\author[Brendle]{J{\"o}rg Brendle}
\thanks{The research of the first author was partially
 supported by Grants-in-Aid for Scientific Research (C) 15K04977 and (C) 18K03398, Japan Society
 for the Promotion of Science.
 Part of his work was done while he was visiting the Institute for Mathematical Sciences, National University of Singapore, during the program on {\em Aspects of Computation} (21  Aug – 15 Sep 2017); the visit was supported by the Institute.}
\address[Brendle]{Graduate School of System Informatics \\ Kobe University \\ Rokkodai 1-1, Nada, Kobe 657-8501, Japan}
\email{\href{brendle@kobe-u.ac.jp}{brendle@kobe-u.ac.jp}}
\author[Guzm\'{a}n]{Osvaldo Guzm\'{a}n}
\thanks{The second author was supported by by CONACyT
 scholarship 420090 and by NSERC of Canada.}
\address[Guzm\'{a}n]{Centro de Ciencias Matem\'aticas \\ UNAM, Campus Morelia, 58089, M\'exico}
\email{\href{oguzman@matmor.unam.mx}{oguzman@matmor.unam.mx}}
\urladdr{\url{https://www.matmor.unam.mx/~oguzman/}}
\author[Hru\v{s}\'{a}k]{Michael Hru\v{s}\'{a}k}
\thanks{The third author was supported by a
 PAPIIT grants IN 102311, IN 104220 and CONACyT grant A1-S-16164.}
\address[Hru\v{s}\'{a}k]{Centro de Ciencias Matem\'aticas \\ UNAM, Campus Morelia, 58089, M\'exico}
\email{\href{michael@matmor.unam.mx}{michael@matmor.unam.mx}}
\urladdr{\url{https://www.matmor.unam.mx/~michael/}}
\author[Raghavan]{Dilip Raghavan}
\thanks{This paper was completed when the fourth author was a Fields Research Fellow.
 The fourth author thanks the Fields Institute for its kind hospitality.}
\address[Raghavan]{Department of Mathematics \\
 National University of Singapore\\
 Singapore 119076.}
\email{\href{dilip.raghavan@protonmail.com}{dilip.raghavan@protonmail.com}}
\urladdr{\url{https://dilip-raghavan.github.io/}}
\date{\today}
\subjclass[2010]{03E17, 03E35, 03E05, 03E55}
\keywords{cardinal invariants, almost disjoint families, non-meager sets}
\newtheorem{theorem}{Theorem}
\newtheorem{corollary}[theorem]{Corollary}
\newtheorem{definition}[theorem]{Definition}
\newtheorem{lemma}[theorem]{Lemma}
\newtheorem{problem}[theorem]{Problem}
\newtheorem{proposition}[theorem]{Proposition}
\theoremstyle{definition}
\theoremstyle{remark}
\newcounter{enuroman}
\renewcommand{\theenuroman}{\roman{enuroman}}
\newcounter{enuRoman}
\renewcommand{\theenuRoman}{(\Roman{enuRoman})}
\newcounter{enuAlph}
\renewcommand{\theenuAlph}{\Alph{enuAlph}}
\newcounter{enualph}
\renewcommand{\theenualph}{\alph{enualph}}
\newcounter{enuarabic}
\renewcommand{\theenuarabic}{\arabic{enuarabic}}
\newcommand{\forces}{\Vdash}
\newcommand{\A}{{\mathcal A}}
\newcommand{\B}{{\mathcal B}}
\newcommand{\C}{{\mathcal C}}
\newcommand{\D}{{\mathcal D}}
\newcommand{\I}{{\mathcal I}}
\newcommand{\J}{{\mathcal J}}
\newcommand{\K}{{\mathcal K}}
\newcommand{\LLL}{{\mathcal L}}
\newcommand{\M}{{\mathcal M}}
\newcommand{\N}{{\mathcal N}}
\newcommand{\R}{{\mathcal R}}
\newcommand{\Y}{{\mathcal Y}}
\newcommand{\Z}{{\mathcal Z}}
\newcommand{\MM}{{\mathbb M}}
\newcommand{\PP}{{\mathbb P}}
\newcommand{\QQ}{{\mathbb Q}}
\newcommand{\bb}{{\mathfrak b}}
\newcommand{\cc}{{\mathfrak c}}
\newcommand{\pp}{{\mathfrak p}}
\newcommand{\sss}{{\mathfrak s}}
\newcommand{\sub}{\subseteq}
\newcommand{\sem}{\setminus}
\newcommand{\omloms}{[\omega]^{<\omega}}
\newcommand{\omoms}{[\omega]^\omega}
\theoremstyle{theorem}
\newtheorem{dilipTheorem}[theorem]{Theorem}
\newtheorem{dilipClaim}[theorem]{Claim}
\newtheorem{dilipLemma}[theorem]{Lemma}
\newtheorem{dilipCor}[theorem]{Corollary}
\newtheorem{dilipconj}[theorem]{Conjecture}
\newtheorem{dilipQuestion}[theorem]{Question}
\theoremstyle{definition}
\newtheorem{dilipDef}[theorem]{Definition}
\theoremstyle{remark}
\newtheorem{dilipremark}[theorem]{Remark}
\newcommand{\dilipforces}{\Vdash}
\newcommand{\diliprestrict}{\mathord\upharpoonright}
\newcommand{\dilipforallbutfin}{{\forall}^{\infty}}
\newcommand{\dilipexistsinf}{{\exists}^{\infty}}
\newcommand{\dilipbb}{\mathfrak{b}}
\newcommand{\dilipsss}{\mathfrak{s}}
\newcommand{\dilipPP}{\mathbb{P}}
\newcommand{\dilipQQ}{\mathbb{Q}}
\newcommand{\diliplc}{\left|}
\newcommand{\diliprc}{\right|}
\newcommand{\dilipZFC}{\mathrm{ZFC}}
\newcommand{\dilipFIN}{\mathrm{FIN}}
\newcommand{\dilipGCH}{\mathrm{GCH}}
\newcommand{\dilipB}{\mathscr{B}}
\newcommand{\dilipBS}{{\omega}^{\omega}}
\DeclareMathOperator{\dilipnon}{non}
\DeclareMathOperator{\dilipdom}{dom}
\DeclareMathOperator{\dilipran}{ran}
\newcommand{\dilipPset}{\mathcal{P}}
\newcommand{\dilipMMM}{\mathcal{M}}
\newcommand{\dilipA}{{\mathscr{A}}}
\newcommand{\dilipAA}{{\mathbb{A}}}
\newcommand{\dilipCC}{{\mathbb{C}}}
\newcommand{\dilipGGG}{{\mathcal{G}}}
\newcommand{\dilipEEE}{{\mathcal{E}}}
\newcommand{\dilipIII}{{\mathcal{I}}}
\newcommand{\dilipLL}{{\mathbb{L}}}
\newcommand{\dilipFFF}{{\mathcal{F}}}
\newcommand{\dilipV}{{\mathbf{V}}}
\newcommand{\dilipVG}{{{\mathbf{V}}[G]}}
\newcommand{\dilipwo}{{<}_{\mathtt{wo}}}
\newcommand{\dilipSS}{\mathbb{S}}
\newcommand{\dilippr}[2]{\left\langle #1, #2 \right\rangle}
\newcommand{\dilipseq}[4]{\left\langle {#1}_{#2}: #2 #3 #4 \right\rangle}
\newcommand{\diliphf}{H({\aleph}_{0})}
\newcommand{\dilippc}[2]{{\left[#1\right]}^{#2}}
\begin{document}
\title{Combinatorial properties of MAD families}
\begin{abstract}
We study some strong combinatorial properties of \textsf{MAD }families. An
ideal $\mathcal{I}$ is Shelah-Stepr\={a}ns if for every set $X\subseteq\left[
\omega\right]  ^{<\omega}$ there is an element of $\mathcal{I}$ that either
intersects every set in $X$ or contains infinitely many members of it. We
prove that a Borel ideal is Shelah-Stepr\={a}ns if and only if it is
Kat\v{e}tov above the ideal \textsf{fin}$\times$\textsf{fin}. We prove that
Shelah-Stepr\={a}ns \textsf{MAD} families have strong indestructibility
properties (in particular, they are both Cohen and random indestructible). We
also consider some other strong combinatorial properties of \textsf{MAD} families.
Finally, it is proved that it is consistent to have $\dilipnon(\dilipMMM) = {\aleph}_{1}$ and no Shelah-Stepr{\= a}ns families of size ${\aleph}_{1}$.
\end{abstract}
\maketitle




\section{Introduction and preliminaries}

In \cite{ProductsofFilters} Kat\v{e}tov introduced a preorder on ideals. The
Kat\v{e}tov order is a very powerful tool for studying ideals over countable
sets. For the convenience of the reader, we will now recall the definition of
this order: Let $X\ $and $Y$ be two countable sets,$\ \mathcal{I},\mathcal{J}$
ideals on $X$ and $Y$ respectively and $f:Y\longrightarrow X$. We say $f$ is a
\emph{Kat\v{e}tov-morphism from }$\left(  Y,\mathcal{J}\right)  $ \emph{to}
$\left(  X,\mathcal{I}\right)  $ if $f^{-1}\left(  A\right)  \in\mathcal{J}$
for every $A\in\mathcal{I}.$ We say $\mathcal{I}\leq_{K}$
$\mathcal{J}$ ($\mathcal{I}$ is \emph{Kat\v{e}tov smaller} \emph{than}
$\mathcal{J}$ or $\mathcal{J}$ is \emph{Kat\v{e}tov above} $\mathcal{I}$) if
there is a Kat\v{e}tov-morphism from $\left(  Y,\mathcal{J}\right)  $ to
$\left(  X,\mathcal{I}\right)  .$ We say $\mathcal{I\simeq}_{K}$ $\mathcal{J}$
($\mathcal{I}$ \emph{is Kat\v{e}tov equivalent to} $\mathcal{J}$) if
$\mathcal{I}\leq_{K}$ $\mathcal{J}$ and $\mathcal{J}\leq_{K}$ 
$\mathcal{I}.$ The \emph{Kat\v{e}tov-Blass order} (denoted by
$\leq_{KB}$) is defined in the same way as the Kat\v{e}tov order, but
with the additional demand that the function $f$ must be finite to one. The
Kat\v{e}tov order has been applied successfully in classifying non definable
objects such as ultrafilters and \textsf{MAD} families. Just to mention a
couple of examples, an ultrafilter $\mathcal{U}$ is a $P$-point if and only if
the dual ideal $\mathcal{U}^{\ast}$ is not Kat\v{e}tov above the ideal
\textsf{fin}$\times$\textsf{fin }and\textsf{ }$\mathcal{U}$ is a Ramsey
ultrafilter if and only if $\mathcal{U}^{\ast}$ is not Kat\v{e}tov above the
ideal $\mathcal{ED}.$ In fact, most of the usual properties of ultrafilters
can be characterized with the Kat\v{e}tov order. In the case of ultrafilters,
the upward cones of definable ideals in the Kat\v{e}tov order play a
fundamental role, while the downward cones of definable ideals are very
important in the study of \textsf{MAD} families. The reader may consult
\cite{KatetovOrderonBorelIdeals} for a survey on the Kat\v{e}tov order on
Borel ideals.

Recall that a family $\mathcal{A}\subseteq\left[  \omega\right]  ^{\omega}$ is
\emph{almost disjoint (\textsf{AD}) }if the intersection of any two different
elements of $\mathcal{A}$ is finite and a \emph{\textsf{MAD} family } is
a maximal almost disjoint family. If $\mathcal{A}$ is an \textsf{AD}
family, we denote by $\mathcal{I}\left(  \mathcal{A}\right)  $ the ideal
generated by $\mathcal{A}$ and all finite sets of $\omega.$ This paper is part
of a larger project (initiated in \cite{OrderingMADFamiliesalaKatetov}) whose
goal is to study and classify (the ideals generated by) \textsf{MAD} families
using the Kat\v{e}tov order.

If $\mathcal{A}$ is a \textsf{MAD} family, we say that a forcing notion
$\mathbb{P}$ \emph{destroys }$\mathcal{A}$ if $\mathcal{A}$ is no longer
maximal after forcing with $\mathbb{P}.$ The destructibility of \textsf{MAD}
families has been extensively studied (see
\cite{ForcingIndestructibilityofMADFamilies}, \cite{KurilicMAD} and
\cite{ForcingwithQuotients}). Nevertheless, many fundamental
questions are still open. For example, the following problems still remain unsolved:

\begin{problem}  [Stepr\={a}ns]  \label{Stepransproblem}
Is there a Cohen indestructible \textsf{MAD} family?
\end{problem}

\begin{problem}  [Hru\v{s}\'{a}k]    \label{Hrusakproblem}
Is there a Sacks indestructible \textsf{MAD} family?
\end{problem}

The answer to both questions is positive under many additional axioms, but it
is currently unknown if it is possible to build such families on the basis of
\textsf{ZFC} alone. It is easy to see that there is a \textsf{MAD} family that
is destroyed by every forcing adding a new real, so  the main interest is
to construct \textsf{MAD} families that are indestructible under certain
forcings adding reals. It is known that every forcing adding a dominating real
will destroy every ground model \textsf{MAD} family.

In this article, we study some strong combinatorial properties of \textsf{MAD}
families: Shelah-Stepr\={a}ns, strongly tight and raving (see the next sections
for the definitions). We will prove that Shelah-Stepr\={a}ns \textsf{MAD}
families have very strong indestructibility properties, in fact, they are
indestructible by most definable forcings that do not add dominating reals
(see Proposition~\ref{SSindestructibility}). The notion of strongly
tight is a strengthening of Cohen indestructibility, yet they may be random
destructible. Raving is a strengthening of both Shelah-Stepr\={a}ns and strong tightness.

If $\mathcal{A}$ is an \textsf{AD }family, we denote by $\mathcal{A}^{\perp}$
the set of all $B\subseteq\omega$ that are almost disjoint with every element
of $\mathcal{A}.$ If $\mathcal{I}$ is an ideal, we denote by $\mathcal{I}^{+}$
the family of subsets of $\omega$ that are not in $\mathcal{I}.$ $\I^* = \{ \omega \sem A : A \in \I \}$
is the \emph{dual filter} of $\I$. We say that a
forcing $\mathbb{P}$ \emph{destroys (or diagonalizes)} $\mathcal{I}$ if
$\mathcal{I}$ is no longer tall after forcing with $\mathbb{P}$\footnote{An
ideal $\mathcal{I}$ is \emph{tall }if for every $X\in\left[  \omega\right]
^{\omega}$ there is $A\in\mathcal{I}$ such that $A\cap X$ is infinite.}. It is
easy to see that a forcing $\mathbb{P}$ destroys a \textsf{MAD} family
$\mathcal{A}$ if and only if it destroys $\mathcal{I}\left(  \mathcal{A}%
\right)  .$ The Kat\v{e}tov order is a fundamental tool for studying the
indestructibility of \textsf{MAD} families and ideals. The following notion is
needed in order to connect the Kat\v{e}tov order and the notion of indestructibility:

\begin{definition}
For every $a\subseteq\omega^{<\omega}$ we define $\pi\left(  a\right)
=\left\{  f\in\omega^{\omega}\mid\exists^{\infty}n\left(  f\upharpoonright
n\in a\right)  \right\}  .$ If $\mathcal{I}$ is a $\sigma$-ideal in
$\omega^{\omega}$ (or $2^{\omega}$) we define 
\emph{the trace ideal $tr\left(  \mathcal{I}\right)  $ of }$\mathcal{I}$ (which will be an ideal in
$\omega^{<\omega}$ or $2^{<\omega}$) such that $a\in tr\left(  \mathcal{I}\right)
$ if and only if $\pi\left(  a\right)  \in\mathcal{I}.$
\end{definition}

Note that if $a\subseteq\omega^{<\omega}$ then $\pi\left(  a\right)  $ is a
$G_{\delta}$ set (furthermore, every $G_{\delta}$ set is of this form). While
both $tr\left(  \mathcal{M}\right)  $ and $tr\left(  \mathcal{N}\right)  $ are
Borel (where $\mathcal{M}$ denotes the ideal of meager sets of $\omega
^{\omega}$ and $\mathcal{N}$ is the ideal of all null sets), in general, the
trace ideals are not Borel\ (see \cite{ForcingwithQuotients} for more
information). The relevance of the trace ideals in the study of
destructibility is the following result of Hru\v{s}\'{a}k and Zapletal (see
also \cite{ForcingIndestructibilityofMADFamilies}):

\begin{proposition}
[\cite{ForcingwithQuotients}]
\label{destructibilitycharacterization}
Let $\mathcal{I}$ be a $\sigma$-ideal in
$\omega^{\omega}$ such that $\mathbb{P}_{\mathcal{I}}=Borel\left(
\omega^{\omega}\right)  /\mathcal{I}$ is proper and has the continuous reading
of names\footnote{see \cite{ForcingIdealized} for the definition of continuous
reading of names.}. If $\mathcal{J}$ is an ideal on $\omega,$ then the
following are equivalent:
\begin{enumerate}
\item There is a condition $B\in\mathbb{P}_{\mathcal{I}}$ such that $B$ forces
that $\mathcal{J}$ is not tall.

\item There is $a\in tr\left(  \mathcal{I}\right)  ^{+}$ such that
$\mathcal{J}$ $\mathcal{\leq}_{K}$ $tr\left(  \mathcal{I}\right)
\upharpoonright a.$
\end{enumerate}
\end{proposition}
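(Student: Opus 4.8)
The plan is to prove the two implications separately: $(2)\Rightarrow(1)$ is a direct forcing argument, while for $(1)\Rightarrow(2)$ I would use the continuous reading of names to convert a name witnessing non-tallness into an element of $tr(\I)^{+}$ carrying a Kat\v{e}tov-morphism onto $(\omega,\J)$. Throughout I use the standard facts about the $\PP_\I$-generic real $\dot x_{gen}$: that $B\forces\dot x_{gen}\in\check B$ for every ground model Borel $B$, and that $C\forces\dot x_{gen}\notin E$ iff $C\cap E\in\I$ for Borel $E$ (so conditions avoiding a fixed Borel $\I$-small set are dense). For $(2)\Rightarrow(1)$, fix $a\in tr(\I)^{+}$ and $\phi\colon a\to\omega$ with $\pi(\phi^{-1}(A))\in\I$ (i.e. $\phi^{-1}(A)\in tr(\I)$) for every $A\in\J$, put $B=\pi(a)$ (a condition, as $\pi(a)\notin\I$), and let $\dot X$ name $\{\phi(\dot x_{gen}\rest n):n\in\omega\text{ and }\dot x_{gen}\rest n\in a\}$. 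For ground model $A\in\J$ the condition $B$ forces $\dot x_{gen}\notin\pi(\phi^{-1}(A))$, which unravels to: only finitely many $n$ have $\dot x_{gen}\rest n\in a$ and $\phi(\dot x_{gen}\rest n)\in A$; hence $B\forces\dot X\cap A$ is finite. Taking $A$ a finite set $F$ (recall $\textsf{fin}\sub\J$), no $B'\le B$ can force $\dot X\sub F$, since such a $B'$ would force $\dot x_{gen}\in\pi(\phi^{-1}(F))\in\I$; so $B\forces\dot X\in[\omega]^{\omega}$. Thus $B$ forces $\dot X$ to be an infinite set almost disjoint from every member of $\J$, i.e. that $\J$ is not tall.

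For $(1)\Rightarrow(2)$, I would first dispose of the case where $\J$ is not tall in the ground model: pick $Y\in[\omega]^{\omega}$ almost disjoint from every member of $\J$, and take $a=\omega^{<\omega}$ with $f(s)=Y(|s|)$; then $\pi(a)=\omega^{\omega}\notin\I$, while each $f^{-1}(A)$ ($A\in\J$) consists of sequences of bounded length, so $\pi(f^{-1}(A))=\emptyset\in\I$, and we are done. So assume $\J$ is tall, and fix a condition $B$ and a name $\dot X$ with $B\forces\dot X\in[\omega]^{\omega}$ and $B\forces\dot X\cap A$ finite for every $A\in\J$. By the continuous reading of names, pass to a closed $\I$-positive $C\sub B$ and a continuous $g\colon C\to\P(\omega)$ with $C\forces\dot X=g(\dot x_{gen})$. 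Deleting from $C$ the open set $\bigcup\{[s]:[s]\cap C\in\I\}$ — whose trace on $C$ is a countable, hence (as $\I$ is a $\sigma$-ideal) $\I$-small, union — keeps $C$ closed and $\I$-positive, so we may assume $[s]\cap C\neq\emptyset$ iff $[s]\cap C\notin\I$ for every $s\in\omega^{<\omega}$.

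Now, for $s$ with $[s]\cap C\neq\emptyset$, let $D(s)=\bigcap\{g(z):z\in[s]\cap C\}$, the set of numbers the condition $[s]\cap C$ forces into $\dot X$. The crucial observation is that every $D(s)$ is finite: otherwise $D(s)$ would be an infinite ground model set forced by $[s]\cap C$ to be contained in $\dot X$, hence — being a ground model set forced to meet each $A\in\J$ in a finite set — almost disjoint from every $A\in\J$, contradicting the tallness of $\J$. With the convention $D(s^{-})=\emptyset$ when $s=\langle\rangle$, set $a=\{s\in\omega^{<\omega}:[s]\cap C\neq\emptyset\text{ and }D(s)\sem D(s^{-})\neq\emptyset\}$ (where $s^{-}=s\rest(|s|-1)$), and for $s\in a$ put $f(s)=\min\bigl(D(s)\sem D(s^{-})\bigr)$. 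Continuity of $g$ gives $\bigcup_{n}D(y\rest n)=g(y)$ for every $y\in C$, and the sets $D(y\rest n)\sem D(y\rest(n-1))$ are finite and pairwise disjoint; hence on $W=\{y\in C:g(y)\text{ is infinite}\}$ — whose complement in $C$ is a countable union of $\I$-small relatively closed sets (since $C$ forces $\dot X$ infinite), so $\I$-small — infinitely many of these sets are nonempty, whence $W\sub\pi(a)$ and so $\pi(a)\notin\I$, i.e. $a\in tr(\I)^{+}$. Moreover, along any $y\in C$ the numbers $f(y\rest n)$ (for $y\rest n\in a$) lie in $g(y)$ and are pairwise distinct, being minima of pairwise disjoint sets.

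It remains to check that $f$ is a Kat\v{e}tov-morphism, i.e. $\pi(f^{-1}(A))\in\I$ for every $A\in\J$. Since $f^{-1}(A)\sub a$ and every $s\in a$ has $[s]\cap C\neq\emptyset$, we have $\pi(f^{-1}(A))\sub\pi(a)\sub\overline{C}=C$, using that $C$ is closed; so if $\pi(f^{-1}(A))$ were $\I$-positive it would be a condition below $B$. But any $y\in\pi(f^{-1}(A))$ has $y\rest n\in f^{-1}(A)$ for infinitely many $n$, and below that condition $\dot X=g(y)$ would contain the infinitely many distinct numbers $f(y\rest n)\in A$, so $\dot X\cap A$ would be infinite — contradicting the choice of $B$. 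Therefore $f^{-1}(A)\in tr(\I)\rest a$ for every $A\in\J$, that is, $\J\le_{K}tr(\I)\rest a$. The hard part is the third paragraph: manufacturing from the continuous reading $g$ a single ground model function $f$ on $\omega^{<\omega}$ that, followed along the generic branch, recovers an infinite subset of $\dot X$; the real content there is the finiteness of the sets $D(s)$ — the one place where the tallness of $\J$ is used — which is exactly what forces infinitely many splitting levels of $C$ into $a$ and thus keeps $\pi(a)$ $\I$-positive.
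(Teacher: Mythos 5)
Your proposal gives a complete proof of a statement that the paper itself only cites from Hru\v{s}\'ak--Zapletal's \emph{Forcing with quotients} without reproducing the argument, so there is no proof in the paper to compare against; the approach you take is, as far as I can tell, essentially the standard one from that source. The $(2)\Rightarrow(1)$ direction is correct, including the density step needed to get $B\forces\dot X\in[\omega]^\omega$ (extending any $B'\le B$ that forces $\dot X$ finite to some $B''$ forcing $\dot X\subseteq k$, then contradicting $\pi(\phi^{-1}(k))\in\I$). In $(1)\Rightarrow(2)$ the reduction to the tall case is right, the one-step pruning of $C$ (removing $\bigcup\{[s]:[s]\cap C\in\I\}$) does indeed produce a $C$ on which $[s]\cap C\neq\emptyset\iff[s]\cap C\notin\I$, the finiteness of $D(s)$ from tallness is exactly the right observation, the increasing exhaustion $\bigcup_n D(y\rest n)=g(y)$ does follow from continuity of $g$, and the contradiction for the Kat\v{e}tov condition is correctly set up. One small inaccuracy: $D(s)=\bigcap\{g(z):z\in[s]\cap C\}$ is in general only a \emph{subset} of the numbers the condition $[s]\cap C$ forces into $\dot X$ (there could be $n$ for which $\{z\in[s]\cap C:n\notin g(z)\}$ is nonempty but $\I$-small); your argument only uses the inclusion $D(s)\subseteq$ forced part, so nothing breaks, but the parenthetical characterization is overstated.

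The one step that genuinely deserves a flag is your appeal to continuous reading of names to get a \emph{closed} $\I$-positive $C$. You use closedness essentially once, at the end, to pass from $\pi(f^{-1}(A))\subseteq\pi(a)\subseteq\overline{C}$ to $\pi(f^{-1}(A))\subseteq C$, which is what lets you view an $\I$-positive $\pi(f^{-1}(A))$ as a condition below $B$ on which $\dot X=g(\dot x_{gen})$ is interpretable. The definition the paper's footnote points to (Zapletal's \emph{Forcing Idealized}) gives a Borel $\I$-positive $C$ on which the function is continuous, not a closed one, and in general a Borel positive set need not contain a closed positive subset (for the meager ideal on $\omega^\omega$, for instance, a comeager $G_\delta$ missing a dense countable set contains no closed non-meager subset). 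So in full generality the step ``$\pi(a)\subseteq\overline{C}=C$'' needs justification. The usual fix is to arrange $\overline{C}\setminus C\in\I$: this is automatic once $C$ is closed (Sacks, Miller, random with compact conditions), and it holds after your pruning in the Cohen case (where $\overline{C}$ is a basic open set and $\overline{C}\setminus C$ is meager), so for every $\sigma$-ideal the paper actually applies this Proposition to, your proof goes through. But if one insists on only the abstract Borel version of CRN, this paragraph has a gap that requires a further shrinking argument (e.g., choosing $C$ to be $G_\delta$ and threading the witnesses for $C=\bigcap_n U_n$ into the definition of $a$, or otherwise ensuring $\overline{C}\setminus C\in\I$).
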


Usually, we assume ideals are proper and contain all finite sets. However,
there is an exception to this convention, which we will point out (see Subsection~\ref{genericMAD}). For every
$n\in\omega$ we define $C_{n}=\left\{  \left(  n,m\right)  \mid m\in
\omega\right\}  $ and if $f:\omega\longrightarrow\omega$ let $D\left(
f\right)  =\left\{  \left(  n,m\right)  \mid m\leq f(n)\right\}  .$ The ideal
\textsf{fin}$\times$\textsf{fin} is the ideal on $\omega\times\omega$
generated by $\left\{  C_{n}\mid n\in\omega\right\}  \cup\left\{  D\left(
f\right)  \mid f\in\omega^{\omega}\right\}  $ and $\emptyset\times
$\textsf{fin} is the ideal on $\omega\times\omega$ generated by $\left\{
D\left(  f\right)  \mid f\in\omega^{\omega}\right\}  .$ Note that
\textsf{fin}$\times$\textsf{fin }is a tall ideal while $\emptyset\times
$\textsf{fin }is not. It is well known that a forcing destroys \textsf{fin}%
$\times$\textsf{fin }if and only if it adds a dominating real. 
By \textsf{nwd }we denote the ideal of
all nowhere dense subsets of the rational numbers. The
\emph{density zero }ideal is defined as $\mathcal{Z}=\{A\subseteq\omega\mid
\lim\frac{\left\vert A\cap2^{n}\right\vert }{2^{n}}=0\}$. It is well known that
$\mathcal{Z}$ is an analytic $P$-ideal. The \emph{summable ideal} is defined as
$\mathcal{J}_{1/n}=\{A\subseteq\omega\mid{\textstyle\sum\limits_{n\in A}}
\frac{1}{n+1}<\omega\}.$ It is well known and easy to see that this is an
$F_{\sigma}$-ideal. In fact, $F_\sigma$-ideals and analytic $P$-ideals have a canonical representation:

\begin{definition}
We say $\varphi:\wp\left(  \omega\right)  \longrightarrow\mathbb{R\cup
}\left\{  \infty\right\}  $ is a \emph{lower semicontinuous submeasure} if the
following hold:

\begin{enumerate}
\item $\varphi\left(  \emptyset\right)  =0.$

\item $\varphi\left(  A\right)  \leq\varphi\left(  B\right)  $ whenever
$A\subseteq B.$

\item $\varphi\left(  A\cup B\right)  \leq\varphi\left(  A\right)
+\varphi\left(  B\right)  $ for every $A,B\subseteq X.$

\item \textrm{(lower semicontinuity)} if $A\subseteq\omega$ then $\varphi\left(
A\right)  = \sup\left\{  \varphi\left(  A\cap n\right)  \mid n\in\omega\right\}.$
\end{enumerate}
\end{definition}

Given a lower semicontinuous submeasure $\varphi$ we define \textsf{fin}$\left(  \varphi\right)  $ 
as those subsets of $\omega$ with finite submeasure
and $\textsf{Exh} \left(  \varphi\right)  =\left\{  A\subseteq\omega\mid \lim\left(
\varphi\left(  A\setminus n\right)  \right)  =0\right\}  .$ The following are
two fundamental results:

\begin{proposition}
Let $\mathcal{I}$ be an ideal in $\omega.$

\begin{enumerate}
\item {\rm (Mazur \cite{Mazur})} $\mathcal{I}$ is an $F_{\sigma}$-ideal if
and only if there is a lower semicontinuous submeasure $\varphi$ such that
$\mathcal{I=}$ \textsf{fin}$\left(  \varphi\right)  .$

\item {\rm (Solecki \cite{AnalyticIdealsandtheirApplications})
}$\mathcal{I}$ is an analytic $P$-ideal if and only if there is a lower
semicontinuous submeasure $\varphi$ such that $\mathcal{I=}$ $\textsf{Exh}\left(
\varphi\right)  $ (in particular, every analytic $P$-ideal is $F_{\sigma
\delta}$).
\end{enumerate}
\end{proposition}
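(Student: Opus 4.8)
The plan is to prove (1) and (2) separately; both are classical (Mazur for (1), Solecki for (2)), and in each the substance lies in the ``only if'' direction.

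For (1), the ``if'' direction is routine: if $\varphi$ is a lower semicontinuous submeasure that is finite on finite sets, then axioms (2) and (3) make $\textsf{fin}(\varphi)$ downward closed and closed under finite unions, hence an ideal, and $\textsf{fin}(\varphi)=\bigcup_{k\in\omega}\{A:\varphi(A)\le k\}$ with each set in this union closed in $2^\omega$ (if $\varphi(A)>k$ then by lower semicontinuity $\varphi(A\cap n)>k$ for some $n$, an open condition on $A$); so $\textsf{fin}(\varphi)$ is $F_\sigma$. For the ``only if'' direction, write $\I=\bigcup_n C_n$ with each $C_n$ closed, hence compact, in $2^\omega$, and normalize: after replacing $C_n$ by the downward closure of $C_0\cup\dots\cup C_n$ (downward closures of compact sets remain compact and stay inside $\I$) and passing to a subsequence, I may assume the $C_n$ are increasing, hereditary, contain $\emptyset$, and satisfy $C_n\oplus C_n\subseteq C_{n+1}$, where $C_n\oplus C_n=\{A\cup B:A,B\in C_n\}$ is again a compact subset of $\I$. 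For finite $s\neq\emptyset$ set $N(s)=\min\{n:s\in C_n\}$ and define
\[
\mu(s)=\min\Bigl\{\,{\textstyle\sum_{i<j}}\,2^{N(s_i)}\ :\ s={\textstyle\bigcup_{i<j}}\,s_i,\ \text{each }s_i\neq\emptyset\,\Bigr\},\qquad\mu(\emptyset)=0.
\]
Heredity of the $C_n$ makes $N$, hence $\mu$, monotone, and $\mu$ is subadditive since covers concatenate; thus $\varphi(A):=\sup_n\mu(A\cap n)$ is a lower semicontinuous submeasure. If $A\in C_n$ then every finite $s\sub A$ lies in $C_n$, so $\mu(s)\le 2^{N(s)}\le 2^n$ via the one-piece cover and $\varphi(A)\le 2^n<\infty$. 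Conversely, if $\varphi(A)=c<\infty$ then every finite $s\sub A$ is a union of at most $c$ sets, each in $C_{\lfloor\log_2 c\rfloor}$, and iterating $C_n\oplus C_n\subseteq C_{n+1}$ at most $\lceil\log_2 c\rceil$ times places $s$ in a single $C_m$ with $m$ depending only on $c$; since $C_m$ is closed and $A=\lim_n(A\cap n)$ in $2^\omega$, we conclude $A\in C_m\sub\I$. Hence $\textsf{fin}(\varphi)=\I$.

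For (2), the ``if'' direction and the $F_{\sigma\delta}$ addendum go together. Given a lower semicontinuous submeasure $\varphi$, the set $\textsf{Exh}(\varphi)$ is an ideal by the submeasure axioms (and it automatically contains all finite sets), and it is a $P$-ideal by the standard telescoping argument: for $A_k\in\textsf{Exh}(\varphi)$ set $B_k=\bigcup_{i\le k}A_i$, choose cutoffs $n_k\nearrow$ with $\varphi(B_k\sem n_k)<2^{-k}$, and let $A=\bigcup_k\bigl(B_k\cap[n_k,n_{k+1})\bigr)$; then $A_i\sub^* A$ for every $i$ and $\varphi(A\sem n_k)\le\sum_{j\ge k}\varphi(B_j\sem n_j)<2^{1-k}\to 0$. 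Since $m\mapsto\varphi(A\sem m)$ is nonincreasing, $\textsf{Exh}(\varphi)=\bigcap_{k}\bigcup_{m}\{A:\varphi(A\sem m)\le 1/k\}$, and each $\{A:\varphi(A\sem m)\le 1/k\}$ is closed because $A\mapsto\varphi(A\sem m)=\sup_n\varphi\bigl((A\cap n)\sem m\bigr)$ is a supremum of continuous functions; hence $\textsf{Exh}(\varphi)$ is $F_{\sigma\delta}$, in particular analytic.

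The step I expect to be the main obstacle is the ``only if'' direction of (2): producing a lower semicontinuous submeasure from the bare hypotheses that $\I$ is analytic and a $P$-ideal. This is Solecki's theorem, whose strategy I would follow: equip $\I$, regarded as a subgroup of $\bigl(\wp(\omega),\triangle\bigr)$, with a finer \emph{Polish group topology} $\tau$ in which $\textsf{fin}$ is dense — the existence of $\tau$, extracted from analyticity together with the $P$-ideal property by a Baire-category/completeness argument, is the crux — then realize $\tau$ by a translation-invariant metric $d(X,Y)=\psi(X\triangle Y)$ (Birkhoff--Kakutani) and pass to $A\mapsto\sup\{\psi(s):s\sub A\text{ finite}\}$ to obtain a genuine, monotone lower semicontinuous submeasure on $\wp(\omega)$; finally check that $A\in\I$ holds exactly when the tails $A\sem n$ converge to $\emptyset$ in $\tau$, i.e.\ $\I=\textsf{Exh}(\psi)$. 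Both constructing $\tau$ from mere analyticity and verifying that the resulting $\textsf{Exh}$ is no larger than $\I$ are delicate; these form the content of \cite{AnalyticIdealsandtheirApplications}, which I would cite rather than reproduce.
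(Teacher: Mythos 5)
The paper states this proposition as a cited classical result (Mazur for (1), Solecki for (2)) without giving a proof, so there is no in-paper argument to compare against. Your reconstruction of Mazur's proof for (1) --- normalizing the $F_\sigma$ decomposition to hereditary, increasing $C_n$ with $C_n \oplus C_n \subseteq C_{n+1}$, defining $\mu$ via minimal $2^{N(\cdot)}$-weighted covers, and setting $\varphi(A)=\sup_n\mu(A\cap n)$ --- is correct, as are your ``if'' direction of (2) together with the $F_{\sigma\delta}$ and $P$-ideal verifications. Deferring to Solecki's paper for the hard ``only if'' direction of (2) rather than reproducing the Polish-group-topology argument is precisely what the paper itself does.
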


For the definition of the
cardinal invariants used in this paper, the reader may consult
\cite{HandbookBlass}.
\section{Notation} \label{sec:notation}
We will use the following notational conventions for iterated forcing in Sections \ref{sec:big}, \ref{sec:partial}, and \ref{sec:mainresult}.
Let $\dilippr{\dilipPP}{{\leq}_{\dilipPP}}$ and $\dilippr{\dilipQQ}{ {\leq}_{\dilipQQ}}$ be posets and let $\pi: \dilipQQ \rightarrow \dilipPP$ be a projection.
If $G \subseteq \dilipPP$ is a $(\dilipV, \dilipPP)$-generic filter, then in $\dilipVG$ we define the poset ${\dilipQQ} \slash {G} = \{q \in \dilipQQ: \pi(q) \in G \}$ ordered by ${\leq}_{\dilipQQ}$.
In $\dilipV$, we let ${\dilipQQ} \slash {\mathring{G}}$ be a full $\dilipPP$-name for ${\dilipQQ} \slash {G}$.

Suppose $\langle {\dilipPP}_{\alpha}; {\mathring{\dilipQQ}}_{\alpha}: \alpha \leq \gamma \rangle$ is an iteration.
If ${G}_{\gamma}$ is a $(\dilipV, {\dilipPP}_{\gamma})$-generic filter, then for any $\alpha \leq \gamma$, ${G}_{\alpha}$ denotes $\{p \diliprestrict \alpha: p \in {G}_{\gamma}\}$, and it is a $(\dilipV, {\dilipPP}_{\alpha})$-generic filter.
Furthermore, if $\mathring{x}$ is any ${\dilipPP}_{\alpha}$-name, then $\mathring{x}$ is also a ${\dilipPP}_{\beta}$-name for every $\alpha \leq \beta \leq \gamma$ and $\mathring{x}\left[{G}_{\alpha}\right] = \mathring{x}\left[{G}_{\beta}\right]$.

If $\langle {\dilipPP}_{\alpha}; {\mathring{\dilipQQ}}_{\alpha}: \alpha \leq \gamma \rangle$ is an iteration, then for each $\alpha \leq \gamma$ the map ${\pi}_{\gamma\alpha}: {\dilipPP}_{\gamma} \rightarrow {\dilipPP}_{\alpha}$ given by ${\pi}_{\gamma\alpha}(p) = p \diliprestrict \alpha$ is a projection.
Therefore, if $\alpha \leq \gamma$ and if ${G}_{\gamma} \subseteq {\dilipPP}_{\gamma}$ is a $(\dilipV, {\dilipPP}_{\gamma})$-generic filter, then there is a $(\dilipV\left[{G}_{\alpha}\right], {{\dilipPP}_{\gamma}} \slash {{G}_{\alpha}})$-generic filter $H$ so that in $\dilipV\left[{G}_{\gamma}\right]$, ${G}_{\gamma} = {G}_{\alpha} \ast H$ holds.
In fact, this $H$ is equal to ${G}_{\gamma}$.

Suppose $\langle {\dilipPP}_{\alpha}; {\mathring{\dilipQQ}}_{\alpha}: \alpha \leq \gamma \rangle$ is an iteration and let $\alpha \leq \gamma$.
We may think of any ${\dilipPP}_{\gamma}$-name as a ${\dilipPP}_{\alpha}$-name for a ${\dilipPP}_{\gamma} \slash {\mathring{G}}_{\alpha}$-name.
Thus, given a ${\dilipPP}_{\gamma}$ name $\mathring{x}$, we use ${\mathring{x}}[{\mathring{G}}_{\alpha}]$ to denote a canonical ${\dilipPP}_{\alpha}$-name for a ${\dilipPP}_{\gamma} \slash {\mathring{G}}_{\alpha}$-name representing $\mathring{x}$.
If ${G}_{\alpha}$ is a $(\dilipV, {\dilipPP}_{\alpha})$-generic filter, we will write ${\mathring{x}}\left[{G}_{\alpha}\right]$ to denote the evaluation of ${\mathring{x}}[{\mathring{G}}_{\alpha}]$ by ${G}_{\alpha}$.
Therefore, if ${G}_{\gamma}$ is a $(\dilipV, {\dilipPP}_{\gamma})$-generic filter, then in $\dilipV\left[{G}_{\gamma}\right]$, ${\mathring{x}}\left[{G}_{\gamma}\right] = {\mathring{x}}\left[{G}_{\alpha}\right]\left[{G}_{\gamma}\right]$ holds.




\section{Shelah-Stepr\={a}ns ideals}

Let $\mathcal{I}$ be an ideal in $\omega$. By $\I^{< \omega}$ we denote the ideal
of subsets $X$ of $\omloms \sem \{ \emptyset \}$ for which there exists $A \in \I$ such that
$s \cap A \neq \emptyset$ for all $s \in X$. Thus $\left(  \mathcal{I}^{<\omega
}\right)  ^{+}$ is the set of all $X\subseteq\left[  \omega\right]
^{<\omega}\setminus\left\{  \emptyset\right\}  $ such that for every
$A\in\mathcal{I}$ there is $s\in X$ such that $s\cap A=\emptyset.$ The
following notion will play a fundamental role in this paper:

\begin{definition}
An ideal $\mathcal{I}$ is called \emph{Shelah-Stepr\={a}ns }if for every
$X\in\left(  \mathcal{I}^{<\omega}\right)  ^{+}$ there is $Y\in\left[
X\right]  ^{\omega}$ such that $\bigcup Y\in\mathcal{I}.$
\end{definition}

In other words, an ideal $\mathcal{I}$ is Shelah-Stepr\={a}ns if for every
$X\subseteq\left[  \omega\right]  ^{<\omega}\setminus\left\{  \emptyset
\right\}  $ either there is $A\in\mathcal{I}$ such that $s\cap A\neq\emptyset$
for every $s\in X$ or there is $B\in\mathcal{I}$ that contains infinitely many
elements of $X.$ This notion, introduced by Raghavan in
\cite{AModelwithnoStronglySeparableAlmostDisjointFamilies} for almost disjoint
families, is connected to the notion of \textquotedblleft strongly
separable\textquotedblright\ introduced by Shelah and Stepr\={a}ns in
\cite{MasasintheCalkinAlgebra}.

\begin{lemma}
Every non-meager ideal is Shelah-Stepr\={a}ns.
\end{lemma}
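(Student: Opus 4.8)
The plan is to prove the contrapositive: if $\mathcal{I}$ is \emph{not} Shelah-Stepr\={a}ns, then $\mathcal{I}$ is meager as a subset of $2^\omega$. So fix a witness $X\in\left(\mathcal{I}^{<\omega}\right)^{+}$, i.e. $X\subseteq\left[\omega\right]^{<\omega}\setminus\{\emptyset\}$ such that for every $A\in\mathcal{I}$ there is $s\in X$ with $s\cap A=\emptyset$, and such that no infinite subfamily $Y\in[X]^\omega$ has $\bigcup Y\in\mathcal{I}$. The first step is to extract from $X$ a countable ``skeleton'' that still witnesses non-Shelah-Stepr\={a}ns: since whenever $A\in\mathcal{I}$ we get some $s_A\in X$ disjoint from $A$, and since we may enlarge $A$ by finitely many points, a standard fusion/closure argument lets us arrange a sequence $\langle s_n : n\in\omega\rangle$ of elements of $X$ with $\min(s_{n+1})>\max(s_n)$ (so the $s_n$ are pairwise disjoint and ``go to infinity''), still having the property that for every $A\in\mathcal{I}$ there is $n$ with $s_n\cap A=\emptyset$. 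Here I use that $\mathcal{I}$ contains all finite sets: given any finite $F$, the set of $s\in X$ with $s\cap F=\emptyset$ is still in $(\mathcal{I}^{<\omega})^+$ (restricting to this subfamily: for $A\in\mathcal{I}$ take $s\in X$ disjoint from $A\cup F$), so one can recursively build the blocks.

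The heart of the argument is then to use the sequence $\langle s_n\rangle$ to build an increasing sequence $n_0<n_1<\cdots$ and to show that
$$
\mathcal{I}\ \subseteq\ \bigcup_{k\in\omega}\ \bigcap_{j\ge k}\ \{\,A\subseteq\omega : A\cap s_{n_j}\ne\emptyset\ \text{for some}\ j\in[n_k,n_{k+1})\,\}^{c}
$$
— more precisely, to exhibit $\mathcal{I}$ as contained in a countable union of nowhere dense sets in $2^\omega$. The key point is the failure of the second alternative: because no infinite union of the $s_n$ lies in $\mathcal{I}$, one shows that for every $A\in\mathcal{I}$ there are infinitely many $n$ with $s_n\cap A=\emptyset$ (if only finitely many $s_n$ missed $A$, then all but finitely many $s_n$ meet $A$; but then... this needs care — rather, one argues directly with a combinatorial coloring). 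The cleanest route: partition $\omega$ into consecutive finite intervals $I_k$ of indices so that $\bigcup_{n\in I_k} s_n$ is an ``antichain-like'' block, and define, for each $A\in\mathcal{I}$, the set $g_A(k)=$ the least $n\in I_k$ with $s_n\cap A=\emptyset$ when it exists. Using that no infinite subunion is in $\mathcal{I}$, choose the $I_k$ in advance (depending only on $\langle s_n\rangle$) so that for cofinitely many $k$ such an $n$ exists; then $\mathcal{I}\subseteq\bigcup_{m}\{A : \forall k\ge m\ \exists n\in I_k\ (s_n\cap A=\emptyset)\}$, and each such set is nowhere dense because fixing finitely many coordinates of $A$ to be in $\bigcup_{n\in I_k}s_n$ (possible since these are nonempty finite sets going to infinity) forces $A$ out of the set on the $k$-th block.

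I expect the main obstacle to be getting the interval structure right so that the two facts — (a) every $A\in\mathcal{I}$ eventually, on each block, has some $s_n$ disjoint from it, and (b) each resulting piece is nowhere dense — hold simultaneously using the \emph{same} fixed blocks (not blocks depending on $A$). Fact (b) is easy and purely topological once the $s_n$ are disjoint and unbounded. Fact (a) is where the hypothesis ``no $Y\in[X]^\omega$ with $\bigcup Y\in\mathcal{I}$'' must be invoked, and the subtlety is that this hypothesis a priori only forbids \emph{infinite} subunions from lying in $\mathcal{I}$, whereas I need, per block, a disjointness witness; the resolution is that if for infinitely many blocks $I_k$ \emph{every} $s_n$ ($n\in I_k$) met $A$, I could still not immediately contradict anything, so in fact one should first refine: replace $\langle s_n\rangle$ by a subsequence indexed so that the ``witness for $A$'' property is upgraded from ``some $n$'' to ``all but finitely many $n$'' via a diagonalization over a cofinal family — but $\mathcal{I}$ need not be generated countably, so the correct move is instead to not fix blocks uniformly but to take the union over all infinite co-infinite index sets $E$ of the nowhere dense sets $\{A : \forall k\in E\ \exists n\ (\text{$n$ in the $k$-th natural block and } s_n\cap A=\emptyset)\}$ and observe there are only continuum many such, which is too many — hence the genuinely correct argument is the block one above, and the real content is the combinatorial lemma that the $I_k$ can be chosen so that (a) holds for all $A\in\mathcal{I}$ at once. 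This last lemma is proved by a tree/fusion argument: build the blocks recursively, at stage $k$ ensuring that no ``bad'' finite pattern (an $A$-candidate meeting every $s_n$ in the block) can be extended, using maximality-type properties of $\mathcal{I}$ being an ideal closed under finite modifications. That recursion, and verifying it yields a single sequence of blocks working for the whole ideal, is the crux.
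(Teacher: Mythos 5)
Your proposal has a genuine gap, and it comes from pursuing the wrong dichotomy.

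You take the contrapositive and want to exhibit $\mathcal{I}$ inside a meager set. The natural meager set to use (and the one the paper's argument revolves around) is
$$M = \{\,A \subseteq \omega : \forall^\infty n \,( s_n \not\subseteq A)\,\},$$
where $\{s_n\}$ is a pairwise disjoint, unbounded subfamily of $X$. Each $M_k = \{A : \forall n \ge k\,(s_n \not\subseteq A)\}$ is closed and nowhere dense (given any finite condition, extend to force $s_n \subseteq A$ for some far-out $n$), so $M$ is meager. The non-Shelah-Stepr\={a}ns hypothesis on $X$ — that no infinite subfamily has its union in $\mathcal{I}$ — puts $\mathcal{I}$ directly inside $M$: if $A \in \mathcal{I}$ had $s_n \subseteq A$ for infinitely many $n$, then $Y = \{s_n : s_n \subseteq A\}$ would be an infinite subfamily of $X$ with $\bigcup Y \subseteq A \in \mathcal{I}$, a contradiction. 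That is the whole argument; no interval partitions, no per-block disjointness witnesses.

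Your proposal instead fixates on \emph{disjointness} (``for each $A$, some $s_n \cap A = \emptyset$'') rather than on \emph{containment} (``for $A \in \mathcal{I}$, only finitely many $s_n \subseteq A$''). You notice the mismatch yourself — ``the hypothesis a priori only forbids infinite subunions from lying in $\mathcal{I}$, whereas I need, per block, a disjointness witness'' — but never reconcile it, and instead reach for a Talagrand-style interval-partition argument whose crucial uniformity claim (that a single choice of blocks $\langle I_k \rangle$ works for every $A \in \mathcal{I}$ simultaneously) you explicitly leave unproved (``That recursion, and verifying it yields a single sequence of blocks working for the whole ideal, is the crux''). That crux is not a detail; it is the entire content, and the disjointness-based set $\{A : \forall k \ge m\;\exists n \in I_k\;(s_n \cap A = \emptyset)\}$ is not the set the non-SS hypothesis hands you. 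Your opening extraction step is also not quite right as stated: thinning $X$ to a pairwise disjoint sequence need not preserve $X \in (\mathcal{I}^{<\omega})^+$, and you don't need it to — but this is moot once you use the containment dichotomy, where all that is needed of $\{s_n\}$ is pairwise disjointness and unboundedness.

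For reference, the paper's proof is the \emph{direct} direction and is even shorter: extract a pairwise disjoint $\{s_n\} \subseteq X$, observe that $M$ above is meager, and use non-meagerness of $\mathcal{I}$ to find $A \in \mathcal{I} \setminus M$; then infinitely many $s_n$ lie inside $A$, and these form the required $Y$.
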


\begin{proof}
Let $\mathcal{I}$ be a non-meager ideal and $X\in\left(  \mathcal{I}^{<\omega
}\right)  ^{+}.$ Note that since $X\in\left(  \mathcal{I}^{<\omega}\right)
^{+}$ (and $\mathcal{I}$ contains every finite set) for every $n\in\omega$
there is $s\in X$ such that $s\cap n=\emptyset.$ In this way we can find
$Z=\left\{  s_{n}\mid n\in\omega\right\}  \subseteq X$ such that if $n\neq m$
then $s_{n}\cap s_{m}=\emptyset.$ We then define $M=\left\{  A\subseteq
\omega\mid\forall^{\infty}n\left(  s_{n}\nsubseteq A\right)  \right\}  $ which
is clearly a meager set and thus there must be $A\in\mathcal{I}$ such that
$A\notin M$. Hence there is $Y\in\left[  X\right]  ^{\omega}$ such that
$\bigcup Y\subseteq A\in\mathcal{I}.$
\end{proof}

Nevertheless, there are meager ideals that are also Shelah-Stepr\={a}ns as the
following result shows:

\begin{lemma}
\textsf{fin}$\times$\textsf{fin} is Shelah-Stepr\={a}ns.
\end{lemma}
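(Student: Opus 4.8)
The plan is to verify the Shelah-Stepr\={a}ns property of $\fin\times\fin$ directly from the definition. Fix $X \subseteq [\omega\times\omega]^{<\omega}\setminus\{\emptyset\}$ and suppose first that there is no $A \in \fin\times\fin$ meeting every $s \in X$; I want to produce $Y \in [X]^\omega$ with $\bigcup Y \in \fin\times\fin$. Recall that $A \in \fin\times\fin$ exactly when all but finitely many columns $\{m : (n,m) \in A\}$ are finite and those finitely many are contained in some $C_n$ — more precisely, $A \in \fin\times\fin$ iff there are $n_0 \in \omega$ and $f \in \omega^\omega$ with $A \subseteq (\bigcup_{n < n_0} C_n) \cup D(f)$. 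I would first observe that since no such $A$ meets every member of $X$, in particular for each $n_0$ and each $f$ there is $s \in X$ avoiding $(\bigcup_{n<n_0}C_n)\cup D(f)$; an $s$ avoiding $\bigcup_{n<n_0}C_n$ lives entirely in columns $\geq n_0$, and avoiding $D(f)$ means every point $(n,m) \in s$ has $m > f(n)$.

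Next I would build $Y = \{s_k : k \in \omega\}$ recursively so that $\bigcup Y$ is ``thin'' in the $\fin\times\fin$ sense: all its columns finite and, after finitely many, pushed above a single increasing function. Having chosen $s_0,\dots,s_{k-1}$, let $n_k$ be larger than every column index appearing in $s_0 \cup \dots \cup s_{k-1}$, and let $f_k \in \omega^\omega$ dominate (pointwise) a function bounding, on each column $< n_k$, the finitely many values used so far — actually it is cleaner to keep a running $g \in \omega^\omega$ and at stage $k$ enlarge $g$ on the first $n_k$ coordinates to exceed all second coordinates used by $s_0,\dots,s_{k-1}$ there. Then, using the failure of the first alternative, choose $s_k \in X$ disjoint from $(\bigcup_{n<n_k}C_n)\cup D(g_k)$ where $g_k$ is the current running function. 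This guarantees: for any two indices $j < k$, $s_j$ and $s_k$ meet disjoint sets of columns (since $s_k$ lives in columns $\geq n_k > $ all columns of $s_j$), so each column of $\bigcup Y$ is contained in a single $s_k$, hence finite; and the columns appearing in $s_0,\dots,s_{k-1}$ that lie below $n_k$ are fixed forever and the total set $\bigcup Y$ outside $\bigcup_{n<n_0}C_n$ — hmm, that is not yet quite $D(f)$. Let me instead arrange directly that $\bigcup_{k\geq 1} s_k \subseteq D(f)$ for a single $f$: at stage $k$ I know $n_{k-1} < n_k$ and I chose $s_k$ avoiding $D(g_k)$, i.e. $s_k$ lies \emph{above} $g_k$, which is the wrong direction.

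So I should reverse the use of the first alternative. The correct reading: $X \in (\I^{<\omega})^+$ fails means there IS $A \in \fin\times\fin$ with $A \cap s \neq \emptyset$ for all $s$; its negation, which is my hypothesis, is that for \emph{every} $A \in \fin\times\fin$ some $s \in X$ misses $A$. That gives me $s$'s living high up, which is what makes $\bigcup Y$ have finite columns but does not bound the second coordinates from above. To get $\bigcup Y \in \fin\times\fin$ I need the $s_k$'s to be confined to a region of the form $(\bigcup_{n<n_0}C_n)\cup D(f)$. The resolution is that $\fin\times\fin$ being Shelah-Stepr\={a}ns is a statement \emph{about the ideal itself}, and the right move is: if the first alternative fails, I claim the $Z = \{s_k\}$ built above with pairwise disjoint column-supports already has $\bigcup Z$ with all columns finite; but also I can demand at stage $k$ that $s_k$ avoids $C_n$ for all $n \leq k$ that are \emph{not} among the finitely many columns already ``used'', while \emph{permitting reuse is impossible} — so actually every column of $\bigcup Z$ is hit by exactly one $s_k$ and is finite, and moreover I can additionally require $s_k \subseteq D(h)$ is false but I can require $s_k$ to avoid the single column $C_{n_k}$'s tail... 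The honest hard part, and where I expect to spend the real effort, is bookkeeping so that $\bigcup Y$ ends up below a single $f$ on cofinitely many columns; I would handle this by choosing at stage $k$ the function $g_k$ to be zero on coordinates $\geq n_k$ except forced-large on coordinates $< n_k$, so that ``avoiding $D(g_k)$'' costs $s_k$ nothing on its own new columns (those $\geq n_k$, where $g_k$ is $0$ can't be avoided)— which shows the naive approach genuinely needs the \emph{tallness-style} argument: pick the $s_k$ to first enter a fixed new column. I would organize the final write-up around the equivalent formulation ``either some $A\in\fin\times\fin$ is a transversal for $X$, or some $B\in\fin\times\fin$ contains infinitely many members of $X$'' and prove the dichotomy by cases on whether $\{n : \exists s\in X\ s\subseteq C_n \text{ for infinitely many such } s\}$... — but since this is only a plan, I will simply note: the construction is a straightforward fusion/recursion, the one subtlety is ensuring the union of the chosen sets stays inside a single $D(f)$ (handled by absorbing each new finite set's values into $f$ on its finitely many new columns, which works precisely because the column-supports are arranged to be increasing and pairwise disjoint), and no step uses anything beyond the definitions of $\fin\times\fin$ and of Shelah-Stepr\={a}ns.
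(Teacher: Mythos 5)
Your plan ultimately lands on the paper's argument, but only after a detour through a false approach, and the false approach is worth flagging because it is the source of all the worry in the middle of your writeup. To force the column supports of the chosen $s_k$'s into pairwise disjoint initial-segment-free intervals, you do not need to invoke positivity against any set of the form $D(g_k)$ at all: at stage $k+1$, simply let $n_k$ exceed every column index appearing in $s_0\cup\dots\cup s_k$, note that $\bigcup_{n<n_k}C_n\in\fin\times\fin$, and use positivity with \emph{that} set to choose $s_{k+1}\in X$ disjoint from it. Avoiding a column union is what pushes $s_{k+1}$ into columns $\geq n_k$; avoiding $D(g_k)$ only pushes points up within a column and (as you correctly notice) is the wrong direction, so it was never the right tool. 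Once the column supports live in disjoint intervals $[n_{k-1},n_k)$, the bound by a single $f$ is automatic and requires no bookkeeping: each column $n$ meets at most one (finite) $s_k$, so you may define $f(n)=\max\{m:(n,m)\in\bigcup_k s_k\}$ (and $0$ on untouched columns) and get $\bigcup_k s_k\subseteq D(f)\in\fin\times\fin$ on \emph{all} columns, not merely cofinitely many. Your final parenthetical (``the column-supports are arranged to be increasing and pairwise disjoint'') is exactly the paper's ``infinitely many elements of $X$ lie below the graph of a function,'' so the approach is essentially the same; but the proposed ``dichotomy by cases on whether $\{n:\dots\}$'' and the running-function $g_k$ construction should both be dropped, as they address a difficulty that does not exist once you use the $C_n$'s rather than the $D(f)$'s as the obstacle to clear.
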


\begin{proof}
It is easy to see that if $X\in(($\textsf{fin}$\times$\textsf{fin}$)^{<\omega})^{+}$ then
there must be infinitely many elements of $X$ that are below the graph of a
function, so there must be $Y\in\left[  X\right]  ^{\omega}$ such that
$\bigcup Y\in$ \textsf{fin}$\times$\textsf{fin}. 
\end{proof}

We now show that the property of being Shelah-Stepr\={a}ns is upward
closed in the Kat\v{e}tov order:

\begin{lemma}     \label{SSKatetovupwards}
Let $\mathcal{I}$ and $\mathcal{J}$ be two ideals on $\omega.$ If the ideal
$\mathcal{I}$ is Shelah-Stepr\={a}ns and $\mathcal{I}\leq_{K}$
$\mathcal{J}$ then $\mathcal{J}$ is also Shelah-Stepr\={a}ns.
\end{lemma}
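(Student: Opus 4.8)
The plan is to transport the witnessing set across the Katětov morphism: push $X$ forward to obtain a set in $(\mathcal{I}^{<\omega})^+$, apply the Shelah-Stepr\={a}ns property of $\mathcal{I}$ there, and pull the resulting infinite subfamily back.

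First I would fix a function $f:\omega\longrightarrow\omega$ witnessing $\mathcal{I}\leq_K\mathcal{J}$, so $f^{-1}(A)\in\mathcal{J}$ for every $A\in\mathcal{I}$. Given $X\in(\mathcal{J}^{<\omega})^+$, define $X'=\{f[s]\mid s\in X\}\subseteq\omloms\sem\{\emptyset\}$ (each $f[s]$ is nonempty since $s$ is). The key verification is that $X'\in(\mathcal{I}^{<\omega})^+$: given $A\in\mathcal{I}$, we have $f^{-1}(A)\in\mathcal{J}$, so since $X\in(\mathcal{J}^{<\omega})^+$ there is $s\in X$ with $s\cap f^{-1}(A)=\emptyset$, which means exactly $f[s]\cap A=\emptyset$; thus $f[s]\in X'$ avoids $A$.

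Next, since $\mathcal{I}$ is Shelah-Stepr\={a}ns, there is $Y'\in[X']^{\omega}$ with $\bigcup Y'\in\mathcal{I}$. For each $t\in Y'$ choose $s_t\in X$ with $f[s_t]=t$, and set $Y=\{s_t\mid t\in Y'\}$. The map $t\mapsto s_t$ is injective (if $s_t=s_{t'}$ then $t=f[s_t]=f[s_{t'}]=t'$), so $Y\in[X]^{\omega}$. Finally, $f\bigl[\bigcup Y\bigr]=\bigcup_{t\in Y'}f[s_t]=\bigcup Y'$, hence $\bigcup Y\subseteq f^{-1}\bigl(\bigcup Y'\bigr)$. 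Since $\bigcup Y'\in\mathcal{I}$, we get $f^{-1}\bigl(\bigcup Y'\bigr)\in\mathcal{J}$, and as $\mathcal{J}$ is closed under subsets, $\bigcup Y\in\mathcal{J}$, as required.

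There is no serious obstacle here; the argument is a routine push-forward/pull-back. The only points requiring a little care are checking that $X'$ lands in $(\mathcal{I}^{<\omega})^+$ (which is where the Katětov-morphism condition on preimages is used) and ensuring that the selection $t\mapsto s_t$ is injective so that the pulled-back family $Y$ remains infinite.
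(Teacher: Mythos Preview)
Your proof is correct and follows essentially the same approach as the paper: push $X$ forward via $f$, verify the image is in $(\mathcal{I}^{<\omega})^+$, apply Shelah-Stepr\={a}ns for $\mathcal{I}$, and pull back. You are in fact slightly more careful than the paper in explicitly checking injectivity of the selection $t\mapsto s_t$ and spelling out why $\bigcup Y\subseteq f^{-1}(\bigcup Y')\in\mathcal{J}$.
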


\begin{proof}
Let $f:\omega\longrightarrow\omega$ be a Kat\v{e}tov-morphism from $\left(
\omega,\mathcal{J}\right)  $ to $\left(  \omega,\mathcal{I}\right)  .$ Letting
$X\in\left(  \mathcal{J}^{<\omega}\right)  ^{+}$ we must find $Y\in\left[
X\right]  ^{\omega}$ such that $\bigcup Y\in\mathcal{J}.$ Defining
$X_{1}=\left\{  f\left[  s\right]  \mid s\in X\right\}  ,$ we will first argue
that \thinspace$X_{1}\in\left(  \mathcal{I}^{<\omega}\right)  ^{+}.$ To prove
this fact, let $A\in\mathcal{I}.$ Since $f$ is a Kat\v{e}tov-morphism,
$f^{-1}\left(  A\right)  \in\mathcal{J}$ so there is $s\in X$ for which $s\cap
f^{-1}\left(  A\right)  =\emptyset$ and then $f\left[  s\right]  \cap
A=\emptyset.$ Since $\mathcal{I}$ is Shelah-Stepr\={a}ns, there is $Y_{1}%
\in\left[  X_{1}\right]  ^{\omega}$ such that $\bigcup Y_{1}\in\mathcal{I}.$
Finally if $Y\in\left[  X\right]  ^{\omega}$ is such that $Y_{1}=\left\{
f\left[  s\right]  \mid s\in Y\right\}  $ then $\bigcup Y\in\mathcal{J}.$
\qquad\ \ 
\end{proof}

We will need the following game designed by Claude Laflamme: Letting $\mathcal{I}$
be an ideal on $\omega,$ define the game $\mathcal{L}\left(  \mathcal{I}
\right)  $ between players $\mathsf{I}$ and $\mathsf{II}$ as follows:

\begin{center}%
\begin{tabular}
[c]{|l|l|l|l|l|l|}\hline
$\mathsf{I}$ & $...$ & $A_{n}$ &  & $...$ & \\\hline
$\mathsf{II}$ & $...$ &  & $s_{n}$ & $...$ & $\bigcup s_{n}\in\mathcal{I}^{+}%
$\\\hline
\end{tabular}

\end{center}
At round $n\in\omega$ player $\mathsf{I}$ plays $A_{n}\in\mathcal{I}$ and
$\mathsf{II}$ responds with $s_{n}\in\lbrack\omega\backslash$ $A_{n}%
]^{<\omega}.$ The player $\mathsf{II}$ wins in case $\bigcup s_{n}%
\in\mathcal{I}^{+}$. The following is a result of Laflamme.

\begin{proposition}
[Laflamme \cite{FilterGamesandCombinatorialPropertiesofStrategies}]Let
$\mathcal{I}$ be an ideal on $\omega.$

\begin{enumerate}
\item The following are equivalent:

\begin{enumerate}
\item $\mathsf{I}$ has a winning strategy in $\mathcal{L}\left(
\mathcal{I}\right)  .$

\item For every $\left\{  F_{s}\mid s\in\omega^{<\omega}\right\}
\subseteq\mathcal{I}^{\ast},$ there is an increasing function $f\in
\omega^{\omega}$ such that $%
{\textstyle\bigcup\limits_{n\in\omega}}
\left(  F_{f\upharpoonright n}\cap f\left(  n\right)  \right)  \in
\mathcal{I}^{+}.$

\item Every countable subset of $\mathcal{I}^{\ast}$ has a pseudointersection
in $\mathcal{I}^{+}.$

\item \textsf{fin}$\times$\textsf{fin }$\leq_{\mathsf{K}}\mathcal{I}.$
\end{enumerate}

\item The following are equivalent:

\begin{enumerate}
\item $\mathsf{II}$ has a winning strategy in $\mathcal{L}\left(
\mathcal{I}\right)  .$

\item There is $\left\{  X_{n}\mid n\in\omega\right\}  \subseteq\left(
\mathcal{I}^{<\omega}\right)  ^{+}$ such that for every $A\in\mathcal{I}$
there is $n\in\omega$ such that $A\cap X_{n}=\emptyset.$
\end{enumerate}
\end{enumerate}
\end{proposition}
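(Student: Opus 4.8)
I would prove part (1) by running through the cycle (a)$\Rightarrow$(b)$\Rightarrow$(c)$\Rightarrow$(d)$\Rightarrow$(a) and part (2) by its two implications separately; the recurring device is the translation back and forth between winning strategies in $\mathcal{L}(\I)$ and the combinatorial data in (b)--(d). The heart is the equivalence of (a) and (b). Given $\{F_s : s\in\omlom\}\sub\I^*$ witnessing (b), Player $\mathsf{I}$ plays in $\mathcal{L}(\I)$ by keeping a ``virtual position'' $s\in\omlom$, starting from $s=\la\ra$: while $s$ has length $n$ he plays $A_n=\omega\sem F_s\in\I$, and once $\mathsf{II}$ answers $t_n\in[\omega\sem A_n]^{<\omega}=[F_s]^{<\omega}$ he passes to $s^\frown k$ for any $k$ larger than $n$, larger than every element of $t_n$, and larger than every entry of $s$. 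The resulting branch $f\in\omom$ is strictly increasing and $t_n\sub F_{f\re n}\cap f(n)$ for all $n$, so $\bigcup_n t_n\sub\bigcup_n(F_{f\re n}\cap f(n))$ and whether the branch unions lie in $\I$ governs the outcome. Conversely, from a winning strategy $\sigma$ for $\mathsf{I}$ one reads off the tree by running $\sigma$ against a canonical opponent: for $s=\la k_0,\dots,k_{n-1}\ra$ put, recursively in $i<n$, $A^s_i=\sigma(\la u^s_0,\dots,u^s_{i-1}\ra)$ and $u^s_i=(\omega\sem A^s_i)\cap k_i$ (a legal move), and set $F_s=\omega\sem A^s_n\in\I^*$; one checks $A^s_i$ depends only on $s\re i$ (so the $F_s$ form a coherent tree) and that along every strictly increasing branch $f$ the canonical opponent plays exactly the sets $F_{f\re i}\cap f(i)$, so $\sigma$ wins exactly when all such branch unions are in $\I$.

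For (b)$\Leftrightarrow$(c): in one direction one takes $F_s=\bigcap_{i<|s|}G_i\in\I^*$ from a countable $\{G_n\}\sub\I^*$, so that each branch union is an almost-subset of every $G_m$; in the other direction the tree $\{F_s : s\in\omlom\}$, viewed as a countable subfamily of $\I^*$, admits no pseudointersection in $\I^+$, which one sees by building a fast-growing increasing branch recovering (modulo $\I$) any candidate pseudointersection. For (c)$\Leftrightarrow$(d) I would invoke the standard correspondence: after reducing to a $\sub$-decreasing $\{G_n\}\sub\I^*$ with $\bigcap_n G_n=\emptyset$ and no pseudointersection in $\I^+$, the sets $G_n\sem G_{n+1}$ (with $\omega\sem G_0$ absorbed into the first block) partition $\omega$ into $\I$-sets, and the map $g\colon\omega\to\omega\times\omega$ enumerating the $n$-th block into the $n$-th column is a Kat\v{e}tov morphism, because $g^{-1}(C_n)\in\I$ while each $g^{-1}(D(h))$ meets every block finitely and hence is an almost-subset of all $G_m$, so lies in $\I$; the converse pulls the columns $C_n$ back through a given morphism. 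Finally (d)$\Rightarrow$(a) is direct: Player $\mathsf{I}$ wins $\mathcal{L}(\mathsf{fin}\times\mathsf{fin})$ by playing $A_n=C_0\cup\dots\cup C_n$ — every answer $s_n$ then lies in columns $>n$, so every column of $\bigcup_n s_n$ is finite — and through a Kat\v{e}tov morphism $g$ Player $\mathsf{I}$ plays $g^{-1}(C_0\cup\dots\cup C_n)\in\I$, whereupon $g[\bigcup_n s_n]\in\mathsf{fin}\times\mathsf{fin}$ and so $\bigcup_n s_n\sub g^{-1}(g[\bigcup_n s_n])\in\I$.

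For part (2), the implication (b)$\Rightarrow$(a) is quick: fix a bookkeeping function $n\mapsto k_n$ taking every value infinitely often and let Player $\mathsf{II}$ play, at round $n$, any $s_n\in X_{k_n}$ with $s_n\cap A_n=\emptyset$ — which exists since $X_{k_n}\in(\I^{<\omega})^+$ and $A_n\in\I$. If some $B\in\I$ contained $\bigcup_n s_n$, choosing by (b) a $k$ with $B\cap s=\emptyset$ for all $s\in X_k$ and a round $m$ with $k_m=k$ forces $s_m\sub B$ and $s_m\cap B=\emptyset$, hence $s_m=\emptyset$, contradicting $X_k\sub\omloms\sem\{\emptyset\}$; so $\bigcup_n s_n\in\I^+$ and $\mathsf{II}$ wins. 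The converse (a)$\Rightarrow$(b) is the step I expect to be the main obstacle: one must extract the countable family $\{X_n\}$ from a winning strategy for $\mathsf{II}$, and the naive choice (``$X_n=$ all round-$n$ answers the strategy allows'') fails the avoidance clause, since the union of all those answers can meet every $A\in\I$. The plan is a recursive, fusion-type construction of the $X_n$ level by level, in which one records $\mathsf{II}$'s answers only along plays where Player $\mathsf{I}$ has been steered into eventually exhausting a prescribed $\I$-set, so that each such set is avoided by some $X_n$ while keeping every $X_n$ in $(\I^{<\omega})^+$; getting this bookkeeping right is the delicate point.
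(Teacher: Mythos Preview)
The paper defers essentially everything in this Proposition to Laflamme and supplies a proof only of (c)$\Rightarrow$(b) of part 1. So the relevant comparison is your sketch of that implication, and there you have a genuine gap.

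Your plan for (c)$\Rightarrow$(b) is: view $\{F_s:s\in\omlom\}$ as a countable subfamily of $\I^*$, take a pseudointersection $X\in\I^+$, and then ``build a fast-growing increasing branch $f$ recovering (modulo $\I$) $X$''. The natural recursion is to choose $f(n)$ large enough that $X\setminus f(n)\subseteq F_{f\re n}$; this guarantees $X\cap[f(n),\infty)\subseteq F_{f\re n}$. But to place a point $k\in X\cap[f(n),f(n+1))$ into $\bigcup_m(F_{f\re m}\cap f(m))$ you need an index $m$ with both $k<f(m)$ and $k\in F_{f\re m}$; the first forces $m\geq n+1$, while your recursion only gives $k\in F_{f\re m}$ for $m\leq n$. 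Since $F_{f\re(n+1)}$ depends on the value $f(n)$ you have just committed to, no amount of ``going faster'' breaks this off-by-one circularity. The paper's argument is precisely a device to defeat it: rather than a pseudointersection of all the $F_s$, it takes $X$ a pseudointersection of the coarser sets $H_n=\bigcap\{F_s:|s|\le n,\ \text{all entries of }s\le n\}$, finds $n_i\uparrow$ with $X\setminus n_{i+1}\subseteq H_{n_i}$, and then splits $X$ into its even- and odd-interval parts $Y_0,Y_1$ and sets $f(m)=n_{2m+1}$ (resp.\ $n_{2(m+1)}$). The parity buys exactly one step of look-ahead: for $k\in Y_0\cap[n_{2i},n_{2i+1})$ one gets $k\in H_{n_{2i-1}}\subseteq F_{f\re i}$ \emph{and} $k<n_{2i+1}=f(i)$ with the \emph{same} index $i$. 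Your sketch does not contain this (or an equivalent) mechanism, and without it the ``fast-growing branch'' idea does not go through.

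One smaller remark on the rest of your write-up: your individual arguments are the standard ones, but they do not assemble into the announced cycle (a)$\Rightarrow$(b)$\Rightarrow$(c)$\Rightarrow$(d)$\Rightarrow$(a). Your (a)$\Leftrightarrow$(b) paragraph actually shows that a tree gives $\mathsf{I}$ a winning strategy precisely when \emph{all} branch unions lie in $\I$ (i.e.\ (a)$\Leftrightarrow\neg$(b)), and your (c)$\Leftrightarrow$(d) paragraph shows $\neg$(c)$\Leftrightarrow$(d): you start from a decreasing $\{G_n\}$ with \emph{no} pseudointersection in $\I^+$ to build the morphism, and conversely pull back the columns to defeat any candidate pseudointersection. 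Combined with your (b)$\Leftrightarrow$(c) and (d)$\Rightarrow$(a), this yields the consistent chain (a)$\Leftrightarrow$(d)$\Leftrightarrow\neg$(c)$\Leftrightarrow\neg$(b), not the cycle you claim. You should reconcile the signs before presenting this as a proof of the four-way equivalence as stated.
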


\begin{proof}
Everything in the Proposition is either contained in
\cite{FilterGamesandCombinatorialPropertiesofStrategies} or is trivial, with
the exception of (c) implies (b) of 1, which is only mentioned in
\cite{FilterGamesandCombinatorialPropertiesofStrategies} without proof. We
will provide a brief sketch of the proof of this implication.

\qquad\qquad\qquad\ \ \ 

Let $\left\{  F_{s}\mid s\in\omega^{<\omega}\right\}  \subseteq\mathcal{I}%
^{\ast}.$ Without lost of generality, we may assume that for every
$s,t\in\omega^{<\omega},$ if $t\subseteq s,$ then $F_{s}\subseteq F_{t}.$ For
every $n\in\omega,$ define:\bigskip

\hfill%
\begin{tabular}
[c]{l}%
$H_{n}=%
{\textstyle\bigcap}
\{F_{s}\mid s\in n^{\leq n}\wedge\forall i\in dom\left(  s\right)  \left(
s\left(  i\right)  \leq n\right)  \}$%
\end{tabular}
\hfill\qquad\ 

\medskip

\qquad\qquad\ \ \ \ \ \ 

Note that $H_{n}\in\mathcal{I}^{\ast}$ for every $n\in\omega.$ By point (c),
there is $X\in\mathcal{I}^{+}$ a pseudointersection of $\left\{  H_{n}\mid
n\in\omega\right\}  .$ We now find a sequence $\left\langle n_{i}\right\rangle
_{i\in\omega}$ such that for every $i\in\omega,$ the following holds:

\begin{enumerate}
\item $n_{i}<n_{i+1}.$

\item $X\setminus n_{i+1}\subseteq H_{n_{i}}.$
\end{enumerate}

\qquad\ \ \ \ 

We now define the following sets:\bigskip

\hfill%
\begin{tabular}
[c]{c}%
$Y_{0}=X\cap\left(
{\textstyle\bigcup}
\{[n_{i},n_{i+1})\mid i\text{ is even}\}\right)  $\\
$Y_{1}=X\cap\left(
{\textstyle\bigcup}
\{[n_{i},n_{i+1})\mid i\text{ is odd}\}\right)  $%
\end{tabular}
\hfill\qquad\ 

\medskip

\qquad\qquad\ \ \ \ \ \ 

Since $X\in\mathcal{I}^{+},$ there is $i\in2$ such that $Y_{i}\in
\mathcal{I}^{+}$. If $Y_{0}\in\mathcal{I}^{+}$, define $f\in\omega^{\omega}$
where $f\left(  m\right)  =n_{2m+1}$ and if $Y_{1}\in\mathcal{I}^{+},$ define
$f$ by $f\left(  m\right)  =n_{2\left(  m+1\right)  }.$ In either case, it is
easy to see that $%
{\textstyle\bigcup\limits_{m\in\omega}}
\left(  F_{f\upharpoonright m}\cap f\left(  m\right)  \right)  \in
\mathcal{I}^{+}.$
\end{proof}

If $s_{0},...,s_{n}$ are finite non-empty subsets of $\omega$, we say $a=\left\{
k_{0},...,k_{n}\right\}  \in\left[  \omega\right]  ^{<\omega}$ is a
\emph{selector }of $\left(  s_{0},...,s_{n}\right)  $ if $k_{i}\in s_{i}$ for
every $i\leq n.$

\begin{proposition}
If $\mathcal{I}$ is Shelah-Stepr\={a}ns then $\mathsf{II}$ does not have a
winning strategy in $\mathcal{L}\left(  \mathcal{I}\right)  .$
\end{proposition}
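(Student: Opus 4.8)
The plan is to prove the contrapositive: assuming $\mathsf{II}$ has a winning strategy in $\mathcal{L}(\mathcal{I})$, I will construct a set $X \in (\mathcal{I}^{<\omega})^+$ witnessing that $\mathcal{I}$ is not Shelah-Stepr\={a}ns. By part (2) of Laflamme's Proposition, having a winning strategy for $\mathsf{II}$ is equivalent to the existence of a family $\{X_n \mid n \in \omega\} \subseteq (\mathcal{I}^{<\omega})^+$ such that for every $A \in \mathcal{I}$ there is $n$ with $A \cap \bigcup X_n = \emptyset$ (here I read $A \cap X_n = \emptyset$ as: $A$ meets none of the finite sets in $X_n$; equivalently $A$ is disjoint from $\bigcup X_n$). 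The key idea is to \emph{amalgamate} these $X_n$ into a single witness using selectors: roughly, I would let $X$ be the collection of all sets of the form $s_0 \cup s_1 \cup \cdots \cup s_n$ where $s_i \in X_i$ for each $i \le n$ — or, more carefully, the collection of selectors of finite tuples drawn one from each $X_i$ up to some level.

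First I would verify that this $X$ (whichever precise form turns out to work) lies in $(\mathcal{I}^{<\omega})^+$. Given $A \in \mathcal{I}$, choose $n$ with $A \cap \bigcup X_n = \emptyset$; then pick any $s \in X_n$, and observe that $s$ itself (or any amalgamated set built to contain such an $s$ as a ``coordinate'' with the other coordinates also chosen disjoint from $A$ — this is where I'd need the $X_i$ to be tall enough, which the lemma preceding the previous Proposition about disjoint refinements inside $(\mathcal{I}^{<\omega})^+$-sets guarantees) is a member of $X$ disjoint from $A$. So $X \in (\mathcal{I}^{<\omega})^+$.

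Second, I would show that no infinite $Y \in [X]^\omega$ has $\bigcup Y \in \mathcal{I}$. Suppose toward a contradiction that $Y \in [X]^\omega$ with $A := \bigcup Y \in \mathcal{I}$. Each element of $Y$ is an amalgamated set $t = s_0^t \cup \cdots \cup s_{n_t}^t$ with $s_i^t \in X_i$; in particular, for each index $i$ that appears, $t$ contains an element $s_i^t \in X_i$ with $s_i^t \subseteq A$. Now fix the $n$ given by the defining property of the family $\{X_n\}$ applied to this $A$, so $A$ meets no member of $X_n$. If I have arranged the construction of $X$ so that every element of $X$ uses coordinates from $X_0, \dots, X_m$ for $m$ as large as we like — in particular so that infinitely many elements of $Y$ use a coordinate in $X_n$ — I get a set $s_n^t \in X_n$ with $s_n^t \subseteq A$, contradicting $A \cap (\text{members of } X_n) = \emptyset$. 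The bookkeeping that makes ``infinitely many elements of $Y$ use the $n$-th coordinate'' automatic (e.g. by indexing $X$ so that level-$m$ elements always have an $X_n$-coordinate for all $n \le m$, and noting $Y$ infinite forces unbounded levels) is the crux.

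The main obstacle I anticipate is getting the amalgamation bookkeeping exactly right so that both requirements hold simultaneously: the witness $X$ must be ``spread out'' enough over all the $X_n$ that any infinite subfamily is forced to touch every $X_n$ (for the second part), yet each individual element must still be disjoint from a suitable $A \in \mathcal{I}$ for membership in $(\mathcal{I}^{<\omega})^+$ (for the first part). I expect the clean solution is to let $X$ consist of the finite unions $\bigcup_{i \le m} s_i$ with $s_i \in X_i'$, where $\{X_i'\}$ is a disjointification of $\{X_i\}$ obtained by thinning so the relevant finite sets live in disjoint intervals of $\omega$; then for a given $A \in \mathcal{I}$ one removes the bad level $n$ from the union-pattern, and for an infinite $Y$ the unbounded-level argument above applies. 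Everything else — the use of Laflamme's characterization and the straightforward disjoint-refinement fact for $(\mathcal{I}^{<\omega})^+$-sets used in the non-meager lemma above — is routine.
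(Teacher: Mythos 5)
Your reading of Laflamme's clause 2(b) differs from the paper's and is in fact false. The paper reads ``$A \cap X_n = \emptyset$'' as ``no $s \in X_n$ is a subset of $A$,'' not as ``$A$ is disjoint from $\bigcup X_n$.'' The stronger reading fails already for the summable ideal $\mathcal{J}_{1/n}$: for \emph{any} $\{X_n\} \subseteq (\mathcal{J}_{1/n}^{<\omega})^+$, each $B_n = \bigcup X_n$ is infinite and $\mathcal{J}_{1/n}$-positive, so choosing $a_n \in B_n$ with $a_n \geq 2^n$ produces $A = \{a_n : n \in \omega\} \in \mathcal{J}_{1/n}$ that meets every $B_n$; yet $\mathcal{J}_{1/n}$ is $F_\sigma$, so $\mathsf{II}$ wins $\mathcal{L}(\mathcal{J}_{1/n})$. (The paper's weaker reading does hold there: take $X_k$ to consist of the nonempty finite $s$ with $\sum_{i\in s}\frac{1}{i+1} > k$.) So your contrapositive starts from a hypothesis that Laflamme does not actually deliver.

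The second gap is exactly the bookkeeping you flag as ``the crux,'' and the level-wise unions you propose do not resolve it. In $X = \{s_0 \cup \cdots \cup s_m : s_i \in X_i\}$, each fixed level (in particular level $0$) already contributes infinitely many elements, so an infinite $Y \subseteq X$ can sit entirely inside $X_0$; then nothing forces $\bigcup Y \in \mathcal{I}^+$ unless $X_0$ was \emph{already} a witness to the failure of Shelah-Stepr\={a}ns --- which is what you are trying to build, not something Laflamme hands you. Disjointifying the $X_i$ does not change this. The paper's construction $F_{(n,m,a)} = p_n^m(0) \cup \cdots \cup p_n^m(n-1) \cup a$ carries a \emph{second} parameter for precisely this reason: the first $n$ coordinates pick one member of each $X_j$, $j < n$ (as you propose), but the last block $a$ is a \emph{selector} through the first $m+1$ members of $X_n$. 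The dichotomy for an infinite $Y$ is then: if some $n$ recurs infinitely often, the selectors at that $n$ exhaust $X_n$, so $\bigcup Y$ meets every member of $X_n$ and the contradiction comes from $X_n \in (\mathcal{I}^{<\omega})^+$ alone; otherwise the $n$'s are unbounded, $\bigcup Y$ contains a member of every $X_j$, and only here is the Laflamme condition invoked. Your one-parameter construction has no lever in the first horn.
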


\begin{proof}
Let $\mathcal{I}$ be an ideal for which $\mathsf{II}$ has a winning strategy
in $\mathcal{L}\left(  \mathcal{I}\right)  .$ We will prove that $\mathcal{I}$
is not Shelah-Stepr\={a}ns. Let $\left\{  X_{n}\mid n\in\omega\right\}
\subseteq\left(  \mathcal{I}^{<\omega}\right)  ^{+}$ such that for every
$A\in\mathcal{I}$ there is $n\in\omega$ such that $A$ does not contain any
element of $X_{n}.$ For every $n\in\omega$ enumerate\ $X_{n}=\left\{
t_{n}^{i}\mid i\in\omega\right\}  $ and $\prod\limits_{j<n}X_{j}=\{p_{n}%
^{i}\mid i<\omega\}.$


For every $n,m\in\omega$ and a selector $a\in\left[  \omega\right]  ^{<\omega
}$ of $\left(  t_{n}^{0},...,t_{n}^{m}\right)  $ we define $F_{\left(
n,m,a\right)  }=p_{n}^{m}\left(  0\right)  \cup...\cup p_{n}^{m}\left(
n-1\right)  \cup a$ (recall $p_{n}^{m}\in\prod\limits_{j<n}X_{j}$). Clearly
each $F_{\left(  n,m,a\right)  }$ is a non-empty finite set. Let $X$ be the
collection of all the $F_{\left(  n,m,a\right)  };$ we will prove that $X$
witnesses that $\mathcal{I}$ is not Shelah-Stepr\={a}ns.

We will first prove that $X\in\left(  \mathcal{I}^{<\omega}\right)  ^{+}.$
Letting $A\in\mathcal{I}$ we first find $n\in\omega$ such that $A$ does not
contain any element of $X_{n}.$ Since each $X_{j}\in\left(  \mathcal{I}%
^{<\omega}\right)  ^{+}$ for every $j<\omega$ there is $m\in\omega$ such that
$A$ is disjoint with $p_{n}^{m}\left(  0\right)  \cup...\cup p_{n}^{m}\left(
n-1\right)  .$ Finally, by the assumption of $X_{n}$ we can find a selector
$b$ of $\left(  t_{n}^{0},...,t_{n}^{m}\right)  $ such that $b\cap
A=\emptyset$ and therefore $A\cap F_{\left(  n,m,b\right)  }=\emptyset.$

Letting $Y\in\left[  X\right]  ^{\omega}$ we will show that $B=\bigcup
Y\in\mathcal{I}^{+}.$ There are two cases to consider: first assume there is
$n\in\omega$ for which there are infinitely many $\left(  m,a\right)  $ such
that $F_{\left(  n,m,a\right)  }\in Y.$ In this case, $B$ intersects every
element of $X_{n},$ hence $B\in\mathcal{I}^{+}.$ Now assume that for every
$n\in\omega$ there are only finitely many $\left(  m,a\right)  $ such that
$F_{\left(  n,m,a\right)  }\in Y.$ In this case, there must be infinitely many
$n\in\omega$ for which there is $\left(  m,a\right)  $ such that $F_{\left(
n,m,a\right)  }\in Y,$ hence $B$ must contain (at least) one element of every
$X_{k}.$ We can then conclude that $B\in\mathcal{I}^{+}.$
\end{proof}

As a consequence we obtain (the equivalence of items 2 and 3 was proved
by Laczkovich and Rec\l aw in
\cite{IdeallimitsofSequencesofContinuousFunctions}; we include the proof for
the convenience of the reader):

\begin{corollary}
Let $\mathcal{I}$ be an ideal on $\omega.$ The following are equivalent:

\begin{enumerate}
\item $\mathcal{I}$ is not Shelah-Stepr\={a}ns.

\item The Player $\mathsf{II}$ has a winning strategy in $\mathcal{L}\left(
\mathcal{I}\right)  .$

\item There is an $F_{\sigma}$ set $F\subseteq\wp\left(  \omega\right)  $ such
that $\mathcal{I}$ $\subseteq F$ and $\mathcal{I}^{\ast}\cap F=\emptyset.$
\end{enumerate}
\end{corollary}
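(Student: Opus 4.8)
The plan is to establish the four implications $(2)\Rightarrow(1)$, $(1)\Rightarrow(2)$, $(2)\Rightarrow(3)$, $(3)\Rightarrow(2)$, which together give $(1)\Leftrightarrow(2)\Leftrightarrow(3)$. The implication $(2)\Rightarrow(1)$ is nothing but the contrapositive of the Proposition immediately preceding this corollary. For $(1)\Rightarrow(2)$ I would argue directly from the witness: fix $X\in(\mathcal{I}^{<\omega})^{+}$ such that $\bigcup Y\notin\mathcal{I}$ for every $Y\in[X]^{\omega}$. The key point is that for each $A\in\mathcal{I}$ the set $\{s\in X:s\cap A=\emptyset\}$ is infinite; for if it were equal to $\{s^{1},\dots,s^{k}\}$, then $A'=A\cup s^{1}\cup\dots\cup s^{k}\in\mathcal{I}$, and since $X\in(\mathcal{I}^{<\omega})^{+}$ there would be $s\in X$ disjoint from $A'$, whence $s$ is disjoint from $A$, so $s=s^{j}$ for some $j$, so $\emptyset\neq s^{j}\subseteq A'$ is disjoint from $A'$, which is absurd. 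Now $\mathsf{II}$ plays as follows: after seeing $A_{0},\dots,A_{n}$, choose $s_{n}\in X$ with $s_{n}\cap A_{n}=\emptyset$ and $s_{n}\notin\{s_{0},\dots,s_{n-1}\}$, which is possible by the previous remark. Every $s_{n}$ is a nonempty finite subset of $\omega\setminus A_{n}$, so the play is legal, and $\{s_{n}:n\in\omega\}\in[X]^{\omega}$, whence $\bigcup_{n}s_{n}\notin\mathcal{I}$; thus $\mathsf{II}$ wins and (2) holds.

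For $(2)\Rightarrow(3)$ I would invoke part 2 of Laflamme's Proposition to fix $\{X_{n}:n\in\omega\}\subseteq(\mathcal{I}^{<\omega})^{+}$ such that every $A\in\mathcal{I}$ contains no element of at least one $X_{n}$. Put $U_{n}=\{C\subseteq\omega:\exists s\in X_{n}\,(s\subseteq C)\}$, an open subset of $\wp(\omega)$. Since $X_{n}\in(\mathcal{I}^{<\omega})^{+}$, for every $A\in\mathcal{I}$ there is $s\in X_{n}$ with $s\cap A=\emptyset$, i.e.\ $s\subseteq\omega\setminus A$, so $\mathcal{I}^{\ast}\subseteq U_{n}$; and by the choice of the family, for every $A\in\mathcal{I}$ there is $n$ with $A\notin U_{n}$. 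Hence $G=\bigcap_{n}U_{n}$ is a $G_{\delta}$ set with $\mathcal{I}^{\ast}\subseteq G$ and $\mathcal{I}\cap G=\emptyset$, so $F=\wp(\omega)\setminus G$ is the desired $F_{\sigma}$ set.

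The substance of the corollary is $(3)\Rightarrow(2)$, which I would prove by exhibiting a winning strategy for $\mathsf{II}$ directly. Write $F=\bigcup_{n}F_{n}$ with each $F_{n}$ closed, set $G_{n}=\wp(\omega)\setminus F_{n}$ and $G=\bigcap_{n}G_{n}$, so that $\mathcal{I}^{\ast}\subseteq G$ and $\mathcal{I}\cap G=\emptyset$. Player $\mathsf{II}$ will maintain after round $n$ an integer $l_{n+1}>l_{n}$ (with $l_{0}=0$) together with the stem $c_{n+1}=\bigcup_{i\leq n}s_{i}\subseteq l_{n+1}$, subject to the invariant that the clopen cylinder $\{A:A\cap l_{n+1}=c_{n+1}\}$ is contained in $G_{n}$. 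At round $n$, when $\mathsf{I}$ plays $A_{n}\in\mathcal{I}$, $\mathsf{II}$ considers $B^{\ast}=c_{n}\cup([l_{n},\omega)\setminus A_{n})$; since $\omega\setminus B^{\ast}\subseteq(l_{n}\setminus c_{n})\cup A_{n}\in\mathcal{I}$, we get $B^{\ast}\in\mathcal{I}^{\ast}\subseteq G_{n}$, and openness of $G_{n}$ furnishes $l_{n+1}>l_{n}$ with $\{A:A\cap l_{n+1}=B^{\ast}\cap l_{n+1}\}\subseteq G_{n}$. Player $\mathsf{II}$ then responds with $s_{n}=[l_{n},l_{n+1})\setminus A_{n}$, which is a legal move disjoint from all earlier moves, and the invariant persists with $c_{n+1}=B^{\ast}\cap l_{n+1}=c_{n}\cup s_{n}$. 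At the end of the run $B=\bigcup_{n}s_{n}$ satisfies $B\cap l_{n+1}=c_{n+1}$ for every $n$, hence $B\in G_{n}$ for every $n$, so $B\in G$ and therefore $B\notin\mathcal{I}$; thus $\mathsf{II}$ wins, proving (2).

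The only genuine obstacle is hitting on the right invariant in $(3)\Rightarrow(2)$: one must phrase $\mathsf{II}$'s commitment as a nested sequence of clopen cylinders, so that the \emph{limit} of the play is forced into every $G_{n}$, and one must notice that the large set $B^{\ast}$ assembled from the current stem and the complement of $\mathsf{I}$'s move lies in $\mathcal{I}^{\ast}$ — this is precisely what lets $\mathsf{II}$ stay inside $G_{n}$ while still obeying $s_{n}\cap A_{n}=\emptyset$. Once this setup is in place, checking that the invariant survives each round and that a complete run yields a set outside $\mathcal{I}$ is routine bookkeeping with the intervals $[l_{n},l_{n+1})$.
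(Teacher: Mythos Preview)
Your proof is correct. The substantive step $(3)\Rightarrow(2)$ is essentially the paper's argument: the paper also has $\mathsf{II}$ play so that after round $n$ the characteristic function of $\bigcup_{i\le n}s_i$ up to some height determines a basic clopen set avoiding the $n$th closed piece of $F$; your presentation with the integers $l_n$, stems $c_n$, and the auxiliary set $B^\ast\in\mathcal I^\ast$ is just a more explicit rendering of the same strategy.

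Where you differ is in the routing of the easier implications. The paper closes the cycle as $2\Rightarrow 1\Rightarrow 3\Rightarrow 2$, going directly from the witness $X=\{s_n\}$ to non-Shelah--Stepr\=ans to the $F_\sigma$ set $F=\{W:\forall^\infty n\,(s_n\not\subseteq W)\}$ in one line. You instead prove two separate biconditionals: you supply a direct $(1)\Rightarrow(2)$ (a simple winning strategy playing distinct members of $X$, which the paper never writes down), and then obtain $(2)\Rightarrow(3)$ by invoking Laflamme's characterization of $\mathsf{II}$'s winning strategies in terms of a family $\{X_n\}\subseteq(\mathcal I^{<\omega})^+$ and taking complements of the open sets $U_n=\{C:\exists s\in X_n\,(s\subseteq C)\}$. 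Both routes are elementary; the paper's is shorter because it bypasses Laflamme's characterization for that direction, while yours has the minor bonus of exhibiting the winning strategy from the non-Shelah--Stepr\=ans witness explicitly.
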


\begin{proof}
By the previous result, we know that 2 implies 1. We first prove that 1
implies 3. Assume that $\mathcal{I}$ is not Shelah-Stepr\={a}ns, so there is
$X=\left\{  s_{n}\mid n\in\omega\right\}  \in\left(  \mathcal{I}^{<\omega
}\right)  ^{+}$ such that ${\textstyle\bigcup}
Y\in\mathcal{I}^{+}$ for every $Y\in\left[  X\right]  ^{\omega}.$ We now
define the set $F=\left\{  W\subseteq\omega \mid \forall^{\infty}n \left(  s_{n}
\nsubseteq W\right)  \right\}  .$ It is easy to see that $F$ has the desired properties.

We finally prove that 3 implies 2. Assume there is an increasing sequence of
closed sets $\left\langle C_{n}\mid n\in\omega\right\rangle $ such that $F=%
{\textstyle\bigcup\limits_{n\in\omega}}
C_{n}$ contains $\mathcal{I}$ and is disjoint from $\mathcal{I}^{\ast}.$ We
will now describe a winning strategy for Player $\mathsf{II}$ in $\mathcal{L} (\I)$: In the first
round, if Player $\mathsf{I}$ plays $A_{0}\in\mathcal{I}$ then Player
$\mathsf{II}$ finds an initial segment $s_{0}$ of $\omega\setminus A_{0}$ such
that $\left\langle s_{0}\right\rangle =\left\{  Z\mid s_{0}\sqsubseteq
Z\right\}  $ is disjoint from $C_{0}$ (where $s_{0}\sqsubseteq Z$ means that
$s_{0}$ is an initial segment of $Z$). At round round $n+1,$ if Player
$\mathsf{I}$ plays $A_{n+1}\in\mathcal{I}$ then Player $\mathsf{II}$ finds
$s_{n+1}$ such that $t={\textstyle\bigcup\limits_{i\leq n+1}}
s_{i}$ is an initial segment of $\left(  \omega\setminus A_{n}\right)  \cup%
{\textstyle\bigcup\limits_{j<n+1}} s_{j}$ (we may assume $%
{\textstyle\bigcup\limits_{j<n+1}}
s_{j}\subseteq A_{n}$) and $\left\langle t\right\rangle $ is disjoint from
$C_{n+1}.$ It is easy to see that this is a winning strategy.
\end{proof}

Since every game with Borel payoff is determined, we can give a
characterization of the Borel ideals that are Shelah-Stepr\={a}ns.

\begin{theorem}   \label{SSBorel}
If $\mathcal{I}$ is a Borel ideal then $\mathcal{I}$ is Shelah-Stepr\={a}ns if
and only if \textsf{fin}$\times$\textsf{fin }$\leq_{K}\mathcal{I}.$
\end{theorem}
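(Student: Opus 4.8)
The plan is to use the determinacy of the game $\mathcal{L}(\mathcal{I})$, which is available because $\mathcal{I}$ is Borel — indeed, the payoff set for Player $\mathsf{II}$, namely $\{\langle A_n, s_n : n \in \omega\rangle : \bigcup_n s_n \in \mathcal{I}^+\}$, is a Borel subset of the play space (it is a continuous preimage of the complement of $\mathcal{I}$, hence Borel whenever $\mathcal{I}$ is). So by Martin's Borel determinacy theorem one of the two players has a winning strategy in $\mathcal{L}(\mathcal{I})$.

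For the forward direction, suppose $\mathcal{I}$ is Shelah-Stepr\={a}ns. By the Proposition immediately preceding this theorem, Player $\mathsf{II}$ has no winning strategy in $\mathcal{L}(\mathcal{I})$; by determinacy Player $\mathsf{I}$ has one. Then by part (1) of Laflamme's Proposition (the equivalence of (a) and (d)), having a winning strategy for $\mathsf{I}$ is equivalent to $\textsf{fin}\times\textsf{fin} \leq_K \mathcal{I}$, which is exactly the conclusion. For the converse, suppose $\textsf{fin}\times\textsf{fin} \leq_K \mathcal{I}$. We already know from an earlier Lemma in this section that $\textsf{fin}\times\textsf{fin}$ is Shelah-Stepr\={a}ns, and from Lemma~\ref{SSKatetovupwards} that the Shelah-Stepr\={a}ns property is upward closed in the Kat\v{e}tov order; combining these gives that $\mathcal{I}$ is Shelah-Stepr\={a}ns. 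Note that the converse direction does not even require the Borelness hypothesis.

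I expect the only genuine point to check carefully is the measurability (Borelness) of the payoff set for $\mathcal{L}(\mathcal{I})$, so that Borel determinacy genuinely applies; once that is in place the argument is just chaining together the Proposition and Corollary already proved in this section together with Laflamme's result. This is where I would be most careful, but it should be routine: a run of the game is coded as an element of a product of copies of $\mathcal{I}$ (as a subspace of $2^\omega$ or with the discrete topology on $\mathcal{I}$, either way the relevant map $\langle s_n \rangle \mapsto \bigcup_n s_n$ is continuous into $2^\omega$), and "$\bigcup_n s_n \in \mathcal{I}^+$" is then a Borel condition because $\mathcal{I}$ is Borel. Everything else is quotation.
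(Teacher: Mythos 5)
Your proof is correct and is essentially the argument the paper intends; the paper itself merely writes ``Since every game with Borel payoff is determined, we can give a characterization\ldots'' and then states the theorem, leaving the reader to chain together the Proposition that Shelah-Stepr\=ans implies $\mathsf{II}$ has no winning strategy, Laflamme's equivalence of $\mathsf{I}$ having a winning strategy with $\textsf{fin}\times\textsf{fin}\leq_K \mathcal{I}$, and (for the converse) the Shelah-Stepr\=ans-ness of $\textsf{fin}\times\textsf{fin}$ together with Lemma~\ref{SSKatetovupwards}, exactly as you do. One small inaccuracy: the map $\langle s_n\rangle_n\mapsto\bigcup_n s_n$ from $([\omega]^{<\omega})^\omega$ to $2^\omega$ is not continuous, since for a fixed $k$ the condition $k\notin\bigcup_n s_n$ is closed but not open; the map is, however, Baire class $1$ and hence Borel, which is all that Borel determinacy requires, so the conclusion that the payoff set is Borel still stands. (Alternatively, one can impose on the game tree the harmless extra rule that the $s_n$ form a block sequence, under which the union map does become continuous; this is in effect what the paper does when it recasts $\mathcal{L}(\mathcal{I})$ as a game $\mathcal{G}(T,f,\mathcal{W})$ in the proof of Theorem~\ref{SSdefinable}.)
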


We can extend this theorem under some large cardinal assumptions.
Fix a tree $T$ of height $\omega,$ $f:\left[  T\right]  \longrightarrow
\wp\left(  \omega\right)  $ a continuous function (where $\left[  T\right]  $
denotes the set of branches of $T$) and $\mathcal{W}\subseteq\wp\left(
\omega\right)  .$ We then define the game $\mathcal{G}\left(  T,f,\mathcal{W}%
\right)  $ as follows:

\begin{center}%
\begin{tabular}
[c]{|l|l|l|l|l|}\hline
$\mathsf{I}$ & $...$ & $x_{n}$ &  & $...$\\\hline
$\mathsf{II}$ & $...$ &  & $y_{n}$ & $...$\\\hline
\end{tabular}

\end{center}
At round $n\in\omega$ player $\mathsf{I}$ plays $x_{n}$ and $\mathsf{II}$
responds with $y_{n}$ with the requirement that $\left\langle x_{0}%
,y_{0},...,x_{n},y_{n}\right\rangle \in T.$ Player $\mathsf{I}$ wins if
$f\left(  b\right)  \in\mathcal{W}$ where $b$ is the branch constructed during
the game. The following is a well known extension of Martin's result (see
\cite{ForcingIdealized}):

\begin{proposition}
[\textsf{LC}]If $\mathcal{W}\in$ \textsf{L}$\left(  \mathbb{R}\right)  $ then
$\mathcal{G}\left(  T,f,\mathcal{W}\right)  $ is determined (\textsf{L}%
$\left(  \mathbb{R}\right)  $ denotes the smallest transitive model of
$\mathsf{ZFC}$ that contains all reals)
\end{proposition}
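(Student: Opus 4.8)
The plan is to reduce the tree game $\mathcal{G}(T,f,\mathcal{W})$ to an ordinary integer game of length $\omega$ whose payoff set belongs to $L(\mathbb{R})$, and then to invoke the determinacy of such games furnished by \textsf{LC}. Recall that under the large cardinal hypothesis one has $L(\mathbb{R})\models\mathbf{AD}$ (Martin--Steel--Woodin), so every set of reals lying in $L(\mathbb{R})$ is determined; the content of the proof is to recast $\mathcal{G}(T,f,\mathcal{W})$ so that this applies and so that the resulting strategy lives in $V$.

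First I would normalize so that $T$ is pruned: replacing $T$ by $T'=\{t\in T:t\text{ has an extension in }[T]\}$ does not change $[T]$, and since a node of $T\setminus T'$ can never occur in a branch, the player who first moves outside $T'$ may be declared to lose without affecting who has a winning strategy. Fixing an injection coding the possible moves of $T$ by natural numbers, define $B\subseteq\omega^\omega$ by stipulating that $z\in B$ (``$\mathsf{I}$ wins'') if and only if either $\mathsf{II}$ is the first to play a move taking the position out of $T$, or both players play legally throughout --- producing a branch $b\in[T]$ --- and $f(b)\in\mathcal{W}$. A routine bookkeeping argument then shows that a winning strategy for a player in the integer game $G(B)$ induces a winning strategy for that same player in $\mathcal{G}(T,f,\mathcal{W})$: because $T$ is pruned each player always has a legal move, so a winning strategy never makes the first illegal move against a legal opponent, and when both sides play legally the run is a branch whose winner is decided by $f$ and $\mathcal{W}$ exactly as in $\mathcal{G}(T,f,\mathcal{W})$.

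It remains to check that $B\in L(\mathbb{R})$ and that a strategy winning $G(B)$ inside $L(\mathbb{R})$ is a genuine winning strategy in $V$. For the first point, the countable tree $T$ and the continuous map $f$ are each coded by a real, $\mathcal{W}\in L(\mathbb{R})$ by hypothesis, and $B$ is definable in $L(\mathbb{R})$ from those real codes together with the parameter $\mathcal{W}$; hence $B\in L(\mathbb{R})$. Now $\mathbf{AD}^{L(\mathbb{R})}$ yields, inside $L(\mathbb{R})$, a strategy $\sigma$ winning $G(B)$ for one of the players. Since a strategy is a function $\omega^{<\omega}\to\omega$, hence coded by a real, and $\mathbb{R}^{L(\mathbb{R})}=\mathbb{R}^{V}$, we have $\sigma\in V$; moreover the assertion ``$\sigma$ is winning for $\mathsf{I}$'' is $\forall y\,(\sigma\ast y\in B)$, whose truth value is the same whether computed in $L(\mathbb{R})$ or in $V$, because $\sigma\ast y$ and $B$ both lie in $L(\mathbb{R})$ and membership is absolute (and symmetrically for $\mathsf{II}$). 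Hence $G(B)$ is determined in $V$, and by the reduction of the previous paragraph so is $\mathcal{G}(T,f,\mathcal{W})$.

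Since essentially everything above is cited machinery, there is no substantial obstacle; the only points requiring care are the legality convention in the passage to an integer game --- which is exactly why one first prunes $T$ --- and the observation that a strategy obtained in $L(\mathbb{R})$ is an honest strategy in $V$, which is immediate once one notes that strategies are reals and that determinacy assertions about sets of reals in $L(\mathbb{R})$ are absolute to $V$.
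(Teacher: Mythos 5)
The paper offers no proof of this proposition: it is quoted from Zapletal's \emph{Forcing Idealized} as ``a well known extension of Martin's result''. So the only question is whether your argument actually establishes it, and it does not, because of one step: ``Fixing an injection coding the possible moves of $T$ by natural numbers.'' This presupposes that $T$ is countably branching, and the proposition is applied in this paper precisely to trees that are not. In the proof of Theorem~\ref{SSdefinable}, the nodes of $T$ are sequences $\left\langle A_0,s_0,\dots,A_n,s_n\right\rangle$ with $A_i\in\mathcal{I}$, so player $\mathsf{I}$'s moves range over an arbitrary (typically uncountable) subset of $\wp(\omega)$. Your reduction therefore does not produce an integer game, and the fallback --- a length-$\omega$ game in which one player plays reals --- is not covered by $\mathsf{AD}^{L(\mathbb{R})}$: $\mathsf{AD}_{\mathbb{R}}$ fails in $L(\mathbb{R})$ (it implies uniformization, which fails there), so one cannot simply quote determinacy inside $L(\mathbb{R})$. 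Even your transfer-to-$V$ step degrades, since a strategy for a game with real moves is a function on finite sequences of reals and is no longer itself coded by a real. A secondary symptom of the same problem: for uncountably branching $T$, neither $T$ nor the continuous $f:[T]\to\wp(\omega)$ is coded by a real, so the definability-over-$L(\mathbb{R})$ argument for the payoff set also needs repair.

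The reason the result is described as an extension of \emph{Martin's} theorem is that the intended proof runs Martin's homogeneously Suslin determinacy argument directly on the (possibly uncountably branching) tree $T$, in $V$: under a proper class of Woodin cardinals, every $\mathcal{W}\in L(\mathbb{R})$ is $\infty$-homogeneously Suslin, witnessed by a tree $S$ with homogeneity measures; one passes to the closed auxiliary game on $T$ in which player $\mathsf{I}$ additionally builds a branch of $S$ over $f(b)$, uses the fact that closed games on \emph{arbitrary} trees are determined (Gale--Stewart needs no countability), and integrates the resulting auxiliary strategies via the homogeneity measures. That argument never requires the moves to be integers. Your final paragraph (absoluteness of ``$\sigma$ is winning'' between $L(\mathbb{R})$ and $V$) is correct for integer games, but it only becomes relevant after a correct reduction, which is exactly the step that is missing.
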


Here \textsf{LC }denotes a large cardinal assumption. In this case, it is enough to assume that there is a proper class of Woodin cardinals. The reader may consult the first chapter of \cite{ForcingIdealized}) for more information. We can conclude
the following:

\begin{theorem}
[\textsf{LC}]
\label{SSdefinable}
\begin{enumerate}
\item Let $\mathcal{I}\in$ \textsf{L}$\left(  \mathbb{R}\right)  $ be an ideal
on $\omega.$ Then $\mathcal{I}$ is Shelah-Stepr\={a}ns if and only if
\textsf{fin}$\times$\textsf{fin }$\leq_{K}\mathcal{I}.$

\item Let $\mathcal{J}$ be a $\sigma$-ideal in $\omega^{\omega}$ such that
$\mathcal{J}\in$ \textsf{L}$\left(  \mathbb{R}\right)  $ and $X\in tr\left(
\mathcal{J}\right)  ^{+}.$ Then $tr\left(  \mathcal{J}\right)  \upharpoonright
X$ is Shelah-Stepr\={a}ns if and only if \textsf{fin}$\times$\textsf{fin
}$\leq_{K}tr\left(  \mathcal{J}\right)  \upharpoonright X.$
\end{enumerate}
\end{theorem}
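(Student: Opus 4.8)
The plan is to reduce Theorem~\ref{SSdefinable} to the combinatorial characterization already established, using the determinacy of the games $\mathcal{G}(T,f,\mathcal{W})$ in place of plain Borel determinacy. For part (1), recall from Lemma~\ref{SSKatetovupwards} that $\textsf{fin}\times\textsf{fin} \leq_K \mathcal{I}$ always implies $\mathcal{I}$ is Shelah-Stepr\={a}ns, since $\textsf{fin}\times\textsf{fin}$ is Shelah-Stepr\={a}ns; this direction needs no large cardinals. For the converse, suppose $\mathcal{I}\in \textsf{L}(\mathbb{R})$ is an ideal on $\omega$ that is \emph{not} Shelah-Stepr\={a}ns; I want to conclude $\textsf{fin}\times\textsf{fin}\not\leq_K\mathcal{I}$, equivalently (by part (1)(a)$\Leftrightarrow$(1)(d) of Laflamme's Proposition) that $\mathsf{I}$ does \emph{not} have a winning strategy in $\mathcal{L}(\mathcal{I})$. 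By the Corollary, $\mathcal{I}$ not Shelah-Stepr\={a}ns is equivalent to $\mathsf{II}$ having a winning strategy in $\mathcal{L}(\mathcal{I})$, and the two cannot both hold. So the real content is to show that, for $\mathcal{I}\in\textsf{L}(\mathbb{R})$, the game $\mathcal{L}(\mathcal{I})$ is \emph{determined}, after which the Corollary and Laflamme's Proposition finish everything.

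To get determinacy of $\mathcal{L}(\mathcal{I})$ I would recast it as a game of the form $\mathcal{G}(T,f,\mathcal{W})$. The moves in $\mathcal{L}(\mathcal{I})$ are: $\mathsf{I}$ plays $A_n\in\mathcal{I}$ (a subset of $\omega$, coded as an element of $2^\omega$, so really played bit by bit) and $\mathsf{II}$ plays a finite set $s_n\subseteq\omega\setminus A_n$. I would organize a single run as building a branch $b$ through a recursively presented tree $T$ whose nodes interleave the bits of the $A_n$'s and the finite sets $s_n$ (with the legality constraint $s_n\cap A_n=\emptyset$ built into $T$, which is fine since at the stage $s_n$ is played the relevant initial segment of $A_n$ has been revealed — one can delay $\mathsf{II}$'s move for $s_n$ until enough of $A_{n}$ is known, or simply allow $s_n$ to be played against $A_n\cap N$ and use lower semicontinuity-type bookkeeping; the cleanest route is to have $\mathsf{I}$ finish specifying $A_n$ before $\mathsf{II}$ plays $s_n$, reindexing rounds). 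Let $f:[T]\to\wp(\omega)\times\wp(\omega)$ (or, composing with a pairing, into $\wp(\omega)$) be the continuous map reading off from $b$ the pair $(\bigcup_n s_n,\ \langle A_n\rangle_{n})$. Then $\mathsf{II}$ wins iff $\bigcup_n s_n\in\mathcal{I}^+$, i.e. iff $f(b)$ lands in the set $\mathcal{W}=\{(B,\vec A): B\notin\mathcal{I}\}$, which is definable from $\mathcal{I}$ and hence lies in $\textsf{L}(\mathbb{R})$ whenever $\mathcal{I}$ does. Swapping the roles to match the ``$\mathsf{I}$ wins if $f(b)\in\mathcal{W}$'' convention is cosmetic. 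By the Proposition~(\textsf{LC}), $\mathcal{G}(T,f,\mathcal{W})$ is determined, hence so is $\mathcal{L}(\mathcal{I})$.

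For part (2) I would apply part (1) to the ideal $\mathcal{I}=tr(\mathcal{J})\restriction X$. The point is that if $\mathcal{J}$ is a $\sigma$-ideal on $\omega^\omega$ with $\mathcal{J}\in\textsf{L}(\mathbb{R})$, then $tr(\mathcal{J})=\{a\subseteq\omega^{<\omega}:\pi(a)\in\mathcal{J}\}$ is defined by a formula with parameter $\mathcal{J}$ (the map $a\mapsto\pi(a)$ is Borel, in fact arithmetical), so $tr(\mathcal{J})\in\textsf{L}(\mathbb{R})$; and the restriction to a fixed $X\in tr(\mathcal{J})^+$ keeps us in $\textsf{L}(\mathbb{R})$. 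Then part (1) gives: $tr(\mathcal{J})\restriction X$ is Shelah-Stepr\={a}ns iff $\textsf{fin}\times\textsf{fin}\leq_K tr(\mathcal{J})\restriction X$, which is exactly the assertion of (2). (This is the form needed to combine with Proposition~\ref{destructibilitycharacterization} for the indestructibility applications.)

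The main obstacle I expect is purely bookkeeping: presenting $\mathcal{L}(\mathcal{I})$ faithfully as some $\mathcal{G}(T,f,\mathcal{W})$ with $T$ a tree of height $\omega$ and $f$ \emph{continuous}, while honoring that $\mathsf{II}$'s finite set $s_n$ must avoid $A_n$ — a set $\mathsf{I}$ is still revealing. The resolution is to stagger the moves (let $\mathsf{I}$ announce $A_n$ completely — i.e. infinitely many bits — across a block of rounds before $\mathsf{II}$ must commit to $s_n$, which is legitimate because in $\mathcal{L}(\mathcal{I})$ there is no restriction forcing $\mathsf{II}$ to move at every single round and the winning condition only sees $\bigcup_n s_n$), so that the legality predicate defining $T$ is clopen and $f$ reading off $\bigcup_n s_n$ is continuous. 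Everything else — upward closure under $\leq_K$, the equivalence ``not Shelah-Stepr\={a}ns $\Leftrightarrow$ $\mathsf{II}$ wins $\mathcal{L}(\mathcal{I})$'', and ``$\mathsf{I}$ wins $\mathcal{L}(\mathcal{I})$ $\Leftrightarrow$ $\textsf{fin}\times\textsf{fin}\leq_K\mathcal{I}$'' — is already available from the results above, so once determinacy of $\mathcal{L}(\mathcal{I})$ is secured the theorem follows immediately, exactly as Theorem~\ref{SSBorel} followed from Borel determinacy.
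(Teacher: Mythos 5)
Your high-level strategy matches the paper's exactly: the backward implication is ZFC via Lemma~\ref{SSKatetovupwards} (because $\textsf{fin}\times\textsf{fin}$ is Shelah-Stepr\={a}ns and the property is $\leq_K$-upward closed), the forward implication reduces via the Corollary and Laflamme's Proposition to the determinacy of $\mathcal{L}(\mathcal{I})$, and part (2) follows by applying part (1) to the ideal $tr(\mathcal{J})\upharpoonright X$, which lies in $\textsf{L}(\mathbb{R})$ since it is definable from $\mathcal{J}$ and the real $X$.

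The gap is in the coding of $\mathcal{L}(\mathcal{I})$ as a game $\mathcal{G}(T,f,\mathcal{W})$. You insist on coding each $A_n$ bit by bit as an element of $2^\omega$, and the resolution you commit to --- ``let $\mathsf{I}$ announce $A_n$ completely, i.e.\ infinitely many bits, across a block of rounds before $\mathsf{II}$ must commit to $s_n$'' --- is not coherent for a game of length $\omega$: since $A_n$ is an infinite set, revealing it bit by bit already exhausts all $\omega$ rounds at $n=0$, so $\mathsf{II}$ would never move, $\bigcup_n s_n$ would be empty, and the resulting game is not $\mathcal{L}(\mathcal{I})$. The parenthetical alternative you float (check $s_n$ only against a finite revealed part $A_n\cap N$) is the right idea if one is wedded to bitwise coding, but you do not carry it out, and it is not needed. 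The definition of $\mathcal{G}(T,f,\mathcal{W})$ places no restriction on what the moves may be: the paper takes $T$ to be the downward closure of the set of finite sequences $\langle A_0,s_0,\dots,A_n,s_n\rangle$ with $A_i\in\mathcal{I}$, $s_i\in[\omega\setminus A_i]^{<\omega}$, and $\max(s_i)\subseteq A_{i+1}$ for $i<n$, so that each $A_n$ is a \emph{single} move. The last clause is precisely what makes $f(b)=\bigcup_n b(2n+1)$ continuous (it forces the $s_n$ to march to the right, so membership of any given $k$ in $\bigcup_n s_n$ is decided after finitely many rounds), and it is a harmless restriction on $\mathsf{I}$ because $\mathcal{I}$ contains every finite set. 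With this encoding the equivalence with $\mathcal{L}(\mathcal{I})$ is immediate and the whole coding step collapses to a few lines.
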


\begin{proof}
To prove the first item, let $Y$ be the set of all sequences $\left\langle
A_{0},s_{0},...,A_{n},s_{n}\right\rangle $ such that $A_{n}\in\mathcal{I}$ and
$s_{n}\in\left[  \omega\setminus A_{n}\right]  ^{<\omega}$ and $\max\left(
s_{i}\right)  \subseteq A_{i+1}$ if $i<n.$ Let $T$ be the tree obtained by
closing $Y$ under restrictions. We define $f:\left[  T\right]
\longrightarrow\wp\left(  \omega\right)  $ by $f\left(  b\right)  =%
{\textstyle\bigcup\limits_{n\in\omega}}
b\left(  2n+1\right)  $ where $b\in\left[  T\right]  .$ Clearly $\mathcal{L}%
\left(  \mathcal{I}\right)  $ is a game equivalent to $\mathcal{G}\left(
T,f,\mathcal{I}\right)  $, so the result follows from the previous results. The
second item is a consequence of the first.
\end{proof}

The following result will be useful later, see Lemma~\ref{ShelahSteprans-Hurewicz}:

\begin{lemma}  \label{SScharacterization}
Let $\mathcal{I}$ be an ideal on $\omega.$ The following are equivalent:

\begin{enumerate}
\item $\mathcal{I}$ is Shelah-Stepr\={a}ns.

\item For every $\left\{  X_{n}\mid n\in\omega\right\}  \subseteq\left(
\mathcal{I}^{<\omega}\right)  ^{+}$ there is $B\in\mathcal{I}$ such that
$X_{n}\cap\left[  B\right]  ^{<\omega}$ is infinite for every $n\in\omega.$
\end{enumerate}
\end{lemma}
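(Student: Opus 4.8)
The plan is to prove the two implications separately. The implication $(2)\Rightarrow(1)$ is immediate: given $X\in\left(\mathcal{I}^{<\omega}\right)^{+}$, apply $(2)$ to the constant sequence $X_{n}=X$ to obtain $B\in\mathcal{I}$ with $X\cap\left[B\right]^{<\omega}$ infinite; then $Y:=X\cap\left[B\right]^{<\omega}\in\left[X\right]^{\omega}$ satisfies $\bigcup Y\subseteq B\in\mathcal{I}$, so $\mathcal{I}$ is Shelah-Stepr\={a}ns.

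For $(1)\Rightarrow(2)$ I would \emph{not} try to amalgamate the $X_{n}$ into a single member of $\left(\mathcal{I}^{<\omega}\right)^{+}$ and apply the definition of Shelah-Stepr\={a}ns just once: the infinite subset it returns could ``concentrate'' on finitely many of the $X_{n}$, and there seems to be no way to preclude this with a static set. Instead I would pass through Laflamme's game. Assume $\mathcal{I}$ is Shelah-Stepr\={a}ns, so by the earlier result that player $\mathsf{II}$ has no winning strategy in $\mathcal{L}\left(\mathcal{I}\right)$ when $\mathcal{I}$ is Shelah-Stepr\={a}ns, $\mathsf{II}$ has no winning strategy in $\mathcal{L}\left(\mathcal{I}\right)$. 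Fix $\left\{X_{n}\mid n\in\omega\right\}\subseteq\left(\mathcal{I}^{<\omega}\right)^{+}$ and a surjection $e\colon\omega\to\omega$ all of whose fibres are infinite. I would then define a strategy $\tau$ for $\mathsf{II}$ as follows: at round $n$, once $\mathsf{I}$ has played $A_{0},\dots,A_{n}\in\mathcal{I}$ and $\mathsf{II}$ (via $\tau$) has played $s_{0},\dots,s_{n-1}$, put $k=e(n)$; since $A_{n}\cup s_{0}\cup\dots\cup s_{n-1}\in\mathcal{I}$ and $X_{k}\in\left(\mathcal{I}^{<\omega}\right)^{+}$, there is $s\in X_{k}$ disjoint from $A_{n}\cup s_{0}\cup\dots\cup s_{n-1}$, and we let $s_{n}=\tau(A_{0},\dots,A_{n})$ be the least such $s$ in a fixed enumeration of $X_{k}$. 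As $s_{n}\cap A_{n}=\emptyset$, this is a legal move, so $\tau$ is a genuine strategy for $\mathsf{II}$.

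The point of $\tau$ is that it imposes a strong property on the outcome $B:=\bigcup_{n}s_{n}$ of \emph{every} run in which $\mathsf{II}$ follows $\tau$, no matter how $\mathsf{I}$ plays: the moves $s_{n}$ are pairwise disjoint (each was chosen disjoint from all earlier moves), hence pairwise distinct, and for each fixed $k$ the set $\left\{s_{n}\mid e(n)=k\right\}$ is an infinite (since $e^{-1}(k)$ is infinite) subset of $X_{k}$ all of whose members lie in $B$; hence $X_{k}\cap\left[B\right]^{<\omega}$ is infinite for every $k$. Now, as $\mathsf{II}$ has no winning strategy, $\tau$ is in particular not winning, so there is a run in which $\mathsf{II}$ follows $\tau$ but loses, i.e.\ a sequence $\left(A_{n}\right)_{n}$ of moves of $\mathsf{I}$ for which $B=\bigcup_{n}s_{n}\notin\mathcal{I}^{+}$, that is, $B\in\mathcal{I}$. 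By the preceding observation this $B\in\mathcal{I}$ witnesses $(2)$.

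The real obstacle is conceptual: recognising that a single application of the Shelah-Stepr\={a}ns property is not enough and that one needs the dynamic, strategy-based argument, where $\tau$ distributes its attention across all the $X_{n}$ while the absence of a winning strategy for $\mathsf{II}$ supplies, after the fact, a run whose outcome lands in $\mathcal{I}$. Once $\tau$ is written down, the remaining verifications — that $A_{n}\cup s_{0}\cup\dots\cup s_{n-1}\in\mathcal{I}$, that every move dictated by $\tau$ is legal, and that the moves with $e(n)=k$ are infinitely many distinct members of $X_{k}$ contained in $B$ — are routine.
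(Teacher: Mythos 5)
Your proof is correct and follows essentially the same route as the paper. The paper's one-line argument is ``Clearly 2 implies 1 and if 2 fails then it is easy to see that Player $\mathsf{II}$ has a winning strategy in $\mathcal{L}(\mathcal{I})$, so 1 also fails'' --- that is, it also passes through Laflamme's game and the prior proposition that Shelah-Stepr\={a}ns ideals admit no winning strategy for $\mathsf{II}$. Your strategy $\tau$, distributing plays over the $X_n$ via a surjection $e$ with infinite fibres, is precisely the content of the paper's ``easy to see,'' just written out directly rather than in contrapositive form.
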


\begin{proof}
Clearly 2 implies 1 and if 2 fails then it is easy to see that Player
$\mathsf{II}$ has a winning strategy in $\mathcal{L}\left(  \mathcal{I}%
\right)  ,$ so 1 also fails.
\end{proof}




\section{Strong  properties of \textsf{MAD} families}

In this section, we will study some strong combinatorial properties of
\textsf{MAD }families and we will clarify the relationship between them. 
The basic notions and implications will be presented in the more
general context of ideals (Subsection~\ref{ideals-basic}), and for existence, non-existence 
(see Subsections~\ref{existence-nonexistence}, \ref{diamond-existence} and~\ref{p=c-existence}), and
non-implications (Subsection~\ref{non-implications}) we will consider the special case of \textsf{MAD}
families. We shall also consider generic \textsf{MAD} families (Subsection~\ref{genericMAD}).


\subsection{Combinatorial properties of ideals: definitions and implications}
\label{ideals-basic}


We start with:

\begin{definition}
Let $\I$ be an ideal on $\omega$. 
\begin{enumerate}
\item $\I$ is \emph{tight} if for every
$\left\{  X_{n}\mid n\in\omega\right\}  \subseteq\mathcal{I}^{+}$ there is $A\in\mathcal{I}$ 
such that $A\cap X_{n}\neq\emptyset$ for every $n\in\omega.$
\item $\I$ is \emph{weakly tight} if for every
$\left\{  X_{n}\mid n\in\omega\right\}  \subseteq\mathcal{I}^{+}$ there is $A\in\mathcal{I}$ 
such that $|A\cap X_{n}| = \omega$ for infinitely many $n\in\omega.$
\item $\I$ is \emph{strongly tight} if for every
$\left\{  X_{n}\mid n\in\omega\right\}  \subseteq \omoms$ such that 
$\{ n \mid X_n \sub^* Y\}$ is finite for every $Y \in \I$, there is $A\in\mathcal{I}$ 
such that $A\cap X_{n}\neq\emptyset$ for every $n\in\omega.$
\item We say a family $X=\left\{  X_{n}\mid n\in\omega\right\}  $ such that
$X_{n}\subseteq\left[  \omega\right]  ^{<\omega}$ is \emph{locally finite
according to }$\mathcal{I}$ if for every $B\in\mathcal{I}$ for almost all
$n\in\omega$ there is $s\in X_{n}$ such that $s\cap B=\emptyset.$
\item $\mathcal{I}$ is \emph{raving }if for every family $X=\left\{
X_{n}\mid n\in\omega\right\}  $ that is locally finite according to\emph{
}$\mathcal{I}$ there is $A\in\mathcal{I}$ such that $A$ contains at least one
element of each $X_{n}.$ 
\end{enumerate}
\end{definition}

Obviously, strongly tight implies tight, which in turn implies weakly tight.
Also, it is easy to see that raving implies both Shelah-Stepr\= ans and
strongly tight. 
Furthermore, by Lemma~\ref{SScharacterization}, every Shelah-Stepr\=ans ideal is tight. 
We shall see later on (in particular in Subsection~\ref{non-implications}) that all 
these properties are (consistently) distinct, even for \textsf{MAD} families.



For a \textsf{MAD} family $\A$ and a property X of ideals, we say {\em $\A$ has property X} whenever
the corresponding ideal $\I (\A)$ has property X.

Tight \textsf{MAD} families are Cohen indestructible. Although Cohen
indestructibility does not imply tightness, it is true that every Cohen
indestructible \textsf{MAD} family has a restriction that is tight (see
\cite{OrderingMADFamiliesalaKatetov}). Thus, existence-wise, the two properties
are on the same level.

In~\cite{SplittingFamiliesandCompleteSeparability} and
\cite{OnWeaklytightFamilies} it was proved that weakly tight \textsf{MAD}
families exist under $\mathfrak{s\leq b}.$

Strongly tight \textsf{MAD} families have the following characterization.

\begin{lemma}   \label{stronglytightMAD}
$\A$ is strongly tight iff whenever $\{ B_n \mid n \in \omega \} \sub \omoms$ is such that there is
$\{ A_n \mid n \in \omega \} \sub \A$ such that
\begin{itemize}
\item $B_n \sub A_n$ for all $n$,
\item for all $A \in \A$, the set $\{ n \mid A_n = A \}$ is finite,
\end{itemize}
there is $A \in \I (\A)$ such that $A \cap B_n \neq \emptyset$ for all $n$.
\end{lemma}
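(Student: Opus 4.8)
The plan is to unfold each side of the equivalence against the definition of strong tightness for the ideal $\I(\A)$ and to bridge the two by shrinking the given sets into members of $\A$ and relabelling. I will use two standard facts throughout: every $Y\in\I(\A)$ is $\sub^*$ a finite union $A^{(0)}\cup\dots\cup A^{(k)}$ of elements of $\A$; and, since $\A$ is MAD, every infinite $X\sub\omega$ satisfies $X\sub^*\bigcup\{A\in\A\mid|A\cap X|=\omega\}$ (otherwise $X$ minus this union is infinite, hence meets some $A\in\A$ infinitely, a contradiction).

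For the forward implication, I would take $\{B_n\}$ and $\{A_n\}$ as in the statement and verify that $\{B_n\}$ satisfies the hypothesis in the definition of strong tightness of $\I(\A)$. Given $Y\in\I(\A)$, write $Y\sub^* A^{(0)}\cup\dots\cup A^{(k)}$ with each $A^{(i)}\in\A$; if $B_n\sub^* Y$ then the infinite set $B_n$ meets some $A^{(i)}$ infinitely, and as $B_n\sub A_n$ and distinct members of $\A$ are almost disjoint, this forces $A_n=A^{(i)}$. Hence $\{n\mid B_n\sub^* Y\}\sub\bigcup_{i\le k}\{n\mid A_n=A^{(i)}\}$, which is finite. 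Strong tightness of $\I(\A)$ then produces the desired $A\in\I(\A)$ meeting every $B_n$.

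For the reverse implication I would begin with $\{X_n\}\sub\omoms$ such that $\{n\mid X_n\sub^* Y\}$ is finite for every $Y\in\I(\A)$, and set $S_n=\{A\in\A\mid|A\cap X_n|=\omega\}$; then $S_n\neq\emptyset$ and $X_n\sub^*\bigcup S_n$. The key point is that for every finite $\F\sub\A$ the set $\{n\mid S_n\sub\F\}$ is finite, since for such $n$ one has $X_n\sub^*\bigcup\F\in\I(\A)$. I would then construct $g\colon\omega\to\A$ with $g(n)\in S_n$ and all fibres finite: on $Q=\{n\mid S_n\text{ is infinite}\}$ pick the values of $g$ one at a time, always avoiding the previously chosen ones (legal because each such $S_n$ is infinite), making $g\restriction Q$ injective; on $P=\{n\mid S_n\text{ is finite}\}$ note that $\bigcup_{n\in P}S_n$ is countable, enumerate it as $\{D_k\mid k\in\omega\}$ (if it is finite then $P$ is finite and $g\restriction P$ is arbitrary), put $C_k=\{n\in P\mid S_n\sub\{D_0,\dots,D_k\}\}$, which is finite by the key point, increasing in $k$, with $\bigcup_kC_k=P$, and set $g(n)=D_k$ whenever $n\in C_k\sem C_{k-1}$; this is legitimate because then $D_k\in S_n$, and it keeps $g^{-1}(D_k)\sub C_k\sem C_{k-1}$ finite. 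Finally I would let $A_n=g(n)$ and $B_n=A_n\cap X_n\in\omoms$; then $B_n\sub A_n\in\A$ and $\{n\mid A_n=A\}$ is finite for every $A\in\A$, so the right-hand condition yields $A\in\I(\A)$ with $A\cap B_n\neq\emptyset$ for all $n$, and since $B_n\sub X_n$ this $A$ meets every $X_n$.

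The almost-disjointness bookkeeping in both directions is routine; the real obstacle is the construction of the relabelling $g$ with finite fibres. The observation that resolves it is that the indices $n$ with $S_n$ finite collectively refer to only countably many members of $\A$, so that those members can be enumerated and one can always assign ``the first available one'', which automatically bounds each fibre; the indices with $S_n$ infinite have to be separated off and dealt with by a trivial injective choice.
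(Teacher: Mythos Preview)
Your proof is correct and follows the same overall plan as the paper: the forward direction checks that the $B_n$'s satisfy the hypothesis of strong tightness (the paper simply says ``clearly''; your verification via $Y\sub^*A^{(0)}\cup\dots\cup A^{(k)}$ is exactly what is needed), and the reverse direction reduces to producing a choice $A_n\in S_n=\{A\in\A:|A\cap X_n|=\omega\}$ with all fibres finite.

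The only difference is in how this choice is manufactured. The paper does it in one line: pick $A_n\in S_n$ greedily, taking $A_n\notin\{A_i:i<n\}$ whenever possible, and asserts that the hypothesis on the $X_n$ forces every $A$ to be chosen only finitely often. Your construction is more explicit: you split into $S_n$ infinite (inject) versus $S_n$ finite (enumerate $\bigcup_{n\in P}S_n$ and assign by first containment level $C_k$). Both work; yours has the advantage of making the finite-fibre verification completely mechanical, while the paper's greedy argument is shorter but leaves the reader to check that a suitable fallback (e.g.\ when forced, pick the most recently introduced element of $S_n$) really does bound the fibres — if $A$ were reused infinitely often, then at every such stage $S_n$ would lie inside a fixed finite set $\{V_0,\dots,V_{j_0}\}$ of previously chosen elements, giving $X_n\sub^*V_0\cup\dots\cup V_{j_0}$ for infinitely many $n$, contrary to hypothesis. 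Your $C_k$-construction is essentially a direct implementation of this idea restricted to the finite-$S_n$ case.
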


\begin{proof}
First assume $\A$ is strongly tight. Assume $B_n$ is given as in the lemma. Clearly $X_n = B_n$ satisfies
the assumption in the definition of ``strongly tight" and we obtain $A \in \I (\A)$ as required.

On the other hand, if $X_n$ are given as in the definition of ``strongly tight", first use the maximality
of $\A$ to find $A_n \in \A$ such that $A_n \cap X_n$ is infinite and let $B_n = A_n \cap X_n$. 
Whenever possible, choose $A_n$ distinct from $A_i, i < n$. The assumption on the $X_n$ then guarantees
that every $A \in \A$ is chosen only finitely often. The $A \in \I (\A)$ given by the lemma is as required.
\end{proof}

Like the Shelah-Stepr\=ans property, the properties considered here are upwards closed in the Kat\v etov order
(see Lemma~\ref{SSKatetovupwards}).

\begin{lemma}   \label{tightKatetovupwards}
Assume $\I$ and $\J$ are ideals on $\omega$.
\begin{enumerate}
\item If  $\I \leq_K \J$ and $\I$ is tight (weakly tight, raving, resp.), then so is $\J$.
\item If $\I \leq_{KB} \J$ and $\I$ is strongly tight, then so is $\J$.
\item Assume $\A$ and $\B$ are \textsf{MAD} families. If
$\mathcal{A}$ is strongly tight and $\mathcal{I}\left(  \mathcal{A}\right)
\leq_{K}\mathcal{I}\left(  \mathcal{B}\right)  $ then $\mathcal{B}$ is
strongly tight.
\end{enumerate}
\end{lemma}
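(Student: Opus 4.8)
The plan is to prove the three parts of Lemma~\ref{tightKatetovupwards} by adapting the argument of Lemma~\ref{SSKatetovupwards}. In each case we fix a Kat\v etov-morphism $f : \omega \to \omega$ from $(\omega,\J)$ to $(\omega,\I)$ and, given a witnessing sequence on the $\J$-side, push it forward along $f$ (or pull it back, depending on the direction) to obtain a witnessing sequence on the $\I$-side, apply the hypothesis there, and transport the resulting set back.

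For part (1), tightness: given $\{X_n : n\in\omega\}\subseteq\J^+$, set $X_n' = f[X_n]$; since $f^{-1}(A)\in\J$ for $A\in\I$ and $X_n\notin\J$, we get $X_n\not\subseteq f^{-1}(A)$, hence $f[X_n]\cap A\neq\emptyset$, so $X_n'\in\I^+$. Tightness of $\I$ yields $A\in\I$ meeting every $X_n'$; then $f^{-1}(A)\in\J$ meets every $X_n$, as required. Weak tightness is the same argument with ``$|A\cap X_n'|=\omega$ for infinitely many $n$'' in place of ``$A\cap X_n'\neq\emptyset$ for every $n$'': if $A\cap f[X_n]$ is infinite then $f^{-1}(A)\cap X_n$ is infinite (here $f^{-1}$ of an infinite subset of $\ran f$ is infinite). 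For the raving clause: given $X=\{X_n\}$ locally finite according to $\J$ with $X_n\subseteq[\omega]^{<\omega}$, put $X_n' = \{f[s] : s\in X_n\}$; given $B\in\I$, local finiteness of $X$ applied to $f^{-1}(B)\in\J$ gives, for almost all $n$, some $s\in X_n$ with $s\cap f^{-1}(B)=\emptyset$, i.e. $f[s]\cap B=\emptyset$, so $X'$ is locally finite according to $\I$. Raving of $\I$ gives $A\in\I$ containing some $f[s_n]\in X_n'$ for each $n$; then $A'=f^{-1}(A)\in\J$ contains the corresponding $s_n\in X_n$ for each $n$, finishing the clause. (One routine point: members of $X_n$ should be non-empty, which is part of the set-up of the raving definition; $f[s]$ is then non-empty too.)

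For part (2), strong tightness via $\leq_{KB}$: here $f$ is finite-to-one. Given $\{X_n : n\in\omega\}\subseteq\omoms$ with $\{n : X_n\subseteq^* Y\}$ finite for every $Y\in\J$, set $X_n' = f[X_n]$, which is infinite since $f$ is finite-to-one. I claim $\{n : X_n'\subseteq^* Y'\}$ is finite for every $Y'\in\I$: if $f[X_n]\subseteq^* Y'$ then $X_n\subseteq^* f^{-1}(Y')$ (again using finite-to-one-ness, since $X_n\setminus f^{-1}(Y') = X_n\cap f^{-1}(\omega\setminus Y')$ maps into the finite set $f[X_n]\setminus Y'$ with finite fibers), and $f^{-1}(Y')\in\J$ because $f$ is a Kat\v etov-morphism; so by hypothesis there are only finitely many such $n$. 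Strong tightness of $\I$ gives $A\in\I$ meeting every $X_n'$, and then $f^{-1}(A)\in\J$ meets every $X_n$. Finite-to-one-ness is exactly what is needed and explains why only $\leq_{KB}$ works here; this is the one genuinely delicate point of the lemma.

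Part (3) then follows from part (1) and the maximality of the \textsf{MAD} families: the obstacle is that $\leq_K$ does not give a finite-to-one morphism, so part (2) does not apply directly. Instead I would use the characterization in Lemma~\ref{stronglytightMAD}. Let $f$ be a Kat\v etov-morphism from $(\omega,\I(\B))$ to $(\omega,\I(\A))$. Suppose $\{B_n : n\in\omega\}\subseteq\omoms$ is witnessed by $\{A_n : n\in\omega\}\subseteq\B$ as in the statement of Lemma~\ref{stronglytightMAD} (with $B_n\subseteq A_n$ and each member of $\B$ used finitely often). Note $f^{-1}(C)\in\I(\B)$ for every $C\in\I(\A)$, so in particular $f[B_n]\notin\I(\A)$ when $B_n\notin\I(\B)$; since each $A_n\in\B\subseteq\I(\B)$ while $B_n\subseteq A_n$, the sets $B_n$ themselves may be in $\I(\B)$, so I instead verify the hypothesis of the ``strongly tight'' definition for $\A$ directly: I must show $\{n : f[B_n]\subseteq^* Y\}$ is finite for every $Y\in\I(\A)$. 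If $f[B_n]\subseteq^* Y$ then $B_n\subseteq^* f^{-1}(Y)$, and $f^{-1}(Y)\in\I(\B)$; using maximality of $\B$, any infinite subfamily of $\{B_n\}$ with all $B_n\subseteq^* f^{-1}(Y)$ would force the generating sets $A_n$ into $f^{-1}(Y)$ modulo finite, hence (since distinct members of $\B$ are almost disjoint, and each appears finitely often among the $A_n$) would need infinitely many members of $\B$ almost contained in a single set of $\I(\B)$ — which, by genericity of the construction of the $A_n$, is impossible; more simply, apply Lemma~\ref{stronglytightMAD}'s proof pattern in reverse. Cleanly: $\{f[B_n] : n\in\omega\}$ satisfies the hypothesis of strong tightness for $\A$ because for $Y\in\I(\A)$, $\{n : f[B_n]\subseteq^* Y\}\subseteq\{n : B_n\subseteq^* f^{-1}(Y)\}$, and the latter is finite since $\{B_n\}$ with witnesses $\{A_n\}$ satisfies the Lemma~\ref{stronglytightMAD} hypothesis and $f^{-1}(Y)\in\I(\B)$ can almost-contain only finitely many of the almost disjoint, finitely-repeated $A_n$'s. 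Then strong tightness of $\A$ yields $A\in\I(\A)$ with $A\cap f[B_n]\neq\emptyset$ for all $n$, whence $f^{-1}(A)\in\I(\B)$ meets every $B_n$; by Lemma~\ref{stronglytightMAD}, $\B$ is strongly tight. The main thing to get right is the bookkeeping showing that the pushed-forward sequence still satisfies the relevant ``spread-out'' hypothesis, and in part (2) that this genuinely needs $f$ finite-to-one.
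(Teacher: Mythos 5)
Parts (1) and (2) of your proof are correct and match the paper's (brief) argument. In particular your observation in part (2) that finite-to-one-ness is precisely what makes the implication $f[X_n]\subseteq^* Y' \Rightarrow X_n\subseteq^* f^{-1}(Y')$ go through is exactly right.

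Part (3) has a genuine gap, and it is precisely the point you identified as delicate in part (2). You invoke the inclusion $\{n : f[B_n]\subseteq^* Y\}\subseteq\{n : B_n\subseteq^* f^{-1}(Y)\}$, but this fails when $f$ is not finite-to-one: if $f$ is constant on an infinite subset of $B_n$ landing outside $Y$, then $f[B_n]\setminus Y$ can be finite while $B_n\setminus f^{-1}(Y)$ is infinite. In part (3) you only have $\I(\A)\leq_K\I(\B)$, so $f$ need not be finite-to-one, and you cannot use this step. A related unaddressed problem is that $f[B_n]$ may even be finite (for instance, if $B_n$ lies inside a single fiber), in which case $f[B_n]\subseteq^* Y$ holds for every $Y$ and the strong-tightness hypothesis for $\{f[B_n]\}$ fails trivially. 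The paper gets around both issues by first splitting $\W$ into $\W_1=\{X : f[X] \text{ finite}\}$ and $\W_2=\W\setminus\W_1$, and showing $\W_1$ is finite: if it were infinite, then by the pigeonhole principle each $X\in\W_1$ has a fiber $f^{-1}(\{b_X\})$ with infinite intersection with $X$, and if infinitely many $b_X$ lay in some $A\in\A$ then $f^{-1}(A)\in\I(\B)$ would have infinite intersection with infinitely many distinct members of $\B$, which is impossible. For $X\in\W_2$, rather than pushing $B_n$ forward and comparing with $\subseteq^*$, the paper picks $A_X\in\I(\A)$ with $Y_X=A_X\cap f[X]$ infinite, verifies the strong-tightness hypothesis for $\{Y_X\}$ directly via the same almost-disjointness argument, and applies strong tightness of $\A$ to $\{Y_X\}$. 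The finite part $\W_1$ is then absorbed by adding $\bigcup\W_1$ at the end. Your ``push forward and pull back'' template, which works cleanly in parts (1) and (2), does not directly adapt to (3); the $\W_1$/$\W_2$ case split and the replacement of $f[X]$ by $A_X\cap f[X]$ are the missing ingredients.
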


\begin{proof}
1. This is a standard argument. Let $f : \omega\to\omega$ be a Kat\v etov reduction.
Assume first $\I$ is tight. Take $\{ X_n \mid n \in \omega \} \sub \J^+$ and let $Y_n = f [X_n]$.
Clearly the $Y_n$ belong to $\I^+$, and therefore there is $B \in \I$ such that $B \cap Y_n \neq \emptyset$
for all $n$. Letting $A = f^{-1} [B] \in \J$ we see that $A \cap X_n \neq \emptyset $ for all $n$. 
The proofs for ``weakly tight" and ``raving" are similar.

2. This is also similar.

3. Fix a Kat\v{e}tov-morphism $f$ from $\left(  \omega,\mathcal{I}\left(
\mathcal{B}\right)  \right)  $ to $\left(  \omega,\mathcal{I}\left(
\mathcal{A}\right)  \right)  $ and a family $\mathcal{W}=\left\{  X_{n}\mid
n\in\omega\right\}  $ such that for every $n\in\omega$ there is $B_{n}%
\in\mathcal{B}$ such that $X_{n}\subseteq B_{n}$ and for every $B\in
\mathcal{B}$ the set $\left\{  n\mid B_{n}=B\right\}  $ is finite. Let
$\mathcal{W}_{1}=\left\{  X\in\mathcal{W}\mid f\left[  X\right]  \in\left[
\omega\right]  ^{<\omega}\right\}  $ and for every $X\in\mathcal{W}_1$ we choose
$b_{X}\in f\left[  X\right]  $ such that $f^{-1}\left(  \left\{
b_{X}\right\}  \right)  $ is infinite. We first claim that the set $Y=\left\{
b_{X}\mid X\in\mathcal{W}_1\right\}  $ is finite. If this was not the case, we
could find $A\in\mathcal{A}$ such that $A\cap Y$ is infinite. Since $f$ is a
Kat\v{e}tov-morphism, we conclude that $f^{-1}\left(  A\right)  \in
\mathcal{I}\left(  \mathcal{B}\right)  $ and $\left\{  X\in\mathcal{W}\mid
f^{-1}\left(  A\right)  \cap X\in\left[  \omega\right]  ^{\omega}\right\}  $
is infinite, but this is a contradiction. Using that $Y$ is finite, it is easy
to see that $\mathcal{W}_{1}$ must also be finite.

Letting $\mathcal{W}_{2}=\mathcal{W}\setminus\mathcal{W}_{1},$ for every
$X\in\mathcal{W}_{2}$ we choose $A_{X}\in\mathcal{I}\left(  \mathcal{A}%
\right)  $ such that $Y_{X}=A_{X}\cap f\left[  X\right]  $ is infinite. Note
that if $A\in\mathcal{A}$ then the set $\left\{  X\in\mathcal{W}_{2}\mid
A=A_{X}\right\}  $ must be finite. Since $\mathcal{A}$ is strongly tight we
can find $A\in\mathcal{I}\left(  A\right)  $ such that $A\cap Y_{X}%
\neq\emptyset$ for every $X\in\mathcal{W}_{2}.$ Since $f$ is a 
Kat\v{e}tov-morphism, we may conclude that $B=f^{-1}\left(  A\right)  $
belongs to $\mathcal{I}\left(  \mathcal{B}\right)  $ and $B\cap
X\neq\emptyset$ for every $X\in\mathcal{W}_{2}.$ Clearly $B\cup%
{\textstyle\bigcup}
\mathcal{W}_{1}$ has the desired properties.
\end{proof}


\begin{center}
$\star\star\star$
\end{center}


\noindent We next define properties of ideals $\I$ relevant for the investigation of 
Mathias forcing $\MM (\I)$:

\begin{definition}
Let $\mathcal{I}$ be an ideal in $\omega.$
\begin{enumerate}
\item We say $\mathcal{I}$ is \emph{Canjar }if and only if for every $\left\{
X_{n}\mid n\in\omega\right\}  \subseteq\left(  \mathcal{I}^{<\omega}\right)
^{+}$ there are $Y_{n}\in\left[  X_{n}\right]  ^{<\omega}$ such that $%
{\textstyle\bigcup\limits_{n\in\omega}}
Y_{n}\in\left(  \mathcal{I}^{<\omega}\right)  ^{+}$.
\item We say $\mathcal{I}$ is \emph{Hurewicz }if and only if for every
$\left\{  X_{n}\mid n\in\omega\right\}  \subseteq\left(  \mathcal{I}^{<\omega
}\right)  ^{+}$ there are $Y_{n}\in\left[  X_{n}\right]  ^{<\omega}$ such that
$%
{\textstyle\bigcup\limits_{n\in A}}
Y_{n}\in\left(  \mathcal{I}^{<\omega}\right)  ^{+}$ for every $A\in\left[
\omega\right]  ^{\omega}.$
\end{enumerate}
\end{definition}

Clearly, every Hurewicz ideal is Canjar. Moreover, 
every $F_{\sigma}$ ideal is Hurewicz~\cite{MobandMad}, and every Borel Canjar ideal is
$F_{\sigma}$ \cite{MathiasForcingandCombinatorialCoveringPropertiesofFilters,
CanjarFilters}, so that for Borel ideals, $F_\sigma$, Canjar, and Hurewicz agree.
In general, the two notions are quite different: Canjar~\cite{Canjar} constructed {\em Canjar ultrafilters}
(i.e. ultrafilters whose dual maximal ideals are Canjar) under \textsf{CH}, while it
is easy to see that no maximal ideal can be Hurewicz.

If $\mathcal{I}$ is an ideal, we denote by $\mathbb{M}\left(  \mathcal{I}\right)
$ the \emph{Mathias forcing with }$\mathcal{I}$, that is, the set of all pairs $\left(
s,A\right)  $ such that $s\in\left[  \omega\right]  ^{<\omega}$ and
$A\in\mathcal{I}$, ordered by $\left(  s,A\right)  \leq\left(
t,B\right)  $ if $t\subseteq s,$ $B\subseteq A$ and $\left(  s\setminus
t\right)  \cap B=\emptyset,$ where $\left(  s,A\right) , \left(t,B\right)  \in \MM (\I)$.
It is easy to see that $\mathbb{M}\left(\mathcal{I}\right)  $ destroys the tallness of $\mathcal{I}.$ 

We mention the following important results regarding Canjar and Hurewicz ideals:
\begin{itemize}
\item $\mathcal{I}$ is Canjar if and only if $\mathbb{M}\left(  \mathcal{I}\right)  $ does 
not add a dominating real~\cite{MathiasPrikryandLaverPrikryTypeForcing}.
\item $\mathcal{I}$ is Hurewicz if and only if $\mathbb{M}\left(\mathcal{I}\right)  $ preserves 
all unbounded families of the ground model~\cite{MathiasForcingandCombinatorialCoveringPropertiesofFilters}.
\item $\mathcal{I}$ is Canjar if and only if $\mathcal{I}$ is a Menger subspace of 
$\wp\left(  \omega\right)$~\cite{MathiasForcingandCombinatorialCoveringPropertiesofFilters}.
\item $\mathcal{I}$ is Hurewicz if and only if $\mathcal{I}$ is a Hurewicz subspace of 
$\wp\left(  \omega\right)$~\cite{MathiasForcingandCombinatorialCoveringPropertiesofFilters}.
\end{itemize}

For \textsf{MAD} families we have:

\begin{proposition}   \label{ShelahSteprans-Hurewicz}
Every Shelah-Stepr\={a}ns \textsf{MAD} family is Hurewicz.
\end{proposition}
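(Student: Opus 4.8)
The plan is to combine the characterization of Shelah-Stepr\={a}ns in Lemma~\ref{SScharacterization}(2) with the concrete structure of the ideal $\mathcal{I}=\mathcal{I}\left(\mathcal{A}\right)$. First I would reformulate the Hurewicz property: since each $Y_{n}$ is finite, the requirement ``$\bigcup_{n\in A}Y_{n}\in\left(\mathcal{I}^{<\omega}\right)^{+}$ for every $A\in\left[\omega\right]^{\omega}$'' is equivalent to demanding that for every $C\in\mathcal{I}$ the set $\left\{n\mid\forall s\in Y_{n}\;\left(s\cap C\neq\emptyset\right)\right\}$ is \emph{finite}, i.e.\ for every $C\in\mathcal{I}$ and all but finitely many $n$ some member of $Y_{n}$ is disjoint from $C$. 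Call this condition $(\star)$. The task becomes: given $\left\{X_{n}\mid n\in\omega\right\}\subseteq\left(\mathcal{I}^{<\omega}\right)^{+}$, produce $Y_{n}\in\left[X_{n}\right]^{<\omega}$ satisfying $(\star)$.

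Next I would recursively build pairwise disjoint sets $B_{k}\in\mathcal{I}$ with \emph{pairwise disjoint generator sets}. Having built $B_{0},\dots,B_{k-1}$, with fixed representations $B_{i}\subseteq A^{i}_{0}\cup\dots\cup A^{i}_{j_{i}}\cup F_{i}$ where $A^{i}_{l}\in\mathcal{A}$ and $F_{i}$ is finite, set $D_{k}=\bigcup_{i<k}\left(A^{i}_{0}\cup\dots\cup A^{i}_{j_{i}}\cup F_{i}\right)\in\mathcal{I}$ and let $X^{\left(k\right)}_{n}=\left\{s\in X_{n}\mid s\cap D_{k}=\emptyset\right\}$. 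Since $D_{k}\cup A\in\mathcal{I}$ for every $A\in\mathcal{I}$, each $X^{\left(k\right)}_{n}$ is again in $\left(\mathcal{I}^{<\omega}\right)^{+}$, so Lemma~\ref{SScharacterization}(2) applied to $\left\{X^{\left(k\right)}_{n}\mid n\in\omega\right\}$ yields $\tilde{B}_{k}\in\mathcal{I}$ with $X^{\left(k\right)}_{n}\cap\left[\tilde{B}_{k}\right]^{<\omega}$ infinite for every $n$; fix a representation of $\tilde{B}_{k}$ and put $B_{k}=\tilde{B}_{k}\setminus D_{k}$. Then the $B_{k}$ are pairwise disjoint, $X^{\left(k\right)}_{n}\cap\left[B_{k}\right]^{<\omega}$ is still infinite (members of $X^{\left(k\right)}_{n}$ avoid $D_{k}$), and no $A\in\mathcal{A}$ is a generator of two different $B_{k}$: if $A\cap B_{k}$ and $A\cap B_{i}$ were both infinite with $i<k$, then by almost disjointness $A$ would equal one of the generators of $B_{i}$, hence $A\subseteq D_{k}$, contradicting $B_{k}\cap D_{k}=\emptyset$. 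For each $n$ and each $k<n$ I then choose $s^{n}_{k}\in X^{\left(k\right)}_{n}\cap\left[B_{k}\right]^{<\omega}$ with $\min s^{n}_{k}\geq n$ (possible since this infinite subfamily of $\wp\left(B_{k}\right)$ cannot have all its members contained in a bounded interval), and set $Y_{n}=\left\{s^{n}_{k}\mid k<n\right\}$.

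Finally I would verify $(\star)$. Fix $C\in\mathcal{I}$, say $C\subseteq A_{1}\cup\dots\cup A_{j}\cup F$ with $A_{l}\in\mathcal{A}$ and $F$ finite. As the generator sets of the $B_{k}$ are pairwise disjoint, each $A_{l}$ has infinite intersection with at most one $B_{k}$, so there is $k_{0}\in\omega$ with $A_{l}\cap B_{k_{0}}$ finite for all $l\leq j$; then $M:=\max\left(C\cap B_{k_{0}}\right)$ is finite, and for every $n>\max\left(k_{0},M\right)$ the set $s^{n}_{k_{0}}\in Y_{n}$ satisfies $s^{n}_{k_{0}}\subseteq B_{k_{0}}$ and $\min s^{n}_{k_{0}}\geq n>M$, whence $s^{n}_{k_{0}}\cap C=s^{n}_{k_{0}}\cap\left(C\cap B_{k_{0}}\right)=\emptyset$. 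This gives $(\star)$, and hence the Hurewicz property for $\mathcal{I}\left(\mathcal{A}\right)$.

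The step I expect to be the main obstacle is arranging that the generator sets of the $B_{k}$ are pairwise disjoint: this is precisely what guarantees that the finitely many generators of an arbitrary $C\in\mathcal{I}$ can ``spoil'' only finitely many of the $B_{k}$, and it is achieved by the ``subtract $D_{k}$'' maneuver above together with the almost disjointness of $\mathcal{A}$. Everything else --- positivity of the $X^{\left(k\right)}_{n}$, the choice of $s^{n}_{k}$ with large minimum, and the reformulation of the Hurewicz property in terms of $(\star)$ --- is routine.
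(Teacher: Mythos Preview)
Your approach is essentially the same as the paper's: build $B_k\in\mathcal{I}(\mathcal{A})$ with pairwise disjoint ``supports'' in $\mathcal{A}$ by repeated use of Lemma~\ref{SScharacterization}, pick $Y_n$ to contain a subset of each $B_k$ for $k<n$, and verify $(\star)$ by noting that any $C\in\mathcal{I}(\mathcal{A})$ has finite intersection with some $B_{k_0}$.

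There is, however, one genuine gap. You claim you can choose $s^n_k\in X^{(k)}_n\cap[B_k]^{<\omega}$ with $\min s^n_k\geq n$, justified by ``this infinite subfamily of $\wp(B_k)$ cannot have all its members contained in a bounded interval''. That argument only shows that $\max s$ is unbounded over the family, not that $\min s$ is. Nothing in your construction rules out, say, every member of $X^{(k)}_n\cap[B_k]^{<\omega}$ containing a fixed point of $B_k$; the positivity of $X^{(k)}_n$ prevents this for $X^{(k)}_n$ itself, but not for its intersection with $[B_k]^{<\omega}$. Since your verification of $(\star)$ uses $\min s^n_{k_0}>M$ in an essential way (to get $s^n_{k_0}\cap(C\cap B_{k_0})=\emptyset$), this is not cosmetic.

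The fix is easy and is exactly what the paper builds in: at stage $k$, apply Lemma~\ref{SScharacterization} not just to $\{X^{(k)}_n:n\in\omega\}$ but to the countable family $\{\,\{s\in X^{(k)}_n:\min s\geq j\}:n,j\in\omega\,\}$. Each of these is still in $(\mathcal{I}^{<\omega})^+$ because $[0,j)\cup D_k\in\mathcal{I}$. The resulting $\tilde B_k$ then satisfies that $\{s\in X^{(k)}_n:\min s\geq j,\ s\subseteq B_k\}$ is infinite for all $n,j$, which is precisely the paper's condition that $X_n\cap[B_k\setminus j]^{<\omega}$ be infinite, and gives you the $s^n_k$ with $\min s^n_k\geq n$ that you need.
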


\begin{proof}
Let $\mathcal{A}$ be a Shelah-Stepr\={a}ns \textsf{MAD} family and $\left\{
X_{n}\mid n\in\omega\right\}  \subseteq\left(  \mathcal{I}\left(
\mathcal{A}\right)  ^{<\omega}\right)  ^{+}.$ Note that if $B\in
\mathcal{I}\left(  \mathcal{A}\right)  $ then $\left\{  X_{n}\setminus\left[
B\right]  ^{<\omega}\mid n\in\omega\right\}  \subseteq\left(  \mathcal{I}%
\left(  \mathcal{A}\right)  ^{<\omega}\right)  ^{+}.$ Using the  Lemma~\ref{SScharacterization}, 
we can thus recursively
find $\left\{  B_{n}\mid n\in\omega\right\}  \subseteq\mathcal{I}\left(
\mathcal{A}\right)  $ with the following properties:
\begin{enumerate}
\item If $n\neq m$ then there is no $A\in\mathcal{A}$ that has infinite
intersection with both $B_{n}$ and $B_{m}.$
\item $X_n \cap [B_m \sem k]^{< \omega}$ is infinite for all $n,m,k \in \omega$.
\end{enumerate}
For every $n\in\omega$ let $Y_{n}\in\left[  X_{n}\right]  ^{<\omega}$ such
that $Y_{n}\cap\left[  B_{i}\right]  ^{<\omega}\neq\emptyset$ for every $i\leq
n$.  
It is then easy to see that if $D\in\left[  \omega\right]  ^{\omega}$ then
${\textstyle\bigcup\limits_{n\in D}}
Y_{n}\in\left(  \mathcal{I}\left(  \mathcal{A}\right)  ^{<\omega}\right)
^{+}.$
\end{proof}

This implication clearly fails for ideals in general because, for example,
\textsf{fin}$\times$\textsf{fin} is Shelah-Stepr\=ans, but not Canjar.


\begin{center}
$\star\star\star$
\end{center}


\noindent We will consider some more properties of ideals:

\begin{definition}
Let $\mathcal{J}$ be an ideal.

\begin{enumerate}
\item $\mathcal{J}$ is \emph{Laflamme }if $\mathcal{J}$ can not be extended to
an $F_{\sigma}$ ideal$.$

\item $\mathcal{J}$ is \emph{not-P }if $\mathcal{J}$ can not be
extended to an analytic $P$-ideal$.$

\item $\mathcal{J}$ is \emph{\textsf{fin}$\times$\textsf{fin}-like }if $\mathcal{J}
\nleq_{K}\mathcal{I}$ for every analytic
ideal $\I$ such that \textsf{fin}$\times$\textsf{fin }$\nleq_{K}\mathcal{I}$.
\end{enumerate}
\end{definition}

By results of Solecki, Laczkovich and Rec\l aw it can be proved that no
$F_{\sigma\delta}$ ideal is Kat\v{e}tov above \textsf{fin}$\times$\textsf{fin}
(see \cite{FiltersandSequences} and
\cite{IdeallimitsofSequencesofContinuousFunctions}).
Since every analytic $P$-ideal is $F_{\sigma\delta}$ (see
\cite{AnalyticIdealsandtheirApplications}),  it follows that every
\textsf{fin}$\times$\textsf{fin-}like ideal is not-P. Furthermore, in
\cite{KatetovandKatetovBlassOrdersFsigmaIdeals} it was proved that every
$F_{\sigma}$ ideal is contained in an analytic $P$-ideal, so every
not-P ideal is Laflamme. By Theorem~\ref{SSdefinable}, every Shelah-Stepr\=ans
ideal is \textsf{fin}$\times$\textsf{fin-}like.\footnote{This uses \textsf{LC}; note that if in the
definition of \textsf{fin}$\times$\textsf{fin-}like we only quantified over Borel ideals,
then this implication would be true in \textsf{ZFC} by Theorem~\ref{SSBorel}.}

We have the following:

\begin{lemma}    \label{Laflamme-notP-char}
Let $\mathcal{J}$ be an ideal.

\begin{enumerate}
\item $\mathcal{J}$ is Laflamme if and only if $\mathcal{J} \nleq_{K}\mathcal{I}$ for every $F_{\sigma}%
$-ideal $\mathcal{I}.$

\item $\mathcal{J}$ is not-P if and only if $\mathcal{J} \nleq_{KB}\mathcal{I}$ for every analytic
$P$-ideal $\mathcal{I}.$
\end{enumerate}
\end{lemma}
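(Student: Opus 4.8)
The plan is to prove both biconditionals by proving the contrapositives in the nontrivial directions, since the forward implications are immediate from the definitions (if $\mathcal{J} \leq_K \mathcal{I}$ with $\mathcal{I}$ an $F_\sigma$-ideal, then $\mathcal{I}$ is itself an $F_\sigma$ ideal extending nothing — wait, we need an actual extension). Let me reconsider: for part (1), if $\mathcal{J}$ extends to an $F_\sigma$ ideal $\mathcal{I}$, then the identity map witnesses $\mathcal{J} \leq_K \mathcal{I}$, giving one direction. Conversely, suppose $f : \omega \to \omega$ is a Kat\v{e}tov-morphism from $(\omega, \mathcal{J})$ to $(\omega, \mathcal{I})$ with $\mathcal{I}$ an $F_\sigma$-ideal; I would push $\mathcal{I}$ forward along $f$ and combine with $\mathcal{J}$. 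Concretely, set $\mathcal{I}^{*} = \{ B \subseteq \omega : f^{-1}[B] \in \mathcal{I}^{\star}\text{-dual-generated}\}$... the cleaner route: let $\mathcal{K}$ be the ideal generated by $\mathcal{J} \cup \{ f^{-1}[A] : A \in \mathcal{I}\}$; but that need not be $F_\sigma$. Instead, observe that $\mathcal{J} \subseteq \{ B : f^{-1}[B] \in \mathcal{I} \}$ is false in general. The right object is: $\mathcal{J} \leq_K \mathcal{I}$ means $f^{-1}[A] \in \mathcal{J}$ for $A \in \mathcal{I}$; this says $\mathcal{I} \subseteq \{ A : f^{-1}[A] \in \mathcal{J}\} =: f_*(\mathcal{J})$, which is an ideal extending $\mathcal{I}$, hence extending... no. Let me think once more.

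The correct argument: the key is that $F_\sigma$ ideals are \emph{downward closed} under Kat\v{e}tov reductions in the following sense — if $\mathcal{J} \leq_K \mathcal{I}$ and $\mathcal{I}$ is an $F_\sigma$ ideal presented as $\mathrm{fin}(\varphi)$ for a lower semicontinuous submeasure $\varphi$, then define $\psi$ on $\wp(\omega)$ by $\psi(B) = \varphi(f^{-1}[B])$. One checks directly that $\psi$ is a lower semicontinuous submeasure (monotonicity, subadditivity, and $\psi(\emptyset)=0$ are inherited; lower semicontinuity needs that $f^{-1}[B \cap n] = f^{-1}[B] \cap f^{-1}[n]$ and $f^{-1}[n] \uparrow \omega$, together with lower semicontinuity of $\varphi$). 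Then $\mathrm{fin}(\psi) = \{ B : \varphi(f^{-1}[B]) < \infty \} = \{ B : f^{-1}[B] \in \mathrm{fin}(\varphi) = \mathcal{I}\}$, which contains $\mathcal{J}$ because $f$ is a Kat\v{e}tov-morphism. Thus $\mathrm{fin}(\psi)$ is an $F_\sigma$ ideal extending $\mathcal{J}$ (provided it is proper, i.e. $\psi(\omega) = \infty$, which holds since $\mathcal{J}$ is proper and is contained in it). This establishes part (1). For part (2), I would run the analogous argument with $\mathcal{I} = \mathrm{Exh}(\varphi)$ an analytic $P$-ideal: given a \emph{finite-to-one} Kat\v{e}tov-morphism $f$ from $(\omega,\mathcal{J})$ to $(\omega,\mathcal{I})$, define $\psi(B) = \varphi(f^{-1}[B])$. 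Lower semicontinuity of $\psi$ follows as before. The crucial point where finite-to-one is used: we need $\mathrm{Exh}(\psi) \supseteq \mathcal{J}$, i.e. for $B \in \mathcal{J}$ we need $\lim_n \psi(B \sem n) = \lim_n \varphi(f^{-1}[B \sem n]) = 0$; since $f$ is finite-to-one, $f^{-1}[B \sem n] = f^{-1}[B] \sem f^{-1}[n]$ and $f^{-1}[n]$ is a finite set that grows to $\omega$, so $f^{-1}[B \sem n] \searrow \emptyset$ along a cofinal-in-$[\omega]^{<\omega}$ sequence of co-restrictions, and $f^{-1}[B] \in \mathcal{I} = \mathrm{Exh}(\varphi)$ gives $\varphi(f^{-1}[B] \sem m) \to 0$; choosing $m$ with $f^{-1}[m] \supseteq f^{-1}[B] \cap m'$ appropriately yields the limit $0$. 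Hence $\mathrm{Exh}(\psi)$ is an analytic $P$-ideal extending $\mathcal{J}$, proving the contrapositive of part (2); the converse direction is again the identity embedding, but note it must be finite-to-one — and indeed the identity is finite-to-one, which is why part (2) is phrased with $\leq_{KB}$.

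I expect the main obstacle to be the finite-to-one subtlety in part (2): one must verify carefully that $\psi(B) = \varphi(f^{-1}[B])$ actually lies in the class of submeasures witnessing analytic $P$-ideal-hood, and in particular that $B \in \mathcal{J} \implies B \in \mathrm{Exh}(\psi)$, which genuinely requires $f$ to be finite-to-one (for a general $f$, $f^{-1}[B \sem n]$ need not shrink, and an infinite fiber could keep $\psi(B \sem n)$ bounded away from $0$). A secondary point to get right is that the resulting ideals are proper and contain all finite sets: properness of $\mathrm{fin}(\psi)$ resp. $\mathrm{Exh}(\psi)$ follows since they contain the proper ideal $\mathcal{J}$, and they contain the singletons since $f^{-1}[\{k\}]$ has finite $\varphi$-submeasure (indeed $f^{-1}[\{k\}] \in \mathcal{I}$ as $\mathcal{I} \supseteq \mathrm{Fin}$ and $f^{-1}[\{k\}]$ is a fiber — for $\mathrm{Exh}$ we also use that finite sets are in $\mathrm{Exh}(\psi)$ because finite-to-one makes the fibers over a single point... actually fibers can be infinite here in part (1); but that is fine, we only need $\varphi(f^{-1}[\{k\}]) < \infty$, which holds as $f^{-1}[\{k\}] = f^{-1}[\{k\}]$, an element of $\mathcal{I}$ since $\mathcal{J} \ni \emptyset$... hmm, $\{k\} \in \mathcal{I}$ so $f^{-1}[\{k\}] \in \mathcal{J} \subseteq \mathrm{fin}(\psi)$, done). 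I would also explicitly remark that the forward directions are trivial: if $\mathcal{J} \subseteq \mathcal{I}$ with $\mathcal{I}$ an $F_\sigma$-ideal (resp.\ analytic $P$-ideal) then the identity is a Kat\v{e}tov-morphism (resp.\ a finite-to-one Kat\v{e}tov-morphism) witnessing $\mathcal{J} \leq_K \mathcal{I}$ (resp.\ $\mathcal{J} \leq_{KB} \mathcal{I}$).

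Assembling: the proof is short once the submeasure transfer lemma "$\psi := \varphi \circ f^{-1}$ is lower semicontinuous submeasure, with $\mathrm{fin}(\psi) = f_*^{-1}(\mathrm{fin}(\varphi))$ and, when $f$ is finite-to-one, $\mathrm{Exh}(\psi) \supseteq f_*^{-1}(\mathrm{Exh}(\varphi))$" is in hand; I would state and prove that as the single computational core, then read off both parts.
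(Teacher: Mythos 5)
Your overall plan---replacing $\varphi$ by the pushforward $\psi = \varphi\circ f^{-1}$ and reading off $\mathrm{fin}(\psi)$ (resp.\ $\mathrm{Exh}(\psi)$) as an extension of $\mathcal{J}$---is the paper's argument in submeasure language; for part (1) the paper writes the same ideal directly as $\bigcup_n\{X: \varphi(f^{-1}(X))\leq n\}$, which is your $\mathrm{fin}(\psi)$ via Mazur's theorem. However, your treatment of part (2) misplaces the role of the finite-to-one hypothesis and leaves a genuine gap. You assert that $\mathcal{J}\subseteq\mathrm{Exh}(\psi)$ ``genuinely requires $f$ to be finite-to-one,'' but this inclusion in fact holds for arbitrary $f$: if $B\in\mathcal{J}$ then $f^{-1}(B)\in\mathrm{Exh}(\varphi)$, and given $\varepsilon > 0$ one picks $m$ with $\varphi(f^{-1}(B)\setminus m)<\varepsilon$ and then any $n > \max\{f(k): k < m\}$; since $m\cap f^{-1}(B) \subseteq f^{-1}(n)$, it follows that $\psi(B\setminus n) = \varphi(f^{-1}(B)\setminus f^{-1}(n)) \leq \varphi(f^{-1}(B)\setminus m) < \varepsilon$, using only that $f$ is a function.

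Where finite-to-one-ness is actually indispensable is in showing that $\mathrm{Exh}(\psi)$ is a \emph{proper} ideal, which you justify with ``properness \dots follows since they contain the proper ideal $\mathcal{J}$.'' That reasoning is backwards: an ideal containing a proper ideal may well equal $\wp(\omega)$. The correct argument is that $\psi(\omega\setminus n) = \varphi(\omega\setminus f^{-1}(n))$, and when $f$ is finite-to-one, $\omega\setminus f^{-1}(n)$ is cofinite, so $\psi(\omega\setminus n) \geq \inf_m\varphi(\omega\setminus m) > 0$ (the infimum is positive because $\omega\notin\mathrm{Exh}(\varphi)$); hence $\omega\notin\mathrm{Exh}(\psi)$. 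Without finite-to-one-ness this fails and $\mathrm{Exh}(\psi)$ can degenerate to $\wp(\omega)$, in which case it is not an analytic $P$-ideal, the not-P hypothesis gives you nothing, and the contrapositive of part (2) collapses. (For part (1), properness is immediate from $\psi(\omega)=\varphi(\omega)=\infty$ since $\omega\notin\mathrm{fin}(\varphi)$; the ``contains $\mathcal{J}$'' reasoning is again not a valid justification even though the conclusion happens to hold.) You should also watch the direction of your Kat\v{e}tov-morphisms: $\mathcal{J}\leq_K\mathcal{I}$ is witnessed by a morphism from $(\omega,\mathcal{I})$ to $(\omega,\mathcal{J})$, i.e.\ an $f$ with $f^{-1}(A)\in\mathcal{I}$ for $A\in\mathcal{J}$, whereas you repeatedly write the reverse; your computations implicitly use the correct property, but the phrasing should be fixed to match.
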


\begin{proof}
Clearly if $\mathcal{J} \nleq_{K}\mathcal{I}$ for every $F_{\sigma}$-ideal
$\mathcal{I}$ then $\mathcal{J}$ is Laflamme. So assume that $\mathcal{J}$ is
Laflamme, let $\mathcal{I}$ be an $F_{\sigma}$ ideal, and let $f:\omega
\longrightarrow\omega$. We must show that $f$ is not a Kat\v{e}tov-morphism
from $\left(  \omega,\mathcal{I}\right)  $ to $\left(  \omega,\mathcal{J} \right).$ Define $\mathcal{I'=}\left\{
X\mid f^{-1}\left(  X\right)  \in\mathcal{I}\right\}  .$ Let $\varphi$ be a
lower semicontinuous submeasure such that $\mathcal{I}=$ \textsf{fin}$\left(
\varphi\right)  .$ For every $n\in\omega\,\ $we define $C_{n}=\left\{
X\mid\varphi\left(  f^{-1}\left(  X\right)  \right)  \leq n\right\}  .$ It is
easy to see that each $C_{n}$ is a closed set and $\mathcal{I'}=%
{\textstyle\bigcup\limits_{n\in\omega}} C_{n}.$ Since $\mathcal{J} $ is not contained in
$\mathcal{I'}$ the result follows.

For the second part, it is clear that if $\mathcal{J} \nleq_{KB}\mathcal{I}$ for every analytic $P$-ideal $\mathcal{I}$ then
$\mathcal{J}$ is not-P. Assume $\mathcal{J}$ is not-P, let $\mathcal{I}$ be an analytic $P$-ideal, and let
$f:\omega\longrightarrow\omega$ be finite to one. We must show that $f$ is not a
Kat\v{e}tov-morphism from $\left(  \omega,\mathcal{I}\right)  $ to $\left(
\omega,\mathcal{J}  \right)  .$ Let $\varphi$ be a
lower semicontinuous submeasure such that $\mathcal{I}=$ \textsf{Exh}$\left(
\varphi\right)  .$ Define $\sigma:\wp\left(  \omega\right)
\longrightarrow\mathbb{R\cup}\left\{  \infty\right\}  $  by
$\sigma\left(  A\right)  =\varphi\left(  f^{-1}\left(  A\right)  \right)  .$
It is easy to see that $\sigma$ is a lower semicontinuous submeasure (it is a submeasure by definition and the lower semicontinuity follows since $f$ is finite to one). Since
$\mathcal{J}$ is not-P, there is $B\in\mathcal{J}$ such that $B\notin \textsf{Exh}\left(  \sigma\right)  $
which implies that $f^{-1}\left(  B\right)  \notin\mathcal{I}.$
\end{proof}

Note in this context that we could have defined ``\textsf{fin}$\times$\textsf{fin}-like" as we did define 
``Laflamme" and ``not-P": namely, $\mathcal{J}$ is \textsf{fin}$\times$\textsf{fin}-like  if $\mathcal{J}
\not\subseteq \mathcal{I}$ for every analytic
ideal $\I$ such that \textsf{fin}$\times$\textsf{fin }$\nleq_{K}\mathcal{I}$. To see the nontrivial
direction of this equivalence, assume there is an analytic ideal $\I$ such that \textsf{fin}$\times$\textsf{fin} $\nleq_K  \I$
and $\J \leq_K \I$ as witnessed by the Kat\v etov reduction $f$. Define $\I ' = \{ A | f^{-1} [A] \in \I \}$
and note that $\J \subseteq \I' \leq_K \I$, that \textsf{fin}$\times$\textsf{fin} $\nleq_K  \I '$ and that $\I'$ is still analytic.

Also notice that all three properties defined here are (trivially) upwards closed in the Kat\v etov order.

We now investigate the connection between \textsf{fin}$\times$\textsf{fin}-likeness and weak tightness. Define $C_{n}=\left\{  \left(  n,m\right)
\mid m\in\omega\right\}  .$

\begin{definition}
We define the ideal $\mathcal{WT}$ on $\omega\times\omega$ as follows:

\begin{enumerate}
\item $\mathcal{WT}\upharpoonright C_{n}$ is a copy of \textsf{fin}$\times
$\textsf{fin} (for every $n\in\omega$). 

\item $A \in \mathcal{WT}$ iff $A \cap C_n \in \mathcal{WT}\upharpoonright C_{n}$ for all $n$ and $A \cap C_n$ is finite for all but finitely many $n$.
\end{enumerate}
\end{definition}
Clearly, $\mathcal{WT}$ is not weakly tight as witnessed by the $C_n$.
Also note that if $B\subseteq\omega\times\omega$ has infinite intersection with
infinitely many columns then $B\in\mathcal{WT}^{+}.$\ \ \ 

\begin{proposition}   \label{WTfintimesfin}
$\mathcal{WT}$ is strictly Kat\v{e}tov below \textsf{fin}$\times$%
\textsf{fin}$.$
\end{proposition}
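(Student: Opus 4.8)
The plan is to prove the two directions separately. That $\mathcal{WT} \leq_K \textsf{fin}\times\textsf{fin}$ is immediate: by the usual description of \textsf{fin}$\times$\textsf{fin}, a set $A \subseteq \omega\times\omega$ lies in \textsf{fin}$\times$\textsf{fin} precisely when $A \cap C_n$ is finite for all but finitely many $n$, which is exactly clause (2) in the definition of $\mathcal{WT}$. Hence $\mathcal{WT} \subseteq \textsf{fin}\times\textsf{fin}$ as ideals on $\omega\times\omega$, so the identity map on $\omega\times\omega$ is a Kat\v{e}tov-morphism (in fact a finite-to-one one) from $(\omega\times\omega, \textsf{fin}\times\textsf{fin})$ to $(\omega\times\omega, \mathcal{WT})$, giving $\mathcal{WT} \leq_K \textsf{fin}\times\textsf{fin}$.

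The real content is the strictness, i.e.\ that \textsf{fin}$\times$\textsf{fin} $\nleq_K \mathcal{WT}$; this is not obvious a priori, since $\mathcal{WT}$ is assembled out of copies of \textsf{fin}$\times$\textsf{fin} and one might fear $\mathcal{WT} \simeq_K \textsf{fin}\times\textsf{fin}$. My plan is to use the Proposition of Laflamme stated above: its item~1 says that \textsf{fin}$\times$\textsf{fin} $\leq_K \mathcal{I}$ is equivalent to Player $\mathsf{I}$ having a winning strategy in the game $\mathcal{L}(\mathcal{I})$, so it suffices to show that $\mathsf{I}$ has \emph{no} winning strategy in $\mathcal{L}(\mathcal{WT})$. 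I will in fact prove the stronger assertion that Player $\mathsf{II}$ can win against an \emph{arbitrary} sequence of moves of $\mathsf{I}$. Fix a surjection $j : \omega \to \omega$ all of whose fibres are infinite. Whenever $\mathsf{I}$ plays $A_n \in \mathcal{WT}$ at round $n$, Player $\mathsf{II}$ answers with the singleton $s_n = \{x_n\}$, where $x_n$ is chosen in $C_{j(n)} \setminus A_n$ and distinct from $x_0, \dots, x_{n-1}$. Such a choice is always available because $A_n \cap C_{j(n)}$ belongs to $\mathcal{WT} \upharpoonright C_{j(n)}$, which by clause (1) is a copy of \textsf{fin}$\times$\textsf{fin}, hence a \emph{proper} ideal on the column $C_{j(n)}$ containing all finite sets, so $C_{j(n)} \setminus A_n$ is infinite; thus $s_n \in [\,\omega \setminus A_n\,]^{<\omega}$ is a legal move. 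At the end of the run $B := \bigcup_n s_n = \{x_n \mid n \in \omega\}$ satisfies $B \cap C_\ell \supseteq \{x_n \mid j(n) = \ell\}$, which is infinite for every $\ell$ since the $x_n$ are pairwise distinct and $j^{-1}(\ell)$ is infinite. So $B$ has infinite intersection with infinitely many columns, whence $B \in \mathcal{WT}^+$ (as observed right after the definition of $\mathcal{WT}$), and $\mathsf{II}$ wins.

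The argument is short, and the single step I expect to require care — where the difference between $\mathcal{WT}$ and \textsf{fin}$\times$\textsf{fin} is genuinely exploited — is exactly that $\mathsf{I}$ can never cover, or even almost cover, a column $C_n$, because $\mathcal{WT} \upharpoonright C_n$ stays a proper ideal. For \textsf{fin}$\times$\textsf{fin} itself a single column \emph{is} a member of the ideal, so this strategy of $\mathsf{II}$ would be illegal there — which is as it must be, since \textsf{fin}$\times$\textsf{fin} $\leq_K$ \textsf{fin}$\times$\textsf{fin}. One should also be careful to quote Laflamme's Proposition in the correct direction, namely that failure of ``$\mathsf{I}$ has a winning strategy in $\mathcal{L}(\mathcal{WT})$'' is precisely failure of \textsf{fin}$\times$\textsf{fin} $\leq_K \mathcal{WT}$. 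Combining the two halves yields $\mathcal{WT} <_K \textsf{fin}\times\textsf{fin}$, as claimed.
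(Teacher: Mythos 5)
Your proof is correct and takes essentially the same route as the paper: the paper also witnesses $\mathcal{WT}\leq_K\textsf{fin}\times\textsf{fin}$ by the identity and then shows $\mathsf{II}$ has a winning strategy in $\mathcal{L}(\mathcal{WT})$ by making the final set meet every column infinitely (using that no $C_n$ lies in $\mathcal{WT}$), invoking Laflamme's characterization for the strict part. You merely spell out the strategy that the paper labels ``easy''.
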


\begin{proof}
Note that the identity mapping witnesses $\mathcal{WT}\leq_{K}$\textsf{fin}%
$\times$\textsf{fin}$.$ Now, we will show $\mathsf{II}\ $has a winning
strategy in $\mathcal{L}\left(  \mathcal{WT}\right)  .$ This is easy, since
no $C_{n}$ belongs to $\mathcal{WT}$ and therefore $\mathsf{II}\ $can play in such a way that the
set she constructed at the end intersects infinitely often all the $C_{n},$ so it
can not be an element of $\mathcal{WT}.$
\end{proof}

For \textsf{MAD} families we have the following characterization:

\begin{lemma}
If $\mathcal{A}$ is a \textsf{MAD} family then $\mathcal{A}$ is weakly tight
if and only if $\mathcal{I}\left(  \mathcal{A}\right)  $ $\mathcal{\nleq}_{K}$
$\mathcal{WT}.$
\end{lemma}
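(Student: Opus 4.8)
The plan is to prove the two directions separately, using the combinatorial description of $\mathcal{WT}$ together with the maximality of $\mathcal{A}$.

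\medskip

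\textbf{($\Leftarrow$, contrapositive.)} Suppose $\mathcal{A}$ is not weakly tight. Then there is a family $\{X_n \mid n \in \omega\} \subseteq \mathcal{I}(\mathcal{A})^+$ such that for every $A \in \mathcal{I}(\mathcal{A})$ the set $\{n \mid |A \cap X_n| = \omega\}$ is finite. By refining (replacing each $X_n$ by a subset and using that $\mathcal{A}$ is \textsf{MAD}, so each $X_n$ is covered mod finite by countably many members of $\mathcal{A}$ together with a finite set) I would first arrange that each $X_n$ is almost contained in $\bigcup_{k} A^n_k$ for some countable $\{A^n_k \mid k \in \omega\} \subseteq \mathcal{A}$, and in fact one can decompose $X_n = \bigsqcup_k X^n_k$ with $X^n_k \subseteq A^n_k$. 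The natural map sends the ``column'' $C_n$ of $\mathcal{WT}$ to $X_n$, with the internal \textsf{fin}$\times$\textsf{fin} structure on $C_n$ matching the decomposition $X_n = \bigsqcup_k X^n_k$ into pieces living on distinct members of $\mathcal{A}$. I would define $f : \omega \to \omega\times\omega$ so that $f$ restricted to (a copy of) each piece of the $n$-th column is a bijection onto $X^n_k$. The key verification is that $f$ is a Kat\v{e}tov-morphism from $(\omega\times\omega, \mathcal{WT})$ to $(\omega, \mathcal{I}(\mathcal{A}))$: given $A \in \mathcal{I}(\mathcal{A})$, I must check $f^{-1}(A) \in \mathcal{WT}$, i.e. $f^{-1}(A)$ meets only finitely many columns $C_n$ in an infinite set and each column-trace lies in the corresponding copy of \textsf{fin}$\times$\textsf{fin}. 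The first clause is precisely the weak-tightness failure assumption (infinite trace on $C_n$ corresponds to $|A \cap X_n| = \omega$); the second follows because $A$, being in $\mathcal{I}(\mathcal{A})$, is covered by finitely many members of $\mathcal{A}$ plus a finite set, so within a single column it hits only finitely many of the pieces $X^n_k$ infinitely, which is exactly the \textsf{fin}$\times$\textsf{fin} condition. Hence $\mathcal{I}(\mathcal{A}) \leq_K \mathcal{WT}$.

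\medskip

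\textbf{($\Rightarrow$, contrapositive.)} Suppose $\mathcal{I}(\mathcal{A}) \leq_K \mathcal{WT}$, witnessed by a Kat\v{e}tov-morphism $f : \omega\times\omega \to \omega$. I want to produce $\{X_n \mid n \in \omega\} \subseteq \mathcal{I}(\mathcal{A})^+$ witnessing failure of weak tightness. Set $X_n = f[C_n]$. First I would check each $X_n \in \mathcal{I}(\mathcal{A})^+$: if $X_n \in \mathcal{I}(\mathcal{A})$ then $f^{-1}(X_n) \in \mathcal{WT}$; but $f^{-1}(X_n) \supseteq C_n$ and $C_n \notin \mathcal{WT}$, a contradiction. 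Now take any $A \in \mathcal{I}(\mathcal{A})$; then $f^{-1}(A) \in \mathcal{WT}$, so $f^{-1}(A)$ has finite intersection with all but finitely many columns $C_n$, hence $f^{-1}(A) \cap C_n$ is finite for all but finitely many $n$, which forces $|A \cap X_n| = |A \cap f[C_n]| < \omega$ for all but finitely many $n$ (since $A \cap f[C_n] \subseteq f[f^{-1}(A) \cap C_n]$). Thus $\{n \mid |A \cap X_n| = \omega\}$ is finite for every $A \in \mathcal{I}(\mathcal{A})$, so $\mathcal{A}$ is not weakly tight.

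\medskip

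The main obstacle is the forward setup in the ($\Leftarrow$) direction: turning the abstract weak-tightness failure into a clean coordinatized form that genuinely matches the two-layer structure of $\mathcal{WT}$ (the outer $\emptyset\times$\textsf{fin}-style control across columns, and the inner \textsf{fin}$\times$\textsf{fin} inside each column). The point is that membership of $A \in \mathcal{I}(\mathcal{A})$ in $\mathcal{WT}$ after pullback needs \emph{both} that $A$ hits only finitely many $X_n$ infinitely (weak tightness failure) \emph{and} that within each $X_n$ the trace of $A$ respects a \textsf{fin}$\times$\textsf{fin} pattern — and the latter is automatic because any $A \in \mathcal{I}(\mathcal{A})$ is a finite union of (pieces of) members of $\mathcal{A}$ mod finite, so it can only infinitely meet finitely many of the pairwise almost disjoint pieces $X^n_k$, and on the one it does meet infinitely its trace is again just a subset of a member of $\mathcal{A}$, giving the needed nested structure. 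Carefully choosing the almost disjoint decomposition of each $X_n$ and the bookkeeping for the map $f$ is the only genuinely fiddly part; everything else is a direct unwinding of definitions.
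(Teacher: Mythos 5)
Your proof is correct and follows essentially the same route as the paper: for one direction you pull $\{f[C_n]\}$ back through a hypothetical Kat\v etov morphism (the paper instead just invokes that weak tightness is upwards closed under $\leq_K$), and for the harder direction you build the morphism column-by-column from the MADness of $\mathcal{A}\upharpoonright X_n$, which is exactly how the paper obtains its Kat\v etov--Blass maps $h_n$. One small slip: you write ``define $f:\omega\to\omega\times\omega$'' where you mean $f:\omega\times\omega\to\omega$, but the rest of the argument makes the intended direction clear.
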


\begin{proof}
We will prove that $\mathcal{A}$ is not weakly tight if and only if
$\mathcal{I}\left(  \mathcal{A}\right)  $ $\mathcal{\leq}_{K}$ $\mathcal{WT}.$
One direction follows from the fact that $\mathcal{WT}$ is not weakly tight and from the
upwards closure of weak tightness in the Kat\v etov order (see Lemma~\ref{tightKatetovupwards}).

So assume $\mathcal{A}$ is not weakly tight. So there is a partition
$X=\left\{  X_{n}\mid n\in\omega\right\}  \subseteq\mathcal{I}\left(
\mathcal{A}\right)  ^{+}$ such that if $A\in\mathcal{A}$ then $A\cap X_{n}$ is
finite for almost all $n\in\omega.$ Since $\mathcal{A}\upharpoonright X_{n}$
is an \textsf{AD} family, we know $\mathcal{A}\upharpoonright X_{n}$
$\mathcal{\leq}_{KB}$ \textsf{fin}$\times$\textsf{fin}, so for each $n\in\omega$
fix a Kat\v{e}tov-Blass-morphism $h_{n}:C_{n}\longrightarrow X_{n}$ from $\left(
C_{n},\mathcal{WT\upharpoonright}C_{n}\right)  $ to $\left(  X_{n}%
,\mathcal{A}\upharpoonright X_{n}\right)$ (in fact, it is well-known that we can choose
one to one $h_n$).  Letting $h={\textstyle\bigcup}
h_{n}$ we will show $h$ is a Kat\v{e}tov-morphism from $\left(  \omega
\times\omega,\mathcal{WT}\right)  $ to $\left(  \omega,\mathcal{A}\right)  .$
If $A\in\mathcal{A}$ then we can find $B\in X^{\perp}$ and a finite set
$F\subseteq\omega$ such that $A={\textstyle\bigcup\limits_{n\in F}}
\left(  A\cap X_{n}\right)  \cup B.$ Clearly $h^{-1}\left(  B\right)  \in$
$\mathcal{WT}$ since $h^{-1}\left(  B\right)  \in$ $\emptyset\times
$\textsf{fin} and $h^{-1}\left(  A\cap X_{n}\right)  =h_{n}^{-1}\left(  A\cap
X_{n}\right)  $ which is an element of $\mathcal{WT}$ since $h_{n}$ is a
Kat\v{e}tov-morphism. Therefore $h^{-1}\left(  A\right)  \in\mathcal{WT}.$
\end{proof}

We conclude:

\begin{corollary}
If $\mathcal{A}$ is \textsf{fin}$\times$\textsf{fin}-like then $\mathcal{A}$
is weakly tight.
\end{corollary}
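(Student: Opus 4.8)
The plan is to derive this from the preceding lemma characterizing weak tightness of \textsf{MAD} families (the statement ``$\mathcal{A}$ is weakly tight iff $\mathcal{I}(\mathcal{A}) \nleq_K \mathcal{WT}$'') together with the fact that $\mathcal{WT}$ lies strictly below $\textsf{fin}\times\textsf{fin}$ in the Kat\v{e}tov order (Proposition~\ref{WTfintimesfin}). Suppose $\mathcal{A}$ is \textsf{fin}$\times$\textsf{fin}-like. To show $\mathcal{A}$ is weakly tight, by the lemma it suffices to verify that $\mathcal{I}(\mathcal{A}) \nleq_K \mathcal{WT}$.

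First I would observe that $\mathcal{WT}$ is an analytic ideal: it is built from countably many copies of the analytic (indeed $F_{\sigma\delta}$) ideal $\textsf{fin}\times\textsf{fin}$ on the columns $C_n$, with the extra requirement that all but finitely many columns meet $A$ in a finite set — this is clearly a Borel (in fact $F_{\sigma\delta}$) condition, so $\mathcal{WT}$ is analytic. Next, by Proposition~\ref{WTfintimesfin} we have $\textsf{fin}\times\textsf{fin} \nleq_K \mathcal{WT}$ (indeed $\mathcal{WT} <_K \textsf{fin}\times\textsf{fin}$, and if we had $\textsf{fin}\times\textsf{fin} \leq_K \mathcal{WT}$ then combined with $\mathcal{WT} \leq_K \textsf{fin}\times\textsf{fin}$ we would get Kat\v{e}tov equivalence, contradicting strictness). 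So $\mathcal{WT}$ is an analytic ideal with $\textsf{fin}\times\textsf{fin} \nleq_K \mathcal{WT}$. By the very definition of \textsf{fin}$\times$\textsf{fin}-like applied to $\mathcal{I}(\mathcal{A})$, we conclude $\mathcal{I}(\mathcal{A}) \nleq_K \mathcal{WT}$, which is exactly what we needed.

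The only mildly subtle point — and the one place I would be careful — is making sure the definitions line up: \textsf{fin}$\times$\textsf{fin}-like says $\mathcal{J} \nleq_K \mathcal{I}$ for \emph{every analytic} ideal $\mathcal{I}$ with $\textsf{fin}\times\textsf{fin} \nleq_K \mathcal{I}$, so I must confirm that $\mathcal{WL} := \mathcal{WT}$ is genuinely analytic (not just Borel-coded in some weaker sense) and that the hypothesis ``$\mathcal{A}$ is \textsf{fin}$\times$\textsf{fin}-like'' is being used for the ideal $\mathcal{J} = \mathcal{I}(\mathcal{A})$. Both are immediate once spelled out. I do not expect any real obstacle here; the corollary is essentially a one-line consequence of the machinery already assembled, and the proof is simply a matter of chaining Proposition~\ref{WTfintimesfin}, the analyticity of $\mathcal{WT}$, the definition of \textsf{fin}$\times$\textsf{fin}-like, and the weak-tightness characterization lemma.
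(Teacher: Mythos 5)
Your proof is correct and matches the paper's (implicit) argument: the corollary is exactly the chain of the weak-tightness characterization lemma, Proposition~\ref{WTfintimesfin}, the Borel-ness of $\mathcal{WT}$, and the definition of \textsf{fin}$\times$\textsf{fin}-like applied to $\mathcal{I}(\mathcal{A})$. One harmless slip: $\textsf{fin}\times\textsf{fin}$ is $\mathbf{\Sigma}^0_3$ (not $F_{\sigma\delta}$), so $\mathcal{WT}$ sits somewhat higher in the Borel hierarchy than you claim, but all that matters for the argument is that $\mathcal{WT}$ is analytic, which is clear.
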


This implication fails for ideals in general:

\begin{proposition}
There is a \textsf{fin}$\times$\textsf{fin}-like ideal that is not weakly tight.
\end{proposition}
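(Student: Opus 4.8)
The plan is to exhibit a concrete ideal $\J$ on a countable set that fails to be weakly tight but which, nevertheless, satisfies the strong Katětov-incomparability condition defining ``fin$\times$fin-like''. A natural candidate is to take an ideal built over $\omega\times\omega$ that looks, on each column, like a maximal ideal (dual to a selective ultrafilter), but is assembled across columns the way $\mathcal{WT}$ is assembled from copies of fin$\times$fin. Concretely, fix (under a suitable hypothesis, or outright in ZFC if one uses a nonprincipal ultrafilter obtained from maximality) ultrafilters $\U_n$ on each column $C_n=\{(n,m)\mid m\in\omega\}$, and let $\J$ consist of those $A\subseteq\omega\times\omega$ such that $A\cap C_n\in\U_n^\ast$ for all $n$ and $A\cap C_n$ is finite for all but finitely many $n$. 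As with $\mathcal{WT}$, the columns $C_n$ witness immediately that $\J$ is not weakly tight: given any $A\in\J$, the set $A\cap C_n$ is finite — in particular not infinite — for all but finitely many $n$, so $\{n\mid |A\cap C_n|=\omega\}$ is finite rather than infinite. (Recall that any $B$ meeting infinitely many columns infinitely lies in $\J^+$, exactly as in the remark preceding Proposition~\ref{WTfintimesfin}.)

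The substantive work is verifying that $\J$ is fin$\times$fin-like, i.e.\ that $\J\not\leq_K\I$ for every analytic ideal $\I$ with fin$\times$fin$\not\leq_K\I$. The key idea is that the ``maximal-ideal-on-each-column'' structure of $\J$ forces any Katětov morphism witnessing $\J\leq_K\I$ to effectively transfer the combinatorial content of a selective ultrafilter into $\I$. Suppose $f\colon\omega\to\omega$ (re-coordinatized, $f\colon\dom(\I)\to\omega\times\omega$) is a Katětov morphism from $(\dom(\I),\I)$ to $(\omega\times\omega,\J)$. For each $n$, the preimage $f^{-1}[C_n]$ either lies in $\I$ or not; I would first argue that $f^{-1}[C_n]\in\I^+$ for infinitely many $n$ (otherwise $f^{-1}$ of a cofinite union of columns, which is in $\J$, would be in $\I$, yet one can choose an element of $\J^+$ concentrated on those columns whose preimage then lies in $\I$, contradicting that $f$ is a morphism). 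Restricting to those columns and pulling back the ultrafilters $\U_n$ along $f\restriction f^{-1}[C_n]$, one gets that $\I$ restricted to each such preimage has the property that it is not contained in any single set from a dual filter generated by the pullback — this is getting at the claim that $\I$ Katětov-dominates an ideal of the fin$\times$fin type. The cleanest route is to show directly that $f$ pulls fin$\times$fin into $\I$: use part (1) of Laflamme's Proposition, specifically the equivalence (d)~$\Leftrightarrow$~(c), to say that fin$\times$fin$\leq_K\I$ iff every countable subset of $\I^\ast$ has a pseudointersection in $\I^+$, and derive the latter by feeding a descending sequence of $\I^\ast$-sets through the morphism into the columnwise ultrafilter structure of $\J$, where the required pseudointersection is manufactured column by column (each $\U_n$, being an ultrafilter, easily absorbs countably many conditions when combined with a diagonal fin$\times$fin argument across columns).

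The main obstacle I anticipate is precisely this last step: translating the statement ``$f$ is a Katětov morphism into $\J$'' into ``every countable descending sequence from $\I^\ast$ has a pseudointersection in $\I^+$,'' because one must control simultaneously (i) the vertical direction, where one is essentially diagonalizing against a fixed ultrafilter on each column, and (ii) the horizontal direction, where $\J$ only allows finitely many infinite columns, so the pseudointersection must be spread across infinitely many columns as a genuinely fin$\times$fin-like (below-a-graph) set rather than sitting inside one column. I would handle this by building the pseudointersection in $\omega$-many stages: at stage $k$, using that $f^{-1}[C_{n_k}]\in\I^+$ for a suitable increasing sequence $n_0<n_1<\cdots$, pick a finite piece from $f^{-1}[C_{n_k}]$ meeting the $k$-th given $\I^\ast$-set, with the pieces chosen so that their $f$-images form a selector-type set meeting each relevant column and hence land in $\J^+$; then the morphism property forces the union of the pieces into $\I^+$. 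Keeping this construction honest — in particular ensuring the union is genuinely in $\I^+$ and that no step is blocked — is where the care is needed, but it is morally the same bookkeeping as in the proof of Proposition~\ref{WTfintimesfin} and the preceding lemmas, so I expect no conceptual surprises beyond this. A cleaner alternative, if available, is to show $\mathcal{WT}$ itself already works, or a minor variant of it, thereby reducing the whole argument to Proposition~\ref{WTfintimesfin} plus an analysis of which analytic ideals can Katětov-dominate $\mathcal{WT}$; but I suspect $\mathcal{WT}$ is \emph{too small} (it sits below fin$\times$fin and is analytic, so it is itself a candidate $\I$), which is exactly why the ultrafilter-on-columns modification is needed.
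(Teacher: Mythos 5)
You have the right ideal: your ``ultrafilter on each column'' construction is the paper's $\mathcal{WT}(\J)$ with $\J$ the dual (maximal) ideal of an ultrafilter — the paper's choice is slightly more general, any ideal not contained in a proper analytic ideal — and your argument that the columns $C_n$ witness failure of weak tightness is the same as the paper's. The divergence, and the gap, is in the verification of \textsf{fin}$\times$\textsf{fin}-likeness, where you work directly at the level of Kat\v etov morphisms rather than using the containment characterization stated just before this proposition: $\J$ is \textsf{fin}$\times$\textsf{fin}-like iff $\J\not\subseteq\I$ for every analytic $\I$ with \textsf{fin}$\times$\textsf{fin}$\nleq_K\I$. (Given a reduction $f$ witnessing $\J\leq_K\I$, pass to $\I'=\{A:f^{-1}[A]\in\I\}\supseteq\J$; this $\I'$ is still analytic, is $\leq_K\I$, and hence still not above \textsf{fin}$\times$\textsf{fin}.) With this in hand the verification is short and needs no morphism at all: take $A\in\textsf{fin}\times\textsf{fin}\setminus\I$, split $A=B\cup C$ with $B$ supported on finitely many columns and $C$ below a graph; if $C\notin\I$ you are done, since any below-a-graph set lies in $\mathcal{WT}(\J)$; if $B\notin\I$ then some $B\cap C_n\notin\I$, so $\I\restriction C_n$ is a proper analytic ideal, and since $\J$ lies in no proper analytic ideal you find a witness inside $C_n$.

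Your direct morphism-level sketch has two concrete failures. First, a cofinite union of columns is \emph{not} in $\mathcal{WT}(\J)$ — it has infinite (indeed full) intersection with almost every $C_n$, and in fact no set containing a full column is in $\mathcal{WT}(\J)$ since $\J$ is proper — so your parenthetical argument that $f^{-1}[C_n]\in\I^+$ for infinitely many $n$ does not get off the ground. Second, and fatally, the concluding step uses the morphism in the wrong direction. Your pseudointersection $U$ is a union of \emph{finite} pieces from the $f^{-1}[C_{n_k}]$, so $f[U]$ has finite intersection with every column and therefore lies \emph{in} $\mathcal{WT}(\J)$, not in $\mathcal{WT}(\J)^+$ as you assert (a selector is below a graph, hence in $\mathcal{WT}(\J)$). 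The morphism property then gives $f^{-1}(f[U])\in\I$, and since $U\subseteq f^{-1}(f[U])$, you get $U\in\I$ — the exact opposite of the $U\in\I^+$ you need. The containment characterization is precisely what lets the paper sidestep both of these problems.
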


\begin{proof}
Let $\J$ be any ideal not contained in an analytic ideal (e.g., any maximal ideal).
Define the ideal $\mathcal{WT} (\J)$ on $\omega\times\omega$ as follows:
\begin{enumerate}
\item $\mathcal{WT} (\J)\upharpoonright C_{n}$ is a copy of $\J$.
\item $A \in \mathcal{WT} (\J)$ iff $A \cap C_n \in \mathcal{WT}(\J)\upharpoonright C_{n}$ for all $n$ and $A \cap C_n$ is finite for all but finitely many $n$.
\end{enumerate}
As for $\mathcal{WT}$ we see that the $C_n$ witness that $\mathcal{WT}(\J)$ is not weakly tight.
Also note that as in Proposition~\ref{WTfintimesfin} we see that $\mathcal{WT}(\J)$ is strictly Kat\v etov below \textsf{fin}$\times$\textsf{fin}.
To see that $\mathcal{WT}(\J)$ is \textsf{fin}$\times$\textsf{fin}-like, let $\I$ be an analytic ideal on $\omega \times \omega$
with \textsf{fin}$\times$\textsf{fin} $ \nleq_K \I$. We need to see that $\mathcal{WT}(\J) \not\subseteq \I$. Since 
\textsf{fin}$\times$\textsf{fin} $ \nleq_K \I$ there is $A \in$ \textsf{fin}$\times$\textsf{fin} with $A \notin \I$, say
$A = B \cup C$ where $B$ meets only finitely many $C_n$ and $C \cap C_n$ is finite for all $n$.
If $C \notin \I$, we are done because $C \in \mathcal{WT}(\J)$. So assume $B \notin \I$. Thus, for some $n$,
$B \cap C_n \notin \I$ and $\I \upharpoonright C_n$ is a proper analytic ideal. Then $\mathcal{WT} (\J)\upharpoonright C_{n} \not\subseteq \I \upharpoonright C_{n}$,
and we are also done.
\end{proof}


\begin{center}
$\star\star\star$
\end{center}


\noindent We briefly discuss the connection between the properties introduced so far and indestructibility by forcing:

\begin{proposition}   \label{SSindestructibility}
Let $\I$ be an ideal. Also let $\mathcal{J}$ be a $\sigma$-ideal in $\omega^{\omega}$
for which the forcing $\mathbb{P}_{\mathcal{J}}=Borel\left(  \omega^{\omega}\right)
/\mathcal{J}$ is proper, has the continuous reading of names and does not add a
dominating real (below any condition). 
\begin{enumerate}
\item Assume $\I$ is  \textsf{fin}$\times$\textsf{fin}-like and $tr (\J)$ is an analytic ideal. 
Then $\mathcal{I}$ is $\mathbb{P}_{\mathcal{J}}$-indestructible.

\item \textsf{(LC)} If $\I$ is Shelah-Stepr\=ans and $\mathcal{J}\in$ \textsf{L}$\left(  \mathbb{R}\right)  $
then $\mathcal{I}$ is $\mathbb{P}_{\mathcal{J}}$-indestructible.

\item In particular, if $\I$ is Shelah-Stepr\=ans (or just \textsf{fin}$\times$\textsf{fin}-like), then $\I$ is Cohen, random, and
Sacks indestructible. 

\item If $\I$ is a not-P ideal then $\I$ is random and Sacks indestructible.
\end{enumerate}
\end{proposition}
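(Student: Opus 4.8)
The plan is to reduce everything to Proposition~\ref{destructibilitycharacterization}, which says that $\I$ is destroyed by $\PP_\J$ precisely when there is $a \in tr(\J)^+$ with $\I \leq_K tr(\J) \restrict a$. So throughout, I assume toward a contradiction that such an $a$ exists, and derive a contradiction from each of the hypotheses in turn.

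For item~(1): first I would observe that, since $\PP_\J$ does not add a dominating real below any condition, $tr(\J) \restrict a$ cannot be Kat\v etov above \textsf{fin}$\times$\textsf{fin}. Indeed, if \textsf{fin}$\times$\textsf{fin} $\leq_K tr(\J) \restrict a$, then (thinking of the equivalent $\sigma$-ideal $\J \restrict \pi(a)$ and using that \textsf{fin}$\times$\textsf{fin} is destroyed exactly by forcings adding a dominating real, together with Proposition~\ref{destructibilitycharacterization} applied in the other direction) $\PP_{\J}$ below the condition corresponding to $a$ would add a dominating real, contradiction. Now $tr(\J) \restrict a$ is an analytic ideal with \textsf{fin}$\times$\textsf{fin} $\nleq_K tr(\J) \restrict a$, and $\I \leq_K tr(\J) \restrict a$; this directly contradicts the definition of \textsf{fin}$\times$\textsf{fin}-like. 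Here I am using the hypothesis that $tr(\J)$ is analytic (hence so is its restriction to $a$). One technical point to nail down: the ``other direction'' of Proposition~\ref{destructibilitycharacterization} that I invoked for \textsf{fin}$\times$\textsf{fin} — that Kat\v etov-above-\textsf{fin}$\times$\textsf{fin} on a positive trace set forces a dominating real — is the implication (2)$\Rightarrow$(1) of that proposition applied to $\mathcal{J}' = $ \textsf{fin}$\times$\textsf{fin}; I should double-check it is stated as a genuine equivalence there, which it is.

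For item~(2): this is the same argument, but now $\J \in \textsf{L}(\RR)$, so $tr(\J)$ (and its restriction to $a$) lies in $\textsf{L}(\RR)$, and by Theorem~\ref{SSdefinable}(2) the ideal $tr(\J) \restrict a$ is Shelah-Stepr\={a}ns if and only if \textsf{fin}$\times$\textsf{fin} $\leq_K tr(\J) \restrict a$. As in (1), the no-dominating-real hypothesis rules out the right-hand side, so $tr(\J)\restrict a$ is \emph{not} Shelah-Stepr\={a}ns; but by Lemma~\ref{SSKatetovupwards}, $\I \leq_K tr(\J)\restrict a$ together with $\I$ Shelah-Stepr\={a}ns would force $tr(\J)\restrict a$ to be Shelah-Stepr\={a}ns, contradiction. (Alternatively, and perhaps more cleanly, since $\I$ Shelah-Stepr\={a}ns implies $\I$ is \textsf{fin}$\times$\textsf{fin}-like by Theorem~\ref{SSdefinable}(1), item~(2) is subsumed by the \textsf{LC}-version of item~(1) once one notes every $\J \in \textsf{L}(\RR)$ has $tr(\J) \in \textsf{L}(\RR)$; I would present it this way to avoid repetition.)

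For item~(3): the Cohen, random, and Sacks $\sigma$-ideals $\M$, $\N$, and the Sacks ideal are all definable (indeed Borel, or at worst in $\textsf{L}(\RR)$), the corresponding forcings are proper with continuous reading of names and add no dominating real, and — crucially — $tr(\M)$ and $tr(\N)$ are Borel by the remark following the Definition of $tr$, and $tr$ of the Sacks ideal is likewise analytic (being the trace of a definable $\sigma$-ideal on a Polish space; one may cite \cite{ForcingwithQuotients}). So item~(1) applies directly to each, giving indestructibility for \textsf{fin}$\times$\textsf{fin}-like ideals; Shelah-Stepr\={a}ns implies \textsf{fin}$\times$\textsf{fin}-like (under \textsf{LC}), so the Shelah-Stepr\={a}ns case follows too — but in fact for $\M$, $\N$ one can invoke Theorem~\ref{SSBorel} instead of \textsf{LC}, so no large cardinals are needed for Cohen and random; I would remark on this. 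For item~(4): a not-P ideal cannot be Kat\v etov-below any analytic $P$-ideal (by Lemma~\ref{Laflamme-notP-char}, or directly from the definition together with the fact, cited in the excerpt, that no $F_{\sigma\delta}$ ideal is Kat\v etov above \textsf{fin}$\times$\textsf{fin}); the random and Sacks $\sigma$-ideals have the property that $tr(\J)$ is (contained in, or Kat\v etov-below) an analytic $P$-ideal — indeed, for the random ideal $tr(\N)$ is itself an analytic $P$-ideal, and similarly the Sacks trace is handled in \cite{ForcingwithQuotients} — so again Proposition~\ref{destructibilitycharacterization} gives indestructibility. The main obstacle, and the part I would be most careful about, is item~(4): pinning down exactly which analytic-$P$-ideal statement is true of $tr(\N)$ and of the Sacks trace ideal, and citing it correctly, since this is where the argument depends on nontrivial facts about these specific $\sigma$-ideals rather than on the soft Kat\v etov-order machinery used for (1)–(3).
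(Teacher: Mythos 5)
Your treatment of items (1)--(3) is essentially the paper's proof, just phrased as a contradiction rather than directly: you reduce to Proposition~\ref{destructibilitycharacterization}, use the "no dominating reals" hypothesis to rule out $\textsf{fin}\times\textsf{fin} \leq_K tr(\J)\restrict a$, and then invoke the definition of $\textsf{fin}\times\textsf{fin}$-like (for~(1)) or Theorem~\ref{SSdefinable}/Theorem~\ref{SSBorel} together with Lemma~\ref{SSKatetovupwards} (for~(2) and the Shelah-Stepr\={a}ns half of~(3)). That is all correct. (A small stylistic point on~(3): since $tr(\mathsf{ctble})$ is Borel, the ``no large cardinals'' shortcut you flag for $\M, \N$ actually covers the Sacks case too, so you need not single out Cohen and random.)

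Item~(4) is where your argument breaks down, and the gap is genuine. You assert that ``$tr(\N)$ is itself an analytic $P$-ideal.'' That is false: $tr(\N)$ is Borel, but it is not a $P$-ideal, and this is precisely why the paper must instead cite the nontrivial result (from \cite{CardinalInvariantsofAnalyticPIdeals}) that $tr(\N)\leq_K\Z$, where $\Z$ is the density-zero ideal. Your plan of deriving the conclusion from ``$tr(\N)$ is contained in, or Kat\v etov-below, an analytic $P$-ideal'' therefore rests on an incorrect factual premise in the random case, and is undeveloped in the Sacks case (the paper's route for Sacks is $tr(\mathsf{ctble}) \leq_K tr(\N)$, reducing Sacks indestructibility to random indestructibility). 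There is a second, subtler problem with your argument: you open item~(4) by claiming that ``a not-P ideal cannot be Kat\v etov-below any analytic $P$-ideal,'' citing Lemma~\ref{Laflamme-notP-char}. But part~(2) of that lemma is stated for the Kat\v etov-\emph{Blass} order $\leq_{KB}$, not for $\leq_K$: the proof pulls back a lower semicontinuous submeasure, and lower semicontinuity of the pullback is exactly what requires the reduction to be finite-to-one. So even granting the (false) claim about $tr(\N)$, your appeal to the lemma would need the Kat\v etov reduction you get from Proposition~\ref{destructibilitycharacterization} to be finite-to-one, which is not given. Your parenthetical alternative (``directly from the definition together with the fact that no $F_{\sigma\delta}$ ideal is Kat\v etov above $\textsf{fin}\times\textsf{fin}$'') also does not help: that fact is what shows $\textsf{fin}\times\textsf{fin}$-like implies not-P, which is the wrong direction here, since not-P is the \emph{weaker} hypothesis. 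In short: for item~(4) you cannot avoid using the specific fact $tr(\N)\leq_K\Z$ (or an equivalent), and you need to be more careful about which version of the Kat\v etov order the not-P machinery actually gives you.
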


\begin{proof}
1. Let $\mathcal{J}$ be a $\sigma$-ideal in $\omega^{\omega}$ such
that $\mathbb{P}_{\mathcal{J}}$ is proper and has the continuous
reading of names. If there is $B\in\mathbb{P}_{\mathcal{J}}$ such that forcing
below $B$ destroys $\mathcal{I},$ then there is $X\in tr\left(  \mathcal{J}%
\right)  ^{+}$ such that $\mathcal{I}  \leq
_{K}tr\left(  \mathcal{J}\right)  \upharpoonright X$ (see Proposition~\ref{destructibilitycharacterization}). 
Since $\I$ is \textsf{fin}$\times$\textsf{fin}-like and $tr (\J)$ is analytic, \textsf{fin}$\times
$\textsf{fin }$\leq_{\mathsf{K}}tr\left(  \mathcal{J}\right)  \upharpoonright X$ follows,
and so $\mathbb{P}_{\mathcal{J}}$ must add a dominating real below some
condition.

2. This is similar, using Theorem~\ref{SSdefinable}.

3. Noting that the ideals $tr (\M)$, $tr (\N)$, and $tr (\mathsf{ctble})$ are all Borel, this follows from the previous items.

4. In~\cite[Theorem 3.4]{CardinalInvariantsofAnalyticPIdeals} it was proved
that $tr\left(  \mathcal{N}\right)  \leq_{{K}}\mathcal{Z}$. So every
not-P ideal is random indestructible. It is well known that random
indestructibility entails Sacks indestructibility (because $tr  (\mathsf{ctble}) \leq_K tr (\N)$, see also~\cite{ForcingIndestructibilityofMADFamilies}).
\end{proof}

\begin{center}%
\begin{figure}
[ptb]
\begin{center}
\includegraphics[scale=0.40]{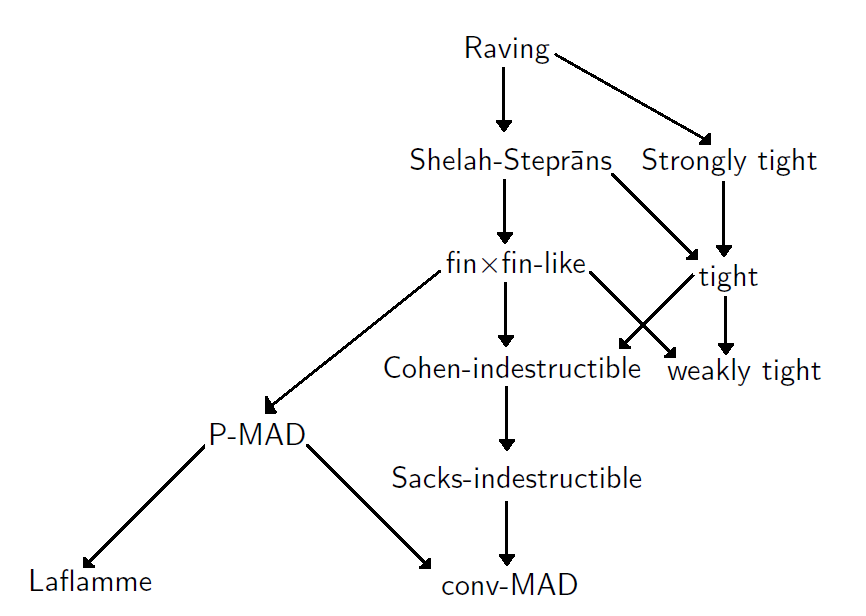}
\end{center}
\end{figure}

\end{center}


\subsection{Generic \textsf{MAD} families}
\label{genericMAD}


Let $\mathcal{I}$ be a (perhaps improper) tall ideal. We define $\mathbb{P}%
_{\mathsf{MAD}}\left(  \mathcal{I}\right)  $ as the set of all countable
\textsf{AD} families contained in $\mathcal{I}$, ordered by inclusion. It is easy to see
that this is a $\sigma$-closed forcing adding a \textsf{MAD} family
contained in $\mathcal{I},$ which we will denote by $\mathcal{A}_{gen}\left(
\mathcal{I}\right)  .$ By $\mathbb{P}_{\mathsf{MAD}}$ we denote $\mathbb{P}%
_{\mathsf{MAD}}\left(  \wp\left(  \omega\right)  \right)  ,$ which is the set
of all countable \textsf{AD} families ordered by inclusion; we will 
denote by $\mathcal{A}_{gen}$ the generic \textsf{MAD} family.
We shall use such generic objects later for non-implications  (see Subsection~\ref{non-implications}).

\begin{definition}
Let $\mathcal{I}$ be an ideal. We say that $\mathcal{I}$ is \emph{nowhere
Shelah-Stepr\={a}ns }if no restriction of$\mathcal{\ I}$ is Shelah-Stepr\={a}ns.
\end{definition}

It is easy to see that \textsf{nwd}$,$ $tr($\textsf{ctble}$),$ $tr\left(
\mathcal{N}\right)  ,$ $tr\left(  \mathcal{K}_{\sigma}\right)  $ and every
$F_{\sigma}$-ideal are nowhere Shelah-Stepr\={a}ns.

\begin{lemma}   \label{Katetovgenericlemma}
Let $\mathcal{I},\mathcal{J}$ be two ideals such that $\mathcal{I}$ is nowhere
Shelah-Stepr\={a}ns and $\mathcal{J}\nleq_{K}\mathcal{I}.$ Let
$\mathcal{A\subseteq J}$ be a countable \textsf{AD} family and let $f:\left(
\omega,\mathcal{I}\right)  \longrightarrow\left(  \omega,\mathcal{I}\left(
\mathcal{A}\right)  \right)  $ be a Kat\v{e}tov-morphism. Then there is
$B\in\mathcal{J}\cap\mathcal{A}^{\perp}$ such that $f^{-1}\left(  B\right)
\in\mathcal{I}^{+}.$
\end{lemma}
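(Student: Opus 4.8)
The statement I want to prove: given $\I$ nowhere Shelah-Stepr\={a}ns, $\J \nleq_K \I$, a countable \textsf{AD} family $\A \subseteq \J$, and a Kat\v etov-morphism $f : (\omega,\I) \to (\omega, \I(\A))$, there is $B \in \J \cap \A^\perp$ with $f^{-1}(B) \in \I^+$. The natural strategy is a fusion-type recursion producing $B$ as an increasing union of finite sets, using at each stage the failure of the Shelah-Stepr\={a}ns property of an appropriate restriction of $\I$.

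**The setup.** Enumerate $\A = \{ A_n \mid n \in \omega \}$. The set $B$ should be almost disjoint from each $A_n$; the cleanest way is to build $B = \bigcup_k b_k$ with $b_k \in [\omega]^{<\omega}$, $b_k \subsetneq b_{k+1}$, arranging $b_{k+1} \setminus b_k$ disjoint from $A_0 \cup \dots \cup A_{k}$, so that $B \cap A_n \subseteq b_{n}$ is finite for every $n$, hence $B \in \A^\perp$; moreover $B \in \J$ because $\J \supseteq \A$ is an ideal closed under the relevant operations — wait, $\J$ need not contain $B$ just because $B$ is a.d.\ from $\A$. Here is the point where $\J \nleq_K \I$ enters: I do \emph{not} get $B \in \J$ for free, so I must build $B$ so that additionally $B \in \J$. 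The way to guarantee this is to keep $B$ inside a single set of the form $f^{-1}(\text{something})$... no. Actually the correct reading: I want $f^{-1}(B) \in \I^+$, and separately $B \in \J$. Since $f$ is a Kat\v etov-morphism into $\I(\A)$, for $A \in \I(\A)$ we have $f^{-1}(A) \in \J$; so if I can arrange $B$ to be \emph{covered by} some fixed $A^* \in \I(\A)$ together with finitely many points — but elements of $\I(\A)$ are finite modifications of finite unions from $\A$, and such sets are \emph{not} in $\A^\perp$ unless essentially finite. So $B$ cannot be in $\I(\A)$. Instead, the membership $B \in \J$ must be extracted from the hypothesis $\J \nleq_K \I$ directly: because $f$ is \emph{not} a Kat\v etov-morphism from $(\omega,\J)$ to $(\omega,\I)$ — wait, that is not what $\J \nleq_K \I$ says either. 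Let me reconsider: $\J \nleq_K \I$ means there is \emph{no} Kat\v etov-morphism $(\omega,\I) \to (\omega,\J)$; in particular $f$ composed with nothing gives... Actually the role of $\J \nleq_K \I$ is presumably this: were \emph{every} $B \in \J \cap \A^\perp$ to have $f^{-1}(B) \in \I$, one could assemble a Kat\v etov reduction witnessing $\J \leq_K \I$ (using that $\J$ is generated by $\A$ together with $\A^\perp \cap \J$ and finite sets, roughly), contradicting $\J \nleq_K \I$. So the proof is by contradiction: \textbf{assume} for all $B \in \J \cap \A^\perp$, $f^{-1}(B) \in \I$.

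**Deriving the contradiction.** Under that assumption, I claim $f$ is a Kat\v etov-morphism from $(\omega,\J)$ to $(\omega,\I)$. Indeed take $C \in \J$. Using maximality-style decomposition relative to $\A$: since $\A$ is AD and $\A \subseteq \J$, write $C = (C \cap (A_0 \cup \dots \cup A_{n-1})) \cup C'$ where ideally $C'$ is a.d.\ from all of $\A$; but $C$ need not be essentially covered by finitely many $A_i$. Here is where \emph{nowhere Shelah-Stepr\={a}ns} must be used to reduce to the a.d.-from-$\A$ case and to handle the columns $A_i$: we know $f^{-1}(A_i) \in \J$ trivially (it's $f^{-1}$ of a set in $\I(\A)$, hence in $\J$) — no, we want it in $\I$. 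So for the finitely-many-columns part: is $f^{-1}(A_i) \in \I$? Not obviously. This is exactly the obstruction, and I expect the real argument to interleave a recursive construction (as in the $b_k$ above) with repeated appeals to the characterization of non-Shelah-Stepr\={a}ns (Player \textsf{II} has a winning strategy, equivalently there is a suitable $F_\sigma$ hull, Corollary after Proposition on Laflamme's game) applied to each restriction $\I \upharpoonright f^{-1}(A_i)$, which is non-Shelah-Stepr\={a}ns by hypothesis. The plan: recursively choose finite $b_k$ and auxiliary sets so that (i) $\bigcup b_k \setminus b_n$ avoids $A_0 \cup \dots \cup A_n$ (giving $\A^\perp$), (ii) $\bigcup b_k \in \J$ — arranged by keeping $b_k \setminus b_{k-1}$ inside a fixed pseudo-pseudointersection of the filter dual to $\I$ intersected appropriately so that $f^{-1}$ of the growing set stays $\I$-positive, using that $\J \nleq_K \I$ supplies, for each finite approximation, a witness set in $\J$ whose $f$-preimage is $\I$-positive that we can't eliminate — this is where Lemma~\ref{Laflamme-notP-char}-style reasoning and the game characterization feed in.

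**Main obstacle.** Honestly, the crux is simultaneously securing $B \in \J$ \emph{and} $f^{-1}(B) \in \I^+$ \emph{and} $B \in \A^\perp$: the first and third pull $B$ toward being "large but a.d.", while $f^{-1}(B) \in \I^+$ is the positivity demand that $\J \nleq_K \I$ makes achievable. I expect the correct proof to run: suppose not; then for every $B \in \J \cap \A^\perp$ we have $f^{-1}(B) \in \I$; combined with the nowhere-Shelah-Stepr\={a}ns hypothesis on each $\I \upharpoonright f^{-1}(A_n)$ (which, via the $F_\sigma$-hull characterization, lets us conservatively absorb the $f^{-1}(A_n)$ into $\I$ "modulo a small adjustment"), one shows $f^{-1}(C) \in \I$ for \emph{all} $C \in \J$, i.e.\ $f$ witnesses $\J \leq_K \I$, contradiction. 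So the hard step is the bookkeeping lemma: \emph{if $\I$ is nowhere Shelah-Stepr\={a}ns and $f^{-1}(B) \in \I$ for every $B \in \J \cap \A^\perp$, then $f^{-1}(A_n) \in \I$ for each $n$} (after which closure under finite unions and the decomposition $C = \bigcup_{i<N}(C \cap A_i) \cup (C \setminus \bigcup_{i<N} A_i)$ finishes it, since the tail piece is in $\J \cap \A^\perp$ once $N$ is large — using that $C \in \J$ and $\J$ is generated by $\A$, finite sets, and members of $\A^\perp \cap \J$? — if $\J$ is \emph{not} so generated this last reduction also needs care, possibly another appeal to nowhere-Shelah-Stepr\={a}ns). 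I would organize the write-up as: (1) assume the conclusion fails; (2) prove $f$ is a Kat\v etov-morphism $(\omega,\J)\to(\omega,\I)$ via the decomposition plus the bookkeeping lemma; (3) cite $\J \nleq_K \I$ for the contradiction; and devote the bulk of the effort to (2)'s bookkeeping lemma, where the nowhere-Shelah-Stepr\={a}ns hypothesis of $\I$ is the essential ingredient and the game/$F_\sigma$-hull characterization from the Corollary is the tool.
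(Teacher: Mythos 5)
Your proposal has a genuine gap, and the root cause is that you never find the right way to use $\J \nleq_K \I$: it should be used \emph{directly}, not by contradiction. Since $f$ is a Kat\v etov-morphism from $(\omega,\I)$ to $(\omega,\I(\A))$ and $\I(\A)\subseteq\J$, the assumption $\J\nleq_K\I$ says exactly that $f$ is \emph{not} a Kat\v etov-morphism from $(\omega,\I)$ to $(\omega,\J)$; hence there is a single $D\in\J$ with $C=f^{-1}(D)\in\I^+$. This one set dissolves your ``main obstacle'': $\I\upharpoonright C$ is not Shelah-Stepr\={a}ns, so pick $X\in\left((\I\upharpoonright C)^{<\omega}\right)^+$ such that no $A\in\I$ contains infinitely many members of $X$. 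Since $f^{-1}(A_0\cup\dots\cup A_n)\in\I$ for every $n$ (this is automatic --- a point you seem to miss, since you propose proving it as a ``bookkeeping lemma''; it follows at once from $A_n\in\I(\A)$ and $f$ being a Kat\v etov-morphism into $\I(\A)$), you can pick distinct $s_n\in X$ with $s_n\cap f^{-1}(A_0\cup\dots\cup A_n)=\emptyset$, set $E=\bigcup_n s_n\in\I^+$, and take $B=f[E]$. Then $B\subseteq D\in\J$, $B\cap A_n=f\left[E\cap f^{-1}(A_n)\right]$ is finite, and $f^{-1}(B)\supseteq E\in\I^+$. All three of your demands are met precisely because $E$ is trapped inside $C=f^{-1}(D)$, which is what the forward use of $\J\nleq_K\I$ hands you.

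The contradiction route you outline does not close. You assume $f^{-1}(B)\in\I$ for every $B\in\J\cap\A^\perp$ and try to conclude $f^{-1}(C)\in\I$ for all $C\in\J$ via $C=\bigcup_{i<N}(C\cap A_i)\cup\left(C\setminus\bigcup_{i<N}A_i\right)$; but the tail $C\setminus\bigcup_{i<N}A_i$ need not be in $\A^\perp$ for any finite $N$, since a set $C\in\J$ can meet infinitely many $A_i$ in infinite sets, so your hypothesis simply does not apply to it. You flag this yourself, but it is a real obstruction, not a technicality: $\J$ is arbitrary subject only to $\A\subseteq\J$. Moreover, the nowhere-Shelah-Stepr\={a}ns hypothesis is not needed to handle the $f^{-1}(A_n)$ at all; its actual role is to supply the family $X$ inside $C$ from which an $\I$-positive $E$ can be assembled one finite piece at a time, each piece dodging a prescribed set in $\I$.
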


\begin{proof}
Let $\mathcal{A}=\left\{  A_{n}\mid n\in\omega\right\}  .$ We know $f$ is a
Kat\v{e}tov-morphism, so the set $\left\{  f^{-1}\left(  A_{n}\right)  \mid
n\in\omega\right\}  $ is contained in $\mathcal{I}.$ Since $\mathcal{J}%
\nleq_{K}\mathcal{I}$ there is $D\in\mathcal{J}$ such that $C=f^{-1}\left(
D\right)  \in\mathcal{I}^{+}.$ Since $\mathcal{I}\upharpoonright C$ is not
Shelah-Stepr\={a}ns, there is $X\in\left(  \left(  \mathcal{I}\upharpoonright
C\right)  ^{<\omega}\right)  ^{+}$ such that no element of $\mathcal{I}$
contains infinitely many elements of $X.$ For each $n\in\omega$ we choose
$s_{n}\in X$ such that $s_{n}\cap\left(  f^{-1}\left(  A_{0}\cup
... \cup A_{n}\right)  \right)  =\emptyset.$ We then know that $E={\textstyle\bigcup}
s_{n}\in\mathcal{I}^{+}.$ It is easy to see that $B=f\left[  E\right]  $ has
the desired properties.
\end{proof}

We conclude:

\begin{corollary}    \label{Katetovgenericcorollary}
Let $\mathcal{I},\mathcal{J}$ be ideals such that $\mathcal{I}$ is nowhere
Shelah-Stepr\={a}ns and $\mathcal{J}\nleq_{K}\mathcal{I}.$

\begin{enumerate}
\item $\mathbb{P}_{\mathsf{MAD}}\left(  \mathcal{J}\right)  $ forces that
$\mathcal{A}_{gen}\left(  \mathcal{J}\right)  $ is not Kat\v{e}tov below
$\mathcal{I}.$

\item The Continuum Hypothesis implies that there is a \textsf{MAD} family
$\mathcal{A}\subseteq\mathcal{J}$ such that $\mathcal{I}\left(  \mathcal{A}%
\right)  \nleq_{K}\mathcal{I}.$
\end{enumerate}
\end{corollary}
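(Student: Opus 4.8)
The plan is to deduce both parts of Corollary~\ref{Katetovgenericcorollary} directly from Lemma~\ref{Katetovgenericlemma}, which already does essentially all the work; the corollary is just a matter of iterating the lemma, once along a generic filter for part (1) and once along an $\omega_1$-enumeration under \textsf{CH} for part (2).

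For part (1), I would argue by a density argument in $\PP_{\mathsf{MAD}}(\J)$. Suppose, toward a contradiction, that some condition $p$ forces that $\I(\A_{gen}(\J)) \leq_K \I$, as witnessed by some name $\dot f$ for a function $\omega \to \omega$. Since $\PP_{\mathsf{MAD}}(\J)$ is $\sigma$-closed, it adds no new reals, so I may find $q \leq p$ deciding $\dot f$ to be a ground-model function $f$; note $q$ is a countable \textsf{AD} family $\A \subseteq \J$, and $q$ forces that $f$ is a Kat\v etov-morphism from $(\omega, \I)$ to $(\omega, \I(\A_{gen}(\J)))$, hence in particular from $(\omega,\I)$ to $(\omega, \I(\A))$ since $\A \subseteq \A_{gen}(\J)$. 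Now apply Lemma~\ref{Katetovgenericlemma} to $\I$, $\J$, $\A$, and $f$: we obtain $B \in \J \cap \A^\perp$ with $f^{-1}(B) \in \I^+$. Then $q \cup \{B\}$ is a legitimate condition in $\PP_{\mathsf{MAD}}(\J)$ extending $q$ (here one uses that $B \in \A^\perp$ so the family stays \textsf{AD}, and that $B \in \J$), and it forces $B \in \I(\A_{gen}(\J))$. But $f^{-1}(B) \in \I^+$ contradicts the statement that $f$ is a Kat\v etov-morphism from $(\omega,\I)$ to $(\omega, \I(\A_{gen}(\J)))$, which $q$ (and hence $q \cup \{B\}$) forces. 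This contradiction establishes part (1). The only mild subtlety is the passage from the name $\dot f$ to a ground-model $f$: one should make sure the genericity argument is arranged so that densely many conditions already decide enough of $\dot f$; since no reals are added, a single $\sigma$-closure step suffices.

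For part (2), I would carry out a transfinite recursion of length $\omega_1$ under \textsf{CH}. Enumerate $[\omega]^\omega = \{ Y_\alpha \mid \alpha < \omega_1 \}$ and also enumerate $\omega^\omega = \{ f_\alpha \mid \alpha < \omega_1 \}$. I build an increasing (mod finite) sequence of countable \textsf{AD} families $\A_\alpha \subseteq \J$, $\alpha < \omega_1$, with $\A = \bigcup_{\alpha < \omega_1} \A_\alpha$ the desired \textsf{MAD} family. At stage $\alpha$, given $\A_{<\alpha} = \bigcup_{\beta < \alpha} \A_\beta$ (a countable \textsf{AD} subfamily of $\J$, using that $\J$ is an ideal to absorb the countably many sets), I do two things: first, to ensure maximality, if $Y_\alpha$ is almost disjoint from every member of $\A_{<\alpha}$, I use tallness of $\J$ to pick $A \in \J$ with $A \cap Y_\alpha$ infinite and replace $Y_\alpha$ by (a $\J$-element almost contained in) $A \cap Y_\alpha$, throwing it into the family; second, to ensure $\I(\A) \nleq_K \I$, I handle the potential Kat\v etov-morphism $f_\alpha$: if $f_\alpha$ happens to be a Kat\v etov-morphism from $(\omega,\I)$ to $(\omega, \I(\A_{<\alpha}'))$ (where $\A_{<\alpha}'$ is the family built so far at this stage), I invoke Lemma~\ref{Katetovgenericlemma} to get $B \in \J \cap (\A_{<\alpha}')^\perp$ with $f_\alpha^{-1}(B) \in \I^+$, and add $B$ to the family; this kills $f_\alpha$ once and for all, since any later family contains $B$ and so $f_\alpha^{-1}(B) \in \I^+$ shows $f_\alpha$ fails to be a Kat\v etov-morphism into $\I(\A)$. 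At the end, $\A$ is \textsf{MAD} (by the first bookkeeping), contained in $\J$ (by construction at successor stages and the ideal property at limits), and $\I(\A) \nleq_K \I$ because every candidate $f_\alpha$ was dealt with.

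The main obstacle, such as it is, is bookkeeping rather than genuine difficulty: one must interleave the two tasks (diagonalizing against all $Y_\alpha$ for maximality, and against all $f_\alpha$ for non-reducibility) within a single $\omega_1$-recursion, and must check at each step that the hypotheses of Lemma~\ref{Katetovgenericlemma} are actually met — in particular that the partial family being extended is still a countable \textsf{AD} family contained in $\J$, and that adding $B$ preserves almost-disjointness (guaranteed by $B \in (\A_{<\alpha}')^\perp$) and keeps us inside $\J$ (guaranteed by $B \in \J$). There is also the trivial point that if $f_\alpha$ is \emph{not} a Kat\v etov-morphism at stage $\alpha$ then there is already a witness $A \in \I$ with $f_\alpha^{-1}(A) \in \I(\A_{<\alpha}')^+$, but as more sets get added the preimage could shrink into the ideal; this is not a real problem since the final $\I(\A)$ extends $\I(\A_{<\alpha}')$, so $f_\alpha^{-1}(A) \in \I(\A)^+$ still fails only if some later $A'$-witness absorbs it — hence one should be slightly careful and instead note that if $f_\alpha$ is not a Kat\v etov-morphism \emph{into $\I(\A)$} that is automatically fine, and the recursion only needs to act when $f_\alpha$ threatens to be one, which is exactly the case Lemma~\ref{Katetovgenericlemma} covers. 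I do not expect any step here to require more than routine verification given the lemma.
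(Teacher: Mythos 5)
Your proposal is correct and carries out exactly the standard derivation of Corollary~\ref{Katetovgenericcorollary} from Lemma~\ref{Katetovgenericlemma}, which the paper leaves implicit. One small slip in your closing remark is worth flagging: you state the failure of a Kat\v{e}tov morphism backwards, writing ``a witness $A \in \I$ with $f_\alpha^{-1}(A) \in \I(\A_{<\alpha}')^+$'' where the correct condition is $A \in \I(\A_{<\alpha}')$ with $f_\alpha^{-1}(A) \in \I^+$ (the morphism $f_\alpha : (\omega, \I) \to (\omega, \I(\A))$ must satisfy $f_\alpha^{-1}(A) \in \I$ for every $A \in \I(\A)$); once the direction is corrected, your worry disappears on the spot, since such a witness $A$ stays in $\I(\A_{<\alpha}') \subseteq \I(\A)$ while $f_\alpha^{-1}(A) \in \I^+$ is unaffected, so any $f_\alpha$ that is not a morphism into $\I(\A_{<\alpha}')$ is automatically not a morphism into the final $\I(\A)$, and the recursion need not address it at all.
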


In particular, $\mathcal{A}_{gen}($\textsf{nwd}$)$ is a Cohen destructible
\textsf{MAD} family that is Miller and random indestructible. For more on this
type of results, the reader may consult
\cite{ForcingIndestructibilityofMADFamilies}.

\begin{proposition}
$\mathbb{P}_{\mathsf{MAD}}$ forces that$\ \mathcal{A}_{gen}$ is raving.
\end{proposition}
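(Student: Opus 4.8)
The plan is to exploit that $\mathbb{P}_{\mathsf{MAD}}$ is $\sigma$-closed, hence adds no new reals, so that every family $\vec{X}=\langle X_n : n\in\omega\rangle$ with $X_n\subseteq[\omega]^{<\omega}$ that could witness a failure of ``raving'' already lies in $V$. So fix a generic $G$, work in $V[G]$, and suppose $\vec{X}$ is locally finite according to $\mathcal{I}(\mathcal{A}_{gen})$. Since $\vec{X}$ is coded by a real, $\vec{X}\in V$, and we may fix $p\in G$ forcing ``$\check{\vec{X}}$ is locally finite according to $\mathcal{I}(\mathcal{A}_{gen})$''. Recall a condition $p$ is just a countable \textsf{AD} family $\{B^p_j : j\in\omega\}$ (if finite, pad with $\emptyset$), and each finite union $B^p_0\cup\dots\cup B^p_j$ is forced by $p$ to lie in $\mathcal{I}(\mathcal{A}_{gen})$, being a finite union of members of $\mathcal{A}_{gen}$. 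We may also assume $X_n\neq\emptyset$ for all $n$: local finiteness applied to $\emptyset\in\mathcal{I}(\mathcal{A}_{gen})$ forces all but finitely many $X_n$ nonempty, and a family with some $X_n=\emptyset$ cannot witness anything.

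The key observation is that applying the forced local finiteness of $\vec{X}$ to the sets $B^p_0\cup\dots\cup B^p_j$ yields: $p$ forces ``for almost all $n$ there is $s\in X_n$ with $s\cap(B^p_0\cup\dots\cup B^p_j)=\emptyset$''. But this is a purely arithmetical statement about the ground-model reals $\vec{X}$ and $B^p_0,\dots,B^p_j$, hence absolute; being forced by a condition, it is true in $V$. Thus in $V$ we obtain a strictly increasing function $j\mapsto N_j$ such that for every $n\geq N_j$ there is $s\in X_n$ with $s\cap(B^p_0\cup\dots\cup B^p_j)=\emptyset$.

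Now diagonalize in $V$: for $n<N_0$ pick any $s_n\in X_n$, and for $n\in[N_j,N_{j+1})$ pick $s_n\in X_n$ with $s_n\cap(B^p_0\cup\dots\cup B^p_j)=\emptyset$. Put $A=\bigcup_n s_n$. For each $k$, every $s_n$ with $n\geq N_k$ is disjoint from $B^p_k$, so $A\cap B^p_k\subseteq\bigcup_{n<N_k}s_n$ is finite; hence $A$ is almost disjoint from every member of $p$, and $A\supseteq s_n\in X_n$ for every $n$. If $A$ is finite, then $p$ itself forces $A\in\mathcal{I}(\mathcal{A}_{gen})$ (finite sets lie in the ideal), so $p$ forces the raving conclusion for $\vec{X}$; if $A$ is infinite, then $r=p\cup\{A\}\leq p$ forces $A\in\mathcal{A}_{gen}\subseteq\mathcal{I}(\mathcal{A}_{gen})$ with $A$ containing a member of each $X_n$. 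Since the same argument runs verbatim below any $p'\leq p$, the set of conditions forcing ``there is $A\in\mathcal{I}(\mathcal{A}_{gen})$ containing a member of each $X_n$'' is dense below $p$, so this is forced by $p$; as some such $p$ lies in $G$, the conclusion holds in $V[G]$. Therefore $\mathcal{A}_{gen}$ is raving.

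There is no real obstacle here: the only point requiring care is the absoluteness step that converts ``$p$ forces local finiteness of $\vec{X}$'' into a ground-model combinatorial fact about $\vec{X}$ relative to the finitely many sets in the current condition. Once that is in hand, the construction of $A$ is a routine almost-disjoint diagonalization, the only extra bookkeeping being the trivial dichotomy on whether $A$ is finite.
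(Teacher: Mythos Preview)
Your argument is correct and follows essentially the same route as the paper: fix a condition forcing local finiteness, apply that property to the finite unions $B_0\cup\dots\cup B_j$ of members of the condition to get thresholds $N_j$, diagonalize to build $A=\bigcup_n s_n$ almost disjoint from the condition and meeting every $X_n$, then extend by $A$. Your write-up is slightly more explicit about why the combinatorics lives in $V$ (via $\sigma$-closedness and absoluteness) and handles the edge cases $A$ finite and $X_n=\emptyset$, which the paper leaves implicit, but the core idea and construction are the same.
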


\begin{proof}
Let $\mathcal{B\in}$ $\mathbb{P}_{\mathsf{MAD}}$ and $X=\left\{  X_{n}\mid
n\in\omega\right\}  $ such that $\mathcal{B}$ forces that $X$ is locally
finite according to $\mathcal{I}\left(  \mathcal{A}_{gen}\right)  .$ Let
$\mathcal{B}=\left\{  B_{n}\mid n\in\omega\right\}  $ and we define
$E_{n}=B_{0}\cup...\cup B_{n}$ for every $n\in\omega.$ We can then find an
interval partition $\mathcal{P}=\left\{  P_{n}\mid n\in\omega\right\}  $ of $\omega$ such
that if $i\in P_{n+1}$ then $E_{n}$ does not intersect every element of
$X_{i}.$ For every $i\in\omega$ we choose $s_{i}\in X_{i}$ as follows: if
$i\in P_{0}$ let $s_{i}$ be any element of $X_{i}$ and if $i\in P_{n+1}$ we
choose $s_{i}\in X_{i}$ such that $s_{i}\cap E_{n}=\emptyset.$ Let $A={\textstyle\bigcup\limits_{n\in\omega}}
s_{n}$; then $A\in\mathcal{B}^{\perp}$ and the condition $\mathcal{B\cup
}\left\{  A\right\}  \in\mathbb{P}_{\mathsf{MAD}}$ is the extension of
$\mathcal{B}$ we were looking for.
\end{proof}

Our motivation for studying the forcing $\mathbb{P}_{\mathsf{MAD}}$ comes
from the following results about generic ultrafilters:

\begin{theorem}
[Todorcevic, see~\cite{SemiselectiveCoideals}]
An ultrafilter $\mathcal{U}$ is
$\wp\left(  \omega\right)  \setminus$ \textsf{fin} generic over \textsf{L}%
$\left(  \mathbb{R}\right)  $ if and only if $\mathcal{U}$ is Ramsey.
\end{theorem}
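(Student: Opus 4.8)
\noindent The plan is to prove both implications by exploiting the single fact that $\textsf{L}(\mathbb{R})$ contains every real, hence every subset of $\omega$ and every subset of $[\omega]^{\omega}$ that is definable over $\textsf{L}(\mathbb{R})$ from a real parameter. Throughout I write $\PP = \wp(\omega) \setminus \textsf{fin}$ ordered by $\subseteq^{*}$; since $\textsf{L}(\mathbb{R})$ and $V$ have the same reals, $\PP$ is absolute between them. An ultrafilter $\U$ on $\omega$ is identified with the filter $\{A \in \PP : A \in \U\}$ on $\PP$ — this is a filter because $\U$ is closed under finite intersections and supersets — so that ``$\U$ is $\PP$-generic over $\textsf{L}(\mathbb{R})$'' means exactly that $\U \cap \D \neq \emptyset$ for every dense $\D \subseteq \PP$ with $\D \in \textsf{L}(\mathbb{R})$.

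\noindent For the implication from genericity to Ramseyness I would argue as follows. For each $A \subseteq \omega$ the family $\D_A = \{B \in \PP : B \subseteq^{*} A \text{ or } B \subseteq^{*} \omega \setminus A\}$ is dense and, being defined from the real $A$, belongs to $\textsf{L}(\mathbb{R})$; meeting $\D_A$ gives $A \in \U$ or $\omega \setminus A \in \U$, so a $\PP$-generic filter over $\textsf{L}(\mathbb{R})$ is automatically an ultrafilter. Next, given a colouring $c \colon [\omega]^{2} \to 2$, the family $\D_c$ of $c$-homogeneous infinite sets is dense by Ramsey's theorem and lies in $\textsf{L}(\mathbb{R})$; an element of $\U \cap \D_c$ is a homogeneous set in $\U$, so $\U$ has the partition property for pairs and is therefore Ramsey (by the standard characterisation of Ramsey ultrafilters). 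This half uses nothing beyond the reals of $\textsf{L}(\mathbb{R})$.

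\noindent For the converse, assume $\U$ is Ramsey. I need to show that $\U$ meets every dense $\D \in \textsf{L}(\mathbb{R})$. Here I would invoke the central result of the theory of selective — more generally, semiselective — coideals developed in \cite{SemiselectiveCoideals}: under \textsf{LC} one has $\textsf{L}(\mathbb{R}) \models \textsf{AD}$, and consequently every $\X \subseteq [\omega]^{\omega}$ lying in $\textsf{L}(\mathbb{R})$ is $\U$-Ramsey, i.e. for every $B \in \U$ there is $A \in \U$ with $A \subseteq B$ and either $[A]^{\omega} \subseteq \X$ or $[A]^{\omega} \cap \X = \emptyset$. I apply this with $\X = \{A \in [\omega]^{\omega} : \exists B \in \D \; (A \subseteq^{*} B)\}$, which lies in $\textsf{L}(\mathbb{R})$, and with $B = \omega$, obtaining $A \in \U$ homogeneous for $\X$. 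The alternative $[A]^{\omega} \cap \X = \emptyset$ is impossible, since density of $\D$ yields $C \in \D$ with $C \subseteq^{*} A$, and then $A \cap C$ is an infinite subset of $A$ belonging to $\X$. Hence $[A]^{\omega} \subseteq \X$; in particular $A \in \X$, so there is $B \in \D$ with $A \subseteq^{*} B$, and then $B \in \U$ because $A \in \U$ and $\U$ is upward closed. Thus $B \in \U \cap \D$, as required.

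\noindent I expect the main obstacle to be precisely the ingredient used in the previous paragraph, namely that \emph{every} set of reals in $\textsf{L}(\mathbb{R})$ is $\U$-Ramsey for a Ramsey (indeed any semiselective) ultrafilter. This is the only point where large cardinals are genuinely needed, entering through determinacy in $\textsf{L}(\mathbb{R})$ together with Mathias-style fusion relativised to a semiselective coideal; everything else in the proof is a routine unpacking of the definition of genericity and the absoluteness of $\PP$.
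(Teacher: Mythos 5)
Your proposal is correct, but note that the paper does not supply its own proof of this theorem: it is cited (attributed to Todorcevic, with reference to Farah's paper on semiselective coideals), so there is no in-paper argument to compare against. Your proof follows exactly the route one would extract from that reference. The direction from genericity to Ramseyness is, as you say, a routine dense-set argument, using that the relevant dense families are definable from a real parameter and hence lie in $\textsf{L}\left(\mathbb{R}\right)$. The converse you correctly reduce to the single non-trivial ingredient: that under large cardinals, $\textsf{L}\left(\mathbb{R}\right) \models \textsf{AD}$ and consequently every $\X \subseteq [\omega]^{\omega}$ in $\textsf{L}\left(\mathbb{R}\right)$ is $\U$-Ramsey for any Ramsey (selective) ultrafilter $\U$; this is the Mathias--Todorcevic theorem relativised to a selective coideal, and it is exactly what the cited reference develops. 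Given that lemma, your application to $\X = \{A : \exists B \in \D\, (A \subseteq^{*} B)\}$ and the elimination of the negative alternative via density are carried out correctly, including the detail that $A \in [A]^{\omega}$ because $A \in \U$ is infinite. So the reduction is sound, and you have honestly flagged the one point where the real content lies.
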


\begin{theorem}
[Chodounsk\'y, Zapletal, see~\cite{IdealsGenUF}]
Let $\mathcal{I}$ be an $F_{\sigma}$-ideal and
$\mathcal{U}$ an ultrafilter. $\mathcal{U}$ is $\wp\left(  \omega\right)
\setminus\mathcal{I}$ generic over \textsf{L}$\left(  \mathbb{R}\right)  $ if
and only if $\mathcal{I\cap U=\emptyset}$ and for every closed set
$\mathcal{C}$ if $\mathcal{C}\cap\mathcal{U=\emptyset}$ then there is
$A\in\mathcal{U}$ such that $A\cap Y\in\mathcal{I}$ for every $Y\in$
$\mathcal{C}.$
\end{theorem}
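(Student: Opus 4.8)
The plan is to prove the two implications separately. The direction ``$\mathcal U$ is $\wp(\omega)\setminus\mathcal I$-generic over $\mathsf{L}(\mathbb{R})$ implies the combinatorial clause'' is soft --- it uses only that members of a generic filter are conditions, plus one density computation --- whereas the converse is the substantial direction and is where the large cardinal hypothesis enters: a proper class of Woodin cardinals, so that games with payoff set in $\mathsf{L}(\mathbb{R})$ are determined (cf.\ the determinacy proposition preceding Theorem~\ref{SSdefinable} and~\cite{ForcingIdealized}). Throughout I would use that $\mathsf{L}(\mathbb{R})$ contains every real, so every closed $\mathcal C\subseteq\wp(\omega)$ and every submeasure is coded there, and I would fix a lower semicontinuous submeasure $\varphi$ with $\mathcal I=\textsf{fin}(\varphi)$ (Mazur~\cite{Mazur}); by ``$\wp(\omega)\setminus\mathcal I$'' I mean the forcing $\mathcal I^{+}$ ordered by inclusion, whose generic filter is readily seen to be an ultrafilter disjoint from $\mathcal I$.

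\emph{Necessity.} Since every member of a $\mathcal I^{+}$-generic $\mathcal U$ is a condition, hence $\mathcal I$-positive, one gets $\mathcal I\cap\mathcal U=\emptyset$ at once. For the closed-set clause I would, given a closed $\mathcal C$ with $\mathcal C\cap\mathcal U=\emptyset$, consider
\[
D_{\mathcal C}=\bigl\{A\in\mathcal I^{+}:\forall Y\in\mathcal C\ (A\cap Y\in\mathcal I)\bigr\}\cup\bigl\{A\in\mathcal I^{+}:\exists Y\in\mathcal C\ (A\subseteq Y)\bigr\}.
\]
As $\mathcal C$ and $\varphi$ are coded by reals, $D_{\mathcal C}$ is definable from parameters in $\mathsf{L}(\mathbb{R})$, hence lies there; and it is dense, since below any $B\in\mathcal I^{+}$ either $B\cap Y\in\mathcal I$ for all $Y\in\mathcal C$, putting $B$ into the first set, or some $B\cap Y$ is $\mathcal I$-positive and extends $B$ into the second. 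By genericity $\mathcal U$ meets $D_{\mathcal C}$; it cannot meet the second set, as that would force the witnessing $Y$ into $\mathcal U\cap\mathcal C$, so it meets the first --- which is precisely the asserted conclusion.

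\emph{Sufficiency.} Assuming $\mathcal U$ satisfies $\mathcal I\cap\mathcal U=\emptyset$ together with the closed-set clause, it is enough to show $\mathcal U$ meets every dense open $D\subseteq\mathcal I^{+}$ in $\mathsf{L}(\mathbb{R})$; I would follow the template of Todorcevic's characterization of the Ramsey ultrafilters as the $\wp(\omega)\setminus\textsf{fin}$-generics over $\mathsf{L}(\mathbb{R})$. Call $A\in\mathcal I^{+}$ \emph{$D$-good} if every $\mathcal I$-positive $B\subseteq A$ lies in $D$, and \emph{$D$-bad} if no such $B$ does; density of $D$ excludes $D$-bad conditions, and a $D$-good set lies in $D$ (take $B=A$). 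The crux is the Ramsey-type dichotomy that some $A\in\mathcal U$ is $D$-good or $D$-bad. To obtain it I would analyse a two-player game in which $\mathsf{I}$ builds a $\subseteq$-decreasing sequence of $\mathcal I$-positive sets and, along the play, reveals reals whose closure is a closed set $\mathcal C$, while $\mathsf{II}$ builds finite approximations $s_{n}$ to a set $A=\bigcup_{n}s_{n}$ that is required to lie below $\mathsf{I}$'s sets and to diagonalize $\mathcal C$; $\mathsf{II}$ wins iff $A\in D$. This payoff set is definable from $D$, hence in $\mathsf{L}(\mathbb{R})$, so the game is determined. A winning strategy for $\mathsf{II}$ would be run against a play of $\mathsf{I}$ conducted inside $\mathcal U$ (at each round the finite set of reals revealed so far is, by choice, disjoint from $\mathcal U$, so the closed-set clause provides a continuation keeping matters in $\mathcal U$), producing a $D$-good $A\in\mathcal U$; a winning strategy for $\mathsf{I}$ would be impossible, since it would yield a closed set disjoint from $\mathcal U$ diagonalized by no member of $\mathcal U$, contradicting the hypothesis. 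The dichotomy in hand, the $D$-good $A\in\mathcal U$ lies in $D$, so $\mathcal U\cap D\neq\emptyset$.

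The main obstacle is this dichotomy: one must design the game so that, at once, legal plays of $\mathsf{I}$ can be realized with all her $\mathcal I$-positive sets drawn from $\mathcal U$ --- so that the closed-set clause applies at every round and the set built by $\mathsf{II}$ can be steered into $\mathcal U$ --- while the payoff set still lies in $\mathsf{L}(\mathbb{R})$, and a winning strategy for $\mathsf{I}$ genuinely produces a closed counterexample to the hypothesis on $\mathcal U$. It is at this point that $\mathcal I$ being $F_{\sigma}$ is used: the submeasure $\varphi$ supplies the fusion that controls membership in the open set $D$ along $\mathsf{II}$'s construction. The remaining ingredients --- the two density computations of the necessity direction, the reduction to dense open sets, and the appeal to determinacy --- are routine.
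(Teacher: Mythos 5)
The paper cites this result to Chodounsk\'y and Zapletal~\cite{IdealsGenUF} without proving it, so there is no internal argument to compare against. Evaluating your attempt on its own merits: the necessity direction is correct --- the density computation for $D_{\mathcal{C}}$ is right, and you correctly observe that if the generic met the second half of $D_{\mathcal{C}}$ then some $Y \in \mathcal{C}$ would be pushed into $\mathcal{U}$ by upward closure, contradicting $\mathcal{C}\cap\mathcal{U}=\emptyset$.

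The sufficiency direction, however, has a genuine gap. You never actually specify the game: neither the rules governing $\mathsf{I}$'s ``revealed reals,'' nor the precise legality condition on $\mathsf{II}$'s finite blocks $s_n$ (and without pinning down how the submeasure $\varphi$ enters, one cannot verify that $\bigcup_n s_n$ is $\mathcal{I}$-positive, nor check that the payoff set is in $\mathsf{L}(\mathbb{R})$, nor confirm that determinacy applies in the intended form). More seriously, the interface with $\mathcal{U}$ is left unexamined: $\mathcal{U}$ is typically not a member of $\mathsf{L}(\mathbb{R})$, so ``running $\mathsf{II}$'s strategy against a play of $\mathsf{I}$ conducted inside $\mathcal{U}$'' is a play carried out in $V$ against a single strategy living in $\mathsf{L}(\mathbb{R})$, and you would need to argue (i) that the reals revealed along such a play can be kept outside $\mathcal{U}$ so that the closed-set clause is applicable at each round, and (ii) that the resulting $A = \bigcup_n s_n$ belongs to $\mathcal{U}$ rather than merely to $D$. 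Point (ii) is the crux: $A$ is only a pseudo-intersection of $\mathsf{I}$'s sets, and nothing has been assumed about $\mathcal{U}$ (it is not hypothesized to be a P-point, for instance) beyond disjointness from $\mathcal{I}$ and the closed-set clause, so some argument showing the construction stays in $\mathcal{U}$ is required. Finally, the ``$D$-good versus $D$-bad'' dichotomy you reduce to is itself a Galvin--Prikry-type statement for the forcing $\mathcal{I}^+$ that must be established from the hypotheses rather than taken for granted. Your closing paragraph acknowledges all of this explicitly, and that is the honest assessment: the substantive half of the theorem remains to be supplied.
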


It would be interesting to find a similar characterization of the
$\mathbb{P}_{\mathsf{MAD}}$ generics over \textsf{L}$\left(  \mathbb{R}%
\right)  :$

\begin{problem}
Is there a combinatorial characterization of $\mathcal{A}$ (or of
$\mathcal{I}\left(  \mathcal{A}\right)  $)$\ $where $\mathcal{A}%
\ $is$\ \mathbb{P}_{\mathsf{MAD}}$ generic over \textsf{L}$\left(
\mathbb{R}\right)  $?
\end{problem}

A natural candidate would be ``raving", but even this strong property might still
be too weak to capture the full extent of genericity.


\subsection{Existence versus non-existence}
\label{existence-nonexistence}


An important result of Raghavan says:

\begin{theorem}
[\cite{AModelwithnoStronglySeparableAlmostDisjointFamilies}]It is consistent
that there are no Shelah-Stepr\={a}ns \textsf{MAD} families.
\end{theorem}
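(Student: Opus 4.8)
The plan is to build, under a suitable iterated forcing construction over a model of $\mathsf{CH}$, a universe in which every \textsf{MAD} family fails to be Shelah-Stepr\={a}ns. The natural mechanism is to arrange that every \textsf{MAD} family $\A$ satisfies $\mathcal{I}(\A) \leq_K \mathcal{WT}$ (equivalently, by the characterization lemma proved in the excerpt, that every \textsf{MAD} family is \emph{not} weakly tight); indeed, since Shelah-Stepr\={a}ns implies tight implies weakly tight (as noted right after the definitions), killing weak tightness kills Shelah-Stepr\={a}ns. However, killing weak tightness of all \textsf{MAD} families everywhere is too strong a demand if one wants $\mathsf{ZFC}$-consistency with, say, many ground-model combinatorial features; a cleaner route, and the one I would pursue first, is a bookkeeping iteration: enumerate all potential candidates for Shelah-Stepr\={a}ns-witnessing data and, at each stage, shoot in a real destroying the Shelah-Stepr\={a}ns property of one more \textsf{MAD} family while preserving the maximality of the families we care about.

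The key technical step is a \emph{single-step} forcing $\mathbb{Q}(\A)$ which, given a \textsf{MAD} family $\A$, adds a set $X \in (\mathcal{I}(\A)^{<\omega})^+$ witnessing failure of Shelah-Stepr\={a}ns, i.e. a countable $X = \{s_n : n \in \omega\} \subseteq [\omega]^{<\omega}\setminus\{\emptyset\}$ such that (a) for every $A \in \mathcal{I}(\A)$ there is $s_n \in X$ disjoint from $A$, and (b) $\bigcup Y \in \mathcal{I}(\A)^+$ for every infinite $Y \subseteq X$. Conditions would be finite approximations $\langle s_0,\dots,s_{k-1}\rangle$ together with a finite ``promise'' set of elements of $\A$ along which future $s_n$'s must avoid, in the spirit of the winning-strategy-for-$\mathsf{II}$ constructions in the proof of the Proposition that Shelah-Stepr\={a}ns ideals have no winning strategy for $\mathsf{II}$. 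The main obstacle is proving that such $\mathbb{Q}(\A)$ (and its countable support or finite support iteration) \emph{preserves enough maximality}: after forcing, $\A$ should remain \textsf{MAD} (or be extendable canonically to a \textsf{MAD} family whose ideal still sits below $\mathcal{WT}$), and — crucially — no new \textsf{MAD} family with the Shelah-Stepr\={a}ns property should be created at later stages. Since Shelah-Stepr\={a}ns \textsf{MAD} families are Cohen, random, and Sacks indestructible (Proposition~\ref{SSindestructibility}) and in fact indestructible by all the tame forcings, the destruction must be done by forcings that genuinely diagonalize; one expects a $\sigma$-centered or Suslin-proper iteration carefully tailored so that it does not add dominating reals (hence does not inadvertently kill \emph{every} \textsf{MAD} family in a trivial way) yet kills Shelah-Stepr\={a}ns-ness of each targeted family.

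First I would set up the ground model $V \models \mathsf{GCH}$ and a countable support (or finite support, if the iterands are Suslin ccc) iteration $\langle \mathbb{P}_\alpha ; \dot{\mathbb{Q}}_\alpha : \alpha < \omega_2\rangle$ using the notational conventions of Section~\ref{sec:notation}. At stage $\alpha$ I use a bookkeeping function to hand me a $\mathbb{P}_\alpha$-name $\dot\A_\alpha$ for a \textsf{MAD} family, and let $\dot{\mathbb{Q}}_\alpha = \mathbb{Q}(\dot\A_\alpha)$ be the single-step destruction forcing above. The standard $\Delta$-system/reflection argument ensures that every \textsf{MAD} family in the final model $V[\mathbb{P}_{\omega_2}]$ already appeared at some stage $\alpha < \omega_2$ (here one uses that $\mathbb{P}_{\omega_2}$ has the $\aleph_2$-cc and that \textsf{MAD} families have size $\leq \cc = \aleph_2$, together with a preservation theorem guaranteeing that a family which is \textsf{MAD} at stage $\alpha$ and gets its Shelah-Stepr\={a}ns-ness destroyed by $\dot{\mathbb{Q}}_\alpha$ is not resurrected). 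Then at the final stage, every \textsf{MAD} family $\A$ is, by the way $\mathbb{Q}(\A)$ works, no longer Shelah-Stepr\={a}ns — because the generic $X$ it added witnesses $X \in (\mathcal{I}(\A)^{<\omega})^+$ with no infinite union landing in $\mathcal{I}(\A)$, and this property of $X$ is upward-absolute as long as $\mathcal{I}(\A)$ does not grow, which is exactly what the preservation theorem secures. The simultaneous demand $\dilipnon(\dilipMMM)=\aleph_1$ from the abstract suggests that in the actual construction one wants the iterands to be, in addition, \emph{$\sqsubseteq^{\mathrm{Mathias}}$-preserving} or $\omega^\omega$-bounding-with-a-twist so that the non-meager-set-of-size-$\aleph_1$ from the ground model survives; I would fold that into the iteration by choosing $\mathbb{Q}(\A)$ to be Suslin $\sigma$-centered (or a Mathias-like forcing relative to an ideal arranged to keep $\dilipnon(\dilipMMM)$ small) and invoke the corresponding preservation theorem.

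The step I expect to be genuinely hard is the \textbf{preservation of non-Shelah-Stepr\={a}ns-ness along the tail of the iteration}: proving that once $\dot{\mathbb{Q}}_\alpha$ has killed the Shelah-Stepr\={a}ns property of $\dot\A_\alpha$ (with $X_\alpha$ the witness), no subsequent forcing $\mathbb{P}_{\omega_2}/\mathbb{P}_{\alpha+1}$ adds, below a condition, a pseudo-union $Y \subseteq X_\alpha$ with $\bigcup Y$ still \textsf{AD} with $\A_\alpha$, while \emph{also} checking that $\A_\alpha$ (or a canonical \textsf{MAD} extension of it) stays \textsf{MAD}. This is the classical tension in ``no Shelah-Stepr\={a}ns \textsf{MAD} family'' arguments: the destruction forcing must be weak enough to preserve maximality of the targeted family but strong enough that the witness $X_\alpha$ can never be ``tamed'' later. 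I would handle it by designing $\mathbb{Q}(\A)$ so that the relevant preservation property (something like: ``$\mathbb{Q}(\A)$ forces that $X$ is a permanent witness, and this is preserved by any further proper forcing not adding a dominating real'') is an iterable condition in the sense of the standard preservation theorems for countable support iterations, and then verifying the two-step iteration case plus the limit case. The $\mathsf{CH}$-in-the-ground-model hypothesis is used both for the bookkeeping (enumerating all names of length $\omega_1$) and to get the witnesses and promises to cohere; the large-cardinal-free nature of the whole argument is preserved since everything takes place over $\mathsf{ZFC}+\mathsf{GCH}$.
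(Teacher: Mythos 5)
Your proposal runs contrary to the actual construction, and the paper itself tells you this. In the discussion just before Question~\ref{q:ssnonM}, the authors state that the model of \cite{AModelwithnoStronglySeparableAlmostDisjointFamilies} is obtained by iterating posets of the form $\LL(\G)$ (Laver-type forcing over a filter $\G$ on a countable set), and that since such posets always add dominating reals, $\bb > \aleph_1$ holds there. You insist on the opposite: $\sigma$-centered or Suslin iterands that do \emph{not} add dominating reals. Adding a dominating real diagonalizes --- that is, destroys the maximality of --- every ground-model MAD family, so in Raghavan's construction the targeted families are killed outright; the genuine technical work is in showing that the \emph{fresh} MAD families arising in the extension cannot be Shelah-Stepr\={a}ns, not in preserving a targeted family's maximality while adjoining a failure witness for it. Your write-up even contains this tension internally: you ask both that the iteration ``preserve the maximality of the families we care about'' and that ``the destruction must be done by forcings that genuinely diagonalize,'' but diagonalizing $\A$ destroys exactly the maximality of $\A$.

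There are two further problems. First, you fold in the requirement $\non(\M)=\aleph_1$ from the abstract, but that belongs to the paper's Theorem~\ref{thm:nosmallSS}, which only rules out Shelah-Stepr\={a}ns families of size $\aleph_1$, not to the cited theorem; ruling out \emph{all} Shelah-Stepr\={a}ns families while also keeping $\non(\M)=\aleph_1$ is precisely the open Question~\ref{q:ssnonM}, so a working version of your plan would settle an open problem --- a warning sign for a blind sketch. Second, the single-step forcing $\QQ(\A)$ is not actually constructed. You need the generic $X=\{s_n: n\in\omega\}$ to satisfy the strong requirement that $\bigcup Y \in \I(\A)^+$ for \emph{every} infinite $Y\subseteq X$; a condition carrying a finite stem and a finite promise subset of $\A$ controls this against only finitely many finite subfamilies of the (uncountable) $\A$ at a time, no density argument forcing the full requirement is given, and since $\I(\A)$ is not Borel, there is no absoluteness to protect the witness across the remaining iteration. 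Note also that the paper's generic-MAD-family analysis points the other way: $\PP_{\mathsf{MAD}}$, which adds a MAD family by finite-promise countable conditions, produces a \emph{raving} (hence Shelah-Stepr\={a}ns) family, so ``shoot in a witness by finite approximations with promises'' is structurally pulling against the grain of what you want.
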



For strongly tight families, we first prove:

\begin{proposition}
If $\mathcal{A}$ is strongly tight then $\mathfrak{d\leq}$ $\left\vert
\mathcal{A}\right\vert .$
\end{proposition}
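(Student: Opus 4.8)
The plan is to show that if $\mathfrak{d} > |\mathcal{A}|$ then $\mathcal{A}$ cannot be strongly tight, by producing a bad family $\{X_n \mid n \in \omega\}$ as in the definition of strong tightness. Since $|\mathcal{A}| < \mathfrak{d}$, the ideal $\mathcal{I}(\mathcal{A})$ is generated by fewer than $\mathfrak{d}$ sets, and I want to exploit the fact that on the complement of finitely many members of $\mathcal{A}$ the family looks like (or rather, refines into) a $\mathsf{fin} \times \mathsf{fin}$-type structure that is not dominated. The natural move is to fix a partition of $\omega$ into infinitely many infinite sets $\{Z_k \mid k \in \omega\}$ and, on each $Z_k$, set things up so that witnessing requires dominating a function; with $<\mathfrak{d}$ many constraints coming from $\mathcal{A}$, no single $A \in \mathcal{I}(\mathcal{A})$ can meet all the $X_n$.

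More concretely, here is the approach I would carry out. Pick an infinite $B \in \mathcal{I}(\mathcal{A})^+$ — for instance, $B$ a member of $\mathcal{A}$, or better, a pseudo-union arrangement; actually I would rather work on all of $\omega$ and use a bijection $\omega \cong \omega \times \omega$. On the column $C_n = \{n\} \times \omega$, declare $X_n = \{\{(n,m)\} \mid m \in \omega\}$ — the singletons. Then for any $Y \in \mathcal{I}(\mathcal{A})$, the set $\{n \mid X_n \subseteq^* Y\}$ is exactly $\{n \mid C_n \subseteq^* Y\}$, and this is finite provided no single member of $\mathcal{I}(\mathcal{A})$ contains infinitely many full columns — which I can arrange by choosing the identification of $\omega$ with $\omega \times \omega$ so that the columns form an AD family refining $\mathcal{A}$ suitably, or more simply by noting the columns can be taken to be a partition into infinite sets each of which is a subset of a distinct member of a maximal family, hence no element of $\mathcal{I}(\mathcal{A})$ (a finite union of members of $\mathcal{A}$ plus a finite set) can contain infinitely many of them. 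So the hypothesis of strong tightness is met. Now I must locate the $\mathfrak{d}$: a set $A \in \mathcal{I}(\mathcal{A})$ meeting every $X_n$ just means $A \cap C_n \neq \emptyset$ for all $n$, which is trivially satisfiable — so this naive choice is too weak, and I need the $X_n$ to be harder to hit.

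So instead I would take $X_n = \{ s \in [C_n]^{<\omega} \mid |s| \geq f(n) \text{ for enough spread} \}$; more precisely, following the pattern of the proof that $\mathsf{fin}\times\mathsf{fin}$ is Shelah-Stepr\={a}ns, let $X_n = \{ s \in [C_n]^{<\omega} \mid s \not\subseteq D(g)$ for any $g$ in a fixed cofinal-in-$\mathcal{I}(\mathcal{A})\!\restriction\! C_n$ family $\}$ — the point being to force a witness $A$ to, on column $n$, reach above the $n$-th "coordinate" of a scale. Then an $A \in \mathcal{I}(\mathcal{A})$ meeting every $X_n$ would yield, via the columns, a function eventually dominating a prescribed $<\mathfrak{d}$-generated family of functions, using that $A$ is covered by finitely many members of $\mathcal{A}$ together with a finite set and each member of $\mathcal{A}$ restricted to the columns induces a function; a counting/diagonalization against the $<\mathfrak{d}$ many functions arising from $\mathcal{A}$ gives a contradiction. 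I expect the main obstacle to be precisely the bookkeeping in this last step: showing that the hypothesis "$\{n \mid X_n \subseteq^* Y\}$ is finite for every $Y \in \mathcal{I}(\mathcal{A})$" is genuinely satisfied by the harder $X_n$ (one must check the $X_n$ are still "spread out" enough that no ground-model ideal set almost-contains infinitely many of them), while simultaneously the conclusion "$\exists A \in \mathcal{I}(\mathcal{A})$ meeting all $X_n$" encodes domination of a $<\mathfrak{d}$-sized family — the two requirements pull in opposite directions and getting both from a single combinatorial configuration is the crux. Once the configuration is set up correctly, the contradiction with $|\mathcal{A}| < \mathfrak{d}$ is immediate, so $\mathfrak{d} \leq |\mathcal{A}|$.
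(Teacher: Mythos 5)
Your high-level plan is correct and matches the paper's: show that the family $\{f_A : A \in \mathcal{I}(\mathcal{A})\}$ of functions read off from the ideal is dominating (equivalently, derive a contradiction from an undominated $g$ using strong tightness). But you never land on the right configuration, and your final candidate for the $X_n$'s is of the wrong type: you propose $X_n = \{ s \in [C_n]^{<\omega} \mid \ldots \}$, a collection of \emph{finite} subsets of the $n$-th column, which is the data that appears in the Shelah-Stepr\={a}ns property (elements of $(\mathcal{I}^{<\omega})^+$), not in the definition of strong tightness, where each $X_n$ must be a single \emph{infinite} subset of $\omega$. So the hypothesis ``$\{n : X_n \subseteq^* Y\}$ is finite'' and the conclusion ``$A \cap X_n \neq \emptyset$'' don't even parse for your proposed $X_n$. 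The tension you flag at the end is real for this malformed choice, but it is not the crux of the actual proof.

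The configuration that works is a two-level partition, and once you see it the tension dissolves. Fix a partition $\{A_n\}_{n\in\omega}$ of $\omega$ with $A_n \in \mathcal{A}$, and partition each $A_n$ into infinite pieces $\{A_n(i) : i \in \omega\}$. For $A \in \mathcal{I}(\mathcal{A})$ define $f_A(n) = 0$ if $A \cap A_n$ is infinite, and $f_A(n) = \max\{i : A \cap A_n(i) \neq \emptyset\} + 1$ otherwise. Suppose some $g$ is not $\leq^*$-dominated by any $f_A$, and set $X_n := A_n(g(n))$, a single infinite set. The hypothesis of strong tightness is now automatic: $X_n \subseteq A_n$ and $\mathcal{A}$ is almost disjoint, so $Y \in \mathcal{I}(\mathcal{A})$ can satisfy $X_n \subseteq^* Y$ only if $A_n$ is one of the finitely many generators of $Y$. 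Strong tightness then produces $A \in \mathcal{I}(\mathcal{A})$ meeting every $X_n$. Since $A$ is a finite union of members of $\mathcal{A}$ plus a finite set, $A \cap A_m$ is infinite for only finitely many $m$; and since $g \not\leq^* f_A$, there are infinitely many $m$ with $f_A(m) < g(m)$. Pick such an $m$ with $A \cap A_m$ finite. Then $A \cap A_m(g(m)) = \emptyset$ (else $g(m) < f_A(m)$), i.e.\ $A \cap X_m = \emptyset$, a contradiction. Hence $\{f_A : A \in \mathcal{I}(\mathcal{A})\}$ is dominating and $\mathfrak{d} \leq |\mathcal{I}(\mathcal{A})| = |\mathcal{A}|$. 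The missing idea in your write-up is precisely that $X_n$ should be a single piece $A_n(g(n))$ of a refinement along an undominated $g$; that is what lets the ``$\subseteq^*$-finitely-often'' hypothesis come for free from almost disjointness while the ``meet every $X_n$'' conclusion encodes domination.
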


\begin{proof}
Let $\left\{  A_{n}\mid n\in\omega\right\}  $ be a partition of $\omega$
contained in $\mathcal{A}$ and for each $n\in\omega$ let $P_{n}=\left\{
A_{n}\left(  i\right)  \mid i\in\omega\right\}  $ be a partition of $A_{n}$ in
infinite pieces. Given $A\in\mathcal{I}\left(  \mathcal{A}\right)  $ we define
a function $f_{A}:\omega\longrightarrow\omega$ given by $f_{A}\left(
n\right)  =0$ if $A\cap A_{n}$ is infinite and in the other case $f_{A}\left(
n\right)  =\max\left\{  i\mid A\cap A_{n}\left(  i\right)  \neq\emptyset
\right\}  +1.$ We claim that $\left\{  f_{A}\mid A\in\mathcal{I}\left(
\mathcal{A}\right)  \right\}  $ is a dominating family. Assume this is not the
case, so there is $g:\omega\longrightarrow\omega$ not dominated by any of the
$f_{A}.$ For each $n\in\omega$ define $X_{n}=A_{n}\left(  g\left(  n\right)
\right)  $ and $X=\left\{  X_{n}\mid n\in\omega\right\}  .$ Since
$\mathcal{A}$ is strongly tight there must be $A\in\mathcal{I}\left(
\mathcal{A}\right)  $ such that $A\cap X_{n}\neq\emptyset$ for every
$n\in\omega.$ Pick any $m$ such that $f_{A}\left(  m\right)  <g\left(
m\right)  $; then $A\cap A_{m}\left(  g\left(  m\right)
\right)  =\emptyset$ so that $A\cap X_{m}=\emptyset$, which is a contradiction.
\end{proof}

We conclude:

\begin{corollary}
There are no strongly tight \textsf{MAD} families in the Cohen model.
\end{corollary}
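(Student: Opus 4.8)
The plan is to combine the preceding Proposition with the value of $\dd$ in the Cohen model. Recall that in the Cohen model $\Cov{M}=\cc$, and since $\Cov{M}\le\dd\le\cc$ holds in \textsf{ZFC}, we get $\dd=\cc$ there. Hence, by the Proposition, any strongly tight \textsf{MAD} family $\A$ satisfies $\cc=\dd\le|\A|\le\cc$, so $|\A|=\cc$. It therefore suffices to rule out strongly tight \textsf{MAD} families of size $\cc$ in the Cohen model.

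For this I would work inside $V[G]$, where $V$ is a model of \textsf{CH}, $G$ is $\Fn(\omega_2\times\omega,2)$-generic, and $\cc=\aleph_2$, and argue by contradiction. Suppose $\A=\{A_\xi:\xi<\omega_2\}$ is strongly tight. Using that $\Fn(\omega_2\times\omega,2)$ is ccc and that every member of $\I(\A)$ is determined by countably many coordinates, a standard closing‑off argument produces $\delta<\omega_2$ of cofinality $\omega_1$ such that, with $M=V[G\restriction\delta]$, the fragment $\A_\delta:=\A\cap M=\{A_\xi:\xi<\delta\}$ is a \textsf{MAD} family in $M$. Fix, inside $M$, a sequence $\langle C_n:n<\omega\rangle$ of distinct members of $\A_\delta$ and a partition of each $C_n$ into infinitely many infinite blocks $\langle C_n(i):i<\omega\rangle$.

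I would then take a single Cohen real $d$ which, relative to a suitable factorization of $\Fn(\omega_2\times\omega,2)$ as a product of a copy of itself with one additional Cohen factor, is generic over a ground model $N$ (itself a Cohen model) containing $M$ and the auxiliary data above; from $d$ read off a function $g\in N[d]$ that is unbounded over $N$, and set $X_n=C_n(g(n))$. Since each $X_n\subseteq C_n$ and the $C_n$ are distinct, $\langle X_n:n<\omega\rangle$ satisfies the hypothesis of Lemma~\ref{stronglytightMAD}, so strong tightness of $\A$ hands us $A\in\I(\A)$ meeting every $X_n$. But, just as in the proof of the Proposition, "$A$ meets every $X_n$'' says precisely that the function $n\mapsto 1+\max\{i:A\cap C_n\cap C_n(i)\ne\emptyset\}$ eventually dominates $g$; and if $A$ — hence this function — lies in $N$, that is impossible because $g$ is unbounded over $N$. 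This contradiction would complete the argument.

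The delicate point — and the reason this needs the Cohen model rather than merely $\dd=\cc$ — is the step of choosing the factorization and the model $N$: a strongly tight \textsf{MAD} family of size $\cc$ need not be contained in any intermediate Cohen model, so it is not automatic that one can capture $M$, the auxiliary data, and (more importantly) \emph{every} relevant $A\in\I(\A)$ inside a model over which a further Cohen real is generic. Justifying this — either by arranging, via a counting/$\Delta$‑system argument, that some Cohen coordinate is used by no member of $\A$, or by directly producing a Cohen real generic over an elementary submodel of $H_\theta$ containing $\A$ — is the main obstacle I expect to have to overcome; everything else is essentially a re‑run of the proof of the Proposition together with the fact that a Cohen real is unbounded over the ground model.
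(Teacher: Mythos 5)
Your first step---deriving $|\A| = \cc$ from the preceding Proposition together with $\dd = \cc$ in the Cohen model---is correct and matches the paper's first step. The rest of your plan diverges from the paper's, and, as you suspect, the diverging part has a genuine gap.

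The paper's proof finishes in one more line, using the \emph{weaker} consequence ``tight'' rather than ``strongly tight'': strongly tight implies tight, tight implies Cohen indestructible, and a tight \textsf{MAD} family in the Cohen model has size $\omega_1$. The last fact is a reflection argument. By the ccc and the fact that each member of $\I(\A)$, and each countable sequence from $\I(\A)^+$, is coded by countably many coordinates, one can close off to find $\delta < \omega_2$ of cofinality $\omega_1$ such that $\A_\delta := \A \cap V[G\restriction\delta]$ is \textsf{MAD} \emph{and tight} in $V[G\restriction\delta]$ (tightness is a $\forall$-countable-$\exists$-countable property, so it reflects cleanly). Being tight, $\A_\delta$ is Cohen indestructible there, hence remains \textsf{MAD} in $V[G]$, so $\A = \A_\delta$ and $|\A| \le |2^\omega \cap V[G\restriction\delta]| = \omega_1 < \cc$, contradicting the first step.

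Your alternative---applying strong tightness directly to a Cohen-generic sequence $\langle X_n \rangle$---has the circularity you flag, and it is not merely delicate but actually unresolved: the $X_n$ depend on the Cohen real $d$, the witness $A \in \I(\A)$ depends on the $X_n$, and since $\A$ has size $\cc = \omega_2$, its members may collectively use every Cohen coordinate. The counting/$\Delta$-system repair you propose does not obviously go through: for each candidate coordinate $\alpha$ you obtain a witness ${A}_{\alpha}$ using some countable set $S_\alpha \subseteq \omega_2$ of coordinates, and you would need some $\alpha$ with $\alpha \notin S_\alpha$; but a map $\alpha \mapsto S_\alpha \in [\omega_2]^{\le\omega}$ with $\alpha \in S_\alpha$ for all $\alpha$ is perfectly possible, and no free-set theorem applies without extra structure (e.g.\ regressiveness) that you do not have here. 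The paper's route sidesteps this circularity entirely: rather than trying to capture the strong-tightness witness for a generically chosen challenge, it reflects the $\Pi_2$-type property ``tight'' to an intermediate model and then invokes Cohen indestructibility. I would replace the second half of your argument with that route.
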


\begin{proof}
If there were a strongly tight \textsf{MAD}, it would have size continuum, by the previous
proposition. But since it would also be tight,
it should have size $\omega_{1}$ (recall that tight \textsf{MAD} families are
Cohen indestructible).
\end{proof}

We will later prove that there are Shelah-Stepr\={a}ns \textsf{MAD} families
in the Cohen model, so Shelah-Stepr\={a}ns does not imply strong tightness
(see the discussion after Theorem~\ref{diamondMAD}). As mentioned in the
Introduction (Problem~\ref{Stepransproblem}), it is still open whether tight
\textsf{MAD} families exist in \textsf{ZFC}.

To provide an example for a class of \textsf{MAD} families existing in \textsf{ZFC}, 
we make the following definition:

\begin{definition}
Let $\mathcal{I}$ be an ideal and $\mathcal{A}$ a \textsf{MAD }family. We say
that $\mathcal{A}$ is $\mathcal{I}$\emph{-\textsf{MAD} }if $\mathcal{I}\left(
\mathcal{A}\right)  \nleq_{K}\mathcal{I}.$
\end{definition}

It is well known that no \textsf{MAD} family is \textsf{fin}$\times
$\textsf{fin}-\textsf{MAD}. Clearly $\A$ is Laflamme iff it is $\I$-\textsf{MAD} for every
$F_\sigma$-ideal $\I$, and if $\A$ is $\I$-\textsf{MAD} for every analytic $P$-ideal then 
$\A$ is not-P (Lemma~\ref{Laflamme-notP-char}). Also note that being Cohen indestructible is
equivalent to being \textsf{nwd}-\textsf{MAD }(recall that \textsf{nwd }denotes the
ideal of nowhere dense sets of the rational numbers). 

We denote by \textsf{conv
}the ideal in $\left[  0,1\right]  \cap\mathbb{Q}$ generated by all sequences
converging to a real number. Since \textsf{conv} $\leq_K tr (\mathsf{ctble})$, every
Sacks indestructible \textsf{MAD} family is \textsf{conv}-\textsf{MAD}. On the other hand,
if $\A$ is a $\PP_{\mathsf{MAD}} (tr (\mathsf{ctble}))$-generic \textsf{MAD} family, then
by Corollary~\ref{Katetovgenericcorollary}, $\A$ is a Sacks destructible \textsf{conv}-\textsf{MAD} family
because $tr (\mathsf{ctble}) \not\leq_K $ \textsf{conv}. We will now prove that there is a
\textsf{conv}-\textsf{MAD} family; this result is based on the proof of
Proposition 2 of \cite{MADFamiliesandtheRationals}. We need the following lemma:  

\begin{lemma}
Let $\mathcal{A}$ be an \textsf{AD }family of size less than $\mathfrak{c}$.
If $f$ is a Kat\v{e}tov-morphism from $(\left[  0,1\right]  \cap\mathbb{Q}%
,$\textsf{conv}$)$ to $\left(  \omega,\mathcal{I}\left(  \mathcal{A}\right)
\right)  $ then there is $B\in\mathcal{A}^{\perp}$ such that $f^{-1}\left(
B\right)  \notin$ \textsf{conv}.
\end{lemma}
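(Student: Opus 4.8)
The plan is to argue by contradiction. Suppose $\mathcal{A}$ is an \textsf{AD} family with $|\mathcal{A}| < \mathfrak{c}$ and $f$ is a Kat\v etov-morphism from $([0,1]\cap\mathbb{Q}, \textsf{conv})$ to $(\omega, \mathcal{I}(\mathcal{A}))$ such that for \emph{every} $B \in \mathcal{A}^\perp$ we have $f^{-1}[B] \in \textsf{conv}$. The key structural fact I would use is that \textsf{conv} is nowhere tall in a strong sense: for any $Q \in ([0,1]\cap\mathbb{Q})$ of size continuum, in fact for $[0,1]\cap\mathbb{Q}$ itself, the ideal \textsf{conv}$\restriction Q$ is generated by convergent sequences, so a set is \textsf{conv}-positive iff it has a condensation point (an accumulation point in $[0,1]$ that is also a limit of infinitely many members of the set). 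First I would observe that since $f$ is a Kat\v etov-morphism and $\mathcal{A} \subseteq \mathcal{I}(\mathcal{A})$, each $f^{-1}[A]$ for $A \in \mathcal{A}$ lies in \textsf{conv}, hence is a countable union (indeed each $f^{-1}[A]$ is a single element of \textsf{conv}, so bounded-away-from-accumulation-free in the appropriate sense) — more precisely each $f^{-1}[A]$ is covered by finitely many convergent sequences together with a finite set, so there is a finite set $L_A \subseteq [0,1]$ (the limit points) with: for every $\varepsilon > 0$, all but finitely many points of $f^{-1}[A]$ lie within $\varepsilon$ of $L_A$.

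The heart of the argument is a fusion/diagonalization construction building a set $E \subseteq [0,1]\cap\mathbb{Q}$ with $E = \bigcup_n s_n$, where the $s_n$ are finite, chosen so that (i) $E$ has a real accumulation point $x$ that is approached by infinitely many of the $s_n$, so $E \notin \textsf{conv}$, i.e. $E \in \textsf{conv}^+$; and (ii) $B = f[E]$ is almost disjoint from every $A \in \mathcal{A}$, i.e. $B \in \mathcal{A}^\perp$. Since $|\mathcal{A}| < \mathfrak{c} = |[0,1]\cap\mathbb{Q}\text{-many disjoint clusters}|$, I would fix a point $x \in [0,1]$ around which we can find continuum-many pairwise ``separated'' small rationals, and enumerate $\mathcal{A}$ with cofinality-type bookkeeping (using $|\mathcal{A}| < \mathfrak{c}$ so that at stage $n$ only finitely/few members of $\mathcal{A}$ have been ``activated''). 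At stage $n$ I choose a small finite set $s_n$ of rationals very close to $x$ (within $2^{-n}$ of $x$), disjoint in $f$-image from $f^{-1}[A_0 \cup \dots \cup A_{n-1}]$, which is possible because $f^{-1}[A_0\cup\dots\cup A_{n-1}] \in \textsf{conv}$ and therefore cannot contain cofinally many rationals near $x$ unless $x$ is among its (finitely many) limit points — and the point $x$ is chosen outside the countably-many limit-point sets $\bigcup_{A} L_A$, which is where $|\mathcal{A}| < \mathfrak{c}$ (really just that there are continuum-many candidate points $x$ but only $\leq |\mathcal{A}| \cdot \aleph_0 < \mathfrak{c}$ forbidden ones) is used. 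Then $E = \bigcup_n s_n$ accumulates only at $x$ (by the $2^{-n}$-shrinking) plus finite noise, so $E \notin \textsf{conv}$; and $f[E]$ is almost disjoint from each $A$ since $E$ eventually avoids $f^{-1}[A]$.

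Having produced such $E$, set $B = f[E]$. Then $B \in \mathcal{A}^\perp$ by construction, but $f^{-1}[B] \supseteq E \in \textsf{conv}^+$, so $f^{-1}[B] \notin \textsf{conv}$ — contradicting our assumption. This is exactly the conclusion $f^{-1}[B] \notin \textsf{conv}$ for some $B \in \mathcal{A}^\perp$, establishing the lemma. I expect the main obstacle to be the bookkeeping in the fusion: one must choose the point $x$ and the finite sets $s_n$ so that simultaneously (a) $E$ genuinely fails to be in \textsf{conv} — it is not enough that $E$ be infinite, it must cluster at a real — and (b) $E$ is $f$-image-almost-disjoint from all of $\mathcal{A}$, not just from an initial segment; handling the ``all of $\mathcal{A}$'' requires that the sets $f^{-1}[A]$ be genuinely small near $x$, which rests on the precise description of \textsf{conv} (convergent sequences) and on choosing $x \notin \bigcup_{A \in \mathcal{A}} L_A$, a set of size $\leq |\mathcal{A}| < \mathfrak{c}$. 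This is morally the same diagonalization as in Lemma~\ref{Katetovgenericlemma} / Corollary~\ref{Katetovgenericcorollary}, adapted to the specific geometry of \textsf{conv} and replacing ``$\I$ nowhere Shelah-Stepr\=ans'' by the explicit combinatorics of converging sequences together with the cardinality hypothesis $|\mathcal{A}| < \mathfrak{c}$; the proof of Proposition~2 of \cite{MADFamiliesandtheRationals} gives the template.
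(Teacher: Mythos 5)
There is a genuine gap: your characterization of \textsf{conv}-positivity, and hence your whole construction, is wrong at a basic level. A set $X \subseteq [0,1]\cap\mathbb{Q}$ is in \textsf{conv} precisely when it is covered by finitely many convergent sequences plus a finite set, which happens iff $X$ has only finitely many accumulation points in $[0,1]$. It is not true that a set is \textsf{conv}-positive as soon as it clusters at some real: \emph{every} infinite subset of $[0,1]\cap\mathbb{Q}$ has an accumulation point by compactness, and your notion of ``condensation point'' (an accumulation point that is a limit of infinitely many points of the set) is just an accumulation point, so this test does not distinguish positive sets from sets in the ideal. Consequently the set $E=\bigcup_n s_n$ you build, which by your $2^{-n}$-shrinking accumulates at the single real $x$ and nowhere else, is exactly a convergent sequence and therefore lies in \textsf{conv}. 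Far from giving $f^{-1}(B)\notin\textsf{conv}$, your construction produces a witness for membership in \textsf{conv}, so the contradiction you aim for never materializes.

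The correct move, which is what the paper does, is to make the accumulation set large rather than a singleton. For each $A\in\mathcal{A}$, $f^{-1}(A)\in\textsf{conv}$ has a finite set $F_A$ of accumulation points; since $[0,1]$ partitions into $\mathfrak{c}$ pairwise disjoint perfect sets and $\bigl|\bigcup_{A\in\mathcal{A}}F_A\bigr|<\mathfrak{c}$, one of those perfect sets $C$ misses every $F_A$. Taking $D\subseteq[0,1]\cap\mathbb{Q}$ whose accumulation set is exactly $C$ gives $D\notin\textsf{conv}$ (uncountably many accumulation points) while $D\cap f^{-1}(A)$ is finite for each $A$ (otherwise $D$ would accumulate somewhere in $F_A$). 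Then $B=f[D]$ works. So your reading of where $|\mathcal{A}|<\mathfrak{c}$ enters is off too: you only need it to avoid a set of size $<\mathfrak{c}$, but what must be chosen avoiding it is a perfect set, not a point, and this is the step for which the partition of $[0,1]$ into $\mathfrak{c}$-many perfect sets is needed. Your fusion idea could in principle be repaired by clustering $E$ along an infinite (indeed perfect) set of reals rather than at one, but as written the argument proves the opposite of what you want.
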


\begin{proof}
For every $A\in\mathcal{A}$ let $F_{A}$ be the set of accumulation points of
$f^{-1}\left(  A\right)  ,$ and note that each $F_{A}$ is finite since
$f^{-1}\left(  A\right)  $ can be covered by finitely many converging
sequences. Since $\left[  0,1\right]  $ can be partitioned into $\mathfrak{c}%
$-many perfect pairwise disjoint sets, we can find a perfect set
$C\subseteq\left[  0,1\right]  $ such that $C\cap F_{A}=\emptyset$ for every
$A\in\mathcal{A}.$ Let $D \subseteq [0,1] \cap \QQ$ be such that $C$ is the set of
accumulation points of $D$ and note that $D \cap f^{-1} (A)$ is finite for all $A \in \mathcal{A}$.
It is easy to see that $B=f\left[  D\right]  $ has the
desired properties.
\end{proof}

We conclude:

\begin{corollary}
There is a \textsf{conv}-\textsf{MAD} family.
\end{corollary}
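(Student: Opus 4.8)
The plan is to build a \textsf{conv}-\textsf{MAD} family by a transfinite recursion of length $\cc$, using the preceding lemma at each successor step to defeat a potential Kat\v{e}tov-morphism. First I would fix an enumeration $\langle f_\alpha : \alpha < \cc \rangle$ of all functions $f : [0,1] \cap \QQ \to \omega$, and recursively construct an increasing chain $\langle \A_\alpha : \alpha < \cc \rangle$ of \textsf{AD} families, each of size $< \cc$, so that $\A = \bigcup_{\alpha < \cc} \A_\alpha$ is the desired family. We start with $\A_0$ any countably infinite partition of $\omega$ into infinite sets (so that $\A$ will automatically be infinite and we never run into triviality issues), and take unions at limit stages, noting that a union of $< \cc$ many \textsf{AD} families of size $< \cc$ still has size $< \cc$ (using $\cf(\cc) > \omega$ is not even needed here since we just bound by $|\alpha| + \aleph_0$).

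At stage $\alpha + 1$ I would do two things. The first is a \emph{maximality} step: if $\A_\alpha$ is not already maximal, pick some $Y \in \A_\alpha^\perp$ (infinite, almost disjoint from everything in $\A_\alpha$) and add it; this is the standard bookkeeping ensuring the final $\A$ is \textsf{MAD}, and can be interleaved with the $f_\alpha$-enumeration in the usual way (e.g. handle $f_\alpha$ and also the $\alpha$-th element of a fixed enumeration of $[\omega]^\omega$). The second, and the real content, is the \emph{Kat\v{e}tov-diagonalization} step: check whether $f_\alpha$ is a Kat\v{e}tov-morphism from $([0,1]\cap\QQ, \textsf{conv})$ to $(\omega, \I(\A_\alpha))$; if it is, then since $|\A_\alpha| < \cc$ the lemma supplies $B \in \A_\alpha^\perp$ with $f_\alpha^{-1}(B) \notin \textsf{conv}$, and I add this $B$ to form $\A_{\alpha+1}$. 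The point is that once $B \in \A_{\alpha+1} \subseteq \A$, the preimage $f_\alpha^{-1}(B)$ is a set in $\textsf{conv}^+$, so $f_\alpha$ fails to be a Kat\v{e}tov-morphism from $([0,1]\cap\QQ, \textsf{conv})$ to $(\omega, \I(\A))$ — indeed $B \in \I(\A)$ but $f_\alpha^{-1}(B) \notin \textsf{conv}$. If $f_\alpha$ was not a Kat\v{e}tov-morphism at stage $\alpha$, I must still argue it cannot become one later; but that is automatic, because witnessing non-morphism is a statement of the form ``there is $A \in \I(\A_\alpha)$ with $f_\alpha^{-1}(A) \notin \textsf{conv}$'', and enlarging the family only enlarges $\I(\A)$, so such a witness persists. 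Hence in either case $\I(\A) \not\leq_K \textsf{conv}$ via $f_\alpha$.

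Since every function $[0,1]\cap\QQ \to \omega$ appears as some $f_\alpha$, at the end no $f$ is a Kat\v{e}tov-morphism from $([0,1]\cap\QQ,\textsf{conv})$ to $(\omega,\I(\A))$, i.e. $\I(\A) \nleq_K \textsf{conv}$, which is exactly the assertion that $\A$ is \textsf{conv}-\textsf{MAD}. The maximality bookkeeping ensures $\A$ is a \textsf{MAD} family, completing the proof.

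The main obstacle is purely organizational: making sure the two interleaved tasks (enlarging towards maximality, and killing each $f_\alpha$) keep the family size strictly below $\cc$ at every stage $\alpha < \cc$, since the lemma has that as its hypothesis. This is handled by the standard observation that at stage $\alpha$ we have added at most $|\alpha| + \aleph_0 < \cc$ sets. A minor point worth spelling out is that $\A_\alpha^\perp \neq \emptyset$ whenever $\A_\alpha$ is not maximal, which is immediate, and that when $\A_\alpha$ \emph{is} already maximal we simply skip the maximality step; the Kat\v{e}tov step is then vacuous too, because no $f$ can be a Kat\v{e}tov-morphism into $(\omega, \I(\A_\alpha))$ for a \textsf{MAD} $\A_\alpha$ — this is the cited fact that no \textsf{MAD} family is \textsf{fin}$\times$\textsf{fin}-\textsf{MAD} combined with \textsf{conv} $\leq_K$ \textsf{fin}$\times$\textsf{fin}, or can be seen directly. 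Either way the recursion goes through and $\A$ is as required.
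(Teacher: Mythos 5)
Your proof is correct and is exactly the standard transfinite recursion of length $\mathfrak{c}$ that the paper leaves implicit when it writes ``We conclude:'' after the lemma, so you and the paper are doing the same thing.

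One small caveat on the parenthetical at the end: the derivation ``no \textsf{MAD} family is \textsf{fin}$\times$\textsf{fin}-\textsf{MAD}, combined with $\textsf{conv} \leq_K \textsf{fin}\times\textsf{fin}$'' does not give what you want. Those two facts say, respectively, that $\I(\A_\alpha) \leq_K \textsf{fin}\times\textsf{fin}$ and that $\textsf{conv} \leq_K \textsf{fin}\times\textsf{fin}$; knowing that two ideals both lie Kat\v etov-below a third tells you nothing about their relationship to each other, and in particular does not yield $\I(\A_\alpha) \nleq_K \textsf{conv}$. Fortunately the ``can be seen directly'' alternative you offer is the right one: if $\A_\alpha$ is \textsf{MAD} with $|\A_\alpha|<\cc$ and $f$ were a Kat\v etov-morphism, the lemma would hand you some $B\in\A_\alpha^\perp$ with $f^{-1}(B)\notin\textsf{conv}$; since $\A_\alpha$ is maximal $B$ is finite, hence $B\in\I(\A_\alpha)$, and then $f^{-1}(B)\in\textsf{conv}$ because $f$ is a morphism, a contradiction. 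So the corner case is handled, just not for the reason you cited first.
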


The ideal \textsf{conv }is one of the few Borel ideals $\mathcal{I}$ for which we
can prove that there are $\mathcal{I}$-\textsf{MAD }families.


\subsection{Existence under diamond principles}
\label{diamond-existence}


Parametrized diamonds are strong guessing principles which can be used to construct 
\textsf{MAD} families with strong combinatorial properties. We first recall the
principles $\Diamond\left(  \mathfrak{b}\right)  $ and $\Diamond\left(
\mathfrak{d}\right)  $ from \cite{ParametrizedDiamonds}.

\begin{description}
\item[$\Diamond\left(  \mathfrak{b}\right)  $)] For every $\left\langle
F_{\alpha}:2^{\alpha}\longrightarrow\omega^{\omega}\right\rangle
_{\alpha<\omega_{1}}$ such that each $F_{\alpha}$ is Borel, there is
$g:\omega_{1}\longrightarrow\omega^{\omega}$ such that for every
$R\in2^{\omega_{1}},$ the set $\{\alpha\mid F_{\alpha}\left(  R\upharpoonright
\alpha\right)  $ $^{\ast}\ngeq g\left(  \alpha\right)  \}$ is stationary.

\item[$\Diamond\left(  \mathfrak{d}\right)  $)] For every $\left\langle
F_{\alpha}:2^{\alpha}\longrightarrow\omega^{\omega}\right\rangle
_{\alpha<\omega_{1}}$ such that each $F_{\alpha}$ is Borel, there is
$g:\omega_{1}\longrightarrow\omega^{\omega}$ such that for every
$R\in2^{\omega_{1}},$ the set $\{\alpha\mid F_{\alpha}\left(  R\upharpoonright
\alpha\right)  $ $\leq^{\ast}g\left(  \alpha\right)  \}$ is stationary.
\end{description}
Clearly $\Diamond\left(  \mathfrak{d}\right)  $ implies $\Diamond\left(
\mathfrak{b}\right)  .$ In \cite{ParametrizedDiamonds} it was proved that
$\Diamond\left(  \mathfrak{b}\right)  $ implies that $\mathfrak{a}=\omega_{1}$
and in \cite{GenericExistenceofMADfamilies} it was shown that $\Diamond\left(
\mathfrak{b}\right)  $ implies the existence of a tight \textsf{MAD} family.
We will now improve this result:

\begin{theorem}   \label{diamondMAD}
\begin{enumerate}
\item $\Diamond\left(  \mathfrak{b}\right)  $ implies there is a
Shelah-Stepr\={a}ns \textsf{MAD} family.

\item $\Diamond\left(  \mathfrak{d}\right)  $ implies there is a raving
\textsf{MAD} family.
\end{enumerate}
\end{theorem}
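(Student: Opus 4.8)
The plan is to build the \textsf{MAD} family $\A = \{A_\alpha : \alpha < \omega_1\}$ by a recursion of length $\omega_1$, using the parametrized diamond to anticipate, at each stage $\alpha$, a potential ``bad" object — either a set $X \in (\I(\A \restriction \alpha)^{<\omega})^+$ witnessing a failure of Shelah-Stepr\={a}ns (for item 1, using $\Diamond(\bb)$) or a family $\langle X_n : n \in \omega \rangle$ locally finite according to $\I(\A\restriction\alpha)$ witnessing a failure of raving (for item 2, using $\Diamond(\dd)$). At a stage where the diamond guesses such an object correctly, we must \emph{simultaneously} add a new almost disjoint set $A_\alpha$ that (a) kills the guessed bad object — i.e. $A_\alpha$ absorbs infinitely many elements of the guessed $X$ (for Shelah-Stepr\={a}ns) resp.\ picks an element from each $X_n$ (for raving) — and (b) keeps the construction going, by diagonalizing against all $A_\beta$, $\beta < \alpha$, so that $\A$ stays \textsf{AD}, and also ensures maximality in the usual bookkeeping way (absorbing a prescribed infinite set into $A_\alpha$ when it is almost disjoint from $\A\restriction\alpha$).

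The key technical device is to phrase the recursive step as a statement about a single real parameter coding $\A\restriction\alpha$ together with the bookkeeping data, so that the diamond hypothesis applies. Concretely, for item 1: at stage $\alpha$ we have a countable \textsf{AD} family $\A\restriction\alpha$; we enumerate it as $\{A^\alpha_n : n \in \omega\}$ and code this by a real; the diamond function $F_\alpha$ reads this real (restricted appropriately) and must, on a stationary set of $\alpha$, produce a function that is \emph{not} eventually dominated by (or, for $\Diamond(\dd)$, is eventually dominated by) a specific function $g_\alpha$ we define from the data at stage $\alpha$. Following the template of \cite{GenericExistenceofMADfamilies}, one sets up $F_\alpha$ so that whenever the ``environment" at a later stage of a generic construction threatens to produce a counterexample to Shelah-Stepr\={a}ns, the function $g_\alpha$ dominating it would force the guess to be useful. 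The point of $\Diamond(\bb)$ (an ``anti-domination" guessing principle) is exactly that it catches, stationarily often, a guessed function that the construction \emph{cannot} dominate — and one arranges that catching such a function lets us find the required $A_\alpha$ absorbing infinitely many members of $X$. For item 2, raving requires handling an $\omega$-indexed family of sets that is locally finite, which is a more ``$\dd$-like" demand (one needs to beat the guessed object on \emph{all} infinite sets, not cofinally), so $\Diamond(\dd)$ is the right strength; the mechanics are parallel, with $F_\alpha$ now coding the whole sequence and $g_\alpha$ built so that domination by $g_\alpha$ yields a single $A_\alpha \in \I(\A\restriction\alpha \cup \{A_\alpha\})$ meeting every $X_n$.

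The main obstacle is the combinatorial heart of the successor step: given the guessed bad object at stage $\alpha$ and given that it really is (a correct guess of) a set $X \in (\I(\A\restriction\alpha)^{<\omega})^+$, one must produce the single set $A_\alpha$, almost disjoint from every $A_\beta$ ($\beta<\alpha$), such that $\bigcup Y \subseteq A_\alpha$ for some $Y \in [X]^\omega$. Since $X \in (\I(\A\restriction\alpha)^{<\omega})^+$, for every finite union $A_{\beta_0} \cup \dots \cup A_{\beta_{k}}$ of members of $\A\restriction\alpha$ there is $s \in X$ disjoint from it; a fusion-style argument along an enumeration of $\A\restriction\alpha$ then lets one choose $s_0, s_1, s_2, \ldots \in X$ with $s_i$ disjoint from the first $i$ members, and $A_\alpha = \bigcup_i s_i$ is the desired set — but one must also interleave this with the maximality bookkeeping (absorbing a guessed infinite $B$ almost disjoint from $\A\restriction\alpha$ if one exists), checking that the two requirements do not conflict, which they do not because the $s_i$ only need to \emph{avoid} finitely much at each step. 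The correctness analysis — showing that if $\A$ failed Shelah-Stepr\={a}ns then the witnessing $X$ would, via a pressing-down / reflection argument on the stationary set given by the diamond, have been guessed at some stage $\alpha$ with $X$ already living in $(\I(\A\restriction\alpha)^{<\omega})^+$, contradicting that $A_\alpha$ was chosen to absorb infinitely many of its elements — is routine once the coding is in place, and is exactly where the stationarity in the conclusion of $\Diamond(\bb)$ (resp.\ $\Diamond(\dd)$) is used.
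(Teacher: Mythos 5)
Your outline captures the right skeleton (a recursion of length $\omega_1$; a coloring that reads a coded initial segment of the construction together with a candidate bad object; a stationary set where the guess is ``correct''), but there is a genuine gap, and it is located exactly at what you call the ``main obstacle.''

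You describe the obstacle as: given (a correct guess of) some $X\in(\I(\A\restriction\alpha)^{<\omega})^+$ at stage $\alpha$, build $A_\alpha$ by a fusion --- choose $s_i\in X$ disjoint from $A_{\alpha_0}\cup\dots\cup A_{\alpha_i}$ and set $A_\alpha=\bigcup_i s_i$. That step is correct, but it is a \textsf{CH}-style argument, not a $\Diamond(\bb)$-argument: it presupposes that the set $X$ itself is in hand at stage $\alpha$. The parametrized diamond $\Diamond(\bb)$ does not hand you $X$; it hands you a single function $G(\alpha)\in\omega^\omega$ with the weak property that, for every branch $R$, $C(R\restriction\alpha)\not\geq^* G(\alpha)$ on a stationary set. (Recall $\Diamond(\bb)$ holds in the Cohen model where \textsf{CH} fails, so a recursion that needs $X$ itself at stage $\alpha$ cannot be what the hypothesis buys you.) The heart of the proof is therefore to define $A_\alpha$ from $G(\alpha)$ and $\A\restriction\alpha$ \emph{alone}, with no reference to $X$, in such a way that the inequality $C(R\restriction\alpha)\not\geq^* G(\alpha)$ forces $A_\alpha$ to absorb infinitely many members of whatever $X$ was fed into $C$. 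Concretely: one takes $C(t)(n)$ to be a bound such that some $s\in X_t$ satisfies $s\subseteq C(t)(n)$ and $s\cap(A_{\alpha_0}\cup\dots\cup A_{\alpha_n}\cup n)=\emptyset$ --- so $C(t)$ records the ``speed'' of precisely the fusion you describe --- and then sets $A_\alpha=\bigcup_{n}\bigl(G(\alpha)(n)\setminus(A_{\alpha_0}\cup\dots\cup A_{\alpha_n})\bigr)$. Whenever $G(\alpha)(n)>C(t)(n)$, this $A_\alpha$ swallows the witnessing $s$, and this happens infinitely often by the choice of $\beta$ in the diamond's stationary set. Your write-up also inverts the roles: you say $g_\alpha$ is ``a specific function we define from the data at stage $\alpha$,'' but the coloring $F_\alpha$ (the $C$ above) is what the prover defines, while $g$ is the guessing sequence that the diamond supplies; the stated direction (``not eventually dominated by'') also doesn't match $\not\geq^*$. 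Once the precise coloring and the $G(\alpha)$-only construction of $A_\alpha$ are in place, item (1) goes through; item (2) needs the analogous but more elaborate coloring built over the countably many $X_n$ (tracking, for each $n$, the first stage past which $B_n$ misses some member of $X_\ell$), which is where eventual domination --- and hence $\Diamond(\dd)$ rather than $\Diamond(\bb)$ --- is actually used, as you correctly sensed but did not spell out.
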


\begin{proof}
For every $\alpha<\omega_{1}$ fix an enumeration $\alpha=\left\{  \alpha
_{n}\mid n\in\omega\right\}  .$ We will first show that $\Diamond\left(
\mathfrak{b}\right)  $ implies there is a Shelah-Stepr\={a}ns \textsf{MAD}
family. With a suitable coding, the coloring $C$ will be defined for pairs
$t=\left(  \mathcal{A}_{t},X_{t}\right)  $ where $\mathcal{A}_{t}=\left\langle
A_{\xi}\mid\xi<\alpha\right\rangle $ and $X_{t}\subseteq\left[  \omega\right]
^{<\omega}$. We define $C\left(  t\right)  $ to be the constant $0$ function
in case $\mathcal{A}_{t}$ is not an almost disjoint family or $X_{t}%
\notin\left(  \mathcal{I}\left(  \mathcal{A}_{t}\right)  ^{<\omega}\right)
^{+}.$ In the other case, define an increasing function $C\left(  t\right)
:\omega\longrightarrow\omega$ such that if $n\in\omega$ then there is $s\in
X_{t}$ such that $s\subseteq C\left(  t\right)  \left(  n\right)  $ and
$s\cap\left(  A_{\alpha_{0}}\cup...\cup A_{\alpha_{n}}\cup n\right)
=\emptyset.$

Using $\Diamond\left(  \mathfrak{b}\right)  $ let $G:\omega_{1}\longrightarrow
\omega^{\omega}$ be a guessing sequence for $C.$ By changing $G$ if necessary,
we may assume that all the $G\left(  \alpha\right)  $ are increasing and if
$\alpha<\beta$ then $G\left(  \alpha\right)  <^{\ast}G\left(  \beta\right)  .$
We will now define our \textsf{MAD} family: start by taking $\left\{
A_{n}\mid n\in\omega\right\}  $ a partition of $\omega.$ Having defined
$A_{\xi}$ for all $\xi<\alpha,$ we proceed to define $A_{\alpha}%
=\bigcup\limits_{n\in\omega}\left(  G\left(  \alpha\right)  \left(  n\right)
\backslash A_{\alpha_{0}}\cup...\cup A_{\alpha_{n}}\right)  $ in case this is
an infinite set, otherwise just take any $A_{\alpha}$ that is almost disjoint
from $\{ A_\beta \mid \beta < \alpha \}.$ We will see that $\mathcal{A}$ is a
Shelah-Stepr\={a}ns \textsf{MAD} family. Let $X\in\left(  \mathcal{I}\left(
\mathcal{A}\right)  ^{<\omega}\right)  ^{+}$. Consider the branch $R=\left(
\left\langle A_{\xi}\mid\xi<\omega_{1}\right\rangle ,X\right)  $ and pick
$\beta>\omega$ such that $C\left(  R\upharpoonright\beta\right)  $ $^{\ast
}\ngeq G\left(  \beta\right)  .$ It is easy to see that $A_{\beta}$ contains
infinitely many elements of $X.$

Now we will prove that $\Diamond\left(  \mathfrak{d}\right)  $ implies there
is a raving \textsf{MAD} family. With a suitable coding, the coloring $C$ will
be defined for pairs $t=\left(  \mathcal{A}_{t},X_{t}\right)  $ where
$\mathcal{A}_{t}=\left\langle A_{\xi}\mid\xi<\alpha\right\rangle $ and
$X_{t}=\left\{  X_{n}^{t}\mid n\in\omega\right\}  \subseteq\left[
\omega\right]  ^{<\omega}$. We define $C\left(  t\right)  $ to be the constant
$0$ function in case $\mathcal{A}_{t}$ is not an almost disjoint family or
$X_{t}$ is not locally finite according to $\mathcal{I}\left(  \mathcal{A}%
_{t}\right)  .$ We will describe what to do in the other case. For every
$n\in\omega$ define $B_{n}=\bigcup\limits_{i<n}A_{\alpha_{i}}$ (hence
$B_{0}=\emptyset$) and let $d\left(  n\right)  $ be the smallest $k\geq n$
such that if $\ell\geq k$ then $B_{n}$ does not intersect every element of
$X_{\ell}^{t}.$ We define an increasing function $C\left(  t\right)
:\omega\longrightarrow\omega$ such that for every $n,i\in\omega,$
if\ $d\left(  n\right)  \leq i<d\left(  n+1\right)  $ then $C\left(  t\right)
\left(  n\right)  $ $\backslash$ $B_{n}$ contains an element of $X_{i}^t.$ The
rest of the proof is similar as in the case of $\Diamond\left(  \mathfrak{b}%
\right)  .$
\end{proof}

It is known that $\Diamond\left(  \mathfrak{b}\right)  $ holds in the Cohen
model (see \cite{ParametrizedDiamonds}) so there are Shelah-Stepr\={a}ns
\textsf{MAD} families in this model but as we saw earlier, there is no
strongly tight \textsf{MAD} family, so being Shelah-Stepr\={a}ns does not
imply being strongly tight. We will later see that strong tightness does not
imply being Shelah-Stepr\={a}ns.


\subsection{Existence under forcing axioms}
\label{p=c-existence}


We will now prove two results:
$\mathfrak{p=c}$, which is equivalent to MA($\sigma$-centered), implies the existence of a Shelah-Stepr\={a}ns \textsf{MAD}
family and of a strongly tight \textsf{MAD} family. 
In \cite{KatetovandKatetovBlassOrdersFsigmaIdeals} it was proved that Laflamme
\textsf{MAD} families exist under $\mathfrak{p=c}.$ The following is a strengthening:

\begin{proposition}   \label{ShelahStepransunderp=c}
If $\mathfrak{p=c}$ then every \textsf{AD} family of size less than
$\mathfrak{c}$ can be extended to a Shelah-Stepr\={a}ns \textsf{MAD} family.
\end{proposition}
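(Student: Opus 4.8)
The plan is to build the \textsf{MAD} family by a recursion of length $\mathfrak{c}$, adjoining at each step one new member of the family by an application of MA($\sigma$-centered). Write $\mathcal{A}_{0}$ for the given \textsf{AD} family, of size $<\mathfrak{c}$; we may clearly assume it is infinite. Since $[\omega]^{<\omega}$ is countable, fix an enumeration $\langle X_{\alpha}:\alpha<\mathfrak{c}\rangle$ of all subsets of $[\omega]^{<\omega}\setminus\{\emptyset\}$. I would recursively construct an increasing chain of \textsf{AD} families $\langle\mathcal{A}_{\alpha}:\alpha\leq\mathfrak{c}\rangle$ with $|\mathcal{A}_{\alpha}|\leq|\mathcal{A}_{0}|+|\alpha|<\mathfrak{c}$, taking unions at limits, as follows. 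At stage $\alpha$, if $X_{\alpha}\notin\left(\mathcal{I}\left(\mathcal{A}_{\alpha}\right)^{<\omega}\right)^{+}$, put $\mathcal{A}_{\alpha+1}=\mathcal{A}_{\alpha}$. Otherwise, by the Claim below, choose an infinite $A_{\alpha}\subseteq\omega$ that is almost disjoint from every member of $\mathcal{A}_{\alpha}$ and such that $\{s\in X_{\alpha}:s\subseteq A_{\alpha}\}$ is infinite, and put $\mathcal{A}_{\alpha+1}=\mathcal{A}_{\alpha}\cup\{A_{\alpha}\}$ (note that $A_{\alpha}$, being infinite and almost disjoint from all of $\mathcal{A}_{\alpha}$, is not a member of $\mathcal{A}_{\alpha}$, so this is again an \textsf{AD} family). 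Finally set $\mathcal{A}=\bigcup_{\alpha<\mathfrak{c}}\mathcal{A}_{\alpha}\supseteq\mathcal{A}_{0}$.

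The only real work is the following Claim, where $\mathfrak{p}=\mathfrak{c}$ is used: \emph{if $\mathcal{B}$ is an \textsf{AD} family with $|\mathcal{B}|<\mathfrak{c}$ and $X\in\left(\mathcal{I}\left(\mathcal{B}\right)^{<\omega}\right)^{+}$, then there is an infinite $A^{\ast}\subseteq\omega$, almost disjoint from every member of $\mathcal{B}$, with infinitely many elements of $X$ contained in $A^{\ast}$.} To prove it I would force with the poset $\mathbb{Q}$ whose conditions are pairs $(s,F)$ where $F\in[\mathcal{B}]^{<\omega}$ and $s\subseteq\omega$ is a finite union of members of $X$ with $s\cap\bigcup F=\emptyset$, ordered by $(s',F')\leq(s,F)$ iff $s\subseteq s'$, $F\subseteq F'$ and $(s'\setminus s)\cap\bigcup F=\emptyset$; two conditions with the same first coordinate $s$ have the common lower bound $(s,F_{1}\cup F_{2})$, and there are only countably many possible first coordinates, so $\mathbb{Q}$ is $\sigma$-centered. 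For each $B\in\mathcal{B}$ the set $D_{B}=\{(s,F):B\in F\}$ is dense, and meeting $D_{B}$ keeps $A^{\ast}\cap B$ finite. For each $k\in\omega$ the set $E_{k}=\{(s,F):|\{t\in X:t\subseteq s\}|\geq k\}$ is dense: given $(s,F)\notin E_{k}$, note that $(\bigcup F)\cup n\in\mathcal{I}\left(\mathcal{B}\right)$ for every $n\in\omega$, so by $X\in\left(\mathcal{I}\left(\mathcal{B}\right)^{<\omega}\right)^{+}$ there are infinitely many $t\in X$ disjoint from $\bigcup F$, and picking one whose minimum exceeds every element of $s$ (hence $t\not\subseteq s$, as $t\neq\emptyset$) the condition $(s\cup t,F)\leq(s,F)$ has one more member of $X$ in its first coordinate; iterating reaches $E_{k}$. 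Since $|\mathcal{B}|+\aleph_{0}<\mathfrak{c}=\mathfrak{p}$, MA($\sigma$-centered) yields a filter $G$ meeting every $D_{B}$ and every $E_{k}$, and $A^{\ast}=\bigcup\{s:(s,F)\in G\}$ works; it is infinite because it contains infinitely many distinct nonempty finite sets.

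It remains to verify that $\mathcal{A}$ is a Shelah-Stepr\={a}ns \textsf{MAD} family. It is \textsf{AD} by construction. Given $X\in\left(\mathcal{I}\left(\mathcal{A}\right)^{<\omega}\right)^{+}$, write $X=X_{\alpha}$; since $\mathcal{I}\left(\mathcal{A}_{\alpha}\right)\subseteq\mathcal{I}\left(\mathcal{A}\right)$ we get $\left(\mathcal{I}\left(\mathcal{A}\right)^{<\omega}\right)^{+}\subseteq\left(\mathcal{I}\left(\mathcal{A}_{\alpha}\right)^{<\omega}\right)^{+}$, so at stage $\alpha$ we were in the second case and adjoined $A_{\alpha}$; then $Y:=\{s\in X:s\subseteq A_{\alpha}\}\in[X]^{\omega}$ and $\bigcup Y\subseteq A_{\alpha}\in\mathcal{A}\subseteq\mathcal{I}\left(\mathcal{A}\right)$, which is exactly the Shelah-Stepr\={a}ns conclusion. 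Maximality is then automatic: were some infinite $Z\subseteq\omega$ almost disjoint from every member of $\mathcal{A}$, then $\{\{n\}:n\in Z\}\in\left(\mathcal{I}\left(\mathcal{A}\right)^{<\omega}\right)^{+}$ because every element of $\mathcal{I}\left(\mathcal{A}\right)$ meets $Z$ in a finite set, and Shelah-Stepr\={a}nsness would then produce an infinite subset of $Z$ belonging to $\mathcal{I}\left(\mathcal{A}\right)$, which is absurd. The crux of the whole argument is the Claim, and there the point is exactly that $X\in\left(\mathcal{I}\left(\mathcal{B}\right)^{<\omega}\right)^{+}$ leaves room to keep enlarging $A^{\ast}$ by fresh elements of $X$ while dodging any prescribed finite union of members of $\mathcal{B}$; the surrounding recursion is routine bookkeeping of the type already seen in the $\Diamond(\mathfrak{b})$ proof.
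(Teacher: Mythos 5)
Your approach is essentially the same as the paper's: both reduce the problem to a single extension step -- given an \textsf{AD} family $\mathcal{B}$ of size $<\mathfrak{c}=\mathfrak{p}$ and $X\in\left(\mathcal{I}(\mathcal{B})^{<\omega}\right)^{+}$, produce an infinite $Y\subseteq X$ with $\bigcup Y\in\mathcal{B}^{\perp}$ -- via a $\sigma$-centered Mathias-type poset and an appeal to $\mathfrak{p}=\mathfrak{c}$, and then close off with a recursion of length $\mathfrak{c}$. The paper's conditions are pairs $(t_{p},\mathcal{F}_{p})$ with $t_{p}\in 2^{<\omega}$ recording which $s_{n}$ have been selected and $\mathcal{F}_{p}\in[\mathcal{A}]^{<\omega}$ a side condition, with \emph{no} constraint between the two coordinates; your conditions carry the selected part directly as a finite union $s$.

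There is one genuine slip in your poset. Requiring $s\cap\bigcup F=\emptyset$ in the \emph{definition} of a condition makes $D_{B}=\{(s,F):B\in F\}$ fail to be dense: if some $t\in X$ meets $B$ (which is unavoidable in general, e.g.\ $X=\{\{n\}:n\in\omega\}$), then $(t,\emptyset)$ is a condition with no extension $(s',F')$ having $B\in F'$, since $t\subseteq s'$ and $s'\cap\bigcup F'=\emptyset$ would force $t\cap B=\emptyset$. Without $D_{B}$ dense you lose control of $A^{*}\cap B$, which is the whole point of the side condition. The fix is to drop the requirement $s\cap\bigcup F=\emptyset$ from the definition of a condition -- exactly what the paper's formulation does by placing no constraint between $t_{p}$ and $\mathcal{F}_{p}$. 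With that change, the order you wrote (demanding only $(s'\setminus s)\cap\bigcup F=\emptyset$) still gives $\sigma$-centeredness (two conditions with the same $s$ have lower bound $(s,F_{1}\cup F_{2})$), $D_{B}$ becomes trivially dense and meeting it pins $A^{*}\cap B$ inside a single finite $s$, and your density argument for $E_{k}$ goes through unchanged since it only uses that $X\in\left(\mathcal{I}(\mathcal{B})^{<\omega}\right)^{+}$. The surrounding recursion, the maximality argument, and the verification of the Shelah-Stepr\={a}ns property are all fine.
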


\begin{proof}
Let $\mathcal{A}$ be an \textsf{AD} family of size less than $\mathfrak{c}$
and $X=\left\{  s_{n}\mid n\in\omega\right\}  \in\left(  \mathcal{I}\left(
\mathcal{A}\right)  ^{<\omega}\right)  ^{+}.$ We define the forcing
$\mathbb{P}$ as the set of all $p=\left(  t_{p},\mathcal{F}_{p}\right)  $
where $t_{p}\in2^{<\omega}$ and $\mathcal{F}_{p}\in\left[  \mathcal{A}\right]
^{<\omega}.$ If $p=\left(  t_{p},\mathcal{F}_{p}\right)  $ and $q=\left(
t_{q},\mathcal{F}_{q}\right)  $ then $p\leq q$ if the following holds:

\begin{enumerate}
\item $t_{q}\subseteq t_{p}$ and $\mathcal{F}_{q}\subseteq\mathcal{F}_{p}.$

\item In case $n\in dom\left(  t_{p}\right)  $ $\backslash dom\left(
t_{q}\right)  $ and $A\in\mathcal{F}_{q}$ if $t_{p}\left(  n\right)  =1$ then
$s_{n}\cap A=\emptyset.$
\end{enumerate}
For any $n\in\omega$ and $A\in\mathcal{A}$ let $D_{n,A}\subseteq\mathbb{P}$ be
the set of conditions $p=\left(  t_{p},\mathcal{F}_{p}\right)  $ such that
$t_{p}^{-1}\left(  1\right)  $ has size at least $n$ and $A\in\mathcal{F}%
_{p}.$ Since $X\in\left(  \mathcal{I}\left(  \mathcal{A}\right)  ^{<\omega
}\right)  ^{+}$ each $D_{n,A}$ is open dense. Clearly $\mathbb{P}$ is
$\sigma$-centered and since $\mathcal{A}$ has size less than $\mathfrak{p}$ we
can then force and find $Y\in\left[  X\right]  ^{\omega}$ such that $\bigcup
Y$ is almost disjoint with every element of $\mathcal{A}.$\qquad
\ \ \ \qquad\ \ \ \qquad\ \ \ 
\end{proof}

We prove a similar result for strongly tight \textsf{MAD} families:

\begin{lemma}
Let $\mathcal{A}$ be an \textsf{AD} family of size less than $\mathfrak{p}.$
Let $\left\{  X_{n}\mid n\in\omega\right\}  $ be a family of infinite
subsets of $\omega$ such that for every $A\in\mathcal{I}\left(  \mathcal{A}%
\right)  $ the set $\left\{  n\mid X_n\subseteq^{\ast}A\right\}  $ is finite.
Then there is $B\in\mathcal{A}^{\perp}$ such that $B\cap X_{n}\neq\emptyset$
for every $n\in\omega.$
\end{lemma}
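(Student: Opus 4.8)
The plan is a $\sigma$-centered forcing argument of size $<\pp$, using Martin's Axiom for $\sigma$-centered partial orders (equivalently Bell's theorem). First I would dispose of the trivial case: if only finitely many of the $X_n$ are distinct, any finite partial selector is a $B$ as wanted; so assume there are infinitely many distinct $X_n$, and note that then no finite $F\sub\A$ has a cofinite union (apply the hypothesis to $\bigcup F\in\I(\A)$), i.e.\ $\I(\A)$ is proper. The real difficulty is that an $X_n$ lying in $\I(\A)$ can no longer be met once the current finite condition has ``forbidden'' enough of $\A$, so such $n$ must be forced to be dealt with early. Let $N_0=\{n:X_n\in\I(\A)\}$. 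For $n\in N_0$ fix a $\sub$-minimal finite $F_n\sub\A$ with $X_n\sub^*\bigcup F_n$; minimality gives $F_n\neq\emptyset$ and $|X_n\cap A|=\omega$ for every $A\in F_n$. The hypothesis yields the \emph{key fact}: for each finite $F\sub\A$ the set $\{n\in N_0:F_n\sub F\}$ is finite (otherwise $X_n\sub^*\bigcup F$ for infinitely many $n$). From the key fact I would extract a function $f\colon N_0\to\A$ with $f(n)\in F_n$ and every fibre $f^{-1}(A)$ finite: enumerate the countable set $\bigcup_{n\in N_0}F_n$ as $\{A_i:i\in\omega\}$ and put $f(n)=A_{M(n)}$, where $M(n)=\max\{i:A_i\in F_n\}$; then $f^{-1}(A_i)\sub\{n:F_n\sub\{A_0,\dots,A_i\}\}$ is finite.

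Next I would let $\PP$ consist of pairs $p=(s_p,F_p)$ with $s_p\in\omslom$ and $F_p\in[\A]^{<\omega}$, subject to the \emph{invariant}: for every $n\in N_0$, if $f(n)\in F_p$ then $s_p\cap X_n\neq\emptyset$ (finitely many requirements, since $f$ has finite fibres). Order $\PP$ by $q\le p$ iff $s_p\sub s_q$, $F_p\sub F_q$ and $(s_q\sem s_p)\cap\bigcup F_p=\emptyset$. Then $\PP$ is $\sigma$-centered: any two conditions sharing the same first coordinate $s$ have $(s,F_p\cup F_q)$ as a common lower bound, and this still satisfies the invariant because any $n\in N_0$ with $f(n)\in F_p\cup F_q$ has $f(n)$ already in $F_p$ or in $F_q$. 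The dense sets to meet are $D_A=\{p:A\in F_p\}$ for $A\in\A$, $E_n=\{p:s_p\cap X_n\neq\emptyset\}$ for $n\in\omega$, and $R_k=\{p:|s_p|\ge k\}$ for $k\in\omega$; there are $|\A|+\aleph_0<\pp$ of them. Density of $R_k$ is clear ($\omega\sem\bigcup F_p$ is infinite), and for $D_A$ one enlarges $p$ by adjoining $A$ to $F_p$ and, for the finitely many $n\in f^{-1}(A)$ not yet met, a point of the infinite set $(X_n\cap A)\sem\bigcup F_p$ (infinite because $X_n\cap A$ is infinite and, $A$ being outside $F_p$, $A\cap\bigcup F_p$ is finite); these new points restore the invariant. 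The point that makes everything work is the density of $E_n$: if $p\notin E_n$ and $n\in N_0$, the invariant forces $f(n)\notin F_p$, so by almost disjointness $f(n)\cap\bigcup F_p$ is finite and hence $X_n\cap f(n)\sem\bigcup F_p$ is infinite; if $n\notin N_0$ then $X_n\notin\I(\A)$ and $X_n\sem\bigcup F_p$ is infinite for a trivial reason; either way $p$ can be extended by a point of $X_n\sem\bigcup F_p$.

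Finally I would take a filter $G$ meeting all these dense sets and set $B=\bigcup_{p\in G}s_p$. The sets $R_k$ make $B$ infinite, the $E_n$ make $B\cap X_n\neq\emptyset$ for all $n$, and for $A\in\A$, choosing $p\in G$ with $A\in F_p$, every $q\in G$ below $p$ has $(s_q\sem s_p)\cap A=\emptyset$, so $B\cap A\sub s_p\cap A$ is finite; thus $B\in\A^\perp$, as required. I expect the main obstacle to be precisely this $\sigma$-centeredness versus meeting tension: to catch an $X_n$ with $n\in N_0$ one must throw a point of $X_n$ into $s_p$ before $\bigcup F_p$ absorbs almost all of $X_n$, but building the naive demand ``$X_n\sub^*\bigcup F_p\Rightarrow s_p\cap X_n\neq\emptyset$'' into $\PP$ destroys $\sigma$-centeredness --- two conditions that separately put $A_1$, resp.\ $A_2$, into their $F$-part while leaving $X_m=A_1\cup A_2$ unmet have no common extension. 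Channelling the obligation through the finite-to-one $f$, so that a condition must meet $X_n$ already when $f(n)$ alone enters $F_p$, is what resolves this; verifying the key fact, the existence of $f$, and the preservation of the invariant under the operations above are the steps that require care.
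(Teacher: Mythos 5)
Your proof is correct, and structurally parallel to the paper's in that both build a $\sigma$-centered poset of pairs $(s,F)$ with $s$ finite and $F$ a finite subset of $\mathcal{A}$ and then invoke $\pp = \mm(\sigma\text{-centered})$; but the device that schedules the ``hard'' $X_n$ is different. The paper first reduces to the case $X_n \subseteq A_n$ for some $A_n \in \mathcal{A}$ with each $A$ chosen only finitely often, and then makes the first coordinate a finite \emph{selector} $s \in \omega^{<\omega}$ with $s(i) \in X_i$, so that hitting every $X_n$ is built into the shape of a condition and follows from density of long $s$'s; the second and third coordinates $F_p \in [\mathcal{A}\setminus\{A_n : n\}]^{<\omega}$ and $G_p \in [\{A_n : n\}]^{<\omega}$ handle the almost-disjointness, the ordering permitting a new $s(i)$ to lie in $A_i$ but no other frozen member. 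You instead leave the $X_n$ untouched, isolate $N_0 = \{n : X_n \in \mathcal{I}(\mathcal{A})\}$, extract a $\subseteq$-minimal cover $F_n$ for each $n \in N_0$, manufacture a finite-to-one $f : N_0 \to \mathcal{A}$ by selecting from each $F_n$ the element of largest index in a fixed enumeration, and encode the scheduling via the invariant ``$f(n) \in F_p \Rightarrow s_p \cap X_n \neq \emptyset$''. What your route buys: it avoids the paper's WLOG reduction, which is less innocuous than its one-line phrasing suggests --- when $X_n$ is covered by several members of $\mathcal{A}$ one must pick $A_n$ carefully to keep fibres finite, which is precisely what your $f$ and your ``key fact'' accomplish, and when $X_n \notin \mathcal{I}(\mathcal{A})$ there need not be any $A_n \in \mathcal{A}$ with $X_n \subseteq A_n$, a case you handle cleanly as $n \notin N_0$. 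Thus your argument makes explicit a step the paper compresses, at the modest cost of having to carry the invariant through the verifications of $\sigma$-centeredness and of the density of $D_A$ and $E_n$, which you carry out correctly.
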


\begin{proof}
We may assume that for every $n\in\omega$ there is $A_{n}\in\mathcal{A}$ such
that $X_{n}\subseteq A_{n}$ (note that if $A\in\mathcal{A}$ then the set
$\left\{  n\mid A_{n}=A\right\}  $ is finite). Let $\mathcal{B}=\left\{
A_{n}\mid n\in\omega\right\}  $ and $\mathcal{D}=\mathcal{A}\setminus
\mathcal{B}.$ We now define the forcing $\mathbb{P}$ whose elements are sets
of the form $p=\left(  s_{p},F_{p},G_{p}\right)  $ with the following properties:

\begin{enumerate}
\item $s_{p}\in\omega^{<\omega},$ $F_{p}\in\left[  \mathcal{D}\right]
^{<\omega}$ and $G_{p}\in\left[  \mathcal{B}\right]  ^{<\omega}.$

\item If $i\in dom\left(  s_{p}\right)  $ then $s_{p}\left(  i\right)  \in
X_{i}.$
\end{enumerate}
For $p,q\in\mathbb{P}$ we let $p\leq q$ if the following conditions hold:

\begin{enumerate}
\item $s_{q}\subseteq s_{p},$ $F_{q}\subseteq F_{p}$ and $G_{q}\subseteq
G_{p}.$

\item For every $i\in dom\left(  s_{p}\right)  \setminus dom\left(
s_{q}\right)  $ the following holds:

\begin{enumerate}
\item $s_{p}\left(  i\right)  \notin%
{\textstyle\bigcup} F_{q}.$

\item If $B\in G_{q}$ and $A_{i}\neq B$ then $s_{p}\left(  i\right)  \notin
B.$
\end{enumerate}
\end{enumerate}
It is easy to see that $\mathbb{P}$ is a $\sigma$-centered forcing and adds a
set almost disjoint with $\mathcal{A}$ that intersects every $X_{n}.$ Since
$\mathcal{A}$ has size less than $\mathfrak{p},$ the result follows.
\end{proof}

We conclude:

\begin{proposition}
If $\mathfrak{p=c}$ then every \textsf{AD} family of size less than $\cc$ can be
extended to a strongly tight \textsf{MAD} family.
\end{proposition}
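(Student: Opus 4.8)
The plan is to run the familiar bookkeeping recursion of length $\cc$, invoking the preceding lemma once per stage, while a standard diagonalization takes care of maximality. Recall that $\pp = \cc$ implies $\cc$ is regular and $\cc^{\aleph_{0}} = \cc$, so I may fix enumerations $[\omega]^{\omega} = \{ Z_{\alpha} \mid \alpha < \cc \}$ and $\{ \langle X^{\alpha}_{n} \mid n \in \omega \rangle \mid \alpha < \cc \}$, the latter listing every $\omega$-sequence of infinite subsets of $\omega$. Starting from the given $\A_{0} = \A$, which has size $< \cc = \pp$, I would build an $\subseteq$-increasing chain $\langle \A_{\alpha} \mid \alpha \le \cc \rangle$ of \textsf{AD} families, adjoining at most two sets at each successor stage and taking unions at limits, so that $|\A_{\alpha}| < \pp$ at every stage $\alpha < \cc$. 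At stage $\alpha$, first handle maximality: if $\A_{\alpha} \cup \{ Z_{\alpha} \}$ is again \textsf{AD}, put $\A'_{\alpha} = \A_{\alpha} \cup \{ Z_{\alpha} \}$, and otherwise $\A'_{\alpha} = \A_{\alpha}$. Then handle strong tightness: if $\{ n \mid X^{\alpha}_{n} \sub^{\ast} A \}$ is finite for every $A \in \I ( \A'_{\alpha} )$, apply the lemma to the \textsf{AD} family $\A'_{\alpha}$ (of size $< \pp$) and the sequence $\langle X^{\alpha}_{n} \rangle$ to obtain $B_{\alpha} \in ( \A'_{\alpha} )^{\perp}$ with $B_{\alpha} \cap X^{\alpha}_{n} \neq \emptyset$ for all $n$; set $\A_{\alpha + 1} = \A'_{\alpha} \cup \{ B_{\alpha} \}$ if $B_{\alpha}$ is infinite, and $\A_{\alpha + 1} = \A'_{\alpha}$ otherwise (in the latter case $B_{\alpha}$ is a finite set and will take care of stage $\alpha$ on its own). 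If the hypothesis of the lemma fails at stage $\alpha$, simply let $\A_{\alpha + 1} = \A'_{\alpha}$. Finally set $\A = \bigcup_{\alpha < \cc} \A_{\alpha}$.

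Verifying the conclusion is then immediate. The family $\A$ is \textsf{MAD}: given $Z \in [\omega]^{\omega}$, write $Z = Z_{\alpha}$; either $Z_{\alpha}$ already met some member of $\A_{\alpha} \subseteq \A$ in an infinite set, or $Z_{\alpha} \in \A'_{\alpha} \subseteq \A$ has infinite intersection with itself. For strong tightness, suppose $\langle X_{n} \mid n \in \omega \rangle$ is a sequence of infinite subsets of $\omega$ with $\{ n \mid X_{n} \sub^{\ast} Y \}$ finite for every $Y \in \I ( \A )$, and fix $\alpha$ with $\langle X_{n} \rangle = \langle X^{\alpha}_{n} \rangle$. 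Since $\A'_{\alpha} \subseteq \A$ we have $\I ( \A'_{\alpha} ) \subseteq \I ( \A )$, so the hypothesis holds in particular for every $A \in \I ( \A'_{\alpha} )$, whence the strong-tightness clause fired at stage $\alpha$ and produced $B_{\alpha}$ with $B_{\alpha} \cap X_{n} \neq \emptyset$ for all $n$. If $B_{\alpha}$ is infinite then $B_{\alpha} \in \A \subseteq \I ( \A )$, and if $B_{\alpha}$ is finite then $B_{\alpha} \in \I ( \A )$ because $\I ( \A )$ contains every finite set; in either case $B_{\alpha}$ witnesses the strong-tightness requirement for $\langle X_{n} \rangle$. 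Hence $\I ( \A )$ is strongly tight, i.e. $\A$ is strongly tight.

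The one point deserving care --- and what I would flag as the crux of the argument --- is that the defining property of ``strongly tight'' quantifies over the final ideal $\I ( \A )$, which is not available while the recursion is running. The remedy is exactly the one used above: test the hypothesis of the lemma against the current approximation $\I ( \A'_{\alpha} )$ and invoke the monotonicity $\I ( \A'_{\alpha} ) \subseteq \I ( \A )$, which at once guarantees that the lemma is applicable whenever we call it and that every sequence relevant to the final family was in fact processed at the right stage. Everything else is routine: keeping $|\A_{\alpha}| < \pp$ uses nothing beyond $\pp = \cc$, the maximality step never clashes with the tightness step since $\A'_{\alpha}$ is still an \textsf{AD} family of size $< \pp$, and the split according to whether $B_{\alpha}$ is finite is merely cosmetic (one checks from the proof of the lemma that the generic sequence there has infinite range, so $B_{\alpha}$ is in fact always infinite).
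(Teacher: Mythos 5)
Your proof is correct and fills in exactly the standard bookkeeping recursion that the paper leaves implicit (the paper merely writes ``We conclude:'' after the lemma, so no proof is given there). The one point you rightly flag as delicate --- that the strong-tightness condition quantifies over the final ideal $\I(\A)$, which is unavailable during the recursion --- is handled correctly by the observation that the lemma's hypothesis is monotone: if $\{n : X_n \subseteq^* Y\}$ is finite for all $Y \in \I(\A)$, it is a fortiori finite for all $Y \in \I(\A'_\alpha) \subseteq \I(\A)$, so the lemma did fire at stage $\alpha$. The housekeeping (regularity of $\cc$ under $\pp = \cc$ so that sizes stay below $\pp$, $\cc^{\aleph_0}=\cc$ for the enumeration, and the treatment of a possibly finite $B_\alpha$) is all correct.
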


We strongly conjecture the following has a positive answer:

\begin{problem}
Does $\pp = \cc$ imply the existence of a raving  \textsf{MAD} family?
\end{problem}


\subsection{Non-implications}
\label{non-implications}


Under \textsf{CH} we provide a number of examples for \textsf{MAD} families
satisfying some of the properties of Subsection~\ref{ideals-basic} while failing others.

\noindent We will now show that (consistently) strong tightness does not imply being
Laflamme or random indestructible. Recall that the summable ideal is defined as
$\mathcal{J}_{1/n}=\{A\subseteq\omega\mid{\textstyle\sum\limits_{n\in A}}
\frac{1}{n+1}<\omega\}.$  We start with the following lemma:

\begin{lemma}   \label{summablestrongtight}
Let $\mathcal{A}$ be a countable \textsf{AD} family contained in the summable
ideal. Let $X=\left\{  X_{n}\mid n\in\omega\right\}  \subseteq\left[
\omega\right]  ^{\omega}$ such that all $X_n$ are contained in some member of $\A$ and
$\{ n \mid X_n \sub A \}$ is finite for all $A \in \A$. 
Then there is $D\in\mathcal{A}^{\perp}\cap\mathcal{J}_{1/n}$ such
that $D\cap X_{n}\neq\emptyset$ for every $n\in\omega.$
\end{lemma}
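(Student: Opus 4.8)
The plan is a straightforward recursion: choose one point $d_n$ from each $X_n$, keeping the chosen points large (so that the resulting set lands in $\mathcal{J}_{1/n}$) and making them avoid the earlier members of $\A$ (so that the set is almost disjoint from $\A$). Enumerate $\A = \{ A_k \mid k \in \omega \}$, and for each $n$ fix some $k(n) \in \omega$ with $X_n \subseteq A_{k(n)}$; such a $k(n)$ exists by hypothesis (it is in fact unique, since $X_n$ is infinite and $\A$ is almost disjoint, but uniqueness will not be needed). Two finiteness facts drive everything. First, for each $k$ the set $\{ n \mid k(n) = k \}$ is contained in $\{ n \mid X_n \subseteq A_k \}$, which is finite by assumption. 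Second, if $k \neq k(n)$ then $X_n \cap A_k \subseteq A_{k(n)} \cap A_k$ is finite, because $\A$ is \textsf{AD}.

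Next I would recursively pick $d_n \in X_n$ for $n \in \omega$ so that, setting $d_{-1} = 0$: (i) $d_n > d_{n-1}$; (ii) $\tfrac{1}{d_n+1} < 2^{-n}$; and (iii) $d_n \notin A_k$ for every $k < n$ with $k \neq k(n)$. This is possible at every stage: conditions (i) and (ii) forbid only an initial segment of $\omega$, and condition (iii) forbids $X_n \cap \bigcup \{ A_k \mid k < n,\ k \neq k(n) \}$, which is a finite union of finite sets by the second fact above; hence $X_n$ with these finitely many elements removed is still infinite, so in particular nonempty, and $d_n$ can be chosen there.

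Finally, set $D = \{ d_n \mid n \in \omega \}$. By (i) the $d_n$ are pairwise distinct, so $D$ is infinite and $d_n \in D \cap X_n$, giving $D \cap X_n \neq \emptyset$ for all $n$. By (ii), $\sum_{m \in D} \tfrac{1}{m+1} = \sum_{n \in \omega} \tfrac{1}{d_n+1} < \sum_{n \in \omega} 2^{-n} < \infty$, so $D \in \mathcal{J}_{1/n}$. For almost disjointness, fix $k$: if $d_n \in A_k$, then by (iii) either $n \leq k$ or $k(n) = k$, and both of these index sets are finite (the latter by the first fact above); hence $D \cap A_k$ is finite, i.e. $D \in \A^\perp$. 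Thus $D \in \A^\perp \cap \mathcal{J}_{1/n}$ and $D$ meets every $X_n$, as required.

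I do not anticipate a real obstacle here; the argument is a routine diagonal recursion, and the only thing to watch is the bookkeeping guaranteeing that at stage $n$ only finitely many elements of $X_n$ are excluded — which is precisely where the two structural hypotheses on $X$ (each $X_n$ inside a member of $\A$, and each member of $\A$ absorbing only finitely many of the $X_n$), together with almost disjointness of $\A$, are used. The hypothesis that $\A$ itself lies in $\mathcal{J}_{1/n}$ is not needed for this lemma as stated; presumably it appears because $\A$ is to be enlarged by such sets $D$, inside $\mathcal{J}_{1/n}$, in the intended application.
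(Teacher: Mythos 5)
Your proof is correct, and it takes a genuinely different (and somewhat more streamlined) route than the paper's. The paper enumerates $\A = \{A_n \mid n \in \omega\}$, defines $F_n = \{X_i \mid X_i \subseteq A_n\}$ (finite by hypothesis), and builds a block sequence $\langle s_n \rangle$ of finite sets, where $s_n$ meets each element of $F_n$, is disjoint from $A_0, \dotsc, A_{n-1}$, and contributes less than $2^{-(n+1)}$ to the sum. In other words, the paper groups the $X_i$'s by the member of $\A$ that contains them and processes one cluster per stage, which requires tracking the cluster size $r = |F_n|$ when choosing the threshold $m$ with $r/(1+m) < 2^{-(n+1)}$. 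You instead process the $X_n$'s one at a time, choosing a single $d_n \in X_n$ per stage that avoids $A_k$ for $k < n$ with $k \neq k(n)$ and satisfies $\tfrac{1}{d_n+1} < 2^{-n}$; the summability bookkeeping is then one term per step and no cluster counting is needed. Both proofs rest on the same two structural facts — uniqueness (and existence) of the covering $A_{k(n)}$, finiteness of $\{n \mid k(n) = k\}$, and finiteness of $X_n \cap A_k$ for $k \neq k(n)$ — and both reach the same conclusion with essentially the same type of diagonalization. Your observation that the hypothesis $\A \subseteq \J_{1/n}$ is not used in either argument is also correct; it is present because the lemma is applied to conditions of $\PP_{\mathsf{MAD}}(\J_{1/n})$, which must themselves live inside $\J_{1/n}$.
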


\begin{proof}
Let $\mathcal{A}=\left\{  A_{n}\mid n\in\omega\right\}  $. For each $n\in
\omega$ we define $F_{n}=\left\{  X_{i}\mid X_{i}\subseteq A_{n}\right\}  .$
We construct a sequence of finite sets $\left\{  s_{n}\mid n\in\omega\right\}
\subseteq\left[  \omega\right]  ^{<\omega}$ such that:\ 

\begin{enumerate}
\item $\max\left(  s_{n}\right)  <\min\left(  s_{n+1}\right)  .$

\item ${\textstyle\sum\limits_{i\in s_{n}}} \frac{1}{1+i}<\frac{1}{2^{n+1}}.$

\item $s_{n}$ has non empty intersection with every element of $F_{n}.$

\item If $m<n$ then $s_{n}$ is disjoint from $A_{m}.$
\end{enumerate}
Assume we are at step $n,$ and let $r$ be such that $F_{n}=\left\{  X_{n_{1}%
},...,X_{n_{r}}\right\}  .$ Find $m$ such that $\frac{r}{1+m}<\frac{1}%
{2^{n+1}}$ and $s_{i}\subseteq m$ for every $i<n.$ For every $i\leq r$ we
choose $k_{i}>m$ such that $k_{i}\in X_{n_{i}}\setminus{\textstyle\bigcup\limits_{j<n}}
A_{j}$ and let $s_{n}=\left\{  k_{i}\mid i\leq r\right\}  .$ It is easy to see
that $D={\textstyle\bigcup\limits_{n\in\omega}}
s_{n}$ has the desired properties.
\end{proof}

In \cite{CardinalInvariantsofAnalyticPIdeals} it was proved that random
forcing destroys the summable ideal (in fact, $\J_{1/n} \leq_K tr(\N)$). We therefore conclude, using Lemma~\ref{stronglytightMAD}:

\begin{proposition}
[$\mathsf{CH}$]There is a strongly tight \textsf{MAD }family contained in the summable
ideal $\mathcal{J}_{1/n}$ (in particular, it is random destructible and not Laflamme).
\end{proposition}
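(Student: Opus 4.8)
The plan is to build the desired \textsf{MAD} family by a transfinite recursion of length $\cc=\omega_1$ (under \textsf{CH}), keeping at every stage a countable \textsf{AD} family contained in the summable ideal $\J_{1/n}$, and using Lemma~\ref{summablestrongtight} as the key combinatorial engine to handle the strong tightness requirements. More precisely, I would fix an enumeration $\langle (\A_\alpha, \{X^\alpha_n \mid n\in\omega\}) \mid \alpha<\omega_1\rangle$ of all pairs consisting of a countable \textsf{AD} family together with an $\omega$-sequence of infinite subsets of $\omega$, arranged so that each such pair with $\A_\alpha$ a subset of the family being constructed appears with index $\alpha$ larger than the stage at which $\A_\alpha$ was enumerated. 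I would construct $\A = \{ B_\alpha \mid \alpha<\omega_1\}\subseteq\J_{1/n}$, starting from a partition $\{B_n\mid n\in\omega\}$ of $\omega$ into sets in $\J_{1/n}$, and at successor-type stages do one of two things depending on the bookkeeping.

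At stage $\alpha$ I would be handed (via the enumeration) a sequence $\{X^\alpha_n\mid n\in\omega\}\subseteq\omoms$ satisfying the hypothesis that $\{n \mid X^\alpha_n \sub^* Y\}$ is finite for every $Y$ in the part of $\A$ built so far; the aim is to verify the criterion of Lemma~\ref{stronglytightMAD}. By Lemma~\ref{stronglytightMAD} it is enough to handle the case where there are $A_n$ in the current \textsf{AD} family with $X^\alpha_n\sub A_n$ and each member chosen only finitely often. In that case I would apply Lemma~\ref{summablestrongtight} to the current countable \textsf{AD} family $\{B_\xi \mid \xi<\beta_\alpha\}$ (for appropriate countable $\beta_\alpha$) and the sequence $\{X^\alpha_n\}$ to obtain $D\in\A^\perp_{<\beta_\alpha}\cap\J_{1/n}$ meeting every $X^\alpha_n$; I would add a set coding $D$ (or $D$ itself, perhaps thinned so it stays in $\J_{1/n}$) as the next member of the family. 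Interleaved with these steps I would, at other stages, do the usual \textsf{MAD}-completion step: given an infinite $Y\subseteq\omega$ almost disjoint from everything so far, either $Y$ is already compatible with maximality and needs no action, or I add a set inside $Y$ that lies in $\J_{1/n}$ and is almost disjoint from the previous members — this is possible because any infinite set contains an infinite subset in $\J_{1/n}$ (thin it out so the reciprocals sum finitely) and only countably many previous members must be avoided. Standard diagonalization then yields maximality at the end.

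Finally, once $\A$ is built, strong tightness follows because every instance $\{X_n\mid n\in\omega\}\sub\omoms$ of the definition, after reducing via Lemma~\ref{stronglytightMAD} and using maximality to replace the $X_n$ by infinite pieces $B_n\sub A_n$ with each $A\in\A$ used only finitely often, is exactly an instance treated at some stage $\alpha$, where the added member witnesses $A\cap B_n\neq\emptyset$ for all $n$; the membership $\A\subseteq\J_{1/n}$ gives $\I(\A)\subseteq\J_{1/n}$, so $\A$ is contained in the summable ideal, whence it is random destructible (random forcing destroys $\J_{1/n}$, since $\J_{1/n}\leq_K tr(\N)$) and not Laflamme (it is contained in the $F_\sigma$-ideal $\J_{1/n}$, so trivially $\I(\A)\leq_K\J_{1/n}$). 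The main obstacle I anticipate is the bookkeeping: I must ensure that whenever a sequence $\{X_n\}$ is relevant to strong tightness it is processed at a stage where the \emph{witnessing} $A_n\in\A$ all already appear, yet the $A_n$ are only determined by maximality using later members — the standard fix is to close off under the ``first $\omega$ members coded so far'' and note that the hypothesis on $\{X_n\}$ (each $Y\in\I(\A)$ almost-containing only finitely many $X_n$) is automatically inherited by the tail, so a careful choice of the enumeration (reflecting which countable subfamilies and which sequences have appeared) makes every genuine instance get caught. Keeping $\J_{1/n}$-membership throughout is routine since $\J_{1/n}$ is an $F_\sigma$ (hence closed under the finite unions and the thinning operations used).
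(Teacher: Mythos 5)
Your proof is correct and takes essentially the same route as the paper: the paper also builds the family via a length-$\omega_1$ construction (packaged as a $\PP_{\mathsf{MAD}}(\J_{1/n})$-generic filter, available under \textsf{CH} since $\PP_{\mathsf{MAD}}$ is $\sigma$-closed) and uses Lemma~\ref{summablestrongtight} at each relevant stage to meet the strong-tightness requirement as mediated by Lemma~\ref{stronglytightMAD}; the conclusions about random destructibility and failure of Laflamme are obtained exactly as you say. The bookkeeping concern you raise is handled precisely by the ``process each candidate sequence cofinally often'' trick you sketch (or, equivalently, automatically by genericity for $\PP_{\mathsf{MAD}}(\J_{1/n})$), and the small parenthetical about ``thinning $D$'' is unnecessary since Lemma~\ref{summablestrongtight} already produces $D\in\J_{1/n}$.
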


Note that the $\PP_{\mathsf{MAD}} (\J_{1 / n} )$-generic \textsf{MAD} family has all these properties.


\begin{center}
$\star\star\star$
\end{center}


\noindent We saw that every \textsf{fin}$\times$\textsf{fin}-like \textsf{MAD} family is
Cohen indestructible (Proposition~\ref{SSindestructibility}). 
However, we will now show that \textsf{fin}$\times
$\textsf{fin}-like does not imply tightness (so in particular, it is a weaker
notion than Shelah-Stepr\={a}ns).

\begin{proposition}
[$\mathsf{CH}$]There is a \textsf{fin}$\times$\textsf{fin}-like \textsf{MAD}
family that is not tight.
\end{proposition}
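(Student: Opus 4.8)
The plan is to construct, under $\mathsf{CH}$, a \textsf{MAD} family $\A$ which is \textsf{fin}$\times$\textsf{fin}-like but not tight, by a transfinite recursion of length $\omega_1$ in which at each stage we simultaneously (i) diagonalize against one analytic ideal $\I$ with \textsf{fin}$\times$\textsf{fin} $\nleq_K \I$ to kill a potential Katětov reduction, (ii) extend towards maximality by splitting one ground-model infinite set, while (iii) maintaining a fixed partition $\{D_k \mid k \in \omega\} \subseteq \A$ together with a witness $\{X_n \mid n \in \omega\} \subseteq \I(\A)^+$ to non-tightness — something like: infinitely many $X_n$ live essentially inside each $D_k$, but in such a way that no $A \in \I(\A)$ can meet every $X_n$. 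The key observation making (iii) compatible with (i) is that failing tightness is a rather mild requirement: we only need to preserve, throughout the construction, that for each $A\in\I(\A)$ there is some $X_n$ with $A\cap X_n=\emptyset$, and this can be arranged by keeping the $X_n$ ``deep'' inside the pieces $D_k$ in a way controlled by an auxiliary fast-growing function, exactly as in the proof that strongly tight implies $\dd\le|\A|$ above (Proposition on strongly tight families), but now run in reverse to guarantee non-tightness.

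First I would set up the bookkeeping: enumerate in type $\omega_1$ all pairs $(\I,f)$ where $\I$ is an analytic (equivalently, coded by a real) ideal on $\omega$ with \textsf{fin}$\times$\textsf{fin} $\nleq_K \I$ and $f:\omega\to\omega$ is a candidate Katětov morphism from $(\omega,\I)$ to $(\omega,\I(\A))$, as well as all infinite subsets of $\omega$ that need to be swallowed for maximality. At stage $\alpha$ I would have built a countable \textsf{AD} family $\A_\alpha \supseteq \{D_k \mid k\in\omega\}$ together with the designated family $\{X_n\mid n\in\omega\}$ satisfying the invariant from (iii). To handle the $\alpha$-th pair $(\I,f)$: since $f$ is being considered as a reduction to $\I(\A_\alpha)$ and $\I(\A_\alpha)$ is generated by the countable \textsf{AD} family $\A_\alpha$ which is (so far) contained in no analytic ideal beyond trivialities, the genericity/pseudo-intersection machinery — precisely the content of Lemma~\ref{Katetovgenericlemma} and Corollary~\ref{Katetovgenericcorollary}, applied with a suitable nowhere Shelah-Stepr\={a}ns ideal, or a direct forcing/diagonalization argument — yields $B\in\A_\alpha^\perp$ with $f^{-1}[B]\in\I^+$, and adjoining $B$ to the family destroys $f$ as a Katětov morphism while preserving almost disjointness. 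The point is that this new $B$ can always be chosen almost disjoint from $\bigcup\{X_n\mid n\le N\}$ for every $N$ along the way (it is being built as a pseudo-union of finite sets, so we have room), hence it does not spoil the non-tightness witness.

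The main obstacle, and the step I would dwell on, is (iii) versus maximality: to make $\A$ \textsf{MAD} we must absorb every infinite $Y\subseteq\omega$ into $\I(\A)$, i.e. eventually $Y$ meets some $A\in\A$ infinitely; but the designated non-tightness witness must survive, meaning no single $A\in\I(\A)$ — including the ones added to achieve maximality — meets all the $X_n$. The resolution is to make the $X_n$ genuinely spread across the fixed partition: inside each column $D_k$ place infinitely many of the $X_n$ as pieces $D_k(i_n)$ of an infinite subpartition of $D_k$, so that any $A\in\I(\A)$ is (mod finite) a finite union of members of $\A$ plus a set in $\{D_k\mid k\}^\perp$; such an $A$ has infinite intersection with only finitely many $D_k$, and within each of those it meets only boundedly many of the $D_k(i)$ unless $A\supseteq^* D_k$, which only finitely many $A$ can do — in all cases $A$ misses all but finitely many $X_n$. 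Any new set added for maximality is almost disjoint from each $D_k$ or from all but finitely many of them, and one checks it inherits the same bound, so the invariant is genuinely inductive. Finally, $\dd = \aleph_1$ under $\mathsf{CH}$ is irrelevant here; what matters is only that the recursion has enough stages to meet all $(\I,f)$ and all $Y$, which $\mathsf{CH}$ supplies, and that \textsf{fin}$\times$\textsf{fin}-likeness is upward-$\leq_K$-closed and we have handled every analytic $\I$ off the cone of \textsf{fin}$\times$\textsf{fin}, so $\I(\A)$ lies on no such cone, giving \textsf{fin}$\times$\textsf{fin}-likeness, while the surviving witness gives non-tightness.
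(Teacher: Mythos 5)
There is a genuine gap in your non-tightness witness. You take each $X_n$ to be a piece $D_k(i_n)$ of an infinite subpartition of some fixed $D_k\in\A$; but then $X_n\subseteq D_k\in\A$, so $X_n\in\I(\A)$ rather than $\I(\A)^+$. Since tightness quantifies only over families $\{X_n\}\subseteq\I(\A)^+$, such a family is not a legal input, and your maintained invariant (every $A\in\I(\A)$ misses some $X_n$), while true, is vacuous for the purpose of showing that $\A$ fails to be tight. You appear to have transplanted the setup from the proof that strong tightness implies $\dd\le|\A|$, where the $X_n$ only need to satisfy the different positivity condition built into the definition of strong tightness, namely that $\{n\mid X_n\subseteq^\ast Y\}$ is finite for every $Y\in\I(\A)$; the pieces $D_k(i_n)$ do satisfy that, but for plain tightness each $X_n$ must itself be $\I(\A)$-positive. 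The paper instead fixes a partition $\{X_n\mid n\in\omega\}$ of $\omega$ that is refined by the first $\omega$ columns of $\A$, with each $X_n$ containing infinitely many of those columns, so that $X_n\in\I(\A)^+$ is preserved throughout; the maintained invariant is that every $B\in\I(\A)$ has finite intersection with some $X_n$.

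A second, softer issue concerns the $\textsf{fin}\times\textsf{fin}$-likeness diagonalization. Enumerating pairs $(\I,f)$ is in principle viable under \textsf{CH}, but your appeal to Lemma~\ref{Katetovgenericlemma} requires the analytic ideals $\I$ in question to be nowhere Shelah-Stepr\={a}ns, which you do not verify and which does not follow merely from $\textsf{fin}\times\textsf{fin}\nleq_K\I$ (the lemma's hypothesis constrains every positive restriction of $\I$). The paper avoids this entirely by exploiting the remark after Lemma~\ref{Laflamme-notP-char}: $\J$ is $\textsf{fin}\times\textsf{fin}$-like iff $\J\not\subseteq\I$ for every analytic ideal $\I$ with $\textsf{fin}\times\textsf{fin}\nleq_K\I$. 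This converts the diagonalization into simply adding, at stage $\alpha$, a single almost disjoint set $A_\alpha\notin\I_\alpha$, which is found by locating a positive part of $\omega$ (one of $T_0$ or $T_1$, built from a carefully chosen pseudointersection $D$ of the sets $E_n$) over which $\I_\alpha$ still fails to lie on the \textsf{fin}$\times$\textsf{fin} cone, and then choosing a positive almost disjoint set there compatible with the column structure. That interplay between the choice of $A_\alpha$ and the preservation of the non-tightness invariant is exactly the part your sketch glosses over, and it is substantially simpler via the containment reformulation than via the morphism-killing route you outline.
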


\begin{proof}
Let $\left\{  \mathcal{I}_{\alpha}\mid\omega\leq\alpha<\omega_{1}\right\}  $
be an enumeration of all analytic ideals $\I$ with \textsf{fin}$\times$\textsf{fin} $\not\leq_K \I$ and let $\left\{  X_{n}\mid n\in
\omega\right\}  $ be a partition of $\omega$ into infinite sets. We will
recursively construct an \textsf{AD} family $\mathcal{A}=\left\{  A_{\alpha
}\mid\alpha<\omega_{1}\right\}  $ such that for every $\alpha$ the following
conditions hold:\qquad\ \ \ 

\begin{enumerate}
\item $\left\{  A_{n}\mid n\in\omega\right\}  $ is a partition of $\omega$
refining $\left\{  X_{n}\mid n\in\omega\right\}  $ and every $X_{n}$ contains
infinitely many of the $A_{m}.$

\item There is $\xi\leq\alpha$ such that $A_{\xi}\notin\mathcal{I}_{\alpha}.$

\item If $B\in\mathcal{I}\left(  \mathcal{A}\right)  $ then there is
$n\in\omega$ such that $B\cap X_{n}$ is finite.
\end{enumerate}
Note that by the comment after the proof of Lemma~\ref{Laflamme-notP-char}, $\A$ will indeed be \textsf{fin}$\times$\textsf{fin}-like
while the $X_n$ witness the failure of tightness.

Let $\mathcal{A}_{\alpha}=\left\{  A_{\xi}\mid\xi<\alpha\right\}  $ and assume
$\mathcal{A}_{\alpha}\subseteq\mathcal{I}_{\alpha}.$ Let $\alpha=\left\{
\alpha_{n}\mid n\in\omega\right\}  $ and define $L_{n}=A_{\alpha_{0}}%
\cup...\cup A_{\alpha_{n}}.$ Define $E_{n}=\left\{  m\mid\left\vert L_{n}\cap
X_{m}\right\vert <\omega\right\}  $ and note that $E=\left\langle
E_{n}\right\rangle _{n\in\omega}\,\ $is a decreasing sequence of infinite
sets. Find a pseudointersection $D$ of $E$ such that $\omega\setminus D$ also
contains a pseudointersection of $E.$ Define $T_{0}={\textstyle\bigcup\limits_{n\in D}}
X_{n}$ and $T_{1}={\textstyle\bigcup\limits_{n\notin D}}
X_{n}.$ Since \textsf{fin}$\times$\textsf{fin} $\mathcal{\nleq}_{K}$
$\mathcal{I}_{\alpha}$ we know that either \textsf{fin}$\times$\textsf{fin}
$\mathcal{\nleq}_{K}$ $\mathcal{I}_{\alpha}\upharpoonright T_{0}$ or
\textsf{fin}$\times$\textsf{fin} $\mathcal{\nleq}_{K}$ $\mathcal{I}_{\alpha
}\upharpoonright T_{1}.$ First assume \textsf{fin}$\times$\textsf{fin}
$\mathcal{\nleq}_{K}$ $\mathcal{I}_{\alpha}\upharpoonright T_{0}$. Then we
can choose $A_{\alpha}\in\left(  \mathcal{I}_{\alpha}\upharpoonright
T_{0}\right)  ^{+}$ that is almost disjoint with $\mathcal{A}_{\alpha
}\mathcal{\upharpoonright}T_{0}$ which implies it is \textsf{AD} with
$\mathcal{A}_\alpha.$ We now need to prove that for every $n<\omega$ there is
$X_{m}\ \ $such that$\ \left(  L_{n}\cup A_{\alpha}\right)  \cap X_{m}$ is
finite. Since $\omega\setminus D$ contains a pseudointersection of $E$
there is $m\in E_{n}\setminus D$ and then both $L_{n}$ and $A_{\alpha}$ are
almost disjoint with $X_{m}.$ The other case is similar.
\end{proof}


\begin{center}
$\star\star\star$
\end{center}

 
\noindent Recall that the density zero ideal is defined as $\mathcal{Z}%
=\{A\subseteq\omega\mid \lim\frac{\left\vert A\cap2^{n}\right\vert }{2^{n}%
}=0\}$. $\mathcal{Z}$ is
not Kat\v{e}tov below any $F_{\sigma}$-ideal. Thus, from Lemma~\ref{Katetovgenericlemma}, we obtain:

\begin{corollary}
Let $\mathcal{A}$ be a countable \textsf{AD }family contained in 
$\mathcal{Z}.$ If $\mathcal{I}$ is an $F_{\sigma}$-ideal and $f:\omega
\longrightarrow\omega$ then there is a countable \textsf{AD }family
$\mathcal{B}$ such that $\mathcal{A\subseteq B\subseteq Z}$ and there is
$B\in\mathcal{I}\left(  \mathcal{B}\right)  $ such that $f^{-1}\left(
B\right)  \in\mathcal{I}^{+}.$
\end{corollary}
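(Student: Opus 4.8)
The plan is to obtain this as an essentially immediate consequence of Lemma~\ref{Katetovgenericlemma}, once one makes a trivial case distinction on whether or not $f$ is a Kat\v{e}tov-morphism from $(\omega,\mathcal{I})$ to $(\omega,\mathcal{I}(\mathcal{A}))$. I would first record that the two standing hypotheses of that lemma hold in the present situation with $\mathcal{J}=\mathcal{Z}$: every $F_{\sigma}$-ideal is nowhere Shelah-Stepr\={a}ns (as noted just after the definition of ``nowhere Shelah-Stepr\={a}ns''), and $\mathcal{Z}\nleq_{K}\mathcal{I}$ because $\mathcal{Z}$ is not Kat\v{e}tov below any $F_{\sigma}$-ideal.

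If $f$ is not a Kat\v{e}tov-morphism from $(\omega,\mathcal{I})$ to $(\omega,\mathcal{I}(\mathcal{A}))$, then by definition there is already some $B\in\mathcal{I}(\mathcal{A})$ with $f^{-1}(B)\notin\mathcal{I}$, i.e.\ $f^{-1}(B)\in\mathcal{I}^{+}$. In that case I would simply take $\mathcal{B}=\mathcal{A}$: then $\mathcal{A}\subseteq\mathcal{B}\subseteq\mathcal{Z}$ trivially, and $B\in\mathcal{I}(\mathcal{A})=\mathcal{I}(\mathcal{B})$ is the witness we want.

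In the remaining case $f$ \emph{is} a Kat\v{e}tov-morphism from $(\omega,\mathcal{I})$ to $(\omega,\mathcal{I}(\mathcal{A}))$, so Lemma~\ref{Katetovgenericlemma} applies (with $\mathcal{J}=\mathcal{Z}$) and yields $B\in\mathcal{Z}\cap\mathcal{A}^{\perp}$ with $f^{-1}(B)\in\mathcal{I}^{+}$. The one point I would check explicitly is that $B$ is infinite, so that $\mathcal{A}\cup\{B\}$ is really an \textsf{AD} family: were $B$ finite it would belong to $\mathcal{I}(\mathcal{A})$, whence $f^{-1}(B)\in\mathcal{I}$ since $f$ is a Kat\v{e}tov-morphism, contradicting $f^{-1}(B)\in\mathcal{I}^{+}$. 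Then $\mathcal{B}=\mathcal{A}\cup\{B\}$ is a countable \textsf{AD} family with $\mathcal{A}\subseteq\mathcal{B}\subseteq\mathcal{Z}$ (using $B\in\mathcal{Z}$ and $B\in\mathcal{A}^{\perp}$), and $B\in\mathcal{I}(\mathcal{B})$ witnesses the conclusion.

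I do not expect any genuine obstacle: the corollary is just Lemma~\ref{Katetovgenericlemma} repackaged as a statement about extending \textsf{AD} families inside $\mathcal{Z}$, and the only things needing a word are the degenerate case where $f$ fails to be a Kat\v{e}tov-morphism (handled by taking $\mathcal{B}=\mathcal{A}$) and the triviality that the set produced by the lemma is infinite.
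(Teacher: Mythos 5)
Your proposal is correct and follows exactly the route the paper intends: the paper derives this corollary directly from Lemma~\ref{Katetovgenericlemma} by taking $\mathcal{J}=\mathcal{Z}$, and you have filled in the two small points the paper leaves implicit — the degenerate case where $f$ fails to be a Kat\v{e}tov-morphism (where $\mathcal{B}=\mathcal{A}$ already works) and the observation that the $B$ produced by the lemma is infinite, so that $\mathcal{A}\cup\{B\}$ is a genuine \textsf{AD} family.
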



Using a suitable bookkeeping device we conclude:

\begin{proposition}   
[$\mathsf{CH}$]   \label{PMADZ-generic-prop}
There is a Laflamme \textsf{MAD }contained in $\mathcal{Z}.$ In
particular, it is not a not-P {\textsf{MAD} }family. Additionally this \textsf{MAD} family
is strongly tight and random indestructible.
\end{proposition}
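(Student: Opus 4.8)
The plan is to carry out a standard transfinite recursion of length $\omega_1$ under $\mathsf{CH}$, building an \textsf{AD} family $\mathcal{A} = \{ A_\alpha \mid \alpha < \omega_1 \}$ contained in $\mathcal{Z}$, interleaving three kinds of bookkeeping tasks: (i) making $\mathcal{A}$ maximal, (ii) making $\mathcal{A}$ Laflamme (i.e. $\mathcal{I}(\mathcal{A}) \nleq_K \mathcal{I}$ for every $F_\sigma$-ideal $\mathcal{I}$), and (iii) making $\mathcal{A}$ strongly tight. Random indestructibility will come for free: since $\mathcal{Z}$ is not Kat\v etov below any $F_\sigma$-ideal and $\mathsf{tr}(\mathcal{N}) \leq_K \mathcal{Z}$ is $F_\sigma$ (as $\mathcal{N}$ is Borel, $\mathsf{tr}(\mathcal{N})$ is Borel, hence by the argument in Subsection~\ref{non-implications} and Lemma~\ref{Laflamme-notP-char} being Laflamme suffices), any Laflamme \textsf{MAD} family is automatically random indestructible; also a Laflamme ideal, not being extendible to an $F_\sigma$-ideal, is in particular not extendible to an analytic $P$-ideal only if it is not-P, so the ``in particular'' clause (that it is \emph{not} a not-P \textsf{MAD} family) must instead be read as: the \emph{negation} is witnessed because we will keep $\mathcal{A} \subseteq \mathcal{Z}$ and $\mathcal{Z}$ is an analytic $P$-ideal, so $\mathcal{I}(\mathcal{A})$ is contained in an analytic $P$-ideal and hence is not not-P.

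First I would fix, using $\mathsf{CH}$, enumerations $\{ r_\alpha \mid \alpha < \omega_1 \}$ of $[\omega]^\omega$, $\{ \mathcal{I}_\alpha \mid \alpha < \omega_1 \}$ of all $F_\sigma$-ideals together with all functions $f : \omega \to \omega$ (coded as pairs $(\mathcal{I}, f)$), and $\{ \vec{X}^\alpha \mid \alpha < \omega_1 \}$ of all $\omega$-sequences $\langle X_n \mid n \in \omega \rangle$ of infinite subsets of $\omega$; a single bookkeeping function will ensure each task is considered cofinally often and that at stage $\alpha$ only countably much information (the family $\mathcal{A}_\alpha = \{ A_\xi \mid \xi < \alpha \}$) has been built. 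Start the recursion with $\{ A_n \mid n \in \omega \}$ a partition of $\omega$ into density-zero sets. At a successor-type stage handling task (ii) with parameter $(\mathcal{I}_\alpha, f_\alpha)$: since $\mathcal{A}_\alpha$ is countable and contained in $\mathcal{Z}$, apply the Corollary immediately preceding the statement (the one derived from Lemma~\ref{Katetovgenericlemma}, with $\mathcal{I} = \mathcal{I}_\alpha$ an $F_\sigma$-ideal, which is nowhere Shelah-Stepr\=ans, and $\mathcal{J} = \mathcal{Z} \nleq_K$ any $F_\sigma$-ideal) to obtain a countable \textsf{AD} family $\mathcal{B}$ with $\mathcal{A}_\alpha \subseteq \mathcal{B} \subseteq \mathcal{Z}$ and some $B \in \mathcal{I}(\mathcal{B})$ with $f_\alpha^{-1}(B) \in \mathcal{I}_\alpha^+$; absorbing the finitely-or-countably many new members of $\mathcal{B}$ into the recursion (possibly lengthening the stage) guarantees $f_\alpha$ is not a Kat\v etov-morphism from $(\omega,\mathcal{I}_\alpha)$ to $(\omega,\mathcal{I}(\mathcal{A}))$, so $\mathcal{I}(\mathcal{A}) \nleq_K \mathcal{I}_\alpha$, establishing Laflamme by Lemma~\ref{Laflamme-notP-char}.

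For task (iii), at a stage handling $\vec{X}^\alpha = \langle X_n \rangle$: if there is $A \in \mathcal{I}(\mathcal{A}_\alpha)$ with $\{ n \mid X_n \subseteq^* A \}$ infinite, do nothing (the hypothesis of strong tightness fails for this sequence with respect to $\mathcal{I}(\mathcal{A})$ too, as $\mathcal{I}(\mathcal{A}_\alpha) \subseteq \mathcal{I}(\mathcal{A})$), so only sequences satisfying the strongly-tight hypothesis against the \emph{final} ideal need attention, and for each such sequence at the stage it is handled it will already satisfy the hypothesis against $\mathcal{I}(\mathcal{A}_\alpha)$ (this is where the bookkeeping must ensure that if $\vec X$ ultimately satisfies the hypothesis against $\mathcal{I}(\mathcal{A})$ it is revisited at some stage by which enough of $\mathcal{A}$ is built that it already satisfies it against $\mathcal{I}(\mathcal{A}_\alpha)$ — this is the main technical obstacle and the usual fix is to handle each $\vec X$ at $\aleph_1$ many stages and argue that for one such stage the witness $A$ would-be in $\mathcal{I}(\mathcal{A})$ already appears in $\mathcal{I}(\mathcal{A}_\alpha)$, or to reduce via Lemma~\ref{stronglytightMAD} to sequences $X_n \subseteq A_n$ with $A_n$ listing members of $\mathcal{A}_\alpha$ with each repeated finitely often). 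Given that, apply Lemma~\ref{summablestrongtight}'s analogue for $\mathcal{Z}$ — more precisely, re-run the construction of that lemma but replacing condition (2) there ($\sum_{i \in s_n} \frac{1}{1+i} < 2^{-(n+1)}$) by a density condition forcing $D = \bigcup_n s_n$ into $\mathcal{Z}$: at step $n$, having $F_n = \{ X_{n_1}, \dots, X_{n_r} \}$, choose a single large interval $[2^{k}, 2^{k+1})$ with $r \cdot 2^{-k}$ tiny and pick the $r$ elements $k_i$ there, avoiding $A_{\alpha_0} \cup \dots \cup A_{\alpha_{n-1}}$; the resulting $D$ has at most finitely many points below each $2^m$ beyond a bounded count, so $\lim |D \cap 2^m|/2^m = 0$, giving $D \in \mathcal{Z} \cap \mathcal{A}_\alpha^\perp$ meeting every $X_n$. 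Add this $D$ (and, as needed, a further \textsf{AD} set extending maximality for task (i)) as the next member(s) $A_\alpha \in \mathcal{Z}$. Finally, verify that the resulting $\mathcal{A}$ is \textsf{MAD} (task (i) was met cofinally), contained in $\mathcal{Z}$ (every step added only density-zero sets), Laflamme (task (ii)), strongly tight (task (iii), via Lemma~\ref{stronglytightMAD}), hence random indestructible; and since $\mathcal{I}(\mathcal{A}) \subseteq \mathcal{Z}$ with $\mathcal{Z}$ an analytic $P$-ideal, $\mathcal{A}$ is not a not-P family, as claimed.
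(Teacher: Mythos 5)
Your overall recursive $\mathsf{CH}$ construction matches the paper's intent (the paper phrases it as ``the $\PP_{\mathsf{MAD}}(\Z)$-generic \textsf{MAD} family has all these properties,'' which under $\mathsf{CH}$ is exactly the transfinite recursion you describe), and your treatment of Laflamme and of the ``not not-P'' clause is correct. But there is a genuine error in how you obtain random indestructibility, and it propagates to the structure of the whole argument.

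You assert that ``$\mathsf{tr}(\N) \leq_K \Z$ is $F_\sigma$ (as $\N$ is Borel, $\mathsf{tr}(\N)$ is Borel, \ldots) any Laflamme \textsf{MAD} family is automatically random indestructible.'' This is false. Borel does not imply $F_\sigma$, and $tr(\N)$ is in fact not $F_\sigma$ (it sits at the level of $F_{\sigma\delta}$). Consequently, being Laflamme --- that is, $\I(\A) \nleq_K \I$ for every $F_\sigma$ ideal $\I$ --- says nothing about whether $\I(\A) \leq_K tr(\N) \restriction X$, which is what governs random destructibility via Proposition~\ref{destructibilitycharacterization}. Indeed, the paper is careful to state random indestructibility as an \emph{additional} property precisely because it does not follow from Laflamme; in Proposition~\ref{SSindestructibility} it is the stronger property ``not-P'' (via $tr(\N) \leq_K \Z$) that implies random indestructibility, and your family is explicitly not not-P. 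The correct route, and the one the paper uses, is to invoke Corollary~\ref{Katetovgenericcorollary} with $\J = \Z$ and $\I = tr(\N)$: since $tr(\N)$ is nowhere Shelah-Stepr\=ans (noted earlier in the paper) and $\Z \nleq_K tr(\N)$, Lemma~\ref{Katetovgenericlemma} lets you add a fourth bookkeeping task --- for each $X \in tr(\N)^+$ and each candidate morphism $f : X \to \omega$ from $tr(\N)\restriction X$, extend $\A_\alpha$ inside $\Z$ to kill $f$ --- which is what actually yields $\I(\A) \nleq_K tr(\N)\restriction X$ for all $X$. Without this you have not established random indestructibility.

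Two smaller comments on the strong tightness step. You re-derive a $\Z$-analogue of Lemma~\ref{summablestrongtight}, but this is unnecessary: the proof of that lemma does not actually use the hypothesis $\A \subseteq \J_{1/n}$, and its conclusion produces $D \in \A^\perp \cap \J_{1/n} \subseteq \Z$, which already keeps the recursion inside $\Z$ (this is what the paper means by ``use Lemma~\ref{summablestrongtight} and $\J_{1/n} \subseteq \Z$''). Moreover, your adaptation as written has a bug: you ask for the $r$ elements $k_i \in X_{n_i}$ to be picked from a single interval $[2^k, 2^{k+1})$, but the $X_{n_i}$ need not all meet any one such interval; you should instead, as in the paper's proof, pick the $k_i$ beyond a common threshold $m$ with $r/(1+m)$ small, without any common-interval requirement. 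Finally, your discussion of the bookkeeping obstacle for strong tightness is somewhat confused: if $\vec{X}$ satisfies the strongly tight hypothesis against the final $\I(\A)$ it automatically satisfies it against every $\I(\A_\alpha)$ (a smaller ideal imposes fewer constraints); the actual reason each $\vec{B}$ must be revisited cofinally often is to wait until its parents $A_n$ have appeared in $\A_\alpha$, per the reduction of Lemma~\ref{stronglytightMAD}.
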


Indeed, the $\PP_{\mathsf{MAD}} (\Z)$-generic \textsf{MAD} family has all these properties. To see strong tightness,
use Lemma~\ref{summablestrongtight} and $\J_{1/n} \sub \Z$. Since $\Z \not\leq_K tr (\N)$, random indestructibility
follows from Corollary~\ref{Katetovgenericcorollary}. We shall come back to this generic object in the next section
(Theorem~\ref{PMADZ-generic}).




\section{Destructibility by forcing}

In \cite{ShelahTemplates} Shelah constructed models of $\mathfrak{d<a}$ (see
also \cite{BrendleTemplates}). In these models, $\mathfrak{d}$ is bigger than
$\omega_{1}.$ It is an old question of Roitman whether $\mathfrak{d}=\omega_{1}$
implies $\mathfrak{a}=\omega_{1}.$ Even the following question of the first
and last authors is still open:

\begin{problem} [Brendle, Raghavan] \label{bsa-problem}
Does $\mathfrak{b=s}=\omega_{1}$ imply $\mathfrak{a}%
=\omega_{1}$?
\end{problem}

Constructing models of $\mathfrak{b<a}$ is much easier than constructing
models of $\mathfrak{d<a}.$ However, all the known models of $\mathfrak{b=}$
$\omega_{1}<\mathfrak{a}$ require diagonalizing an ultrafilter, which
increases the splitting number (see \cite{ProperandImproper}, \cite{MobandMad}%
, \cite{MadFamiliesSplittingFamiliesandLargeContinuum} and
\cite{BrendleRaghavan}). Problem~\ref{bsa-problem} is related to
the following: \textit{Assuming }$\mathsf{CH},$ \textit{can every }\textsf{MAD
}\textit{family be destroyed by a proper forcing that does not add dominating
or unsplit reals? }Recall that Shelah-Stepr\={a}ns \textsf{MAD} families $\A$
are indestructible for many definable forcings that do not add dominating
reals. Perhaps surprisingly, such families can be destroyed by forcings that
do not add dominating reals or unsplit reals. In fact, we will see that the
Mathias forcing associated with $\mathcal{I}\left(  \mathcal{A}\right)  $ has
these properties.

\subsection{Destroying Hurewicz ideals}

Recall that for Hurewicz ideals $\I$, Mathias forcing $\MM (\I)$ preserves unbounded families
from the ground model. We now proceed to strengthen this.

We need the following notions (see~\cite[Definition 31]{BrendleRaghavan} for a notion similar to item 2):

\begin{definition}
\begin{enumerate}
\item Let $P=\left\{  s_{n}\mid n\in\omega\right\}  \subseteq\left[
\omega\right]  ^{<\omega}$ be a collection of finite disjoint sets and
$S\in\left[  \omega\right]  ^{\omega}.$ We say that $S$ \emph{block splits
}$P$ if both of the sets $\left\{  n\mid s_{n}\subseteq S\right\}  $ and
$\left\{  s_{n}\mid s_{n}\cap S=\emptyset\right\}  $ are infinite.

\item We say that $\mathcal{S}=\left\{  S_{\alpha}\mid\alpha\in\omega
_{1}\right\}  \subseteq\left[  \omega\right]  ^{\omega}$ is a \emph{tail
block}-\emph{splitting family }if for every infinite set $P$ of finite
disjoint subsets of $\omega$ there is $\alpha<\omega_{1}$ such that
$S_{\gamma}$ block splits $P$ for every $\gamma>\alpha.$
\end{enumerate}
\end{definition}

It is easy to see that tail block splitting families exist if $\mathfrak{d}%
=\omega_{1}$ (see also~\cite[Observation 34]{BrendleRaghavan}) and tail block splitting families are splitting families. We say
that a forcing $\mathbb{P}$ preserves a tail block-splitting family if it
remains tail block-splitting after forcing with $\mathbb{P}$.

\begin{proposition}  \label{Hurewicztailsplitting}
Let $\mathcal{I}$ be a Hurewicz ideal. If $\mathcal{S=}\left\{  S_{\alpha}%
\mid\alpha\in\omega_{1}\right\}  \subseteq\left[  \omega\right]  ^{\omega}$ is
a tail block-splitting family then $\mathbb{M}\left(  \mathcal{I}\right)  $
preserves $\mathcal{S}$ as a tail block-splitting family.
\end{proposition}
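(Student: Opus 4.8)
The plan is to argue by contradiction: suppose $\mathbb{M}(\mathcal{I})$ does not preserve $\mathcal{S}$ as a tail block-splitting family. Then there is a condition $(s,A) \in \mathbb{M}(\mathcal{I})$ and an $\mathbb{M}(\mathcal{I})$-name $\mathring{P}$ for an infinite collection $\{\mathring{t}_n \mid n \in \omega\}$ of pairwise disjoint finite subsets of $\omega$ such that $(s,A)$ forces that for cofinally many $\alpha < \omega_1$, $S_\alpha$ does not block split $\mathring{P}$. Using that $\mathbb{M}(\mathcal{I})$ has the Laver property-like fusion/ pure decision structure (condition $(s,A)$ can be refined on the $A$-side while keeping the stem), I would build, by a fusion argument over the finitely-branching decisions of where the next block $\mathring{t}_n$ lands, a decreasing sequence of conditions $(s, A) \geq (s, A_0) \geq (s, A_1) \geq \cdots$ with $A \supseteq A_0 \supseteq A_1 \supseteq \cdots$, $A_k \in \mathcal{I}$, together with finite sets $X_n \subseteq [\omega]^{<\omega}$ of possible values of $\mathring{t}_n$, so that any condition in the fusion decides $\mathring{t}_n$ to lie in $X_n$. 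The key point is to arrange that $X_n \in (\mathcal{I}^{<\omega})^+$ restricted appropriately, so that the Hurewicz property of $\mathcal{I}$ applies.

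Next I would invoke the Hurewicz property of $\mathcal{I}$: applied to $\{X_n \mid n \in \omega\} \subseteq (\mathcal{I}^{<\omega})^+$, it yields finite sets $Y_n \in [X_n]^{<\omega}$ such that $\bigcup_{n \in A} Y_n \in (\mathcal{I}^{<\omega})^+$ for every infinite $A \subseteq \omega$. The set $P' = \bigcup_n Y_n$ is then an infinite collection of finite (essentially disjoint, after thinning) subsets of $\omega$ which, crucially, has the property that along every infinite subsequence it remains ``$(\mathcal{I}^{<\omega})^+$-large''. Since $\mathcal{S}$ is a tail block-splitting family, there is $\alpha < \omega_1$ such that $S_\gamma$ block splits $P'$ for every $\gamma > \alpha$: for each such $\gamma$, both $\{n \mid t \subseteq S_\gamma \text{ for some } t \in Y_n\}$-type index sets are infinite. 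I would then use this together with the $(\mathcal{I}^{<\omega})^+$-largeness to find, by a further refinement inside the fusion, a single condition $(s, A_\omega)$ (with $A_\omega = \bigcap_k A_k$ thinned to lie in $\mathcal{I}$, using that $\mathcal{I}$ is closed under the relevant pseudo-intersections along the fusion — this is where the Hurewicz output $Y_n$ is used to guarantee membership in $\mathcal{I}$) forcing that the generic real $\mathring{r}$ actually realizes infinitely many blocks inside $S_\gamma$ and infinitely many disjoint from $S_\gamma$, i.e. $(s,A_\omega)$ forces that $S_\gamma$ block splits $\mathring{P}$, contradicting the choice of $(s,A)$ (taking $\gamma$ above both $\alpha$ and whatever ordinal $(s,A)$ named).

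The main obstacle I anticipate is the bookkeeping in the fusion: I must simultaneously (a) decide each block $\mathring{t}_n$ into a finite set $X_n$ while keeping the side condition in $\mathcal{I}$, (b) ensure the family $\{X_n\}$ is genuinely in $(\mathcal{I}^{<\omega})^+$ so that Hurewicz applies — this requires exploiting that the generic real for $\mathbb{M}(\mathcal{I})$ avoids every set in $\mathcal{I}$, so the possible positions of $\mathring{t}_n$ cannot be confined to a single $\mathcal{I}$-set — and (c) after receiving $Y_n$ from Hurewicz, extract a genuine condition whose side part is in $\mathcal{I}$ and which forces the generic to hit the prescribed blocks. Step (c) is delicate because Mathias forcing only lets the generic pick points outside the current side condition, so I need the $Y_n$'s to be arranged along the stem growth in a way compatible with the ordering on $\mathbb{M}(\mathcal{I})$; the Hurewicz conclusion ``$\bigcup_{n \in A} Y_n \in (\mathcal{I}^{<\omega})^+$ for \emph{every} infinite $A$'' is exactly what lets me do this after thinning to an infinite $A$ along which the block-splitting witnessed by $S_\gamma$ is visible. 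The rest — the fusion lemma for $\mathbb{M}(\mathcal{I})$ and the verification that $\mathcal{I}$ being Hurewicz gives the needed closure — is routine given the machinery already set up in the paper.
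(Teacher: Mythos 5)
Your plan hinges on a fusion over $\mathbb{M}(\mathcal{I})$ that ``decides each $\dot{t}_n$ to lie in $X_n$'' below a single limit condition, and this is exactly what is not available here. Mathias forcing with a Hurewicz ideal is a $\sigma$-centered order, not a tree forcing: below a fixed $(s,A)$, the possible values of $\dot{t}_n$ depend on how the stem grows, and the stem can be extended by \emph{any} finite $t \subseteq \omega\setminus A$ with $\min t > \max s$. The decision is infinitely branching, so there is no pure-decision lemma confining $\dot{t}_n$ to a finite set. Worse, to take the limit of your $A_k$'s and land back in $\mathbb{M}(\mathcal{I})$ you would need $\mathcal{I}$ to be a $P$-ideal, and Hurewicz ideals need not be $P$-ideals (e.g.\ $\mathcal{ED}_{\mathrm{fin}}$ is $F_\sigma$, hence Hurewicz, but not a $P$-ideal). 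The Hurewicz output $Y_n$ gives you \emph{positivity} of $\bigcup_{n\in W} Y_n$, which is a statement about $(\mathcal{I}^{<\omega})^+$; it does not put anything into $\mathcal{I}$ and therefore cannot substitute for the missing pseudo-unions. Your ``step (c)'' is where the argument would collapse, and no amount of bookkeeping fixes it because the obstruction is structural.

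The paper's proof avoids fusion entirely, and the key ideas you are missing are two. First, instead of fixing a condition and trying to decide the name below it, one parametrizes by the \emph{stem}: for each $s \in [\omega]^{<\omega}$ and each $m$, $X_m(s)$ is defined to be the set of stem-extensions $t$ for which \emph{some} side condition $B\in\mathcal{I}$ makes $(s\cup t,B)$ decide $\dot{p}_m$. Positivity of each $X_m(s)$ follows from a single density pass (given $B\in\mathcal{I}$, extend $(s,B)$ to decide $\dot{p}_m$ and read off $t$), and Hurewicz then yields finite $Y_m(s)\subseteq X_m(s)$ whose unions along every infinite $W$ stay in $(\mathcal{I}^{<\omega})^+$. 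Second, this is done for \emph{every} potential stem $s$ --- there are only countably many --- and the ``tail'' in tail block-splitting is precisely what lets one find a single $\alpha$ such that $S_\gamma$ block-splits the induced block sequence $R(s)$ for all $\gamma>\alpha$ and \emph{all} $s$ at once. The rest is a one-step density argument: if $(s,A)$ forced $S_\gamma$ to fail to block-split $\dot{P}$, pick $m$ with $Z_m(s)\subseteq S_\gamma$ and $t\in Y_m(s)$ disjoint from $A$ (this uses the positivity of the Hurewicz output along the infinite set of such $m$), and extend to $(s\cup t, A\cup B)$ to force $\dot{p}_m\subseteq S_\gamma$. You invoke the tail property only for the single sequence $P'$ and thereby lose the uniformity over stems that makes the fusion-free argument go through; I would scrap the fusion and rebuild along these lines.
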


\begin{proof}
Let $\mathcal{I}$ be a Hurewicz ideal and $\mathcal{S}$ a tail block-splitting
family. Let $\dot{P}$ $\mathcal{=\{}\dot{p}_{n}\mid n\in\omega\}$ be a name
for an infinite set of pairwise disjoint finite subsets of $\omega$, we may
assume $\dot{p}_{n}$ is forced to be disjoint from $n$. For every $s\in\left[
\omega\right]  ^{<\omega}$ and $m\in\omega$ we define $X_{m}\left(  s\right)
$ as the set of all $t\in\left[  \omega\right]  ^{<\omega}$ such that
$\max\left(  s\right)  <\min\left(  t\right)  $ and there are $F_{\left(
t,m,s\right)  }\in\left[  \omega\right]  ^{<\omega}$ and $B\in\mathcal{I}$
such that $\left(  s\cup t,B\right)  \Vdash``\dot{p}_{m}=F_{\left(
t,m,s\right)  }\textquotedblright.$ It is easy to see that $\left\{
X_{m}\left(  s\right)  \mid m\in\omega\right\}  \subseteq\left(
\mathcal{I}^{<\omega}\right)  ^{+}$ and since $\mathcal{I}$ is Hurewicz, we
may find $Y_{m}\left(  s\right)  \in\left[  X_{m}\left(  s\right)  \right]
^{<\omega}$ such that if $W\in\left[  \omega\right]  ^{\omega}$ then $%
{\textstyle\bigcup\limits_{m\in W}}
Y_{m}\left(  s\right)  \in\left(  \mathcal{I}^{<\omega}\right)  ^{+}.$ Let
$Z_{m}\left(  s\right)  =
{\textstyle\bigcup\limits_{t\in Y_{m}\left(  s\right)  }}
F_{\left(  t,m,s\right)  }.$ For every $s\in\left[  \omega\right]  ^{<\omega}$
we can then find $D\left(  s\right)  \in\left[  \omega\right]  ^{\omega}$ such
that $R\left(  s\right)  =\left\{  Z_{m}\left(  s\right)  \mid m\in D\left(
s\right)  \right\}  $ is pairwise disjoint.

Since $\mathcal{S}$ is tail block-splitting, we can find $\alpha$ such that if
$\gamma>\alpha$ then $S_{\gamma}$ block splits $R\left(  s\right)  $ for every
$s\in\left[  \omega\right]  ^{<\omega}.$ We claim that in this case,
$S_{\gamma}$ is forced to block split $\dot{P}.$ If this was not the case, we
could find $\left(  s,A\right)  \in\mathbb{M}\left(  \mathcal{I}\right)  $ and
$n\in\omega$ such that either $\left(  s,A\right)  \Vdash``%
{\textstyle\bigcup}
\left\{  \dot{p}_{m}\mid\dot{p}_{m}\subseteq S_{\gamma}\right\}  \subseteq
n\textquotedblright$ or $\left(  s,A\right)  \Vdash``%
{\textstyle\bigcup}
\left\{  \dot{p}_{m}\mid\dot{p}_{m}\cap S_{\gamma}=\emptyset\right\}
\subseteq n\textquotedblright.$ Assume the first case holds (the other one is
similar). Since $S_{\gamma}$ block splits $R\left(  s\right)  ,$ we know that
the set $W=\left\{  m>n\mid Z_{m}\left(  s\right)  \subseteq S_{\gamma
}\right\}  $ is infinite. Since ${\textstyle\bigcup\limits_{m\in W}}
Y_{m}\left(  s\right)  \in\left(  \mathcal{I}^{<\omega}\right)  ^{+}$,
there is $m\in W$ and $t\in Y_{m}\left(  s\right)  $ such that $t\cap
A=\emptyset.$ We  know there is $B\in\mathcal{I}$ such that $\left(  s\cup
t,B\right)  \Vdash``\dot{p}_{m}=F_{\left(  t,m,s\right)  }\textquotedblright.$
Since $t\cap A=\emptyset$, we have $\left(  s\cup t,A\cup B\right)  \leq\left(
s,A\right)  .$ But $\left(  s\cup t,A\cup B\right)  $ forces that $\dot{p}%
_{m}$ is a subset of $S_{\gamma},$ which is a contradiction. We therefore conclude
that $\mathcal{S}$ remains being a tail block-splitting family.
\end{proof}

In particular, if $V$ is a model of \textsf{CH }and $\mathcal{I}$ is a
Hurewicz ideal, then $\mathbb{M}\left(  \mathcal{I}\right)  $ preserves
$V\cap\left[  \omega\right]  ^{\omega}$ as a splitting family (this result has
also been noted by Lyubomyr Zdomskyy). Since Hurewicz ideals are Canjar
ideals, we conclude the following:

\begin{corollary}
If $\mathcal{A}$ is Shelah-Stepr\={a}ns then $\mathcal{A}$ can be destroyed
with a ccc forcing that does not add dominating nor unsplit reals.
\end{corollary}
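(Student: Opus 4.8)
The plan is to take $\PP = \MM(\I(\A))$, the Mathias forcing guided by $\I (\A)$, and check the three requirements. That $\PP$ destroys $\A$ is immediate: $\MM (\I)$ always destroys the tallness of $\I$, and a forcing destroys the \textsf{MAD} family $\A$ exactly when it destroys $\I(\A)$. That $\PP$ is ccc follows from the stronger fact that it is $\sigma$-centered: for a fixed stem $s\in[\omega]^{<\omega}$, any finitely many conditions $(s,A_0),\dots,(s,A_k)$ have the common extension $(s,A_0\cup\dots\cup A_k)$ (using that $\I(\A)$ is an ideal), and there are only countably many stems. That $\PP$ adds no dominating real follows by assembling cited facts: by Proposition~\ref{ShelahSteprans-Hurewicz} the ideal $\I(\A)$ is Hurewicz, hence Canjar, and $\MM(\I)$ for a Canjar ideal $\I$ adds no dominating real.

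The one clause needing care is that $\PP$ adds no unsplit real, i.e. that $V\cap\omoms$ remains a splitting family. I cannot simply quote Proposition~\ref{Hurewicztailsplitting}, since that proposition is phrased via tail block-splitting families, which need not exist in \textsf{ZFC}. Instead I would reexamine its proof. Given a $\PP$-name $\dot x$ for an element of $\omoms$, let $\dot x_n$ be a name for the $n$-th element of $\dot x$ in increasing order and set $\dot P = \langle \{\dot x_n\}\mid n\in\omega\rangle$; the $\dot p_n := \{\dot x_n\}$ are forced to be pairwise disjoint and $\dot p_n$ is forced disjoint from $n$, as the proof of Proposition~\ref{Hurewicztailsplitting} requires. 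That proof then produces, for each $s\in[\omega]^{<\omega}$, a pairwise disjoint family $R(s)$, and its final paragraph shows --- using only that $\I(\A)$ is Hurewicz, \emph{not} the existence of a tail block-splitting family --- that any $S$ which block-splits every $R(s)$ is forced to block-split $\dot P$; and a set block-splits $\dot P$ precisely when it splits $\dot x$. So it remains to find, in $V$, a single set $S$ block-splitting the countably many pairwise disjoint families $R(s)$. This is elementary: build $S$ in $\omega$ stages, at each stage handling one requirement ``$R(s)$ gets another block inside $S$'' or ``$R(s)$ gets another block disjoint from $S$'', at each step choosing a block with minimum above everything decided so far and (for inclusion) disjoint from the finitely many points already excluded; since each $R(s)$ is infinite and pairwise disjoint, cofinitely many of its blocks are available at every stage. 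Then, since every real of $V[G]$ has a name, a routine density argument turns ``for every name there is a ground-model splitter'' into ``$V\cap\omoms$ is splitting in $V[G]$''.

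I expect the only real obstacle to be exactly this last point: recognizing that the ``no unsplit reals'' conclusion is \emph{not} literally Proposition~\ref{Hurewicztailsplitting} but is obtained by extracting from its proof the part that does not depend on the tail block-splitting hypothesis, and replacing ``find a splitter inside a prescribed family'' by the \textsf{ZFC} construction of a splitter for countably many pairwise disjoint families. The remaining bookkeeping (the $\sigma$-centered decomposition, the Hurewicz-to-Canjar step, and the tallness/\textsf{MAD} equivalence) is routine once the earlier results in the paper are invoked.
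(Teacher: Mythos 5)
Your proof is correct, and it actually delivers more than the paper's literal argument does. The paper reaches this corollary via Proposition~\ref{Hurewicztailsplitting} together with the preceding remark that, \emph{under} \textsf{CH}, a tail block-splitting family exists in the ground model and is preserved, so that $V\cap[\omega]^\omega$ stays splitting. As you point out, that route silently assumes $\dd=\omega_1$ (or \textsf{CH}), because tail block-splitting families are not \textsf{ZFC} objects; the corollary as stated is an unconditional claim about $\A$, and the literal chain of citations in the paper does not quite give it.

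Your fix is the right extraction. The proof of Proposition~\ref{Hurewicztailsplitting} shows, without ever invoking the tail block-splitting hypothesis, the following: from a $\MM(\I)$-name $\dot P=\langle \dot p_n\rangle$ for a pairwise disjoint sequence of nonempty finite sets with $\dot p_n$ forced disjoint from $n$, one extracts, for each stem $s\in[\omega]^{<\omega}$, a ground-model pairwise disjoint family $R(s)$ (using only that $\I$ is Hurewicz), and shows that \emph{any} $S$ block-splitting every $R(s)$ is forced to block-split $\dot P$. Applying this with $\dot p_n=\{\dot x_n\}$, so that block-splitting $\dot P$ is exactly splitting $\dot x$, reduces the task to finding in the ground model a single $S$ block-splitting countably many pairwise disjoint families, and your $\omega$-stage diagonalization does that in \textsf{ZFC}. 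The remaining steps --- $\sigma$-centeredness of $\MM(\I(\A))$, that Mathias forcing destroys its ideal, and the Shelah--Stepr\={a}ns $\Rightarrow$ Hurewicz (Proposition~\ref{ShelahSteprans-Hurewicz}) $\Rightarrow$ Canjar $\Rightarrow$ no dominating reals chain --- are exactly what the paper uses. The net effect is a clean \textsf{ZFC} proof, where the paper's implicit argument is conditional on $\dd=\omega_1$; your version is the one that should be recorded.
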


In fact, such forcings can be iterated without adding unsplit reals, as the
following result shows:

\begin{proposition}
Let $\langle\mathbb{P}_{\alpha},\mathbb{\dot{Q}}_{\alpha}\mid\alpha
<\delta\mathbb{\rangle}$ be a finite support iteration of ccc forcings. If
$\mathbb{P}_{\alpha}$ forces that $\mathbb{\dot{Q}}_{\alpha}$ preserves tail
block-splitting families, then $\mathbb{P}_{\delta}$ preserves tail
block-splitting families.
\end{proposition}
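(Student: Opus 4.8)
The plan is to prove this by induction on $\delta$, the successor step and the limit step of uncountable cofinality being routine and the limit step of cofinality $\omega$ being the heart of the matter. Fix, throughout, a tail block-splitting family $\mathcal{S} = \{S_\alpha : \alpha < \omega_1\}$ in $V$. For $\delta$ a successor, say $\delta = \beta+1$, write $\mathbb{P}_{\beta+1} = \mathbb{P}_\beta \ast \dot{\mathbb{Q}}_\beta$; the inductive hypothesis gives that $\mathcal{S}$ is still tail block-splitting in $V[G_\beta]$, and since $\mathbb{P}_\beta$ forces that $\dot{\mathbb{Q}}_\beta$ preserves tail block-splitting families, $\mathcal{S}$ remains tail block-splitting in $V[G_{\beta+1}]$. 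For $\delta$ a limit of uncountable cofinality, any $\mathbb{P}_\delta$-name for an infinite family $P$ of pairwise disjoint finite subsets of $\omega$ is, by the ccc and finiteness of supports, a $\mathbb{P}_\alpha$-name for some $\alpha < \delta$ (it involves only countably many countable antichains of finitely supported conditions, so the supports are bounded below $\delta$); the inductive hypothesis applied to $\mathbb{P}_\alpha$ produces $\gamma_0 < \omega_1$ with $S_\gamma$ block splitting $P$ for all $\gamma > \gamma_0$, and this persists to $V[G_\delta]$ since ``block splits'' is an arithmetical relation between the reals $S_\gamma$ and $P$.

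For $\delta$ a limit of cofinality $\omega$, fix a strictly increasing cofinal sequence $\langle \delta_n : n < \omega \rangle$ with $\delta_0 = 0$. Each quotient $\mathbb{P}_{\delta_{n+1}}/G_{\delta_n}$ is, in $V[G_{\delta_n}]$, a finite support iteration of ccc forcings of length $< \delta$, each of whose iterands still preserves tail block-splitting families (since $\mathbb{P}_\alpha = \mathbb{P}_{\delta_n} \ast \mathbb{P}_{[\delta_n,\alpha)}$ for $\delta_n \le \alpha < \delta_{n+1}$, the hypothesis on $\dot{\mathbb{Q}}_\alpha$ survives restriction to the quotient); so by the inductive hypothesis each such quotient preserves tail block-splitting families. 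Re-associating the iteration, $\mathbb{P}_\delta$ is (up to a dense embedding) the length-$\omega$ finite support iteration whose $n$-th iterand is this quotient, and each of its initial segments is some $\mathbb{P}_{\delta_n}$, which preserves tail block-splitting families by the inductive hypothesis. Hence it suffices to treat a length-$\omega$ finite support iteration $\mathbb{P}_\omega$ of ccc forcings in which every proper initial segment and every iterand preserves tail block-splitting families.

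So let $\dot P = \{\dot p_k : k < \omega\}$ be a $\mathbb{P}_\omega$-name for an infinite pairwise disjoint family of finite subsets of $\omega$ (we may assume $\dot p_k$ is forced nonempty and disjoint from $k$), and let $q \in \mathbb{P}_\omega$; since supports are finite, $q \in \mathbb{P}_{n_0}$ for some $n_0$. We must find $\gamma_0 < \omega_1$ with $q \Vdash$ ``$S_\gamma$ block splits $\dot P$ for every $\gamma > \gamma_0$''. The idea, following the proof of Proposition~\ref{Hurewicztailsplitting}, is to reflect $\dot P$ down to a ground-model family: working in $V$ one builds, by a recursion through the coordinates $n \ge n_0$ which at stage $n$ invokes the preservation of tail block-splitting by the $n$-th iterand to capture the part of the values of $\dot P$ contributed at that coordinate, a pairwise disjoint family $P^\ast = \{t_i : i < \omega\} \in V$ together with an accompanying system of conditions, so that $q$ forces: whenever $S \in V$ block splits $P^\ast$, then $S$ block splits $\dot P$. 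Since $\mathcal{S}$ is tail block-splitting in $V$, there is $\gamma_0$ with $S_\gamma$ block splitting $P^\ast$ for all $\gamma > \gamma_0$, and then $q$ forces $S_\gamma$ block splits $\dot P$ for all such $\gamma$. As $\dot P$ and $q$ were arbitrary, this completes the induction.

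The main obstacle is exactly this last reflection. In a finite support iteration the values of $\dot P$ cannot be read off from a single condition, so the values decided by various mutually incompatible conditions must be amalgamated into one ground-model family $P^\ast$ in such a way that block-splitting $P^\ast$ genuinely forces block-splitting $\dot P$; moreover the ``tail'' uniformity has to come out right, i.e.\ the set of $\gamma$ for which $S_\gamma$ fails must end up bounded below $\omega_1$. This is what compels one to use the ccc (to keep all the relevant antichains, and hence all the exceptional ordinals, countable) together with the iterand-level preservation hypothesis at every coordinate, in precisely the manner of the bookkeeping in Proposition~\ref{Hurewicztailsplitting} and of the standard preservation theorems for finite support iterations.
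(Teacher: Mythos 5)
Your outline of the induction is sound through the uncountable cofinality case, and the reduction of the countable cofinality case to a length-$\omega$ iteration in which every proper initial segment and every iterand preserves tail block-splitting families is correct. But the proof stops short exactly where the work is: you describe wanting to build a single pairwise disjoint family $P^\ast \in V$ together with an "accompanying system of conditions" so that $q$ forces that any ground-model $S$ block-splitting $P^\ast$ block-splits $\dot P$, and then you acknowledge that the construction is "the main obstacle" and that it "compels one to use the ccc ... in precisely the manner of ... the standard preservation theorems" -- but no construction is actually given. As written this is a statement of intent, not a proof, and the gap is precisely the content of the proposition.

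Moreover, the particular reflection you aim for (a single $P^\ast \in V$ such that $q$ forces the implication "$S$ block-splits $P^\ast \Rightarrow S$ block-splits $\dot P$") is not what the paper's argument establishes, and it is not clear it can be achieved in that form. The paper instead works level by level: for each $n$ it defines a $\mathbb{P}_{\delta_n}$-\emph{name} $\dot P(n) = \{\dot s_i(n) : i \in \omega\}$, chosen in $V[G_{\delta_n}]$ to be a pairwise disjoint family such that for every $i$ some $p$ in the quotient $\mathbb{P}_\delta/G_{\delta_n}$ forces $\dot s_i = s_i(n)$. It then invokes the inductive hypothesis \emph{at} $\delta_n$ (that $\mathbb{P}_{\delta_n}$ preserves tail block-splitting) to obtain a $\mathbb{P}_{\delta_n}$-name $\dot\alpha_n$ for a bound, uses the ccc of each $\mathbb{P}_{\delta_n}$ to uniformize to a single $\alpha < \omega_1$ with $\Vdash \dot\alpha_n < \alpha$ for all $n$, and finally argues by contradiction: if some $p \in \mathbb{P}_\delta$ and $\beta > \alpha$ witness failure of block-splitting, finite support puts $p$ in some $\mathbb{P}_{\delta_n}$, and the fact that $S_\beta$ block-splits $\dot P(n)$ produces (using the "possible value" witness condition in the quotient) an $r_1 \leq p$ in $\mathbb{P}_\delta$ contradicting what $p$ forces. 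This two-step shape -- amalgamate only from level $\delta_n$ upward, keep the result as a $\mathbb{P}_{\delta_n}$-name, and use a density/contradiction argument rather than a direct implication -- is what actually makes the "tail" uniformity come out, and it is the piece your proposal needs and does not supply.
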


\begin{proof}
We prove the result by induction on $\delta.$ The cases where $\delta=0$,
$\delta$ is a successor ordinal or $\delta$ has uncountable cofinality are
trivial, so we assume $\delta$ is a limit ordinal of countable cofinality. Fix
an increasing sequence $\left\langle \delta_{n}\right\rangle _{n\in\omega}$
such that $\delta={\textstyle\bigcup}
\delta_{n}.$ Let $\mathcal{S}=\left\{  S_{\alpha}\mid\alpha\in\omega
_{1}\right\}  $ be a tail block-splitting family\ and let $\dot{P}=\left\{
\dot{s}_{i}\mid i\in\omega\right\}  $ be a $\mathbb{P}_{\delta}$-name for a
collection of finite disjoint sets. For every $n\in\omega,$ we define a
$\mathbb{P}_{\delta_{n}}$-name $\dot{P}\left(  n\right)  =\left\{  \dot{s}%
_{i}\left(  n\right)  \mid i\in\omega\right\}  $ as follows:

Assume $G_{n}\subseteq\mathbb{P}_{\delta_{n}}$ is a generic filter. In $V\left[
G_{n}\right]  $ we find a family $P\left(  n\right)  =\left\{  s_{i}\left(
n\right)  \mid i\in\omega\right\}  $ with the following properties:

\begin{enumerate}
\item $P\left(  n\right)  $ is a family of pairwise disjoint sets.

\item For every $i\in\omega,$ there is $p\in\mathbb{P}_{\delta}/G_{n}$ 
such that $p\Vdash_{\mathbb{P}_{\delta}/G_{n}}``\dot{s}_{i}%
=s_{i}\left(  n\right)  \textquotedblright$ (where $\mathbb{P}_{\delta}/G_{n}$
denotes the quotient forcing).
\end{enumerate}

Let $\dot{s}_{i}\left(  n\right)  $ be a $\mathbb{P}_{\delta_n}$-name for
$s_{i}\left(  n\right)  .$ Since $\mathbb{P}_{\delta_{n}}$ preserves tail
block-splitting families, there is  a $\mathbb{P}%
_{\delta_{n}}$-name $\dot{\alpha}_{n}$ for a countable ordinal such that $1_{\mathbb{P}%
_{\delta_{n}}}$ forces that if $\beta$ is bigger that $\dot{\alpha}_{n}$ then
$S_{\beta}$ tail block-splits $\dot{P}\left(  n\right)  .$ Since each
$\mathbb{P}_{\delta_{n}}$ has the countable chain condition, we can find
$\alpha<\omega_{1}$ such that $1_{\mathbb{P}_{\delta_n}}\Vdash``\dot{\alpha}%
_{n}<\alpha\textquotedblright$ for all $n$. We claim that if $\beta>\alpha$, then
$S_{\beta}$ is forced to block-split $\dot{P}.$

Assume this is not the case, so there are $m\in\omega,$ $\beta>\alpha,$
$p\in\mathbb{P}_{\delta}$ such that either $p\Vdash_{\mathbb{P}_{\delta}}``%
{\textstyle\bigcup}
\left\{  \dot{s}_{i}\mid\dot{s}_{i}\subseteq S_{\beta}\right\}  \subseteq
m\textquotedblright$ or $p\Vdash_{\mathbb{P}_{\delta}}``%
{\textstyle\bigcup}
\left\{  \dot{s}_{i}\mid\dot{s}_{i}\cap S_{\beta}=\emptyset\right\}  \subseteq
m\textquotedblright.$ We will assume that $p\Vdash_{\mathbb{P}_{\delta}}``%
{\textstyle\bigcup}
\left\{  \dot{s}_{i}\mid\dot{s}_{i}\subseteq S_{\beta}\right\}  \subseteq
m\textquotedblright$ (the other case is similar). Let $n\in\omega$ such that
$p\in\mathbb{P}_{\delta_{n}}.$ Since $S_{\beta}$ is forced to block-split
$\dot{P}\left(  n\right)  ,$ we can find $q\leq p$ and $j\in\omega$ such
that $q\Vdash_{\mathbb{P}_{\delta_n}}``\dot{s}_{j}\left(  n\right)  \nsubseteq
m\wedge\dot{s}_{j}\left(  n\right)  \subseteq S_{\beta}\textquotedblright.$ 
Then there is a $\PP_{\delta_n}$-name $\dot r\in\mathbb{P}_{\delta}$ such that
$q\Vdash_{\mathbb{P}_{\delta_{n}}}`` \dot r \in \PP_\delta / \dot G_n$ and $\dot r \forces_{\PP_\delta / \dot G_n} \dot{s}_{j}\left(  n\right)  =\dot{s}%
_{j}\textquotedblright$. 
Therefore, we can find
$r_{1}\leq q$ in $\PP_\delta$ such that $r_{1}\Vdash_{\mathbb{P}_{\delta}}``\dot{s}%
_{j}\nsubseteq m\wedge\dot{s}_{j}\subseteq S_{\beta}\textquotedblright,$ which
is a contradiction.
\end{proof}

By iterating the Mathias forcing of all Hurewicz ideals, we obtain:

\begin{theorem} \label{thm:59}
There is a model in which the following statements hold:
\begin{enumerate}
\item $\mathfrak{c}=\omega_{2}.$

\item $\mathfrak{b=s}=\omega_{1}.$

\item No \textsf{MAD} family of size ${\aleph}_{1}$ can be extended to a Hurewicz ideal.
\end{enumerate}
\end{theorem}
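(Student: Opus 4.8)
The plan is to obtain the model by a finite support iteration $\langle \mathbb{P}_\alpha , \dot{\mathbb{Q}}_\alpha : \alpha < \omega_2 \rangle$ over a ground model $V$ of \textsf{GCH}, in which each $\dot{\mathbb{Q}}_\alpha$ is a Mathias forcing $\mathbb{M}(\dot{\mathcal J}_\alpha)$ driven by a Hurewicz ideal. Since every $\mathbb{M}(\mathcal J)$ is $\sigma$-centered (conditions with the same stem are compatible), the iteration is ccc, $|\mathbb{P}_{\omega_2}| = \aleph_2$, and the usual nice-name count together with the Cohen reals added at limit stages yields $\mathbb{P}_{\omega_2} \Vdash \mathfrak{c} = \aleph_2$, which is (1). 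To prepare for (3), first fix once and for all an enumeration of all nice $\mathbb{P}_\beta$-names ($\beta < \omega_2$) for ideals on $\omega$ --- note that $\mathfrak{c} = \aleph_1$ in every proper initial extension $V[G_\beta]$, so every ideal there has size $\le \aleph_1$ and there are only $\aleph_2$ such names overall --- and fix a bookkeeping surjection $\phi : \omega_2 \to \omega_2$ in which every value recurs cofinally. At stage $\alpha$ I would let $\dot{\mathcal J}_\alpha$ be the $\phi(\alpha)$-th name on the list; if $\mathbb{P}_\alpha$ forces ``$\dot{\mathcal J}_\alpha$ is a Hurewicz ideal'' I put $\dot{\mathbb{Q}}_\alpha = \mathbb{M}(\dot{\mathcal J}_\alpha)$, and otherwise $\dot{\mathbb{Q}}_\alpha$ is trivial.

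For (2): \textsf{CH} gives $\mathfrak{d} = \aleph_1$ in $V$, so (by the remark after the definition) $V$ has a tail block-splitting family $\mathcal S = \{ S_\gamma : \gamma < \omega_1 \}$. Every non-trivial iterand $\mathbb{M}(\mathcal K)$ with $\mathcal K$ Hurewicz preserves tail block-splitting families by Proposition~\ref{Hurewicztailsplitting}, and trivial forcing does so vacuously, so by the finite support iteration theorem established just above, $\mathbb{P}_{\omega_2}$ preserves $\mathcal S$ as a tail block-splitting --- hence splitting --- family; thus $\mathfrak{s} = \aleph_1$ in the extension. For $\mathfrak{b}$, I would use that $\mathcal K$ Hurewicz implies $\mathbb{M}(\mathcal K)$ preserves every unbounded family of its ground model, so that by the standard preservation theorem for finite support iterations of ccc forcings preserving unboundedness, $V \cap \omega^\omega$ (of size $\aleph_1$) remains unbounded; hence $\mathfrak{b} = \aleph_1$.

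The heart of the matter is (3). Suppose towards a contradiction that in $V[G_{\omega_2}]$ there is a \textsf{MAD} family $\mathcal A$ with $|\mathcal A| = \aleph_1$ and a Hurewicz ideal $\mathcal J \supseteq \mathcal I(\mathcal A)$. As $\mathbb{P}_{\omega_2}$ is ccc and $\cf(\omega_2) > \omega_1$, a name for the $\aleph_1$ reals constituting $\mathcal A$ is equivalent to a $\mathbb{P}_\beta$-name for some $\beta < \omega_2$, so $\mathcal A \in V[G_\beta]$, and $\mathcal A$ is still \textsf{MAD} there because maximality of an \textsf{AD} family is downward absolute. The main obstacle is now a reflection argument showing that $\mathcal A$, together with \emph{some} Hurewicz ideal extending $\mathcal I(\mathcal A)$, already appears in an initial extension. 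The natural route is to take $M \prec H(\theta)^{V[G_{\omega_2}]}$ of size $\aleph_1$ with $\omega_1 \subseteq M$, $M \cap \omega_2$ an ordinal $\beta' \ge \beta$, and $\mathcal A, \mathcal J$, the iteration, and $G_{\omega_2}$ in $M$; then $\mathcal J' := \mathcal J \cap M$ contains $\mathcal I(\mathcal A)$ (whose members are finite Boolean combinations of elements of $\mathcal A \subseteq M$), and one argues, using $M \prec H(\theta)$ and the absoluteness of ``$X$ is $\mathcal J$-positive'' for $X \in M$, that $\mathcal J' \in V[G_{\beta'}]$ and that $\mathbb{P}_{\beta'}$ forces $\mathcal J'$ to be a Hurewicz ideal. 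One also needs the preservation fact --- again traceable to the equivalence between $\mathcal I$ being Hurewicz and $\mathbb{M}(\mathcal I)$ preserving unbounded families --- that Hurewicz-ness of $\mathcal J'$ survives the remainder of the iteration, so that by the bookkeeping there is a stage $\gamma \in [\beta', \omega_2)$ with $\dot{\mathbb{Q}}_\gamma = \mathbb{M}(\mathcal J')$.

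Granting this, the contradiction falls out: the Mathias real $m$ added at stage $\gamma$ satisfies $|m \cap B| < \omega$ for every $B \in \mathcal J'$, and since $\mathcal I(\mathcal A) \subseteq \mathcal J'$ this means $m$ is almost disjoint from every member of $\mathcal A$, so $\mathcal A \cup \{ m \}$ is an \textsf{AD} family properly extending $\mathcal A$ inside $V[G_{\gamma + 1}] \subseteq V[G_{\omega_2}]$, contradicting that $\mathcal A$ is \textsf{MAD} in $V[G_{\omega_2}]$. So the genuine work is everything in the previous paragraph --- checking that $\mathcal J \cap M$ is really a Hurewicz ideal lying in a proper initial extension, and that the iterands do not destroy Hurewicz-ness of the ideals we want to diagonalize --- whereas the setup, the computations of $\mathfrak{c}$, $\mathfrak{b}$, $\mathfrak{s}$, and the bookkeeping are routine.
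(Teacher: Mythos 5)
Your framework---a finite-support ccc iteration of Mathias forcings $\mathbb{M}(\dot{\mathcal{J}}_\alpha)$ over Hurewicz ideals, driven by a bookkeeping function over a ground model of GCH---is exactly what the paper's one-line hint (``by iterating the Mathias forcing of all Hurewicz ideals'') suggests, and your computations of $\mathfrak{c}$, of $\mathfrak{s}$ (via Proposition~\ref{Hurewicztailsplitting} and the finite-support preservation proposition for tail block-splitting), and of $\mathfrak{b}$ (via ``Hurewicz implies $\mathbb{M}$ preserves unbounded families'') are all fine. Clause (3) is where the content lives, and your reflection idea is the natural route, but you leave open precisely the two steps on which it hinges.

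The first step---that $\mathcal{J}' := \mathcal{J} \cap M$ is Hurewicz in $V[G_{\beta'}]$---is repairable but needs more than ``$M \prec H(\theta)$'': one must choose $M$ so that $2^\omega \cap M = 2^\omega \cap V[G_{\beta'}]$, a closing-off argument that is not automatic for a submodel of size $\aleph_1$ when CH fails in $V[G_{\omega_2}]$; only then does the observation ``for $X \in M$, $X \in ((\mathcal{J}')^{<\omega})^+ \Leftrightarrow X \in (\mathcal{J}^{<\omega})^+$'' combine with elementarity to pull Hurewicz witnesses into $M$. The second step is a genuine gap. You need $\mathcal{J}'$ to still be Hurewicz in $V[G_\gamma]$ at the later stage $\gamma > \beta'$ where the bookkeeping hands you its name, and the justification you offer---that this is ``traceable to the equivalence between $\mathcal{I}$ Hurewicz and $\mathbb{M}(\mathcal{I})$ preserving unbounded families''---does not do the job. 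That equivalence controls what $\mathbb{M}(\mathcal{J}')$ does to the universe; it says nothing about whether the intervening Cohen and Mathias reals keep the fixed set $\mathcal{J}' \subseteq \wp(\omega)$ a Hurewicz subspace. New $\mathcal{J}'$-positive sequences $\langle X_n\rangle \in V[G_\gamma]\setminus V[G_{\beta'}]$ appear and need fresh Hurewicz witnesses, and you supply no argument producing them; one needs either a separate preservation theorem for the topological Hurewicz property under these iterands, or a rearrangement of the bookkeeping so that the elementary-submodel reflection already lands on the stage where the name is diagonalized. The paper itself prints no proof of this theorem, so there is no official argument to compare against, but as written your sketch does not establish~(3).
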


We do not know the value of $\mathfrak{a}$ in the previous model. Naturally,
if every \textsf{MAD} family could be extended to a Hurewicz ideal (at least
under \textsf{CH}), then we would be able to solve the Problem~\ref{bsa-problem}. 
Unfortunately, this may not be the case, as we will prove in the
next section (Theorem~\ref{notHurewicz}).

\subsection{Variants of the Shelah-Stepr\=ans property}

Given two non-empty finite subsets  $s,t$  of $\omega,$ we write $s<t$ if
$\max\left(  s\right)  <\min\left(  t\right)  .$ We 
say that $\mathcal{B}=\left\{  s_{n}\mid n\in\omega\right\}  \subseteq\left[
\omega\right]  ^{<\omega}\setminus\left\{  \emptyset\right\}  $ is a
\emph{block sequence }if $s_{n}<s_{n+1}$ for every $n\in\omega.$ The following
are natural weakenings of being Shelah-Stepr\={a}ns.

\begin{definition}   \label{SSblockdef}
Let $\mathcal{A}$ be a \textsf{MAD }family. 
\begin{enumerate}
\item We say that $\mathcal{A\ }$is
\emph{Shelah-Stepr\={a}ns for block sequences} if for every block sequence
$\mathcal{B}=\left\{  s_{n}\mid n\in\omega\right\}  \in \left(  \mathcal{I}\left(  \mathcal{A}\right)
^{<\omega}\right)  ^{+},$  there is $W\in\left[  \omega\right]  ^{\omega}$ such that $%
{\textstyle\bigcup\limits_{n\in W}} s_{n}\in\mathcal{I}\left(  \mathcal{A}\right)  .$
\item We say that $\mathcal{A}$ is
$\omega$\emph{-Shelah-Stepr\={a}ns for block sequences }if for every sequence
$\left\langle \B_{n}\right\rangle _{n\in\omega}$ of
block sequences with $\B_{n}\in\left(  \mathcal{I}\left(  \mathcal{A}\right)
^{<\omega}\right)  ^{+},$ there is $C\in \I (\A)$ such that for every
$n\in\omega$ there are infinitely many $s\in X_{n}$ such that $s\subseteq C.$
\end{enumerate}
\end{definition}

Note that by Lemma~\ref{SScharacterization}, 
every Shelah-Stepr\={a}ns \textsf{MAD} family is $\omega$-Shelah-Stepr\={a}ns for block sequences, 
and obviously  $\omega$-Shelah-Stepr\={a}ns for block sequences implies 
Shelah-Stepr\={a}ns for block sequences.

One may wonder if Shelah-Stepr\={a}ns and ($\omega$-)Shelah-Stepr\={a}ns for block
sequences are different concepts. We are going to prove that Shelah-Stepr\={a}ns and $\omega$-Shelah-Stepr\={a}ns for block
sequences might and might not agree. We do not know whether Shelah-Stepr\={a}ns for block
sequences and $\omega$-Shelah-Stepr\={a}ns for block sequences are the same. First we have the following result:

\begin{lemma}
Let $\mathcal{A}$ be a \textsf{MAD} family such that $\left\vert
\mathcal{A}\right\vert <$ \textsf{cov}$\left(  \mathcal{M}\right)  .$ If
$\mathcal{A}$ is Shelah-Stepr\={a}ns for block sequences, then $\mathcal{A}$
is Shelah-Stepr\={a}ns.
\end{lemma}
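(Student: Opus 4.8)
The plan is to reduce the Shelah-Stepr\={a}ns property to its block-sequence version by extracting, from an arbitrary witness $X \in \left(\I(\A)^{<\omega}\right)^+$, a \emph{block sequence} $\B \sub X$ that is still positive, i.e.\ $\B \in \left(\I(\A)^{<\omega}\right)^+$. Once this is done, applying ``Shelah-Stepr\={a}ns for block sequences'' to $\B = \{ s_n \mid n \in \omega \}$ yields $W \in [\omega]^\omega$ with $\bigcup_{n \in W} s_n \in \I(\A)$, and then $Y = \{ s_n \mid n \in W \}$ is an infinite subset of $X$ whose union lies in $\I(\A)$, which is exactly what Shelah-Stepr\={a}ns-ness demands.

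To carry out the extraction I would run a category argument of length $\cov(\M)$. Let $T$ be the tree, ordered by end-extension, of all finite sequences $\la t_0, \dots, t_{n-1} \ra$ with $t_i \in X$ and $t_0 < t_1 < \cdots < t_{n-1}$ (finite block sequences drawn from $X$). Since $\I(\A)$ contains every finite set, positivity of $X$ shows that above any node one can choose an element of $X$ whose minimum exceeds the maximum of the last block, and in fact two such elements with distinct minima; hence $T$ is pruned and splits above every node, so it is a countable atomless poset. Put $\D = \{ \bigcup \F \cup F \mid \F \in [\A]^{<\omega},\ F \in [\omega]^{<\omega}\}$; this is a subfamily of $\I(\A)$ of cardinality $|\A|$ which is cofinal in $(\I(\A), \sub)$. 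For $A \in \D$ set $D_A = \{ p \in T \mid \exists i < |p| \ (p(i) \cap A = \emptyset)\}$ and for $n \in \omega$ set $E_n = \{ p \in T \mid |p| \geq n \}$. Each $E_n$ is dense because $T$ is pruned, and each $D_A$ is dense because, given $p \in T$ whose last block has maximum $N$, the set $A \cup \{0, \dots, N\}$ again lies in $\I(\A)$, so there is $t \in X$ disjoint from it, and then $p \ha \la t \ra \in D_A$ extends $p$.

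Since $|\D| + \aleph_0 = |\A| < \cov(\M)$, the standard reformulation of $\cov(\M)$ (Martin's axiom for Cohen forcing below $\cov(\M)$, equivalently Baire category for the perfect Polish space $[T]$) yields a filter $G$ on $T$ meeting every $D_A$ and every $E_n$. Then $b = \bigcup G$ is an infinite branch and $\B = \{ b(i) \mid i \in \omega \}$ is a block sequence with $\B \sub X$; moreover, for any $A' \in \I(\A)$ we may pick $A \in \D$ with $A' \sub A$, and since $G$ meets $D_A$ some $b(i)$ is disjoint from $A$, hence from $A'$, so $\B \in \left(\I(\A)^{<\omega}\right)^+$. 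Applying the hypothesis to $\B$ then finishes the proof.

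The argument is mostly routine, and the only point that genuinely needs attention is a cardinality subtlety in the previous paragraph: $\I(\A)$ itself has size $\cc$ (it contains all subsets of any fixed member of $\A$), so one cannot index the dense sets by $\I(\A)$; one must use instead the cofinal subfamily $\D$ of size $|\A|$, which is the hypothesis's real role. Everything else — that $T$ is atomless and pruned, that the $D_A$ and $E_n$ are dense, and that meeting them produces a positive block subsequence — is direct bookkeeping from the definition of $\left(\I(\A)^{<\omega}\right)^+$.
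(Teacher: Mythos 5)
Your proposal is correct and takes essentially the same approach as the paper: both build a countable, atomless poset of finite block sequences drawn from $X$ (your tree $T$ is the same as the paper's $\mathbb{P}(X)$), use $|\mathcal{A}| < \cov(\mathcal{M})$ to meet the requisite dense sets and extract a block sequence $\mathcal{B} \subseteq X$ that remains positive, and then apply block Shelah-Stepr\=ans-ness. The paper's write-up is more terse (it simply invokes "since $|\mathcal{A}| < \cov(\mathcal{M})$ we can find $g$\dots"), whereas you spell out the dense sets and the cofinal family $\mathcal{D}$ explicitly; the content is identical.
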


\begin{proof}
Let $\mathcal{A}$ be a  \textsf{MAD}
family of size less than \textsf{cov}$\left(  \mathcal{M}\right)$ that is Shelah-Stepr\={a}ns for block sequences. Letting
$X=\left\{  s_{n}\mid n\in\omega\right\}  \in\left(  \mathcal{I}\left(
\mathcal{A}\right)  ^{<\omega}\right)  ^{+},$ we must show that there is
$B\in\mathcal{I}\left(  \mathcal{A}\right)  $ such that $B$ contains
infinitely many elements of $X.$ We define the forcing notion $\mathbb{P}%
\left(  X\right)  $ as the set of all $p$ with the following properties:
\begin{enumerate}
\item There is $n_{p}\in\omega$ such that $p:n_{p}\longrightarrow X.$

\item If $i<j<n_{p}$ then $\max\left(  p\left(  i\right)  \right)  <\min\left(
p\left(  j\right)  \right)  .$
\end{enumerate}
We order $\mathbb{P}\left(  X\right)  $ by inclusion. Since $\mathbb{P}\left(
X\right)  $ is countable, it is forcing equivalent to Cohen forcing. Note that
$\mathbb{P}\left(  X\right)  $ adds a block sequence contained in $X.$
Furthermore, since $\left\vert \mathcal{A}\right\vert <$ \textsf{cov}$\left(
\mathcal{M}\right)  ,$ we can find $g:\omega\longrightarrow X$ such that
$g\left[  \omega\right]  $ is a block sequence and for every $B\in
\mathcal{I}\left(  \mathcal{A}\right)  $ there is $n\in\omega$ such that
$g\left(  n\right)  \cap B=\emptyset.$ Since $\mathcal{A}$ is
Shelah-Stepr\={a}ns for block sequences, we conclude that there is
$B\in\mathcal{I}\left(  \mathcal{A}\right)  $ such that $B$ contains
infinitely many elements of $X.$
\end{proof}

It is easy to see that \textsf{MAD}   
families that are $\omega$-Shelah-Stepr\={a}ns for block sequences are tight, so in particular, they are Cohen-indestructible. We thus
obtain:

\begin{corollary}
In the Cohen model, every  \textsf{MAD}
family that is $\omega$-Shelah-Stepr\={a}ns for block sequences is Shelah-Stepr\={a}ns.
\end{corollary}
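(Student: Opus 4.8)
The corollary combines the previous lemma with the observation that $\omega$-Shelah-Stepr\={a}ns for block sequences implies tightness (hence Cohen-indestructibility), plus the fact that in the Cohen model every \textsf{MAD} family has size $\omega_1 < \mathsf{cov}(\M) = \cc$.

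The plan is to argue as follows. Work in the Cohen model, i.e., the extension obtained by adding $\kappa \geq \omega_2$ Cohen reals to a model of \textsf{CH}, so that $\mathsf{cov}(\M) = \cc = \omega_2$ while $\omega_1 < \cc$. Let $\A$ be a \textsf{MAD} family that is $\omega$-Shelah-Stepr\={a}ns for block sequences. First I would show that $\A$ is tight: given $\{X_n \mid n \in \omega\} \subseteq \I(\A)^+$, I want to produce $A \in \I(\A)$ meeting every $X_n$. Since each $X_n \in \I(\A)^+$, the family $[X_n]^{<\omega} \setminus \{\emptyset\}$ of finite subsets of $X_n$ lies in $(\I(\A)^{<\omega})^+$ — indeed any $B \in \I(\A)$ misses $X_n$ on an infinite set, hence misses some finite subset. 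Moreover, because $X_n$ is infinite, one can thin out $[X_n]^{<\omega}$ to a block sequence $\B_n$ still in $(\I(\A)^{<\omega})^+$ (at stage $k$ pick a finite subset of $X_n$ past $\max$ of what was chosen so far — possible as $X_n$ is infinite and any $B \in \I(\A)$ fails to cover a cofinal piece of $X_n$). Applying the $\omega$-Shelah-Stepr\={a}ns for block sequences property to $\langle \B_n \rangle_{n \in \omega}$ yields $C \in \I(\A)$ with infinitely many elements of each $\B_n$ contained in $C$; in particular $C$ meets $X_n$ for every $n$. (One should double-check whether tightness is already recorded in the excerpt via the chain ``$\omega$-SS for block sequences $\Rightarrow$ SS for block sequences'' together with Lemma~\ref{SScharacterization} and the remark that Shelah-Stepr\={a}ns implies tight — the text states ``every Shelah-Stepr\={a}ns \textsf{MAD} family is tight'' and just before the corollary asserts ``\textsf{MAD} families that are $\omega$-Shelah-Stepr\={a}ns for block sequences are tight'', so this step may be cited rather than reproved.)

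Next, since tight \textsf{MAD} families are Cohen-indestructible (as recalled in the excerpt), a tight \textsf{MAD} family in the Cohen model must already appear in an intermediate model after adding only countably many Cohen reals — more to the point, every \textsf{MAD} family in the Cohen model has cardinality $\omega_1$: a \textsf{MAD} family of size $\cc$ reading all its members from a subextension by $< \cc$ Cohen reals would be destroyed by the remaining Cohen reals, contradicting Cohen-indestructibility, so $|\A| \leq \omega_1 < \omega_2 = \mathsf{cov}(\M)$. (Alternatively one simply invokes the standard fact that $\aa = \omega_1$ in the Cohen model; but the argument above is self-contained from what is in the paper.) Therefore $|\A| < \mathsf{cov}(\M)$.

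Finally, apply the preceding lemma: $\A$ is a \textsf{MAD} family with $|\A| < \mathsf{cov}(\M)$, and $\A$ is Shelah-Stepr\={a}ns for block sequences (being $\omega$-Shelah-Stepr\={a}ns for block sequences, which implies Shelah-Stepr\={a}ns for block sequences as noted right after Definition~\ref{SSblockdef}), so the lemma gives that $\A$ is Shelah-Stepr\={a}ns, as desired. The only point requiring any care is the bound $|\A| < \mathsf{cov}(\M)$ in the Cohen model; everything else is a direct chain of cited implications. Since this deduction is short, I would write it out in two or three sentences rather than as a displayed argument.

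\begin{proof}
Work in the Cohen model, so that $\mathsf{cov}(\M) = \cc$ while $\aa = \omega_1 < \cc$. Let $\A$ be a \textsf{MAD} family that is $\omega$-Shelah-Stepr\={a}ns for block sequences. As noted in the text, $\A$ is then tight, hence Cohen-indestructible, and so $|\A| = \omega_1 < \cc = \mathsf{cov}(\M)$. Moreover $\A$ is in particular Shelah-Stepr\={a}ns for block sequences. By the previous lemma, $\A$ is Shelah-Stepr\={a}ns.
\end{proof}
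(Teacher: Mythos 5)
Your formal proof is correct and matches the paper's intended argument exactly: $\omega$-Shelah-Stepr\={a}ns for block sequences implies tight (as the paper asserts just before the corollary), tight implies Cohen-indestructible, Cohen-indestructible \textsf{MAD} families in the Cohen model have size $\omega_1 < \cc = \Cov{M}$, and since $\omega$-Shelah-Stepr\={a}ns for block sequences trivially implies Shelah-Stepr\={a}ns for block sequences, the preceding lemma finishes the job. One minor caveat about your informal preamble: the claim ``every \textsf{MAD} family in the Cohen model has cardinality $\omega_1$'' is false as literally stated (there are always \textsf{MAD} families of size $\cc$), and a tight family of size $\omega_1$ certainly cannot appear after only countably many Cohen reals; however your final displayed proof correctly restricts attention to the tight/Cohen-indestructible case and relies only on the fact the paper itself uses without proof, so none of this affects the validity of what you hand in.
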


We will see now that the conclusion of the corollary is false
under the Continuum Hypothesis (Theorem~\ref{PMADZ-generic}).

Given $n\in\omega,$ let $R_{n}=[2^{n},2^{n+1})$ and for every $A\subseteq
\omega$ let $\varphi_{n}\left(  A\right)  =\frac{\left\vert A\cap
R_{n}\right\vert }{2^{n}}$. We also define the function $\varphi_{\max}:\left[
\omega\right]  ^{<\omega}\mathfrak{\longrightarrow}$ $\mathbb{Q}$ where
$\varphi_{\max}\left(  A\right)  =\max\left\{  \varphi_{n}\left(  A\right)  \mid
n\in\omega\right\}  .$

\begin{lemma}
Let $X=\left\{  s_{n}\mid n\in\omega\right\}  $ be a block sequence and
$\mathcal{A}$ a countable \textsf{AD }family such that $\mathcal{A}%
\subseteq\mathcal{Z}.$ If there is $m>0$ such that $\frac{1}{m}\leq
\varphi_{\max}\left(  s_{n}\right)  $ for every $n\in\omega,$ then there is
$B\in\mathcal{A}^{\perp}\cap\mathcal{Z}$ such that $B\cap s_{n}\neq\emptyset$
for every $n\in\omega.$
\end{lemma}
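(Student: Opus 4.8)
The plan is to build $B$ by choosing, for each $n$, a single point $b_n\in s_n$ and setting $B=\{b_n\mid n\in\omega\}$. The selection must be done so that $B$ is almost disjoint from $\mathcal A$ \emph{and} has density zero; these two demands compete, and the whole point is that the hypothesis $\frac1m\le\varphi_{\max}(s_n)$ makes the density-zero demand almost automatic, as long as we always choose $b_n$ inside the ``heavy slice'' of $s_n$.

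First record two routine facts. For $A\in\mathcal Z$ we have $\varphi_k(A)\to0$ (since $|A\cap R_k|\le|A\cap 2^{k+1}|$, so $\varphi_k(A)\le 2\,|A\cap2^{k+1}|/2^{k+1}\to0$), and conversely $\varphi_k(A)\to0$ implies $A\in\mathcal Z$; hence $A\in\mathcal Z$ iff $\varphi_k(A)\to0$. For each $n$ fix $k(n)$ with $\varphi_{k(n)}(s_n)=\varphi_{\max}(s_n)$, so that $|s_n\cap R_{k(n)}|\ge 2^{k(n)}/m$, and in particular $s_n\cap R_{k(n)}\neq\emptyset$. Since the $s_n$ are pairwise disjoint, for a fixed $k$ the sets $s_n\cap R_k$ over $\{n\mid k(n)=k\}$ are pairwise disjoint subsets of $R_k$, each of size $\ge 2^k/m$; hence at most $m$ indices $n$ can satisfy $k(n)=k$, and in particular $k(n)\to\infty$.

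Next comes the diagonalization for almost-disjointness. Enumerate $\mathcal A=\{A_j\mid j\in\omega\}$. Since each $A_j\in\mathcal Z$, pick a strictly increasing sequence $K_0<K_1<\cdots$ with $K_j\to\infty$ such that $\varphi_k(A_j)<\frac1{m2^{j+2}}$ for all $k\ge K_j$. Let $N=\{n\mid k(n)<K_0\}$, which is finite because $k(n)\to\infty$. For $n\notin N$ let $j(n)$ be the largest $j$ with $K_j\le k(n)$ (well-defined since $K_0\le k(n)$), so $j(n)\to\infty$. For $i\le j(n)$ we then have $k(n)\ge K_i$, hence $\varphi_{k(n)}(A_i)<\frac1{m2^{i+2}}$, and summing yields
\[
\bigl|(A_0\cup\dots\cup A_{j(n)})\cap R_{k(n)}\bigr|<2^{k(n)}\sum_{i\le j(n)}\frac1{m2^{i+2}}<\frac{2^{k(n)}}{2m}<\bigl|s_n\cap R_{k(n)}\bigr|,
\]
so we may choose $b_n\in(s_n\cap R_{k(n)})\setminus(A_0\cup\dots\cup A_{j(n)})$. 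For $n\in N$ pick $b_n\in s_n$ arbitrarily, and set $B=\{b_n\mid n\in\omega\}$; the $b_n$ are distinct since the $s_n$ are disjoint.

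Finally verify the three properties. Clearly $b_n\in s_n$, so $B$ meets every $s_n$. For almost-disjointness: if $b_n\in A_j$ with $n\notin N$ then $j>j(n)$ by the choice of $b_n$; since $j(n)\to\infty$ only finitely many such $n$ exist, and $N$ is finite, so $B\cap A_j$ is finite, i.e.\ $B\in\mathcal A^{\perp}$. For density zero, write $B=B_0\cup B_1$ with $B_0=\{b_n\mid n\in N\}$ finite and $B_1=\{b_n\mid n\notin N\}$; each $b_n\in B_1$ lies in $R_{k(n)}$, so $|B_1\cap R_k|$ is at most the number of $n\notin N$ with $k(n)=k$, which is $\le m$; hence $\varphi_k(B_1)\le m/2^k\to0$ and $B_1\in\mathcal Z$, and since $\mathcal Z$ is an ideal containing all finite sets, $B=B_0\cup B_1\in\mathcal Z$. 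The only step with real content is the uniform bound ``at most $m$ of the blocks meet any $R_k$'', which is exactly what simultaneously forces $k(n)\to\infty$ and $B\in\mathcal Z$; everything else is bookkeeping.
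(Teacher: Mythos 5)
Your proof is correct and follows essentially the same route as the paper's: choose one point $b_n$ from each $s_n$ inside the heavy slice $R_{k(n)}$, diagonalize against an enumeration of $\mathcal A$ so that $b_n$ avoids an initial segment of $\mathcal A$ growing to infinity, and conclude $B\in\mathcal Z$ from the uniform bound $|B\cap R_k|\le m$. The paper packages the diagonalization with a single increasing function $f$ against the finite unions $B_n=A_0\cup\dots\cup A_n$ using the crude bound $\varphi_i(B_n)<1/m$, whereas you use separate thresholds $K_j$ with a geometric series bound $\varphi_k(A_j)<1/(m2^{j+2})$; these are just two ways of doing the same bookkeeping.
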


\begin{proof}
For every $n\in\omega,$ we choose $l_{n}\in\omega$ such that $\frac{1}{m}%
\leq\varphi_{l_n}\left(  s_{n}\right)  .$ Since $X$ is pairwise disjoint, for
every $l\in\omega$ the set $\left\{  n\mid l_{n}=l\right\}  $ has size at most
$m.$ Let $\mathcal{A}=\left\{  A_{n}\mid n\in\omega\right\}  $ and define
$B_{n}=A_{0}\cup...\cup A_{n}.$ Fix an increasing function $f:\omega
\longrightarrow\omega$ such that for every $n,i\in\omega,$ if $f\left(
n\right)  \leq i$ then $\varphi_{i}\left(  B_{n}\right)  <\frac{1}{m}.$ We can
then find $B=\left\{  y_{n}\mid n\in\omega\right\}  $ such that for every
$n\in\omega,$ the following holds:

\begin{enumerate}
\item $y_{n}\in s_{n}\cap R_{l_{n}}.$

\item If $f\left(  k\right)  \leq l_n$ then $y_{n}\notin
B_{k}.$
\end{enumerate}
It is easy to see that $B\in\mathcal{A}^{\perp}$ and $B\cap s_{n}\neq
\emptyset$ for every $n\in\omega.$ Finally, $B\in\mathcal{Z}$ since
$\left\vert B\cap R_{l}\right\vert \leq m$ for every $l\in\omega.$
\end{proof}

We conclude:

\begin{lemma}
Let $\overline{X}=\left\{  X_{n}\mid n\in\omega\right\}  $ be a countable
collection of block sequences and $\mathcal{A}\in\mathbb{P}_{\mathsf{MAD}%
}\left(  \mathcal{Z}\right)  .$ If $\mathcal{A}\Vdash``\overline{X}%
\subseteq(\mathcal{I(\dot{A}}_{gen}(\mathcal{Z)})^{<\omega})^{+}%
\textquotedblright$ then there is $\mathcal{B\in}$ $\mathbb{P}_{\mathsf{MAD}%
}\left(  \mathcal{Z}\right)  $ such that $\mathcal{A\subseteq B}$ and there is
$B\in\mathcal{I}\left(  \mathcal{B}\right)  $ such that $B$ contains
infinitely many elements of each $X_{n}.$\ \ 
\end{lemma}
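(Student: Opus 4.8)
The plan is to reduce the statement to the construction of a single new set: it suffices to produce $B\in\mathcal{A}^{\perp}\cap\mathcal{Z}$ which contains infinitely many elements of every $X_n$, since then $\mathcal{B}=\mathcal{A}\cup\{B\}\in\mathbb{P}_{\mathsf{MAD}}(\mathcal{Z})$ is an extension of $\mathcal{A}$ and $B\in\mathcal{I}(\mathcal{B})$ is as required. Write $\mathcal{A}=\{A_j\mid j\in\omega\}$ and $X_n=\{s^n_m\mid m\in\omega\}$ with $s^n_m<s^n_{m+1}$ for all $m$.

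The key step, where the hypothesis and the previous lemma get used, will be the following observation: for every $n\in\omega$, every $A\in\mathcal{I}(\mathcal{A})$, and every $\varepsilon>0$, the set
\[
\bigl\{\,m\in\omega\ \big|\ s^n_m\cap A=\emptyset\ \text{and}\ \varphi_{\max}(s^n_m)<\varepsilon\,\bigr\}
\]
is infinite. To prove this, first note that $X_n^A:=\{s\in X_n\mid s\cap A=\emptyset\}$ is again forced by $\mathcal{A}$ to lie in $(\mathcal{I}(\dot{\mathcal{A}}_{gen}(\mathcal{Z}))^{<\omega})^{+}$: indeed $\mathcal{A}\Vdash A\in\mathcal{I}(\dot{\mathcal{A}}_{gen}(\mathcal{Z}))$, so given any $C$ in this ideal, also $C\cup A$ is in it, whence some $s\in X_n$ satisfies $s\cap(C\cup A)=\emptyset$, i.e. $s\in X_n^A$ and $s\cap C=\emptyset$. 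Now suppose toward a contradiction that only finitely many $s\in X_n^A$ satisfy $\varphi_{\max}(s)<\varepsilon$. Deleting those finitely many sets from $X_n^A$ leaves a block sequence $Y$ which (since removing finitely many sets from a member of $(\mathcal{J}^{<\omega})^{+}$ yields a member, a fact applied inside the extension) is still forced positive, and every $s\in Y$ satisfies $\varphi_{\max}(s)\ge\varepsilon\ge 1/m_0$ where $m_0=\lceil 1/\varepsilon\rceil$. Applying the previous lemma to the countable \textsf{AD} family $\mathcal{A}\subseteq\mathcal{Z}$ and the block sequence $Y$, we obtain $D\in\mathcal{A}^{\perp}\cap\mathcal{Z}$ meeting every element of $Y$. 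Then $\mathcal{C}=\mathcal{A}\cup\{D\}\in\mathbb{P}_{\mathsf{MAD}}(\mathcal{Z})$ extends $\mathcal{A}$ and forces $D\in\mathcal{I}(\dot{\mathcal{A}}_{gen}(\mathcal{Z}))$ to meet every element of $Y$, so $\mathcal{C}$ forces $Y\notin(\mathcal{I}(\dot{\mathcal{A}}_{gen}(\mathcal{Z}))^{<\omega})^{+}$, contradicting that $\mathcal{A}$ (hence $\mathcal{C}$) forces $Y$ positive.

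Then I would fix a surjection $\omega\to\omega$, $i\mapsto n_i$, attaining every value infinitely often, and recursively build finite sets $\sigma_i=s^{n_i}_{m_i}\in X_{n_i}$ as follows. Writing $q_{-1}=0$ and, for $i\ge 0$, $q_i$ for the largest $\ell$ with $\sigma_i\cap R_\ell\ne\emptyset$, at stage $i$ I use the observation with $A=A_0\cup\dots\cup A_i$ and $\varepsilon=2^{-i}$ to choose $m_i$ so large that $\sigma_i\cap(A_0\cup\dots\cup A_i)=\emptyset$, $\varphi_{\max}(\sigma_i)<2^{-i}$, $\min(\sigma_i)\ge 2^{q_{i-1}+2}$, and $m_i>m_{i'}$ for every $i'<i$ with $n_{i'}=n_i$; this is possible because infinitely many valid $m$ exist and $\min(s^n_m)\to\infty$ in $m$. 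Set $B=\bigcup_i\sigma_i$. The requirement $\min(\sigma_i)\ge 2^{q_{i-1}+2}$ forces the level intervals $[\,\min\{\ell:\sigma_i\cap R_\ell\ne\emptyset\},q_i\,]$ to be pairwise disjoint and increasing, so each $R_\ell$ meets at most one $\sigma_i$, and for that $i$ we get $\varphi_\ell(B)=\varphi_\ell(\sigma_i)\le\varphi_{\max}(\sigma_i)<2^{-i}$, with $i\to\infty$ as $\ell\to\infty$; hence $\varphi_\ell(B)\to0$ and $B\in\mathcal{Z}$. By the first condition, $B\cap A_j\subseteq\bigcup_{i<j}\sigma_i$ is finite for every $j$, so $B\in\mathcal{A}^{\perp}$. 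Finally, for each $n$ the equation $n_i=n$ has infinitely many solutions $i$, and the corresponding $\sigma_i=s^n_{m_i}$ are pairwise distinct elements of $X_n$ contained in $B$; so $\mathcal{B}=\mathcal{A}\cup\{B\}$ and $B$ witness the lemma.

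The main obstacle is the observation in the second paragraph. One cannot simply discard the $\varphi_{\max}$-large elements of $X_n$ at the outset, because it is not clear the resulting thin subsequence would be forced positive, and one also needs the selected sets disjoint from prescribed members of $\mathcal{I}(\mathcal{A})$. The device of first passing to the forced-positive block sequence $X_n^A$ and only then invoking the previous lemma to rule out a $\varphi_{\max}$-thick tail handles both requirements at once. Once that is in hand, the recursion is routine, the only bookkeeping being the separation of the level intervals of the $\sigma_i$, which is exactly what keeps $B$ in $\mathcal{Z}$.
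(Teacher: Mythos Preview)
Your proof is correct and follows essentially the same approach as the paper. The key observation (that within each $X_n^A$ there are infinitely many blocks with arbitrarily small $\varphi_{\max}$, proved by contradiction via the previous lemma) is exactly the paper's claim, phrased for a general $A\in\mathcal{I}(\mathcal{A})$ rather than the specific $B_k$; your derivation of the contradiction via the forced positivity of the thinned $Y$ is a harmless reformulation of the paper's argument that $B\cup B_k\cup\bigcup Y$ kills $X_n$ directly. Your explicit diagonalization with the level-separation bookkeeping simply spells out what the paper calls ``an easy diagonalization argument''.
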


\begin{proof}
Let $\mathcal{A}=\left\{  A_{n}\mid n\in\omega\right\}  $ and define
$B_{n}=A_{0}\cup...\cup A_{n}.$ Given $n,k\in\omega,$ we define
$X_{n}\left(  k\right)  =\left\{  s\in X_{n}\mid s\cap B_{k}=\emptyset
\right\} $  and note that each $X_{n}\left(  k\right)  $ is infinite. We claim
that for every $n,m,k\in\omega,$ $m>0$, there are infinitely many $s\in
X_{n}\left(  k\right)  $ such that $\varphi_{\max}\left(  s\right)  <\frac
{1}{m}.$ Assuming this is not the case,  there are $n,m,k\in\omega,$ $m>0$
such that $\frac{1}{m}\leq\varphi_{\max}\left(  s\right)  $ for almost all
$s\in X_{n}\left(  k\right)  .$ Let $Y=\left\{  s\in X_{n}\left(  k\right)
\mid\varphi_{\max}\left(  s\right)  <\frac{1}{m}\right\}  $ and $Z=X_{n}\left(
k\right)  \setminus Y.$ By the previous lemma, there is $B\in\mathcal{A}%
^{\perp}\cap\mathcal{Z}$ such that $B$ intersects every element of $Z.$ It
follows that $B\cup B_{k}\cup{\textstyle\bigcup}
Y$ intersects every element of $X_n.$ Therefore, $\mathcal{A\cup}\left\{
B\right\}  $ is an extension of $\mathcal{A}$ forcing that $X_{n}$ is not
positive, which is a contradiction.

Thus we know that for every $n,m,k\in\omega,$ $m>0$ there are
infinitely many $s\in X_{n}\left(  k\right)  $ such that $\varphi_{\max}\left(
s\right)  <\frac{1}{m}.$ By an easy diagonalization argument, we can find
$B\in\mathcal{A}^{\perp}\cap\mathcal{Z}$ such that $B$ contains infinitely
many elements of each $X_{n}.$
\end{proof}

Let $Z\left(  n,m\right)  =\left\{
s\subseteq R_{m}\mid\varphi_{m}\left(  R_{m}\setminus s\right)  <\frac
{1}{2^{n+1}}\right\}  $ and define $X_{n}={\textstyle\bigcup\limits_{m\in\omega}}
Z\left(  n,m\right)  .$ It is easy to see that $X_{n}\in\left(  \mathcal{I}%
\left(  \mathcal{Z}\right)  ^{<\omega}\right)  ^{+}.$

\begin{lemma}
Let $\mathcal{A}$ $\mathcal{\in}$ $\mathbb{P}_{\mathsf{MAD}}\left(
\mathcal{Z}\right)  $ and for every $n\in\omega,$ let $Y_{n}\in\left[
X_{n}\right]  ^{<\omega}.$ There is $B\in\mathcal{A}^{\perp}\cap\mathcal{Z}$
such that $B\cap s\neq\emptyset$ for every $s\in Y_{n}$ and $n\in\omega.$
\end{lemma}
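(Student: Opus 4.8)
The plan is to construct $B$ directly as $B=\bigcup_{m\in\omega}T_m$ with $T_m\subseteq R_m$, choosing each $T_m$ so that (a) $T_m$ meets every $s\in\bigcup_nY_n$ with $s\subseteq R_m$, (b) $|T_m|/2^m\to 0$, which forces $B\in\mathcal{Z}$, and (c) $T_m$ avoids a long enough initial union of $\mathcal{A}$, which gives $B\in\mathcal{A}^{\perp}$. For (c), enumerate $\mathcal{A}=\{A_k\mid k\in\omega\}$ and put $B^{(k)}=A_0\cup\dots\cup A_k$; since each $A_k\in\mathcal{Z}$, also $B^{(k)}\in\mathcal{Z}$, so $\varphi_m(B^{(k)})\to 0$ as $m\to\infty$. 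Fixing an increasing sequence $\langle N_k\rangle$ with $\varphi_m(B^{(k)})<\tfrac12$ for $m\ge N_k$ and letting $j(m)=\max\{k\mid N_k\le m\}$, we get a function $j$ with $j(m)\to\infty$ and, for all large $m$, $|R_m\setminus B^{(j(m))}|>2^{m-1}$. If every $T_m$ (past a threshold) satisfies $T_m\cap B^{(j(m))}=\emptyset$, then for each fixed $k$ we have $T_m\cap A_k=\emptyset$ once $j(m)\ge k$, hence $B\cap A_k$ is finite.

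The combinatorial core is the choice of $T_m$ for large $m$. First note that every $s\in X_n$ lies inside a unique $R_m$, and if $s\in Y_n$ and $s\subseteq R_m$ then $s\in Z(n,m)$, so $|R_m\setminus s|<2^m/2^{n+1}=2^{m-n-1}$; in particular $|s|>2^{m-1}$, so $s$ automatically meets $R_m\setminus B^{(j(m))}$ (their sizes sum to more than $2^m=|R_m|$). Second, since each $Y_n$ is \emph{finite}, $\sum_{n\le k}|Y_n|$ is a fixed finite quantity for every $k$, so we may set $k(m)=\max\bigl(\{k\le m-2\mid \sum_{n\le k}|Y_n|\le 2^{\lceil m/2\rceil}\}\cup\{-1\}\bigr)$ and observe that $k(m)\to\infty$. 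For large $m$ set the ``deficiency cut-off'' $d(m)=2^{m-2-k(m)}$ (so $1\le d(m)\le 2^{m-2}$), and split the blocks $s\subseteq R_m$ lying in $\bigcup_nY_n$ into those with $|R_m\setminus s|\ge d(m)$ and those with $|R_m\setminus s|<d(m)$. A block $s$ of the first kind lies in some $Z(n,m)$ with $2^{m-n-1}>d(m)$, forcing $n\le k(m)$, so there are at most $\sum_{n\le k(m)}|Y_n|\le 2^{\lceil m/2\rceil}$ of them; to each assign one point of $s\cap(R_m\setminus B^{(j(m))})$, which is nonempty by the first observation. Every block $s$ of the second kind is met by \emph{any} single set $U_m\subseteq R_m\setminus B^{(j(m))}$ with $|U_m|=d(m)$, because then $|U_m|>|R_m\setminus s|$; such a $U_m$ exists since $d(m)\le 2^{m-2}<|R_m\setminus B^{(j(m))}|$. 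Let $T_m$ be the union of $U_m$ with the finitely many chosen points. Then $T_m\cap B^{(j(m))}=\emptyset$, $T_m$ meets every $s\subseteq R_m$ in $\bigcup_nY_n$, and $|T_m|\le d(m)+2^{\lceil m/2\rceil}=2^{m-2-k(m)}+2^{\lceil m/2\rceil}$, so $\varphi_m(B)=|T_m|/2^m\le 2^{-2-k(m)}+2^{\lceil m/2\rceil-m}\to 0$.

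For the finitely many exceptional small $m$ (those with $k(m)\le 1$ or where the choice of $j$ has not yet stabilized) simply let $T_m$ consist of one point chosen from each of the finitely many blocks $s\subseteq R_m$ occurring in $\bigcup_nY_n$; this alters $B$ only on a bounded set, so it affects neither $B\in\mathcal{Z}$ nor almost disjointness from $\mathcal{A}$. Putting $B=\bigcup_mT_m$ then gives all three properties: $B$ meets every $s$ in every $Y_n$, $B\in\mathcal{Z}$ since $\varphi_m(B)\to 0$, and $B\in\mathcal{A}^{\perp}$ by the choice of $j$. I expect the main difficulty to be precisely the bookkeeping of the second paragraph: one must choose the deficiency cut-off $d(m)$ \emph{adaptively} (a fixed polynomial or exponential cut-off fails because the individual cardinalities $|Y_n|$ are unbounded), and it is the finiteness of each single $Y_n$ — hence $k(m)\to\infty$ — that simultaneously makes both error terms $2^{m-2-k(m)}$ and $2^{\lceil m/2\rceil}$ of size $o(2^m)$ while still leaving $R_m\setminus B^{(j(m))}$ large enough (more than half of $R_m$) to host $U_m$ together with all the individually chosen points.
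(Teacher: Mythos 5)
Your proof is correct, but it takes a genuinely different route from the paper's. Both constructions build $B=\bigcup_m T_m$ with $T_m\subseteq R_m$ disjoint from an initial union $A_0\cup\dots\cup A_{k}$ of $\mathcal{A}$, where the index $k$ grows with $m$ and $\varphi_m(T_m)\to 0$. The difference lies in how one ensures that $T_m$ hits every $s\in\bigcup_n Y_n$ sitting inside $R_m$. The paper's proof first fixes an increasing $f:\omega\to\omega$ with two properties: every $s\in Y_n$ lies in some $R_j$ with $j<f(n)$ (possible because each $Y_n$ is finite), and $\varphi_i(A_0\cup\dots\cup A_n)<2^{-(n+2)}$ for $i>f(n)$. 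Then for $m\in[f(k),f(k+1))$ it takes a single set $t_m\subseteq R_m\setminus(A_0\cup\dots\cup A_k)$ of cardinality exactly $2^{m-k-2}$. The key observation, left to the reader, is that any $s\in Y_n$ inside such an $R_m$ forces $n>k$ by the first property of $f$, hence $|R_m\setminus s|<2^{m-n-1}\le 2^{m-k-2}=|t_m|$, so $t_m$ automatically meets every such $s$ with no case distinction at all. You instead let $k(m)$ be governed by the cumulative count $\sum_{n\le k}|Y_n|$, which does not prevent blocks from small-index $Y_n$ from landing in $R_m$; you therefore split into large-deficiency blocks (bounded in number by $\sum_{n\le k(m)}|Y_n|\le 2^{\lceil m/2\rceil}$, handled by individually chosen points) and small-deficiency blocks (handled by a single $U_m$ of size $d(m)$). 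Both exploit the finiteness of each $Y_n$, but differently: the paper to bound, for each $n$, the indices $j$ with $Y_n\cap\mathcal{P}(R_j)\ne\emptyset$; you to keep $\sum_{n\le k}|Y_n|$ finite so that $k(m)\to\infty$. The paper's choice of $f$ buys cleaner bookkeeping, since it filters large-deficiency blocks out of $R_m$ entirely instead of treating them separately, but your adaptive threshold $d(m)$ achieves the same end.
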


\begin{proof}
Let $\mathcal{A}=\left\{  A_{n}\mid n\in\omega\right\}  $ and $B_{n}=A_{0}%
\cup...\cup A_{n}.$ We first find an increasing function $f:\omega
\longrightarrow\omega$ such that for every $n\in\omega,$ the following
conditions hold:
\begin{enumerate}
\item $f\left(  n\right)  $ is of the form $2^{m_{n}+1}$ for some $m_{n}.$

\item If $f\left(  n\right)  <i$ then $\varphi_{i}\left(  B_{n}\right)
<\frac{1}{2^{n+2}}.$

\item If $s\in Y_{n}$ and $s\subseteq R_{j},$ then $j<f\left(  n\right)  .$
\end{enumerate}
We now define a sequence $\left\langle t_{n}\right\rangle _{n\in\omega}$ such
that for every $n\in\omega,$ the following conditions hold:
\begin{enumerate}
\item $t_{n}\subseteq R_{n}.$

\item If $n<f\left(  0\right)  $ then $\varphi_{n}\left(  t_{n}\right)
=\frac{1}{2}.$

\item If $f\left(  k\right)  \leq n<f\left(  k+1\right)  $ then $t_{n}\cap
B_{k}=\emptyset.$

\item If $f\left(  k\right)  \leq n<f\left(  k+1\right)  $ then $\varphi
_{n}\left(  t_{n}\right)  =\frac{1}{2^{k+2}}.$
\end{enumerate}
Letting $B={\textstyle\bigcup\limits_{n\in\omega}}
t_{n},$ it is easy to see that $B$ has the desired properties.
\end{proof}

We thus obtain:

\begin{theorem}   \label{PMADZ-generic}
\begin{enumerate}
\item $\mathbb{P}_{\mathsf{MAD}}\left(  \mathcal{Z}\right)  $ forces that
$\mathcal{A}_{gen}\left(  \mathcal{Z}\right)  $ is an $\omega$%
-Shelah-Stepr\={a}ns \textsf{MAD} family for block sequences  that is not Canjar.

\item The Continuum Hypothesis implies that there is  a non Canjar,  $\omega$%
-Shelah-Stepr\={a}ns \textsf{MAD} family for block sequences $\mathcal{A}$
such that $\mathcal{A}\subseteq\mathcal{Z}.$ In particular, $\mathcal{A}$ is
not Shelah-Stepr\={a}ns.\qquad\ \ 
\end{enumerate}
\end{theorem}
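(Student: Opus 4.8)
The plan is to deduce both items from the three extension lemmas proved immediately above: a genericity argument handles item~1, and a transfinite recursion of length $\omega_{1}$ under $\mathsf{CH}$ handles item~2. I would use only that $\mathbb{P}_{\mathsf{MAD}}\left(\mathcal{Z}\right)$ is $\sigma$-closed (hence adds no reals, so every countable collection of block sequences and every $\omega$-sequence of finite subsets of $\omega$ in the extension already lies in the ground model), that it adds a \textsf{MAD} family contained in $\mathcal{Z}$, and that $\mathcal{Z}$ is tall. I also record that, by $\sigma$-closure, for a condition $\mathcal{A}'$ and a countable collection $\overline{X}=\left\{\mathcal{B}_{n}\mid n\in\omega\right\}$ of block sequences, ``$\mathcal{A}'\Vdash\overline{X}\subseteq\left(\mathcal{I}\left(\dot{\mathcal{A}}_{gen}\left(\mathcal{Z}\right)\right)^{<\omega}\right)^{+}$'' is equivalent to: for every countable \textsf{AD} family $\mathcal{C}\subseteq\mathcal{Z}$ with $\mathcal{A}'\subseteq\mathcal{C}$, each $\mathcal{B}_{n}\in\left(\mathcal{I}\left(\mathcal{C}\right)^{<\omega}\right)^{+}$; call this ``$\overline{X}$ is robustly positive over $\mathcal{A}'$''. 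This is exactly the hypothesis of the extension lemma for collections of block sequences.

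For item~1, I would first treat the $\omega$-Shelah-Stepr\={a}ns property for block sequences. Fix a ground-model countable collection $\overline{X}$ of block sequences. The set $D$ of conditions that either force some $\mathcal{B}_{n}$ not to be positive over $\mathcal{I}\left(\dot{\mathcal{A}}_{gen}\left(\mathcal{Z}\right)\right)$, or force that there is a $C\in\mathcal{I}\left(\dot{\mathcal{A}}_{gen}\left(\mathcal{Z}\right)\right)$ as in Definition~\ref{SSblockdef}(2), is dense: below any condition forcing all $\mathcal{B}_{n}$ positive, first strengthen (using $\sigma$-closure) to a condition $\mathcal{A}$ still forcing this, so $\overline{X}$ is robustly positive over $\mathcal{A}$; then the extension lemma yields $\mathcal{B}\supseteq\mathcal{A}$ and $B\in\mathcal{I}\left(\mathcal{B}\right)$ containing infinitely many members of each $\mathcal{B}_{n}$, and $\mathcal{B}\in D$ since it forces $B\in\mathcal{I}\left(\dot{\mathcal{A}}_{gen}\left(\mathcal{Z}\right)\right)$ to be a witness. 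Genericity finishes this half. For non-Canjarness I would use the fixed family $X_{n}={\textstyle\bigcup_{m}}Z\left(n,m\right)\in\left(\mathcal{I}\left(\mathcal{Z}\right)^{<\omega}\right)^{+}$; since $\mathcal{I}\left(\dot{\mathcal{A}}_{gen}\left(\mathcal{Z}\right)\right)\subseteq\mathcal{Z}$ is forced, each $X_{n}$ stays positive over $\mathcal{I}\left(\dot{\mathcal{A}}_{gen}\left(\mathcal{Z}\right)\right)$, so it is enough to show that, for each ground-model sequence $\left\langle Y_{n}\right\rangle$ with $Y_{n}\in\left[X_{n}\right]^{<\omega}$, the set of conditions forcing ${\textstyle\bigcup_{n}}Y_{n}\notin\left(\mathcal{I}\left(\dot{\mathcal{A}}_{gen}\left(\mathcal{Z}\right)\right)^{<\omega}\right)^{+}$ is dense. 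Given such a sequence and any condition $\mathcal{A}$, the last extension lemma gives $B\in\mathcal{A}^{\perp}\cap\mathcal{Z}$ meeting every element of every $Y_{n}$, so $\mathcal{A}\cup\left\{B\right\}\in\mathbb{P}_{\mathsf{MAD}}\left(\mathcal{Z}\right)$ forces $B\in\mathcal{I}\left(\dot{\mathcal{A}}_{gen}\left(\mathcal{Z}\right)\right)$ and that $B$ meets every element of ${\textstyle\bigcup_{n}}Y_{n}$. Together with the fact that $\dot{\mathcal{A}}_{gen}\left(\mathcal{Z}\right)$ is \textsf{MAD} (as $\mathcal{Z}$ is tall), this gives item~1; and the parenthetical ``not Shelah-Stepr\={a}ns'' in item~2 follows from Proposition~\ref{ShelahSteprans-Hurewicz}, since a Shelah-Stepr\={a}ns \textsf{MAD} family is Hurewicz, hence Canjar, so a non-Canjar \textsf{MAD} family cannot be Shelah-Stepr\={a}ns.

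For item~2, assuming $\mathsf{CH}$, I would build an increasing chain $\left\langle\mathcal{A}_{\alpha}\mid\alpha<\omega_{1}\right\rangle$ of countable \textsf{AD} families contained in $\mathcal{Z}$, with unions at countable limit stages, and set $\mathcal{A}={\textstyle\bigcup_{\alpha<\omega_{1}}}\mathcal{A}_{\alpha}$. A bookkeeping device, enumerating in $\omega_{1}$ steps all countable collections $\overline{X}$ of block sequences, all sequences $\left\langle Y_{n}\right\rangle$ with $Y_{n}\in\left[X_{n}\right]^{<\omega}$, and all $E\in\left[\omega\right]^{\omega}$, dictates at each successor stage one of three moves. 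For a collection $\overline{X}=\left\{\mathcal{B}_{n}\right\}$: if $\overline{X}$ is robustly positive over $\mathcal{A}_{\alpha}$, apply the extension lemma and set $\mathcal{A}_{\alpha+1}=\mathcal{A}_{\alpha}\cup\left\{B\right\}$ with $B\in\mathcal{A}_{\alpha}^{\perp}\cap\mathcal{Z}$ containing infinitely many members of each $\mathcal{B}_{n}$; otherwise fix a countable \textsf{AD} $\mathcal{C}^{\ast}\subseteq\mathcal{Z}$ with $\mathcal{A}_{\alpha}\subseteq\mathcal{C}^{\ast}$ and $A^{\ast}\in\mathcal{I}\left(\mathcal{C}^{\ast}\right)$ meeting every member of some $\mathcal{B}_{n}$, and set $\mathcal{A}_{\alpha+1}=\mathcal{C}^{\ast}$. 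For a sequence $\left\langle Y_{n}\right\rangle$: apply the last extension lemma to adjoin $B\in\mathcal{A}_{\alpha}^{\perp}\cap\mathcal{Z}$ meeting every element of every $Y_{n}$. For $E\in\left[\omega\right]^{\omega}$: if no member of $\mathcal{A}_{\alpha}$ meets $E$ infinitely, use tallness of $\mathcal{Z}$ to adjoin an infinite $B\subseteq E$ with $B\in\mathcal{Z}$ (automatically in $\mathcal{A}_{\alpha}^{\perp}$). Then $\mathcal{A}$ is \textsf{MAD} and contained in $\mathcal{Z}$; the $X_{n}$ witness that it is not Canjar by the second kind of move; and, using $\mathcal{I}\left(\mathcal{A}\right)={\textstyle\bigcup_{\alpha}}\mathcal{I}\left(\mathcal{A}_{\alpha}\right)$, any $\overline{X}$ satisfying the hypothesis for $\mathcal{I}\left(\mathcal{A}\right)$ must, at its scheduled stage, have fallen in the first branch (the ``otherwise'' branch puts $A^{\ast}\in\mathcal{I}\left(\mathcal{A}\right)$, contradicting the hypothesis), so the lemma applied there already furnishes the required $C\in\mathcal{I}\left(\mathcal{A}\right)$.

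The hard part will be precisely this last alignment in item~2: the extension lemma for block sequences needs \emph{robust} positivity over $\mathcal{A}_{\alpha}$, whereas the $\omega$-Shelah-Stepr\={a}ns conclusion only presumes positivity over the \emph{final} ideal $\mathcal{I}\left(\mathcal{A}\right)$, and these differ. The case split above is designed to bridge this: whenever robust positivity fails at a stage, the ``otherwise'' branch deliberately destroys positivity over $\mathcal{I}\left(\mathcal{A}\right)$, so that only robustly positive collections survive to the end, and for those the witness was already built at their stage. Everything else --- that countable unions at limits keep the chain a condition, that each $X_{n}$ stays in $\left(\mathcal{I}\left(\mathcal{A}\right)^{<\omega}\right)^{+}$ because $\mathcal{I}\left(\mathcal{A}\right)\subseteq\mathcal{Z}$, and that the $\mathsf{CH}$ bookkeeping exhausts all $\omega_{1}$ tasks --- is routine, the genuine combinatorics (the submeasure $\varphi_{\max}$ and the fine structure of the sets $Z\left(n,m\right)$) being already packaged inside the three preparatory lemmas.
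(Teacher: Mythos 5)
Your proposal is correct and takes essentially the same approach as the paper, which presents the theorem as a direct consequence of the three preceding lemmas: a density argument for item~1 and (under \textsf{CH}) an $\omega_1$-recursion meeting the corresponding $\omega_1$ many ``tasks'' for item~2. Your case split bridging robust positivity at a stage with positivity over the final ideal is exactly the content of the relevant dense set, and your derivation of the non-Shelah-Stepr\={a}ns parenthetical matches the remark the paper makes immediately after the theorem.
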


To see that $\A$ is not Shelah-Stepr\=ans, either use $\A \subseteq \Z$ 
or the fact that $\A$ is not Canjar, and recall 
that every Shelah-Stepr\={a}ns \textsf{MAD} family is Hurewicz (Proposition~\ref{ShelahSteprans-Hurewicz}) and thus Canjar. 
For other properties of this generic \textsf{MAD} family see the earlier Proposition~\ref{PMADZ-generic-prop}.


\begin{center}
$\star\star\star$
\end{center}



\noindent We say that a block sequence $\mathcal{B}=\left\{  s_{n}\mid n\in
\omega\right\}  $ \emph{witnesses that }$\mathcal{A}$ \emph{is not
Shelah-Stepr\={a}ns for block sequences }if $\B \in \left( \I (\A)^{< \omega} \right)^+$ and there is no
$W \in \omoms$ such that $\bigcup_{n \in W} s_n \in \I (\A)$. We will say that $\mathcal{B}=\left\{  s_{n}\mid n\in
\omega\right\}  $ is an \emph{increasing block sequence }if for every
$n\in\omega,$ the set $\left\{  m\mid\left\vert s_{m}\right\vert =n\right\}  $
is finite.

\begin{lemma}
Let $\mathcal{A}$ be a \textsf{MAD }family. If a block sequence $\mathcal{B}%
=\left\{  s_{n}\mid n\in\omega\right\}  $ witnesses that $\mathcal{A}$ is not
Shelah-Stepr\={a}ns for block sequences, then $\mathcal{B}$ is an increasing
block sequence.
\end{lemma}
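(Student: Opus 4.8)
The plan is to prove the contrapositive: if a block sequence $\B = \{ s_n \mid n \in \omega \}$ is \emph{not} increasing, then $\B$ cannot witness that $\A$ is not Shelah-Stepr\={a}ns for block sequences, because one can exhibit $W \in \omoms$ with $\bigcup_{n \in W} s_n \in \I (\A)$. Concretely, suppose there is $N \in \omega$ with $S = \{ m \mid |s_m| = N \}$ infinite; since block sequences consist of non-empty sets, $N \geq 1$. Enumerate $S$ increasingly as $\langle m_k \mid k \in \omega \rangle$ and write each $s_{m_k}$ in increasing order as $\{ a^0_k < \dots < a^{N-1}_k \}$. Because $\B$ is a block sequence, the $s_{m_k}$ are pairwise disjoint, so for each fixed $i < N$ the ``column'' $\{ a^i_k \mid k \in \omega \}$ is an infinite subset of $\omega$.

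The main step is a recursion of finite length $N$ that thins the index set one column at a time, using maximality of $\A$. Put $W_{-1} = \omega$; given an infinite $W_{i-1} \subseteq \omega$ with $0 \leq i < N$, the set $\{ a^i_k \mid k \in W_{i-1} \}$ is infinite, so by maximality of $\A$ some $A^i \in \A$ meets it in an infinite set, and we let $W_i = \{ k \in W_{i-1} \mid a^i_k \in A^i \}$, which is infinite. After $N$ rounds we have an infinite set $W_{N-1}$ together with $A^0, \dots, A^{N-1} \in \A$ such that $a^i_k \in A^i$ for all $k \in W_{N-1}$ and all $i < N$; hence $s_{m_k} \subseteq A^0 \cup \dots \cup A^{N-1}$ for every $k \in W_{N-1}$. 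Setting $W = \{ m_k \mid k \in W_{N-1} \} \in \omoms$, we get $\bigcup_{n \in W} s_n \subseteq A^0 \cup \dots \cup A^{N-1} \in \I (\A)$, contradicting the assumption that $\B$ witnesses that $\A$ is not Shelah-Stepr\={a}ns for block sequences.

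I do not expect a genuine obstacle here; the two points worth attending to are that one must thin the index set $N$ separate times, one per coordinate of the $s_{m_k}$, rather than attempting to treat all coordinates simultaneously, and that the $A^i$ produced along the way need not be distinct --- but a finite union of members of $\A$ still lies in $\I (\A)$, which is precisely what is required. Note also that the positivity hypothesis $\B \in (\I (\A)^{<\omega})^+$ appearing in the definition of ``witnesses'' is not needed for this implication; only the clause asserting the non-existence of a suitable $W$ is contradicted.
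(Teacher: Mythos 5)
Your proof is correct and follows essentially the same approach as the paper, which likewise observes that if $\B$ is not increasing then some $|s_n|=m$ occurs infinitely often, and applies maximality of $\A$ $m$-many times to extract an infinite subsequence covered by a set in $\I(\A)$. You simply spell out the coordinate-by-coordinate thinning that the paper leaves implicit in the phrase ``by applying that $\mathcal{A}$ is maximal $m$-many times.''
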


\begin{proof}
Assume this is not the case. So there is $m\in\omega$ such that the set
$W=\left\{  n\mid\left\vert s_{n}\right\vert =m\right\}  $ is infinite. By
applying that $\mathcal{A}$ is maximal $m$-many times, we can find $W_{0}%
\in\left[  W\right]  ^{\omega}$ and $B\in\mathcal{I}\left(  \mathcal{A}%
\right)  $ such that ${\textstyle\bigcup\limits_{n\in W_{0}}}
s_{n}\subseteq B,$ which is a contradiction.
\end{proof}

We need the following notion:

\begin{definition}
Let $\mathcal{B}=\left\{  s_{n}\mid n\in\omega\right\}  $ be an increasing
block sequence. We define the ideal $\mathcal{J}\left(  \mathcal{B}\right)  $
as the set of all $A\subseteq\omega$ such that $\lim_n \left(
\frac{\left\vert A\cap s_{n}\right\vert }{\left\vert s_{n}\right\vert
}\right)  =0.$
\end{definition}

Note that the density zero ideal has the previous form. Given an increasing block sequence 
$\mathcal{B} =\left\{  s_{n}\mid n\in\omega\right\}  $ and
$X\in\left[  \omega\right]  ^{\omega},$ we define $\mathcal{B}_{X}=\left\{
s_{n}\mid n\in X\right\}  .$ Note that if $\A$ is \textsf{MAD} then $\I (\A)$ is meager
and therefore by Talagrand's Theorem, there is an increasing interval  partition $\mathcal{B} =\left\{  s_{n}\mid n\in\omega\right\}  $
such that there is no $W \in \omoms$ with $\bigcup_{n \in W} s_n \in \I (\A)$. We can now prove the following result:

\begin{proposition}   \label{countableexception}
Let $\mathcal{A}$ be a \textsf{MAD }family and let $\mathcal{B} =\left\{  s_{n}\mid n\in\omega\right\}  $
be such that there is no $W \in \omoms$ with $\bigcup_{n \in W} s_n \in \I (\A)$.
There are $X\in\left[\omega\right]  ^{\omega}$ and $\mathcal{A}_{0}\in\left[  \mathcal{A}\right]
^{\leq\omega}$ such that $\mathcal{A}\setminus\mathcal{A}_{0}\subseteq
\mathcal{J}\left(  \mathcal{B}_{X}\right)  .$
\end{proposition}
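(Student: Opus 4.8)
The plan is a recursive ``greedy peeling'' of $\mathcal{A}$. First note that $\mathcal{B}$ is an increasing block sequence (any block sequence with the stated property is increasing, by the argument of the previous lemma: if $|s_n|=m$ for infinitely many $n$, applying maximality of $\mathcal{A}$ $m$ times would cover infinitely many blocks by $m$ members of $\mathcal{A}$). Thus each $s_n$ is non-empty and $|s_n|\to\infty$; for $A\subseteq\omega$ and $n\in\omega$ I write $d_n(A)=\frac{|A\cap s_n|}{|s_n|}\in[0,1]$, so that $\mathcal{J}(\mathcal{B}_X)=\{A : \limsup_{n\in X}d_n(A)=0\}$ for $X\in[\omega]^{\omega}$. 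The only feature of $\mathcal{A}$ I use is almost disjointness: for every finite $\mathcal{F}\subseteq\mathcal{A}$ there is $N$ such that the sets $A\cap s_n$ ($A\in\mathcal{F}$) are pairwise disjoint for all $n\geq N$, whence $\sum_{A\in\mathcal{F}}d_n(A)\leq 1$ for all such $n$.

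I would build a decreasing sequence $\omega=X_0\supseteq X_1\supseteq\cdots$ of infinite sets, together with distinct $A_0,A_1,\dots\in\mathcal{A}$ and positive reals $\epsilon_i$, as follows. At stage $i$, let $M_i=\sup\{\limsup_{n\in X_i}d_n(A) : A\in\mathcal{A}\setminus\{A_0,\dots,A_{i-1}\}\}$. If $M_i=0$, halt. Otherwise pick $A_i\in\mathcal{A}\setminus\{A_0,\dots,A_{i-1}\}$ with $\limsup_{n\in X_i}d_n(A_i)>M_i/2$, set $\epsilon_i=M_i/4$, and put $X_{i+1}=\{n\in X_i : d_n(A_i)\geq\epsilon_i\}$; since $\epsilon_i$ lies strictly below the $\limsup$, $X_{i+1}$ is infinite.

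The heart of the argument is the bound $\sum_{j\leq i}\epsilon_j\leq 1$ for every $i$: choosing $n\in X_{i+1}$ beyond the threshold $N$ attached to the finite family $\{A_0,\dots,A_i\}$, the traces $A_j\cap s_n$ ($j\leq i$) are pairwise disjoint subsets of $s_n$, and $d_n(A_j)\geq\epsilon_j$ for each $j\leq i$ because $n\in X_{i+1}\subseteq X_{j+1}$; hence $\sum_{j\leq i}\epsilon_j\leq\sum_{j\leq i}d_n(A_j)\leq 1$, so $\sum_i\epsilon_i\leq 1$. Now two cases. If the recursion halts at some stage $i$, take $X=X_i$ and $\mathcal{A}_0=\{A_0,\dots,A_{i-1}\}$; as $M_i=0$, every $A\in\mathcal{A}\setminus\mathcal{A}_0$ has $\limsup_{n\in X}d_n(A)=0$, i.e. $A\in\mathcal{J}(\mathcal{B}_X)$. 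If it never halts, then $\sum_i M_i/4=\sum_i\epsilon_i\leq 1<\infty$, so $M_i\to 0$; let $X$ be a pseudointersection of $\langle X_i:i\in\omega\rangle$ and $\mathcal{A}_0=\{A_i:i\in\omega\}$ (countable). Any $A\in\mathcal{A}\setminus\mathcal{A}_0$ is ``new'' at every stage, so $\limsup_{n\in X}d_n(A)\leq\limsup_{n\in X_i}d_n(A)\leq M_i$ for all $i$ (using $X\subseteq^{\ast}X_i$), and letting $i\to\infty$ gives $\limsup_{n\in X}d_n(A)=0$. In both cases $\mathcal{A}\setminus\mathcal{A}_0\subseteq\mathcal{J}(\mathcal{B}_X)$, as required.

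The step needing the most care is the simultaneous choice entering the definition of $X_{i+1}$ and the bound $\sum_{j\le i}\epsilon_j\le 1$: one must check that at a non-halting stage a genuinely new element $A_i$ of near-maximal $\limsup$ exists, and that some block index $n\in X_{i+1}$ can be found which is at once large enough for the finitely many pairwise intersections $A_j\cap A_{j'}$ ($j,j'\le i$) to have already been passed; the remaining verifications are routine bookkeeping. (The hypothesis on $\mathcal{B}$ is used only to guarantee that $\mathcal{B}$, hence $\mathcal{B}_X$, is an increasing block sequence, so that $\mathcal{J}(\mathcal{B}_X)$ is defined.)
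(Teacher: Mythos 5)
Your proof is correct, and it takes a genuinely different route from the paper's. The paper argues by contradiction: assuming the conclusion fails, it builds a transfinite sequence $\langle A_\alpha, q_\alpha, X_\alpha\rangle_{\alpha<\omega_1}$ of almost disjoint sets, a positive rational lower bound for densities, and $\subseteq^*$-decreasing infinite index sets; then it thins to an uncountable $W$ with a fixed $q$, picks $m > 1/q$ of the $A_\alpha$, and derives a contradiction by pigeonhole inside a single block (since $m$ pairwise disjoint subsets of $s_n$ cannot each occupy more than a $q$-fraction when $mq>1$). Your argument, by contrast, is a direct countable construction: you peel off members of $\mathcal{A}$ of near-maximal density along shrinking index sets, and the crucial observation is the \emph{quantitative} version of the same pigeonhole — the quantities $\epsilon_j$ are pairwise-disjointly realized inside a single block, so $\sum_j\epsilon_j\le 1$, forcing $M_i\to 0$. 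This converts the paper's contradiction-from-uncountability into a positive convergence statement, eliminating the $\omega_1$-machinery entirely and producing $X$ and $\mathcal{A}_0$ explicitly (either at a finite halting stage or as a pseudointersection of an $\omega$-sequence). Both proofs rest on the same combinatorial core (almost disjointness bounds total density within a block by $1$); yours extracts a summable sequence from it rather than a cofinal constant, which is arguably cleaner and more informative. One minor point worth making explicit when writing this up: the definition of $X_{i+1}$ uses $\epsilon_i = M_i/4 < M_i/2 < \limsup_{n\in X_i} d_n(A_i)$, which is what guarantees $X_{i+1}$ is infinite — the factor $1/4$ rather than $1/2$ is what buys you strict inequality against the $\limsup$.
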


\begin{proof}
We argue by contradiction, so assume this is not the case.

By $\mathbb{Q}^{+}$ we denote the set of all positive rational numbers. We
will now recursively define $\left\langle A_{\alpha},q_{\alpha},X_{\alpha
}\right\rangle _{\alpha\in\omega_{1}}$ such that for every $\alpha<\omega_{1}$
the following hold:

\begin{enumerate}
\item $A_{\alpha}\in\mathcal{A},$ $q_{\alpha}\in\mathbb{Q}^{+}$ and
$X_{\alpha}\in\left[  \omega\right]  ^{\omega}.$

\item If $\alpha\neq\beta$ then $A_{\alpha}\neq A_{\beta}.$

\item If~$\beta<\alpha$ then $X_{\alpha}\subseteq^{\ast}X_{\beta}.$

\item If $n\in X_{\alpha},$ then $q_{\alpha}\leq\frac{\left\vert A_{\alpha
}\cap s_{n}\right\vert }{\left\vert s_{n}\right\vert }.$
\end{enumerate}

Let $\alpha<\omega_{1}$ and assume we have already constructed $\left\langle
A_{\xi},q_{\xi},X_{\xi}\right\rangle _{\xi<\alpha}.$ We will see how to find
$A_{\alpha},q_{\alpha}$ and $X_{\alpha}.$ Since $\left\{  X_{\xi}\mid
\xi<\alpha\right\}  $ is a $\subseteq^{\ast}$-decreasing sequence, we may find
$Y\in\left[  \omega\right]  ^{\omega}$ such that $Y\subseteq^{\ast}X_{\xi}$
for every $\xi<\alpha.$ By our assumption, the set $\mathcal{C}%
=\mathcal{A\setminus J}\left(  \mathcal{B}_{Y}\right)  $ is uncountable. Note
that if $A\in\mathcal{C},$ then there is $q_{A}\in\mathbb{Q}^{+}$ such that
the set $\left\{  n\in Y\mid q_A \leq\frac{\left\vert A\cap s_{n}\right\vert
}{\left\vert s_{n}\right\vert }\right\}  $ is infinite. Since $\mathcal{C}$ is
uncountable, we may find $q_{\alpha}\in\mathbb{Q}^{+}$ such that
$\mathcal{C}_{1}=\left\{  A\in\mathcal{C\mid}q_{A}=q_\alpha\right\}  $ is
uncountable. We can then find $A_{\alpha}\in\mathcal{C}_{1}$ such that
$A_{\alpha}\neq A_{\xi}$ for every $\xi<\alpha.$ Finally, let $X_{\alpha
}=\left\{  n\in Y\mid q_{\alpha}\leq\frac{\left\vert A_{\alpha}\cap
s_{n}\right\vert }{\left\vert s_{n}\right\vert }\right\}  .$ Clearly,
$A_{\alpha},q_{\alpha}$ and $X_{\alpha}$ have the desired properties.

We can now find $W\in\left[  \omega_{1}\right]  ^{\omega_{1}}$ and
$q\in\mathbb{Q}^{+}$ such that $q_{\alpha}=q$ for every $\alpha\in W.$ Let
$m\in\omega$ such that $\frac{1}{q}<m$ and choose $\alpha_{1},...,\alpha
_{m}\in W.$ Let $X=X_{\alpha_{1}}\cap...\cap X_{\alpha_m}$ and note that $X$
is an infinite set. By construction, if $n\in X$ and $i\leq m$ then
$q\leq\frac{\left\vert A_{\alpha_{i}}\cap s_{n}\right\vert }{\left\vert
s_{n}\right\vert }.$ Since $\frac{1}{q}<m,$ for each $n\in X$ there must be
$i_{n},j_{n}\leq m$ such that $i_{n}\neq j_{n}$ and $A_{\alpha_{i_{n}}}\cap
A_{\alpha_{j_{n}}}\cap s_{n}\neq\emptyset.$ Since $X$ is infinite, there are
$i,j\leq m$ and $Y\in\left[  X\right]  ^{\omega}$ such that $i=i_{n}$ and
$j=j_{n}$ for every $n\in Y.$ This implies that $A_{\alpha_{i_{n}}}\cap
A_{\alpha_{j_{n}}}$ is infinite, which is a contradiction.
\end{proof}

We also have the following:

\begin{lemma}
If $\mathcal{B}$ is an increasing block sequence, then $\mathcal{J}\left(
\mathcal{B}\right)  \leq_{K}\mathcal{Z}.$
\end{lemma}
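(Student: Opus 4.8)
The plan is to build an explicit Katětov-morphism $f : \omega \longrightarrow \bigcup_{n} s_n = \bigcup \mathcal{B}$ witnessing $\mathcal{J}(\mathcal{B}) \leq_K \mathcal{Z}$. Since $\mathcal{B} = \{ s_n \mid n \in \omega \}$ is an increasing block sequence, the sets $s_n$ are finite, pairwise disjoint, and $|s_n| \to \infty$. The target ideal $\mathcal{J}(\mathcal{B})$ lives on $\bigcup \mathcal{B}$ and measures density block-by-block along the $s_n$, while $\mathcal{Z}$ measures density along the dyadic intervals $R_k = [2^k, 2^{k+1})$ of $\omega$. The natural idea is to map the $k$-th dyadic interval $R_k$ onto (a suitable portion of) the block $s_n$ for an appropriate index $n = n(k)$, in a \emph{density-preserving} way, so that a set is $\mathcal{Z}$-small iff its $f$-image has vanishing relative density in the blocks.

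Concretely, first I would choose, for each $n$, a function $g_n$ that maps the dyadic interval $R_{k_n}$ (where $2^{k_n}$ is the least power of $2$ that is $\geq |s_n|$, say) \emph{onto} $s_n$ in roughly uniform fashion: each point of $s_n$ has preimage of size either $\lfloor 2^{k_n}/|s_n| \rfloor$ or $\lceil 2^{k_n}/|s_n| \rceil$. Assigning distinct blocks to distinct (necessarily cofinitely many, by increasingness) dyadic levels, and mapping the leftover dyadic levels arbitrarily, this defines $f$ on all of $\omega$. The point of the near-uniformity is that for $A \subseteq \bigcup \mathcal{B}$ we have, up to a multiplicative constant, $\frac{|f^{-1}(A) \cap R_{k_n}|}{2^{k_n}} \approx \frac{|A \cap s_n|}{|s_n|}$; hence $\lim_n \frac{|A\cap s_n|}{|s_n|} = 0$ is equivalent to $f^{-1}(A)$ having density zero along the relevant subsequence of dyadic levels, which (since the other levels are irrelevant to membership in $A$, as $f$ maps them outside $\bigcup\mathcal{B}$, or one can simply arrange those preimages to be empty) gives $f^{-1}(A) \in \mathcal{Z}$. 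Thus $A \in \mathcal{Z}^+$-preimage-style reasoning shows: if $A \notin \mathcal{J}(\mathcal{B})$ then $f^{-1}(A) \notin \mathcal{Z}$, i.e. $f$ is a Katětov-morphism from $(\omega, \mathcal{Z})$ to $(\bigcup\mathcal{B}, \mathcal{J}(\mathcal{B}))$, which is exactly $\mathcal{J}(\mathcal{B}) \leq_K \mathcal{Z}$.

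I expect the main technical obstacle to be the comparison of the \emph{exact} submeasures: $\mathcal{Z}$ is an $F_{\sigma\delta}$ $P$-ideal defined via $\lim \varphi(A \setminus n) = 0$ for its canonical lower semicontinuous submeasure $\varphi$, whereas $\mathcal{J}(\mathcal{B})$ as defined uses a plain $\lim_n$ of ratios (not a $\limsup$ or exhaustion). One must check these definitions interact correctly — in particular that membership in $\mathcal{J}(\mathcal{B})$ really is captured by a density-zero condition on the dyadic pullback, and that the rounding errors $\lceil \cdot \rceil$ vs $\lfloor \cdot \rfloor$ and the gap between $|s_n|$ and the next power of two ($< $ factor $2$) are absorbed. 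Since everything is within bounded multiplicative factors and we only care about limits being $0$, these constants wash out, so the argument is routine once the map $f$ is set up carefully; I would therefore present the construction of $f$ in detail and treat the density estimate as a short computation.
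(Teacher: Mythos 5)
Your approach is genuinely different from the paper's: there, one simply observes that $\J(\B)$ is a non-pathological $P$-ideal and invokes the general theorem that every non-pathological $P$-ideal is Kat\v etov below $\Z$. Your plan --- build an explicit density-preserving morphism sending dyadic intervals onto the blocks $s_n$ --- is a more elementary and self-contained route, and its core is sound: if $f\upharpoonright R_k$ maps $R_k$ near-uniformly onto some $s_{m(k)}$ with $|s_{m(k)}|\leq 2^k$, then $\frac{|f^{-1}(A)\cap R_k|}{2^k}\leq 2\cdot\frac{|A\cap s_{m(k)}|}{|s_{m(k)}|}$, which is exactly the estimate you need, and you are right that the rounding constants wash out since one only chases limits to $0$.

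There is, however, a real gap in the disposal of the dyadic levels that do not get assigned to a block. Since you use an \emph{injection} sending each block $s_n$ to a level $\kappa(n)$ with $2^{\kappa(n)}\geq |s_n|$, the set $L=\omega\setminus\rng(\kappa)$ of leftover levels can easily be infinite: if, say, $|s_n|=2^{10n}$, then $\kappa(n)\geq 10n$, so among the levels $\leq k$ at most roughly $k/10$ lie in $\rng(\kappa)$. Both of your suggested treatments of $L$ then fail. ``Mapping the leftover dyadic levels arbitrarily'' breaks the morphism: if infinitely many $R_k$, $k\in L$, are mapped into $s_0$, then $A=s_0$ is finite and hence in $\J(\B)$, yet $f^{-1}(A)\supseteq R_k$ for all $k\in L$, so $\limsup_k \frac{|f^{-1}(A)\cap 2^{k+1}|}{2^{k+1}}\geq\frac{1}{2}$ and $f^{-1}(A)\notin\Z$. ``Arrange those preimages to be empty'' is impossible for a single fixed $f$: if instead all $R_k$, $k\in L$, are sent to a point $x_0\notin\bigcup\B$, then $A=\{x_0\}\in\J(\B)$ (it misses every $s_n$) but again $f^{-1}(A)\supseteq\bigcup_{k\in L}R_k$.

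The fix is to drop injectivity and use a \emph{total} assignment of levels to blocks: set $m(k)=\max\{\,n : |s_n|\leq 2^k\,\}$ (this set is finite because $\B$ is increasing and nonempty for $k$ large; handle the finitely many small $k$ arbitrarily). Then $|s_{m(k)}|\leq 2^k$ holds by definition, and $m(k)\to\infty$ because whenever $2^k\geq|s_{N+1}|$ one has $m(k)\geq N+1$. Mapping $R_k$ near-uniformly onto $s_{m(k)}$ now covers every level, and for $A\in\J(\B)$ the estimate above gives $\lim_k\frac{|f^{-1}(A)\cap R_k|}{2^k}=0$, i.e.\ $f^{-1}(A)\in\Z$. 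With this correction your argument goes through and yields a direct proof that, unlike the paper's, does not depend on the non-pathological $P$-ideal machinery.
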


\begin{proof}
It is easy to see that $\mathcal{J}\left(  \mathcal{B}\right)  $ is a
non-pathological $P$-ideal (see \cite{IliasBook} or \cite{TesisDavid} for the
definition of non-pathological $P$-ideals) and in \cite{TesisDavid} it was
proved that every non-pathological $P$-ideal is Kat\v{e}tov below
$\mathcal{Z}.$
\end{proof}

Therefore, every \textsf{MAD} family is \textquotedblleft nearly
Kat\v{e}tov-below\textquotedblright\ $\mathcal{Z}$. By these results, it would
be tempting to conjecture the following: \textit{If }$\mathcal{Z}$\textit{ can
be destroyed without increasing }$\bb$\textit{ and }$\sss,$\textit{ then every
\textsf{MAD} family can be
destroyed without increasing }$\mathfrak{b}$\textit{ or }$\mathfrak{s}.$
Unfortunately, it seems that the density zero ideal can not be destroyed
without increasing $\mathfrak{b}$ or $\mathfrak{s}.$ Recently, Raghavan showed
that \textsf{cov}$^{\ast}\left(  \mathcal{Z}\right)  \leq \max\left\{
\mathfrak{b},\mathfrak{s}\left(  \mathfrak{pr}\right)  \right\}  $\footnote{If
$\mathcal{I}$ is a tall ideal, by \textsf{cov}$^{\ast}\left(  \mathcal{I}%
\right)  $ we denote the smallest size of a family $\mathcal{X\subseteq I}$
such that for every $A\in\left[  \omega\right]  ^{\omega}$ there is
$X\in\mathcal{X}$ such that $A\cap X\in\left[  \omega\right]  ^{\omega}.$}
($\mathfrak{s}\left(  \mathfrak{pr}\right)  $ is the \emph{promptly splitting
number}, which is a cardinal invariant closely related to $\mathfrak{s},$ see
\cite{MoreontheDensityZeroIdeal} for more details). This improves an earlier
work of Raghavan and Shelah (see
\cite{TwoinequalitiesbetweenCardinalInvariants}) where they showed that
\textsf{cov}$^{\ast}\left(  \mathcal{Z}\right)  \leq\mathfrak{d}.$

We know that every \textsf{MAD} family is contained up to a countable subfamily in an ideal
$\mathcal{J}\left(  \mathcal{B}\right)  $ (where $\mathcal{B}$ is an
increasing block sequence). We will now show that (consistently) this is best
possible, that is, one can not disregard the countable family in Proposition~\ref{countableexception}
(see Theorem~\ref{ZMADfamily}).

Let $\mathcal{B}=\left\{  P_{n}\mid n\in\omega\right\}  $ be the interval
partition of $\omega$ with $\left\vert P_{n}\right\vert =n+1.$ Given
$X\subseteq\omega$ and $n\in\omega,$ we define $\mathcal{B}\left(  X,n\right)
=\left\{  m\mid\left\vert P_{m}\setminus X\right\vert \leq n\right\}  .$ We
will say a family $\mathcal{A}$ is $\mathcal{B}$-\textsf{AD} if the following
conditions hold:

\begin{enumerate}
\item $\mathcal{A}$ is a countable \textsf{AD} family.

\item If $B\in\mathcal{I}\left(  \mathcal{A}\right)  $ then $\mathcal{B}%
\left(  B,n\right)  $ is finite for every $n\in\omega.$

\item If $B\in\mathcal{I}\left(  \mathcal{A}\right)  $ then there is
$n\in\omega$ such that $P_{n}\cap B=\emptyset.$
\end{enumerate}
Note that if $\mathcal{A}$ is $\mathcal{B}$-\textsf{AD} then for every
$B\in\mathcal{I}\left(  \mathcal{A}\right)  $  there are, in fact,
infinitely many $n\in\omega$ such that $P_{n}\cap B=\emptyset$ (recall every
finite set is in $\mathcal{I}\left(  \mathcal{A}\right)  $).

\begin{lemma}
Let $\mathcal{A}$ be a $\mathcal{B}$-\textsf{AD} and $X\in\mathcal{A}^{\perp
}.$ There is $A\in\left[  X\right]  ^{\omega}$ such that $\mathcal{A\cup
}\left\{  A\right\}  $ is $\mathcal{B}$-\textsf{AD}.
\end{lemma}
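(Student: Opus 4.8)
The plan is to enumerate $X = \{x_0, x_1, \dots\}$ and build $A \subseteq X$ recursively, deciding at stage $n$ whether to put $x_n$ into $A$, while at the same time keeping track of the enumeration $\mathcal{A} = \{A_n \mid n \in \omega\}$ and the finite unions $B_n = A_0 \cup \dots \cup A_n$. Since $\mathcal{A}$ is $\mathcal{B}$-\textsf{AD} and $X \in \mathcal{A}^\perp$, each $B_n$ meets $X$ in a finite set, so infinitely many intervals $P_m$ are disjoint from $B_n$; more to the point, for any fixed $k$, for all but finitely many $m$ we have $|P_m \cap B_n| \le k$ is false in the wrong direction — rather, we get $|P_m \setminus B_n| = |P_m| - |P_m \cap B_n|$ grows, since $|P_m \cap B_n|$ is bounded (it is at most $|B_n \cap X|$ plus the contribution of $B_n \setminus X$, but $B_n \setminus X$ need not be finite). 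The right observation is instead: since $X \in \mathcal{A}^\perp$, $B_n \cap X$ is finite for each $n$, so for $m$ large, $P_m \cap X$ is entirely outside $B_n$.

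**The recursion.**
First I would fix, for each $n$, a threshold $m_n$ so that for all $m \ge m_n$ the interval $P_m$ satisfies $P_m \cap X \cap B_n = \emptyset$ and $P_m \cap X \ne \emptyset$ (the latter because $X$ is infinite and meets cofinally many intervals — or, if not, first pass to an infinite subset of $X$ lying in infinitely many intervals, which is harmless). We may assume $m_n$ is strictly increasing. Now define $A$ as follows: from the block $\bigcup\{P_m \mid m_n \le m < m_{n+1}\}$, put into $A$ every element of $X$ in those intervals that is not already excluded; more carefully, to guarantee condition (2) of $\mathcal{B}$-\textsf{AD}\ for $\mathcal{A} \cup \{A\}$, I want: for each fixed $k$, only finitely many $P_m$ have $|P_m \setminus A| \le k$. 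Since $P_m \cap X$ is a proper nonempty subset of $P_m$ for all but finitely many $m$ (because $X \in \mathcal{A}^\perp$ is co-infinite relative to any fixed $A_j \in \mathcal{A}$, so it cannot contain almost all of $P_m$ for $m$ large — here I use that some $A_j$ has infinite intersection with $\omega$, hence meets $P_m$ in the complement of $X$ for infinitely many $m$; to be safe, arrange $|P_m| - |P_m \cap X| \to \infty$ by shrinking $X$ first if necessary), we get $|P_m \setminus A| \ge |P_m \setminus X| \to \infty$, giving condition (2) for the part of $\mathcal{I}(\mathcal{A} \cup \{A\})$ generated by $A$ itself; for finite modifications and for the old $A_j$'s, condition (2) is inherited from $\mathcal{A}$ being $\mathcal{B}$-\textsf{AD}\ up to a finite error. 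Condition (3) for $A$ is automatic since $P_{m_0 - 1} \cap A = \emptyset$, say, and we only ever use intervals past $m_0$. Finally, almost-disjointness of $A$ with each $A_j$: at the block indexed by $n$ we have thrown away all elements of $B_n \supseteq A_j$ (for $j \le n$), so $A \cap A_j \subseteq B_n \cap X$, which is finite.

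**The main obstacle.**
The delicate point is ensuring condition (2), $\mathcal{B}(B, n)$ finite for every $n$, for the \emph{new} ideal member built from $A$, and more generally for all $B \in \mathcal{I}(\mathcal{A} \cup \{A\})$, which have the form $A_{j_1} \cup \dots \cup A_{j_r} \cup (A \cap \text{cofinite}) \cup (\text{finite})$. The term $|P_m \setminus B|$ could be small if $A$ swallows most of $P_m$; I need that $X$ — and hence $A \subseteq X$ — misses a growing number of points in each interval. This may fail verbatim for the given $X$ (e.g. if $X$ contains whole intervals), so the first move is to replace $X$ by an infinite $X' \subseteq X$ with $|P_m \setminus X'| \to \infty$ (possible by removing one point from $X$ in each sufficiently large interval it meets — but one must check $X'$ is still infinite and still in $\mathcal{A}^\perp$, which is clear). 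With this preprocessing the estimate $|P_m \setminus B| \ge |P_m \setminus X'| - |\{j : A_{j_1}, \dots \text{meet } P_m \text{ outside } X'\}|$, together with $\mathcal{A}$ being $\mathcal{B}$-\textsf{AD}, drives the argument through. I expect writing out this bookkeeping — interleaving the shrinking of $X$, the choice of thresholds $m_n$, and the verification of all three clauses for every $B \in \mathcal{I}(\mathcal{A} \cup \{A\})$ — to be the bulk of the (routine) work.
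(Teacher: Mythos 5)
Your skeleton (build $A\subseteq X$ block by block, using the finiteness of $B_n\cap X$ to escape the ideal at each stage) is the right one, but the resolution of what you call ``the main obstacle'' has two real gaps, both traceable to the decision to throw \emph{all} of $X$ on the chosen intervals into $A$. The preprocessing step does not do what you claim: deleting one point of $X$ per large interval only yields $|P_m\setminus X'|=|P_m\setminus X|+1$, which stays bounded whenever $|P_m\setminus X|$ does, and this can happen under the hypotheses --- e.g.\ $\mathcal{A}=\{\{\min P_{2m}:m\in\omega\}\}$ is $\mathcal{B}$-\textsf{AD} and $X=\omega\setminus\{\min P_n:n\in\omega\}\in\mathcal{A}^\perp$ has $|P_m\setminus X|=1$ for every $m$. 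Even granting $|P_m\setminus X'|\to\infty$, condition (2) for $B=B_n\cup A$ with $A\subseteq X'$ requires $|P_m\setminus(B_n\cup X')|>k$ for almost all $m$, and $B_n$ and $X'$ can jointly cover $P_m$ although each individually misses a growing part of it; your displayed estimate subtracts a count of \emph{generators} $A_{j_i}$ meeting $P_m$, but what must be subtracted is $|P_m\cap B_n\setminus X'|$, a count of \emph{points}, and the $\mathcal{B}$-\textsf{AD} hypothesis bounds $|P_m\setminus B_n|$ from below, not $|P_m\cap B_n|$ from above. Separately, condition (3) is not automatic: $P_{m_0-1}\cap A=\emptyset$ handles $B=A$ alone, but for $B=B_n\cup A$ you need an interval disjoint from both $B_n$ and $A$, and your $A$ may meet every interval that $B_n$ misses.

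The paper avoids both problems by placing \emph{at most one} point of $X$ into $A$ per interval: it selects intervals $P_{y_n}$ together with a single $x_n\in X\cap P_{y_n}$ chosen outside $B_n$, and sets $A=\{x_n:n\in\omega\}$. Then $|P_m\cap A|\le 1$ for all $m$, hence $\mathcal{B}(B_n\cup A,k)\subseteq\mathcal{B}(B_n,k+1)$, which is finite because $\mathcal{A}$ is $\mathcal{B}$-\textsf{AD} --- that is condition (2) with no asymptotics on $X$ needed at all. For condition (3) it additionally reserves, between $P_{y_n}$ and $P_{y_{n+1}}$, an interval $P_{u_n}$ with $B_n\cap P_{u_n}=\emptyset$; since $A$ meets only the $P_{y_j}$'s, $P_{u_m}\cap(B_n\cup A)=\emptyset$ for every $m>n$. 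Replacing ``every element of $X$ in the chosen intervals'' by ``one element per chosen interval'' and inserting the reserved intervals $P_{u_n}$ would repair your argument.
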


\begin{proof}
Letting $\mathcal{A}=\left\{  A_{n}\mid n\in\omega\right\}  ,$ we may assume that
$n\in A_{n}$ for every $n\in\omega.$ For every $n\in\omega,$ we define $B_{n}=%
{\textstyle\bigcup} \left\{  A_{i}\mid i\leq \max\left(  P_{n}\right)  \right\}  $ (note that
$P_{0}\cup...\cup P_{n}\subseteq B_{n}$ and $B_{n}\in\mathcal{I}\left(
\mathcal{A}\right)  $). We recursively construct a sequence $\left\langle
\left(  y_{n},u_{n},x_{n}\right)  \right\rangle _{n\in\omega}$ with the
following properties:

\begin{enumerate}
\item $y_{n}<u_{n}<y_{n+1}$ for every $n\in\omega.$

\item $\left\{  x_{n}\mid n\in\omega\right\}  \subseteq X.$

\item $x_{n}\in P_{y_{n}}.$

\item $x_{n}\notin B_{n}.$

\item $y_{n}\notin\mathcal{B}\left(  B_{n},n+1\right)  .$

\item $B_{n}\cap P_{u_{n}}=\emptyset.$
\end{enumerate}

Assuming we have constructed $\left(  y_{i},u_{i},x_{i}\right)  $ for every
$i<n,$ we will see how to define $\left(  y_{n},u_{n},x_{n}\right)  .$ We
first find $r\in\omega$ such that the following hold:

\begin{enumerate}
\item $\max\left(  P_{u_{i}}\right)  <r$ for every $i < n$.

\item $B_{n}\cap X\subseteq r.$

\item $\mathcal{B}\left(  B_{n},n+1\right)  \subseteq r.$
\end{enumerate}
Since $X$ is an infinite set, we can find $y_{n}$ such that $r<\min\left(
P_{y_{n}}\right)  $ and $X\cap P_{y_{n}}\neq\emptyset.$ Choose any $x_{n}\in
X\cap P_{y_{n}}$. Finally, let $u_{n}$ such that $y_{n}<u_{n}$ and $B_{n}\cap
P_{u_{n}}=\emptyset.$

We now define $A=\left\{  x_{n}\mid n\in\omega\right\}  .$ Clearly $A$ is
almost disjoint with every element of $\mathcal{A}$ and $A$ is an infinite
subset of $X.$ Letting $\mathcal{A}_{1}=\mathcal{A\cup}\left\{  A\right\}  ,$ we
need to argue that $\mathcal{A}_{1}$ is a $\mathcal{B}$-\textsf{AD} family.
Letting $n\in\omega,$ note that if $m>n$ then $\left(  A\cup B_{n}\right)  \cap
P_{u_{m}}=\emptyset,$ so $A\cup B_{n}$ is disjoint with infinitely many
elements of $\mathcal{B}.$ Finally, note that if $m>n$ then $y_{m}%
\notin\mathcal{B}\left(  A\cup B_{n},n\right)  .$
\end{proof}

We can now prove:

\begin{lemma}
Let $\mathcal{A}$ be $\mathcal{B}$-\textsf{AD}. If $f:\omega\longrightarrow
\omega$ is a Kat\v{e}tov-morphism from $\mathcal{Z}$ to $\mathcal{I}\left(
\mathcal{A}\right)  $ then there is $A\in\mathcal{A}^{\perp}$ such that
$\mathcal{A}_{1}=\mathcal{A\cup}\left\{  A\right\}  $ is $\mathcal{B}%
$-\textsf{AD} and $f$ is no longer a Kat\v{e}tov-morphism from $\mathcal{Z}$
to $\mathcal{I}\left(  \mathcal{A}_{1}\right)  .$
\end{lemma}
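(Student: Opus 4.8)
The plan is to construct $A=\bigcup_{n\in\omega}A^{(n)}$ as an increasing union of finite sets, built recursively, where stage $n$ takes care of three things at once: making $A$ almost disjoint from $A_n$, reserving one block $P_{r_n}$ of $\mathcal{B}$ that $A$ will avoid entirely, and forcing a fixed positive fraction of an initial segment $2^{k_n}$ (for a suitably large $k_n$) into $f^{-1}(A)$. Throughout write $D_n=A_0\cup\dots\cup A_n\in\mathcal{I}(\mathcal{A})$ and $\mathcal{B}=\{P_m:m\in\omega\}$. Recall the point of the lemma: once $f^{-1}(A)\notin\mathcal{Z}$, since $A\in\mathcal{I}(\mathcal{A}\cup\{A\})$, $f$ ceases to be a Kat\v{e}tov-morphism from $\mathcal{Z}$ to $\mathcal{I}(\mathcal{A}\cup\{A\})$.

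The step that needs care is the genuine tension between the two demands on $A$. To get $f^{-1}(A)\notin\mathcal{Z}$ one is forced to feed ``many'' points of $\ran(f)$ into $A$: since $\{r\}$, and more generally every finite set, lies in $\mathcal{I}(\mathcal{A})$, the Kat\v{e}tov-morphism $f$ sends it into $\mathcal{Z}$, so the ``preimage weight'' $r\mapsto\left|f^{-1}(r)\cap 2^{k}\right|$ drifts, as $k\to\infty$, onto points lying in blocks $P_m$ of larger and larger index; hence a single point, or even one point per block, captures a fraction of $2^{k}$ tending to $0$. On the other hand, for $\mathcal{A}\cup\{A\}$ to stay $\mathcal{B}$-\textsf{AD}, $A$ together with any $D_n$ must neither almost-cover infinitely many blocks nor meet all but finitely many of them. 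The device reconciling these is: at stage $n$, let $\mathrm{Bad}_n$ be the union of the (finitely many) blocks touched at earlier stages, the previously reserved blocks, and one new block $P_{r_n}$ with $P_{r_n}\cap D_n=\emptyset$ (which exists since $\mathcal{A}$ is $\mathcal{B}$-\textsf{AD}); as $D_n\cup\mathrm{Bad}_n\in\mathcal{I}(\mathcal{A})$ and $f$ is a Kat\v{e}tov-morphism, pick $k_n>k_{n-1}$ with $\left|f^{-1}(D_n\cup\mathrm{Bad}_n)\cap 2^{k_n}\right|<\tfrac13 2^{k_n}$. The available points $V_n:=f[2^{k_n}]\setminus(D_n\cup\mathrm{Bad}_n)$ then carry more than $\tfrac23$ of the weight of $2^{k_n}$, and we let $A^{(n)}$ consist, for each block $P_m$ meeting $V_n$, of the $\lceil\tfrac12|V_n\cap P_m|\rceil$ points of $V_n\cap P_m$ of largest weight. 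Note $A^{(n)}$ is finite and nonempty, avoids $D_n$, and lies in blocks untouched before, so distinct stages touch disjoint blocks.

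It then remains to verify four things, all routine. First, $A^{(n)}\cap D_n=\emptyset$ gives $A\cap A_j\subseteq\bigcup_{n<j}A^{(n)}\cap A_j$, a finite set, so $A\in\mathcal{A}^\perp$, and $A$ is infinite, being a disjoint union of nonempty sets. Second, since the top $\lceil p/2\rceil$ of $p$ weights always carry at least half the total, $\left|f^{-1}(A)\cap 2^{k_n}\right|\ge\left|f^{-1}(A^{(n)})\cap 2^{k_n}\right|\ge\tfrac12\left|f^{-1}(V_n)\cap 2^{k_n}\right|>\tfrac13 2^{k_n}$, so $f^{-1}(A)$ has upper density at least $\tfrac13$ and hence $f^{-1}(A)\notin\mathcal{Z}$. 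Third, each $P_{r_n}$ is avoided at stage $n$, untouched before, and put into $\mathrm{Bad}_m$ for every $m>n$, so $P_{r_n}\cap(A\cup D_n)=\emptyset$; as the $r_n$ are infinitely many and distinct and every member of $\mathcal{I}(\mathcal{A}\cup\{A\})$ lies in some $D_N\cup A\cup F$ with $F$ finite, each such set is disjoint from some $P_{r_n}$. Fourth, fix $n$: a block $P_m$ with $m$ large is either never touched, whence $P_m\setminus(A\cup D_n)=P_m\setminus D_n$ is large because $\mathcal{A}$ is $\mathcal{B}$-\textsf{AD}, or touched at a unique stage $n(m)$; since only finitely many blocks are touched at stages $\le n$, $n(m)>n$ for all large $m$, and then $A^{(n(m))}\cap P_m\subseteq P_m\setminus D_{n(m)}\subseteq P_m\setminus D_n$ has at most $\lceil\tfrac12|P_m\setminus D_n|\rceil$ elements, so $|P_m\setminus(A\cup D_n)|\ge\lfloor\tfrac12|P_m\setminus D_n|\rfloor\to\infty$. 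Thus $\mathcal{A}\cup\{A\}$ satisfies all clauses of being $\mathcal{B}$-\textsf{AD}, and $A$ is as required. The only delicate point throughout is the choice of $k_n$, which must simultaneously absorb $D_n$ and all the bookkeeping — and this is precisely what the hypothesis that $f$ is a Kat\v{e}tov-morphism (applied to the single element $D_n\cup\mathrm{Bad}_n$ of $\mathcal{I}(\mathcal{A})$) delivers.
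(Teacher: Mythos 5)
Your proof is correct and follows the same broad strategy as the paper's: build $A=\bigcup_n A^{(n)}$ recursively, at each stage using the Kat\v etov hypothesis on $D_n\cup\mathrm{Bad}_n\in\mathcal{I}(\mathcal{A})$ to locate a scale where the ``forbidden'' preimage has density $<1/3$, then harvest a heavy-but-not-all part of the remaining preimage mass block by block, while reserving one fresh block $P_{r_n}$ per stage to secure clause (3) of $\mathcal{B}$-\textsf{AD}-ness. Where you genuinely diverge is in how the ``heavy but not all'' per block is arranged. The paper, inside a block $P_m$ meeting $z$, withholds exactly $n+1$ light points (the $\bar s_n^m$) when the block is large relative to $B_{n+1}$, and keeps all of $t_m\cap z$ otherwise; this requires first discarding the blocks in $\mathcal{B}(B_{n+1},3(n+1))$ so that $|t_m|>3(n+1)$, and splits into the cases $m\in K_1$ and $m\in K\setminus K_1$ to guarantee both that $\varphi_{y_{n+1}}(f^{-1}(\bar s_{n+1}))\le\varphi_{y_{n+1}}(f^{-1}(s_{n+1}))$ and that $|P_m\setminus(s_{n+1}\cup B_{n+1})|\ge n+1$. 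You instead uniformly keep the heavier half of $V_n\cap P_m$. This eliminates the $\mathcal{B}(B_{n+1},3(n+1))$ threshold and the $K_1/K\setminus K_1$ case split; the density bound becomes a one-line ``top half carries half the weight'' count, and clause (2) of $\mathcal{B}$-\textsf{AD}-ness is recovered not from an explicit $n+1$ lower bound on each touched block but from the proportional bound $|P_m\setminus(A\cup D_n)|\ge\lfloor\tfrac12|P_m\setminus D_n|\rfloor\to\infty$, which in turn follows from $\mathcal{A}$ itself being $\mathcal{B}$-\textsf{AD}. Your version discards more per block than the paper's (half rather than $n+1$ elements), but this costs nothing for the statement at hand and yields a cleaner argument overall. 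One cosmetic difference: you measure density against the initial segments $2^{k_n}$ rather than the dyadic annuli $R_{y_n}$ via $\varphi_{y_n}$; these are interchangeable characterizations of $\mathcal{Z}$, and your choice is the one literally appearing in the paper's definition of $\mathcal{Z}$.
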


\begin{proof}
As before, let $\mathcal{A}=\left\{  A_{n}\mid n\in\omega\right\}  $ (we
assume again that $n\in A_{n}$ for every $n\in\omega$) and for every $n\in\omega,$
we define $B_{n}= {\textstyle\bigcup}
\left\{  A_{i}\mid i\leq \max\left(  P_{n}\right)  \right\}  .$ We recursively
construct a sequence $\left\langle \left(  y_{n},u_{n},s_{n}\right)
\right\rangle _{n\in\omega}$ with the following properties:

\begin{enumerate}
\item $y_{n}<y_{n+1}$ for every $n\in\omega.$

\item $s_{n}\subseteq f\left[  R_{y_{n}}\right]  $ (recall that $R_{n}=[2^{n},2^{n+1})$ and for every $A\subseteq
\omega$ we defined $\varphi_{n}\left(  A\right)  =\frac{\left\vert A\cap
R_{n}\right\vert }{2^{n}}$).

\item $\varphi_{y_{n}}\left(  f^{-1}\left(  s_{n}\right)  \right)  \geq
\frac{1}{3}.$

\item $s_{n}\cap B_{n}=\emptyset.$

\item If $P_{m}\cap s_{n}\neq\emptyset$ then $m\notin\mathcal{B}\left(
B_{n},3n\right)  $ for every $m\in\omega.$

\item If $P_{m}\cap s_{n}\neq\emptyset$ then $n\leq\left\vert P_{m}%
\setminus\left(  s_{n}\cup B_{n}\right)  \right\vert .$

\item $P_{u_{n}}\cap B_{n}=\emptyset.$

\item $\max\left(  P_{u_{n}}\right)  <\min\left(  s_{n+1}\right)  \leq
\max\left(  s_{n+1}\right)  <\min\left(  P_{u_{n+1}}\right)  .$
\end{enumerate}

Assuming we have constructed the triple $\left(  y_{n},u_{n},s_{n}\right)  ,$ we will show
how to construct $\left(  y_{n+1},u_{n+1},s_{n+1}\right)  .$ Let
$D=B_{n+1}\cup{\textstyle\bigcup}
\left\{  P_{m}\mid m\in\mathcal{B}\left(  B_{n+1},3(n+1) \right)  \right\}  \cup
\max\left(  P_{u_{n}}\right)  .$ Clearly $D\in\mathcal{I}\left(  \mathcal{A}%
\right)  $, hence $f^{-1}\left(  D\right)  \in\mathcal{Z}.$ Let $y_{n+1}%
>y_{n}$ with the property that $\varphi_{y_{n+1}}\left(  f^{-1}\left(  D\right)  \right)
<\frac{1}{3}.$ Let $z=f\left[  R_{y_{n+1}}\setminus f^{-1}\left(  D\right)
\right]  $ and note that if $m\in\omega$ and $P_{m}\cap z\neq\emptyset$ then
$m\notin\mathcal{B}\left(  B_{n+1},3(n+1)\right)  $ and $\max\left(  P_{u_{n}%
}\right)  <\min\left(  P_{m}\right)  .$ Let $K=\left\{  m\mid P_{m}\cap
z\neq\emptyset\right\}  $ which clearly is a finite set. For every $m\in K$
let $t_{m}=P_{m}\setminus B_{n+1}$ and define $K_{1}=\left\{  m\in K\mid
\left\vert t_{m}\setminus z\right\vert <n+1\right\}  .$ Note if $m\in K_{1},$
then $\left\vert t_{m}\right\vert >3(n+1)$ hence $2(n+1)\leq\left\vert t_{m}\cap
z\right\vert .$ For $m \in K_1$ we can now choose distinct $x_{0}^{m},...,x_{n}^{m},w_{0}^{m}%
,...,w_{n}^{m}\in t_{m}\cap z$ such that $\varphi_{y_{n+1}}\left(
f^{-1}\left(  \left\{  x_{0}^{m},...,x_{n}^{m}\right\}  \right)  \right)
\geq\varphi_{y_{n+1}}\left(  f^{-1}\left(  \left\{  w_{0}^{m},...,w_{n}%
^{m}\right\}  \right)  \right)  .$ Given $m\in K,$ we now define the set $s_{n}%
^{m}=\left(  t_{m}\cap z\right)  \setminus\left\{  w_{0}^{m},...,w_{n}%
^{m}\right\}  $ if $m\in K_{1}$ and $s_{n}^{m}=t_{m}\cap z$ if $m\in
K\setminus K_{1}.$ Let $\overline{s}_{n}^{m}=$ $\left\{  w_{0}^{m}%
,...,w_{n}^{m}\right\}  $ for every $m\in K_{1}$ and we define $s_{n+1}=%
{\textstyle\bigcup\limits_{m\in K}} s_{n}^{m}$ and $\overline{s}_{n+1}=%
{\textstyle\bigcup\limits_{m\in K_{1}}}
\overline{s}_{n}^{m}.$ Note that $R_{y_{n+1}}=R_{y_{n+1}}\cap\left(
f^{-1}\left(  D\right)  \cup f^{-1}\left(  s_{n+1}\right)  \cup f^{-1}\left(
\overline{s}_{n+1}\right)  \right)  .$ Now, $\varphi_{y_{n+1}}\left(
f^{-1}\left(  D\right)  \right)  <\frac{1}{3}$ and $\varphi_{y_{n+1}}\left(
f^{-1}\left(  \overline{s}_{n+1}\right)  \right)  \leq\varphi_{y_{n+1}}\left(
f^{-1}\left(  s_{n+1}\right)  \right)  ,$ hence $\varphi_{y_{n+1}}\left(
f^{-1}\left(  s_{n+1}\right)  \right)  \geq\frac{1}{3}.$ It is easy to see
that $s_{n+1}$ has all the desired properties. Finally we choose $u_{n+1}$ accordingly.

Letting $A={\textstyle\bigcup\limits_{n\in\omega}}
s_{n},$ it is easy to see that $f^{-1}\left(  A\right)  \notin\mathcal{Z}$ and
$\mathcal{A\cup}\left\{  A\right\}  $ is $\mathcal{B}$-\textsf{AD}.
\end{proof}

With a suitable bookkeeping device, we conclude the following:

\begin{theorem}[\textsf{CH}]    \label{ZMADfamily}
There is a $\mathcal{Z}$-\textsf{MAD} family that is not
Shelah-Stepr\={a}ns for block sequences.
\end{theorem}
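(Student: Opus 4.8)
The plan is a transfinite recursion of length $\omega_1$ under \textsf{CH}, assembling the \textsf{MAD} family as an increasing union of countable $\mathcal{B}$-\textsf{AD} families, where $\mathcal{B} = \{P_n \mid n \in \omega\}$ is the fixed interval partition with $\lvert P_n\rvert = n+1$. Fix enumerations $\langle X_\alpha \mid \alpha < \omega_1\rangle$ of $[\omega]^\omega$ and $\langle f_\alpha \mid \alpha<\omega_1\rangle$ of $\omega^\omega$, and interleave the two families of tasks via a standard bookkeeping so that each $X_\alpha$ and each $f_\alpha$ is addressed at some stage. Start with a countable $\mathcal{B}$-\textsf{AD} family $\A_0$ (for instance $\A_0 = \emptyset$, which is $\mathcal{B}$-\textsf{AD} because $\lvert P_n\rvert \to \infty$, so $\mathcal{B}(B,n)$ is finite and some $P_m$ misses $B$ whenever $B$ is finite). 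At a successor stage, given a $\mathcal{B}$-\textsf{AD} family $\A_\alpha$: if the current task is $X_\alpha$ and $X_\alpha \in \A_\alpha^\perp$, apply the first lemma to obtain $A \in [X_\alpha]^\omega$ with $\A_\alpha \cup \{A\}$ still $\mathcal{B}$-\textsf{AD}, and set $\A_{\alpha+1} = \A_\alpha \cup \{A\}$; if the current task is $f_\alpha$ and $f_\alpha$ is a Kat\v{e}tov-morphism from $\mathcal{Z}$ to $\I(\A_\alpha)$, apply the second lemma to obtain $A \in \A_\alpha^\perp$ with $\A_{\alpha+1} = \A_\alpha \cup \{A\}$ still $\mathcal{B}$-\textsf{AD} and $f_\alpha$ no longer a Kat\v{e}tov-morphism from $\mathcal{Z}$ to $\I(\A_{\alpha+1})$; in all other cases set $\A_{\alpha+1} = \A_\alpha$. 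At a limit stage $\lambda$, set $\A_\lambda = \bigcup_{\alpha<\lambda}\A_\alpha$: this is a countable \textsf{AD} family, and it is again $\mathcal{B}$-\textsf{AD} since any $B \in \I(\A_\lambda)$ is almost covered by finitely many members of $\A_\lambda$, hence lies in some $\I(\A_\alpha)$ with $\alpha < \lambda$, so conditions (2) and (3) of $\mathcal{B}$-\textsf{AD}-ness pass from $\A_\alpha$ to $\A_\lambda$. Let $\A = \bigcup_{\alpha<\omega_1}\A_\alpha$.

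Next I would verify that $\A$ is \textsf{MAD}: given $X \in [\omega]^\omega$, at the stage $\alpha$ handling $X = X_\alpha$ either $X \notin \A_\alpha^\perp$ (so $X$ already has infinite intersection with a member of $\A_\alpha \subseteq \A$), or we added some $A \subseteq X$ to $\A_{\alpha+1} \subseteq \A$; either way $X$ meets a member of $\A$ infinitely. That $\A$ is $\mathcal{Z}$-\textsf{MAD}, i.e.\ $\I(\A) \nleq_K \mathcal{Z}$: fix $f \in \omega^\omega$ and the stage $\alpha$ handling $f = f_\alpha$. If $f$ was already not a Kat\v{e}tov-morphism from $\mathcal{Z}$ to $\I(\A_\alpha)$, there is $E \in \I(\A_\alpha) \subseteq \I(\A)$ with $f^{-1}(E) \notin \mathcal{Z}$; otherwise the second lemma produced $A \in \I(\A_{\alpha+1}) \subseteq \I(\A)$ with $f^{-1}(A) \notin \mathcal{Z}$ (this is precisely what that lemma's proof yields). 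In both cases $f$ fails to be a Kat\v{e}tov-morphism from $(\omega,\mathcal{Z})$ to $(\omega,\I(\A))$, and since witnesses of this form persist under enlarging the family, $\I(\A) \nleq_K \mathcal{Z}$.

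Finally I would check that $\A$ is not Shelah-Stepr\={a}ns for block sequences, the witness being the (increasing interval partition) block sequence $\mathcal{B} = \{P_n \mid n \in \omega\}$ itself. First, $\mathcal{B} \in \left(\I(\A)^{<\omega}\right)^+$: for every $A \in \I(\A)$, condition (3) of $\mathcal{B}$-\textsf{AD}-ness gives some $P_n$ with $P_n \cap A = \emptyset$. Second, there is no $W \in [\omega]^\omega$ with $\bigcup_{n\in W}P_n \in \I(\A)$: if $B = \bigcup_{n\in W}P_n \in \I(\A)$, then $P_n \setminus B = \emptyset$ for every $n \in W$, so $\mathcal{B}(B,0) \supseteq W$ is infinite, contradicting condition (2) of $\mathcal{B}$-\textsf{AD}-ness. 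By Definition~\ref{SSblockdef}, $\mathcal{B}$ witnesses that $\A$ is not Shelah-Stepr\={a}ns for block sequences.

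The genuine technical difficulty has all been absorbed into the preceding lemmas, above all the one handling a Kat\v{e}tov-morphism $f$, where the construction must simultaneously maintain $\varphi_{y_n}(f^{-1}(s_n)) \geq \tfrac13$ along a sparse sequence of blocks $R_{y_n}$ (so that $f^{-1}(A) \notin \mathcal{Z}$) and keep $s_n$ disjoint from $B_n$, $\mathcal{B}(B_n,3n)$-avoiding, and leave $\lvert P_m \setminus (s_n \cup B_n)\rvert$ large (so that $\A \cup \{A\}$ stays $\mathcal{B}$-\textsf{AD}). For the theorem proper, the only points requiring care are the bookkeeping setup ensuring every $X \in [\omega]^\omega$ and every $f \in \omega^\omega$ is treated once, and the preservation of $\mathcal{B}$-\textsf{AD}-ness through limit stages — both routine.
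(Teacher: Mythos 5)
Your proposal is correct and takes essentially the same route as the paper: the two preceding lemmas are exactly the successor-step tools, and you have spelled out the ``suitable bookkeeping device'' that the paper leaves implicit, including the verification that $\mathcal{B}$-\textsf{AD}-ness passes through limit stages and that $\mathcal{B}$ itself witnesses the failure of the block Shelah-Stepr\={a}ns property. The only point worth flagging is cosmetic: if one is uneasy starting the recursion from $\mathcal{A}_0 = \emptyset$ (the lemma proofs tacitly index $\mathcal{A} = \{A_n \mid n \in \omega\}$), one can pad with singletons or start from any fixed countably infinite $\mathcal{B}$-\textsf{AD} family, which changes nothing.
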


Therefore, the countable family mentioned in Proposition~\ref{countableexception} is indeed
needed. Theorem~\ref{ZMADfamily} motivates the following questions:

\begin{problem}
\begin{enumerate}
\item Do $\mathcal{Z}$-\textsf{MAD} families exist (in \textsf{ZFC})?

\item Is it consistent that every \textsf{MAD} family that is not Shelah-Stepr\={a}ns
(or Shelah-Stepr\={a}ns for block sequences) is Kat\v{e}tov below
$\mathcal{Z}$?
\end{enumerate}
\end{problem}




\section{A \textsf{MAD} family that can not be extended to a Hurewicz
ideal}

We know that every Shelah-Stepr\={a}ns \textsf{MAD} family is Hurewicz (Proposition~\ref{ShelahSteprans-Hurewicz}).
Non Canjar \textsf{MAD} families can be
constructed in \textsf{ZFC}, see
\cite{MathiasForcingandCombinatorialCoveringPropertiesofFilters} or
\cite{CanjarFilters}). In order to solve the Problem~\ref{bsa-problem},
it would be enough to show that every \textsf{MAD} family can be extended to a
Hurewicz ideal. Unfortunately, we will see that it is consistent with
\textsf{CH }that this is not the case.

\begin{definition}
\begin{enumerate}
\item By \textsf{Part }we denote the set of partitions of $\omega$
into infinitely many infinite pieces.

\item Given $\mathcal{D}\in$ \textsf{Part }we define \textsf{fin}$\times
$\textsf{fin}$\left(  \mathcal{D}\right)  =\left\{  B\subseteq\omega
\mid\forall^{\infty}D\in\mathcal{D}\left(  \left\vert D\cap B\right\vert
<\omega\right)  \right\}  .$

\item Given two elements $\mathcal{D=}$ $\left\{  D\left(  n\right)  \mid n\in
\omega\right\}  ,\mathcal{C=}\left\{  C\left(  n\right)  \mid n\in
\omega\right\}  $ of \textsf{Part}, we say
$\mathcal{C\longrightarrow D}$ if one of the following conditions holds:
\begin{enumerate}
\item $C\left(  n\right)  $ is almost disjoint with every $D\left(  m\right)
$ for almost all $n\in\omega$ or

\item there is $m\in\omega$ such that $C\left(  n\right)  \subseteq^{\ast
}D\left(  m\right)  $ for almost all $n\in\omega$ or

\item for almost every $n\in\omega$ there is $m_{n}\in\omega$ such that
$C\left(  n\right)  \subseteq^{\ast}D\left(  m_{n}\right)  $ and $m_{k}\neq
m_{r}$ whenever $k\neq r.$
\end{enumerate}
\end{enumerate}
\end{definition}

Note that if $\mathcal{C\longrightarrow D}$ then almost every element of
$\mathcal{C}$ is in \textsf{fin}$\times$\textsf{fin}$\left(  \mathcal{D}%
\right)  .$ Now we define the following:

\begin{definition}
Let $\mathbb{P}$ be the set of all $p=\left(  \mathcal{A}_{p},\mathcal{K}%
_{p}\right)  $ such that $\mathcal{A}_{p}$ is a countable \textsf{AD} family
and $\mathcal{K}_{p}$ is a countable subset of \textsf{Part. }We say that $p=$
$\left(  \mathcal{A}_{p},\mathcal{K}_{p}\right)  \leq q=\left(  \mathcal{A}%
_{q},\mathcal{K}_{q}\right)  $ if the following conditions hold:

\begin{enumerate}
\item $\mathcal{A}_{q}\subseteq\mathcal{A}_{p}$ and $\mathcal{K}_{q}%
\subseteq\mathcal{K}_{p}.$

\item If $\mathcal{C\in K}_{p}\setminus\mathcal{K}_{q}$ and $\mathcal{D\in
K}_{q}$ then $\C\longrightarrow\mathcal{D}.$

\item If $A\in\mathcal{A}_{p}\setminus\mathcal{A}_{q}$ and $\mathcal{D\in
K}_{q}$ then $A\in$ \textsf{fin}$\times$\textsf{fin}$\left(  \mathcal{D}%
\right)  .$
\end{enumerate}
\end{definition}


It is easy to see that $\mathbb{P}$ is a $\sigma$-closed forcing (so it does
not add new reals). Let $\mathcal{\dot{A}}_{gen}$ be the name of ${\textstyle\bigcup}
\{\mathcal{A}_{p}\mid p\in\dot{G}\}$ (where $\dot{G}$ is the name for the
generic filter). It is easy to see that $\mathcal{\dot{A}}_{gen}$ is forced to
be an almost disjoint family, and we will see that $\mathcal{\dot{A}}_{gen}$ is
forced to be a \textsf{MAD} family that can not be extended to a Hurewicz
ideal. Recall that a family $\mathcal{H}\subseteq\left[  \omega\right]
^{\omega}$ is open dense in $\wp\left(  \omega\right)  /$ \textsf{fin} if for
every $A\in\left[  \omega\right]  ^{\omega}$ there is a $B\in\mathcal{H}$ such
that $B\subseteq^{\ast}A$ and $\mathcal{H}$ is closed under almost inclusion.
It is well known and easy to see that every tall ideal on $\omega$ is open
dense in $\wp\left(  \omega\right)  /$ \textsf{fin} and the intersection of
countably many open dense sets is open dense.

\begin{lemma}
$\mathcal{\dot{A}}_{gen}$ is forced to be a \textsf{MAD} family.
\end{lemma}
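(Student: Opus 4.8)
The plan is to exploit the fact, already observed above, that $\mathbb{P}$ is $\sigma$-closed, hence adds no new reals. Consequently it suffices to verify that for every $X \in \left[\omega\right]^{\omega}$ of the ground model the set
\[
D_X = \left\{ q \in \mathbb{P} : \exists A \in \mathcal{A}_q \ \left| A \cap X \right| = \omega \right\}
\]
is dense in $\mathbb{P}$. Granting this: if $G$ is generic and $X \in \left[\omega\right]^{\omega}$ lies in the extension, then $X \in V$, so some $q \in G$ is in $D_X$, and the corresponding $A \in \mathcal{A}_q \subseteq \mathcal{\dot{A}}_{gen}$ witnesses that $X$ is not almost disjoint from $\mathcal{\dot{A}}_{gen}$. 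Since $\mathcal{\dot{A}}_{gen}$ is already forced to be an \textsf{AD} family, this yields that it is forced to be \textsf{MAD}.

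To see that $D_X$ is dense, fix $p = \left(\mathcal{A}_p, \mathcal{K}_p\right)$. If some $A \in \mathcal{A}_p$ already meets $X$ in an infinite set, then $p \in D_X$. Otherwise $X \in \mathcal{A}_p^{\perp}$, and the task reduces to producing an infinite $A \subseteq X$ with $A \in \textsf{fin} \times \textsf{fin}\left(\mathcal{D}\right)$ for every $\mathcal{D} \in \mathcal{K}_p$. For such an $A$, the pair $q := \left(\mathcal{A}_p \cup \{A\}, \mathcal{K}_p\right)$ is a condition extending $p$: clause (1) of the order is clear, clause (2) is vacuous since $\mathcal{K}_q = \mathcal{K}_p$, and clause (3) is precisely the requirement just imposed on $A$; also $A \subseteq X \in \mathcal{A}_p^{\perp}$ makes $\mathcal{A}_p \cup \{A\}$ an \textsf{AD} family. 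Then $A \in \mathcal{A}_q$ witnesses $q \in D_X$.

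The combinatorial core is the construction of $A$. I would enumerate $\mathcal{K}_p = \left\{ \mathcal{D}_n : n < \omega \right\}$ with $\mathcal{D}_n = \left\{ D_n(m) : m < \omega \right\}$, and build a $\subseteq$-decreasing sequence $X = X_{-1} \supseteq X_0 \supseteq X_1 \supseteq \cdots$ of infinite sets: given $X_{n-1}$, if $X_{n-1}$ is almost contained in a finite union of pieces of $\mathcal{D}_n$, put $X_n = X_{n-1}$; otherwise $X_{n-1}$ meets infinitely many pieces of $\mathcal{D}_n$ (else it would be contained in a finite union of them), so pick an infinite $X_n \subseteq X_{n-1}$ with $\left| X_n \cap D_n(m) \right| \leq 1$ for all $m$. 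Finally let $A$ be an infinite pseudointersection of $\left\{ X_n : n < \omega \right\}$ obtained diagonally, so $A \subseteq X_0 \subseteq X$ and $A \setminus X_n$ is finite for each $n$. The verification that $A \in \textsf{fin} \times \textsf{fin}\left(\mathcal{D}_n\right)$ is then routine: if the $n$-th step was of the first type, $A \subseteq^{\ast} X_{n-1}$ is almost covered by finitely many pieces of $\mathcal{D}_n$ and hence meets all the remaining pieces finitely; if it was of the second type, then for every $m$ we have $A \cap D_n(m) \subseteq \left( X_n \cap D_n(m) \right) \cup \left( A \setminus X_n \right)$, which is finite. (The same observation also disposes of any $\mathcal{D}_n$ for which $X$ itself happens already to be almost covered by finitely many of its pieces.)

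The only genuine obstacle is that one cannot simply thin $X$ so as to meet every partition in $\mathcal{K}_p$ in at most one point per piece simultaneously: a set already thinned against $\mathcal{D}_0, \dots, \mathcal{D}_{n-1}$ may collapse into a single piece of $\mathcal{D}_n$. This is exactly why the recursion must keep the ``already trivial'' alternative available at each stage and take a pseudointersection only at the end, and why the two-case check above is necessary. Everything else is bookkeeping.
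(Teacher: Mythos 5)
Your proof is correct and follows the same route as the paper's: show density of the set $D_X$ of conditions $q$ some of whose $A \in \mathcal{A}_q$ meets $X$ infinitely, using that $\mathbb{P}$ is $\sigma$-closed so reals are absolute, and then add to $\mathcal{A}_p$ an infinite $A \subseteq^* X$ lying in $\bigcap_{\mathcal{D} \in \mathcal{K}_p} \textsf{fin}\times\textsf{fin}(\mathcal{D})$. The only difference is presentational: the paper obtains $A$ in one line by quoting the standing remark that each $\textsf{fin}\times\textsf{fin}(\mathcal{D})$ is a tall ideal (hence open dense in $\wp(\omega)/\textsf{fin}$) and that a countable intersection of open dense families is open dense, whereas you unfold that remark into an explicit two-case recursion with a final pseudointersection -- the ``already trivial'' alternative in your recursion is precisely what makes $\textsf{fin}\times\textsf{fin}(\mathcal{D})$ tall.
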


\begin{proof}
Letting $p=\left(  \mathcal{A}_{p},\mathcal{K}_{p}\right)  \in\mathbb{P}$ and
$X\in\left[  \omega\right]  ^{\omega},$ we must find $q=\left(  \mathcal{A}%
_{q},\mathcal{K}_{q}\right)  \leq p$ such that $X$ is not \textsf{AD} with
$\mathcal{A}_{q}$ (this is enough since $\mathbb{P}$ does not add new reals).
Assume that $X$ is \textsf{AD} with $\mathcal{A}_{p}.$ By the previous
remarks, we can find $A\subseteq^{\ast}X$ such that $A\in$ \textsf{fin}%
$\times$\textsf{fin}$\left(  \mathcal{D}\right)  $ for every $\mathcal{D\in
K}_{p}.$ It is clear that $q=\left(  \mathcal{A}_{p}\cup\left\{  A\right\}
,\mathcal{K}_{p}\right)  $ has the desired properties.
\end{proof}


Recall that an ideal $\mathcal{I}$ in a countable set is a $P^{+}$-ideal if for every decreasing
family $\left\{  X_{n}\mid n\in\omega\right\}  \subseteq\mathcal{I}^{+}$ there
is $X\in\mathcal{I}^{+}$ such that $X\subseteq^{\ast}X_{n}$ for every
$n\in\omega.$ We will need the following:

\begin{lemma}
Let $p=\left(  \mathcal{A}_{p},\mathcal{K}_{p}\right)  \in\mathbb{P}$,
$\mathcal{\dot{J}}$ be a name for a $P^{+}$-ideal such that $p\Vdash
``\mathcal{\dot{A}}_{gen}\subseteq$ $\mathcal{\dot{J}}\textquotedblright$ and
$X\in\left[  \omega\right]  ^{\omega}$ such that $p\Vdash``X\in\mathcal{\dot
{J}}^{+}\textquotedblright.$ There are $q\leq p$ and $Y\in\left[  X\right]
^{\omega}$ such that the following conditions hold:
\begin{enumerate}
\item $Y\in\mathcal{A}_{p}^{\perp}.$

\item For every $\mathcal{D}\in\mathcal{K}_{p}$, either $Y$ is \textsf{AD} with
all elements of $\mathcal{D}$ or there is $D\in\mathcal{D}$ such that $Y\subseteq^{\ast}D.$

\item $q\Vdash``Y\in\mathcal{\dot{J}}^{+}\textquotedblright.$
\end{enumerate}
\end{lemma}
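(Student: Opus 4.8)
The plan is to thin out $X$ to the desired $Y$ by a recursion of length $\omega$, using $\sigma$-closedness at the limit. Fix enumerations $\mathcal{A}_p = \{A_n : n \in \omega\}$ and $\mathcal{K}_p = \{\mathcal{D}_n : n \in \omega\}$ with $\mathcal{D}_n = \{D_n(k) : k \in \omega\}$. I would build a $\leq$-decreasing sequence of conditions $p = p_0 \geq p_1 \geq p_2 \geq \cdots$ and a $\subseteq^*$-decreasing sequence of infinite sets $X = X_0 \supseteq^* X_1 \supseteq^* X_2 \supseteq^* \cdots$ with $p_n \Vdash X_n \in \dot{\mathcal{J}}^+$ for all $n$, handling $A_n$ and $\mathcal{D}_n$ at stage $n$. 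First, at stage $n$ put $X_n' = X_n \setminus A_n$; since $p \Vdash A_n \in \dot{\mathcal{A}}_{gen} \subseteq \dot{\mathcal{J}}$ and $p_n \leq p$, still $p_n \Vdash X_n' \in \dot{\mathcal{J}}^+$, and any $Y \subseteq^* X_n'$ is almost disjoint from $A_n$. So the real work is dealing with $\mathcal{D}_n$.

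For $\mathcal{D}_n$ there is a dichotomy. \emph{Case A:} there are $q \leq p_n$ and $k \in \omega$ with $q \Vdash X_n' \cap D_n(k) \in \dot{\mathcal{J}}^+$. Then set $p_{n+1} = q$ and $X_{n+1} = X_n' \cap D_n(k)$; any $Y \subseteq^* X_{n+1}$ satisfies $Y \subseteq^* D_n(k)$, which is the second alternative of clause (2). \emph{Case B:} otherwise. Then, since $p_n$ forces $\dot{\mathcal{J}}$ to be an ideal and hence $\dot{\mathcal{J}}^+ = \wp(\omega) \setminus \dot{\mathcal{J}}$, a routine density argument below $p_n$ shows that $p_n \Vdash X_n' \cap D_n(k) \in \dot{\mathcal{J}}$ for every $k$. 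Consequently the sets $X_n' \setminus (D_n(0) \cup \cdots \cup D_n(j))$, $j \in \omega$, form a $\subseteq$-decreasing sequence forced by $p_n$ to lie in $\dot{\mathcal{J}}^+$; as $p_n$ forces $\dot{\mathcal{J}}$ to be a $P^+$-ideal, it forces this sequence to have a pseudo-intersection in $\dot{\mathcal{J}}^+$, and since $\mathbb{P}$ is $\sigma$-closed, hence adds no reals, I can pass to some $p_{n+1} \leq p_n$ which decides such a pseudo-intersection to equal a ground model set $X_{n+1}$. Then $p_{n+1} \Vdash X_{n+1} \in \dot{\mathcal{J}}^+$, and $X_{n+1} \cap D_n(j)$ is finite for every $j$, so any $Y \subseteq^* X_{n+1}$ is almost disjoint from every element of $\mathcal{D}_n$, the first alternative of clause (2). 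In both cases $X_{n+1} \subseteq X_n'$, so the $X_n$ stay $\subseteq^*$-decreasing.

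To finish, use $\sigma$-closure to choose $q' \leq p_n$ for all $n$. Then $q'$ forces $\{X_n : n \in \omega\}$ to be a $\subseteq^*$-decreasing sequence of members of $\dot{\mathcal{J}}^+$, so using once more that $\dot{\mathcal{J}}$ is forced to be $P^+$ and that $\mathbb{P}$ adds no reals, I can find $q \leq q'$ and a ground model infinite set $Y_0$ with $q \Vdash Y_0 \in \dot{\mathcal{J}}^+$ and $Y_0 \subseteq^* X_n$ for every $n$. Setting $Y = Y_0 \cap X \in \left[X\right]^{\omega}$, we have $Y =^* Y_0$, so $q \Vdash Y \in \dot{\mathcal{J}}^+$, which is (3). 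From $Y \subseteq^* X_{n+1} \subseteq \omega \setminus A_n$ for each $n$ we get $Y \in \mathcal{A}_p^\perp$, which is (1); and the Case A / Case B analysis gives (2). Finally $q \leq q' \leq p$, as required.

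The main obstacle I expect is the interplay between the name $\dot{\mathcal{J}}$ and ground model sets: one must repeatedly convert "$\dot{\mathcal{J}}$ is forced to be a $P^+$-ideal" into honest ground model pseudo-intersections (this is precisely where $\sigma$-closedness is used), and one must arrange the Case A / Case B dichotomy for the partitions so that in Case B positivity is genuinely preserved all along the tower $X_n' \setminus (D_n(0) \cup \cdots \cup D_n(j))$ before invoking $P^+$.
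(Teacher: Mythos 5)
Your proposal is correct and follows essentially the same route as the paper's very terse proof: the paper records the single-partition dichotomy (either some $Z\cap C(n)$ is positive, or one takes a $P^+$-pseudointersection of the tails) as a preliminary remark and then disposes of the recursion over the countable families $\mathcal{A}_p$, $\mathcal{K}_p$ in one line, while you spell out the bookkeeping that the paper elides — the Case A/Case B split below $p_n$, the use of $\sigma$-closedness to decide the pseudointersections as ground-model sets, and the final fusion via $P^+$-ness of $\dot{\mathcal{J}}$. No gaps; the only cosmetic point is that in Case B you get $X_{n+1}\subseteq^* X_n'$ rather than $\subseteq$, which you already note and which is harmless.
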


\begin{proof}
We first note that if $\mathcal{I}$ is a $P^{+}$-ideal,
$Z\in\mathcal{I}^{+}$ and $\mathcal{C}=\left\{  C\left(  n\right)  \mid
n\in\omega\right\}  \in$ \textsf{Part}, then there is $W\in\left[  Z\right]
^{\omega}\cap\mathcal{I}^{+}$ such that either $W$ is \textsf{AD} with
$\mathcal{C}$ or there is $C\in\mathcal{C}$ such that $W\subseteq C.$ Indeed,
if there is $n\in\omega$ such that $Z\cap C\left(  n\right)  \in
\mathcal{I}^{+}$ then we define $W=Z\cap C\left(  n\right)  ,$ and if this is not
the case, then $\{Z\setminus{\textstyle\bigcup\limits_{i\leq n}}
C\left(  i\right)  \mid n\in\omega\}\subseteq\mathcal{I}^{+}$ forms a
decreasing sequence, and we just let $W\subseteq Z$ be a pseudointersection in
$\mathcal{I}^{+}.$


To prove the lemma, note that if $A\in\mathcal{A}_{p},$ then $p$ forces that $X\setminus A$ is
in $\mathcal{\dot{J}}^{+}.$ Since both $\mathcal{A}_{p}$ and
$\mathcal{K}_{p}$ are countable, the result then follows by the previous remarks.
\end{proof}


With the lemma, we can now prove the following:

\begin{lemma}   \label{Partlemma}
Let $p=\left(  \mathcal{A}_{p},\mathcal{K}_{p}\right)  \in\mathbb{P}$,
$\mathcal{\dot{J}}$ be a name for a $P^{+}$-ideal such that $p\Vdash
``\mathcal{\dot{A}}_{gen}\subseteq$ $\mathcal{\dot{J}}\textquotedblright$ and
$\left\{  X_{n}\mid n\in\omega\right\}  \subseteq\left[  \omega\right]
^{\omega}$ such that $X_{n}\cap X_{m}=\emptyset$ whenever $n\neq m$ and
$p\Vdash``X_{n}\in\mathcal{\dot{J}}^{+}\textquotedblright$ for every
$n\in\omega.$ There are $q=\left(  \mathcal{A}_{q},\mathcal{K}_{q}\right)
\leq p,$ $W\in\left[  \omega\right]  ^{\omega}$ and $\left\{  Y_{n}\mid n\in
W\right\} \in$ \textsf{Part} such that the following
conditions hold:
\begin{enumerate}
\item $Y_{n}\subseteq^{\ast}X_{n}$ for every $n\in W.$

\item $\mathcal{Y=}$ $\left\{  Y_{n}\mid n\in W\right\}  \in\mathcal{K}_{q}$
and $\Y \longrightarrow \D$ for every $\D \in \K_q \sem \{ \Y \}$.

\item $q\Vdash``Y_{n}\in\mathcal{\dot{J}}^{+}\textquotedblright$ for every
$n\in W.$

\item $q$ forces that every element in $\mathcal{\dot{A}}_{gen}$ has infinite
intersection with only finitely many elements in $\mathcal{Y}.$
\end{enumerate}
\end{lemma}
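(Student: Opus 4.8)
The plan is to run a $\sigma$-closed fusion of length $\omega$ that iterates the preceding lemma, and then to thin the index set and patch the columns. Put $p_0 = p$; given $p_k$, the hypotheses of the preceding lemma hold for $p_k$, $\dot{\mathcal{J}}$ and $X_k$ (they are inherited from $p$ since $p_k \le p$), so we obtain $p_{k+1} \le p_k$ and $Y_k \in [X_k]^\omega$ such that $Y_k \in \mathcal{A}_{p_k}^\perp$; for every $\mathcal{D} \in \mathcal{K}_{p_k}$, either $Y_k$ is almost disjoint with all members of $\mathcal{D}$ or $Y_k \subseteq^* D$ for some $D \in \mathcal{D}$; and $p_{k+1} \Vdash$ ``$Y_k \in \dot{\mathcal{J}}^+$''. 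Since $\mathbb{P}$ is $\sigma$-closed, $q_0 := \bigl( \bigcup_k \mathcal{A}_{p_k}, \bigcup_k \mathcal{K}_{p_k} \bigr)$ is a condition below every $p_k$, and $\mathcal{K}_{q_0}$ is still countable; as the $X_k$ are pairwise disjoint so are the $Y_k$, and $q_0$ forces each $Y_k$ to be $\dot{\mathcal{J}}$-positive. The delicate feature is that each use of the preceding lemma may enlarge the \textsf{Part}-coordinate uncontrollably, so $\mathcal{K}_{q_0}$ typically extends $\mathcal{K}_p$ strictly, and item~(2) must be secured against \emph{all} of $\mathcal{K}_{q_0}$, not just $\mathcal{K}_p$.

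To handle this I would use that ``$\longrightarrow$'' is a tail condition on columns: each $\mathcal{D} \in \mathcal{K}_{q_0}$ already lies in $\mathcal{K}_{p_{i(\mathcal{D})}}$ for some finite $i(\mathcal{D})$, so by the dichotomy above $Y_k$ is, for every $k \ge i(\mathcal{D})$, either almost disjoint with all of $\mathcal{D}$ or almost inside a single member of $\mathcal{D}$. Enumerating $\mathcal{K}_{q_0} = \{ \mathcal{D}_l : l \in \omega \}$, build infinite sets $\omega = B_0 \supseteq B_1 \supseteq \cdots$ with $B_{l+1} \subseteq B_l \cap [i(\mathcal{D}_l), \infty)$ chosen so that exactly one clause in the definition of ``$\longrightarrow \mathcal{D}_l$'' is realized along $\{ Y_k : k \in B_{l+1} \}$: split $B_l \cap [i(\mathcal{D}_l),\infty)$ into the $k$ for which $Y_k$ is almost disjoint with all of $\mathcal{D}_l$ and the $k$ for which $Y_k \subseteq^* \mathcal{D}_l(m)$ for some $m$, keep whichever part is infinite, and in the second case further pass to a fixed value of $m$ (if one recurs infinitely often) or to an injective choice $k \mapsto m_k$. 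Letting $W$ be an infinite pseudointersection of $\langle B_l : l \in \omega \rangle$, the family $\{ Y_n : n \in W \}$ satisfies $\{ Y_n : n \in W \} \longrightarrow \mathcal{D}$ for every $\mathcal{D} \in \mathcal{K}_{q_0}$. Finally I would partition $\omega \setminus \bigcup_{n \in W} Y_n$ into finite sets and add one of them to each of infinitely many $Y_n$; after this finite modification the columns still satisfy $Y_n \subseteq^* X_n$, the collection $\mathcal{Y} := \{ Y_n : n \in W \}$ is a genuine member of \textsf{Part}, and $\dot{\mathcal{J}}$-positivity, almost-disjointness from $\bigcup_k \mathcal{A}_{p_k}$, and all the relations $\mathcal{Y} \longrightarrow \mathcal{D}$ are preserved (each of these is insensitive to finite changes in the columns).

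The condition $q := ( \mathcal{A}_{q_0}, \mathcal{K}_{q_0} \cup \{ \mathcal{Y} \} )$ is then as required. One checks $q \le q_0 \le p$, the only non-trivial point being the coherence clause for the new column $\mathcal{Y} \in \mathcal{K}_q \setminus \mathcal{K}_{q_0}$, which is exactly $\mathcal{Y} \longrightarrow \mathcal{D}$ for every $\mathcal{D} \in \mathcal{K}_{q_0}$; items~(1), (2), (3) are then immediate (each $Y_n \subseteq^* X_n$; $\mathcal{Y} \in \mathcal{K}_q$ and $\mathcal{K}_q \setminus \{\mathcal{Y}\} = \mathcal{K}_{q_0}$; and $q \le q_0 \le p_{n+1}$ forces $Y_n \in \dot{\mathcal{J}}^+$). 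For~(4), pass to a generic $G \ni q$ and let $A \in \dot{\mathcal{A}}_{gen}[G]$; by directedness of $G$ pick $r \le q$ in $G$ with $A \in \mathcal{A}_r$. If $A \notin \mathcal{A}_q$, then since $\mathcal{Y} \in \mathcal{K}_q$ the definition of $\le$ in $\mathbb{P}$ (new elements of $\mathcal{A}$ must lie in \textsf{fin}$\times$\textsf{fin} of old columns) gives $A \in \textsf{fin}\times\textsf{fin}(\mathcal{Y})$; if $A \in \mathcal{A}_q = \bigcup_k \mathcal{A}_{p_k}$, then $A \in \mathcal{A}_{p_k}$ for some $k$, and $Y_n \in \mathcal{A}_{p_n}^\perp \subseteq \mathcal{A}_{p_k}^\perp$ for all $n \ge k$, so again $A$ meets only finitely many columns of $\mathcal{Y}$ infinitely. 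I expect the main obstacle to be precisely this bookkeeping: the uncontrolled growth of the \textsf{Part}-coordinate during the fusion set against requirement~(2); the remaining steps — that $q_0$ is a genuine lower bound of the fusion, that the finite patch is harmless, and the forcing verification of~(4) — are routine.
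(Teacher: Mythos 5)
Your proposal is correct and follows essentially the same route as the paper: a fusion of length $\omega$ using the preceding lemma, extracting $W$ and the $Y_n$ by intersecting the countably many open dense sets of $\wp(\omega)/\mathsf{fin}$ implicit in the $\longrightarrow$-dichotomies, and then taking $q = (\mathcal{A}_{p_\omega}, \mathcal{K}_{p_\omega} \cup \{\mathcal{Y}\})$. The only difference is one of presentation: where the paper compresses the diagonalization into the remark that "$W$ can be found since its construction only requires intersecting countably many open dense subsets of $\wp(\omega)/\mathsf{fin}$," you carry it out explicitly (the decreasing $B_l$'s, the three-way case split for each $\mathcal{D}_l$, the patching to land in $\mathsf{Part}$), and you also spell out the verification of item (4), which the paper leaves as "easy to see."
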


\begin{proof}
Using the previous lemma, we recursively construct a sequence $\left\langle
\left(  p_{n},Z_{n}\right)  \right\rangle _{n\in\omega}$ with the following properties:
\begin{enumerate}
\item $\left\langle p_{n}\right\rangle _{n\in\omega}$ is decreasing and
$p_{0}\leq p.$

\item $Z_{n}\in\left[  X_{n}\right]  ^{\omega}$.

\item $p_{n}\Vdash``Z_{n}\in\mathcal{\dot{J}}^{+}\textquotedblright.$

\item $Z_{n}\in\mathcal{A}_{p_{n-1}}^{\perp}$ (where $p_{-1}=p$).

\item For every $\mathcal{D}\in\mathcal{K}_{p_{n-1}}$ either $Z_{n}$ is
\textsf{AD} with $\mathcal{D}$ or there is $D\in\mathcal{D}$ such that
$Z_{n}\subseteq^{\ast}D.$
\end{enumerate}
The construction is straightforward. Let $p_{\omega
}=\left(  \mathcal{A}_{p_{\omega}},\mathcal{K}_{p_{\omega}}\right)  $ where
$\mathcal{A}_{p_{\omega}}={\textstyle\bigcup}
\mathcal{A}_{p_{n}}$ and $\mathcal{K}_{p_{\omega}}={\textstyle\bigcup}
\mathcal{K}_{p_{n}}$.   
By our construction, $p_{\omega}$ has the following properties:
\begin{enumerate}
\item If $A\in\mathcal{A}_{p_{\omega}}$ then $A\cap Z_{n}$ \ is finite for
almost every $n\in\omega.$

\item If $\mathcal{D}\in\mathcal{K}_{p_{\omega}}$ \ then for almost all
$n\in\omega,$ either $Z_{n}$ is \textsf{AD} with $\mathcal{D}$ or there is
$D\in\mathcal{D}$ such that $Z_{n}\subseteq^{\ast}D.$
\end{enumerate}
We can then find $W\in\left[  \omega\right]  ^{\omega}$ and $\mathcal{Y}%
=\left\{  Y_{n}\mid n\in W\right\}  $ such that the following conditions hold:
\begin{enumerate}
\item $Y_{n}=^{\ast}Z_{n}$ for every $n\in W.$

\item $\mathcal{Y}\in$ \textsf{Part}$\mathsf{.}$

\item $\mathcal{Y}\longrightarrow\mathcal{D}$ for every $\mathcal{D}%
\in\mathcal{K}_{p_{\omega}}.$
\end{enumerate}
Such $W$ can be found since its construction only requires intersecting
countably many open dense subsets of $\wp\left(  \omega\right)  /$
\textsf{fin}$.$ Letting $q=\left(  \mathcal{A}_{p_{\omega}},\mathcal{K}%
_{p_{\omega}}\cup\left\{  \mathcal{Y}\right\}  \right)  ,$ it is easy to see
that $q$ has the desired properties.
\end{proof}

It is well known that every Canjar ideal is a $P^{+}$-ideal (see
\cite{MathiasPrikryandLaverPrikryTypeForcing},
\cite{MathiasForcingandCombinatorialCoveringPropertiesofFilters} or
\cite{CanjarFilters}). We now have the following result, which is the heart of
the construction:

\begin{proposition}
Let $G\subseteq\mathbb{P}$ be a generic filter. The following holds in
$V\left[  G\right]  :$ If $\mathcal{J}$ is a Canjar ideal such that
$\mathcal{A}_{gen}\subseteq\mathcal{J},$ then there are no $\left\{  X_{n}\mid
n\in\omega\right\}  \subseteq\mathcal{J}^{+}$ such that $X_{n}\cap
X_{m}=\emptyset$ for every $n\in\omega.$
\end{proposition}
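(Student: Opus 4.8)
The plan is to argue by contradiction, using Lemma~\ref{Partlemma} to replace the putative disjoint family by a partition that the generic filter has already absorbed into its ``$\mathcal{K}$''--part, and then to play the Canjar property of $\mathcal{J}$ against the maximality of $\mathcal{A}_{gen}$ together with the design of $\mathbb{P}$. So suppose that in $V[G]$ there were a Canjar ideal $\mathcal{J}$ with $\mathcal{A}_{gen}\subseteq\mathcal{J}$ and a pairwise disjoint $\{X_{n}\mid n\in\omega\}\subseteq\mathcal{J}^{+}$. Since $\mathbb{P}$ is $\sigma$--closed the $X_{n}$ lie in $V$, and by the truth lemma we may fix $p\in G$ and a name $\mathcal{\dot{J}}$ for $\mathcal{J}$ with $p\Vdash$ ``$\mathcal{\dot{J}}$ is Canjar, $\mathcal{\dot{A}}_{gen}\subseteq\mathcal{\dot{J}}$, and $X_{n}\in\mathcal{\dot{J}}^{+}$ for all $n$''; since every Canjar ideal is a $P^{+}$--ideal, $p$ also forces $\mathcal{\dot{J}}$ to be $P^{+}$. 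Every $p'\leq p$ still forces these statements, so Lemma~\ref{Partlemma} applies below each such $p'$, and hence the set of $q\leq p$ such that $\mathcal{K}_{q}$ contains a partition $\mathcal{Y}=\{Y_{n}\mid n\in W\}\in$ \textsf{Part} with $Y_{n}\subseteq^{\ast}X_{n}$, with $q\Vdash$``$Y_{n}\in\mathcal{\dot{J}}^{+}$ for all $n\in W$'', and with $q\Vdash$``every element of $\mathcal{\dot{A}}_{gen}$ has infinite intersection with only finitely many of the $Y_{n}$'', is dense below $p$. Thus $G$ meets it, and in $V[G]$ we obtain a partition $\mathcal{Y}=\{Y_{n}\mid n\in W\}$ of $\omega$, lying in $\mathcal{K}_{q}$ for some $q\in G$, with each $Y_{n}\in\mathcal{J}^{+}$ and with every $A\in\mathcal{A}_{gen}$ having infinite intersection with only finitely many of the $Y_{n}$. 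Write $\mathcal{H}=\{B\subseteq\omega\mid |B\cap Y_{n}|<\omega\text{ for all }n\in W\}$, a copy of $\emptyset\times\textsf{fin}$ carried by $\mathcal{Y}$.

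The heart of the proof is the claim that $\mathcal{H}\subseteq\mathcal{I}(\mathcal{A}_{gen})$, and hence $\mathcal{H}\subseteq\mathcal{J}$. This is where the ``$\mathcal{K}$''--part of $\mathbb{P}$ does its work: once a condition $q_{0}\in G$ has $\mathcal{Y}\in\mathcal{K}_{q_{0}}$, clause~(3) in the definition of $\leq$ forces every almost disjoint set enumerated into $\mathcal{A}_{gen}$ thereafter to lie in $\textsf{fin}\times\textsf{fin}(\mathcal{Y})$, i.e.\ to meet almost every $Y_{n}$ in a finite set; and a density argument below $q_{0}$ shows that $\mathcal{A}_{gen}$ acquires, for every prescribed profile $c\colon W\to\omega$, an element all of whose restrictions $A\cap Y_{n}$ have size at least $c(n)$ for cofinitely many $n$, so that finitely many such elements of $\mathcal{A}_{gen}$ cover any given $B\in\mathcal{H}$ modulo a finite set. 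Carrying this out — in particular verifying that such ``wide'' almost disjoint sets can be added compatibly with the whole of $\mathcal{K}_{q_{0}}$, which is exactly what clauses~(2) and~(3) and the relation $\longrightarrow$ are engineered to permit — is the main technical obstacle of the argument.

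Granted the claim, the contradiction follows from Canjar--ness alone. For $m\in\omega$ put $Z_{m}=\{\{y\}\mid y\in Y_{n},\ n\in W,\ n\geq m\}$. Since $Y_{n}\notin\mathcal{J}$ and $\mathcal{J}$ is downward closed, no member of $\mathcal{J}$ contains $Y_{n}$ for any $n\geq m$, so $Z_{m}\in(\mathcal{J}^{<\omega})^{+}$. As $\mathcal{J}$ is Canjar there are finite sets $F_{m}\in[Z_{m}]^{<\omega}$ with $\bigcup_{m}F_{m}\in(\mathcal{J}^{<\omega})^{+}$; let $S=\{y\mid\{y\}\in\bigcup_{m}F_{m}\}$. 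Each $F_{m}$ is finite and consists of singletons from columns indexed $\geq m$, so $S\cap Y_{n}$ is finite for every $n\in W$, that is, $S\in\mathcal{H}$; on the other hand $\bigcup_{m}F_{m}\in(\mathcal{J}^{<\omega})^{+}$ says precisely that $S$ is contained in no member of $\mathcal{J}$, so $S\notin\mathcal{J}$ (note $S$ is infinite, as a finite $S$ would lie in $\mathcal{J}$). Thus $S\in\mathcal{H}\setminus\mathcal{J}$, contradicting the claim.
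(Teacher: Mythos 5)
Your proposal opens the way the paper does, fixing a name $\mathcal{\dot{J}}$ for a $P^+$-ideal and invoking Lemma~\ref{Partlemma} to trade the disjoint family of positive sets for a partition $\mathcal{Y}=\{Y_n\mid n\in W\}$ that is already committed to the $\mathcal{K}$-part of a condition in $G$. From there, however, you take a shortcut that does not go through.

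The heart of your argument is the assertion that $\mathcal{H}=\{B\subseteq\omega : B\cap Y_n\text{ is finite for all }n\in W\}\subseteq\mathcal{I}(\mathcal{A}_{gen})$, and the density sketch you offer for it is a non-sequitur. Producing an $A\in\mathcal{A}_{gen}$ with $|A\cap Y_n|\geq c(n)$ for cofinitely many $n$ says nothing about containing the particular points of a given $B\in\mathcal{H}$: the finite sets $A\cap Y_n$ and $B\cap Y_n$ can simply be disjoint inside the infinite set $Y_n$, so ``finitely many such elements cover any $B\in\mathcal{H}$ modulo finite'' does not follow from the density statement. Worse, the inclusion itself runs against the design of $\mathbb{P}$. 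Clause (3) of $\leq$ makes $\mathcal{A}_{gen}$ \emph{thin} relative to every partition placed in the $\mathcal{K}$-part, not thick: once some $\mathcal{D}$ with $\mathcal{D}\longrightarrow\mathcal{Y}$ enters $\mathcal{K}$, all subsequently added members of $\mathcal{A}_{gen}$ lie in $\textsf{fin}\times\textsf{fin}(\mathcal{D})$. If $\mathcal{D}\longrightarrow\mathcal{Y}$ holds via clause (a) (cofinitely many $D\in\mathcal{D}$ almost disjoint from all $Y_n$), one can pick a selector $B$ of $\mathcal{Y}$ (so $B\in\mathcal{H}$) meeting infinitely many $D\in\mathcal{D}$ infinitely, with $B$ almost disjoint from the then-current $\mathcal{A}_q$. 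Any finite union $A_1\cup\dots\cup A_k$ from the eventual $\mathcal{A}_{gen}$ then misses $B$ on an infinite set: the old $A_i$'s contribute only finitely much to $B$, and the new ones lie in $\textsf{fin}\times\textsf{fin}(\mathcal{D})$, hence meet all but finitely many $D$'s finitely, while $B$ meets infinitely many $D$'s infinitely. So such a $B$ lies in $\mathcal{H}\setminus\mathcal{I}(\mathcal{A}_{gen})$, and the inclusion you want fails.

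This is exactly why the paper's proof is considerably more delicate. It never tries to absorb all of $\mathcal{H}$ (or even an arbitrary $B\in\mathcal{H}$) into $\mathcal{I}(\mathcal{A}_{gen})$. Instead it fixes the family $\langle F_n\rangle$ of selector-sets and shows that for each fixed $q_1\leq q$ and each fixed choice $\langle H_n\rangle$ with $H_n\in[F_n]^{<\omega}$, some $q_2\leq q_1$ adds a \emph{single} set $A\in\mathcal{I}(\mathcal{A}_{q_2})$ that meets every element of $\bigcup_n H_n$ while remaining compatible with the entire $\mathcal{K}_{q_1}$; the long list of requirements on ``suitable sequences'' is precisely the compatibility bookkeeping that your sketch labels the main technical obstacle. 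Hitting a prescribed countable family of finite sets is achievable; swallowing an arbitrary $B\in\mathcal{H}$ is not. Your final paragraph, which uses Canjar-ness to produce $S\in\mathcal{H}\setminus\mathcal{J}$, is a correct and pleasant observation, but it only closes the loop if $\mathcal{H}\subseteq\mathcal{J}$, which you have not established and which, as the above shows, cannot be derived from $\mathcal{I}(\mathcal{A}_{gen})\subseteq\mathcal{J}$.
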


\begin{proof}
Let $p\in\mathbb{P},$ $\mathcal{\dot{J}}$ a name for a $P^{+}$-ideal extending
$\mathcal{\dot{A}}_{gen}$ and $\left\{  X_{n}\mid n\in\omega\right\}  $ a
pairwise disjoint family such that $p\Vdash``\left\{  X_{n}\mid n\in
\omega\right\}  \subseteq\mathcal{\dot J}^{+}\textquotedblright.$ We will find an
extension of $p$ that forces that $\mathcal{\dot{J}}$ is not Canjar. By the
previous lemma, let $q=\left(  \mathcal{A}_{q},\mathcal{K}_{q}\right)  \leq
p,$ $W\in\left[  \omega\right]  ^{\omega}$ and $\left\{  Y_{n}\mid n\in
\omega\right\}  \subseteq\left[  \omega\right]  ^{\omega}$ be such that the
following conditions hold:
\begin{enumerate}
\item $Y_{n}\subseteq^{\ast}X_{n}$ for every $n\in W.$

\item $\mathcal{Y=}$ $\left\{  Y_{n}\mid n\in W\right\}  \in\mathcal{K}_{q}$
and $\Y \longrightarrow \D$ for every $\D \in \K_q \sem \{ \Y \}$.

\item $q\Vdash``Y_{n}\in\mathcal{\dot{J}}^{+}\textquotedblright$ for every
$n\in W.$

\item $q$ forces that every element in $\mathcal{\dot{A}}_{gen}$ has infinite
intersection with only finitely many elements in $\mathcal{Y}.$
\end{enumerate}
Let $W=\left\{  w_{n}\mid n\in\omega\right\}  .$ For every $n\in\omega,$ we
define $F_{n}$ to be the set of all $\left\{  a_{0},...,a_{n}\right\}  $ such
that $a_{0}<a_{1}<...<a_{n}$ and $a_{i}\in Y_{w_{i}}$ for every $i\leq n.$
It is easy to see that $q\Vdash``F_{n}\in(\mathcal{\dot{J}
}^{<\omega})^{+}\textquotedblright$ for every $n\in\omega.$ We claim that $q$
forces that $\left\langle F_{n}\mid n\in\omega\right\rangle $ witnesses that
(in the extension) $\mathcal{\dot{J}}$ is not a Canjar ideal. It is enough to
prove the following: For every $q_{1}\leq q$ and $\left\langle H_{n}%
\right\rangle _{n\in\omega}$ such that $H_{n}\in\left[  F_{n}\right]
^{<\omega},$ there is $q_{2}=\left(  \mathcal{A}_{q_{2}},\mathcal{K}_{q_{2}%
}\right)  \leq q_{1}$ and $A\in\mathcal{I}\left(  \mathcal{A}_{q_{2}}\right)
$ such that $A$ has non empty intersection with every element of $H_{n}$ for
every \thinspace$n\in\omega$ (recall that $\mathcal{\dot{J}}$ is forced to
extend $\mathcal{\dot{A}}_{gen}$).

Let $q_{1}=\left(  \mathcal{A}_{q_{1}},\mathcal{K}_{q_{1}}\right)  $ be an
extension of $q$ and let $\left\langle H_{n}\right\rangle _{n\in\omega}$ be such that
$H_{n}\in\left[  F_{n}\right]  ^{<\omega}.$ We fix the following items:

\begin{enumerate}
\item Let $\mathcal{A}_{q_{1}}=\left\{  A_{n}\mid n\in\omega\right\}  $ and
define $B_{n}={\textstyle\bigcup\limits_{i\leq n}}
A_{i}$ for every $n\in\omega.$

\item Let $\mathcal{L=}\left\{  \mathcal{C\in K}_{q_{1}}\mid
\mathcal{Y\longrightarrow C}\right\}  .$

\begin{enumerate}
\item Let $\mathcal{L}_{\text{\textsf{AD}}}$ be the family of all
$\mathcal{C}\in\mathcal{L}$ such that almost every element of $\mathcal{Y}$ is
\textsf{AD} with $\mathcal{C}.$ Fix an enumeration $\mathcal{L}%
_{\text{\textsf{AD}}}=\{\C_{n}^{\text{\textsf{AD}}}\mid n\in\omega\}$ where
$\C_{n}^{\text{\textsf{AD}}}=\{C_{n}^{\text{\textsf{AD}}}\left(  m\right)  \mid
m\in\omega\}.$ Let $\overline{C}_{n}^{\text{\textsf{AD}}}\left(  m\right)
=C_{n}^{\text{\textsf{AD}}}\left(  0\right)  \cup...\cup C_{n}%
^{\text{\textsf{AD}}}\left(  m\right)  .$

\item Let $\mathcal{L}_{=}$ be the family of all $\mathcal{C}\in\mathcal{L}$
such that there is $C\in\mathcal{C}$ such that almost all elements of $\mathcal{Y}$
are almost contained in $C.$ Fix an enumeration $\mathcal{L}_{\mathsf{=}%
}=\{\C_{n}^{\mathsf{=}}\mid n\in\omega\}$ where $\C_{n}^{\mathsf{=}}%
=\{C_{n}^{\mathsf{=}}\left(  m\right)  \mid m\in\omega\}$ and let $c_{n}%
^{=}\in\omega$ be such that almost every element of $\mathcal{Y}$ is almost
contained in $C_{n}^{=}\left(  c_{n}^{=}\right)  .$

\item Let $\mathcal{L}_{\neq}$ be the family of all $\mathcal{C}\in
\mathcal{L}$ such that for almost all $n\in W$, the $Y_n$ are almost contained in
pairwise distinct members of $\C$.
 Fix an enumeration $\mathcal{L}_{\mathsf{\neq}}=\{\C_{n}^{\mathsf{\neq}}\mid n\in\omega\}$ where
$\C_{n}^{\mathsf{\neq}}=\{C_{n}^{\mathsf{\neq}}\left(  m\right)  \mid
m\in\omega\}.$
\end{enumerate}
Note that $\LLL$ is the disjoint union of $\LLL_ {\text{\textsf{AD}}}$, $\LLL_=$,
and $\LLL_{\neq}$.

\item Let $\mathcal{R}$ $=\left\{  \mathcal{D\in K}_{q_{1}}\mid
\mathcal{D\longrightarrow Y}\right\}  .$

\begin{enumerate}
\item Let $\mathcal{R}_{\text{\textsf{AD}}}$ be the family of all
$\mathcal{D}\in\mathcal{R}$ such that almost every element of $\mathcal{D}$ is
\textsf{AD} with $\mathcal{Y}.$ Fix an enumeration $\mathcal{R}%
_{\text{\textsf{AD}}}=\{\D_{n}^{\text{\textsf{AD}}}\mid n\in\omega\}$ where
$\D_{n}^{\text{\textsf{AD}}}=\{D_{n}^{\text{\textsf{AD}}}\left(  m\right)  \mid
m\in\omega\}.$ Let $d_{n}^{\text{\textsf{AD}}}\in\omega$ such that if
$d_{n}^{\text{\textsf{AD}}}\leq m$ then $D_{n}^{\text{\textsf{AD}}}\left(
m\right)  $ is \textsf{AD} with $\mathcal{Y}$ and  let $\overline{D}%
_{n}^{\text{\textsf{AD}}}\left(  m\right)  ={\textstyle\bigcup}
\{D_{n}^{\text{\textsf{AD}}}\left(  i\right)  \mid d_{n}^{\text{\textsf{AD}}%
}\leq i\leq m\}.$

\item Let $\mathcal{R}_{=}$ be the family of all $\mathcal{D}\in\mathcal{R}$
such there is $Y_{n}\in\mathcal{Y}$ such that almost all elements of
$\mathcal{D}$ are almost contained in $Y_{n}.$ Fix an enumeration
$\mathcal{R}_{\mathsf{=}}=\{\D_{n}^{\mathsf{=}}\mid n\in\omega\}$ where
$\D_{n}^{\mathsf{=}}=\{D_{n}^{\mathsf{=}}\left(  m\right)  \mid m\in\omega\},$
and let $d_{n}^{=},e_{n}^{=}\in\omega$ such that $D_{n}^{=}\left(  m\right)
\subseteq^{\ast}Y_{d_{n}^{=}}$ for every $m\geq e_{n}^{=}.$

\item Let $\mathcal{R}_{\neq}$ be the family of all $\mathcal{D}\in
\mathcal{R}$ such that almost all $D\in\mathcal{D}$ are almost contained
in pairwise distinct members of $\Y$. Fix an enumeration $\mathcal{R}_{\mathsf{\neq}}%
=\{\D_{n}^{\mathsf{\neq}}\mid n\in\omega\}$ where $\D_{n}^{\mathsf{\neq}%
}=\{D_{n}^{\mathsf{\neq}}\left(  m\right)  \mid m\in\omega\}.$
\end{enumerate}
Note that $\R$ is the disjoint union of $\R_ {\text{\textsf{AD}}}$, $\R_=$,
and $\R_{\neq}$.

\item Let $h:\omega\longrightarrow\omega$ be an increasing function such that
for every $n\in\omega,$ the following conditions hold:

\begin{enumerate}
\item If $h\left(  n\right)  \leq m$ then $B_{n}$ is almost disjoint with
$Y_{w_m}.$

\item If $h\left(  n\right)  \leq m$ then $Y_{w_m}$ is almost disjoint with
every element of $\C_{n}^{\text{\textsf{AD}}}.$

\item If $h\left(  n\right)  \leq m$ then $Y_{w_m}\subseteq^{\ast}C_{n}%
^{=}\left(  c_{n}^{=}\right)  .$

\item If $h\left(  n\right)  \leq m<k$ then $Y_{w_m}$ and $Y_{w_k}$ are almost
subsets of different elements in $\C_{n}^{\neq}.$

\item If $h\left(  n\right)  \leq m$ then $D_{n}^{\text{\textsf{AD}}}\left(
m\right)  $ is \textsf{AD} with $\mathcal{Y}$   (i.e. $d_n^{\text{\textsf{AD}}} \leq h(n)$).

\item If $h\left(  n\right)  \leq m$ then $D_{n}^{\mathsf{=}}\left(  m\right)
\subseteq^{\ast}Y_{d_{n}^{=}}$   (i.e. $e_n^= \leq h(n)$).

\item If $h\left(  n\right)  \leq m<k$ then $D_{n}^{\neq}\left(  m\right)  $
and $D_{n}^{\neq}\left(  k\right)  $ are almost subsets of different elements
in $\mathcal{Y}.$
\end{enumerate}
\end{enumerate}
Note that $\K_{q_1} = \LLL \cup \R \cup \{ \Y \}$. This follows from the construction of
$\Y$ and $q_1 \leq q$.

We will say that a finite sequence $\left\langle \left(  P_{0},s_{0}\right)
,...,\left(  P_{n},s_{n}\right)  \right\rangle $ is \emph{suitable }if the
following conditions hold:

\begin{enumerate}
\item $P_{0},...,P_{n}$ are non-empty finite consecutive intervals of
$\omega.$

\item $\min\left(  P_{0}\right)  =0.$

\item $s_{0}=P_{0}\cap\left({\textstyle\bigcup}
\left\{  Y_{w_j}\mid j<h\left(  0\right)  \right\}  \right)  .$

\item $s_{i+1}=P_{i+1}\cap\left({\textstyle\bigcup}
\left\{  Y_{w_j}\mid h\left(  i\right)  \leq j<h\left(  i+1\right)  \right\}
\right)  .$

\item $s_{i}\neq\emptyset$ for every $i\leq n.$

\item If $m<h\left(  n\right)  $ then ${\textstyle\bigcup\limits_{i\leq n}}
s_{i}$ has non-empty intersection with every element of $H_{m}.$

\item $B_{i}\cap\left({\textstyle\bigcup}
\left\{  Y_{w_j}\mid h\left(  i\right)  \leq j<h\left(  i+1\right)  \right\}
\right)  \subseteq \max\left(  P_{i}\right)  .$

\item $\overline{C}_{l}^{\text{\textsf{AD}}}\left(  h\left(  i\right)
\right)  \cap\left({\textstyle\bigcup}
\left\{  Y_{w_j}\mid h\left(  i\right)  \leq j<h\left(  i+1\right)  \right\}
\right)  \subseteq \max\left(  P_{i}\right)  $ for every $l\leq i.$

\item ${\textstyle\bigcup}
\left\{  Y_{w_j}\mid h\left(  i\right)  \leq j<h\left(  i+1\right)  \right\}
\setminus C_{l}^{=}\left(  c_{l}^{=}\right)  \subseteq \max\left(
P_{i}\right)  $ for every $l\leq i.$

\item For every $j$ such that $h\left(  i\right)  \leq j<h\left(  i+1\right)
,$ if $Y_{w_j}$ is not an almost subset of $C_{k}^{\neq}\left(  l\right)  $ with
$k,l<i$ then $Y_{w_j}\cap C_{k}^{\neq}\left(  l\right)  \subseteq \max\left(
P_{i}\right)  $.

\item $\overline{D}_{l}^{\text{\textsf{AD}}}\left(  h\left(  i\right)
\right)  \cap\left({\textstyle\bigcup}
\left\{  Y_{w_j}\mid h\left(  i\right)  \leq j<h\left(  i+1\right)  \right\}
\right)  \subseteq \max\left(  P_{i}\right)  $ for every $l\leq i.$

\item If $l\leq i$ and $e_{l}^{=}\leq j\leq h\left(  i\right)  $ then
$D_{l}^{=}\left(  j\right)  \setminus \max\left(  P_{i}\right)  \subseteq
Y_{d_{l}^{=}}.$

\item For every $j$ such that $h\left(  i\right)  \leq j<h\left(  i+1\right)
,$ if $D_{k}^{\neq}\left(  l\right)  $ is not an almost subset of $Y_{w_j}$ with
$k,l<i$ then $Y_{w_j}\cap D_{k}^{\neq}\left(  l\right)  \subseteq \max\left(
P_{i}\right)  $.
\end{enumerate}
Although the list of requirements neede to be verified is excessively long, it is not hard to see
that for every suitable $\left\langle \left(  P_{0},s_{0}\right)  ,...,\left(
P_{n},s_{n}\right)  \right\rangle $ there is $\left(  P_{n+1},s_{n+1}\right)
$ such that $\left\langle \left(  P_{0},s_{0}\right)  ,...,\left(  P_{n}%
,s_{n}\right)  ,\left(  P_{n+1},s_{n+1}\right)  \right\rangle $ is suitable.
Therefore, we can recursively construct $\left\langle \left(  P_{0}%
,s_{0}\right)  ,...,\left(  P_{n},s_{n}\right)  ,...\right\rangle _{n\in
\omega}$ such that every initial segment is suitable. Letting $A={\textstyle\bigcup\limits_{n\in\omega}}
s_{n}$ it is easy to see that $q_{2}=\left(  \mathcal{A}_{q_{1}}\cup\left\{
A\right\}  ,\mathcal{K}_{q_{1}}\right)  $ is a condition extending $q_{1},$
which is the extension we were looking for.
\end{proof}

We now obtain the main result of this section:

\begin{theorem}   \label{notHurewicz}
Let $G\subseteq\mathbb{P}$ be a generic filter. If $\mathcal{J}$ is a Canjar ideal extending
$\mathcal{I}\left(  \mathcal{A}_{gen}\right)$ in $V[G]$,  then the following hold:

\begin{enumerate}
\item For every $X\in\mathcal{J}^{+}$ there is $Y\in\mathcal{J}^{+}\cap\left[
X\right]  ^{\omega}$ such that $\mathcal{J}^{\ast}\upharpoonright Y$ is an ultrafilter.

\item Forcing with $\mathbb{M}\left(  \mathcal{J}\right)  $ diagonalizes an ultrafilter.

\item $\mathcal{J}$ is not a Hurewicz ideal.
\end{enumerate}
\end{theorem}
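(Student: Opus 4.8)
All three items follow quickly from the Proposition just proved (in $V[G]$, no Canjar ideal extending $\mathcal{A}_{gen}$ carries an infinite pairwise disjoint family of positive sets) together with the elementary fact, recalled earlier, that no maximal ideal is Hurewicz. Throughout I would work in $V[G]$, and use that $\mathcal{J}$, being an ideal extending $\mathcal{I}(\mathcal{A}_{gen})$, contains all finite sets, so every member of $\mathcal{J}^+$ is infinite; note also $\mathcal{A}_{gen}\subseteq\mathcal{I}(\mathcal{A}_{gen})\subseteq\mathcal{J}$, so the Proposition applies to $\mathcal{J}$.

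\textbf{Item 1.} I would argue by contradiction. Suppose that for every $Y\in\mathcal{J}^+\cap[X]^\omega$ the dual filter $\mathcal{J}^*\rest Y$ is not an ultrafilter on $Y$; equivalently $\mathcal{J}\rest Y$ is not a maximal ideal on $Y$. Then recursively build $\langle Z_n\mid n\in\omega\rangle$ and $\langle X_n\mid n\in\omega\rangle$ with $X_0=X$ and, at stage $n$, using non-maximality of $\mathcal{J}\rest X_n$, choose $Z_n\subseteq X_n$ with $Z_n\in\mathcal{J}^+$ and $X_{n+1}:=X_n\setminus Z_n\in\mathcal{J}^+$. Since $X_n\in\mathcal{J}^+$ is infinite and contained in $X$, the recursion never stops, and the $Z_n$ are pairwise disjoint members of $\mathcal{J}^+$, contradicting the Proposition. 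Hence some $Y\in\mathcal{J}^+\cap[X]^\omega$ has $\mathcal{J}^*\rest Y$ an ultrafilter.

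\textbf{Item 2.} Apply Item 1 with $X=\omega$ to get $Y\in\mathcal{J}^+$ with $\mathcal{U}:=\mathcal{J}^*\rest Y$ an ultrafilter on the infinite set $Y$ (hence on $\omega$ up to a bijection). Let $\dot m$ name the Mathias real of $\mathbb{M}(\mathcal{J})$. I would verify two density facts, keeping the order on $\mathbb{M}(\mathcal{J})$ in mind (strengthening enlarges both the stem and the forbidden set): given $(s,A)\in\mathbb{M}(\mathcal{J})$, first, since $Y\setminus A\in\mathcal{J}^+$ is infinite, for any $k$ one may pick $y\in(Y\setminus A)$ with $y>k,\max s$ and pass to $(s\cup\{y\},A)\leq(s,A)$, so $\dot m\cap Y$ is forced infinite; second, for $B\in\mathcal{U}$ we have $Y\setminus B\in\mathcal{J}$, so $(s,\,A\cup(Y\setminus B))\leq(s,A)$ is legitimate and below it no further element of $Y\setminus B$ enters $\dot m$, so $(\dot m\cap Y)\setminus B$ is forced finite. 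Hence $\mathbb{M}(\mathcal{J})$ forces $\dot m\cap Y$ to be an infinite pseudointersection of $\mathcal{U}$, i.e. $\mathbb{M}(\mathcal{J})$ diagonalizes the ultrafilter $\mathcal{U}$.

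\textbf{Item 3 (and the main point).} Suppose toward a contradiction that $\mathcal{J}$ is Hurewicz. By Item 1 there is $Y\in\mathcal{J}^+$ with $\mathcal{J}\rest Y$ a maximal ideal on $Y$. A short check on the definition of $(\,\cdot\,)^{<\omega}$ shows that restrictions of Hurewicz ideals to positive sets are Hurewicz: if $\{X_n\mid n\in\omega\}\subseteq((\mathcal{J}\rest Y)^{<\omega})^+$, then, since each $X_n$ consists of subsets of $Y$, each $X_n$ also lies in $(\mathcal{J}^{<\omega})^+$, so the finite $Y_n\subseteq X_n$ provided by the Hurewicz property of $\mathcal{J}$ witness it for $\mathcal{J}\rest Y$ as well. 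Thus $\mathcal{J}\rest Y$ would be a maximal Hurewicz ideal, contradicting that no maximal ideal is Hurewicz. I expect no serious obstacle here: the real work is in the already-established Proposition, and the only place calling for care is item 2, where the somewhat unusual definition of $\leq$ on $\mathbb{M}(\mathcal{J})$ must be handled correctly; items 1 and 3 are bookkeeping on top of it.
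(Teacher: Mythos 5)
Your proposal is correct, and it handles all three items. Item~1 is exactly the paper's argument: the same recursive construction of a pairwise disjoint family of $\mathcal{J}$-positive sets contradicting the preceding Proposition. Item~2 fills in a density argument that the paper merely asserts ("2 follows from 1"); your two density checks on $\mathbb{M}(\mathcal{J})$ are correct, including the observation that $Y\setminus B\in\mathcal{J}$ for $B\in\mathcal{U}$, so that $(s,A\cup(Y\setminus B))$ is a legitimate condition.

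Where you genuinely depart from the paper is Item~3. The paper derives~3 from~2 via Proposition~\ref{Hurewicztailsplitting}: if $\mathcal{J}$ were Hurewicz, $\mathbb{M}(\mathcal{J})$ would preserve tail block-splitting families, while diagonalizing an ultrafilter adds an unsplit real and hence destroys any such family. That route implicitly needs a tail block-splitting family to exist in $V[G]$, which the paper guarantees via $\mathfrak{d}=\omega_1$. Your argument instead goes directly from Item~1: you observe that restrictions of Hurewicz ideals to positive sets are Hurewicz (your verification that each $X_n\in((\mathcal{J}\rest Y)^{<\omega})^+$ lies in $(\mathcal{J}^{<\omega})^+$, and that the $Y_n$ given by Hurewicz of $\mathcal{J}$ still work for $\mathcal{J}\rest Y$, is correct because the $s\in X_n$ are subsets of $Y$), and then invoke the earlier-stated fact that no maximal ideal is Hurewicz. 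This is more elementary, avoids the detour through Mathias forcing and splitting families, and in particular dispenses with the $\mathfrak{d}=\omega_1$ background assumption. Both are valid; yours is arguably the cleaner derivation of~3.
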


\begin{proof}
It is easy to see that 2 follows from 1 and 3 is a consequence of 2 (this follows e.g.
from Proposition~\ref{Hurewicztailsplitting}), so we
only prove 1. Assume this is not the case, so for every $Y\in\mathcal{J}%
^{+}\cap\left[  X\right]  ^{\omega}$ it is the case that $\mathcal{J}^{\ast
}\upharpoonright Y$ is not an ultrafilter. We can then recursively construct a
pairwise disjoint family $\left\{  X_{n}\mid n\in\omega\right\}
\subseteq\mathcal{J}^{+}\cap\left[  X\right]  ^{\omega},$ which contradicts
the previous result.
\end{proof}

We  point out that under the Continuum Hypothesis, every
\textsf{MAD} family can be extended to the dual of a Canjar ultrafilter~\cite{MobandMad}.
Finally, we will prove the following result:

\begin{proposition}
$\mathcal{\dot{A}}_{gen}$ is forced to be a Cohen-destructible \textsf{MAD} family.
\end{proposition}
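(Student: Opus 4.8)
The plan is to use the partition components of conditions in $\mathbb{P}$ to equip $\omega$, generically, with a topology homeomorphic to $\mathbb{Q}$ in which every member of $\dot{\mathcal{A}}_{gen}$ is nowhere dense. This yields $\mathcal{I}(\dot{\mathcal{A}}_{gen})\leq_{K}\textsf{nwd}$; since a \textsf{MAD} family is Cohen-indestructible precisely when it is $\textsf{nwd}$-\textsf{MAD}, i.e. when $\mathcal{I}(\mathcal{A})\nleq_{K}\textsf{nwd}$, this makes $\dot{\mathcal{A}}_{gen}$ Cohen-destructible. As $\mathbb{P}$ is $\sigma$-closed it adds no reals, so the topology and the witnessing homeomorphism can be taken in the ground model, and it suffices to show that a dense set of conditions forces the conclusion.

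So fix $p=(\mathcal{A}_{p},\mathcal{K}_{p})\in\mathbb{P}$. I would build a sequence $\langle\mathcal{Y}_{k}:k\in\omega\rangle\subseteq\textsf{Part}$ forming a refining tree on $\omega$: $\mathcal{Y}_{k+1}$ refines $\mathcal{Y}_{k}$, every piece is infinite, and the tree separates points; then the topology $\tau$ on $\omega$ generated by the clopen basis of all pieces of all the $\mathcal{Y}_{k}$ is countable, metrizable, zero-dimensional and crowded, hence homeomorphic to $\mathbb{Q}$. The construction must simultaneously ensure: (a) $\mathcal{Y}_{k}\longrightarrow\mathcal{D}$ for every $\mathcal{D}\in\mathcal{K}_{p}$ and every $k$ (so that $q:=(\mathcal{A}_{p},\mathcal{K}_{p}\cup\{\mathcal{Y}_{k}:k\in\omega\})\leq p$), which uses the three clauses defining $\longrightarrow$ together with bookkeeping of the sort carried out in the proof of Lemma~\ref{Partlemma} (e.g. a piece refining the first relevant $\mathcal{D}$ is automatically almost disjoint from, or refines, the remaining ones); and (b) for every $A\in\mathcal{A}_{p}$ and every piece $P$, some sub-piece of $P$ at a deeper level is disjoint from $A$ — easily arranged since $\mathcal{A}_{p}$ is a countable \textsf{AD} family, by a standard diagonalization keeping every piece $P$ with $P\setminus A$ infinite for each $A\in\mathcal{A}_{p}$. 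Clause (b) says exactly that every member of $\mathcal{A}_{p}$, hence (finite unions of nowhere dense sets being nowhere dense) every member of $\mathcal{I}(\mathcal{A}_{p})$, is nowhere dense in $\tau$.

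It then remains to check that $q\Vdash$ ``every $A\in\mathcal{A}_{gen}$ is nowhere dense in $\tau$''. For $A\in\mathcal{A}_{p}$ this is (b). For $A\in\mathcal{A}_{gen}\setminus\mathcal{A}_{p}$, clause (3) of the ordering applied to each $\mathcal{Y}_{k}\in\mathcal{K}_{q}$ gives $A\in\textsf{fin}\times\textsf{fin}(\mathcal{Y}_{k})$, i.e. at level $k$ only finitely many pieces meet $A$ in an infinite set. Given a piece $P$ of $\mathcal{Y}_{k}$: if $A\cap P$ is finite, some child of $P$ avoids it (a piece is infinite, so has infinitely many children, only finitely many meeting a finite set); if $A\cap P$ is infinite, $P$ has infinitely many children but only finitely many meet $A$ infinitely, so choose a child $Q$ with $A\cap Q$ finite and then a child of $Q$ disjoint from $A\cap Q$. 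Either way $P$ has a sub-piece disjoint from $A$, so $A$ is nowhere dense in $\tau$. Hence $q\Vdash\mathcal{I}(\mathcal{A}_{gen})\subseteq\textsf{nwd}^{(\omega,\tau)}$, so $q\Vdash\mathcal{I}(\mathcal{A}_{gen})\leq_{K}\textsf{nwd}$ by transporting along the homeomorphism $(\omega,\tau)\cong\mathbb{Q}$, i.e. $q$ forces $\mathcal{A}_{gen}$ to be Cohen-destructible. Since $p$ was arbitrary and this is witnessed by a ground-model object, $1_{\mathbb{P}}$ forces $\mathcal{A}_{gen}$ to be Cohen-destructible.

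The main obstacle is the construction of the refining tree in the second paragraph: producing a single tree of partitions that is at once ``transversal enough'' to every partition in $\mathcal{K}_{p}$ (to get $\mathcal{Y}_{k}\longrightarrow\mathcal{D}$) and to every member of the countable family $\mathcal{A}_{p}$ (to make $\mathcal{I}(\mathcal{A}_{p})$ nowhere dense), while keeping all pieces infinite and separating points. This is a fusion/bookkeeping argument juggling the three clauses defining $\longrightarrow$, very much in the spirit of the proof of Lemma~\ref{Partlemma}; everything else is routine.
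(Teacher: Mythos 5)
Your strategy (show $\mathcal{I}(\dot{\mathcal{A}}_{gen})\leq_{K}\textsf{nwd}$ via a generic $\mathbb{Q}$-topology on $\omega$ built from partitions) is a genuinely different route from the paper's, which instead shows directly that no restriction of $\dot{\mathcal{A}}_{gen}$ is weakly tight, using Lemma~\ref{Partlemma} to produce a \emph{single} partition $\mathcal{Y}\in\mathcal{K}_q$ with $Y_n\subseteq^* X_n$ positive and every new $A$ meeting only finitely many $Y_n$ infinitely. The verification in your third paragraph (nowhere-density of new $A$'s from $A\in\textsf{fin}\times\textsf{fin}(\mathcal{Y}_k)$ at every level) is correct \emph{given} the tree. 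But the step you dismiss as ``routine bookkeeping'' is the actual obstruction, and I believe it is false in general.

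The problem is that the relation $\mathcal{C}\longrightarrow\mathcal{D}$ is not stable under refinement of $\mathcal{C}$, so you cannot in general put a whole refining tree $\{\mathcal{Y}_k:k\in\omega\}$ into $\mathcal{K}_q$. Fix $\mathcal{D}\in\mathcal{K}_p$. If $\mathcal{Y}_0\longrightarrow\mathcal{D}$ via clause (c) (distinct targets), then a refinement $\mathcal{Y}_1$ has several pieces almost inside the same $D(m_n)$, which kills (c); these pieces are contained in $D(m_n)$, so they are not almost disjoint from $\mathcal{D}$, killing (a); and the pieces are spread over many $D(m_n)$'s, killing (b). So you are forced to secure $\mathcal{Y}_0\longrightarrow\mathcal{D}$ via (a) or (b). But (b) has the same defect: the finitely many exceptional pieces $Y_0(0),\dots,Y_0(N-1)$ (which must exist, since the cofinitely many pieces landing in a single $D(m)$ cannot cover $\omega$) each spawn infinitely many children in $\mathcal{Y}_1$, and those children live in $\omega\setminus D(m)$, so they are neither almost disjoint from $\mathcal{D}$ nor almost inside $D(m)$; hence $\mathcal{Y}_1\not\longrightarrow\mathcal{D}$. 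That leaves only (a), i.e.\ every (almost every) piece of every level must be almost disjoint from every piece of $\mathcal{D}$. This cannot be arranged simultaneously for all $\mathcal{D}\in\mathcal{K}_p$: for instance if $\mathcal{K}_p$ contains $\mathcal{D}^0$ with $D^0(0)$ the evens and $\mathcal{D}^1$ with $D^1(0)$ the odds, any infinite set that is almost disjoint from both $D^0(0)$ and $D^1(0)$ is finite, so no piece of any $\mathcal{Y}_k$ can be almost disjoint from both partitions. Thus there is no refining tree with $\mathcal{Y}_k\longrightarrow\mathcal{D}$ at every level for every $\mathcal{D}\in\mathcal{K}_p$, and $q$ as you define it is not a legal extension of $p$.

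The paper sidesteps this entirely: it only needs one new partition $\mathcal{Y}$ in $\mathcal{K}_q$, produced by Lemma~\ref{Partlemma}, and then uses the fact (from \cite{OrderingMADFamiliesalaKatetov}) that a Cohen-indestructible \textsf{MAD} family has a tight restriction, hence a weakly tight restriction; the single $\mathcal{Y}$ (with $Y_n\subseteq^* X_n$ for a disjoint positive family $\{X_n\}\subseteq[X]^\omega$) witnesses that $\dot{\mathcal{A}}_{gen}\restriction X$ is not weakly tight for the given positive $X$. If you want to salvage your topological argument you would at minimum need a combinatorial structure for which clause (3) of the ordering is preserved --- for example an infinite sequence of mutually ``independent'' partitions rather than a refining tree --- but you would then face the same question of whether every $\mathcal{Y}^i\longrightarrow\mathcal{D}$ can be secured simultaneously, and this again is far from routine. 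As written, the proof has a real gap.
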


\begin{proof}
Recall that every Cohen indestructible \textsf{MAD} family has a restriction
that is tight (see \cite{OrderingMADFamiliesalaKatetov}). In fact, we will
prove that no restriction of $\mathcal{A}_{gen}$ is weakly tight. Let
$p\in\mathbb{P}$ and $X\in\left[  \omega\right]  ^{\omega}$ such that
$p\Vdash``X\in\mathcal{I(\dot{A}}_{gen}\mathcal{)}^{+}\textquotedblright.$
Since the ideals generated by \textsf{MAD} families are hereditarily meager,
we may assume there is a pairwise disjoint family $\left\{  X_{n}\mid
n\in\omega\right\}  \subseteq\left[  X\right]  ^{\omega}$ such that
$p\Vdash``X_{n}\in\mathcal{I(\dot{A}}_{gen}\mathcal{)}^{+}\textquotedblright$
for every $n\in\omega.$ Since the ideals generated by \textsf{AD} families are
$P^{+}$-ideals (see \cite{HappyFamilies} or
\cite{AlmostDisjointFamiliesandTopology}), we know by Lemma~\ref{Partlemma} there are $q\leq p,$
$W\in\left[  \omega\right]  ^{\omega}$ and $\left\{  Y_{n}\mid n\in W\right\}
\subseteq\left[  \omega\right]  ^{\omega}$ such that the following conditions hold:

\begin{enumerate}
\item $Y_{n}\subseteq^{\ast}X_{n}$ for every $n\in W.$

\item $q\Vdash``Y_{n}\in\mathcal{I(\dot{A}}_{gen}\mathcal{)}^{+}%
\textquotedblright$ for every $n\in W.$

\item $q$ forces that every element in $\mathcal{\dot{A}}_{gen}$ has infinite
intersection with only finitely many elements in $\mathcal{Y}.$
\end{enumerate}
Clearly $q$ forces that the family $\left\{  Y_{n}\mid n\in W\right\}  $
witnesses that $\mathcal{\dot{A}}_{gen}$ is not weakly tight.
\end{proof}

The result raises the following question:

\begin{problem}
Is there (consistently) a Cohen indestructible \textsf{MAD} family that can
not be extended to a Hurewicz ideal?
\end{problem}
\section{Preserving big families} \label{sec:big}
In Sections \ref{sec:big}, \ref{sec:partial}, and \ref{sec:mainresult}, we show that it is consistent to have $\dilipnon(\dilipMMM) = {\aleph}_{1}$ and no Shelah-Stepr{\= a}ns or block Shelah-Stepr{\= a}ns a.d.\@ families of size ${\aleph}_{1}$.
This improves Theorem \ref{thm:59}.

A general strategy for preserving certain non-meager sets from the ground model is provided in this section.
Our results here allow us to keep $\dilipnon(\dilipMMM) = {\aleph}_{1}$ in countable support iterations which do not enjoy stronger preservation properties, such as the preservation of ${\sqsubseteq}_{\mathrm{Cohen}}$.
\begin{dilipDef} \label{def:ip}
 An \emph{interval partition or IP} is a sequence $I = \dilipseq{i}{n}{\in}{\omega} \in {\omega}^{\omega}$ such that ${i}_{0} = 0$ and $\forall n \in \omega\left[{i}_{n} < {i}_{n+1}\right]$.
 
 Given an IP $I$ and $n \in \omega$, ${I}_{n}$ denotes $\left[ {i}_{n}, {i}_{n+1} \right) = \{l \in \omega: {i}_{n} \leq l < {i}_{n+1} \}$.
\end{dilipDef}
The following is a slight variation of a well-known connection between eventually different reals and meagerness of the ground model.
We give a proof even though the argument is similar to the arguments in Miller~\cite{arnieMN} or Bartoszynski and Judah~\cite{BJ}.
\begin{dilipLemma} \label{lem:miller}
 Let ${\dilipV}_{0} \subseteq {\dilipV}_{1}$ be transitive models of a sufficiently large fragment of $\dilipZFC$.
 Assume that
 \begin{align*}
  \forall f \in {H({\aleph}_{0})}^{\omega} \cap {\dilipV}_{1} \exists g \in {H({\aleph}_{0})}^{\omega} \cap {\dilipV}_{0} \dilipexistsinf n \in \omega \left[f(n) = g(n)\right].
 \end{align*}
 Then the following hold:
 \begin{enumerate}
  \item
  for each $f \in {H({\aleph}_{0})}^{\omega} \cap {\dilipV}_{1}$ and each $X \in {\left[\omega\right]}^{\omega} \cap {\dilipV}_{1}$, there exists a $g \in {H({\aleph}_{0})}^{\omega} \cap {\dilipV}_{0}$ so that
  \begin{align*}
   \dilipexistsinf n \in X \left[f(n) = g(n)\right];
  \end{align*}
  \item
  for each IP $I = \dilipseq{i}{n}{\in}{\omega} \in \dilipBS \cap {\dilipV}_{1}$, there is an IP $J = \dilipseq{j}{n}{\in}{\omega} \in \dilipBS \cap {\dilipV}_{0}$ so that
  \begin{align*}
   \dilipexistsinf l \in \omega \exists n \in \omega\left[{I}_{n} \subseteq {J}_{l}\right];
  \end{align*}
  \item
  for any $M \in {\dilipV}_{1}$, if ${\left( M \subseteq {2}^{\omega} \ \text{is a meager set} \right)}^{{\dilipV}_{1}}$, then there exists $x \in {2}^{\omega} \cap {\dilipV}_{0}$ with $x \notin M$.
 \end{enumerate}
\end{dilipLemma}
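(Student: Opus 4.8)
The three items are all consequences of the hypothesis that every $f \in {H({\aleph}_{0})}^{\omega} \cap {\dilipV}_{1}$ is infinitely often equal to some $g \in {H({\aleph}_{0})}^{\omega} \cap {\dilipV}_{0}$, combined with standard coding tricks of the kind found in Bartoszy\'nski--Judah and Miller. For item (1), the idea is to absorb the infinite set $X \in {\dilipV}_{1}$ into the function being coded: given $f$ and $X = \{x_k : k \in \omega\}$ (increasing enumeration, which lies in ${\dilipV}_{1}$), define $\tilde f \in {H({\aleph}_{0})}^{\omega} \cap {\dilipV}_{1}$ by $\tilde f(k) = \dilippr{x_k}{f(x_k)}$. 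Apply the hypothesis to $\tilde f$ to get $\tilde g \in {H({\aleph}_{0})}^{\omega} \cap {\dilipV}_{0}$ with $\tilde f(k) = \tilde g(k)$ for infinitely many $k$. Each time this happens, $\tilde g(k)$ is a pair whose first coordinate is $x_k$, so the second coordinate equals $f(x_k)$. Now define $g \in {H({\aleph}_{0})}^{\omega} \cap {\dilipV}_{0}$ by setting $g(n)$ to be the second coordinate of $\tilde g(k)$ if $n = $ the first coordinate of $\tilde g(k)$ for some $k$ (and $0$ otherwise, resolving clashes arbitrarily); this is well-defined in ${\dilipV}_{0}$ because $\tilde g \in {\dilipV}_{0}$. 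Then for the infinitely many $k$ with $\tilde f(k) = \tilde g(k)$ we get $n = x_k \in X$ with $f(n) = g(n)$, as required.

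\textbf{Item (2).} Given an IP $I = \dilipseq{i}{n}{\in}{\omega} \in {\dilipV}_{1}$, I would code the ``block structure'' of $I$ by a function and then find a ground-model IP coarse enough to swallow infinitely many $I$-blocks. Concretely, define $f \in {H({\aleph}_{0})}^{\omega} \cap {\dilipV}_{1}$ by $f(k) = i_{n_k}$ where $n_k$ is chosen so that $i_{n_k} > k$ and at least one full $I$-block lies inside $[k, i_{n_k})$ — for instance let $n_k$ be least with $i_{n_k - 1} > k$, so that the block $I_{n_k - 1}$ is contained in $[k, i_{n_k})$ once $i_{n_k-1} \geq k$; a small bookkeeping adjustment makes this clean. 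Apply item (1) (with $X = \omega$) — or just the raw hypothesis — to get $g \in {H({\aleph}_{0})}^{\omega} \cap {\dilipV}_{0}$ agreeing with $f$ infinitely often. Build the IP $J$ in ${\dilipV}_{0}$ by iterating $k \mapsto \max(g(k), k+1)$: set $j_0 = 0$ and $j_{l+1} = \max(g(j_l), j_l + 1)$. At every $l$ where $g(j_l) = f(j_l) = i_{n}$ for the appropriate $n$, the interval $J_l = [j_l, j_{l+1})$ contains $[j_l, i_n)$, which by construction of $f$ contains a full block $I_m$. Since agreement happens infinitely often, we get infinitely many $l$ with some $I_m \subseteq J_l$. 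The main thing to be careful about is ensuring that the $j_l$ actually hit the values where $f$ and $g$ agree; since $\dilipseq{j}{l}{\in}{\omega}$ is increasing and unbounded and agreement is on an infinite set, a standard interleaving argument (or passing $X = \dilipran(J)$ back into item (1)) handles this — indeed it is cleanest to first fix $J$ as any ground-model IP, then apply item (1) with $X = \{j_l : l \in \omega\}$ to the coding function.

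\textbf{Item (3).} This is the classical fact that a meager set in ${2}^{\omega}$ is contained in one coded by a pair $(\dilipseq{J}{l}{\in}{\omega}, \dilipseq{\sigma}{l}{\in}{\omega})$ where $J$ is an IP and $\sigma_l \in {2}^{J_l}$, with the meager set being $\{x \in {2}^{\omega} : \dilipforallbutfin l\, (x \diliprestrict J_l \neq \sigma_l)\}$. So given $M \in {\dilipV}_{1}$ meager, fix such a code in ${\dilipV}_{1}$. Apply item (1) to the function $l \mapsto \dilippr{J_l}{\sigma_l}$ (an element of ${H({\aleph}_{0})}^{\omega} \cap {\dilipV}_{1}$, with $X = \omega$) to extract, in ${\dilipV}_{0}$, infinitely many $l$ together with ground-model data matching $\dilippr{J_l}{\sigma_l}$ — more precisely, build an increasing sequence $l_0 < l_1 < \cdots$ and ground-model finite binary strings on the corresponding intervals; then define $x \in {2}^{\omega} \cap {\dilipV}_{0}$ to agree with $\sigma_{l_k}$ on $J_{l_k}$ for each $k$ (and be $0$ elsewhere). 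Then $x \diliprestrict J_{l_k} = \sigma_{l_k}$ for infinitely many $k$, so $x \notin M$. The one technical point, as in item (2), is that the infinitely-often-equal set delivered by the hypothesis is not under our control, so the coding must be arranged so that \emph{any} infinite set of agreements suffices; packaging $J$ and $\sigma$ into a single function of $l$ and invoking item (1) achieves exactly this. I expect item (2) to be the fiddliest of the three — the obstacle there is purely the bookkeeping needed to align the ground-model IP's intervals with the (uncontrolled) positions of agreement — while items (1) and (3) are routine once the coding in (1) is set up.
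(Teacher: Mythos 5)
Your proof of item (1) has a genuine gap at the decoding step, and this gap propagates into your treatments of (2) and (3).

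\textbf{The gap in (1).} You encode $\tilde f(k) = \langle x_k, f(x_k)\rangle$, obtain $\tilde g \in \dilipV_0$ agreeing with $\tilde f$ infinitely often, and then define $g(n)$ to be the second coordinate of some $\tilde g(k)$ whose first coordinate is $n$, ``resolving clashes arbitrarily.'' But the clash resolution is not harmless. Consider $\tilde g \in \dilipV_0$ with $\tilde g(2m) = \langle 2m{+}1, 0\rangle$ and $\tilde g(2m{+}1) = \langle 2m{+}1, 1\rangle$ for all $m$. For $n = 2m{+}1$ there are two candidates $k \in \{2m, 2m{+}1\}$ with different second coordinates, so the resolution must choose. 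If $X = \omega$ and $f(2m{+}1) = 1$ for all $m$, agreement occurs exactly at odd $k$, and only the ``larger $k$'' resolution produces a working $g$. If instead $X = \omega \setminus \{0\}$ (so $x_k = k{+}1$) and $f(2m{+}1) = 0$, agreement occurs exactly at even $k$, and only the ``smaller $k$'' resolution works. The resolution must live in $\dilipV_0$, hence may depend only on $\tilde g$, yet the correct choice depends on $\tilde f \in \dilipV_1$. So no fixed, ``arbitrary,'' or even canonical clash resolution can be shown to succeed, and the proof does not give the extra argument needed to see that the hypothesis can be made to deliver a clash-free $\tilde g$.

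The paper sidesteps this entirely by encoding \emph{cumulative} data: $F(n) = f \diliprestrict \{x_i : i < n\}$. When $F(n) = G(n)$, one reads off $f$ on an entire initial segment of $X$, and since $\diliplc\dilipdom(G(n))\diliprc = n$ grows, one can define in $\dilipV_0$ a \emph{one-to-one} selector $e$ with $e(n) \in \dilipdom(G(y_{n+1}))$ by always picking a fresh element. Injectivity of $e$ is precisely what makes the definition of $g$ clash-free. This is the missing idea: the encoding must produce, for each agreement, an entire growing window from which a new element can be drawn, rather than a single point that can be claimed by a spoiler.

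\textbf{(2) and (3).} Your first plan for (2) iterates $j_{l+1} = \max(g(j_l), j_l{+}1)$, but this only exploits agreements that land exactly on the $j_l$'s; the ``standard interleaving argument'' you invoke is not an argument — an increasing unbounded sequence need not hit an arbitrary infinite set. Your ``cleanest'' alternative (fix $J \in \dilipV_0$ first, then apply (1) with $X = \dilipran J$) cannot work: once $J$ is fixed, its blocks have prescribed lengths and there is nothing to force an $I$-block to fit inside one. The paper's construction instead takes $j_{l+1} = \max(\{j_l{+}1\} \cup \{g(n) : n \leq j_l\})$, so that \emph{every} agreement $g(n) = i_{n+2}$ with $n \leq j_l$ stretches $J$ enough to capture $I_{n+1}$, regardless of where the agreement falls. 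For (3), packaging $l \mapsto \langle J_l, \sigma_l\rangle$ and applying (1) leaves you with a $G \in \dilipV_0$ whose ``good'' values yield intervals that, from the $\dilipV_0$ side, may overlap with the garbage values — exactly the clash problem again. The paper's route is to first use (2) to obtain a genuine interval partition $J \in \dilipV_0$ capturing infinitely many $I$-blocks, and then apply (1) relative to $X = \{l : \exists n\, (I_n \subseteq J_l)\}$; the $x \in \dilipV_0$ is then defined block-by-block along the $\dilipV_0$-IP $J$, which has pairwise disjoint blocks, so no clashes arise.
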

\begin{proof}
 For (1): working in ${\dilipV}_{1}$, fix $f \in {H({\aleph}_{0})}^{\omega}$ and $X \in {\left[\omega\right]}^{\omega}$.
 Let $\dilipseq{x}{n}{\in}{\omega}$ be the strictly increasing enumeration of $X$.
 Recall that whenever $A \in H({\aleph}_{0})$, then ${H({\aleph}_{0})}^{A} \subseteq H({\aleph}_{0})$.
 For each $n \in \omega$, $\{{x}_{i}: i < n\}$, being a finite subset of $\omega$, is a member of $H({\aleph}_{0})$.
 Therefore $f \diliprestrict \{{x}_{i}: i < n\} \in {H({\aleph}_{0})}^{\{{x}_{i}: i < n\}} \subseteq H({\aleph}_{0})$.
 So we may define a function $F: \omega \rightarrow H({\aleph}_{0})$ by setting $F(n) = f \diliprestrict \{{x}_{i}: i < n\}$, for each $n \in \omega$.
 By hypothesis, there is a function $G \in {\dilipV}_{0}$ so that $G: \omega \rightarrow \diliphf$ and $\dilipexistsinf n \in \omega\left[F(n) = G(n)\right]$.
 Observe that whenever $F(n) = G(n)$, $G(n)$ is a function and $\dilipdom(G(n)) \in {\left[\omega\right]}^{n}$.
 Thus working in ${\dilipV}_{0}$, we see that the set $Y = \left\{n \in \omega: G(n) \ \text{is a function and} \ \dilipdom(G(n)) \in {\left[\omega\right]}^{n} \right\}$ is infinite.
 Let $\dilipseq{y}{n}{\in}{\omega}$ be the strictly increasing enumeration of $Y$.
 Define a function $e \in \dilipBS$ by induction as follows.
 Let $n \in \omega$ and assume that $e(m)$ has been defined for all $m < n$.
 Then $\diliplc \{e(m): m < n\} \diliprc \leq n$, while $\diliplc \dilipdom(G({y}_{n+1})) \diliprc = {y}_{n+1} \geq n+1 > n$.
 So define $e(n) = \min(\dilipdom(G({y}_{n+1})) \setminus \{e(m): m < n\})$.
 Note that $e$ is a 1-1 function and that $e(n) \in \dilipdom(G({y}_{n+1}))$, which means that $G({y}_{n+1})(e(n))$ is defined and is a member of $H({\aleph}_{0})$.
 Define $g: \omega \rightarrow \diliphf$ so that $g(e(n)) = G({y}_{n+1})(e(n))$, for all $n \in \omega$, while $g(k) = 0$, for all $k \notin \dilipran(e)$.
 We check that $g$ is as needed.
 We know $Z = \{n \in \omega: F({y}_{n+1}) = G({y}_{n+1})\}$ is infinite.
 If $n \in Z$, then $\dilipdom(G({y}_{n+1})) \subseteq X$, whence $e(n) \in X$ and $g(e(n)) = G({y}_{n+1})(e(n)) = F({y}_{n+1})(e(n)) = f(e(n))$.
 Thus $e''Z \subseteq \{k \in X: g(k) = f(k)\}$.
 As $e$ is a 1-1 function, $\{k \in X: g(k) = f(k)\}$ is an infinite set.
 
 For (2): working in ${\dilipV}_{1}$, fix an IP $I = \dilipseq{i}{n}{\in}{\omega}$.
 Define $f: \omega \rightarrow \diliphf$ by $f(n) = {i}_{n+2}$, for all $n \in \omega$.
 By the hypothesis, there is a $g \in {\dilipV}_{0}$ so that $g: \omega \rightarrow \diliphf$ and $\dilipexistsinf n \in \omega\left[g(n) = {i}_{n+2}\right]$.
 Working in ${\dilipV}_{0}$, define an IP $J = \dilipseq{j}{l}{\in}{\omega}$ as follows.
 ${j}_{0} = 0$.
 Fix $l \in \omega$ and suppose that ${j}_{l} \in \omega$ is given.
 Define ${j}_{l+1} = \max\left( \{{j}_{l} + 1\} \cup \left( \{g(n): n \leq {j}_{l} \} \cap \omega \right) \right)$.
 Note ${j}_{l+1} \geq {j}_{l} + 1 > {j}_{l}$, so we have an IP.
 To check that it is as needed, fix any $M \in \omega$.
 Choose $n > {j}_{M+1}$ with $g(n) = {i}_{n+2}$.
 There is a unique $k \in \omega$ with $n \in {J}_{k}$, that is ${j}_{k} \leq n < {j}_{k+1}$.
 Observe $k \geq M+1 > M$.
 If ${i}_{n+1} < {j}_{k+1}$, then we have ${j}_{k} \leq n \leq {i}_{n} < {i}_{n+1} < {j}_{k+1}$, implying that ${I}_{n} \subseteq {J}_{k}$.
 If ${j}_{k+1} \leq {i}_{n+1}$, then by the definition of ${j}_{k+2}$, we have ${j}_{k+1} \leq {i}_{n+1} < {i}_{n+2} \leq {j}_{k+2}$, implying that ${I}_{n+1} \subseteq {J}_{k+1}$.
 Thus we have found an $l > M$, namely either $l = k$ or $l = k+1$, and an $n'$, namely either $n' = n$ or $n' = n+1$, so that ${I}_{n'} \subseteq {J}_{l}$.
 
 For (3): working in ${\dilipV}_{1}$, fix a meager set $M \subseteq {2}^{\omega}$.
 Let $\dilipseq{F}{n}{\in}{\omega}$ be a sequence of closed nowhere dense subsets of ${2}^{\omega}$ such that $\forall n \in \omega\left[{F}_{n} \subseteq {F}_{n+1}\right]$ and $M \subseteq {\bigcup}_{n \in \omega}{{F}_{n}}$.
 Build an IP $I = \dilipseq{i}{n}{\in}{\omega}$ and a sequence $\dilipseq{\tau}{n}{\in}{\omega}$ as follows.
 Put ${i}_{0} = 0$.
 Let $n \in \omega$ and assume that ${i}_{n} \in \omega$ is given and that ${\tau}_{m}$ has been defined for all $m < n$.
 Find ${i}_{n+1} > {i}_{n}$ and a function ${\tau}_{n}: \left[{i}_{n}, {i}_{n+1}\right) \rightarrow 2$ such that for each $\sigma: {i}_{n} \rightarrow 2$, $\left[\sigma \cup {\tau}_{n}\right] \cap {F}_{n} = 0$.
 This is possible because ${F}_{n}$ is closed nowhere dense.
 Note that $y = {\bigcup}_{n \in \omega}{{\tau}_{n}}: \omega \rightarrow 2$.
 Using (2), find an IP $J = \dilipseq{j}{l}{\in}{\omega} \in {\dilipV}_{0}$ with the property that $\dilipexistsinf l \in \omega \exists n \in \omega\left[{I}_{n} \subseteq {J}_{l}\right]$.
 In ${\dilipV}_{1}$, define a function $F: \omega \rightarrow \diliphf$ by setting $F(l) = y\diliprestrict\left[ {j}_{l}, {j}_{l+1} \right)$, for all $l \in \omega$.
 Note that if ${I}_{n} \subseteq {J}_{l}$, then ${\tau}_{n} = y\diliprestrict{I}_{n} \subseteq y \diliprestrict {J}_{l} = F(l)$.
 Let $X = \{l \in \omega: \exists n \in \omega \left[{I}_{n} \subseteq {J}_{l}\right]\}$.
 Applying (1), find $G \in {\dilipV}_{0}$ such that $G: \omega \rightarrow \diliphf$ and $\dilipexistsinf l \in X\left[G(l) = F(l)\right]$.
 Working in ${\dilipV}_{0}$, define $x: \omega \rightarrow 2$ such that for each $l \in \omega$, if $G(l): {J}_{l} \rightarrow 2$, then $x\diliprestrict{J}_{l} = G(l)$, while if not, then $x\diliprestrict{J}_{l}$ is constantly $0$.
 Suppose $l \in X$ and that $G(l) = F(l)$.
 Choose $n \in \omega$ with ${I}_{n} \subseteq {J}_{l}$.
 Let $\sigma = x \diliprestrict {i}_{n}: {i}_{n} \rightarrow 2$.
 By the choice of ${\tau}_{n}$, $\left[\sigma \cup {\tau}_{n}\right] \cap {F}_{n} = 0$.
 Since $F(l): {J}_{l} \rightarrow 2$, ${I}_{n} \subseteq {J}_{l}$, and $G(l) = F(l)$, ${\tau}_{n} \subseteq F(l) = G(l) \subseteq x$.
 So $x \in \left[\sigma \cup {\tau}_{n}\right]$, whence $x \notin {F}_{n}$.
 Thus we conclude that $\dilipexistsinf n \in \omega\left[x \notin {F}_{n}\right]$.
 As the ${F}_{n}$ are increasing, $x \notin {\bigcup}_{n \in \omega}{{F}_{n}}$, and so $x \notin M$.
\end{proof}
\begin{dilipDef} \label{def:slalom}
 $S: \omega \rightarrow \diliphf$ is called a \emph{slalom} if $\forall n \in \omega \left[\diliplc S(n) \diliprc \leq (n+1){2}^{n+1}\right]$.
 $S: \omega \rightarrow \diliphf$ is called a \emph{small slalom} if $\forall n \in \omega\left[\diliplc S(n) \diliprc \leq {2}^{n+1}\right]$.
\end{dilipDef}
\begin{dilipDef} \label{def:An}
 Let $f \in {\diliphf}^{\omega}$.
 For $n \in \omega$, ${A}_{n} = \{k \leq n: f(k) \ \text{is a function} \ \wedge n \in \dilipdom(f(k)) \wedge \diliplc f(k)(n) \diliprc \leq {2}^{n+1} \}$.
 
 Define ${S}_{f}(n) = \bigcup \left\{ f(k)(n): k \in {A}_{n} \right\}$, $\forall n \in \omega$.
 We observe that
 \begin{align*}
  \forall n \in \omega \left[{S}_{f}(n) \in H({\aleph}_{0}) \ \text{and} \ \diliplc {S}_{f}(n) \diliprc \leq (n+1){2}^{n+1}\right].
 \end{align*}
 Thus ${S}_{f}: \omega \rightarrow H({\aleph}_{0})$ and ${S}_{f}$ is a slalom.
\end{dilipDef}
\begin{dilipLemma} \label{lem:slalombig}
 Assume $\dilipFFF \subseteq {\diliphf}^{\omega}$ is such that $\forall f \in \dilipFFF \left[{S}_{f} \in \dilipFFF\right]$.
 Then if $\forall f \in {\diliphf}^{\omega} \exists g \in \dilipFFF \dilipexistsinf n \in \omega\left[f(n) = g(n)\right]$, then for any small slalom $S: \omega \rightarrow \diliphf$ and any sequence $\dilipseq{X}{l}{\in}{\omega} \subseteq \dilippc{\omega}{\omega}$, there is a slalom $T: \omega \rightarrow \diliphf$ so that $T \in \dilipFFF$ and $\forall l \in \omega \dilipexistsinf n \in {X}_{l}\left[S(n) \subseteq T(n)\right]$.
\end{dilipLemma}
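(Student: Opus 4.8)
The plan is to \emph{encode} the small slalom $S$, restricted to a well-chosen sequence of finite sets, as a single function $f \in {\diliphf}^{\omega}$, then apply the hypothesis to get $g \in \dilipFFF$ with $g(n) = f(n)$ for infinitely many $n$, and finally take $T = {S}_{g}$. Since $\dilipFFF$ is closed under $f \mapsto {S}_{f}$, this yields $T \in \dilipFFF$; and $T$ is automatically a slalom by the observation recorded in Definition~\ref{def:An}. So the whole content of the argument is to design the encoding so that $T$ localizes $S$ along every ${X}_{l}$ simultaneously.

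First I would build $f$ as follows. For each $m \in \omega$, since every ${X}_{l}$ is infinite, I can fix a finite set ${D}_{m} \subseteq [m, \infty)$ with ${D}_{m} \cap {X}_{l} \neq \emptyset$ for all $l \leq m$ — for instance, pick one point of ${X}_{l} \setminus m$ for each $l \leq m$ and let ${D}_{m}$ be the set of these points. Then set $f(m) = S \diliprestrict {D}_{m}$, the function with domain ${D}_{m}$ sending $n \mapsto S(n)$. Each $f(m)$ is hereditarily finite, so $f \in {\diliphf}^{\omega}$. The point of taking $S$ \emph{small} is that $\diliplc S(n) \diliprc \leq {2}^{n+1}$ for all $n$, which is exactly the cardinality constraint appearing in the definition of ${A}_{n}$; this is what will let the index $m$ belong to the relevant ${A}_{n}$.

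Next, apply the hypothesis to this $f$ to obtain $g \in \dilipFFF$ and an infinite $L \subseteq \omega$ with $g(m) = f(m)$ for all $m \in L$, and put $T = {S}_{g} \in \dilipFFF$. To verify the conclusion, fix $l \in \omega$. For each $m \in L$ with $m \geq l$, choose ${n}_{m} \in {D}_{m} \cap {X}_{l}$; then ${n}_{m} \geq m \geq l$, the function $g(m) = f(m) = S \diliprestrict {D}_{m}$ has ${n}_{m}$ in its domain, and $\diliplc g(m)({n}_{m}) \diliprc = \diliplc S({n}_{m}) \diliprc \leq {2}^{{n}_{m}+1}$. Hence $m \in {A}_{{n}_{m}}$ (computed for $g$), so $S({n}_{m}) = g(m)({n}_{m}) \subseteq {S}_{g}({n}_{m}) = T({n}_{m})$. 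Since $m$ ranges over the infinite set $L \cap [l, \infty)$ and ${n}_{m} \geq m$, the ${n}_{m}$ assume infinitely many distinct values, all lying in ${X}_{l}$; this gives $\dilipexistsinf n \in {X}_{l}\left[S(n) \subseteq T(n)\right]$, as needed.

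The step I expect to matter most — and which dictates the form of the encoding — is the requirement ${D}_{m} \subseteq [m, \infty)$. This is precisely what upgrades ``infinitely many agreement points $m$'' to ``infinitely many witnesses inside each fixed ${X}_{l}$'': every agreement at a large $m$ produces a witness ${n}_{m} \geq m$, so distinct agreements give distinct witnesses, and this has to happen for every $l$ at once (the same $g$ works for all $l$). Everything else is bookkeeping: the closure of $\dilipFFF$ under $f \mapsto {S}_{f}$ hands us $T \in \dilipFFF$, the built-in bound $\diliplc {S}_{g}(n) \diliprc \leq (n+1){2}^{n+1}$ makes $T$ a slalom, and smallness of $S$ supplies the cardinality inequality needed for $m \in {A}_{{n}_{m}}$.
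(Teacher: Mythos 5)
Your proof is correct and follows essentially the same strategy as the paper's: encode $S$ on a sequence of finite sets that lie beyond the index and meet all relevant $X_l$, use the bigness hypothesis to get a matching $g$, and take $T=S_g$, with smallness of $S$ supplying the bound $|S(n)|\le 2^{n+1}$ needed to put the index into $A_{g,n}$. The only cosmetic difference is that the paper builds an interval partition $I$ with $I_k\subseteq[k,\infty)$ and sets $f(k)=S\restriction I_k$, whereas you use arbitrary finite selector sets $D_m\subseteq[m,\infty)$; both serve the same purpose, and your variant avoids the IP machinery without losing anything.
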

\begin{proof}
 Let $I = \dilipseq{i}{k}{\in}{\omega}$ be an IP such that $\forall k \in \omega \forall l \leq k\left[{X}_{l} \cap {I}_{k} \neq \emptyset \right]$.
 Define $f: \omega \rightarrow \diliphf$ by $f(k) = S\diliprestrict {I}_{k}$, for all $k \in \omega$.
 Note that for each $k \in \omega$, ${\diliphf}^{{I}_{k}} \subseteq \diliphf$, and so $f: \omega \rightarrow \diliphf$.
 By hypothesis, fix $g \in \dilipFFF$ so that $\dilipexistsinf k \in \omega \left[f(k) = g(k)\right]$.
 Let $T = {S}_{g}$, which by definition means that $T$ is a slalom.
 Also by hypothesis, $T \in \dilipFFF$.
 To see that $T$ is as required, fix $l \in \omega$.
 Let $M \in \omega$ be given.
 Let $K = \max\{M, l\} \in \omega$.
 Choose $k > K$ with $f(k) = g(k)$.
 Since $l \leq K < k$, ${X}_{l} \cap {I}_{k} \neq \emptyset$.
 Choose $n \in {X}_{l} \cap {I}_{k}$.
 Thus $n \in {X}_{l}$ and $M \leq K < k \leq {i}_{k} \leq n$.
 As $g(k) = f(k)$, we have that $k \in {A}_{g, n}$, whence $S(n) = g(k)(n) \subseteq {S}_{g}(n) = T(n)$.
 This is as needed because $n \in {X}_{l}$ and $n > M$.
\end{proof}
\begin{dilipDef} \label{def:f<wo}
 Let $\dilipwo$ be a well-ordering of $\diliphf$.
 For any $A \in \diliphf$, $\dilippr{A}{\dilipwo}$ is a finite well-order, and therefore there is a unique function ${e}_{A, \dilipwo}: \diliplc A \diliprc \rightarrow A$ which is an order isomorphism from $\dilippr{\diliplc A \diliprc}{\in}$ to $\dilippr{A}{\dilipwo}$.
 Define an IP $I = \dilipseq{i}{n}{\in}{\omega}$ as follows.
 ${i}_{0} = 0$.
 Given ${i}_{n} \in \omega$, ${i}_{n+1} = {i}_{n} + (n+1){2}^{n+1}$.
 
 Now suppose $S: \omega \rightarrow \diliphf$ is a slalom.
 Define ${f}_{S, \dilipwo}: \omega \rightarrow \diliphf$ as follows.
 Given $x \in \omega$, there exist unique $n, j \in \omega$ satisfying ${i}_{n} \leq x < {i}_{n+1}$, $j < (n+1){2}^{n+1}$, and $x={i}_{n}+j$.
 If $j \geq \diliplc S(n) \diliprc$, then ${f}_{S, \dilipwo}(x) = 0 \in \diliphf$.
 If $j < \diliplc S(n) \diliprc$, then ${e}_{S(n), \dilipwo}(j) \in \diliphf$.
 If ${e}_{S(n), \dilipwo}(j)$ is a function and $\dilipdom({e}_{S(n), \dilipwo}(j)) = \left[{i}_{n}, {i}_{n+1} \right)$, then ${f}_{S, \dilipwo}(x) = {e}_{S(n), \dilipwo}(j)(x) \in \diliphf$.
 Otherwise, ${f}_{S, \dilipwo}(x) = 0 \in \diliphf$.
\end{dilipDef}
The point of this somewhat cumbersome definition is the following lemma.
\begin{dilipLemma} \label{lem:cumber}
 Assume $\dilipFFF \subseteq {\diliphf}^{\omega}$ has the property that for every slalom $S: \omega \rightarrow \diliphf$, if $S \in \dilipFFF$, then ${f}_{S, \dilipwo} \in \dilipFFF$.
 Then if for every $f \in {\diliphf}^{\omega}$ there is an $S \in \dilipFFF$ such that $S: \omega \rightarrow \diliphf$ is a slalom and $\dilipexistsinf k \in \omega \left[f(k) \in S(k)\right]$, then $\forall f \in {\diliphf}^{\omega} \exists g \in \dilipFFF \dilipexistsinf k \in \omega \left[f(k) = g(k)\right]$.
\end{dilipLemma}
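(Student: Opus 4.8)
The plan is to encode $f$ into a map on $\omega$ valued in $\diliphf$ by cutting $f$ into the finite blocks of the particular IP $I = \dilipseq{i}{n}{\in}{\omega}$ fixed in Definition~\ref{def:f<wo} (the one with $\diliplc {I}_{n} \diliprc = (n+1){2}^{n+1}$), then to apply the localization hypothesis to that encoding, and finally to recover the desired $g$ as ${f}_{S, \dilipwo}$ for the resulting slalom $S$; the closure hypothesis is exactly what puts $g$ back into $\dilipFFF$.

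More precisely: fix $f \in {\diliphf}^{\omega}$ and let $I = \dilipseq{i}{n}{\in}{\omega}$ be the IP of Definition~\ref{def:f<wo}. Define $\hat{f} \colon \omega \to \diliphf$ by $\hat{f}(n) = f \diliprestrict {I}_{n}$; since ${I}_{n} \in \diliphf$ and ${\diliphf}^{{I}_{n}} \subseteq \diliphf$, indeed $\hat{f}(n) \in \diliphf$, so $\hat{f} \in {\diliphf}^{\omega}$. Apply the hypothesis of the lemma to $\hat{f}$ to obtain a slalom $S \in \dilipFFF$ with $\dilipexistsinf n \in \omega\left[\hat{f}(n) \in S(n)\right]$, and set $g := {f}_{S, \dilipwo}$, which lies in $\dilipFFF$ by the assumed closure of $\dilipFFF$ under $S \mapsto {f}_{S, \dilipwo}$ for slaloms $S \in \dilipFFF$.

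It remains to verify $\dilipexistsinf x \in \omega\left[f(x) = g(x)\right]$. Suppose $n$ is such that $\hat{f}(n) \in S(n)$, i.e.\@ $f \diliprestrict {I}_{n} \in S(n)$. Since ${e}_{S(n), \dilipwo}$ is an order isomorphism of $\dilippr{\diliplc S(n) \diliprc}{\in}$ onto $\dilippr{S(n)}{\dilipwo}$, there is a (unique) $j < \diliplc S(n) \diliprc$ with ${e}_{S(n), \dilipwo}(j) = f \diliprestrict {I}_{n}$. As $S$ is a slalom, $\diliplc S(n) \diliprc \leq (n+1){2}^{n+1}$, so $j < (n+1){2}^{n+1}$ and ${x}_{n} := {i}_{n} + j \in {I}_{n}$; moreover ${e}_{S(n), \dilipwo}(j) = f \diliprestrict {I}_{n}$ is a function with domain exactly ${I}_{n} = \left[{i}_{n}, {i}_{n+1}\right)$, so by the middle clause in Definition~\ref{def:f<wo}, $g({x}_{n}) = {f}_{S, \dilipwo}({x}_{n}) = {e}_{S(n), \dilipwo}(j)({x}_{n}) = \left(f \diliprestrict {I}_{n}\right)({x}_{n}) = f({x}_{n})$. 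Since there are infinitely many such $n$ and the blocks ${I}_{n}$ are pairwise disjoint, the corresponding ${x}_{n}$ are pairwise distinct, so $f(x) = g(x)$ for infinitely many $x \in \omega$, as required.

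There is no serious obstacle; all the work is bookkeeping. The one point that needs care is keeping the cut of $f$ aligned with the IP $I$ used to build ${f}_{S, \dilipwo}$ in Definition~\ref{def:f<wo} — this is why $\hat{f}$ must be defined via precisely that $I$ — together with the observation that the slalom bound $\diliplc S(n) \diliprc \leq (n+1){2}^{n+1}$ is exactly what guarantees that the $\dilipwo$-index $j$ of $\hat{f}(n) = f \diliprestrict {I}_{n}$ inside $S(n)$ falls in the range $\left[0, (n+1){2}^{n+1}\right)$ used to address the $n$-th block, so that the coordinate ${x}_{n} = {i}_{n} + j$ both lies in ${I}_{n}$ and activates the branch of the definition of ${f}_{S, \dilipwo}$ that copies the value of the enumerated function.
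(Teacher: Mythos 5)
Your proof is correct and is essentially identical to the paper's: you cut $f$ along the same IP $I$ used in Definition~\ref{def:f<wo}, apply the localization hypothesis to the resulting $\diliphf$-valued function (your $\hat{f}$, the paper's $F$), and then decode via ${f}_{S,\dilipwo}$, with the slalom bound $\diliplc S(n)\diliprc \leq (n+1){2}^{n+1}$ guaranteeing the addressing index lands in ${I}_{n}$. The bookkeeping is the same as in the paper's argument.
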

\begin{proof}
 Let $f: \omega \rightarrow \diliphf$ be given.
 Define the IP $I = \dilipseq{i}{n}{\in}{\omega}$ as follows: ${i}_{0} = 0$; given ${i}_{n} \in \omega$, ${i}_{n+1} = {i}_{n} + (n+1){2}^{n+1}$.
 Define $F: \omega \rightarrow \diliphf$ by setting $F(k) = f \diliprestrict {I}_{k}$.
 By hypothesis, find a slalom $S \in \dilipFFF$ such that $\dilipexistsinf k \in \omega \left[F(k) \in S(k)\right]$.
 Consider any $k \in \omega$ such that $F(k) \in S(k)$.
 There exists a unique $j < \diliplc S(k) \diliprc \leq (k+1){2}^{k+1}$ with ${e}_{S(k), \dilipwo}(j) = F(k)$.
 Let $x = {i}_{k} + j$.
 Then ${i}_{k} \leq x < {i}_{k+1}$ and by definition ${f}_{S, \dilipwo}(x) = {e}_{S(k), \dilipwo}(j)(x) = F(k)(x) = f(x)$.
 Thus for each $k \in \omega$ such that $F(k) \in S(k)$, there exists ${x}_{k} \in {I}_{k}$ such that ${f}_{S, \dilipwo}({x}_{k}) = f({x}_{k})$.
 Since there are infinitely many such $k$ and since ${I}_{k} \cap {I}_{k'} = \emptyset$ whenever $k \neq k'$, $\dilipexistsinf x \in \omega \left[{f}_{S, \dilipwo}(x) = f(x)\right]$.
 We are done because ${f}_{S, \dilipwo} \in \dilipFFF$ by hypothesis.
\end{proof}
\begin{dilipDef} \label{def:wellclosed}
 Let $\dilipwo$ be fixed.
 A family $\dilipFFF \subseteq {\diliphf}^{\omega}$ is called \emph{well-closed w.r.t.\@ $\dilipwo$} if it satisfies the following:
 \begin{enumerate}
  \item
  $\forall f \in \dilipFFF \left[{S}_{f} \in \dilipFFF\right]$;
  \item
  for every $S \in \dilipFFF$, if $S: \omega \rightarrow \diliphf$ is a slalom, then ${f}_{S, \dilipwo} \in \dilipFFF$;
  \item
  for every set $\{{f}_{n}: n \in \omega\} \subseteq \dilipFFF$, there is a slalom $S \in \dilipFFF$ so that $\forall n \in \omega \dilipforallbutfin k \in \omega \left[{f}_{n}(k) \in S(k)\right]$.
 \end{enumerate}
\end{dilipDef}
\begin{dilipremark} \label{rem:absolute}
 Suppose ${\dilipV}_{0} \subseteq {\dilipV}_{1}$ are transitive models of a sufficiently large fragment of $\dilipZFC$ with $\dilipwo \in {\dilipV}_{0}$.
 Then for any $A \in \diliphf$, the map ${e}_{A, \dilipwo}: \diliplc A \diliprc \rightarrow A$ is the same whether it is calculated in ${\dilipV}_{0}$ or in ${\dilipV}_{1}$.
 Also, if $S \in {\dilipV}_{0}$, then $S$ is a slalom in ${\dilipV}_{0}$ if and only if $S$ is a slalom in ${\dilipV}_{1}$, and the computation of ${f}_{S, \dilipwo}$ does not change.
 Similarly, for any $f \in {\diliphf}^{\omega} \cap {\dilipV}_{0}$, ${S}_{f}$ is the same when calculated in ${\dilipV}_{0}$ or ${\dilipV}_{1}$.
 So we conclude that if $\dilipFFF \in {\dilipV}_{0}$ and
 \begin{align*}
  {\left( \dilipFFF \subseteq {\diliphf}^{\omega} \ \text{satisfies (1) and (2) of Definition \ref{def:wellclosed} w.r.t.\@} \ \dilipwo \right)}^{{\dilipV}_{0}}\text{, then}
 \end{align*}
 \begin{align*}
  {\left( \dilipFFF \subseteq {\diliphf}^{\omega} \ \text{satisfies (1) and (2) of Definition \ref{def:wellclosed} w.r.t.\@} \ \dilipwo \right)}^{{\dilipV}_{1}}.
 \end{align*}
 If the models ${\dilipV}_{0} \subseteq {\dilipV}_{1}$ satisfy the further condition that
 \begin{align*}
  \forall X \in {\dilipV}_{0} \forall Y \in {\left( \dilippc{X}{\leq {\aleph}_{0}} \right)}^{{\dilipV}_{1}} \exists Z \in {\left( \dilippc{X}{\leq {\aleph}_{0}} \right)}^{{\dilipV}_{0}} \left[Y \subseteq Z\right],
 \end{align*}
 then for any $\dilipFFF \in {\dilipV}_{0}$, if
 \begin{align*}
  {\left( \dilipFFF \ \text{is well-closed w.r.t.\@} \ \dilipwo \right)}^{{\dilipV}_{0}}\text{, then}
 \end{align*}
 \begin{align*}
  {\left( \dilipFFF \ \text{is well-closed w.r.t.\@} \ \dilipwo \right)}^{{\dilipV}_{1}}.
 \end{align*}
 In particular, this holds whenever ${\dilipV}_{1}$ is a forcing extension of ${\dilipV}_{0}$ by a proper poset.
\end{dilipremark}
Remark \ref{rem:absolute} will used several times in what follows.
\begin{dilipDef} \label{def:big}
 $\dilipFFF \subseteq {\diliphf}^{\omega}$ is called \emph{big} if
 \begin{align*}
  \forall f \in {\diliphf}^{\omega} \exists g \in \dilipFFF \dilipexistsinf k \in \omega \left[f(k) = g(k) \right].
 \end{align*}
\end{dilipDef}
\begin{dilipLemma} \label{lem:twostepbig}
 Suppose $\dilipFFF \subseteq {\diliphf}^{\omega}$ is big.
 Let $\dilipPP$ be a forcing such that ${\dilipforces}_{\dilipPP} \; {\dilipFFF \ \text{is big}}$.
 Let $\mathring{\dilipQQ}$ be a $\dilipPP$-name for a forcing such that ${\dilipforces}_{\dilipPP} \; {``{\dilipforces}_{\mathring{\dilipQQ}} \; {\dilipFFF \ \text{is big}}''}$.
 Then ${\dilipforces}_{\dilipPP \ast \mathring{\dilipQQ}} \; {\dilipFFF \ \text{is big}}$.
\end{dilipLemma}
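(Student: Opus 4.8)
The plan is to unwind the definition of ``big'' and then invoke the standard two-step factorization of iterated forcing. Recall that the assertion ``$\dilipFFF$ is big'' says that for every $f \in {\diliphf}^{\omega}$ \emph{of the model under consideration} there is $g \in \dilipFFF$ with $\dilipexistsinf k \in \omega \left[f(k) = g(k)\right]$. The point to keep in mind throughout is that $\dilipFFF$ is one fixed set of functions lying in $\dilipV$, so ``$g \in \dilipFFF$'' has exactly the same meaning in $\dilipV$ and in every forcing extension of $\dilipV$; hence all three hypotheses and the conclusion refer to the \emph{same} family $\dilipFFF$, and the only thing that varies from model to model is the stock of available reals $f$.

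First I would fix an arbitrary $(\dilipV, \dilipPP \ast \mathring{\dilipQQ})$-generic filter $K$ and factor it as $K = G \ast H$, where $G$ is a $(\dilipV, \dilipPP)$-generic filter and $H$ is a $(\dilipV[G], \mathring{\dilipQQ}[G])$-generic filter, so that $\dilipV[K] = \dilipV[G][H]$; this is precisely the standard fact about projections and quotients recalled in Section~\ref{sec:notation}. Evaluating the third hypothesis --- that $\dilipPP$ forces that $\mathring{\dilipQQ}$ forces $\dilipFFF$ to be big --- at the generic filter $G$ yields that, in $\dilipV[G]$, the poset $\mathring{\dilipQQ}[G]$ forces $\dilipFFF$ to be big. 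Evaluating \emph{this} forcing statement at $H$ gives: in $\dilipV[G][H]$, for every $f \in {\diliphf}^{\omega} \cap \dilipV[G][H]$ there is $g \in \dilipFFF$ with $\dilipexistsinf k \in \omega \left[f(k) = g(k)\right]$.

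Since $\dilipV[K] = \dilipV[G][H]$, this says exactly that $\dilipFFF$ is big in $\dilipV[K]$; and as $K$ was an arbitrary $(\dilipV, \dilipPP \ast \mathring{\dilipQQ})$-generic filter, we conclude that $\dilipPP \ast \mathring{\dilipQQ}$ forces $\dilipFFF$ to be big, as desired. (The first two hypotheses, that $\dilipFFF$ is big in $\dilipV$ and that $\dilipPP$ forces $\dilipFFF$ to be big, are in fact not needed for this particular two-step statement; they are recorded because the lemma is meant to serve as the inductive step for preservation along longer iterations, where one peels off one coordinate at a time.)

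There is essentially no real obstacle here; the only care needed is with two bookkeeping points. The first is that ``big'' is a property of the fixed $\dilipV$-family $\dilipFFF$ and not of some reinterpretation of it in an extension, which is what makes chaining the two forcing relations legitimate. The second is that every element of ${\diliphf}^{\omega}$ occurring in a $\dilipPP \ast \mathring{\dilipQQ}$-extension already occurs in the corresponding $\mathring{\dilipQQ}$-extension of the $\dilipPP$-extension, which is immediate from $\dilipV[G \ast H] = \dilipV[G][H]$. In particular, no genericity-preservation, fusion, or properness argument of the kind used elsewhere in this section is required.
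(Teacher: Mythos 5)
Your proof is correct and matches the paper's argument essentially line for line: factor the generic filter $K$ as $G \ast H$, evaluate the hypothesis ${\dilipforces}_{\dilipPP}\;{``{\dilipforces}_{\mathring{\dilipQQ}}\;{\dilipFFF\ \text{is big}}''}$ at $G$ and then at $H$, and use $\dilipV[K] = \dilipV[G][H]$. Your side observation that the first two hypotheses are not actually used in the proof (the paper does mention in passing that $\dilipFFF$ is big in $\dilipV[G]$, but never invokes it) is also accurate; they are indeed carried along only so the statement dovetails with the inductive hypotheses in Corollary~\ref{cor:bigispreserved}.
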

\begin{proof}
 Let $K$ be $(\dilipV, \dilipPP \ast \mathring{\dilipQQ})$-generic.
 Then there are $G$ and $H$ such that $G$ is $(\dilipV, \dilipPP)$-generic, $H$ is $(\dilipVG, \mathring{\dilipQQ}\left[G\right])$-generic and $\dilipV\left[K\right] = \dilipVG\left[H\right]$.
 By hypothesis, in $\dilipVG$ we have that $\dilipFFF$ is big, $\mathring{\dilipQQ}\left[G\right]$ is a forcing, and ${\dilipforces}_{\mathring{\dilipQQ}\left[G\right]} \; {\dilipFFF \ \text{is big}}$.
 Therefore in $\dilipV\left[K\right] = \dilipVG\left[H\right]$, $\dilipFFF$ is big.
\end{proof}
The next lemma is a variation of Theorem 61 of Raghavan~\cite{svmad}.
See also Raghavan~\cite{mythesis}.
\begin{dilipLemma} \label{lem:iterationbig}
 Let $\dilipwo$ be a well-ordering of $\diliphf$.
 Suppose $\dilipFFF \subseteq {\diliphf}^{\omega}$ is well-closed w.r.t.\@ $\dilipwo$ and big.
 Let $\gamma$ be a limit ordinal and let $\langle {\dilipPP}_{\alpha}; {\mathring{\dilipQQ}}_{\alpha}: \alpha \leq \gamma \rangle$ be a CS iteration such that $\forall \alpha < \gamma \left[{\dilipforces}_{\alpha} \; {{\mathring{\dilipQQ}}_{\alpha} \ \text{is proper}}\right]$.
 Suppose that for all $\alpha < \gamma$, ${\dilipforces}_{\alpha} \; {\dilipFFF \ \text{is big}}$.
 Then ${\dilipforces}_{\gamma} \; {\dilipFFF \ \text{is big}}$.
\end{dilipLemma}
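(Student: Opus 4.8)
The plan is to argue by the usual case split on $\cf(\gamma)$, reducing the statement, via the closure properties of $\dilipFFF$, to a slalom‑covering property that is robust enough to be carried through a countable‑support fusion.

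\emph{Preliminary reductions.} Since a countable support iteration of proper forcings is proper, Remark~\ref{rem:absolute} guarantees that $\dilipFFF$ remains well‑closed with respect to $\dilipwo$ in $\dilipV^{{\dilipPP}_{\alpha}}$ for every $\alpha \le \gamma$; in particular clause (3) of Definition~\ref{def:wellclosed} holds of $\dilipFFF$ in each such model. Consequently Lemmas~\ref{lem:slalombig} and~\ref{lem:cumber} are available throughout, and they give the following equivalence, valid at every stage $\alpha \le \gamma$: $\dilipFFF$ is big if and only if $\dilipFFF$ has the \emph{slalom‑covering property} $\mathcal{SC}$, namely, for every small slalom $S \colon \omega \to \diliphf$ and every sequence $\dilipseq{X}{l}{\in}{\omega}$ of infinite subsets of $\omega$ there is a slalom $T \in \dilipFFF$ with $\forall l\, \dilipexistsinf n \in X_l\, [S(n) \subseteq T(n)]$. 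The forward direction is Lemma~\ref{lem:slalombig}; for the converse, given $f \in {\diliphf}^{\omega}$ apply $\mathcal{SC}$ to the small slalom $n \mapsto \{f(n)\}$ and to the constant sequence $X_l = \omega$ to obtain a slalom $T \in \dilipFFF$ with $\dilipexistsinf k\, [f(k) \in T(k)]$, and then invoke Lemma~\ref{lem:cumber}. So it suffices to prove ${\dilipforces}_{\gamma}\, \mathcal{SC}$.

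\emph{The case $\cf(\gamma) > \omega$.} By standard facts about countable support iterations of proper forcings, every ${\dilipPP}_{\gamma}$‑name for an element of ${\diliphf}^{\omega}$ is forced to equal a ${\dilipPP}_{\alpha}$‑name for some $\alpha < \gamma$, and, applying this to the countably many names constituting a sequence $\dilipseq{\mathring X}{l}{\in}{\omega}$, the same $\alpha$ may be chosen to bound all of them. Hence an instance of $\mathcal{SC}$ at stage $\gamma$ reduces to one at some $\alpha < \gamma$, where it holds because $\dilipFFF$ is big in $\dilipV^{{\dilipPP}_{\alpha}}$, and the resulting witness $T \in \dilipFFF$ is still a witness in $\dilipV^{{\dilipPP}_{\gamma}}$ since the relevant relations are upward absolute. (One can also phrase this via a one‑step application of Lemma~\ref{lem:twostepbig}.)

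\emph{The case $\cf(\gamma) = \omega$.} This is the heart of the matter. Fix an increasing cofinal sequence $\dilipseq{\gamma}{n}{\in}{\omega}$ with $\gamma_0 = 0$. Given $p \in {\dilipPP}_{\gamma}$, a ${\dilipPP}_{\gamma}$‑name $\mathring S$ for a small slalom, a sequence $\dilipseq{\mathring X}{l}{\in}{\omega}$ of names for infinite subsets of $\omega$, and a countable $N \prec H(\theta)$ containing all relevant parameters, one builds an $(N, {\dilipPP}_{\gamma})$‑generic condition $q \le p$ by a fusion along the tower $\langle {\dilipPP}_{\gamma_n} : n \in \omega \rangle$: as in the proper iteration theorem, $q \diliprestrict {\gamma}_{n}$ is decided at step $n$ and frozen afterwards. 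The extra feature built into the fusion is that, at step $n$, one first thins the tail ${\dilipPP}_{\gamma} \slash {\dilipPP}_{\gamma_n}$ below the part of $q$ already chosen so that, in $\dilipV^{{\dilipPP}_{\gamma_n}}$, the possible values of $\mathring S$ and of the $\mathring X_l$ for $l \le n$ over the $n$‑th block of coordinates are enumerated by objects of $\dilipV^{{\dilipPP}_{\gamma_n}}$; this packages into a small slalom $S_n$ and infinite sets $X^n_l$ lying in $\dilipV^{{\dilipPP}_{\gamma_n}}$. Applying the inductive hypothesis in $\dilipV^{{\dilipPP}_{\gamma_n}}$ — that $\dilipFFF$ is big there, equivalently has $\mathcal{SC}$ — one fixes a ${\dilipPP}_{\gamma_n}$‑name ${\mathring g}_{n}$ for a slalom $g_n \in \dilipFFF$ covering $S_n$ on each $X^n_l$. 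In the final extension $\dilipV^{{\dilipPP}_{\gamma}}$, the set $\{g_n : n \in \omega\}$ is a subset of the ground‑model set $\dilipFFF$, so clause (3) of Definition~\ref{def:wellclosed} — which holds of $\dilipFFF$ in $\dilipV^{{\dilipPP}_{\gamma}}$ by Remark~\ref{rem:absolute} — yields a single slalom $T \in \dilipFFF$ with $\forall n\, \dilipforallbutfin k\, [g_n(k) \in T(k)]$. Below $q$, the bookkeeping arranges that, for each $l$, the coordinates on which the $n$‑th contribution acts land on an infinite subset of $\mathring X_l[{\mathring G}_{\gamma}]$, and since $T$ almost contains each $g_n$ pointwise, $T$ covers $\mathring S[{\mathring G}_{\gamma}]$ on $\mathring X_l[{\mathring G}_{\gamma}]$ for every $l$. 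Thus $q$ forces the desired instance of $\mathcal{SC}$. This follows the pattern of Theorem 61 of~\cite{svmad}.

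\emph{The main obstacle.} The delicate point is precisely the reconciliation just described: the witnesses $g_n$ produced by bigness live in the distinct intermediate models $\dilipV^{{\dilipPP}_{\gamma_n}}$, whereas the witness required for $\mathcal{SC}$ at stage $\gamma$ must be a single element of the fixed ground‑model set $\dilipFFF$. Overcoming this requires getting two things right at once: (i) that the interval and coordinate bookkeeping in the fusion genuinely forces the ``$\dilipexistsinf$'' data coming from the $n$‑th intermediate model to land on an infinite subset of the actual realized set $\mathring X_l[{\mathring G}_{\gamma}]$, for each $l$ simultaneously; and (ii) that clause (3) of well‑closedness is applied in $\dilipV^{{\dilipPP}_{\gamma}}$ to the generically chosen family $\{g_n : n \in \omega\} \subseteq \dilipFFF$, which is legitimate exactly because each $g_n$ is an element of the ground‑model set $\dilipFFF$ and, by Remark~\ref{rem:absolute}, clause (3) survives the proper extension. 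The quantifier order here — one ground‑model $T$ working for all $l$ and for the realized values of all the names $\mathring S$, $\mathring X_l$ — is what forces the property $\mathcal{SC}$ to be stated with the parameter sequence $\dilipseq{X}{l}{\in}{\omega}$ rather than merely with $X_l = \omega$, and managing it carefully through the fusion is where the real work lies.
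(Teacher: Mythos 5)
Your design is genuinely different from the paper's, and it contains a gap at the amalgamation step. The paper fixes a countable $M \prec H(\theta)$, fuses along an $\omega$-sequence cofinal in $\sup(M \cap \gamma)$ (so no case split on $\cf(\gamma)$ is needed), and --- crucially --- commits to a single slalom $S \in \dilipFFF$ \emph{before} the fusion, obtained by applying clause (3) of Definition~\ref{def:wellclosed} once to the countable set $\dilipFFF \cap M$; that $S$ is the final witness. You instead produce a fresh $g_n$ at each stage and amalgamate them at the end via clause (3). But clause (3) applied to $\{g_n : n \in \omega\}$ gives a slalom $T$ with $\forall n\, \dilipforallbutfin k\, \left[g_n(k) \in T(k)\right]$: each finite set $g_n(k)$ appears as an \emph{element} of $T(k)$, not as a \emph{subset}. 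Your phrase ``$T$ almost contains each $g_n$ pointwise'' conflates $\in$ with $\subseteq$; from $\mathring{S}(k) \subseteq g_n(k)$ and $g_n(k) \in T(k)$ one cannot conclude $\mathring{S}(k) \subseteq T(k)$, which is what $\mathcal{SC}$ requires. The conversion from $\in$ to $\subseteq$ is exactly what the operator $f \mapsto {S}_{f}$ of Definition~\ref{def:An} is built for (and it is why Lemma~\ref{lem:slalombig} routes through ${S}_{f}$); your argument does not pass through it at the one place where it would be needed, so the conclusion does not follow.

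The paper's mechanism avoids post-hoc amalgamation altogether. Having fixed $S \in \dilipFFF$ with $\dilipforallbutfin k\, [g(k) \in S(k)]$ for every $g \in \dilipFFF \cap M$, it exploits the $(M, {\dilipPP}_{{\gamma}_{n+1}})$-genericity of the accumulating condition ${q}_{n+1}$ to get $\dilipFFF \cap M\left[{G}_{{\gamma}_{n+1}}\right] = \dilipFFF \cap M$; the witness produced by bigness and elementarity inside $\dilipV\left[{G}_{{\gamma}_{n+1}}\right]$ therefore lands automatically in $\dilipFFF \cap M$ and is already eventually captured by $S$. The fusion then directly yields, for every $f$ in the final extension, a slalom in $\dilipFFF$ catching $f$ infinitely often, and Lemma~\ref{lem:cumber} converts this to bigness. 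To repair your argument you should do the same: apply clause (3) once, before the fusion, to $\dilipFFF \cap M$, keep the fusion conditions $(M, {\dilipPP}_{{\gamma}_{n}})$-generic, and let genericity --- not a post-hoc use of clause (3) --- confine the intermediate witnesses. Once you do this, the $\mathcal{SC}$ reformulation, the parameter sequence $\langle {X}_{l} \rangle$, and the $\cf(\gamma)$ case split all become unnecessary.
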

\begin{proof}
 Let $\mathring{f} \in {\dilipV}^{{\dilipPP}_{\gamma}}$ be such that ${\dilipforces}_{\gamma} \; {\mathring{f} \in {\diliphf}^{\omega}}$ and let ${p}_{0} \in {\dilipPP}_{\gamma}$.
 Fix a sufficiently large regular $\theta$ and a countable $M \prec H(\theta)$ with $\dilipFFF, \langle {\dilipPP}_{\alpha}; {\mathring{\dilipQQ}}_{\alpha}: \alpha \leq \gamma \rangle, \mathring{f}, {p}_{0} \in M$.
 Since $\dilipFFF$ satisfies (3) of Definition \ref{def:wellclosed}, we can find a slalom $S \in \dilipFFF$ so that for all $f \in \dilipFFF \cap M$, $\dilipforallbutfin k \in \omega \left[f(k) \in S(k)\right]$.
 We will find a $q \in {\dilipPP}_{\gamma}$ such that $q \; {\dilipforces}_{\gamma} \; {p}_{0} \in {\mathring{G}}_{\gamma}$ and $q \; {\dilipforces}_{\gamma} \; {\dilipexistsinf n \in \omega \left[\mathring{f}(n) \in S(n)\right]}$.
 Put $\gamma' = \sup(M \cap \gamma)$ and let $\dilipseq{\gamma}{n}{\in}{\omega} \subseteq M \cap \gamma$ be an increasing sequence that is cofinal in $\gamma'$.
 We build two sequences $\dilipseq{q}{n}{\in}{\omega}$ and $\dilipseq{\mathring{p}}{n}{\in}{\omega}$ satisfying the following conditions for all $n \in \omega$:
 \begin{enumerate}[series=iterationbig]
  \item
  ${q}_{n} \in {\dilipPP}_{{\gamma}_{n}}$, ${q}_{n}$ is $(M, {\dilipPP}_{{\gamma}_{n}})$-generic, and ${q}_{n+1} \diliprestrict {\gamma}_{n} = {q}_{n}$;
  \item
  ${\mathring{p}}_{0} = {\check{p}}_{0}$, ${\mathring{p}}_{n} \in {\dilipV}^{{\dilipPP}_{{\gamma}_{n}}}$, and ${q}_{n} \; {\dilipforces}_{{\gamma}_{n}} \; {``{\mathring{p}}_{n} \in M \cap {\dilipPP}_{\gamma} \wedge {\mathring{p}}_{n} \diliprestrict {\gamma}_{n} \in {\mathring{G}}_{{\gamma}_{n}}''}$;
  \item
  ${q}_{n+1} \; {\dilipforces}_{{\gamma}_{n+1}} \; {{\mathring{p}}_{n+1} \leq {\mathring{p}}_{n}}$;
  \item
  ${q}_{n+1} \; {\dilipforces}_{{\gamma}_{n+1}} \; {``{\mathring{p}}_{n+1} \; {\dilipforces}_{{\dilipPP}_{\gamma} \slash {\mathring{G}}_{{\gamma}_{n+1}}} \; {\exists k \geq n \left[\mathring{f}\left[{\mathring{G}}_{{\gamma}_{n+1}}\right](k) \in S(k)\right]} ''}$.
 \end{enumerate}
 Assume for a moment that such sequences have been constructed.
 Then ${\bigcup}_{n \in \omega}{{q}_{n}}$ is a condition in ${\dilipPP}_{\gamma'}$.
 We extend ${\bigcup}_{n \in \omega}{{q}_{n}}$ to a condition $q$ in ${\dilipPP}_{\gamma}$ by setting $q(\alpha) = {\mathbbm{1}}_{{\mathring{\dilipQQ}}_{\alpha}}$ for all $\gamma' \leq \alpha < \gamma$.
 By standard arguments, $\forall n \in \omega \left[q \; {\dilipforces}_{\gamma} \; {{\mathring{p}}_{n} \in {\mathring{G}}_{\gamma}}\right]$.
 In particular, $q \; {\dilipforces}_{\gamma} \; {{p}_{0} \in {\mathring{G}}_{\gamma}}$.
 We will check that $q \; {\dilipforces}_{\gamma} \; {\dilipexistsinf n  \in \omega \left[\mathring{f}(n) \in S(n)\right]}$.
 
 Suppose not.
 Then $\exists r \leq q \exists n \in \omega \left[r \; {\dilipforces}_{\gamma} \; {\forall k \geq n \left[\mathring{f}(k) \notin S(k)\right]}\right]$.
 Let ${G}_{\gamma}$ be $(\dilipV, {\dilipPP}_{\gamma})$-generic with $r \in {G}_{\gamma}$.
 It is a standard fact that ${G}_{{\gamma}_{n+1}}$ is $(\dilipV, {\dilipPP}_{{\gamma}_{n+1}})$-generic, that ${G}_{\gamma}$ is $\left( \dilipV\left[{G}_{{\gamma}_{n+1}}\right], {\dilipPP}_{\gamma} \slash {G}_{{\gamma}_{n+1}} \right)$-generic, and that in $\dilipV\left[{G}_{\gamma}\right]$: ${G}_{\gamma} = {G}_{{\gamma}_{n+1}} \ast {G}_{\gamma}$.
 So in $\dilipV\left[{G}_{{\gamma}_{n+1}}\right]$: ${\mathring{p}}_{n+1}\left[{G}_{{\gamma}_{n+1}}\right] \in {\dilipPP}_{\gamma} \slash {G}_{{\gamma}_{n+1}}$ and
 \begin{align*}
  {\mathring{p}}_{n+1}\left[{G}_{\gamma}\right] = {\mathring{p}}_{n+1}\left[{G}_{{\gamma}_{n+1}}\right] \; {\dilipforces}_{{\dilipPP}_{\gamma} \slash {G}_{{\gamma}_{n+1}}} \; {\exists k \geq n+1\left[\mathring{f}\left[{G}_{{\gamma}_{n+1}}\right](k) \in S(k)\right]}.
 \end{align*}
 Since ${\mathring{p}}_{n+1}\left[{G}_{\gamma}\right] \in {G}_{\gamma}$, in $\dilipV\left[{G}_{\gamma}\right] = \dilipV\left[{G}_{{\gamma}_{n+1}}\right]\left[{G}_{\gamma}\right]$: there exists $k \geq n+1$ such that $\mathring{f}\left[{G}_{{\gamma}_{n+1}}\right]\left[{G}_{\gamma}\right](k) \in S(k)$.
 Since $\mathring{f}\left[{G}_{{\gamma}_{n+1}}\right]\left[{G}_{\gamma}\right] = \mathring{f}\left[{G}_{\gamma}\right]$, so $\mathring{f}\left[{G}_{\gamma}\right](k) \in S(k)$, and we have a contradiction.
 
 To build $\dilipseq{q}{n}{\in}{\omega}$ and $\dilipseq{\mathring{p}}{n}{\in}{\omega}$, proceed by induction on $n$.
 Let ${\mathring{p}}_{0} = {\check{p}}_{0}$, the canonical ${\dilipPP}_{{\gamma}_{0}}$-name for ${p}_{0}$.
 Note that ${p}_{0} \diliprestrict {\gamma}_{0} \in M \cap {\dilipPP}_{{\gamma}_{0}}$ and ${\dilipPP}_{{\gamma}_{0}} \in M$.
 By the properness of ${\dilipPP}_{{\gamma}_{0}}$, there is ${q}_{0} \in {\dilipPP}_{{\gamma}_{0}}$ which is $(M, {\dilipPP}_{{\gamma}_{0}})$-generic with ${q}_{0} \leq {p}_{0} \diliprestrict {\gamma}_{0}$.
 Then ${\mathring{p}}_{0}$ and ${q}_{0}$ satisfy (1)--(4).
 Now suppose ${\mathring{p}}_{n}$ and ${q}_{n}$ are given, for some $n \in \omega$.
 By the Properness Extension Lemma (see, for example, Lemma 2.8 of Abraham~\cite{Ab}), there is an $(M, {\dilipPP}_{{\gamma}_{n+1}})$-generic condition ${q}_{n+1} \in {\dilipPP}_{{\gamma}_{n+1}}$ such that ${q}_{n+1} \diliprestrict {\gamma}_{n} = {q}_{n}$ and ${q}_{n+1} \; {\dilipforces}_{{\gamma}_{n+1}} \; {{\mathring{p}}_{n} \diliprestrict {\gamma}_{n+1} \in {\mathring{G}}_{{\gamma}_{n+1}}}$.
 
 To find ${\mathring{p}}_{n+1}$ we proceed as follows.
 Fix a $(\dilipV, {\dilipPP}_{{\gamma}_{n+1}})$-generic filter ${G}_{{\gamma}_{n+1}}$ with ${q}_{n+1} \in {G}_{{\gamma}_{n+1}}$.
 Recall that $M\left[{G}_{{\gamma}_{n+1}}\right] \prec H(\theta)\left[{G}_{{\gamma}_{n+1}}\right] = {H}^{\dilipV\left[{G}_{{\gamma}_{n+1}}\right]}(\theta)$.
 Note that ${\dilipPP}_{\gamma} \slash {G}_{{\gamma}_{n+1}} \in M\left[{G}_{{\gamma}_{n+1}}\right]$ and that in $\dilipV\left[{G}_{{\gamma}_{n+1}}\right]$, ${\dilipforces}_{{\dilipPP}_{\gamma} \slash {G}_{{\gamma}_{n+1}}} \; {\mathring{f}\left[{G}_{{\gamma}_{n+1}}\right] \in {\diliphf}^{\omega}}$.
 Since ${q}_{n+1} \in {G}_{{\gamma}_{n+1}}$, ${\mathring{p}}_{n}\left[{G}_{{\gamma}_{n+1}}\right] \diliprestrict {\gamma}_{n+1} \in {G}_{{\gamma}_{n+1}}$.
 So in $\dilipV\left[{G}_{{\gamma}_{n+1}}\right]$, ${\mathring{p}}_{n}\left[{G}_{{\gamma}_{n+1}}\right] \in M \cap {\dilipPP}_{\gamma} \slash {G}_{{\gamma}_{n+1}}$.
 By elementarity, find a sequence $\langle {p}^{i}: i \in \omega \rangle \in M\left[{G}_{{\gamma}_{n+1}}\right]$ and a function $f \in {\diliphf}^{\omega} \cap M\left[{G}_{{\gamma}_{n+1}}\right]$ satisfying the following for each $i \in \omega$:
 \begin{enumerate}[resume=iterationbig]
  \item
  ${p}^{i} \in {\dilipPP}_{\gamma} \slash {G}_{{\gamma}_{n+1}}$;
  \item
  ${p}^{i+1} \leq {p}^{i} \leq {\mathring{p}}_{n}\left[{G}_{{\gamma}_{n+1}}\right]$;
  \item
  ${p}^{i} \; {\dilipforces}_{{\dilipPP}_{\gamma} \slash {G}_{{\gamma}_{n+1}}} \; {\mathring{f}\left[{G}_{{\gamma}_{n+1}}\right](i) = f(i)}$.
 \end{enumerate}
 By hypothesis, $\dilipFFF$ is big in $\dilipV\left[{G}_{{\gamma}_{n+1}}\right]$.
 So by elementarity and by the fact that $f \in M\left[{G}_{{\gamma}_{n+1}}\right]$, we can find $g \in \dilipFFF \cap M\left[{G}_{{\gamma}_{n+1}}\right]$ so that $\dilipexistsinf k \in \omega\left[f(k) = g(k)\right]$.
 Since ${q}_{n+1}$ is an $(M, {\dilipPP}_{{\gamma}_{n+1}})$-generic condition with ${q}_{n+1} \in {G}_{{\gamma}_{n+1}}$ and since $\dilipFFF \in \dilipV$, $\dilipFFF \cap M = \dilipFFF \cap M\left[{G}_{{\gamma}_{n+1}}\right]$.
 So $g \in \dilipFFF \cap M$.
 By the choice of $S$, $\dilipforallbutfin k \in \omega\left[g(k) \in S(k)\right]$.
 Hence we can fix $k \geq n+1$ such that $f(k) \in S(k)$.
 Note that ${p}^{k} \in {\dilipPP}_{\gamma} \cap M\left[{G}_{{\gamma}_{n+1}}\right]$, and that ${\dilipPP}_{\gamma} \cap M\left[{G}_{{\gamma}_{n+1}}\right] = {\dilipPP}_{\gamma} \cap M$ similarly to $\dilipFFF$.
 Thus ${p}^{k}$ has the following properties:
 \begin{enumerate}[resume=iterationbig]
  \item
  ${p}^{k} \in M \cap {\dilipPP}_{\gamma}$ and ${p}^{k} \diliprestrict {\gamma}_{n+1} \in {G}_{{\gamma}_{n+1}}$;
  \item
  ${p}^{k} \leq {\mathring{p}}_{n}\left[{G}_{{\gamma}_{n+1}}\right]$;
  \item
  ${p}^{k} \; {\dilipforces}_{{\dilipPP}_{\gamma} \slash {G}_{{\gamma}_{n+1}}} \; {\exists i \geq n+1\left[\mathring{f}\left[{G}_{{\gamma}_{n+1}}\right](i) \in S(i)\right]}$.
 \end{enumerate}
 Since ${G}_{{\gamma}_{n+1}}$ was an arbitrary $(\dilipV, {\dilipPP}_{{\gamma}_{n+1}})$-generic filter with ${q}_{n+1} \in {G}_{{\gamma}_{n+1}}$, we can use the maximal principle in $\dilipV$ to find a ${\mathring{p}}_{n+1} \in {\dilipV}^{{\dilipPP}_{{\gamma}_{n+1}}}$ so that in $\dilipV$:
 \begin{enumerate}[resume=iterationbig]
  \item
  ${q}_{n+1} \; {\dilipforces}_{{\dilipPP}_{{\gamma}_{n+1}}} \; {``{\mathring{p}}_{n+1} \in M \cap {\dilipPP}_{\gamma} \wedge {\mathring{p}}_{n+1} \diliprestrict {\gamma}_{n+1} \in {\mathring{G}}_{{\gamma}_{n+1}}''}$;
  \item
  ${q}_{n+1} \; {\dilipforces}_{{\dilipPP}_{{\gamma}_{n+1}}} \; {{\mathring{p}}_{n+1} \leq {\mathring{p}}_{n}}$;
  \item
  ${q}_{n+1} \; {\dilipforces}_{{\dilipPP}_{{\gamma}_{n+1}}} \; {``{\mathring{p}}_{n+1} \; {\dilipforces}_{{\dilipPP}_{\gamma} \slash {\mathring{G}}_{{\gamma}_{n+1}}} \; {\exists k \geq n+1\left[\mathring{f}\left[{\mathring{G}}_{{\gamma}_{n+1}}\right](k) \in S(k)\right]}''}$.
 \end{enumerate}
 This concludes the inductive construction.
 
 To complete the proof, fix an arbitrary $(\dilipV, {\dilipPP}_{\gamma})$-generic filter ${G}_{\gamma}$.
 In $\dilipV\left[{G}_{\gamma}\right]$, by what has been proved above, for every $f \in {\diliphf}^{\omega}$, there exists a slalom $S \in \dilipFFF$ so that $\dilipexistsinf k \in \omega\left[f(k) \in S(k)\right]$.
 Since ${\dilipPP}_{\gamma}$ is proper and since $\dilipFFF$ is well-closed w.r.t.\@ $\dilipwo$ in $\dilipV$, $\dilipFFF$ is still well-closed w.r.t.\@ $\dilipwo$ in $\dilipV\left[{G}_{\gamma}\right]$.
 So Lemma \ref{lem:cumber} applies in $\dilipV\left[{G}_{\gamma}\right]$ and implies that $\forall f \in {\diliphf}^{\omega} \exists g \in \dilipFFF \dilipexistsinf k \in \omega \left[f(k) = g(k)\right]$.
\end{proof}
\begin{dilipCor} \label{cor:bigispreserved}
 Let $\dilipwo$ be a well-ordering of $\diliphf$.
 Suppose $\dilipFFF \subseteq {\diliphf}^{\omega}$ is well-closed w.r.t.\@ $\dilipwo$ and is big.
 Let $\gamma$ be any ordinal.
 Suppose $\langle {\dilipPP}_{\alpha}; {\mathring{\dilipQQ}}_{\alpha}: \alpha \leq \gamma \rangle$ is a CS iteration such that $\forall \alpha < \gamma\left[{\dilipforces}_{\alpha} \; {{\mathring{\dilipQQ}}_{\alpha} \ \text{is proper}}\right]$.
 Suppose also that for each $\alpha < \gamma$ ${\dilipforces}_{\alpha} \; {``{\dilipforces}_{{\mathring{\dilipQQ}}_{\alpha}} \; {\dilipFFF \ \text{is big}}''}$.
 Then ${\dilipforces}_{\gamma} \; {\dilipFFF \ \text{is big}}$.
\end{dilipCor}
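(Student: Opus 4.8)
The plan is to prove Corollary~\ref{cor:bigispreserved} by a straightforward transfinite induction on $\gamma$, using the hypothesis at $\gamma = 0$, Lemma~\ref{lem:twostepbig} at successor stages, and Lemma~\ref{lem:iterationbig} at limit stages. The one structural observation that makes the induction go through is that the hypotheses restrict to initial segments: if $\langle {\dilipPP}_{\alpha}; {\mathring{\dilipQQ}}_{\alpha}: \alpha \leq \gamma \rangle$ satisfies the hypotheses of the corollary, then for every $\beta \leq \gamma$ so does $\langle {\dilipPP}_{\alpha}; {\mathring{\dilipQQ}}_{\alpha}: \alpha \leq \beta \rangle$, since it is still a CS iteration of forcings that are forced to be proper, $\dilipFFF$ is (in $\dilipV$) still well-closed w.r.t.\@ $\dilipwo$ and big, and for each $\alpha < \beta$ we still have ${\dilipforces}_{\alpha} \; {``{\dilipforces}_{{\mathring{\dilipQQ}}_{\alpha}} \; {\dilipFFF \ \text{is big}}''}$ because $\alpha < \gamma$. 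Hence the induction hypothesis can be invoked for any proper initial segment.

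With this in hand, I would argue as follows. For $\gamma = 0$ the forcing ${\dilipPP}_{0}$ is trivial and $\dilipFFF$ is big in $\dilipV$ by assumption, so ${\dilipforces}_{0} \; {\dilipFFF \ \text{is big}}$. For $\gamma = \beta + 1$, the induction hypothesis applied to the initial segment up to $\beta$ gives ${\dilipforces}_{\beta} \; {\dilipFFF \ \text{is big}}$; together with the fact that $\dilipFFF$ is big in $\dilipV$ and with the instance ${\dilipforces}_{\beta} \; {``{\dilipforces}_{{\mathring{\dilipQQ}}_{\beta}} \; {\dilipFFF \ \text{is big}}''}$ of the hypothesis, Lemma~\ref{lem:twostepbig} applied with $\dilipPP = {\dilipPP}_{\beta}$ and $\mathring{\dilipQQ} = {\mathring{\dilipQQ}}_{\beta}$ yields ${\dilipforces}_{{\dilipPP}_{\beta} \ast {\mathring{\dilipQQ}}_{\beta}} \; {\dilipFFF \ \text{is big}}$, i.e.\@ ${\dilipforces}_{\gamma} \; {\dilipFFF \ \text{is big}}$ since ${\dilipPP}_{\gamma} = {\dilipPP}_{\beta} \ast {\mathring{\dilipQQ}}_{\beta}$. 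For $\gamma$ a limit ordinal, the induction hypothesis applied to each initial segment $\langle {\dilipPP}_{\delta}; {\mathring{\dilipQQ}}_{\delta}: \delta \leq \alpha \rangle$ with $\alpha < \gamma$ gives ${\dilipforces}_{\alpha} \; {\dilipFFF \ \text{is big}}$; since $\dilipFFF$ is well-closed w.r.t.\@ $\dilipwo$ and big and each ${\mathring{\dilipQQ}}_{\alpha}$ is forced to be proper, all hypotheses of Lemma~\ref{lem:iterationbig} are satisfied, and it delivers ${\dilipforces}_{\gamma} \; {\dilipFFF \ \text{is big}}$. This closes the induction.

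I do not expect any genuine obstacle here: the substance of the argument is entirely contained in Lemmas~\ref{lem:twostepbig} and~\ref{lem:iterationbig}, and all that remains is the routine bookkeeping just described. The only points that deserve a word of care are that well-closedness of $\dilipFFF$ is a hypothesis about $\dilipV$ (hence trivially inherited by initial segments) while its persistence after forcing with the ${\dilipPP}_{\alpha}$ is already handled inside the proof of Lemma~\ref{lem:iterationbig} via Remark~\ref{rem:absolute}, and that in the successor step one must verify that the initial segment up to $\beta$ indeed meets the hypotheses of the corollary before invoking the induction hypothesis, which it does by the observation in the first paragraph.
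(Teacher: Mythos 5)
Your proposal is correct and follows essentially the same route as the paper's own proof: transfinite induction on $\gamma$, with the trivial case at $\gamma = 0$, Lemma~\ref{lem:twostepbig} at successors, and Lemma~\ref{lem:iterationbig} at limits. The only difference is that you make explicit the (routine) observation that the hypotheses restrict to initial segments, which the paper leaves implicit.
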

\begin{proof}
 The proof is by induction on $\gamma$.
 If $\gamma = 0$, then ${\dilipPP}_{\gamma}$ is the trivial forcing.
 By hypothesis, in $\dilipV$, $\dilipFFF$ is big, so there is nothing to do.
 Suppose $\gamma = \gamma' + 1$ and that the statement is true for $\gamma'$.
 So ${\dilipforces}_{\gamma'} \; {\dilipFFF \ \text{is big}}$.
 Now ${\dilipPP}_{\gamma'+1}$ is forcing equivalent to ${\dilipPP}_{\gamma'} \ast {\mathring{\dilipQQ}}_{\gamma'}$.
 By hypothesis, ${\dilipforces}_{{\dilipPP}_{\gamma'}} \; {``{\dilipforces}_{{\mathring{\dilipQQ}}_{\gamma'}} \; {\dilipFFF \ \text{is big}}''}$.
 By Lemma \ref{lem:twostepbig}, we have ${\dilipforces}_{{\dilipPP}_{\gamma'} \ast {\mathring{\dilipQQ}}_{\gamma'}} \; {\dilipFFF \ \text{is big}}$.
 So ${\dilipforces}_{\gamma} \; {\dilipFFF \ \text{is big}}$ as required.
 Finally suppose that $\gamma$ is a limit ordinal and that the statement is true for all $\beta < \gamma$.
 So $\forall \beta < \gamma \left[{\dilipforces}_{\beta} \; {\dilipFFF \ \text{is big}} \right]$.
 By Lemma \ref{lem:iterationbig}, ${\dilipforces}_{\gamma} \; {\dilipFFF \ \text{is big}}$.
 This concludes the induction and the proof.
\end{proof}
\begin{dilipLemma} \label{lem:cohenbig}
 Suppose $\dilipwo$ is a well-ordering of $\diliphf$.
 If $\dilipFFF \subseteq {\diliphf}^{\omega}$ is well-closed w.r.t.\@ $\dilipwo$ and is big, then ${\dilipforces}_{\dilipCC} \; {\dilipFFF \ \text{is big}}$.
\end{dilipLemma}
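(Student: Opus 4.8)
The plan is to reduce the statement ``$\dilipFFF$ stays big'' to a statement about slaloms, verify the latter by a direct analysis of $\dilipCC$-names exploiting that $\dilipCC$ is countable, and then transfer back via Lemma~\ref{lem:cumber}.

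First I would record that, since $\dilipCC$ is proper, Remark~\ref{rem:absolute} guarantees that $\dilipFFF$ is still well-closed w.r.t.\@ $\dilipwo$ in $V^{\dilipCC}$; in particular condition~(2) of Definition~\ref{def:wellclosed} holds there. By Lemma~\ref{lem:cumber} applied in $V^{\dilipCC}$, it is therefore enough to show
\[
\Vdash_{\dilipCC} \ \text{``}\forall f \in {\diliphf}^{\omega} \ \exists \text{ slalom } S \in \dilipFFF \ \exists^{\infty} k \in \omega \ \left[f(k) \in S(k)\right]\text{.''}
\]
To prove this, fix a $\dilipCC$-name $\mathring{f}$ with $\Vdash_{\dilipCC} \mathring{f} \in {\diliphf}^{\omega}$ and a condition $p$. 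Since $\dilipCC$ is countable, enumerate the conditions below $p$ as $\{t_i : i \in \omega\}$, and for each $i,k$ choose $t^{i,k} \leq t_i$ and $a^{i,k} \in \diliphf$ with $t^{i,k} \Vdash \mathring{f}(k) = \check{a}^{i,k}$; put $g_i(k) = a^{i,k}$, so each $g_i$ lies in ${\diliphf}^{\omega} \cap V$. Now work in $V$: as $\dilipFFF$ is big, pick $h_i \in \dilipFFF$ with $\{k : g_i(k) = h_i(k)\}$ infinite for every $i$, and then use condition~(3) of well-closedness to amalgamate the countably many $h_i$ into a \emph{single} slalom $S \in \dilipFFF$ with $\forall i \, \forall^{\infty} k \, \left[h_i(k) \in S(k)\right]$. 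Then $\{k : g_i(k) \in S(k)\}$ is infinite for each $i$, and consequently for every $t \leq p$ and every $N$ there are $k \geq N$ and $t^{i,k} \leq t$ forcing $\mathring{f}(k) \in \check{S}(k)$. Hence $p \Vdash \exists^{\infty} k \, \left[\mathring{f}(k) \in \check{S}(k)\right]$; as $p$ was arbitrary, the displayed statement holds, and Lemma~\ref{lem:cumber} then yields $\Vdash_{\dilipCC} \dilipFFF$ is big.

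The main obstacle is the amalgamation step: one cannot in general combine the countably many $h_i \in \dilipFFF$ into one member of $\dilipFFF$ that is infinitely-often equal to each $g_i$, so the passage through slaloms (and the conversion back supplied by Lemma~\ref{lem:cumber}) is essential --- well-closedness packs countably many members of $\dilipFFF$ into a single slalom in $\dilipFFF$, and ``eventually inside $S$'' composed with ``infinitely often equal to $g_i$'' still gives ``infinitely often inside $S$''. The only other delicate point is that enumerating the conditions below $p$ needs $\dilipCC$ to be countable, which is precisely why Cohen forcing gets its own lemma rather than being absorbed into a general ccc statement.
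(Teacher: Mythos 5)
Your proof is correct, but it takes a genuinely different route from the paper's. The paper first packages the name $\mathring{f}$ into a single \emph{small slalom} $S$ with $S(n)=\{x_n\}$ (one decided value per Cohen condition), together with the index sets $X_l=\{n:p_n\leq p_l\}$, and then invokes Lemma~\ref{lem:slalombig}, which rests on condition~(1) of Definition~\ref{def:wellclosed} (closure of $\dilipFFF$ under $f\mapsto S_f$) and a single application of bigness. You instead produce one ground-model function $g_i\in\diliphf^\omega$ for each condition $t_i\leq p$, apply bigness $\omega$ many times to obtain matching functions $h_i\in\dilipFFF$, and then amalgamate via condition~(3) of Definition~\ref{def:wellclosed} (the slalom-amalgamation clause) into one $S\in\dilipFFF$; condition~(1) is never used. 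Both routes finish via Lemma~\ref{lem:cumber}, which uses condition~(2). Your approach is slightly more direct in that it avoids the intermediate Lemma~\ref{lem:slalombig}, and it is closer in spirit to the fusion argument of Lemma~\ref{lem:iterationbig} (which also relies on clause~(3)); the paper's route has the advantage that Lemma~\ref{lem:slalombig} is reusable machinery (it reappears in Lemma~\ref{lem:preservingbig}), and it only needs bigness once rather than countably many times. One minor presentational point: at the end you conclude from ``$p\Vdash\exists^\infty k\,[\mathring{f}(k)\in \check S(k)]$ for every $p$'' that the displayed $\forall f$-statement is forced; it is worth being explicit that, given a generic $G$ and $f\in\diliphf^\omega\cap V[G]$, one fixes a name for $f$ and some $p\in G$ and then applies your construction to that $p$, so that the $S$ you produce depends on $(p,\mathring{f})$ but still witnesses the claim in $V[G]$. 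This is routine and your argument clearly supports it, so there is no gap.
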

\begin{proof}
 Let $\mathring{f}$ be a $\dilipCC$-name and suppose that ${\dilipforces}_{\dilipCC} \; {\mathring{f}: \omega \rightarrow \diliphf}$.
 Enumerate $\dilipCC$ as $\left\{ {p}_{n}: n \in \omega \right\}$ in a 1-1 way.
 Define a small slalom $S: \omega \rightarrow \diliphf$ as follows.
 Given $n \in \omega$, find ${q}_{n} \leq {p}_{n}$ and ${x}_{n} \in \diliphf$ so that ${q}_{n} \; {\dilipforces}_{\dilipCC} \; {\mathring{f}(n) = {x}_{n}}$, and define $S(n) = \{{x}_{n}\}$.
 Note that for each $l \in \omega$, ${X}_{l} \in \dilippc{\omega}{\omega}$, where ${X}_{l} = \{ n \in \omega: {p}_{n} \leq {p}_{l} \}$.
 Since $\dilipFFF$ is big, Lemma \ref{lem:slalombig} applies and implies that there is a slalom $T: \omega \rightarrow \diliphf$ so that $T \in \dilipFFF$ and $\forall l \in \omega \dilipexistsinf n \in {X}_{l}\left[S(n) \subseteq T(n)\right]$.
 Now we check that ${\dilipforces}_{\dilipCC} \; {\dilipexistsinf n \in \omega \left[\mathring{f}(n) \in T(n)\right]}$.
 To see this, fix $p \in \dilipCC$ and $N \in \omega$.
 Then $p = {p}_{l}$ for some $l \in \omega$.
 Find $n \in {X}_{l}$ with $n > N$ and $S(n) \subseteq T(n)$.
 By the definition of $S$ and ${X}_{l}$, ${q}_{n} \leq {p}_{n} \leq {p}_{l}$ and ${q}_{n} \; {\dilipforces}_{\dilipCC} \; {\mathring{f}(n) = {x}_{n} \in \{{x}_{n}\} = S(n) \subseteq T(n)}$.
 This is as required.
 
 Now to complete the proof, fix a $(\dilipV, \dilipCC)$-generic filter $G$.
 As $\dilipCC$ is proper, $\dilipFFF$ is well-closed w.r.t.\@ $\dilipwo$ in $\dilipVG$.
 By what has been proved in the previous paragraph, we have in $\dilipVG$ that whenever $f: \omega \rightarrow \diliphf$, then there is a slalom $T \in \dilipFFF$ with the property that $\dilipexistsinf n \in \omega\left[f(n) \in T(n)\right]$.
 As $\dilipFFF$ is well-closed w.r.t.\@ $\dilipwo$ in $\dilipVG$, Lemma \ref{lem:cumber} tells us that $\dilipFFF$ is big in $\dilipVG$.
\end{proof}
\begin{dilipCor} \label{cor:cohenbig}
 Suppose $\dilipwo$ is a well-ordering of $\diliphf$.
 If $\dilipFFF \subseteq {\diliphf}^{\omega}$ is well-closed w.r.t.\@ $\dilipwo$ and is big, then ${\dilipforces}_{{\dilipCC}_{{\omega}_{1}}} \; {\dilipFFF \ \text{is big}}$.
\end{dilipCor}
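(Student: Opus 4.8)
The plan is to recognize $\dilipCC_{{\omega}_{1}}$ as the countable support iteration $\langle {\dilipPP}_{\alpha}; {\mathring{\dilipQQ}}_{\alpha} : \alpha \leq {\omega}_{1} \rangle$ all of whose iterands ${\mathring{\dilipQQ}}_{\alpha}$ are canonical names for Cohen forcing $\dilipCC$, and then to invoke Corollary \ref{cor:bigispreserved} with $\gamma = {\omega}_{1}$. Two of its three hypotheses are free of charge: $\dilipFFF$ is well-closed with respect to $\dilipwo$ and big in $\dilipV$ by assumption, and every ${\mathring{\dilipQQ}}_{\alpha}$ is forced to be proper because Cohen forcing is ccc. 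Everything therefore reduces to checking the remaining hypothesis, namely that for each $\alpha < {\omega}_{1}$, ${\dilipforces}_{\alpha}$ forces that $\dilipCC$ preserves the bigness of $\dilipFFF$.

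I would establish this by a transfinite induction proving, for all $\gamma \leq {\omega}_{1}$, that ${\dilipforces}_{\gamma} \dilipFFF$ is big; the case $\gamma = {\omega}_{1}$ is the desired conclusion. So fix $\gamma$ and assume ${\dilipforces}_{\alpha} \dilipFFF$ is big for every $\alpha < \gamma$. To verify hypothesis (3) of Corollary \ref{cor:bigispreserved} for the initial segment $\langle {\dilipPP}_{\beta}; {\mathring{\dilipQQ}}_{\beta} : \beta \leq \gamma \rangle$, fix $\alpha < \gamma$ and pass to $\dilipV[{G}_{\alpha}]$. Since ${\dilipPP}_{\alpha}$ is a countable support iteration of ccc (hence proper) forcings, it is itself proper, so by Remark \ref{rem:absolute} the family $\dilipFFF$ remains well-closed with respect to $\dilipwo$ in $\dilipV[{G}_{\alpha}]$, and by the inductive hypothesis it remains big there. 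Lemma \ref{lem:cohenbig}, applied inside $\dilipV[{G}_{\alpha}]$, now yields that forcing with $\dilipCC$ over $\dilipV[{G}_{\alpha}]$ preserves the bigness of $\dilipFFF$, which is precisely ``${\dilipforces}_{\alpha}$ forces that ${\dilipforces}_{{\mathring{\dilipQQ}}_{\alpha}} \dilipFFF$ is big''. With all three hypotheses of Corollary \ref{cor:bigispreserved} verified, we conclude ${\dilipforces}_{\gamma} \dilipFFF$ is big, closing the induction.

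I do not anticipate a genuine obstacle: this corollary is simply the fusion of the single-step preservation Lemma \ref{lem:cohenbig} with the countable support iteration preservation Corollary \ref{cor:bigispreserved}. The one point deserving care is the bootstrapping in the induction above. Corollary \ref{cor:bigispreserved} takes the bigness of $\dilipFFF$ in each intermediate model as an \emph{input} rather than deriving it, so one cannot apply it in a single stroke; the bigness of $\dilipFFF$ in $\dilipV[{G}_{\alpha}]$ must be reinjected at every stage of the induction. This is exactly why it is essential that the posets ${\dilipPP}_{\alpha}$ preserve not merely bigness but the stronger property of being well-closed with respect to $\dilipwo$ (Definition \ref{def:wellclosed}), which Remark \ref{rem:absolute} guarantees for proper forcing.
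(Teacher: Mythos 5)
Your proof has a genuine gap at the very first step. You propose to "recognize ${\dilipCC}_{{\omega}_{1}}$ as the countable support iteration" of Cohen forcings, but ${\dilipCC}_{{\omega}_{1}}$ is the poset for adding ${\omega}_{1}$ Cohen reals, i.e. the finite support product (equivalently $\mathrm{Fn}({\omega}_{1}, 2)$), which is ccc. It is not a countable support iteration of Cohen forcing. A CS iteration of Cohen forcings is a different poset altogether: already at stage $\omega$, the inverse limit contains the $2^{\omega}$ pairwise incompatible conditions $\langle t_{n} : n < \omega \rangle$ with each $t_{n} \in \{\langle 0 \rangle, \langle 1 \rangle\}$, so it has antichains of size continuum and is not ccc. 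Since two posets with different ccc status cannot have isomorphic Boolean completions, there is no way to repair the identification. Consequently Corollary~\ref{cor:bigispreserved}, whose proof relies on Lemma~\ref{lem:iterationbig} and hence on the Properness Extension Lemma for CS iterations, simply does not apply here, and your induction (which is essentially just a verbose re-derivation of Corollary~\ref{cor:bigispreserved}) has the wrong hypothesis at its base. Even if one reinterprets your argument as treating ${\dilipCC}_{{\omega}_{1}}$ as a finite support iteration, there is no finite support analogue of Corollary~\ref{cor:bigispreserved} in the paper, and at limit stages of countable cofinality the FS case requires a genuinely different argument.

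The paper's actual proof sidesteps iteration theory entirely. Given a $({\dilipV}, {\dilipCC}_{{\omega}_{1}})$-generic $G$ and $f \in {\diliphf}^{\omega} \cap {\dilipV}[G]$, one uses the standard fact that $f$ depends on only countably many of the ${\omega}_{1}$ Cohen coordinates, so $f$ already lies in ${\dilipV}[H]$ for some $({\dilipV}, {\dilipCC})$-generic $H$. Lemma~\ref{lem:cohenbig} then yields a $g \in \dilipFFF$ in ${\dilipV}[H]$ with ${\dilipexistsinf} n\, [f(n)=g(n)]$, and since $g \in \dilipFFF \subseteq {\dilipV}$ and this equality is absolute, $g$ witnesses bigness in the whole extension ${\dilipV}[G]$. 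This "every new real is a single-Cohen-real real" argument is the same device used implicitly in Lemma~\ref{lem:CSS}, and it is precisely what makes the single-step Lemma~\ref{lem:cohenbig} suffice with no iteration machinery at all.
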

\begin{proof}
 Suppose $G$ is $\left( \dilipV, {\dilipCC}_{{\omega}_{1}} \right)$-generic.
 Consider any $f \in {\diliphf}^{\omega} \cap \dilipVG$.
 It is well-known that for some $(\dilipV, \dilipCC)$-generic filter $H$, $f \in \dilipV\left[H\right]$.
 By Lemma \ref{lem:cohenbig}, $\dilipFFF$ is big in $\dilipV\left[H\right]$.
 So there is $g \in \dilipFFF$ so that $\dilipexistsinf n \in \omega\left[f(n) = g(n)\right]$.
 Therefore, $\dilipFFF$ is big in $\dilipVG$.
\end{proof}
\section{The partial order} \label{sec:partial}
In $\dilipV$, let $\dilipA \subseteq \dilippc{\omega}{\omega}$ be a fixed block Shelah-Stepr{\= a}ns a.d.\@ family.
Let ${\dilipV}_{{\omega}_{1}}$ denote the extension of $\dilipV$ by ${\omega}_{1}$ Cohen reals (i.e.\@ by ${\dilipCC}_{{\omega}_{1}}$).
We assume that $\dilipA$ remains block Shelah-Stepr{\= a}ns in ${\dilipV}_{{\omega}_{1}}$.
Actually, Shelah-Stepr{\= a}ns a.d.\@ families remain Shelah-Stepr{\= a}ns after adding Cohen reals, but this does not seem to hold in general for block Shelah-Stepr{\= a}ns a.d.\@ families.
However, since we are mainly interested in the minimal cardinality of Shelah-Stepr{\= a}ns and block Shelah-Stepr{\= a}ns a.d.\@ families, this is not relevant for our main result in the next section.
\begin{dilipDef} \label{def:less}
 If $F$ and $G$ are non-empty sets of ordinals, we write $F < G$ as an abbreviation for $\forall x \in F \forall y \in G\left[x < y\right]$.
 
 We use $\dilipFIN$ to denote $\dilippc{\omega}{< \omega} \setminus \{\emptyset\}$.
\end{dilipDef}
\begin{dilipLemma} \label{lem:CSS}
 Shelah-Stepr{\= a}ns a.d.\@ families remain Shelah-Stepr{\= a}ns after adding Cohen reals.
\end{dilipLemma}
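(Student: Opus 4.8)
The plan is to reduce to a single Cohen real and then invoke the multi-set characterization of the Shelah-Stepr\={a}ns property in Lemma~\ref{SScharacterization}. Since every countable atomless poset is forcing equivalent to Cohen forcing $\CC$, and since by the countable chain condition every name for a subset of a countable set in an extension by a finite-support product $\CC_{\kappa}$ of copies of $\CC$ is supported on countably many coordinates --- hence is already a name in a sub-product isomorphic to $\CC$ --- it suffices to show that $\CC$ preserves the property of being Shelah-Stepr\={a}ns. So let $\A$ be Shelah-Stepr\={a}ns in $\dilipV$, let $\mathring X$ be a $\CC$-name and $p_{0} \in \CC$ a condition with $p_{0} \dilipforces \mathring X \in (\I(\A)^{<\omega})^{+}$. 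It will be enough to exhibit $B \sub \omega$ with $B \in \dilipV$ and $B \in \I(\A)$ such that $p_{0} \dilipforces |\{ s \in \mathring X : s \sub B \}| = \omega$: then $p_{0}$ forces $Y = \{ s \in \mathring X : s \sub B \}$ to be an infinite subfamily of $\mathring X$ with $\bigcup Y \sub B \in \I(\A)$, which is exactly what is required; and carrying this out for every such $\mathring X$ and $p_{0}$ yields $\dilipforces_{\CC} \A$ is Shelah-Stepr\={a}ns.

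The point I expect to be the main obstacle is that a single application of the Shelah-Stepr\={a}ns property in $\dilipV$ to the obvious set $X_{p_{0}} = \{ s : \exists q \le p_{0}\,(q \dilipforces s \in \mathring X)\}$ does not suffice: the infinite subfamily extracted there may be spread over pairwise incompatible extensions of $p_{0}$, so that no single generic picks up infinitely many of its members. To overcome this, I would enumerate $\{ r \in \CC : r \le p_{0}\} = \{ r_{n} : n \in \omega\}$ (a countable set) and set $X_{n} = \{ s \in \dilipFIN : \exists q \le r_{n}\,(q \dilipforces s \in \mathring X)\}$. Each $X_{n}$ lies in $(\I(\A)^{<\omega})^{+}$ as computed in $\dilipV$: given $A \in \I(\A)$ with $A \in \dilipV$ we have $A \sub (\bigcup\F) \cup m$ for some \emph{finite} $\F \sub \A$ --- which therefore belongs to $\dilipV$ --- and some $m$, so $r_{n} \le p_{0}$ forces $A \in \I(\A)$ and hence forces $\exists s \in \mathring X\,(s \cap A = \emptyset)$; passing to an extension deciding such an $s$ places $s$ in $X_{n}$ with $s \cap A = \emptyset$. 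By Lemma~\ref{SScharacterization} applied in $\dilipV$ to $\{ X_{n} : n \in \omega\}$, there is $B \sub \omega$ with $B \in \dilipV$ and $B \in \I(\A)$ such that $\{ s \in X_{n} : s \sub B\}$ is infinite for every $n$.

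It then remains to check $p_{0} \dilipforces |\{ s \in \mathring X : s \sub B\}| = \omega$. Fix $M \in \omega$ and let $D$ be the set of $q \le p_{0}$ for which there is $s$ with $s \sub B$, $\max(s) > M$ and $q \dilipforces s \in \mathring X$. Then $D$ is dense below $p_{0}$: any $r \le p_{0}$ equals some $r_{n}$, and as $\{ s \in X_{n} : s \sub B\}$ is infinite we may pick $s$ in it with $\max(s) > M$ and then, by definition of $X_{n}$, an extension $q \le r_{n} = r$ forcing $s \in \mathring X$, so $q \in D$. Hence every generic filter containing $p_{0}$ meets $D$, producing $s \in \mathring X$ with $s \sub B$ and $\max(s) > M$; letting $M$ range over $\omega$ this shows $\{ s \in \mathring X : s \sub B\}$ is forced infinite, and $B \in \dilipV$ still belongs to $\I(\A)$ in the extension, which finishes the single-Cohen case. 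Finally, the reduction of $\CC_{\kappa}$ to $\CC$ from the first paragraph goes through because if $\mathring X$ names a positive set with respect to $\I(\A)$ in the $\CC_{\kappa}$-extension, then, since $X = \mathring X[G]$ is a real and hence lies in an intermediate $\CC_{A}$-extension with $A$ countable and $\CC_{A} \cong \CC$, and since $\I(\A)$ as computed in that intermediate model is contained in $\I(\A)$ as computed in the full extension, $X$ is still positive there; the single-Cohen argument applied in the intermediate model then produces $B \in \dilipV$ with $\{ s \in X : s \sub B\}$ infinite, and $\bigcup \{ s \in X : s \sub B\} \sub B \in \I(\A)$ in the full extension as well.
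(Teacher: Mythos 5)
Your proof is correct and follows essentially the same approach as the paper's: enumerate the Cohen conditions, let $X_n$ be the set of finite $s$ forced into $\mathring X$ by some extension of the $n$th condition, check that each $X_n$ is positive in the ground model (using that ground-model members of $\I(\A)$ remain in $\I(\A)$ after forcing), and apply Lemma~\ref{SScharacterization} to obtain a single $B \in \I(\A)$ containing infinitely many members of every $X_n$, which then suffices by a density argument. The paper treats only the single-Cohen case and leaves the passage to $\CC_{\kappa}$ implicit; your explicit reduction via ccc support and the downward persistence of positivity to intermediate models is a nice added detail but does not change the substance.
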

\begin{proof}
 Let $\dilipB$ be a Shelah-Stepr{\= a}ns a.d.\@ family.
 It suffices to show that
 \begin{align*}
  {\dilipforces}_{\dilipCC} \; {``\dilipB \ \text{is Shelah-Stepr{\= a}ns}''}.
 \end{align*}
 To this end, let $\mathring{X}$ be a $\dilipCC$-name such that ${\dilipforces}_{\dilipCC} \; {\mathring{X} \subseteq \dilipFIN}$ and
 \begin{align*}
  {\dilipforces}_{\dilipCC} \; {``\forall B \in \dilipIII(\dilipB) \exists s \in \mathring{X}\left[s \cap B = \emptyset\right]''}.
 \end{align*}
 Let $\dilipseq{p}{n}{\in}{\omega}$ enumerate $\dilipCC$.
 For each $n \in \omega$, let
 \begin{align*}
  {X}_{n} = \left\{s \in \dilipFIN: \exists q \leq {p}_{n}\left[q \; {\dilipforces}_{\dilipCC} \; {s \in \mathring{X}}\right] \right\}.
 \end{align*}
 By hypothesis, for any $n \in \omega$ and any $B \in \dilipIII(\dilipB)$, there exists $q \leq {p}_{n}$ and $s \in \dilipFIN$ such that $s \cap B = \emptyset$ and $q \; {\dilipforces}_{\dilipCC} \; {s \in \mathring{X}}$, whence $s \in {X}_{n}$.
 Since $\dilipB$ is a Shelah-Stepr{\= a}ns a.d.\@ family, there exists $B \in \dilipIII(\dilipB)$ with the property that $\forall n \in \omega \left[\dilippc{B}{< {\aleph}_{0}} \cap {X}_{n} \ \text{is infinite}\right]$.
 We check that ${\dilipforces}_{\dilipCC} \; {``\dilippc{B}{< {\aleph}_{0}} \cap \mathring{X} \ \text{is infinite}''}$.
 To this end, fix $p \in \dilipCC$ and some $k \in \omega$.
 Then $p={p}_{n}$ for some $n \in \omega$, and there exists $s \in \dilippc{B}{< {\aleph}_{0}} \cap {X}_{n}$ such that $s \notin \dilipPset(k)$.
 By definition of ${X}_{n}$, $q \; {\dilipforces}_{\dilipCC} \; {s \in \mathring{X}}$, for some $q \leq {p}_{n}=p$.
 Thus we have proved that for each $p \in \dilipCC$ and $k \in \omega$, there exist $s \in \dilippc{B}{< {\aleph}_{0}}$ and $q \leq p$ such that $s \notin \dilipPset(k)$ and $q \; {\dilipforces}_{\dilipCC} \; {s \in \mathring{X}}$, which proves that ${\dilipforces}_{\dilipCC} \; {``\dilippc{B}{< {\aleph}_{0}} \cap \mathring{X} \ \text{is infinite}''}$.
\end{proof}
We work in ${\dilipV}_{{\omega}_{1}}$ for the remainder of this section unless the contrary is explicitly stated.
As stated above, $\dilipA \subseteq \dilippc{\omega}{\omega}$ is a fixed block Shelah-Stepr{\= a}ns a.d.\@ family which is a member of $\dilipV$ and remains block Shelah-Stepr{\= a}ns in ${\dilipV}_{{\omega}_{1}}$.
\begin{dilipDef} \label{def:po}
 Working in ${\dilipV}_{{\omega}_{1}}$, define a forcing $\dilipPP(\dilipA)$ as follows.
 A pair $p=\dilippr{{s}_{p}}{{c}_{p}}$ belongs to $\dilipPP(\dilipA)$ if and only if:
 \begin{enumerate}[series=po]
  \item
  ${s}_{p} \in \dilippc{\omega}{< \omega}$;
  \item
  ${c}_{p}: \omega \rightarrow \dilipFIN$ such that:
  \begin{enumerate}
   \item[(2a)]
   $\forall i \in \omega\left[{c}_{p}(i) < {c}_{p}(i+1)\right]$;
   \item[(2b)]
   $\forall x \in {s}_{p} \forall y \in {c}_{p}(0)\left[x < y\right]$;
  \end{enumerate}
  \item
  $\forall B \in \dilipIII(\dilipA) \exists i \in \omega \left[B \cap {c}_{p}(i) = \emptyset\right]$.
 \end{enumerate}
 For $q, p \in \dilipPP(\dilipA)$, $q \leq p$ if and only if:
 \begin{enumerate}[resume=po]
  \item
  ${s}_{q} \supseteq {s}_{p}$;
  \item
  $\exists {F}_{q, p} \in \dilippc{\omega}{< \omega}\left[{s}_{q} \setminus {s}_{p} = {\bigcup}_{i \in {F}_{q, p}}{{c}_{p}(i)}\right]$;
  \item
  $\forall i \in \omega \exists {G}_{q, p, i} \in \dilippc{\omega}{< \omega}\left[{c}_{q}(i) = {\bigcup}_{j \in {G}_{q, p, i}}{{c}_{p}(j)}\right]$.
 \end{enumerate}
 For $n \in \omega$ and $q, p \in \dilipPP(\dilipA)$, $q \; {\leq}_{n} \; p$ if and only if $\left( q \leq p \ \text{and} \ {c}_{q} \diliprestrict n = {c}_{p} \diliprestrict n \right)$.
 It is easy to see that $\leq$ and ${\leq}_{n}$ are transitive.
\end{dilipDef}
The following lemma lists some elementary consequences of the definitions which are easy to verify.
\begin{dilipLemma} \label{lem:basic}
 Let $p, q \in \dilipPP(\dilipA)$.
 The following hold:
 \begin{enumerate}
  \item
  $q \; {\leq}_{0} \; p \iff q \leq p$;
  \item
  for all $n > 0$, if $q \; {\leq}_{n} \; p$, then ${s}_{q} = {s}_{p}$;
  \item
  if $q \leq p$, then for all $i < j < \omega$, ${G}_{q, p, i} < {G}_{q, p, j}$;
  \item
  $\forall n < m < \omega\left[q \; {\leq}_{m} \; p \implies q \; {\leq}_{n} \; p\right]$.
 \end{enumerate}
\end{dilipLemma}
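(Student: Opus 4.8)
The plan is to check each of the four items directly from Definition~\ref{def:po}, leaning on two elementary facts about the ordering $<$ on finite sets of ordinals: it is transitive on \emph{nonempty} sets, and $F < G$ with $F, G$ nonempty implies $F \cap G = \emptyset$. Since every value of ${c}_{p}$ and of ${c}_{q}$ lies in $\dilipFIN$ and is therefore nonempty, clause~(2a) of Definition~\ref{def:po} already yields ${c}_{p}(i) < {c}_{p}(j)$ and ${c}_{q}(i) < {c}_{q}(j)$ whenever $i < j$; this is used silently below.

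Items (1) and (4) are purely formal. For (1), both ${c}_{q} \diliprestrict 0$ and ${c}_{p} \diliprestrict 0$ equal the empty function, so the extra clause defining ${\leq}_{0}$ is vacuous and ${\leq}_{0}$ coincides with $\leq$. For (4), if $n < m$ and $q \; {\leq}_{m} \; p$, then ${c}_{q} \diliprestrict m = {c}_{p} \diliprestrict m$; restricting both sides to $n$ gives ${c}_{q} \diliprestrict n = {c}_{p} \diliprestrict n$, so $q \; {\leq}_{n} \; p$.

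For item (3), I would assume $q \leq p$, fix $i < j$, take arbitrary $k \in {G}_{q, p, i}$ and $l \in {G}_{q, p, j}$, and argue $k < l$ by contradiction. If $k = l$, then the nonempty set ${c}_{p}(k)$ is contained in both ${c}_{q}(i)$ and ${c}_{q}(j)$, contradicting ${c}_{q}(i) \cap {c}_{q}(j) = \emptyset$. If $k > l$, then ${c}_{p}(l) < {c}_{p}(k)$; combined with ${c}_{p}(l) \subseteq {c}_{q}(j)$ and ${c}_{p}(k) \subseteq {c}_{q}(i)$, this produces an element of ${c}_{q}(j)$ lying strictly below an element of ${c}_{q}(i)$, contradicting ${c}_{q}(i) < {c}_{q}(j)$. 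Hence $k < l$, so ${G}_{q, p, i} < {G}_{q, p, j}$.

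The only item that requires any real argument — and hence the main obstacle, modest as it is — is (2), since it is the single point at which clause~(2b) of the definition of membership must be played against clause~(5) of the definition of $\leq$. Assuming $n > 0$ and $q \; {\leq}_{n} \; p$, one gets ${c}_{q}(0) = {c}_{p}(0)$. By clause~(5), ${s}_{q} \setminus {s}_{p} = {\bigcup}_{i \in {F}_{q, p}}{{c}_{p}(i)}$, so it suffices to show ${F}_{q, p} = \emptyset$. If $i \in {F}_{q, p}$, any $x \in {c}_{p}(i)$ lies in ${s}_{q}$, so clause~(2b) for $q$ forces $x < y$ for all $y \in {c}_{q}(0) = {c}_{p}(0)$; taking $i = 0$ gives $x < x$, while taking $i > 0$ gives $\max({c}_{p}(0)) < x$ (from ${c}_{p}(0) < {c}_{p}(i)$) together with $x < \min({c}_{p}(0))$, in either case a contradiction. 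Therefore ${F}_{q, p} = \emptyset$, so ${s}_{q} \setminus {s}_{p} = \emptyset$, and since ${s}_{q} \supseteq {s}_{p}$ by clause~(4), ${s}_{q} = {s}_{p}$.
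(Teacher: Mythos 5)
Your proof is correct. The paper itself gives no proof of this lemma, stating only that it "lists some elementary consequences of the definitions which are easy to verify," so there is nothing to compare against; your careful checking — in particular, the case split on $i=0$ versus $i>0$ in item (2) and the trichotomy argument in item (3), both resting on the pairwise disjointness and $<$-increasing arrangement of the blocks $c_p(i)$ — is exactly the kind of routine verification the authors had in mind.
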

\begin{dilipLemma} \label{lem:pnonempty}
 $\dilipPP(\dilipA)$ is non-empty.
\end{dilipLemma}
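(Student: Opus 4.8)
The statement to prove is Lemma~\ref{lem:pnonempty}: $\dilipPP(\dilipA)$ is non-empty. The plan is to exhibit a single concrete condition. The natural candidate is $p = \dilippr{\emptyset}{{c}_{p}}$ where ${s}_{p} = \emptyset$ and ${c}_{p}$ is chosen to be (essentially) the block sequence coming from $\dilipA$ itself, or more simply a block sequence whose blocks ``escape'' every element of the ideal $\dilipIII(\dilipA)$. Concretely, first I would fix any infinite $A \in \dilipA$ and list it as an increasing sequence, but that alone does not obviously satisfy clause (3) of Definition~\ref{def:po}, which requires that for every $B \in \dilipIII(\dilipA)$ there is some $i$ with $B \cap {c}_{p}(i) = \emptyset$. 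So instead I would use the fact that $\dilipA$ is an a.d.\ family (hence $\dilipIII(\dilipA)$ is not all of $\wp(\omega)$, and in fact is meager), and build ${c}_{p}$ by a simple recursion.

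The recursion goes as follows. Enumerate a cofinal-in-$\dilipIII(\dilipA)$-under-$\subseteq$ sequence is not available since $\dilipIII(\dilipA)$ need not be countably generated, so instead I would exploit that $\dilipA$ is a block Shelah-Stepr{\=a}ns a.d.\ family: in particular it is a \textsf{MAD} family (block Shelah-Stepr{\=a}ns a.d.\ families are maximal), so $\dilipIII(\dilipA)$ is a tall meager ideal, and by the discussion preceding Lemma~\ref{SScharacterization} and Talagrand's theorem there is an interval partition $\dilipseq{t}{n}{\in}{\omega}$ of $\omega$ such that no $W \in \omoms$ has $\bigcup_{n \in W} t_n \in \dilipIII(\dilipA)$; a fortiori, for every $B \in \dilipIII(\dilipA)$ there are infinitely many $n$ with $t_n \cap B = \emptyset$. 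Setting ${c}_{p}(i) = t_i$ for all $i$ gives a function ${c}_{p}: \omega \to \dilipFIN$ with ${c}_{p}(i) < {c}_{p}(i+1)$ (since the $t_n$ are consecutive intervals), with ${s}_{p} = \emptyset < {c}_{p}(0)$ vacuously, and with clause (3) satisfied because for each $B \in \dilipIII(\dilipA)$ some (indeed infinitely many) $i$ has $B \cap t_i = \emptyset$. Hence $p = \dilippr{\emptyset}{{c}_{p}} \in \dilipPP(\dilipA)$.

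Alternatively, and perhaps more self-containedly, one can avoid Talagrand entirely: since $\dilipA$ is an a.d.\ family, pick two distinct $A_0, A_1 \in \dilipA$; then recursively choose consecutive finite intervals, but this is more delicate since arbitrary elements of $\dilipIII(\dilipA)$ are finite unions of members of $\dilipA$ plus a finite set, so I would instead just observe directly: let $\dilipseq{A}{n}{\in}{\omega}$ be a subfamily of $\dilipA$ (infinite, as $\dilipA$ is infinite being \textsf{MAD}), and build intervals ${c}_{p}(i)$ so that ${c}_{p}(i)$ is disjoint from $A_0 \cup \dots \cup A_{i}$; this is possible because $\omega \setminus (A_0 \cup \dots \cup A_i)$ is infinite (the $A_j$ being a.d., their union is coinfinite — in fact co-infinite since $\dilipA$ is infinite and a.d.). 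Then for any $B \in \dilipIII(\dilipA)$, $B$ is almost covered by finitely many $A_j$'s together with a finite set, so for $i$ large enough, ${c}_{p}(i)$, being disjoint from all those $A_j$'s and pushed past the finite set, is disjoint from $B$; this gives clause (3).

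The main obstacle — really the only point requiring care — is verifying clause (3) of Definition~\ref{def:po}, i.e.\ that the chosen block sequence genuinely escapes \emph{every} element of $\dilipIII(\dilipA)$, not just the generators. Since every $B \in \dilipIII(\dilipA)$ has the form $B = F \cup A_{j_0} \cup \dots \cup A_{j_{r}}$ for some finite $F$ and finitely many members $A_{j_\ell} \in \dilipA$, and since the construction of ${c}_{p}(i)$ ensures disjointness from an increasing union of members of $\dilipA$ while the blocks march off to infinity (so eventually past $F$), any such $B$ is missed by cofinally many blocks. Once this is checked, clauses (1), (2a), (2b) are immediate from the construction (consecutive nonempty finite intervals with empty stem), and the lemma follows. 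I would write this up in a few lines, choosing whichever of the two constructions above is cleanest given what has already been established in the paper about block Shelah-Stepr{\=a}ns families.
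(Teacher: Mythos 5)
Your first approach contains an invalid inference. The Talagrand partition (as stated before Proposition~\ref{countableexception}) satisfies: for every $B \in \dilipIII(\dilipA)$, only finitely many blocks $t_n$ are \emph{contained in} $B$. You then claim ``a fortiori'' that for every $B$ infinitely many blocks are \emph{disjoint from} $B$ --- but this goes the wrong direction. A member of $\dilipIII(\dilipA)$ can meet every $t_n$ while containing none of them (for instance $B$ could pick up one point from each block); such a $B$ satisfies the Talagrand conclusion but defeats your claim. So the escaping property you need for clause (3) of Definition~\ref{def:po} is not a consequence of the Talagrand property for the Talagrand partition. Your second approach has a separate gap: you fix a \emph{countable} subfamily $\{A_n : n \in \omega\}$ of $\dilipA$ and build blocks that eventually avoid $A_0 \cup \dots \cup A_i$, but $\dilipA$ is uncountable (it is \textsf{MAD}), and a set $B \in \dilipIII(\dilipA)$ may be almost covered by members of $\dilipA$ that do not appear among your $A_n$'s; the construction then gives no block disjoint from $B$.

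The paper's proof takes a different route than either of yours. It works with the countable poset $\dilipSS$ of finite approximations to $c$, observes that the density sets $D_i$ and $E_B$ (for $B \in (\dilipIII(\dilipA))^{\dilipV}$) all live in $\dilipV$, and uses that ${\dilipV}_{{\omega}_{1}}$ contains a $(\dilipV, \dilipSS)$-generic filter. This is essential because there may be uncountably many sets $E_B$, too many to meet by a naive recursion inside ${\dilipV}_{{\omega}_{1}}$; genericity over $\dilipV$ handles them all at once, and since every $B \in (\dilipIII(\dilipA))^{{\dilipV}_{{\omega}_{1}}}$ is contained in some $B' \in (\dilipIII(\dilipA))^{\dilipV}$ (a finite union of members of $\dilipA$ plus a finite set), clause (3) follows in ${\dilipV}_{{\omega}_{1}}$.

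That said, you (and the paper) have both overshot: the lemma in fact has a one-line proof. Take ${s}_{p} = \emptyset$ and ${c}_{p}(i) = \{i\}$. Clauses (1), (2a), (2b) are immediate, and clause (3) reads $\forall B \in \dilipIII(\dilipA)\, \exists i\, [i \notin B]$, i.e.\@ $\omega \notin \dilipIII(\dilipA)$, which holds simply because $\dilipA$ is an infinite almost disjoint family. No Talagrand, no genericity, and no combinatorics of $\dilipA$ beyond properness of the ideal are needed for non-emptiness of the poset.
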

\begin{proof}
 Working in $\dilipV$, define a partial order $\dilipSS$ as follows.
 $s \in \dilipSS$ if and only if $s: {n}_{s} \rightarrow \dilipFIN$, where ${n}_{s} \in \omega$ and $\forall i < i+1 < {n}_{s}\left[s(i) < s(i+1)\right]$.
 For $t, s \in \dilipSS$, $t \leq s$ if and only if $t \supseteq s$.
 It is easy to see that for each $i \in \omega$, ${D}_{i} = \{t \in \dilipSS: i < {n}_{t}\}$, and for each $B \in {\left( \dilipIII(\dilipA) \right)}^{\dilipV}$, ${E}_{B} = \{t \in \dilipSS: \exists i < {n}_{t}\left[B \cap t(i) = \emptyset\right]\}$ are dense subsets of $\dilippr{\dilipSS}{\leq}$ belonging to $\dilipV$.
 In ${\dilipV}_{{\omega}_{1}}$, there exists a $(\dilipV, \dilipSS)$-generic filter $H$.
 Setting $c = \bigcup{H}$, it is clear that $\dilippr{\emptyset}{c}$ is a condition in $\dilipPP(\dilipA)$.
\end{proof}
The relations ${\leq}_{n}$ do not define an Axiom A structure on $\dilipPP(\dilipA)$ because the limit of a fusion sequence will not, in general, satisfy clause (3) of Definition \ref{def:po}.
However, the next lemma says that clause (3) of Definition \ref{def:po} is the only obstruction.
The proof is straightforward and left to the reader.
\begin{dilipLemma} \label{lem:fusion}
 Suppose $\dilipseq{p}{n}{\in}{\omega}$ is a sequence of members of $\dilipPP(\dilipA)$ so that $\forall n \in \omega\left[{p}_{n+1} \; {\leq}_{n} \; {p}_{n}\right]$.
 Define $p = \dilippr{{s}_{p}}{{c}_{p}}$ by setting ${s}_{p} = {s}_{{p}_{1}}$ and ${c}_{p}(n) = {c}_{{p}_{n+1}}(n)$, for all $n \in \omega$.
 Then clauses (1) and (2) of Definition \ref{def:po} are satisfied.
 If clause (3) of Definition \ref{def:po} is also satisfied, then $p \in \dilipPP(\dilipA)$ and $\forall n \in \omega\left[p \; {\leq}_{n} \; {p}_{n}\right]$.
\end{dilipLemma}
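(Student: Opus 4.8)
The plan is to verify clauses (1)--(3) of Definition~\ref{def:po} for $p$ one at a time and then to establish $p\;{\leq}_{n}\;{p}_{n}$ for every $n$; everything reduces to unwinding the definitions once the following \emph{stabilization} fact about the fusion sequence is in hand: for every $k\in\omega$ and every $m\geq k+1$, ${p}_{m}\;{\leq}_{k}\;{p}_{k}$. First I would prove this by induction on $m$: the base case $m=k+1$ is the hypothesis ${p}_{k+1}\;{\leq}_{k}\;{p}_{k}$; for the step, ${p}_{m+1}\;{\leq}_{m}\;{p}_{m}$ together with $k<m$ and Lemma~\ref{lem:basic}(4) gives ${p}_{m+1}\;{\leq}_{k}\;{p}_{m}$, which combines with the inductive hypothesis via transitivity of ${\leq}_{k}$. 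Two consequences will be used repeatedly: applying the fact with $k=i+1$ shows ${c}_{{p}_{m}}(i)={c}_{{p}_{i+1}}(i)$ for all $m\geq i+1$ (the case $m=i+1$ being trivial), so in particular ${c}_{p}(i)={c}_{{p}_{i+1}}(i)={c}_{{p}_{m}}(i)$ whenever $m\geq i+1$; and, by Lemma~\ref{lem:basic}(1),(2), ${s}_{{p}_{m}}={s}_{{p}_{1}}$ for all $m\geq1$ while ${p}_{1}\leq{p}_{0}$ gives ${s}_{{p}_{1}}\supseteq{s}_{{p}_{0}}$ with ${s}_{{p}_{1}}\setminus{s}_{{p}_{0}}$ a union of blocks of ${c}_{{p}_{0}}$.

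Next I would check clauses (1) and (2). Clause (1) is immediate: ${s}_{p}={s}_{{p}_{1}}\in\dilippc{\omega}{< \omega}$ and each ${c}_{p}(n)={c}_{{p}_{n+1}}(n)\in\dilipFIN$, so ${c}_{p}\colon\omega\to\dilipFIN$. For (2a), fix $i$; from ${p}_{i+2}\;{\leq}_{i+1}\;{p}_{i+1}$ we get ${c}_{{p}_{i+2}}(i)={c}_{{p}_{i+1}}(i)={c}_{p}(i)$, and since ${p}_{i+2}\in\dilipPP(\dilipA)$ satisfies (2a) we have ${c}_{{p}_{i+2}}(i)<{c}_{{p}_{i+2}}(i+1)={c}_{p}(i+1)$; hence ${c}_{p}(i)<{c}_{p}(i+1)$. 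For (2b), note ${c}_{p}(0)={c}_{{p}_{1}}(0)$ and ${s}_{p}={s}_{{p}_{1}}$, so (2b) for $p$ is literally (2b) for ${p}_{1}\in\dilipPP(\dilipA)$, which holds. This establishes the first assertion of the lemma.

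Now assume clause (3) of Definition~\ref{def:po} also holds for $p$. Then $p$ satisfies all of (1)--(3), so $p\in\dilipPP(\dilipA)$. It remains to show $p\;{\leq}_{n}\;{p}_{n}$ for each $n$, i.e.\ that ${c}_{p}\diliprestrict n={c}_{{p}_{n}}\diliprestrict n$ and $p\leq{p}_{n}$. The former follows from stabilization: for $i<n$ we have $n\geq i+1$, so ${c}_{p}(i)={c}_{{p}_{n}}(i)$ by the consequence recorded above. For $p\leq{p}_{n}$ I would verify clauses (4)--(6). If $n\geq1$ then ${s}_{p}={s}_{{p}_{1}}={s}_{{p}_{n}}$, so (4) holds and (5) holds with ${F}_{p,{p}_{n}}=\emptyset$; if $n=0$ then ${s}_{p}={s}_{{p}_{1}}\supseteq{s}_{{p}_{0}}$ with ${s}_{{p}_{1}}\setminus{s}_{{p}_{0}}$ a union of blocks of ${c}_{{p}_{0}}$, so (4) and (5) hold with ${F}_{p,{p}_{0}}={F}_{{p}_{1},{p}_{0}}$. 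For (6), fix $i$: if $i<n$ then ${c}_{p}(i)={c}_{{p}_{n}}(i)$ and ${G}_{p,{p}_{n},i}=\{i\}$ works; if $i\geq n$ then ${c}_{p}(i)={c}_{{p}_{i+1}}(i)$, and since $i+1\geq n+1$ stabilization gives ${p}_{i+1}\;{\leq}_{n}\;{p}_{n}$, hence ${p}_{i+1}\leq{p}_{n}$, so clause (6) applied to ${p}_{i+1}\leq{p}_{n}$ provides a finite ${G}_{{p}_{i+1},{p}_{n},i}$ with ${c}_{{p}_{i+1}}(i)={\bigcup}_{j\in{G}_{{p}_{i+1},{p}_{n},i}}{{c}_{{p}_{n}}(j)}$; taking ${G}_{p,{p}_{n},i}={G}_{{p}_{i+1},{p}_{n},i}$ completes (6). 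Thus $p\;{\leq}_{n}\;{p}_{n}$ for every $n$, as required.

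There is no genuine obstacle here; the argument is pure bookkeeping. The only points that require a little attention are the case split between $n=0$ and $n\geq1$ for the ${s}$-component — exactly the asymmetry encoded in Lemma~\ref{lem:basic}(1),(2) — and keeping the indices straight when invoking the stabilization fact, which should always be applied with $m\geq k+1$ so that reflexivity of ${\leq}_{k}$ is never needed.
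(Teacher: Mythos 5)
Your proof is correct. The paper explicitly leaves this lemma to the reader as straightforward bookkeeping, and your argument is exactly the expected verification: the stabilization fact ($p_m \leq_k p_k$ for $m \geq k+1$, by induction using Lemma~\ref{lem:basic}(4) and transitivity of $\leq_k$) is the one observation needed, and everything else is a direct unwinding of clauses (1)--(6) of Definition~\ref{def:po}, with the only mild care being the $n=0$ versus $n\geq 1$ split for the $s$-component and the $i<n$ versus $i\geq n$ split in clause (6).
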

\begin{dilipDef} \label{def:pkF}
 Let $p \in \dilipPP(\dilipA)$ and suppose $k \in \omega$ and $F \subseteq k$.
 Define $p(k, F) = \dilippr{{s}_{p(k, F)}}{{c}_{p(k, F)}}$ by ${s}_{p(k, F)} = {s}_{p} \cup \left( {\bigcup}_{j \in F}{{c}_{p}(j)} \right)$ and ${c}_{p(k, F)}(i) = {c}_{p}(i+k)$, for all $i \in \omega$.
 It is easy to check that $p(k, F) \in \dilipPP(\dilipA)$ and that $p(k, F) \leq p$.
\end{dilipDef}
\begin{dilipDef} \label{def:qpkF}
 Let $p \in \dilipPP(\dilipA)$, $k \in \omega$, and $F \subseteq k$.
 Note that $p(k+1, F\cup\{k\}) \leq p$.
 Suppose that some $q \leq p(k+1, F\cup\{k\}) \leq p$ is given.
 Define $q(p, k, F) = \dilippr{{s}_{q(p, k, F)}}{{c}_{q(p, k, F)}}$ as follows.
 Put ${s}_{q(p, k, F)} = {s}_{p}$.
 For $i < k$, define ${c}_{q(p, k, F)}(i) = {c}_{p}(i)$, define ${c}_{q(p, k, F)}(k) = \bigcup\left\{ {c}_{p}(j): j \in {F}_{q, p} \setminus k \right\}$, and for $i > k$, define ${c}_{q(p, k, F)}(i) = {c}_{q}(i-k-1)$.
 
 It is easy to check that $q(p, k, F) \in \dilipPP(\dilipA)$ and $q(p, k, F) \; {\leq}_{k} \; p$.
\end{dilipDef}
\begin{dilipLemma} \label{lem:diag}
 Let $p \in \dilipPP(\dilipA)$, $k \in \omega$, and $F \subseteq k$.
 Suppose $q \leq p(k+1, F\cup\{k\})$.
 If $r \leq q(p, k, F)$ and $i \in \omega$ are such that $k = \min\left({G}_{r, q(p, k, F), i}\right)$ and ${F}_{r, q(p, k, F)} = F$, then $r(i+1, \{i\}) \leq q$.
\end{dilipLemma}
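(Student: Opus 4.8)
The plan is to unwind Definitions~\ref{def:po}, \ref{def:pkF}, and~\ref{def:qpkF} and to bookkeep which blocks of $p$ land in which blocks of $r$. First I would fix notation: abbreviate $p' = q(p,k,F)$, and let $F' = F_{q,\,p(k+1,F\cup\{k\})}$ be the witness supplied by the hypothesis $q\leq p(k+1,F\cup\{k\})$. Recall from Definition~\ref{def:qpkF} that ${s}_{p'} = {s}_{p}$, that ${c}_{p'}(\ell)={c}_{p}(\ell)$ for $\ell<k$, that ${c}_{p'}(k)=\bigcup\{{c}_{p}(j) : j\in F_{q,p}\setminus k\}$, and that ${c}_{p'}(\ell)={c}_{q}(\ell-k-1)$ for $\ell>k$, where ${F}_{q,p}$ is the (unique) $F$-witness for $q\leq p$ coming from $q\leq p(k+1,F\cup\{k\})\leq p$ and transitivity.

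The key step is to establish the identity $s_q = s_r\cup {c}_{p'}(k)$. Since ${F}_{r,p'}=F\subseteq k$ and ${c}_{p'}\restriction k={c}_{p}\restriction k$, one gets $s_r\setminus s_p=s_r\setminus s_{p'}=\bigcup_{j\in F}{c}_{p}(j)$, so $s_r=s_p\cup\bigcup_{j\in F}{c}_{p}(j)$. Unwinding $q\leq p(k+1,F\cup\{k\})$ and using ${c}_{p(k+1,F\cup\{k\})}(j)={c}_{p}(j+k+1)$ gives $s_q=s_{p(k+1,F\cup\{k\})}\cup\bigcup_{j\in F'}{c}_{p}(j+k+1)=s_p\cup\bigcup_{j\in F}{c}_{p}(j)\cup {c}_{p}(k)\cup\bigcup_{j\in F'}{c}_{p}(j+k+1)$. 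Comparing $s_q$ with $s_p$ identifies ${F}_{q,p}=F\cup\{k\}\cup\{j+k+1:j\in F'\}$, hence ${F}_{q,p}\setminus k=\{k\}\cup\{j+k+1:j\in F'\}$, so ${c}_{p'}(k)={c}_{p}(k)\cup\bigcup_{j\in F'}{c}_{p}(j+k+1)$, and therefore $s_r\cup {c}_{p'}(k)=s_q$.

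With that identity in hand I would next locate ${c}_{p'}(k)$ inside $r$. The sets ${F}_{r,p'}$ and $\{{G}_{r,p',\ell}:\ell\in\omega\}$ partition $\omega$ (immediate from the pairwise disjointness of the blocks ${c}_{p'}(j)$) and the ${G}_{r,p',\ell}$ are $<$-increasing by Lemma~\ref{lem:basic}(3); since $[k,\omega)\subseteq\omega\setminus F$, the hypothesis $k=\min({G}_{r,p',i})$ forces ${G}_{r,p',i}=\{k,k+1,\dots,k+m\}$ for $m=\max({G}_{r,p',i})-k\geq 0$, with ${G}_{r,p',\ell}\subseteq[0,k)$ for $\ell<i$ and ${G}_{r,p',\ell}\subseteq[k+m+1,\omega)$ for $\ell>i$. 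Using ${c}_{p'}(\ell)={c}_{q}(\ell-k-1)$ for $\ell>k$ this yields
\[
 {c}_{r}(i)=\bigcup_{j=k}^{k+m}{c}_{p'}(j)={c}_{p'}(k)\cup\bigcup_{j<m}{c}_{q}(j),
\]
and, for every $\ell\in\omega$, ${c}_{r}(\ell+i+1)=\bigcup_{j\in {G}_{r,p',\ell+i+1}}{c}_{q}(j-k-1)$ because ${G}_{r,p',\ell+i+1}\subseteq[k+m+1,\omega)\subseteq(k,\omega)$. Finally I would verify the three clauses of $\leq$ in Definition~\ref{def:po} for $r(i+1,\{i\})\leq q$ (noting $r(i+1,\{i\})\in\dilipPP(\dilipA)$ by Definition~\ref{def:pkF} and $q\in\dilipPP(\dilipA)$ by hypothesis): by Definition~\ref{def:pkF}, $s_{r(i+1,\{i\})}=s_r\cup {c}_{r}(i)$ and ${c}_{r(i+1,\{i\})}(\ell)={c}_{r}(\ell+i+1)$, so the displays give $s_{r(i+1,\{i\})}=(s_r\cup {c}_{p'}(k))\cup\bigcup_{j<m}{c}_{q}(j)=s_q\cup\bigcup_{j<m}{c}_{q}(j)$; as each ${c}_{q}(j)$ is disjoint from $s_q$, this gives $s_{r(i+1,\{i\})}\supseteq s_q$ and $s_{r(i+1,\{i\})}\setminus s_q=\bigcup_{j<m}{c}_{q}(j)$, so the first two clauses hold with ${F}_{r(i+1,\{i\}),q}=m$. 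The third clause holds because ${c}_{r(i+1,\{i\})}(\ell)={c}_{r}(\ell+i+1)=\bigcup_{j\in {G}_{r,p',\ell+i+1}}{c}_{q}(j-k-1)$, so ${G}_{r(i+1,\{i\}),q,\ell}=\{j-k-1:j\in {G}_{r,p',\ell+i+1}\}$ works; hence $r(i+1,\{i\})\leq q$.

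The main obstacle will be the identity $s_q=s_r\cup {c}_{p'}(k)$ of the second paragraph: it hinges on correctly matching the set ``${F}_{q,p}$'' occurring in the definition of $q(p,k,F)$ with the witness ``$F'$'' for $q\leq p(k+1,F\cup\{k\})$, and on keeping the index shifts by $k$ and by $k+1$ aligned. Once that identity and the interval structure of the ${G}_{r,p',\ell}$ are set up, the remaining verifications are routine block bookkeeping.
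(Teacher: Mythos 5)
Your overall plan matches the paper's, and your careful derivation of the identity $s_q = s_r\cup c_{q(p,k,F)}(k)$ usefully fills in a computation that the paper's proof asserts without detail. However, the third paragraph rests on a false claim that then propagates. Writing $p' = q(p,k,F)$ as you do, you assert that $F_{r,p'}$ together with the sets $G_{r,p',\ell}$ ($\ell\in\omega$) covers all of $\omega$ (calling the result a partition), and from this conclude that $G_{r,p',i}$ must be an interval $\{k,k+1,\dots,k+m\}$. Neither follows from Definition~\ref{def:po}: a condition below $p'$ may simply drop some blocks of $p'$. For example, $p(k,F)\leq p$ with $F\subsetneq k$ has $F_{p(k,F),p}=F$ and $G_{p(k,F),p,i}=\{i+k\}$, so none of the indices in $k\setminus F$ is accounted for; and a $G_{r,p',i}$ can have gaps. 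Consequently your display $c_r(i)={c}_{p'}(k)\cup\bigcup_{j<m}{c}_{q}(j)$, the identification $F_{r(i+1,\{i\}),q}=m$, and the inclusion $G_{r,p',\ell+i+1}\subseteq[k+m+1,\omega)$ are all incorrect in general.

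The fix is local and recovers exactly the paper's computation: set $J = G_{r,p',i}\setminus\{k\} = \{j\in G_{r,p',i}: j>k\}$, a finite but possibly non-consecutive subset of $(k,\omega)$. Then $c_r(i) = c_{p'}(k)\cup\bigcup_{j\in J}c_q(j-k-1)$, hence by your identity $s_{r(i+1,\{i\})} = s_q \cup \bigcup_{j\in J}c_q(j-k-1)$, and the witness for clause (5) of Definition~\ref{def:po} is $\{j-k-1 : j\in J\}$ rather than $m$. For clause (6) one only needs that every element of $G_{r,p',\ell+i+1}$ is strictly greater than $k$, and this already follows from Lemma~\ref{lem:basic}(3) (the $G$'s are $<$-increasing) together with $\min G_{r,p',i}=k$; the stronger inclusion into $[k+m+1,\omega)$ is never needed. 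With this correction the argument is complete and coincides with the paper's proof.
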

\begin{proof}
 Note that ${c}_{r}(i) = \bigcup {\left\{ {c}_{q(p, k, F)}(j): j \in {G}_{r, q(p, k, F), i} \right\}}$.
 Let
 \begin{align*}
  J = \left\{ j \in {G}_{r, q(p, k, F), i}: j > k \right\}.
 \end{align*}
 Then
 \begin{align*}
  {s}_{r(i+1, \{i\})} = {s}_{q} \cup \bigcup {\left\{ {c}_{q(p, k, F)}(j): j \in J \right\}} = {s}_{q} \cup \bigcup {\left\{ {c}_{q}(j-k-1): j \in J \right\}}.
 \end{align*} 
 This confirms both (4) and (5) of Definition \ref{def:po}.
 For $l \in \omega$, we have ${c}_{r(i+1, \{i\})}(l) = {c}_{r}(l+i+1) =$
 \begin{align*}
  \bigcup {\left\{ {c}_{q(p, k, F)}(j): j \in {G}_{r, q(p, k, F), l+i+1} \right\}} = \bigcup {\left\{ {c}_{q}(j-k-1): j \in {G}_{r, q(p, k, F), l+i+1} \right\}}.
 \end{align*}
 This confirms (6) of Definition \ref{def:po}.
 Therefore $r(i+1, \{i\}) \leq q$.
\end{proof}
\begin{dilipLemma} \label{lem:fusion1}
 Let $p \in \dilipPP(\dilipA)$ and $k \in \omega$.
 Suppose $\mathring{x} \in {\dilipV}^{\dilipPP(\dilipA)}_{{\omega}_{1}}$ and ${\dilipforces}_{\dilipPP(\dilipA)}{\mathring{x} \in {\dilipV}_{{\omega}_{1}}}$.
 Then there are $q \; {\leq}_{k} \; p$ and $X \in {\dilipV}_{{\omega}_{1}}$ so that:
 \begin{enumerate}
  \item
  $\diliplc X \diliprc \leq {2}^{k}$, ${s}_{p} = {s}_{q}$, and ${c}_{q}(k) \supseteq {c}_{p}(k)$;
  \item
  for any $q' \; {\leq}_{k+1} \; q$, $F \subseteq k$, $r \leq q'$, and $i \in \omega$, if ${F}_{r, q'} = F$ and $k = \min\left( {G}_{r, q', i} \right)$, then $r(i+1, \{i\}) \; {\dilipforces} \; {\mathring{x} \in X}$.
 \end{enumerate}
\end{dilipLemma}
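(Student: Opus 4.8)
The plan is to refine $p$ by running through all $2^{k}$ subsets of $k$ one at a time, at each stage using the fact that $\dilipPP(\dilipA)$-names for members of ${\dilipV}_{{\omega}_{1}}$ can be decided, together with the operations from Definitions \ref{def:pkF} and \ref{def:qpkF}. First I would fix an enumeration $\dilipseq{F}{j}{<}{m}$ of $\dilippc{k}{< \omega}$, where $m = 2^{k}$, and build a sequence $\langle {p}^{(j)}: j \leq m \rangle$ with ${p}^{(0)} = p$ together with $x_{j} \in {\dilipV}_{{\omega}_{1}}$ for $j < m$, maintaining the invariants ${p}^{(j)} \leq_{k} p$, ${s}_{{p}^{(j)}} = {s}_{p}$, and ${c}_{{p}^{(j)}}(k) \supseteq {c}_{p}(k)$. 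At step $j$, note that ${p}^{(j)}(k+1, F_{j} \cup \{k\}) \leq {p}^{(j)}$ by Definition \ref{def:pkF}; since ${\dilipforces}_{\dilipPP(\dilipA)} \; {\mathring{x} \in {\dilipV}_{{\omega}_{1}}}$, choose $q_{j} \leq {p}^{(j)}(k+1, F_{j} \cup \{k\})$ and $x_{j}$ with $q_{j} \; {\dilipforces} \; {\mathring{x} = x_{j}}$, and put ${p}^{(j+1)} = q_{j}({p}^{(j)}, k, F_{j})$. By Definition \ref{def:qpkF}, ${p}^{(j+1)} \leq_{k} {p}^{(j)}$ and ${s}_{{p}^{(j+1)}} = {s}_{{p}^{(j)}}$; moreover, since $q_{j} \leq {p}^{(j)}(k+1, F_{j} \cup \{k\})$, the block ${c}_{{p}^{(j)}}(k)$ is absorbed into ${s}_{q_{j}}$, so $k \in {F}_{q_{j}, {p}^{(j)}}$ and hence ${c}_{{p}^{(j+1)}}(k) = \bigcup\{{c}_{{p}^{(j)}}(l) : l \in {F}_{q_{j}, {p}^{(j)}} \setminus k\} \supseteq {c}_{{p}^{(j)}}(k)$. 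Thus all three invariants propagate. Finally set $q = {p}^{(m)}$ and $X = \{x_{j} : j < m\}$. Clause (1) is then immediate from the invariants and the transitivity of $\leq_{k}$, since $\diliplc X \diliprc \leq m = {2}^{k}$.

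For clause (2), suppose $q' \leq_{k+1} q$, $F \subseteq k$, $r \leq q'$, and $i \in \omega$ are given with ${F}_{r, q'} = F$ and $k = \min({G}_{r, q', i})$, and let $j < m$ be such that $F = F_{j}$. The idea is to apply Lemma \ref{lem:diag} with ${p}^{(j)}, k, F_{j}, q_{j}$ in the roles of $p, k, F, q$, so that $q_{j}({p}^{(j)}, k, F_{j}) = {p}^{(j+1)}$; it then suffices to verify that $r \leq {p}^{(j+1)}$, that $k = \min({G}_{r, {p}^{(j+1)}, i})$, and that ${F}_{r, {p}^{(j+1)}} = F_{j}$, because the conclusion $r(i+1, \{i\}) \leq q_{j}$ combined with $q_{j} \; {\dilipforces} \; {\mathring{x} = x_{j} \in X}$ finishes the proof. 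Here $r \leq q' \leq {p}^{(m)} = q \leq {p}^{(j+1)}$, using $q' \leq_{k+1} q$ and ${p}^{(m)} \leq_{k} \cdots \leq_{k} {p}^{(j+1)}$. Since $q' \leq_{k+1} {p}^{(m)}$ and ${p}^{(m)} \leq_{k} {p}^{(j+1)}$, the blocks of index $< k$ of $q'$ and of ${p}^{(j+1)}$ coincide and both have stem ${s}_{p}$; combined with ${s}_{r} \setminus {s}_{{p}^{(j+1)}} = {s}_{r} \setminus {s}_{q'} = \bigcup_{l \in F_{j}} {c}_{q'}(l) = \bigcup_{l \in F_{j}} {c}_{{p}^{(j+1)}}(l)$ (as $F_{j} \subseteq k$) and the uniqueness of the block decomposition, this gives ${F}_{r, {p}^{(j+1)}} = F_{j}$. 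For the $\min$ computation: from $\min({G}_{r, q', i}) = k$ we get ${c}_{r}(i) \supseteq {c}_{q'}(k) = {c}_{{p}^{(m)}}(k)$ and ${c}_{r}(i)$ disjoint from the blocks of index $< k$ of ${p}^{(j+1)}$; and since ${p}^{(m)} \leq {p}^{(j+1)}$ with matching stems and matching blocks of index $< k$, Lemma \ref{lem:basic}(3) forces $\min({G}_{{p}^{(m)}, {p}^{(j+1)}, k}) = k$, so ${c}_{{p}^{(m)}}(k) \supseteq {c}_{{p}^{(j+1)}}(k)$ and therefore $k \in {G}_{r, {p}^{(j+1)}, i}$. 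Together with disjointness from lower blocks, $\min({G}_{r, {p}^{(j+1)}, i}) = k$.

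The only non-routine point, and hence the main obstacle, is this last verification. Because the block of index $k$ of the conditions ${p}^{(j)}$ can strictly grow as the construction proceeds, ${c}_{q'}(k)$ will in general be a proper union of blocks of ${p}^{(j+1)}$ starting at index $k$, rather than equal to ${c}_{{p}^{(j+1)}}(k)$. One must therefore argue carefully — using only that the stems agree, that the blocks of index $< k$ agree, and the monotonicity statement of Lemma \ref{lem:basic}(3) — that rewriting the $q'$-decomposition of $r$ as a ${p}^{(j+1)}$-decomposition preserves both $\min {G}_{r, \cdot, i} = k$ and ${F}_{r, \cdot} = F_{j}$. That preservation is precisely what makes Lemma \ref{lem:diag} applicable and closes the argument.
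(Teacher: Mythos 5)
Your proposal matches the paper's own proof: you run the same $2^{k}$-step fusion, building conditions $p^{(j)}$ (the paper's ${q}_{l}$) from $p$ by deciding $\mathring{x}$ below $p^{(j)}(k+1, F_{j}\cup\{k\})$ and then renormalizing via the $q(p,k,F)$ operation of Definition \ref{def:qpkF}, maintaining exactly the invariants ${\leq}_{k}$, equal stem, and growing $k$-th block, and finishing clause (2) with the same appeal to Lemma \ref{lem:diag}. The one place where the paper simply asserts ``it follows that ${F}_{r, {q}_{l+1}} = F$ and $\min({G}_{r, {q}_{l+1}, i}) = k$'' is precisely what your second paragraph verifies in detail (correctly, modulo the minor point that Lemma \ref{lem:basic}(3) alone needs to be combined with ${F}_{q, {p}^{(j+1)}} = \emptyset$ and the block decomposition to give $\min({G}_{q, {p}^{(j+1)}, k}) = k$), so this is a filling-in of the paper's argument rather than a different route.
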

\begin{proof}
 Let $\dilipseq{F}{l}{<}{{2}^{k}}$ enumerate $\dilipPset(k)$.
 We define by induction two sequences $\dilipseq{q}{l}{\leq}{{2}^{k}}$ and $\dilipseq{x}{l}{<}{{2}^{k}}$ satisfying the following: ${q}_{0} = p$ and
 \begin{align*}
  \forall l' < l \left[{q}_{l} \; {\leq}_{k} \; {q}_{l'}\text{, } {s}_{{q}_{l}} = {s}_{{q}_{l'}}\text{, and } {c}_{{q}_{l}}(k) \supseteq {c}_{{q}_{l'}}(k)\right].
 \end{align*}
 Define ${q}_{0} = p$ and note that the induction hypothesis is vacuously satisfied.
 Now suppose $l < {2}^{k}$ and that ${q}_{l}$ satisfying the induction hypothesis is given.
 Then ${q}_{l}(k+1, {F}_{l} \cup \{k\}) \leq {q}_{l}$.
 Find ${r}_{l} \leq {q}_{l}(k+1, {F}_{l} \cup \{k\})$ and ${x}_{l} \in {\dilipV}_{{\omega}_{1}}$ with ${r}_{l} \; {\dilipforces}_{\dilipPP(\dilipA)} \; {\mathring{x} = {x}_{l}}$, and define ${q}_{l+1} = {r}_{l}({q}_{l}, k, {F}_{l})$.
 Note that ${q}_{l+1} \; {\leq}_{k} \; {q}_{l}$ and ${s}_{{q}_{l+1}} = {s}_{{r}_{l}({q}_{l}, k, {F}_{l})} = {s}_{{q}_{l}}$.
 Further, ${c}_{{q}_{l}}(k) \subseteq {s}_{{q}_{l}(k+1, {F}_{l} \cup \{k\})} \setminus {s}_{{q}_{l}} \subseteq {s}_{{r}_{l}} \setminus {s}_{{q}_{l}}$, and $k \in {F}_{{r}_{l}, {q}_{l}} \setminus k$.
 Thus ${c}_{{q}_{l}}(k) \subseteq {c}_{{q}_{l+1}}(k)$.
 Hence by the induction hypothesis,
 \begin{align*}
  \forall l' \leq l \left[{q}_{l+1} \; {\leq}_{k} \; {q}_{l} \; {\leq}_{k} \; {q}_{l'}\text{, } {s}_{{q}_{l+1}} = {s}_{{q}_{l}} = {s}_{{q}_{l'}}\text{, and } {c}_{{q}_{l+1}}(k) \supseteq {c}_{{q}_{l}}(k) \supseteq {c}_{{q}_{l'}}(k)\right].
 \end{align*}
 This concludes the inductive construction.
 
 Now define $q = {q}_{{2}^{k}}$ and $X = \{{x}_{l}: l < {2}^{k}\}$.
 Then (1) is satisfied by construction.
 To verify (2), fix any $q' \; {\leq}_{k+1} \; q$, $F \subseteq k$, $r \leq q'$, and $i \in \omega$, and assume that ${F}_{r, q'} = F$ and that $k = \min\left( {G}_{r, q', i} \right)$.
 Then $F = {F}_{l}$, for some $l < {2}^{k}$.
 Since $q \; {\leq}_{k} \; {q}_{l+1}$, ${s}_{q} = {s}_{{q}_{l+1}}$, and ${c}_{q}(k) \supseteq {c}_{{q}_{l+1}}(k)$, it follows that ${F}_{r, {q}_{l+1}} = F$ and that $\min\left( {G}_{r, {q}_{l+1}, i} \right) = k$.
 By definition, ${r}_{l} \leq {q}_{l}(k+1, F \cup \{k\})$ and ${q}_{l+1} = {r}_{l}({q}_{l}, k, F)$.
 Therefore, by Lemma \ref{lem:diag}, $r(i+1, \{i\}) \leq {r}_{l}$.
 Therefore, $r(i+1, \{i\}) \; {\dilipforces}_{\dilipPP(\dilipA)} \; {\mathring{x} = {x}_{l} \in X}$.
\end{proof}
\begin{dilipLemma} \label{lem:main}
 Work in ${\dilipV}_{{\omega}_{1}}$.
 Let $\theta$ be a sufficiently large regular cardinal.
 Suppose $M \prec H(\theta)$ is countable with $M$ containing all relevant parameters.
 Let $f: \omega \rightarrow M$ be such that
 \begin{align*}
  \forall k \in \omega \left[f(k) \in {\dilipV}^{\dilipPP(\dilipA)}_{{\omega}_{1}} \wedge {\dilipforces}_{\dilipPP(\dilipA)} \; {f(k) \in {\dilipV}_{{\omega}_{1}}}\right].
 \end{align*}
 For any $p \in \dilipPP(\dilipA) \cap M$, there exist $q$ and $S$ satisfying the following:
 \begin{enumerate}[series=mainlem]
  \item
  $q \leq p$;
  \item
  $S$ is a function, $\dilipdom(S) = \omega$, and $\forall k \in \omega\left[S(k) \subseteq M \wedge \diliplc S(k) \diliprc \leq {2}^{k} \right]$;
  \item
  for any $k \in \omega$, any $F \subseteq k$, any $r \leq q$, and any $i \in \omega$, if ${F}_{r, q} = F$ and $\min\left( {G}_{r, q, i} \right) = k$, then $r(i+1, \{i\}) \; {\dilipforces}_{\dilipPP(\dilipA)} \; {f(k) \in S(k)}$.
 \end{enumerate}
\end{dilipLemma}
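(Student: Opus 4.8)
The plan is to get $q$ as the limit of a fusion sequence $\dilipseq{q}{k}{\in}{\omega}$ in $\dilipPP(\dilipA)$ in which the step from $q_k$ to $q_{k+1}$ is an application of Lemma~\ref{lem:fusion1} (deciding $f(k)$ up to $\leq 2^{k}$ possibilities) preceded by a \emph{pruning} of columns, the latter present solely to rescue clause~(3) of Definition~\ref{def:po} for the limit. The whole recursion is carried out inside $M$: using elementarity together with Remark~\ref{rem:absolute}, one sees that each $q_k$ and each $S(k)$ may be taken in $M$. Before starting I fix an enumeration $\langle A_i : i \in \omega\rangle$ of $\dilipA \cap M$ (countable since $M$ is), set $B_k = A_0 \cup \dots \cup A_{k-1}$, and record the elementary observation: \emph{if} $p' \in \dilipPP(\dilipA)$ \emph{and} $B$ \emph{is a finite union of members of} $\dilipA$, \emph{then infinitely many columns of} $p'$ \emph{are disjoint from} $B$ (apply clause~(3) of Definition~\ref{def:po} to $B \cup F$ for arbitrarily large finite $F$).

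I would then run the recursion. Put $q_0 = p$. Given $q_k \in \dilipPP(\dilipA) \cap M$, work in $M$ and let $q'_k$ be obtained from $q_k$ by keeping columns $0, \dots, k-1$ untouched and, among the columns of index $\geq k$, retaining exactly those disjoint from $B_k$ and discarding the rest. By the recorded observation applied below $q_k$ (to $B \cup B_k$, for each finite union $B$ from $\dilipA$), the retained columns are infinite in number and for every such $B$ infinitely many of them are disjoint from $B$; hence $q'_k \in \dilipPP(\dilipA)$, while clearly $q'_k \leq_k q_k$ and $q'_k \in M$. Now apply Lemma~\ref{lem:fusion1} inside $M$ to $(q'_k, k, f(k))$ — legitimate as all the parameters lie in $M$ — obtaining $q_{k+1} \leq_k q'_k$ and $S(k) \in M$ with $\diliplc S(k) \diliprc \leq 2^{k}$, $s_{q_{k+1}} = s_{q'_k}$, $c_{q_{k+1}}(k) \supseteq c_{q'_k}(k)$, and property~(2) of Lemma~\ref{lem:fusion1}. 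Reading off Definitions~\ref{def:pkF} and~\ref{def:qpkF} as used in its proof, each column of $q_{k+1}$ of index $\geq k$ is a union of columns of $q'_k$ of index $\geq k$, hence is disjoint from $B_k$; iterating this, each column of the limit $q$ of index $\geq j$ is disjoint from $B_j$. Let $q$ be the limit of $\dilipseq{q}{k}{\in}{\omega}$ in the sense of Lemma~\ref{lem:fusion}, so $s_q = s_{q_1} = s_p$ and $c_q(n) = c_{q_{n+1}}(n)$; since $q_{k+1} \leq_k q_k$ for all $k$, once clause~(3) of Definition~\ref{def:po} is checked for $q$, Lemma~\ref{lem:fusion} yields $q \leq_n q_n$ for all $n$, in particular $q \leq q_0 = p$ and $q \leq_{k+1} q_{k+1}$ for all $k$.

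Granting clause~(3) of Definition~\ref{def:po} for $q$, the rest is routine. The present lemma's clause~(2) is immediate: each $S(k)$ is a finite element of $M$, so $S(k) \subseteq M$, $\diliplc S(k) \diliprc \leq 2^{k}$, and $\dilipdom(S) = \omega$. For the present lemma's clause~(3), fix $k$, $F \subseteq k$, $r \leq q$ and $i$ with $F_{r, q} = F$ and $\min(G_{r, q, i}) = k$. Since $q \leq_{k+1} q_{k+1}$, the stems and the columns of index $\leq k$ of $q$ and $q_{k+1}$ coincide, so $F_{r, q_{k+1}} = F$, and since $c_{q_{k+1}}(k) = c_q(k) \subseteq c_r(i)$ while the $G$-sets increase (Lemma~\ref{lem:basic}(3)), $\min(G_{r, q_{k+1}, i}) = k$. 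Applying property~(2) of Lemma~\ref{lem:fusion1} for $q_{k+1}$ with $q' := q$ — allowed because $q \leq_{k+1} q_{k+1}$ and $r \leq q$ — gives $r(i+1, \{i\}) \dilipforces_{\dilipPP(\dilipA)} f(k) \in S(k)$, as required. The present lemma's clause~(1), $q \leq p$, was observed above.

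The main obstacle is exactly the clause I kept granting: that the limit $q$ is a member of $\dilipPP(\dilipA)$, i.e.\ satisfies clause~(3) of Definition~\ref{def:po}. As the remark preceding Lemma~\ref{lem:fusion} notes, this is false for general fusions, and it is the only reason for the pruning. The pruning ensures, for each $j$, that the columns of $q$ of index $\geq j$ are disjoint from $B_j = A_0 \cup \dots \cup A_{j-1}$; since the minima of the columns of $q$ tend to infinity, this already yields clause~(3) for every $B \in \dilipIII(\dilipA)$ whose generators lie in $\dilipA \cap M$. The genuinely delicate part is to treat the remaining elements of $\dilipIII(\dilipA)$ — those involving members of $\dilipA \setminus M$ — and this is where the assumption that $\dilipA$ is block Shelah-Stepr\={a}ns is used: one exploits that the columns of $q$ are assembled from columns of $p \in M$ and argues, via a reflection argument based on the choice of $M$, that securing clause~(3) against the book-kept unions $B_j$ suffices. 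I expect this reflection step to be the technically hardest point; all the rest is bookkeeping of the kind already carried out in the proof of Lemma~\ref{lem:fusion1}.
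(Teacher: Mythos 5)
Your proposal correctly identifies the right fusion machinery (Lemmas~\ref{lem:fusion} and~\ref{lem:fusion1}, with the $q(p,k,F)$-bookkeeping), and your verification of clause~(3) of the lemma from $q\leq_{k+1}q_{k+1}$ is sound. But the argument as written has a genuine gap exactly where you flag uncertainty, and the repair you sketch is not the right one.

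The gap: to show the fusion limit $q$ lies in $\dilipPP(\dilipA)$ you must verify clause~(3) of Definition~\ref{def:po} against \emph{every} $B\in\dilipIII(\dilipA)$. Your pruning only guards the columns of $q$ against the $B_k=A_0\cup\dots\cup A_{k-1}$ built from the countably many members of $\dilipA\cap M$. Since $\dilipA$ is uncountable and $M$ is countable, $\dilipA\setminus M$ is nonempty, and nothing in the construction rules out that some $A\in\dilipA\setminus M$ meets every column $c_q(n)$; this can certainly happen for a carelessly built fusion. No elementarity argument rescues this: the columns $c_q(n)$ are individually in $M$, but the sequence $\langle c_q(n):n\in\omega\rangle$, and hence $q$ itself, is not a member of $M$, so there is no way to reflect the statement ``$q$ satisfies clause~(3)'' into $M$. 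Your suggestion that block Shelah-Stepr\={a}ns should be used here is also misplaced: as the paper notes immediately after Lemma~\ref{lem:preservingbig}, that hypothesis is used \emph{only} in Lemma~\ref{lem:preservingbig} and plays no role in Lemma~\ref{lem:main}.

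The paper closes this gap by a fundamentally different device: a \emph{generic} fusion rather than a hand-built one. One introduces a countable forcing $\dilipAA$ of finite ``approximations'' to the fusion sequence (finite sequences $\sigma$ of conditions in $\dilipPP(\dilipA)\cap M$ that step down in $\leq_n$, together with finite initial segments of the decision sets, encoded into the transitive collapse $\bar M$ so that the whole apparatus is an element of $H(\aleph_1)$), and for each $B\in\dilipIII^{\dilipV}(\dilipA)$ a dense set $D(B)$ asking that some column already constructed be disjoint from $B$. The key observation is that $\dilipA\in\dilipV$, so every $B$ relevant to clause~(3) already lies in $\dilipV$, and hence $\dilipAA$ together with the entire family $\{D(B):B\in\dilipIII^{\dilipV}(\dilipA)\}$ lies in some intermediate Cohen extension $\dilipV_\delta$, $\delta<\omega_1$. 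As $\dilipAA$ is a countable poset, $\dilipV_{\omega_1}$ contains a filter $H$ that is $(\dilipV_\delta,\dilipAA)$-generic; $H$ meets every $D(B)$, which is exactly what guarantees clause~(3) for the limit. In short, the uncountably many constraints are handled not by enumerating them in $\omega$ steps (impossible) but by genericity over a model already containing them all. This genericity step is the essential idea missing from your construction, and it is the only nontrivial new idea in the paper's proof beyond what you wrote.
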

\begin{proof}
 Let $\bar{M}$ denote the transitive collapse of $M$.
 Let $\pi: M \rightarrow \bar{M}$ be the collapsing map and ${\pi}^{\ast}: \bar{M} \rightarrow M$ be the inverse of $\pi$.
 Say that $a$ is \emph{an approximation} if:
 \begin{enumerate}[resume=mainlem]
  \item
  $a \in {\left( \dilipPP(\dilipA) \cap M \right)}^{< \omega} \times {\bar{M}}^{< \omega}$;
  \item
  writing $a = \dilippr{{\sigma}_{a}}{{\tau}_{a}}$, $\dilipdom({\sigma}_{a}) = \dilipdom({\tau}_{a})+1$;
  \item
  ${\sigma}_{a}(0) = p$ and $\forall n < n+1 < \dilipdom({\sigma}_{a})\left[{\sigma}_{a}(n+1) \; {\leq}_{n} \; {\sigma}_{a}(n) \right]$;
  \item
  for all $k < \dilipdom({\tau}_{a})$, $\diliplc {\tau}_{a}(k) \diliprc \leq {2}^{k}$;
  \item
  for any $k+1 < \dilipdom({\sigma}_{a})$, any $q' \; {\leq}_{k+1} \; {\sigma}_{a}(k+1)$, $F \subseteq k$, $r \leq q'$, and $i \in \omega$, if ${F}_{r, q'} = F$ and $k = \min\left( {G}_{r, q', i} \right)$, then
  \begin{align*}
   r(i+1, \{i\}) \; {\dilipforces}_{\dilipPP(\dilipA)} \; { f(k) \in \left\{ {\pi}^{\ast}(x): x \in {\tau}_{a}(k) \right\} }.
  \end{align*}
 \end{enumerate}
 Let $\dilipAA = \{a: a \ \text{is an approximation}\}$.
 Define a partial order on $\dilipAA$ by setting $\leq = \{\dilippr{b}{a} \in \dilipAA \times \dilipAA: {\sigma}_{b} \supseteq {\sigma}_{a} \wedge {\tau}_{b} \supseteq {\tau}_{a}\}$.
 It is easy to see that $\dilippr{\dilipAA}{\leq} \in H({\aleph}_{1})$.
 
 For each $B \in {\dilipIII}^{\dilipV}(\dilipA)$, define
 \begin{align*}
  D(B) = \left\{ b \in \dilipAA: \exists l \in \omega \left[ l+1 \in \dilipdom({\sigma}_{b}) \wedge {c}_{{\sigma}_{b}(l+1)}(l) \cap B = \emptyset \right] \right\}.
 \end{align*}
 \begin{dilipClaim} \label{claim:main1}
  $D(B)$ is dense in $\dilippr{\dilipAA}{\leq}$.
 \end{dilipClaim}
 \begin{proof}
  Let $a \in \dilipAA$ be given and put $k = \dilipdom({\tau}_{a})$.
  Let $p' = {\sigma}_{a}(k)$ and $\mathring{x} = f(k)$.
  Note that $\mathring{x} \in M \cap {\dilipV}^{\dilipPP(\dilipA)}_{{\omega}_{1}}$ and that ${\dilipforces}_{\dilipPP(\dilipA)} \; {\mathring{x} \in {\dilipV}_{{\omega}_{1}}}$.
  Note also that $p' \in M$.
  Let $Z = \{ z \in \omega: z \geq k \wedge {c}_{p'}(z) \cap B = \emptyset \}$.
  By the definition of $\dilipPP(\dilipA)$, $Z \in \dilippc{\omega}{\omega}$.
  Let $\dilipseq{z}{l}{\in}{\omega}$ be the strictly increasing enumeration of $Z$.
  Define $p'' = \dilippr{{s}_{p''}}{{c}_{p''}}$ by setting ${s}_{p''} = {s}_{p'}$, ${c}_{p''}(i) = {c}_{p'}(i)$, for all $i < k$, and ${c}_{p''}(k+l) = {c}_{p'}({z}_{l})$, for all $l \in \omega$.
  Then it is clear that $p'' \in \dilipPP(\dilipA)$ and that $p'' \; {\leq}_{k} \; p'$.
  Applying Lemma \ref{lem:fusion1} with $p''$, $k$, and $\mathring{x}$, find ${q}^{\ast} \; {\leq}_{k} \; p'' \; {\leq}_{k} \; p'$ and ${X}^{\ast} \in {\dilipV}_{{\omega}_{1}}$ satisfying (1) and (2) of Lemma \ref{lem:fusion1} with respect to $p''$, $k$, and $\mathring{x}$.
  Notice that ${c}_{{q}^{\ast}}(k) \cap B = \emptyset$.
  $p''$ may not be in $M$, and so ${q}^{\ast}$ and ${X}^{\ast}$ may not be in $M$ either.
  However, $p', \mathring{x}, {c}_{{q}^{\ast}}(k) \in M$.
  Therefore by elementarity and by the fact that $M$ contains all the relevant parameters, there exist $q, X \in M$ satisfying the following properties:
  \begin{enumerate}[resume=mainlem]
   \item
   $q \; {\leq}_{k} \; p'$ and $\diliplc X \diliprc \leq {2}^{k}$;
   \item
   for any $q' \; {\leq}_{k+1} \; q$, $F \subseteq k$, $r \leq q'$, and $i \in \omega$, if ${F}_{r, q'} = F$ and $k = \min\left( {G}_{r, q', i} \right)$, then $r(i+1, \{i\}) \; {\dilipforces}_{\dilipPP(\dilipA)} \; {\mathring{x} \in X}$;
   \item
   ${c}_{q}(k) = {c}_{{q}^{\ast}}(k)$.
  \end{enumerate}
  Define ${\sigma}_{b} = {\sigma}_{a} \cup \left\{ \dilippr{k+1}{q} \right\}$.
  Next, $X \subseteq M$, and so $\{\pi(x): x \in X\} \subseteq \bar{M}$ and $\diliplc \{\pi(x): x \in X\} \diliprc = \diliplc X \diliprc \leq {2}^{k}$.
  Since $\bar{M}$ is a transitive model of a sufficiently large fragment of $\dilipZFC - \mathrm{P}$ and since $\{\pi(x): x \in X\}$ is a finite subset of $\bar{M}$, $\{\pi(x): x \in X\} \in \bar{M}$.
  Define ${\tau}_{b} = {\tau}_{a} \cup \left\{ \dilippr{k}{\{\pi(x): x \in X\}} \right\}$.
  Observe that $\left\{ {\pi}^{\ast}(y): y \in {\tau}_{b}(k) \right\} = X$.
  Using these observations and (10), it is easy to verify that $b = \dilippr{{\sigma}_{b}}{{\tau}_{b}}$ satisfies (4)--(8).
  Therefore $b \in \dilipAA$ and $b \leq a$.
  Finally, ${c}_{{\sigma}_{b}(k+1)}(k) \cap B = {c}_{{q}^{\ast}}(k) \cap B = \emptyset$ because of (11).
  Therefore $l=k$ witnesses that $b \in D(B)$, establishing the density of $D(B)$.
 \end{proof}
 Now note that $\dilipAA$ is non-empty because $\dilippr{\left\{ \dilippr{0}{p} \right\}}{\emptyset} \in \dilipAA$.
 As we have observed earlier, $\dilippr{\dilipAA}{\leq} \in H({\aleph}_{1})$.
 Hence there exists $\delta < {\omega}_{1}$ such that $\dilippr{\dilipAA}{\leq} \in {\dilipV}_{\delta}$, where ${\dilipV}_{\delta}$ is the extension of $\dilipV$ by the first $\delta$ Cohen reals.
 Further, for $B \in {\dilipIII}^{\dilipV}(\dilipA)$, $D(B) \in {\dilipV}_{\delta}$ because $D(B)$ has an absolute definition in terms of the parameters $\dilipAA$ and $B$.
 Since $\dilippr{\dilipAA}{\leq}$ is a countable forcing notion, there exists $H \in {\dilipV}_{{\omega}_{1}}$ which is $\left( {\dilipV}_{\delta}, \dilipAA \right)$-generic.
 Define $P = \bigcup\left\{ {\sigma}_{a}: a \in H \right\}$ and $T = \bigcup\left\{ {\tau}_{a}: a \in H \right\}$.
 Then $P$ and $T$ are functions and $\dilipdom(P) = \sup\left\{ \dilipdom({\sigma}_{a}): a \in H \right\}$.
 \begin{dilipClaim}\label{claim:main2}
  $\dilipdom(P) = \omega$.
 \end{dilipClaim}
 \begin{proof}
  Suppose not.
  Then $\dilipdom(P) < \omega$.
  Let $B = \bigcup\left\{ {c}_{P(l+1)}(l): l+1 \in \dilipdom(P) \right\}$.
  Since $\dilipdom(P)$ is finite, $B$ is a finite subset of $\omega$, whence $B \in {\dilipIII}^{\dilipV}(\dilipA)$.
  Consider $a \in H \cap D(B)$.
  By definition of $D(B)$, there exists $l+1 \in \dilipdom({\sigma}_{a})$ such that ${c}_{{\sigma}_{a}(l+1)}(l) \cap B = \emptyset$.
  It follows that $l+1 \in \dilipdom(P)$ and $B \supseteq {c}_{P(l+1)}(l) = {c}_{{\sigma}_{a}(l+1)}(l)$, whence ${c}_{{\sigma}_{a}(l+1)}(l) = \emptyset$.
  But this is impossible by Clause (2) of Definition \ref{def:po}.
 \end{proof}
 For $n \in \omega$, define ${p}_{n} = P(n)$.
 Then ${p}_{0} = p$, $\forall n \in \omega\left[{p}_{n} \in \dilipPP(\dilipA)\right]$, and $\forall n \in \omega\left[{p}_{n+1} \; {\leq}_{n} \; {p}_{n}\right]$.
 Define $q' = \dilippr{{s}_{q'}}{{c}_{q'}}$ by setting ${s}_{q'} = {s}_{{p}_{1}}$ and ${c}_{q'}(n) = {c}_{{p}_{n+1}}(n)$, for all $n \in \omega$.
 Now if $B \in {\dilipIII}^{\dilipV}(\dilipA)$, then for some $b \in H$ and for some $l \in \omega$, ${c}_{{p}_{l+1}}(l) \cap B = \emptyset$, whence ${c}_{q'}(l) \cap B = \emptyset$.
 Thus by Lemma \ref{lem:fusion}, $q' \in \dilipPP(\dilipA)$ and $\forall n \in \omega\left[q' \; {\leq}_{n} \; {p}_{n}\right]$.
 In particular, $q' \; {\leq}_{0} \; {p}_{0} = p$, which is to say that $q' \leq p$.
 Next for each $k \in \omega$, $T(k) \in \bar{M}$ and $\diliplc T(k) \diliprc \leq {2}^{k}$.
 $T(k) \subseteq \bar{M}$ because $\bar{M}$ is transitive.
 Defining $S(k) = \{{\pi}^{\ast}(y): y \in T(k)\} \subseteq M$, $\diliplc S(k) \diliprc = \diliplc \{{\pi}^{\ast}(y): y \in T(k)\} \diliprc = \diliplc T(k) \diliprc \leq {2}^{k}$.
 Thus (1) and (2) are satisfied by $q'$ and $S$.
 To see that (3) is satisfied, fix $k \in \omega$, $F \subseteq k$, $r \leq q'$, and $i \in \omega$.
 Assume that ${F}_{r, q'} = F$ and $\min\left(  {G}_{r, q', i} \right) = k$.
 Note that $q' \; {\leq}_{k+1} \; {p}_{k+1}$ and that ${p}_{k+1} = {\sigma}_{a}(k+1)$, for some $a \in H$.
 Hence by (8),
 \begin{align*}
  r(i+1, \{i\}) \; {\dilipforces}_{\dilipPP(\dilipA)} \; {f(k) \in \left\{ {\pi}^{\ast}(x): x \in {\tau}_{a}(k) \right\} = \left\{ {\pi}^{\ast}(x): x \in T(k) \right\} = S(k)}.
 \end{align*}
 This concludes the proof.
\end{proof}
We remark that even though the filter $H$ and the function $T$ belong to an intermediate extension of the form ${\dilipV}_{\delta}$, for some $\delta < {\omega}_{1}$, this is not the case for $S$.
This is because the functions $\pi$ and ${\pi}^{\ast}$ are not in any intermediate ${\dilipV}_{\delta}$ with $\delta < {\omega}_{1}$.
\begin{dilipCor} \label{cor:proper}
 $\dilipPP(\dilipA)$ is proper.
\end{dilipCor}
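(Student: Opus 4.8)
The plan is to deduce properness directly from Lemma~\ref{lem:main}, which already packages all of the fusion machinery. Fix a sufficiently large regular $\theta$, a countable $M \prec H(\theta)$ containing $\dilipPP(\dilipA)$, $\dilipA$, and a well-ordering $\dilipwo$ of $H(\theta)$, and a condition $p \in \dilipPP(\dilipA) \cap M$; I must produce $q \leq p$ which is $(M, \dilipPP(\dilipA))$-generic. Working in ${\dilipV}_{{\omega}_{1}}$, enumerate as $\dilipseq{E}{n}{\in}{\omega}$ all the dense open subsets of $\dilipPP(\dilipA)$ that are elements of $M$ (this enumeration itself need not belong to $M$). For each $k \in \omega$, let $f(k)$ be the canonical $\dilipPP(\dilipA)$-name, definable in $H(\theta)$ from the parameters $\dilipwo, \dilipPP(\dilipA), {E}_{0}, \ldots, {E}_{k}$, for the finite sequence $\left\langle {g}_{k}(n): n \leq k \right\rangle$, where ${g}_{k}(n)$ denotes the $\dilipwo$-least element of $\dot{G} \cap {E}_{n}$. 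Since each ${E}_{n}$ is dense open, ${g}_{k}(n)$ is well defined and ${\dilipforces}_{\dilipPP(\dilipA)} \; {{g}_{k}(n) \in \dot{G} \cap {E}_{n}}$; moreover ${E}_{0}, \ldots, {E}_{k}$ and $\dilipwo$ all lie in $M$, so $f(k) \in M$. Thus $f: \omega \rightarrow M$ meets the hypotheses of Lemma~\ref{lem:main}.

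Applying Lemma~\ref{lem:main} I obtain $q \leq p$ and a function $S$ with $\dilipdom(S) = \omega$, $\forall k \in \omega\left[S(k) \subseteq M \wedge \diliplc S(k) \diliprc \leq {2}^{k}\right]$, such that for all $k \in \omega$, all $F \subseteq k$, all $r \leq q$ and all $i \in \omega$: if ${F}_{r, q} = F$ and $\min\left({G}_{r, q, i}\right) = k$, then $r(i+1, \{i\}) \; {\dilipforces}_{\dilipPP(\dilipA)} \; {f(k) \in S(k)}$. I claim this $q$ is $(M, \dilipPP(\dilipA))$-generic. Since $\dilipseq{E}{n}{\in}{\omega}$ lists every dense open subset of $\dilipPP(\dilipA)$ lying in $M$, it is enough to show that for every $n \in \omega$ and every $r \leq q$, the condition $r$ is compatible with some element of ${E}_{n} \cap M$. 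So fix such $r$ and $n$. By Lemma~\ref{lem:basic}(3) the sequence $\left\langle \min\left({G}_{r, q, i}\right) : i \in \omega \right\rangle$ is strictly increasing, so I may fix $i \in \omega$ with $k := \min\left({G}_{r, q, i}\right) \geq n$. Clause (2b) of Definition~\ref{def:po} applied to $r$, together with the fact that the blocks ${c}_{q}(j)$ are nonempty and strictly increasing in $j$, gives ${F}_{r, q} \subseteq \min\left({G}_{r, q, 0}\right) \leq k$; hence with $F = {F}_{r, q}$ the hypotheses above are satisfied and $r' := r(i+1, \{i\})$ satisfies $r' \leq r$ (Definition~\ref{def:pkF}) and $r' \; {\dilipforces}_{\dilipPP(\dilipA)} \; {f(k) \in S(k)}$.

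Now $S(k) \subseteq M$ is finite, so some $r'' \leq r'$ decides the value of $f(k)$, say $r'' \; {\dilipforces}_{\dilipPP(\dilipA)} \; {f(k) = \left\langle {e}_{0}, \ldots, {e}_{k} \right\rangle}$ for a fixed sequence $\left\langle {e}_{0}, \ldots, {e}_{k} \right\rangle \in S(k) \subseteq M$. In particular ${e}_{n} \in M$, and since $r'' \; {\dilipforces}_{\dilipPP(\dilipA)} \; {{e}_{n} = {g}_{k}(n) \in \dot{G} \cap {E}_{n}}$, we get ${e}_{n} \in {E}_{n}$ and that $r''$ is compatible with ${e}_{n}$; as $r'' \leq r' = r(i+1, \{i\}) \leq r$, the original $r$ is compatible with ${e}_{n} \in {E}_{n} \cap M$. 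This proves that ${E}_{n} \cap M$ is predense below $q$ for every $n$, i.e.\@ that $q$ is $(M, \dilipPP(\dilipA))$-generic, and hence that $\dilipPP(\dilipA)$ is proper.

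The bulk of the work is of course already done in Lemma~\ref{lem:main}; the one genuinely delicate point in the deduction is the choice of the names $f(k)$. A naive scheme with one dense open set per coordinate $k$ fails, because for a given $r \leq q$ the set $\left\{ \min\left( {G}_{r, q, i} \right) : i \in \omega \right\}$ of coordinates on which Lemma~\ref{lem:main}(3) can be exploited, while infinite, is far from cofinite, so it need not meet a prescribed infinite set of coordinates. Bundling the first $k+1$ dense open sets into $f(k)$ and invoking $\min\left({G}_{r, q, i}\right) \to \infty$ circumvents this and lets a single sequence of names handle all dense open subsets of $\dilipPP(\dilipA)$ in $M$ at once; the secondary point, that each $f(k)$ really does lie in $M$ despite the enumeration $\dilipseq{E}{n}{\in}{\omega}$ not lying in $M$, is immediate since $f(k)$ is defined from only finitely many parameters of $M$.
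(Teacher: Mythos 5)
Your argument is correct and follows the paper's proof closely: both deduce properness from Lemma~\ref{lem:main} via the same bundling trick, where $f(k)$ encodes the first $k{+}1$ objects to be guessed so that the infinite-but-not-cofinite set $\{\min(G_{r,q,i}) : i \in \omega\}$ is always eventually useful. The only real difference is cosmetic: the paper uses the characterization of $(M,\dilipPP(\dilipA))$-genericity via names for ordinals (taking $f(k)$ to be a name in $M$ for the finite set $\{\mathring{\alpha}_0, \dotsc, \mathring{\alpha}_k\}$ of decoded ordinal names), while you use the dense-open-set characterization (taking $f(k)$ to be a name for a tuple of witnesses from $\dot G \cap E_0, \dotsc, \dot G \cap E_k$). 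These are interchangeable, and the verification that $q$ is generic is structurally identical in both versions. Your final remark correctly identifies why the naive one-set-per-coordinate scheme fails, which is exactly what motivates the construction of $f$ in the paper's proof.

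One small inaccuracy to fix: you ask $M$ to contain a well-ordering $\dilipwo$ of $H(\theta)$, but no well-ordering of $H(\theta)$ is itself an element of $H(\theta)$, so it cannot lie in $M \prec H(\theta)$. The standard repair is either to take $\dilipwo$ to be a well-ordering of $\dilipPP(\dilipA)$ (which is a member of $H(\theta)$, and suffices since $g_k(n)$ needs only to select from $\dilipPP(\dilipA)$), or to work with elementary submodels of the expanded structure $(H(\theta), \in, \dilipwo)$. Either choice leaves the rest of your argument untouched.
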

\begin{proof}
 Working in ${\dilipV}_{{\omega}_{1}}$, fix a countable $M \prec H(\theta)$, where $\theta$ is a sufficiently large regular cardinal and $M$ contains the relevant parameters.
 Let $\dilipseq{\mathring{\alpha}}{n}{\in}{\omega}$ be an enumeration of all $\mathring{\alpha} \in M$ such that $\mathring{\alpha}$ is a $\dilipPP(\dilipA)$-name and ${\dilipforces}_{\dilipPP(\dilipA)} \; {``\mathring{\alpha} \ \text{is an ordinal}''}$.
 Define a function $f: \omega \rightarrow M$ as follows.
 Suppose $k \in \omega$.
 Consider a $\left( {\dilipV}_{{\omega}_{1}}, \dilipPP(\dilipA) \right)$-generic filter $G$.
 In ${\dilipV}_{{\omega}_{1}}\left[G\right]$, $\{{\mathring{\alpha}}_{0}\left[G\right], \dotsc, {\mathring{\alpha}}_{k}\left[G\right]\}$ is a finite set of ordinals and so $\{{\mathring{\alpha}}_{0}\left[G\right], \dotsc, {\mathring{\alpha}}_{k}\left[G\right]\} \in {\dilipV}_{{\omega}_{1}}$.
 Applying the maximal principal in ${\dilipV}_{{\omega}_{1}}$, there is a $\dilipPP(\dilipA)$-name $\mathring{x}$ such that ${\dilipforces}_{\dilipPP(\dilipA)} \; {\mathring{x} \in {\dilipV}_{{\omega}_{1}}}$ and
 \begin{align*}
  {\dilipforces}_{\dilipPP(\dilipA)} \; {``\forall y\left[y \in \mathring{x} \leftrightarrow \left( y = {\mathring{\alpha}}_{0} \vee \dotsb \vee y = {\mathring{\alpha}}_{k} \right)\right]''}.
 \end{align*}
 Since ${\mathring{\alpha}}_{0}, \dotsc, {\mathring{\alpha}}_{k} \in M$, we may choose such an $\mathring{x} \in M$.
 Define $f(k) = \mathring{x} \in M$.
 Notice that ${\dilipforces}_{\dilipPP(\dilipA)} \; {``f(k) \ \text{is finite}''}$ and that ${\dilipforces}_{\dilipPP(\dilipA)} \; {{\mathring{\alpha}}_{l} \in f(k)}$, for every $l \leq k$.
 
 Unfix $k$ from the previous paragraph.
 Fix any $p \in \dilipPP(\dilipA) \cap M$.
 We must find $q \leq p$ which is $(M, \dilipPP(\dilipA))$-generic.
 Applying Lemma \ref{lem:main}, find $q$ and $S$ satisfying (1)--(3) of Lemma \ref{lem:main}.
 We argue that $q$ is $(M, \dilipPP(\dilipA))$-generic.
 To this end, it suffices to show that for any $\mathring{\alpha} \in M$, if ${\dilipforces}_{\dilipPP(\dilipA)} \; {``\mathring{\alpha} \ \text{is an ordinal}''}$, then $q \; {\dilipforces}_{\dilipPP(\dilipA)} \; {\mathring{\alpha} \in M}$.
 Let a relevant $\mathring{\alpha}$ be given.
 Then $\mathring{\alpha} = {\mathring{\alpha}}_{l}$, for some $l < \omega$.
 Suppose $r \leq q$.
 Let $F = {F}_{r, q}$.
 Since $\left\langle \min\left( {G}_{r, q, i} \right): i \in \omega \right\rangle$ is a strictly increasing sequence, it is possible to find $k, i \in \omega$ such that $k = \min\left( {G}_{r, q, i} \right)$, $F \subseteq k$, and $k \geq l$.
 By (3) of Lemma \ref{lem:main}, $r(i+1, \{i\}) \; {\dilipforces}_{\dilipPP(\dilipA)} \; {f(k) \in S(k) \subseteq M}$.
 Find $r' \leq r(i+1, \{i\}) \leq r$ and $X \in M$ with $r' \; {\dilipforces}_{\dilipPP(\dilipA)} \; {f(k)=X}$.
 Since we know ${\dilipforces}_{\dilipPP(\dilipA)} \; {``f(k) \ \text{is finite}''}$, it follows that $X$ is finite, and since $X \in M$, $X \subseteq M$.
 Thus $r' \; {\dilipforces}_{\dilipPP(\dilipA)} \; {{\mathring{\alpha}}_{l} \in f(k)=X \subseteq M}$, whence $r' \; {\dilipforces}_{\dilipPP(\dilipA)} \; {{\mathring{\alpha}}_{l} \in M}$.
 Thus we have proved that $\forall r \leq q \exists r' \leq r\left[r' \; {\dilipforces}_{\dilipPP(\dilipA)} \; {{\mathring{\alpha}}_{l} \in M}\right]$, whence $q \; {\dilipforces}_{\dilipPP(\dilipA)} \; {{\mathring{\alpha}}_{l} \in M}$.
 This proves that $\dilipPP(\dilipA)$ is proper.
\end{proof}
\begin{dilipLemma} \label{lem:preservingbig}
 In $\dilipV$, suppose $\dilipwo$ is a well-ordering of $\diliphf$, and that $\dilipFFF \subseteq {\diliphf}^{\omega}$ is well-closed w.r.t.\@ $\dilipwo$ and is big.
 Then in ${\dilipV}_{{\omega}_{1}}$, ${\dilipforces}_{\dilipPP(\dilipA)} \; {\dilipFFF \ \text{is big}}$.
\end{dilipLemma}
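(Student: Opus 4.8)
The plan is to run the argument of Lemma~\ref{lem:cohenbig} and Lemma~\ref{lem:iterationbig}, with Lemma~\ref{lem:main} serving as the combinatorial engine that, for $\dilipPP(\dilipA)$, replaces the countability of $\dilipCC$ used in the Cohen case. First the reductions: by Corollary~\ref{cor:cohenbig}, $\dilipFFF$ is big in ${\dilipV}_{{\omega}_{1}}$, and since ${\dilipV}_{{\omega}_{1}}$ is a ccc (hence proper) extension of $\dilipV$, Remark~\ref{rem:absolute} shows $\dilipFFF$ is still well-closed w.r.t.\@ $\dilipwo$ in ${\dilipV}_{{\omega}_{1}}$. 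As $\dilipPP(\dilipA)$ is proper (Corollary~\ref{cor:proper}), Remark~\ref{rem:absolute} also shows $\dilipFFF$ will remain well-closed w.r.t.\@ $\dilipwo$ after forcing with $\dilipPP(\dilipA)$; so by Lemma~\ref{lem:cumber} applied in the extension, it is enough to show $\Vdash_{\dilipPP(\dilipA)}$ ``for every $f \in {\diliphf}^{\omega}$ there is a slalom $T \in \dilipFFF$ with $\dilipexistsinf n \in \omega \left[f(n) \in T(n)\right]$''. Working in ${\dilipV}_{{\omega}_{1}}$, I fix a $\dilipPP(\dilipA)$-name $\mathring{f}$ with $\Vdash_{\dilipPP(\dilipA)} \mathring{f} \in {\diliphf}^{\omega}$ and a condition ${p}_{0} \in \dilipPP(\dilipA)$, and it suffices to produce $q \leq {p}_{0}$ and a slalom $T \in \dilipFFF$ with $q \; \Vdash_{\dilipPP(\dilipA)} \; \dilipexistsinf n \in \omega \left[\mathring{f}(n) \in T(n)\right]$, since the set of such $q$ is then dense below any condition.

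Next I would fix a sufficiently large regular $\theta$ and a countable $M \prec H(\theta)$ containing $\dilipPP(\dilipA), \dilipA, \dilipFFF, \dilipwo, \mathring{f}, {p}_{0}$ and the other relevant parameters. For each $k \in \omega$, by elementarity and the maximal principle, choose a $\dilipPP(\dilipA)$-name $f(k) \in M$ for the $k$-th coordinate of $\mathring{f}$, so that $f \colon \omega \to M$ and $\forall k \left[\Vdash_{\dilipPP(\dilipA)} f(k) \in {\dilipV}_{{\omega}_{1}}\right]$. Applying Lemma~\ref{lem:main} with $p = {p}_{0}$ produces $q \leq {p}_{0}$ and a function $S$ with $\dilipdom(S) = \omega$, $S(k) \subseteq M$ and $\diliplc S(k) \diliprc \leq {2}^{k}$ for all $k$ — hence $S$ is a small slalom in the sense of Definition~\ref{def:slalom} — satisfying $(\ast)$: for all $k$, all $F \subseteq k$, all $r \leq q$ and all $i \in \omega$, if ${F}_{r, q} = F$ and $\min\left({G}_{r, q, i}\right) = k$, then $r(i+1, \{i\}) \; \Vdash_{\dilipPP(\dilipA)} \; f(k) \in S(k)$.

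Now the combinatorial core. Using Lemmas~\ref{lem:basic}--\ref{lem:fusion1} one analyzes the conditions below $q$: for $r \leq q$ put ${K}_{r} = \{\min\left({G}_{r, q, i}\right) : i \in \omega\}$, which is infinite and, by clause~(3) of Definition~\ref{def:po} applied to $r$, is positive for the ideal ${\mathcal{I}}_{q}$ on $\omega$ dual to the filter ${\mathcal{G}}_{q}$ generated by $\{ \{k : {c}_{q}(k) \cap B = \emptyset\} : B \in \dilipIII(\dilipA)\}$; conversely, every infinite ${\mathcal{I}}_{q}$-positive set arises as ${K}_{r}$ for some $r \leq q$ (take singleton ${G}$-blocks). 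Let $\langle {X}_{l} : l \in \omega \rangle$ enumerate the generators $\{k : {c}_{q}(k) \cap B = \emptyset\}$ of ${\mathcal{G}}_{q}$ with $B$ ranging over $\dilipIII(\dilipA) \cap M$; each ${X}_{l}$ is infinite since $q \in \dilipPP(\dilipA)$. Since $\dilipFFF$ is big and satisfies clause~(1) of Definition~\ref{def:wellclosed} in ${\dilipV}_{{\omega}_{1}}$, Lemma~\ref{lem:slalombig} yields a slalom $T \in \dilipFFF$ with $\forall l \; \dilipexistsinf n \in {X}_{l} \left[S(n) \subseteq T(n)\right]$; a fusion together with an elementarity argument (reflecting the relevant generators into $M$) then upgrades this to $W := \{k : S(k) \subseteq T(k)\} \in {\mathcal{G}}_{q}$. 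To finish, fix $r \leq q$ and $N \in \omega$: since ${K}_{r}$ is ${\mathcal{I}}_{q}$-positive and $W \in {\mathcal{G}}_{q}$, the set ${K}_{r} \cap W$ is infinite, so pick $k \in {K}_{r} \cap W$ with $k > N$; writing $k = \min\left({G}_{r, q, i}\right)$ (and noting ${F}_{r, q} \subseteq k$ automatically by Definition~\ref{def:po}), $(\ast)$ gives $r(i+1, \{i\}) \; \Vdash_{\dilipPP(\dilipA)} \; f(k) \in S(k) \subseteq T(k)$, while $r(i+1, \{i\}) \leq r$ by Definition~\ref{def:pkF}. Hence $q \; \Vdash_{\dilipPP(\dilipA)} \; \dilipexistsinf n \in \omega \left[\mathring{f}(n) \in T(n)\right]$, which by the reductions above completes the proof.

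The hard part is exactly the upgrade from ``$S(n) \subseteq T(n)$ infinitely often inside each ${X}_{l}$'' — which is all Lemma~\ref{lem:slalombig} delivers directly, because $\dilipFFF$ need not dominate the small slalom $S$ in the eventual-covering order — to the statement $W \in {\mathcal{G}}_{q}$, i.e.\@ eventual covering on a ${\mathcal{G}}_{q}$-large set. This cofinite strengthening is genuinely needed: one cannot escape the ideal ${\mathcal{I}}_{q}$ by passing to further extensions $r' \leq r$, since the admissible values of $\min\left({G}_{r', q, i}\right)$ remain inside ${K}_{r}$. Carrying out the upgrade is where the block Shelah-Stepr{\= a}ns property of $\dilipA$ and the fine structure of $\dilipPP(\dilipA)$ from Lemmas~\ref{lem:basic}--\ref{lem:fusion1} must be used, presumably by interleaving the construction of $T$ with a further thinning of $q$ along a single branch so that only the countably many ${\mathcal{G}}_{q}$-generators coded in $M$ are relevant, at which point ``infinitely often'' along a decreasing family yields the desired pseudointersection-type conclusion. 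Threading this bookkeeping through while keeping $q \leq {p}_{0}$ and $T \in \dilipFFF$ is the technical heart of the lemma.
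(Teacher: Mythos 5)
Your setup is right up through the application of Lemma~\ref{lem:main}: you correctly reduce to finding, for a given name $\mathring{f}$ and condition ${p}_{0}$, a $q \leq {p}_{0}$ and a slalom $T \in \dilipFFF$ forcing $\dilipexistsinf k\left[\mathring{f}(k) \in T(k)\right]$; and you correctly deploy Lemma~\ref{lem:main} to get $q$, a function $S$ with $\diliplc S(k) \diliprc \leq {2}^{k}$, and the key forcing property $(\ast)$. (Minor point: $S(k) \subseteq M$ need not lie in $\diliphf$; the paper passes to ${S}^{\ast}(k) = S(k) \cap \diliphf$ before invoking Lemma~\ref{lem:slalombig}.)

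The genuine gap is the step you flag as the ``technical heart,'' and your proposed route through it does not match the paper and would not work. You aim to prove $W := \{k : S(k) \subseteq T(k)\}$ belongs to the filter ${\mathcal{G}}_{q}$ generated by the sets $\{k : {c}_{q}(k) \cap B = \emptyset\}$, so that ${K}_{r} \cap W$ is infinite for every $r \leq q$ and the condition $q$ itself does the job. But filter membership is a $\forall^{\infty}$-type statement along some generator, whereas Lemma~\ref{lem:slalombig} only ever delivers $\dilipexistsinf$ inside each prescribed ${X}_{l}$; there is no mechanism in that lemma to upgrade this to eventual containment. Moreover, taking the ${X}_{l}$ to be the generators indexed by $\dilipIII(\dilipA) \cap M$ does not suffice, since $\dilipA$ (and hence $\dilipIII(\dilipA)$) is typically uncountable and the condition $q^{\ast}$ you eventually want must satisfy clause~(3) of Definition~\ref{def:po} for \emph{every} $B \in \dilipIII(\dilipA)$, not merely those reflected into $M$.

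The paper's actual move is different and weaker than $W \in {\mathcal{G}}_{q}$: it does not try to keep $q$ fixed, but thins $q$ to a new condition ${q}^{\ast}$ so that the minima of the $G$-blocks of any $r \leq {q}^{\ast}$ automatically land in a prepared set $Z \subseteq W$. The block Shelah-Stepr{\= a}ns property of $\dilipA$ is used exactly once, to build the auxiliary data needed for this thinning: one recursively produces infinite sets ${Y}_{l} \subseteq \omega$ together with finite subfamilies ${\dilipA}_{l} \in \dilippc{\dilipA}{<{\aleph}_{0}}$ such that ${c}_{q}(y) \subseteq \bigcup{{\dilipA}_{l}}$ for $y \in {Y}_{l}$, and --- crucially --- the ${\dilipA}_{l}$ are \emph{pairwise disjoint}. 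Block Shelah-Stepr{\= a}ns is what yields each ${\dilipA}_{l}$ (applied to the block sequence $\dilipran({c}_{q})$ restricted to columns missing $\bigcup_{l'<l}\bigcup{{\dilipA}_{l'}}$), and minimality of $\diliplc {\dilipA}_{l} \diliprc$ forces the disjointness. Lemma~\ref{lem:slalombig} is then applied to \emph{these} ${Y}_{l}$ (not to $M$-generators), producing $T \in \dilipFFF$ and infinite ${Z}_{l} = \{y \in {Y}_{l} : {S}^{\ast}(y) \subseteq T(y)\}$. Setting $Z = \bigcup_{l}{{Z}_{l}}$ and restricting ${c}_{q}$ to $Z$ gives ${q}^{\ast}$, and the pairwise disjointness of the ${\dilipA}_{l}$ is precisely what verifies clause~(3) for ${q}^{\ast}$ against \emph{all} $B \in \dilipIII^{\dilipV}(\dilipA)$: any such $B$ meets only finitely many ${\dilipA}_{l}$, so for large $l$ one finds $y \in {Z}_{l}$ with ${c}_{q}(y) \cap B = \emptyset$. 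With ${q}^{\ast}$ in hand, your final verification using $(\ast)$ goes through verbatim, since every $\min\left({G}_{r, q, i}\right)$ for $r \leq {q}^{\ast}$ lies in $Z \subseteq W$. So the missing idea is not a reflection/fusion upgrade to a filter-membership statement, but a careful re-choice of the ${X}_{l}$'s via block Shelah-Stepr{\= a}ns to obtain pairwise-disjoint ${\dilipA}_{l}$'s and the consequent thinning of $q$.
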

\begin{proof}
 Work in ${\dilipV}_{{\omega}_{1}}$.
 $\dilipFFF$ is well-closed w.r.t.\@ $\dilipwo$ because ${\dilipCC}_{{\omega}_{1}}$ is proper and $\dilipFFF$ is big by Lemma \ref{cor:cohenbig}.
 Fix a $\dilipPP(\dilipA)$ name $\mathring{g}$ and assume that ${\dilipforces}_{\dilipPP(\dilipA)} \; {\mathring{g}: \omega \rightarrow \diliphf}$.
 Let $p \in \dilipPP(\dilipA)$ be fixed.
 Let $\theta$ be a sufficiently large regular cardinal.
 Suppose $M \prec H(\theta)$ is countable with $M$ containing all the relevant parameters.
 In particular, $\mathring{g}, p \in M$.
 Define $f: \omega \rightarrow M$ as follows.
 For each $k \in \omega$, find a $\dilipPP(\dilipA)$-name $\mathring{x} \in M$ such that ${\dilipforces}_{\dilipPP(\dilipA)} \; {\mathring{x} \in \diliphf}$ and ${\dilipforces}_{\dilipPP(\dilipA)} \; {\mathring{g}(k)=\mathring{x}}$, and define $f(k) = \mathring{x}$.
 Applying Lemma \ref{lem:main}, find $q$ and $S$ satisfying (1)--(3) of Lemma \ref{lem:main}.
 Note that for each $k \in \omega$, $\diliplc S(k) \cap \diliphf \diliprc \leq \diliplc S(k) \diliprc \leq {2}^{k}$, and so $S(k) \cap \diliphf$ is a finite subset of $\diliphf$, which implies that $S(k) \cap \diliphf \in \diliphf$.
 Hence we may define a small slalom ${S}^{\ast}: \omega \rightarrow \diliphf$ by ${S}^{\ast}(k) = S(k) \cap \diliphf$, for all $k \in \omega$.
 
 Next, define sequences $\dilipseq{\dilipA}{l}{\in}{\omega} \subseteq \dilippc{\dilipA}{< {\aleph}_{0}}$ and $\dilipseq{Y}{l}{\in}{\omega} \subseteq \dilippc{\omega}{\omega}$ as follows.
 Fix $l \in \omega$ and suppose that $\dilipseq{\dilipA}{l'}{<}{l} \subseteq \dilippc{\dilipA}{< {\aleph}_{0}}$ and $\dilipseq{Y}{l'}{<}{l} \subseteq \dilippc{\omega}{\omega}$ are already given.
 By the definition of $\dilipPP(\dilipA)$, the family
 \begin{align*}
  \dilipEEE = \left\{s \in \dilipran({c}_{q}): s \cap \left( \bigcup{\left( {\bigcup}_{l' < l}{{\dilipA}_{l'}} \right)} \right) = \emptyset \right\}
 \end{align*}
 has the property that $\forall B \in \dilipIII(\dilipA) \exists s \in \dilipEEE\left[s \cap B = \emptyset\right]$.
 Since $\dilipA$ is assumed to be block Shelah-Stepr{\= a}ns in ${\dilipV}_{{\omega}_{1}}$, there exists $B \in \dilipIII(\dilipA)$ with the property that $\{y \in \omega: {c}_{q}(y) \in \dilipEEE \wedge {c}_{q}(y) \subseteq B\}$ is infinite.
 Clearly for every $F \in \dilippc{\omega}{< \omega}$, the set $\{y \in \omega: {c}_{q}(y) \in \dilipEEE \wedge {c}_{q}(y) \subseteq B \setminus F\}$ is still infinite.
 Therefore, there exists ${\dilipA}^{\ast} \in \dilippc{\dilipA}{< {\aleph}_{0}}$ such that $\{y \in \omega: {c}_{q}(y) \in \dilipEEE \wedge {c}_{q}(y) \subseteq \bigcup{{\dilipA}^{\ast}}\}$ is infinite.
 Let ${\dilipA}_{l}$ be such an ${\dilipA}^{\ast}$ of minimal cardinality.
 Define ${Y}_{l} = \{y \in \omega: {c}_{q}(y) \in \dilipEEE \wedge {c}_{q}(y) \subseteq \bigcup{{\dilipA}_{l}}\} \in \dilippc{\omega}{\omega}$.
 Suppose that $A \in {\dilipA}_{l'} \cap {\dilipA}_{l}$, for some $l' < l$.
 Then for each $y \in {Y}_{l}$, ${c}_{q}(y) \cap A = \emptyset$ and ${c}_{q}(y) \subseteq \bigcup{{\dilipA}_{l}}$, whence ${c}_{q}(y) \subseteq \bigcup{\left( {\dilipA}_{l} \setminus \{A\} \right)}$, contradicting the minimality of ${\dilipA}_{l}$.
 Therefore $\forall l' < l\left[{\dilipA}_{l'} \cap {\dilipA}_{l} = \emptyset \right]$.
 This concludes the definition of $\dilipseq{\dilipA}{l}{\in}{\omega}$ and $\dilipseq{Y}{l}{\in}{\omega}$.
 
 Since $\dilipFFF$ is big and is well-closed w.r.t.\@ $\dilipwo$ in ${\dilipV}_{{\omega}_{1}}$, Lemma \ref{lem:slalombig} applies and implies that there exists $T \in \dilipFFF$ such that $T: \omega \rightarrow \diliphf$ is a slalom and $\forall l \in \omega \dilipexistsinf y \in {Y}_{l}\left[{S}^{\ast}(y) \subseteq T(y)\right]$.
 Define ${Z}_{l} = \{ y \in {Y}_{l}: {S}^{\ast}(y) \subseteq T(y) \} \in \dilippc{\omega}{\omega}$, for all $l \in \omega$.
 Let $Z = {\bigcup}_{l \in \omega}{{Z}_{l}}$ and let $\dilipseq{z}{j}{<}{\omega}$ be the strictly increasing enumeration of $Z$.
 Define ${q}^{\ast} = \dilippr{{s}_{{q}^{\ast}}}{{c}_{{q}^{\ast}}}$ by ${s}_{{q}^{\ast}} = {s}_{q}$ and ${c}_{{q}^{\ast}}(j) = {c}_{q}({z}_{j})$, for all $j \in \omega$.
 Consider any $B \in {\dilipIII}^{\dilipV}(\dilipA)$.
 Since $\forall l' < l < \omega\left[{\dilipA}_{l'} \cap {\dilipA}_{l} = \emptyset\right]$, there exists $l, m \in \omega$ such that $B \cap \left( \bigcup{{\dilipA}_{l}} \right) \subseteq m$.
 As ${Z}_{l}$ is infinite, we can find $y \in {Z}_{l}$ with $\min({c}_{q}(y)) \geq m$.
 As $y \in {Y}_{l}$, ${c}_{q}(y) \subseteq \bigcup{{\dilipA}_{l}}$, whence $B \cap {c}_{q}(y) = \emptyset$.
 Now $y = {z}_{j}$ for some $j < \omega$ because $y \in Z$.
 Therefore, ${c}_{{q}^{\ast}}(j) \cap B = {c}_{q}({z}_{j}) \cap B = {c}_{q}(y) \cap B = \emptyset$.
 This shows that ${q}^{\ast} \in \dilipPP(\dilipA)$ and that ${q}^{\ast} \leq q$.
 
 We will now argue that ${q}^{\ast} \; {\dilipforces}_{\dilipPP(\dilipA)} \; {\dilipexistsinf k \in \omega \left[\mathring{g}(k) \in T(k)\right]}$.
 To this end fix $r \leq {q}^{\ast}$ and some $l \in \omega$.
 As $r \leq q$, $\left\langle \min\left( {G}_{r, q, i} \right): i < \omega \right\rangle$ is a strictly increasing sequence.
 Let ${F}_{r, q} = F$.
 Find $k, i \in \omega$ such that $k = \min\left( {G}_{r, q, i} \right)$, $F \subseteq k$, and $k \geq l$.
 It follows from the fact that $r \leq {q}^{\ast} \leq q$ and from the definition of ${q}^{\ast}$ that $k \in Z$.
 In particular, ${S}^{\ast}(k) \subseteq T(k)$.
 By (3) of Lemma \ref{lem:main},
 \begin{align*}
  r(i+1, \{i\}) \; {\dilipforces}_{\dilipPP(\dilipA)} \; {\mathring{g}(k) = f(k) \in S(k) \cap \diliphf = {S}^{\ast}(k) \subseteq T(k)},
 \end{align*}
 whence $r(i+1, \{i\}) \; {\dilipforces}_{\dilipPP(\dilipA)} \; {\mathring{g}(k) \in T(k)}$.
 Since $r(i+1, \{i\}) \leq r$, we have proved that for each $r \leq {q}^{\ast}$ and $l \in \omega$, there exists $k \geq l$ and $r' \leq r$ with $r' \; {\dilipforces}_{\dilipPP(\dilipA)} \; {\mathring{g}(k) \in T(k)}$.
 This proves that ${q}^{\ast} \; {\dilipforces}_{\dilipPP(\dilipA)} \; {\dilipexistsinf k \in \omega\left[\mathring{g}(k) \in T(k)\right]}$.
 Since ${q}^{\ast} \leq q \leq p$, we have now proved that for any $\mathring{g} \in {\dilipV}^{\dilipPP(\dilipA)}_{{\omega}_{1}}$ and any $p \in \dilipPP(\dilipA)$, if ${\dilipforces}_{\dilipPP(\dilipA)} \; {\mathring{g}: \omega \rightarrow \diliphf}$, then there exist ${q}^{\ast} \leq p$ and $T \in \dilipFFF$ such that $T: \omega \rightarrow \diliphf$ is a slalom and ${q}^{\ast} \; {\dilipforces}_{\dilipPP(\dilipA)} \; {\dilipexistsinf k \in \omega\left[\mathring{g}(k) \in T(k)\right]}$.
 
 To complete the proof, let $G$ be a $\left( {\dilipV}_{{\omega}_{1}}, \dilipPP(\dilipA) \right)$-generic filter.
 As $\dilipPP(\dilipA)$ is proper, $\dilipFFF$ is well-closed w.r.t.\@ $\dilipwo$ in ${\dilipV}_{{\omega}_{1}}\left[G\right]$.
 By what has been proved above, we have in ${\dilipV}_{{\omega}_{1}}\left[G\right]$ that for every $g: \omega \rightarrow \diliphf$, there exists $T \in \dilipFFF$ such that $T: \omega \rightarrow \diliphf$ is a slalom and $\dilipexistsinf k \in \omega \left[g(k) \in T(k)\right]$.
 As $\dilipFFF$ is well-closed w.r.t.\@ $\dilipwo$ in ${\dilipV}_{{\omega}_{1}}\left[G\right]$, Lemma \ref{lem:cumber} tells us that $\dilipFFF$ is big in ${\dilipV}_{{\omega}_{1}}\left[G\right]$.
\end{proof}
Note the similarity in the proofs of Lemmas \ref{lem:preservingbig} and \ref{lem:cohenbig}.
Also, the proof of Lemma \ref{lem:preservingbig} is the only place where we use the assumption that $\dilipA$ is block Shelah-Stepr{\= a}ns.
\begin{dilipLemma} \label{lem:diagonalizes}
 $\dilipPP(\dilipA)$ diagonalizes $\dilipA$.
\end{dilipLemma}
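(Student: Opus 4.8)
Recall that, in the terminology of the introduction, ``$\dilipPP(\dilipA)$ diagonalizes $\dilipA$'' means that forcing with $\dilipPP(\dilipA)$ (over ${\dilipV}_{{\omega}_{1}}$) adds an infinite $d\subseteq\omega$ with $|d\cap A|<\omega$ for every $A\in\dilipA$; equivalently, since $\dilipIII(\dilipA)$ contains all finite sets and is generated by $\dilipA$, that $\dilipIII(\dilipA)$ is no longer tall in the extension. As for Mathias-type forcings, the witness will be the generic ``stem'' $d:=\bigcup\{{s}_{p}:p\in \mathring{G}\}$. The plan is therefore to exhibit two families of dense sets: the sets $D_n=\{p:|{s}_{p}|\geq n\}$, forcing $d$ to be infinite, and for each $A\in\dilipA$ a set $E_A$ forcing the stem to ``switch off'' $A$ permanently, so that $d\cap A$ ends up finite.

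The first (and only slightly technical) ingredient is a uniformization of clause (3) of Definition \ref{def:po}: I claim that for every $p\in\dilipPP(\dilipA)$ and every $B\in\dilipIII(\dilipA)$, the set $\{j\in\omega:{c}_{p}(j)\cap B=\emptyset\}$ is infinite. Given $k\in\omega$, note $B':=B\cup{c}_{p}(0)\cup\cdots\cup{c}_{p}(k-1)\in\dilipIII(\dilipA)$, so clause (3) yields $j$ with ${c}_{p}(j)\cap B'=\emptyset$; since the ${c}_{p}(i)$ are nonempty and pairwise $<$-separated, ${c}_{p}(j)\cap({c}_{p}(0)\cup\cdots\cup{c}_{p}(k-1))=\emptyset$ forces $j\geq k$, and ${c}_{p}(j)\cap B=\emptyset$. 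Hence the set is unbounded. Density of $D_n$ is immediate: by Definition \ref{def:pkF}, $p(1,\{0\})\leq p$ has stem ${s}_{p}\cup{c}_{p}(0)\supsetneq{s}_{p}$, because ${c}_{p}(0)\in\dilipFIN$ is nonempty and, by clause (2b), disjoint from ${s}_{p}$; iterating shows $d$ is forced infinite. Now fix $A\in\dilipA$ and set $E_A=\{q\in\dilipPP(\dilipA):\forall i\in\omega\,({c}_{q}(i)\cap A=\emptyset)\}$; I claim $E_A$ is dense. Given $p$, let $\langle {j}_{i}:i\in\omega\rangle$ enumerate the (by the claim, infinite) set $J=\{j:{c}_{p}(j)\cap A=\emptyset\}$ increasingly, and define $q$ by ${s}_{q}={s}_{p}$ and ${c}_{q}(i)={c}_{p}({j}_{i})$. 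Then $q\leq p$, witnessed by ${F}_{q,p}=\emptyset$ and ${G}_{q,p,i}=\{{j}_{i}\}$ (clauses (4)--(6)), and clauses (1), (2a), (2b) for $q$ are trivial; for clause (3), given $B\in\dilipIII(\dilipA)$, apply the claim to $A\cup B\in\dilipIII(\dilipA)$ to find $j\in J$ with ${c}_{p}(j)\cap B=\emptyset$, i.e. some ${c}_{q}(i)\cap B=\emptyset$. Thus $q\in\dilipPP(\dilipA)\cap E_A$ extends $p$.

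Finally, let $G$ be generic, $d=\bigcup\{{s}_{p}:p\in G\}$, and fix $A\in\dilipA$. Pick $q\in G\cap E_A$. For any $p\in G$ there is $p'\in G$ below both $p$ and $q$; since $p'\leq q$, the set ${s}_{p'}\setminus{s}_{q}=\bigcup_{i\in {F}_{p',q}}{c}_{q}(i)$ is disjoint from $A$, so ${s}_{p'}\cap A={s}_{q}\cap A$, and hence ${s}_{p}\cap A\subseteq{s}_{p'}\cap A={s}_{q}\cap A$. Therefore $d\cap A={s}_{q}\cap A$ is finite. As $A$ was arbitrary and $d$ is infinite, $d$ witnesses that $\dilipPP(\dilipA)$ diagonalizes $\dilipA$. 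The only place requiring any thought is checking that clause (3) of Definition \ref{def:po} survives the thinning of the block sequence in the construction of $q\in E_A$, and this is precisely what the uniformization of clause (3) — using closure of $\dilipIII(\dilipA)$ under finite unions — is designed to provide; beyond that the argument is entirely routine.
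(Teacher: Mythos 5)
Your proof is correct and takes essentially the same approach as the paper: the paper also diagonalizes via the generic stem $\bigcup\{{s}_{p}:p\in\mathring{G}\}$ and, for a given $p$ and $B\in\dilipA$, thins ${c}_{p}$ along $Z=\{z:{c}_{p}(z)\cap B=\emptyset\}$ to obtain $q\leq p$ forcing the intersection with $B$ to be finite. Your write-up is a bit more verbose --- you phrase it via explicit dense sets $D_n$ and $E_A$, and you spell out why $Z$ is infinite and why the thinned condition still satisfies clause (3) of Definition \ref{def:po}, steps the paper leaves tacit --- but the underlying argument is identical.
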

\begin{proof}
 Let $\mathring{A}$ be a $\dilipPP(\dilipA)$-name such that ${\dilipforces}_{\dilipPP(\dilipA)} \; {\mathring{A} = \bigcup\left\{ {s}_{p}: p \in \mathring{G} \right\}}$, where $\mathring{G}$ is the canonical $\dilipPP(\dilipA)$-name for a generic filter over $\dilipPP(\dilipA)$.
 Suppose $p \in \dilipPP(\dilipA)$ and $B \in \dilipA$ are given.
 Then $Z = \{z \in \omega: {c}_{p}(z) \cap B = \emptyset \} \in \dilippc{\omega}{\omega}$.
 Let $\dilipseq{z}{j}{<}{\omega}$ be the strictly increasing enumeration of $Z$.
 Define $q = \dilippr{{s}_{q}}{{c}_{q}}$ by setting ${s}_{q} = {s}_{p}$ and ${c}_{q}(j) = {c}_{p}({z}_{j})$, for all $j < \omega$.
 Then $q \leq p$ and $q \; {\dilipforces}_{\dilipPP(\dilipA)} \; {\diliplc \mathring{A} \cap B \diliprc < {\aleph}_{0}}$.
\end{proof}
The following corollary is worth stating even though it is not directly used in the proof of our main result in the next section.
\begin{dilipCor} \label{cor:diagonalizes}
 Any Shelah-Stepr{\= a}ns a.d.\@ family in $\dilipV$ can be diagonalized without increasing $\dilipnon(\dilipMMM)$.
 Any block Shelah-Stepr{\= a}ns a.d.\@ family in $\dilipV$ which remains block Shelah-Stepr{\= a}ns after adding Cohen reals can be diagonalized without increasing $\dilipnon(\dilipMMM)$.
\end{dilipCor}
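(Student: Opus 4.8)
The plan is to use the two-step iteration ${\dilipCC}_{{\omega}_{1}} \ast \dilipPP(\dilipA)$ --- first add ${\omega}_{1}$ Cohen reals, then force with the poset $\dilipPP(\dilipA)$ of Definition~\ref{def:po} as computed in ${\dilipV}_{{\omega}_{1}}$. The first assertion is a special case of the second: by Lemma~\ref{lem:CSS} a Shelah-Stepr{\= a}ns a.d.\@ family in $\dilipV$ stays Shelah-Stepr{\= a}ns, hence also block Shelah-Stepr{\= a}ns, after adding Cohen reals, so it meets the hypothesis on $\dilipA$ in the second assertion. So assume $\dilipA \in \dilipV$ is block Shelah-Stepr{\= a}ns and remains so in ${\dilipV}_{{\omega}_{1}}$. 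Then in ${\dilipV}_{{\omega}_{1}}$ the poset $\dilipPP(\dilipA)$ is non-empty (Lemma~\ref{lem:pnonempty}), proper (Corollary~\ref{cor:proper}), and diagonalizes $\dilipA$ (Lemma~\ref{lem:diagonalizes}); since Shelah-Stepr{\= a}ns a.d.\@ families are Cohen-indestructible (Proposition~\ref{SSindestructibility}), it is essential that this second factor is responsible for the diagonalization.

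The substantive task is to show that the iteration does not raise $\dilipnon(\dilipMMM)$, and for this I would keep a ground-model non-meager set non-meager. Work in $\dilipV$, fix a well-ordering $\dilipwo$ of $\diliphf$, and set $\dilipFFF = {\diliphf}^{\omega} \cap \dilipV$. Then $\dilipFFF$ is big (take $g = f$) and well-closed with respect to $\dilipwo$: clauses (1) and (2) of Definition~\ref{def:wellclosed} hold because the operations $f \mapsto {S}_{f}$ and $S \mapsto {f}_{S, \dilipwo}$ are computed by absolute formulas once $\dilipwo$ is present (Remark~\ref{rem:absolute}), and clause (3) holds because any $\{{f}_{n} : n \in \omega\} \subseteq \dilipFFF$, being coded by a real, lies in $\dilipV$, and then $S(k) = \{{f}_{n}(k) : n \leq k\}$ is a slalom in $\dilipFFF$ that captures each ${f}_{n}$ from $n$ on. As ${\dilipCC}_{{\omega}_{1}}$ is proper, Remark~\ref{rem:absolute} keeps $\dilipFFF$ well-closed in ${\dilipV}_{{\omega}_{1}}$, and Corollary~\ref{cor:cohenbig} makes it big there. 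Since $\dilipPP(\dilipA)$ lives in ${\dilipV}_{{\omega}_{1}}$, Lemma~\ref{lem:preservingbig} gives ${\dilipforces}_{\dilipPP(\dilipA)} \; {\dilipFFF \ \text{is big}}$, and combining this with Corollary~\ref{cor:cohenbig} through Lemma~\ref{lem:twostepbig} yields ${\dilipforces}_{{\dilipCC}_{{\omega}_{1}} \ast \dilipPP(\dilipA)} \; {\dilipFFF \ \text{is big}}$.

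To finish, let $G$ be $\left(\dilipV, {\dilipCC}_{{\omega}_{1}} \ast \dilipPP(\dilipA)\right)$-generic. Bigness of $\dilipFFF = {\diliphf}^{\omega} \cap \dilipV$ in $\dilipV\left[G\right]$ says precisely that every $f \in {\diliphf}^{\omega} \cap \dilipV\left[G\right]$ is infinitely often equal to some $g \in {\diliphf}^{\omega} \cap \dilipV$ --- i.e.\@ the hypothesis of Lemma~\ref{lem:miller} with ${\dilipV}_{0} = \dilipV$ and ${\dilipV}_{1} = \dilipV\left[G\right]$. By item (3) of Lemma~\ref{lem:miller}, every meager subset of ${2}^{\omega}$ coded in $\dilipV\left[G\right]$ omits a point of ${2}^{\omega} \cap \dilipV$, so ${\left( {2}^{\omega} \right)}^{\dilipV}$ is non-meager in $\dilipV\left[G\right]$, while $\dilipA$ has been diagonalized. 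In the situation of interest --- $\dilipV \models \textsf{CH}$, so ${\left( {2}^{\omega} \right)}^{\dilipV}$ has size ${\aleph}_{1} = \dilipnon(\dilipMMM)^{\dilipV}$ and ${2}^{\omega}$ is not enlarged --- this gives $\dilipnon(\dilipMMM) = {\aleph}_{1}$ in $\dilipV\left[G\right]$, i.e.\@ $\dilipnon(\dilipMMM)$ is not increased; this is exactly the configuration needed in the countable-support iteration of Section~\ref{sec:mainresult}.

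The proof is essentially an assembly of earlier results, so there is no single hard step; the only genuine content is Lemma~\ref{lem:preservingbig}, whose proof is the sole place where the block Shelah-Stepr{\= a}ns-ness of $\dilipA$ is used, and the only point requiring real care is the last one: obtaining ``$\dilipnon(\dilipMMM)$ is not increased'' for a completely arbitrary ground model $\dilipV$ would require running the argument with a big, well-closed family $\dilipFFF \in \dilipV$ of size $\dilipnon(\dilipMMM)^{\dilipV}$ rather than with all of ${\diliphf}^{\omega} \cap \dilipV$, which amounts to closing a minimal-size big family under the operations of Definition~\ref{def:wellclosed} --- unproblematic when $\dilipnon(\dilipMMM)^{\dilipV} = {\cc}^{\dilipV}$ (in particular under \textsf{CH}), and the point deserving attention otherwise.
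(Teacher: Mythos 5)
Your proof is correct and takes essentially the same route as the paper's (one-sentence) argument: reduce the first assertion to the second via Lemma~\ref{lem:CSS}, force with ${\dilipCC}_{{\omega}_{1}} \ast \mathring{\dilipPP}(\dilipA)$, and track the non-meagerness of ${({2}^{\omega})}^{\dilipV}$ by keeping $\dilipFFF = {\diliphf}^{\omega} \cap \dilipV$ big via Corollary~\ref{cor:cohenbig}, Lemma~\ref{lem:preservingbig}, Lemma~\ref{lem:twostepbig}, and then Lemma~\ref{lem:miller}(3). Your closing caveat is well taken: as stated, the corollary is meant to be read with $\dilipV$ satisfying (G)CH --- which is how it is used in Section~\ref{sec:mainresult} --- since the witness $\dilipFFF$ has size ${\cc}^{\dilipV}$; for a ground model with $\dilipnon(\dilipMMM)^{\dilipV} < {\cc}^{\dilipV}$ one would need a big, well-closed family of size $\dilipnon(\dilipMMM)^{\dilipV}$, a point the paper does not address.
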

\begin{proof}
 If $\dilipA$ is a Shelah-Stepr{\= a}ns a.d.\@ family, then $\dilipA$ remains Shelah-Stepr{\= a}ns, and hence block Shelah-Stepr{\= a}ns, after adding Cohen reals (see Lemma \ref{lem:CSS}).
 Hence ${\dilipCC}_{{\omega}_{1}} \ast \mathring{\dilipPP}(\dilipA)$ will diagonalize $\dilipA$ and not increase $\dilipnon(\dilipMMM)$.
\end{proof}
\section{The main result} \label{sec:mainresult}
\begin{dilipTheorem} \label{thm:nosmallSS}
 There is a model in which $\dilipnon(\dilipMMM) = {\aleph}_{1}$ and there are no Shelah-Stepr{\= a}ns or block Shelah-Stepr{\= a}ns a.d.\@ families of size ${\aleph}_{1}$.
\end{dilipTheorem}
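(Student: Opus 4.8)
The plan is to realize the model as a length-${\omega}_{2}$ countable support iteration $\langle {\dilipPP}_{\alpha}; {\mathring{\dilipQQ}}_{\alpha} : \alpha \leq {\omega}_{2} \rangle$ over a ground model $\dilipV$ of $\dilipGCH$, in which every iterand has the shape ${\mathring{\dilipQQ}}_{\alpha} = {\dilipCC}_{{\omega}_{1}} \ast {\mathring{\dilipPP}}({\mathring{\dilipA}}_{\alpha})$, where ${\mathring{\dilipA}}_{\alpha}$ is a ${\dilipPP}_{\alpha}$-name for a block Shelah-Stepr{\= a}ns a.d.\@ family of size ${\aleph}_{1}$ which remains block Shelah-Stepr{\= a}ns after forcing with ${\dilipCC}_{{\omega}_{1}}$ (with ${\mathring{\dilipQQ}}_{\alpha} = {\dilipCC}_{{\omega}_{1}}$ when no such family exists). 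Writing ${\dilipV}_{\alpha}$ for the extension after the first $\alpha$ steps, the reason for prefixing each step with ${\dilipCC}_{{\omega}_{1}}$ is that ${\dilipV}_{\alpha}$ then plays the role of ``$\dilipV$'' and ${\dilipV}_{\alpha}[{\dilipCC}_{{\omega}_{1}}]$ the role of ``${\dilipV}_{{\omega}_{1}}$'' from Section~\ref{sec:partial}, with ${\dilipA}_{\alpha} \in {\dilipV}_{\alpha}$, so that Corollary~\ref{cor:proper} (properness of $\dilipPP({\dilipA}_{\alpha})$), Lemma~\ref{lem:diagonalizes} (it diagonalizes ${\dilipA}_{\alpha}$), and Lemma~\ref{lem:preservingbig} apply verbatim. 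In $\dilipV$ fix a well-ordering $\dilipwo$ of $\diliphf$ and put $\dilipFFF = {({\diliphf}^{\omega})}^{\dilipV}$; being all of ${\diliphf}^{\omega}$, $\dilipFFF$ is trivially big (Definition~\ref{def:big}), and it is well-closed w.r.t.\@ $\dilipwo$ (Definition~\ref{def:wellclosed}) since clauses (1),(2) are immediate and the slalom $S(k) = \{{f}_{i}(k) : i \leq k\}$ witnesses (3); also $\diliplc \dilipFFF \diliprc = {\aleph}_{1}$. A standard bookkeeping ($\dilipGCH$ gives $2^{{\aleph}_{1}} = {\aleph}_{2}$ at each stage, each iterand has size $\leq {\aleph}_{1}$, so \textsf{CH} persists and the iteration is ${\aleph}_{2}$-c.c.) arranges that every ${\dilipPP}_{\beta}$-name for a block Shelah-Stepr{\= a}ns a.d.\@ family of size ${\aleph}_{1}$ remaining block Shelah-Stepr{\= a}ns after ${\dilipCC}_{{\omega}_{1}}$ is handed to us at some later stage.

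For $\dilipnon(\dilipMMM) = {\aleph}_{1}$: each ${\mathring{\dilipQQ}}_{\alpha}$ is proper (${\dilipCC}_{{\omega}_{1}}$ is c.c.c., $\dilipPP({\dilipA}_{\alpha})$ is proper by Corollary~\ref{cor:proper}) and forces ``$\dilipFFF$ is big'': working in ${\dilipV}_{\alpha}$, where inductively $\dilipFFF$ is still big and well-closed w.r.t.\@ $\dilipwo$ (using Corollary~\ref{cor:bigispreserved} and Remark~\ref{rem:absolute}), forcing with ${\dilipCC}_{{\omega}_{1}}$ keeps it big by Corollary~\ref{cor:cohenbig}, forcing further with $\dilipPP({\dilipA}_{\alpha})$ keeps it big by Lemma~\ref{lem:preservingbig} (with ${\dilipV}_{\alpha}$ in the role of $\dilipV$), and Lemma~\ref{lem:twostepbig} combines the two. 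Corollary~\ref{cor:bigispreserved} then gives that $\dilipFFF$ is big in the final model ${\dilipV}_{{\omega}_{2}}$; as $\dilipFFF = {\diliphf}^{\omega} \cap \dilipV$, Lemma~\ref{lem:miller}(3) applied with ${\dilipV}_{0} = \dilipV$ and ${\dilipV}_{1} = {\dilipV}_{{\omega}_{2}}$ shows that ${2}^{\omega} \cap \dilipV$ is non-meager in ${\dilipV}_{{\omega}_{2}}$. Since $\diliplc {2}^{\omega} \cap \dilipV \diliprc = {\aleph}_{1}$ and ${\aleph}_{1}$ is preserved, $\dilipnon(\dilipMMM) = {\aleph}_{1}$ there.

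Two elementary facts finish the argument. \textbf{(a)} Every Shelah-Stepr{\= a}ns or block Shelah-Stepr{\= a}ns a.d.\@ family is maximal: if $\dilipA$ is not maximal, pick infinite $B \in {\dilipA}^{\perp}$; then $\{ \{m\} \mid m \in B \}$ is a block sequence in ${(\dilipIII({\dilipA})^{<\omega})}^{+}$, while any infinite union of its members is an infinite subset of $B$ and hence not in $\dilipIII({\dilipA})$. Also a Shelah-Stepr{\= a}ns a.d.\@ family is block Shelah-Stepr{\= a}ns (the singleton witnesses above are block sequences; cf.\@ the remark after Definition~\ref{SSblockdef}), so it suffices to exclude block Shelah-Stepr{\= a}ns families of size ${\aleph}_{1}$. \textbf{(b)} Being block Shelah-Stepr{\= a}ns is downward absolute: if $\dilipA \in M \subseteq N$ are transitive models of enough of $\dilipZFC$ and $\dilipA$ is block Shelah-Stepr{\= a}ns in $N$, then it is so in $M$. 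Indeed, given a block sequence $\{{s}_{n}\} \in M$ lying in ${(\dilipIII({\dilipA})^{<\omega})}^{+}$ as computed in $M$, it remains in ${(\dilipIII({\dilipA})^{<\omega})}^{+}$ in $N$ (for any $A \subseteq^{\ast} {B}_{1} \cup \dots \cup {B}_{k}$ with ${B}_{i} \in \dilipA$, positivity in $M$ plus the fact that $\dilipIII(\dilipA)$ contains the finite sets and the ${s}_{n}$ have increasing minima produces ${s}_{n}$ disjoint from $A$); so $N$ supplies an infinite $W$ and $C = {B}_{1} \cup \dots \cup {B}_{k}$ with ${B}_{i} \in \dilipA$ and $\bigcup_{n \in W}{s}_{n} \subseteq^{\ast} C$, whence $\{n : {s}_{n} \subseteq C\} \in M$ is infinite and witnesses block Shelah-Stepr{\= a}ns-ness of $\dilipA$ in $M$.

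Now suppose $\dilipA$ is block Shelah-Stepr{\= a}ns of size ${\aleph}_{1}$ in ${\dilipV}_{{\omega}_{2}}$. By ${\aleph}_{2}$-c.c.\@ and $\cf({\omega}_{2}) = {\omega}_{2}$, $\dilipA$ (with an enumeration) lies in ${\dilipV}_{\alpha}$ for some $\alpha < {\omega}_{2}$; by (b) it is block Shelah-Stepr{\= a}ns in ${\dilipV}_{\beta}$ and stays so in ${\dilipV}_{\beta}[{\dilipCC}_{{\omega}_{1}}] \subseteq {\dilipV}_{\beta+1} \subseteq {\dilipV}_{{\omega}_{2}}$ for every $\beta \in [\alpha, {\omega}_{2})$. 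Hence $\dilipA$ is an eligible candidate at every such stage, so at some $\beta$ the iterand is ${\dilipCC}_{{\omega}_{1}} \ast {\mathring{\dilipPP}}(\dilipA)$ and, by Lemma~\ref{lem:diagonalizes}, $\dilipA$ fails to be maximal in ${\dilipV}_{\beta+1}$; a diagonalizing real persists into ${\dilipV}_{{\omega}_{2}}$, so $\dilipA$ is not maximal there, contradicting (a). The technical heart of the argument lies in the earlier sections (preservation of bigness, properness of $\dilipPP(\dilipA)$); what is genuinely new is the organizational device of interleaving ${\dilipCC}_{{\omega}_{1}}$ into the iteration to keep every stage in the situation of Section~\ref{sec:partial}, and I expect the main thing to verify carefully to be precisely fact (b) — that block Shelah-Stepr{\= a}ns-ness of the hypothetical final-model family reflects down to the intermediate stages at which the bookkeeping must act.
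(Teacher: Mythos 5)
Your proposal is correct and follows essentially the same line as the paper: a length-$\omega_2$ CS iteration of $\dilipCC_{\omega_1}\ast\mathring{\dilipPP}(\mathring{\dilipA})$ posets with bookkeeping, using Corollary~\ref{cor:bigispreserved} and Lemma~\ref{lem:miller} to keep $\dilipnon(\dilipMMM)=\aleph_1$ and Lemma~\ref{lem:diagonalizes} plus downward absoluteness to rule out small block Shelah-Stepr{\= a}ns families. The only notable divergence is one of care rather than strategy: you actually prove the downward absoluteness of the block Shelah-Stepr{\= a}ns property (your fact (b), which is exactly the step the paper asserts without proof), while the paper is slightly more precise than you about how the decision to run $\dilipPP(\mathring{\dilipA})$ is made at each stage — it hands the bookkeeping an arbitrary a.d.\ family name and uses the almost-homogeneity of $\dilipCC_{\omega_1}$ in $\dilipV[G_\alpha]$ to get a well-defined dichotomy, whereas your phrasing (``a name for a family which remains block Shelah-Stepr{\= a}ns after $\dilipCC_{\omega_1}$'') leaves it implicit that the eligibility check must be performed in the intermediate model rather than in $\dilipV$.
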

\begin{proof}
 Shelah-Stepr{\= a}ns a.d.\@ families are block Shelah-Stepr{\= a}ns.
 So it is enough to produce a model where $\dilipnon(\dilipMMM) = {\aleph}_{1}$ and there are no block Shelah-Stepr{\= a}ns a.d.\@ families of size ${\aleph}_{1}$.
 Let $\dilipV$ be a universe satisfying $\dilipGCH$.
 In $\dilipV$, let $\dilipwo$ be a well-ordering of $\diliphf$ and let $\dilipFFF = \dilipV \cap {\diliphf}^{\omega}$.
 Then $\dilipFFF$ is well-closed w.r.t.\@ $\dilipwo$ in $\dilipV$.
 $\dilipFFF$ is also big in $\dilipV$.
 Build a CS iteration $\left\langle {\dilipPP}_{\alpha}; {\mathring{\dilipQQ}}_{\alpha}: \alpha \leq {\omega}_{2} \right\rangle$ as follows.
 Using $\dilipGCH$ in $\dilipV$, fix a bookkeeping device which has the property that any ${\dilipPP}_{{\omega}_{2}}$-name for a set of reals of size ${\aleph}_{1}$ will be enumerated cofinally often.
 At a stage $\alpha < {\omega}_{2}$, suppose ${\dilipPP}_{\alpha}$ is given.
 Assume ${\dilipPP}_{\alpha}$ is proper and ${\dilipforces}_{\alpha} \; {\dilipFFF \ \text{is big}}$.
 Suppose the bookkeeping device hands us a ${\dilipPP}_{\alpha}$-name $\mathring{\dilipA}$ such that
 \begin{align*}
  {\dilipforces}_{\alpha} \; {``\mathring{\dilipA} \subseteq \dilippc{\omega}{\omega} \ \text{is an infinite a.d.\@ family}''}.
 \end{align*}
 Let ${G}_{\alpha}$ be a $\left( \dilipV, {\dilipPP}_{\alpha} \right)$-generic filter.
 In $\dilipV\left[{G}_{\alpha}\right]$, either
 \begin{align*}
  {\dilipforces}_{{\dilipCC}_{{\omega}_{1}}} \; {``\mathring{\dilipA}\left[{G}_{\alpha}\right] \ \text{is block Shelah-Stepr{\= a}ns}''}
 \end{align*}
 or ${\dilipforces}_{{\dilipCC}_{{\omega}_{1}}} \; {``\mathring{\dilipA}\left[{G}_{\alpha}\right] \ \text{is not block Shelah-Stepr{\= a}ns}''}$ because ${\dilipCC}_{{\omega}_{1}}$ is almost homogeneous.
 If the first alternative happens, then let $\dilipQQ$ be ${\dilipCC}_{{\omega}_{1}} \ast \mathring{\dilipPP}(\mathring{\dilipA}\left[{G}_{\alpha}\right])$.
 If the second alternative happens, then let $\dilipQQ$ be ${\dilipCC}_{{\omega}_{1}}$.
 Back in $\dilipV$, let ${\mathring{\dilipQQ}}_{\alpha}$ be a full ${\dilipPP}_{\alpha}$-name for $\dilipQQ$.
 If the bookkeeping device does not hand us a ${\dilipPP}_{\alpha}$-name of the form $\mathring{\dilipA}$, then we let ${\mathring{\dilipQQ}}_{\alpha}$ be a full ${\dilipPP}_{\alpha}$-name for the trivial forcing.
 Observe that ${\dilipforces}_{\alpha} \; {``{\dilipforces}_{{\mathring{\dilipQQ}}_{\alpha}} \; {\mathring{\dilipA} \ \text{is not block Shelah-Stepr{\= a}ns}}''}$ (if an appropriate $\mathring{\dilipA}$ is given) and that ${\dilipforces}_{\alpha} \; {``{\dilipforces}_{{\mathring{\dilipQQ}}_{\alpha}} \; {\dilipFFF \ \text{is big}}''}$.
 This concludes the construction.
 
 Let ${G}_{{\omega}_{2}}$ be a $\left( \dilipV, {\dilipPP}_{{\omega}_{2}} \right)$-generic filter.
 In $\dilipV\left[{G}_{{\omega}_{2}}\right]$, by Corollary \ref{cor:bigispreserved}, $\dilipFFF$ is big.
 Therefore by Lemma \ref{lem:miller}, ${2}^{\omega} \cap \dilipV$ is non-meager in $\dilipV\left[{G}_{{\omega}_{2}}\right]$.
 Therefore $\dilipnon(\dilipMMM) = {\aleph}_{1}$ in $\dilipV\left[{G}_{{\omega}_{2}}\right]$.
 
 Next, suppose for a contradiction that in $\dilipV\left[{G}_{{\omega}_{2}}\right]$, there exists $\dilipA \subseteq \dilippc{\omega}{\omega}$ which is a block Shelah-Stepr{\= a}ns a.d.\@ family with $\diliplc \dilipA \diliprc = {\aleph}_{1}$.
 For $\gamma \leq {\omega}_{2}$, ${G}_{\gamma}$ denotes the restriction of ${G}_{{\omega}_{2}}$ to ${\dilipPP}_{\gamma}$.
 There exists $\xi < {\omega}_{2}$ such that $\dilipA \in \dilipV\left[{G}_{\xi}\right]$.
 Since the property of being a block Shelah-Stepr{\= a}ns a.d.\@ family is downwards absolute, $\dilipA$ is a block Shelah-Stepr{\= a}ns a.d.\@ family in $\dilipV\left[{G}_{\gamma}\right]$, for all $\xi \leq \gamma \leq {\omega}_{2}$.
 The bookkeeping device ensured that for some $\xi \leq \alpha < {\omega}_{2}$, a ${\dilipPP}_{\alpha}$-name $\mathring{\dilipA}$ such that $\mathring{\dilipA}\left[{G}_{\alpha}\right] = \dilipA$ was considered at stage $\alpha$.
 By the choice of ${\mathring{\dilipQQ}}_{\alpha}$, ${\dilipforces}_{\alpha+1} \; {``\mathring{\dilipA} \ \text{is not block Shelah-Stepr{\= a}ns}''}$.
 This is a contradiction because $\dilipA = \mathring{\dilipA}\left[{G}_{\alpha}\right] = \mathring{\dilipA}\left[{G}_{\alpha+1}\right]$ is block Shelah-Stepr{\= a}ns in $\dilipV\left[{G}_{\alpha+1}\right]$.
\end{proof}
Note that since $\max\{\dilipbb, \dilipsss\} \leq \dilipnon(\dilipMMM)$, $\dilipbb = \dilipsss = {\aleph}_{1}$ holds in the model $\dilipV\left[{G}_{{\omega}_{2}}\right]$ constructed above.
\begin{dilipconj} \label{conj:exists}
 There are Shelah-Stepr{\= a}ns a.d.\@ families in the model $\dilipV\left[{G}_{{\omega}_{2}}\right]$ constructed in the proof of Theorem \ref{thm:nosmallSS} (necessarily of size ${\aleph}_{2}$).
\end{dilipconj}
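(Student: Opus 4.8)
In $V[G_{\omega_2}]$ one has $\cc = \aleph_2$, and since every nontrivial stage of the iteration adds $\CC_{\omega_1}$ as an initial factor while (by the iteration's bookkeeping) cofinally many stages are nontrivial, $\Cov{M} = \aleph_2 = \cc$; moreover every intermediate extension $V[G_\alpha]$ with $\alpha < \omega_2$ satisfies $\mathsf{CH}$, since $\PP_{\omega_2}$ is $\aleph_2$-c.c.\@ and its iterands have size $\le\aleph_1$. The plan is to build a Shelah-Stepr\={a}ns \textsf{MAD} family $\A^\ast = \{A_\xi : \xi < \omega_2\}$ of size $\aleph_2$ by a recursion woven through the iteration, together with an auxiliary enumeration (cofinal in $\omega_2$) of all potential challenges: at stage $\xi$, with $\A^\ast_\xi = \{A_\eta : \eta < \xi\}$ of size $\le\aleph_1$ already built, we are to commit to $A_\xi$ so that (i) $A_\xi$ is almost disjoint from every $A_\eta$, $\eta < \xi$; (ii) if the challenge $X_\xi \in \bigl(\I(\A^\ast_\xi)^{<\omega}\bigr)^+$ then $A_\xi \supseteq \bigcup Y$ for some $Y \in [X_\xi]^\omega$, answering $X_\xi$; and (iii) $|A_\xi \cap R_\xi| = \omega$ whenever the set $R_\xi \in \I(\A^\ast_\xi)^+$, which drives maximality. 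Since $\A^\ast_\xi$ is a set of reals of size $\le\aleph_1$, it (together with $X_\xi$) lies in some $V[G_\alpha]$, $\alpha < \omega_2$, in which $\mathsf{CH}$ holds and beyond which cofinally many $\CC_{\omega_1}$-blocks remain available; the recursion is to read $A_\xi$ off from the first such block.

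\textbf{The routine ingredients.} Requirement (iii) and the final verification that $\A^\ast$ is \textsf{MAD} are the standard book-keeping used for generic \textsf{MAD} families, using that every real of $V[G_{\omega_2}]$ appears in some $V[G_\alpha]$; note there is no interference with the $\PP(\A)$-factors of the iteration (Lemma~\ref{lem:diagonalizes}), because $\A^\ast$ has size $\aleph_2$ and is not equal to any $\PP_\alpha$-name handed to the iteration's bookkeeping. That the answers $Y$ chosen along the way remain correct in $V[G_{\omega_2}]$ is obtained from the downstream preservation facts already in the paper: $\CC_{\omega_1}$ preserves Shelah-Stepr\={a}nsness by Lemma~\ref{lem:CSS} together with a $\Delta$-system reflection of any challenge to a single Cohen coordinate, and each $\PP(\A)$-factor preserves ``$\dilipFFF$ is big'' by Lemma~\ref{lem:preservingbig}. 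The necessity of producing a family of full size $\aleph_2$ (rather than carrying one along) is forced by Theorem~\ref{thm:nosmallSS}: if the $\PP(\A)$-factors preserved Shelah-Stepr\={a}nsness of \emph{every} Shelah-Stepr\={a}ns a.d.\@ family, a $\mathsf{CH}$-construction in $V$ would survive to $V[G_{\omega_2}]$ with size $\aleph_1$, contradicting that theorem; for the same reason $\Diamond(\mathfrak{b})$ fails in $V[G_{\omega_2}]$, so Theorem~\ref{diamondMAD} is unavailable.

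\textbf{Answering a challenge.} Work at the relevant stage inside $V[G_\alpha]$, where $\mathsf{CH}$ holds, so $\pp = \cc = \aleph_1$ and $|\A^\ast_\xi| \le \aleph_1$. Given $X_\xi \in \bigl(\I(\A^\ast_\xi)^{<\omega}\bigr)^+$, Proposition~\ref{ShelahStepransunderp=c} provides a $\sigma$-centered poset $\PP(X_\xi)$ — finite binary stems with finite side conditions drawn from $\A^\ast_\xi$ — whose generic filter yields $Y \in [X_\xi]^\omega$ with $\bigcup Y$ almost disjoint from $\A^\ast_\xi$; the dense sets that must be met are exactly the $D_{n,A}$ for $n \in \omega$ and $A \in \A^\ast_\xi$, at most $\aleph_1$ of them. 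Since the next nontrivial stage $\beta \ge \alpha$ has $\dot{\QQ}_\beta$ containing $\CC_{\omega_1}$ as an initial segment, the iteration adds $\omega_1$ mutually Cohen-generic reals over $V[G_\beta] \supseteq V[G_\alpha]$, and $\mathsf{CH}$, $X_\xi$, $\A^\ast_\xi$ all persist to $V[G_\beta]$. One would like to distil from this $\CC_{\omega_1}$-block a filter on $\PP(X_\xi)$ meeting all the $D_{n,A}$, set $A_\xi = \bigcup Y$, and continue; as the least nontrivial $\beta \ge \alpha$ ranges cofinally in $\omega_2$, the recursion then runs through all $\aleph_2$ stages and delivers $\A^\ast$ of size $\aleph_2$ inside $V[G_{\omega_2}]$.

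\textbf{The main obstacle.} The step that is genuinely open — and the reason this is a conjecture — is the claim that a $\CC_{\omega_1}$-block can be used to meet the $\aleph_1$-many dense sets $D_{n,A}$ ($A \in \A^\ast_\xi$). Merely adding $\aleph_1$ Cohen reals over $V[G_\beta]$ does not accomplish this in general: $\PP(X_\xi)$ is, up to the side constraint $Y \subseteq X_\xi$, an almost-disjointness coding poset for $\A^\ast_\xi$, so if $\A^\ast_\xi$ happened to be a Cohen-indestructible \textsf{MAD} family in $V[G_\beta]$ — and such families exist there under $\mathsf{CH}$ — it would absorb arbitrarily many Cohen reals, $\PP(X_\xi)$ would not embed regularly into $\CC_{\omega_1}$, and no $Y$ would be produced; indeed it is consistent that $\PP(X_\xi)$ simply fails to have any generic-like object read off from a block. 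To make the plan work one must therefore build $\A^\ast$ subject to an additional invariant guaranteeing that, at each stage and for each challenge that can arise, $\PP(X_\xi)$ is forced to be a regular subforcing of an initial segment of the $\CC_{\omega_1}$-block: for instance by keeping $\A^\ast_\xi$ ``spread out'' so that the whole family $\{D_{n,A} : n \in \omega,\ A \in \A^\ast_\xi\}$ collapses, modulo the generic, to a single Cohen condition, or by spending the $\omega_1$ Cohen reals of one block over countably many later substages and producing $A_\xi$ as a limit that only ever confronts countably many side conditions at a time. Isolating such an invariant and propagating it through the countable support iteration — while not disturbing the ``no small Shelah-Stepr\={a}ns family'' conclusion of Theorem~\ref{thm:nosmallSS} — is the crux of the matter, and we see no way to do it at present.
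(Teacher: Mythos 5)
This statement is explicitly labeled a conjecture in the paper, and the paper does not offer a proof of it (indeed, immediately afterward the authors pose Question~\ref{q:ssnonM}, of which the conjecture is a special case). So there is no ``paper's own proof'' to compare against: your write-up is being measured against nothing. That said, you have correctly diagnosed the situation. You recognized that the statement is open, you laid out the only plausible line of attack --- recurse through the $\aleph_2$-length iteration, reflecting any challenge $X_\xi$ down to an intermediate $V[G_\alpha]$ where \textsf{CH} holds and $|\A^\ast_\xi|\le\aleph_1$, and then harvest the answering set from a later $\CC_{\omega_1}$-block --- and you identified precisely where it breaks: a $\CC_{\omega_1}$-block over $V[G_\beta]$ does not automatically provide a filter on the $\sigma$-centered poset $\PP(X_\xi)$ meeting all $\aleph_1$-many dense sets $D_{n,A}$, since $\PP(X_\xi)$ is essentially an almost-disjointness coding poset for $\A^\ast_\xi$ and need not embed regularly into $\CC_{\omega_1}$. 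You also correctly rule out the cheaper routes (carrying a ground-model family along is impossible by the design of the $\PP(\A)$-factors; $\Diamond(\mathfrak{b})$ fails because $\cov(\M)=\aleph_2$). In short, this is an honest and accurate appraisal of an open problem rather than a proof, and since the paper itself proves nothing here, you have done the correct thing by naming the obstruction instead of bluffing past it.
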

A model with no Shelah-Stepr{\= a}ns a.d.\@ families is constructed in \cite{AModelwithnoStronglySeparableAlmostDisjointFamilies}.
This model is gotten by iterating posets of the form $\dilipLL(\dilipGGG)$, where $\dilipGGG$ is some filter on a countable set.
Since posets of this form always add dominating reals, $\dilipbb > {\aleph}_{1}$ in the model in  \cite{AModelwithnoStronglySeparableAlmostDisjointFamilies}.
\begin{dilipQuestion} \label{q:ssnonM}
 Is it consistent that there are no Shelah-Stepr{\= a}ns a.d.\@ families and $\dilipnon(\dilipMMM) = {\aleph}_{1}$?
\end{dilipQuestion}

\def\polhk#1{\setbox0=\hbox{#1}{\ooalign{\hidewidth
  \lower1.5ex\hbox{`}\hidewidth\crcr\unhbox0}}}
\providecommand{\bysame}{\leavevmode\hbox to3em{\hrulefill}\thinspace}
\providecommand{\MR}{\relax\ifhmode\unskip\space\fi MR }
\providecommand{\MRhref}[2]{%
  \href{http://www.ams.org/mathscinet-getitem?mr=#1}{#2}
}
\providecommand{\href}[2]{#2}





\end{document}